\NewDocumentCommand{\makeabbrev}{mmm}
 {
  \yoruk_makeabbrev:nnn { #1 } { #2 } { #3 }
 }
\makeabbrev{\textbf}{tbf#1}{a,b,c,d,e,f,g,h,i,j,k,l,m,n,o,p,q,r,s,t,u,v,w,x,y,z,A,B,C,D,E,F,G,H,I,J,K,L,M,N,O,P,Q,R,S,T,U,V,W,X,Y,Z}
\makeabbrev{\textbf}{bf#1}{a,b,c,d,e,f,g,h,i,j,k,l,m,n,o,p,q,r,s,t,u,v,w,x,y,z,A,B,C,D,E,F,G,H,I,J,K,L,M,N,O,P,Q,R,S,T,U,V,W,X,Y,Z}
\makeabbrev{\textsf}{tsf#1}{a,b,c,d,e,f,g,h,i,j,k,l,m,n,o,p,q,r,s,t,u,v,w,x,y,z,A,B,C,D,E,F,G,H,I,J,K,L,M,N,O,P,Q,R,S,T,U,V,W,X,Y,Z}
\makeabbrev{\mathsf}{mss#1}{a,b,c,d,e,f,g,h,i,j,k,l,m,n,o,p,q,r,s,t,u,v,w,x,y,z,A,B,C,D,E,F,G,H,I,J,K,L,M,N,O,P,Q,R,S,T,U,V,W,X,Y,Z}
\makeabbrev{\mathfrak}{mf#1}{a,b,c,d,e,f,g,h,i,j,k,l,m,n,o,p,q,r,s,t,u,v,w,x,y,z,A,B,C,D,E,F,G,H,I,J,K,L,M,N,O,P,Q,R,S,T,U,V,W,X,Y,Z,
sl,gl
}
\makeabbrev{\mathrm}{mrm#1}{a,b,c,d,e,f,g,h,i,j,k,l,m,n,o,p,q,r,s,t,u,v,w,x,y,z,A,B,C,D,E,F,G,H,I,J,K,L,M,N,O,P,Q,R,S,T,U,V,W,X,Y,Z}
\makeabbrev{\mathbf}{mbf#1}{a,b,c,d,e,f,g,h,i,j,k,l,m,n,o,p,q,r,s,t,u,v,w,x,y,z,A,B,C,D,E,F,G,H,I,J,K,L,M,N,O,P,Q,R,S,T,U,V,W,X,Y,Z}
\makeabbrev{\mathcal}{mc#1}{A,B,C,D,E,F,G,H,I,J,K,L,M,N,O,P,Q,R,S,T,U,V,W,X,Y,Z}
\makeabbrev{\mathbb}{mbb#1}{A,B,C,D,E,F,G,H,I,J,K,L,M,N,O,P,Q,R,S,T,U,V,W,X,Y,Z}
\makeabbrev{\mathscr}{ms#1}{A,B,C,D,E,F,G,H,I,J,K,L,M,N,O,P,Q,R,S,T,U,V,W,X,Y,Z}
\makeabbrev{\mathrm}{#1}{
id,ran,rk,diag,stab,ann,conv,pr,ev,tr,End,Hom,sgn,im,op,can,fin,ext,red,tot,lex,Aut,Inn,unit,
%
rot,usc,lsc,Lip,lip,bSymLip,osc,AC,loc,coz,z,
%
supp,Opt,Adm,Cpl,Geo,GeoOpt,GeoAdm,GeoCpl,reg,res,
%
bd,co,Ric,Exp,dExp,dist,seg,Seg,cut,fcut,Cut,SDiff,Iso,Isom,diam,cl,Homeo,Diff,Der,vol,dvol,inj,relint, Graph, sub,
%
var,law,Var,Poi,Gam,pa,so,iso,fs,inv,pqi,mix,erg,form,
TestF,
ob,cod,inp,
}
\makeabbrev{\mathsf}{#1}{CD,BE,MCP,Ent,wMTW,MTW,Ch,RCD,EVI,Rad,dRad,SL,cSL,dSL,ScL,Irr,SC,wFe,VA,MetMeas,UMeas,CSMet,Met,USp,Meas,Mbl,alg,Alg}
\makeabbrev{\mathsc}{#1}{mmaf,cg}
\newcommand{\inter}{\mathrm{int}}
\newcommand{\boldupalpha}{{\boldsymbol\upalpha}}
\newcommand{\boldnu}{\boldsymbol\nu}
\newcommand{\boldmolli}{\boldsymbol\molli}
\newcommand{\boldsigma}{\boldsymbol\sigma}
\newcommand{\boldvarphi}{\boldsymbol\varphi}
\newcommand{\boldDelta}{\boldsymbol\Delta}
\newcommand{\boldUpsilon}{\boldsymbol\Upsilon}
\newcommand{\boldLambda}{\boldsymbol\Lambda}
\newcommand{\boldOmega}{\boldsymbol\Omega}
\newcommand{\bmssV}{\boldsymbol\mssV}
\newcommand{\bmssW}{\boldsymbol\mssW}
\newcommand{\bmssE}{\boldsymbol\mssE}
\newcommand{\bmssH}{\boldsymbol\mssH}
\newcommand{\bmssh}{\boldsymbol\mssh}
\newcommand{\bmssp}{\boldsymbol\mssp}
\newcommand{\bmssL}{\boldsymbol\mssL}
\newcommand{\bmssT}{\boldsymbol\mssT}
\newcommand{\mssBeta}{\mathsf{B}}
\newcommand{\bmssBeta}{\boldsymbol{\mathsf{B}}}
\newcommand{\mssGamma}{\mathsf{\Gamma}}
\newcommand{\bmssGamma}{\boldsymbol{\mathsf{\Gamma}}}
\newcommand{\Flat}{\textrm{flat}}
\newcommand{\sprod}{\Pi}
\newcommand{\ssum}{\Sigma}
\newcommand{\sotimes}{\otimes}
\newcommand{\shotimes}{\widehat\otimes}
\newcommand{\scap}{\cap}
\newcommand{\scup}{\cup}
\newcommand{\Leb}{\msL}
\newcommand{\mssdiv}{\mathsf{div}}
\renewcommand{\div}{\mathrm{div}}
\newcommand{\msfdiv}{\mathsf{div}}
\newcommand{\T}{\tau} 
\newcommand{\Bo}[1]{\msB_{#1}} 
\newcommand{\DF}[1]{\mcD_{#1}}
\newcommand{\Mb}{\msM_b}
\newcommand{\Mbp}{\msM_b^+}
\newcommand{\FleVio}{\mathsc{fv}}
\newcommand{\DawWat}{\mathsc{dw}}
\newcommand{\BatKan}{\mathsc{bk}}
\newcommand{\Lipu}{\Lip^1}
\newcommand{\martp}[2]{(\mathsc{mp})^{#1}_{#2}}
\newcommand{\hmartp}[2]{(\widehat{\mathsc{mp}})^{#1}_{#2}}
\renewcommand{\iint}{\int\!\!\!\!\!\int}
\renewcommand{\iiint}{\int\!\!\!\!\!\int\!\!\!\!\!\int}
\newcommand{\defeq}{\eqqcolon}
\renewcommand{\complement}{\mathrm{c}}
\newcommand{\acts}{\,\raisebox{\depth}{\scalebox{1}[-1]{$\circlearrowleft$}}\,}
\newcommand{\mathsc}[1]{\text{\textsc{#1}}}
\newcommand{\emparg}{{\,\cdot\,}}
\newcommand{\dint}[2][]{\;\sideset{^{\scriptstyle{#1}}\!\!\!\!}{_{#2}^{\scriptscriptstyle\oplus}}\int}
\newcommand{\forallae}[1]{{\textrm{\,for ${#1}$-a.e.~}}}
\newcommand{\as}[1]{\quad #1\text{-a.e.}}
\renewcommand{\Cap}{\mathrm{cap}}
\newcommand{\dom}[1]{\msD(#1)}
\newcommand{\dotloc}[1]{#1^\bullet_{\loc}}
\DeclareMathOperator{\eqdef}{\coloneqq}
\newcommand{\hCylP}[2]{\widehat{\mcA}^{#1}_{#2}}
\newcommand{\Cyl}[2]{\mfA^{#1}_{#2}}
\newcommand{\hFC}[2]{\widehat{\mcC}^{#1}_{#2}}
\newcommand{\FC}[2]{\mcC^{#1}_{#2}}
\newcommand{\slo}[1]{\abs{\mrmD#1}}
\newcommand{\otym}[1]{{\scriptscriptstyle\otimes #1}}
\newcommand{\tym}[1]{{\scriptscriptstyle\times #1}}
\newcommand{\OF}[1]{\msH_{#1}}
\newcommand{\boldnabla}{\boldsymbol\nabla}
\newcommand{\longrar}{\longrightarrow}
\newcommand{\rar}{\rightarrow}
\newcommand{\nlim}{\lim_{n}}								
\newcommand{\diff}{\mathop{}\!\mathrm{d}}						
\newcommand{\tabs}[1]{\big\lvert#1\big\rvert}	
\newcommand{\abs}[1]{\left\lvert#1\right\rvert}						
\newcommand{\norm}[1]{\left\lVert#1\right\rVert}					
\newcommand{\set}[1]{\left\{#1\right\}}							
\newcommand{\tset}[1]{\big\{#1\big\}}							
\newcommand{\paren}[1]{\left(#1\right)}							
\newcommand{\tparen}[1]{\big({#1}\big)}
\newcommand{\ttparen}[1]{({#1})}
\newcommand{\braket}[1]{\left[#1\right]}							
\newcommand{\tbraket}[1]{\big[#1\big]}
\newcommand{\quadvar}[1]{\left[#1\right]}							
\newcommand{\tquadvar}[1]{\big[#1\big]}
\newcommand{\tsharpb}[1]{\big\langle#1\big\rangle}
\newcommand{\class}[2][]{\left[#2\right]_{#1}}						
\newcommand{\tclass}[2][]{\big [#2\big]_{#1}}						
\newcommand{\ttclass}[2][]{[#2]_{#1}}						
\newcommand{\ceiling}[1]{\left\lceil#1\right\rceil}					
\newcommand{\IE}{W}
\newcommand{\sym}[1]{{\scriptscriptstyle{(#1)}}}
\newcommand{\rep}[1]{\tilde{#1}}								
\newcommand{\reptwo}[1]{\tilde{#1}}							
\newcommand{\ttscalar}[2]{\langle #1 \, |\, #2\rangle}	
\newcommand{\tscalar}[2]{\big\langle #1 \, \big |\, #2\big\rangle}			
\newcommand{\scalar}[2]{\left\langle #1 \,\middle |\, #2\right\rangle}		
\newcommand{\seq}[1]{\paren{#1}}								
\newcommand{\tseq}[1]{{\big(#1\big)}}
\newcommand{\ttseq}[1]{(#1)}
\newcommand{\Cb}{\mcC_b}									
\newcommand{\Cc}{\mcC_c}									
\newcommand{\Cz}{\mcC_0}									
\newcommand{\Bb}{\mcB_b}									
\newcommand{\EM}{\mathrm{em}}
\newcommand{\pfwd}{\sharp}
\DeclareMathOperator*{\esssup}{esssup}
\DeclareMathOperator*{\essinf}{essinf}
\DeclareMathOperator{\car}{\mathbf 1}
\DeclareMathOperator{\emp}{\varnothing}
\newcommand{\N}{{\mathbb N}}
\newcommand{\R}{{\mathbb R}}
\DeclareMathOperator{\Z}{{\mathbb Z}}
\newcommand{\restr}[1]{\big\lvert_{#1}}
\newcommand{\Id}{\mssI}
\newcommand{\iref}[1]{\ref{#1}}
\newcommand{\comma}{\,\mathrm{,}\;\,}
\newcommand{\semicolon}{\,\mathrm{;}\;\,}
\newcommand{\fstop}{\,\mathrm{.}}
\DeclareMathOperator{\zero}{{\mathbf 0}}
\newcommand{\Beta}{\mathrm{B}}
\newcommand{\card}[1]{\abs{#1}}
\newcommand{\trid}{\star}
\newcommand{\molli}{\varphi}
\let\temp\phi
\let\phi\varphi
\let\varphi\temp
\let\temp\epsilon
\let\epsilon\varepsilon
\let\varepsilon\temp
\newcommand{\eps}{\epsilon}
\newcommand{\vareps}{\varepsilon}
\numberwithin{equation}{section}
\theoremstyle{plain}
\newtheorem{theorem}{Theorem}[section]
\newtheorem*{theorem*}{Theorem}
\newtheorem{proposition}[theorem]{Proposition}
\newtheorem{lemma}[theorem]{Lemma}
\newtheorem{corollary}[theorem]{Corollary}
\theoremstyle{definition}
\newtheorem{definition}[theorem]{Definition}
\newtheorem{notation}[theorem]{Notation}
\newtheorem*{defs*}{Definition}
\theoremstyle{remark}
\newtheorem{remark}[theorem]{Remark}
\newtheorem{example}[theorem]{Example}
\newtheorem{assumption}[theorem]{Assumption}
\newtheorem*{ass*}{Assumption}
\newtheorem*{question}{Question}
\newtheorem*{answer}{Answer}
\newcommand{\boldparagraph}[1]{\medskip\textbf{#1}.\quad}
\renewcommand{\paragraph}[1]{\medskip\emph{#1}.\quad}
\newcommand{\nparagraph}[1]{\medskip\emph{#1}\quad}
\begin{document}

\title[Massive Particle Systems, Wasserstein BM's, Dean--Kawasaki spdes]{Massive Particle Systems,\\Wasserstein Brownian Motions,\\ and the Dean--Kawasaki Equation}
\thanks{Research funded by: the Austrian Science Fund (FWF) project \href{https://doi.org/10.55776/ESP208}{{10.55776/ESP208}}; PRIN Department of Excellence MatMod@TOV (CUP: E83C23000330006).
Part of this work has been written while the author was attending the symposium \emph{Stochastic Analysis on Large Scale Interacting Systems} and the conference \emph{Stochastic Analysis} at the Research Institute for Mathematical Sciences (RIMS) of Kyoto University; the conference \emph{Mean field interactions with singular kernels and their approximations} at the Institute Henri Poincar\'e in Paris.
The author thanks the organizers of these events and he is very grateful to Hiroshi Kawabi, Seichiro Kusuoka, and Fran\c{c}ois Delarue for the respective invitations.
The author is grateful to Federico Cornalba, Benjamin Gess, Fenna M\"uller, and Max-Konstantin von~Renesse for very stimulating conversations about the Dean--Kawasaki equation.
}

\author[L.~Dello Schiavo]{Lorenzo Dello Schiavo}
\address{Dipartimento di Matematica -- Universit\`a degli Studi di Roma ``Tor Vergata''
\\
Via della Ricerca Scientifica 1\\
00133 Roma\\
Italy
}
\email{delloschiavo@mat.uniroma2.it}

\begin{abstract}
We develop a unifying theory for four different objects:
\begin{enumerate}[$(1)$]
\item\label{i:Abstract:1} infinite systems of interacting massive particles;
\item\label{i:Abstract:2} solutions to the Dean--Kawasaki equation with singular drift and space-time white noise;
\item\label{i:Abstract:3} Wasserstein diffusions with  a.s.\ purely atomic reversible random measures;
\item\label{i:Abstract:4} metric measure Brownian motions induced by Cheeger energies on $L^2$-Wasserstein spaces.
\end{enumerate}

For the objects in~\ref{i:Abstract:1}--\ref{i:Abstract:3} we prove existence and uniqueness of solutions, and several characterizations, on an arbitrary locally compact Polish ambient probability space~$(M,\nu)$ for a $\nu$-symmetric exponentially recurrent Feller driving noise.
In the case of the Dean--Kawasaki equation, this amounts to replacing the Laplace operator with some arbitrary diffusive Markov generator~$\mssL$ with ultracontractive semigroup.
In addition to a complete discussion of the free case, we consider singular interactions, including, e.g., mean-field repulsive isotropic pairwise interactions of Riesz and logarithmic type under the assumption of local integrability.

We further show that each $\nu$-symmetric Markov diffusion generator~$\mssL$ on~$M$ induces in a natural way a geometry on the space of probability measures over~$M$.
When~$M$ is a manifold and~$\mssL$ is a drifted Laplace--Beltrami operator, this geometry coincides with the geometry of $L^2$-optimal transportation.
The corresponding `geometric Brownian motion' coincides with the `metric measure Brownian motion' in~\ref{i:Abstract:4}.
\end{abstract}

\subjclass[2020]{60G57, 60H17, 60J46, 49Q22, 70F45}
\keywords{interacting particle systems; Wasserstein diffusions; measured-valued diffusions; marked point processes; Dean--Kawasaki equation}

\maketitle

%
%

\newpage

\setcounter{tocdepth}{2}
\tableofcontents

\newpage

\section*{Preface}
We construct and fully characterize a class of stochastic dynamics on the space of Borel probability measures over a general locally compact Polish space, arising as the empirical distribution of a free system of infinitely many independently diffusing particles of unequal mass. 

These \emph{free massive systems} serve as a physically motivated model for mesoscopic phenomena, interpolating between microscopic particle systems and macroscopic continuum limits, and are rigorously formulated using the machinery of Dirichlet forms and stochastic partial differential equations (\textsc{spde}s) on Kantorovich--Rubinstein (Wasserstein) spaces.
Our framework recovers and unifies a wide range of previously studied dynamics ---including the Dirichlet--Ferguson diffusion and Dean--Kawasaki equations with singular drift--- and extends them in several directions.

A main result in our work is the identification of the \emph{measure representation} of a free massive system as a reversible Markov process properly associated with an explicitly constructed quasi-regular Dirichlet form over a space of probability measures. 
We compute its generator and semigroup on a dense algebra of extended cylinder functions, and show that it coincides with the Cheeger energy associated to the Wasserstein geometry whenever the ambient space is a weighted Riemannian manifold.
In this setting, we further identify the process as the \emph{Wasserstein Brownian motion}, extend known Rademacher-type theorems, and obtain sharp Varadhan-type short-time asymptotics for its heat kernel.
Notably, we prove that the generator of the Dirichlet form is essentially self-adjoint, which yields uniqueness of solutions to the associated martingale problem even in the presence of infinite-dimensional gradient-type noise.

Furthermore, we rigorously interpret the associated \textsc{spde} as a \emph{Dean–Kawa\-saki equation} with singular drift, driven by vector-valued white noise, and provide a complete characterization of its solutions in terms of the underlying particle dynamics.
This resolves longstanding difficulties concerning well-posedness of such equations and collocates them within a rigorous, unified geometric and analytic framework. 

Finally, we also extend the theory to a broad class of \emph{interacting systems}, including sub-critical Riesz- and Dyson-type singular pairwise mean-field interactions, via Girsanov transforms of the underlying Dirichlet forms.
In fact, we allow for interaction energies defined only via local Sobolev-type regularity in the Dirichlet structure, which need be neither mean-field, nor pairwise, nor smooth.

Our work reveals deep connections between infinite-dimensional geometric analysis, stochastic particle systems, and the intrinsic Riemannian structure of Wasserstein spaces.
The resulting framework blends stochastic analysis, optimal transport, and Dirichlet-form theory on highly singular curved infinite-dimensional spaces, and provides a new foundational approach for the analysis of measure-valued dynamics under minimal geometric or analytic assumptions.

\section{Introduction}\label{s:Intro}
Particle systems are fundamental in probability theory, mathematical physics, statistics, and various other areas of mathematics.
In physical contexts, they model large ensembles of identical particles, such as electrons or photons, at the \emph{microscopic scale}.

In many applications, the admissible configurations consist of up to countably many \emph{indistinguishable} particles which cannot occupy the same position and remain locally finite.
The study of stochastic dynamics for these configurations subject to noise has been a very active area of research, in itself (see e.g.~\cite{AlbKonRoe98,AlbKonRoe98b,Osa96,LzDSSuz21} and references therein); as well as in connection with: random matrices (see e.g.\ the monograph~\cite{AkeBaiDiF15} and references therein), infinite \textsc{sde}s (e.g.~\cite{OsaTan20} and references therein), metric measure geometry~\cite{ErbHue15,LzDSSuz22a}, among others.

Here, we consider a distinct class of (\emph{marked}) particle systems that naturally describe physical systems at the \emph{mesoscopic scale}.
At this scale we postulate that:

\begin{itemize}
\item the system retains its \emph{particulate nature};
\item a \emph{macroscopic profile} emerges;
\item the \emph{ambient's shape} becomes relevant;
\item the system has \emph{finite total size}.
\end{itemize}

\subsection{Free massive systems}\label{ss:FreeMassiveSystems}
In the following,~$n$ is always a non-negative integer, while~$N$ denotes either a non-negative integer or the countable infinity.

We consider a stochastic particle system of~$N$ \emph{marked} points in an ambient space~$M$, identified with their position~$X^i$ and mark~$s_i>0$.
While we allow for an \emph{infinite} number of particles, we assume the system to have \emph{finite size}, ensuring that ---with no loss of generality, after normalization--- $\sum_i^Ns_i=1$.
The marks~$s_i$ serve as placeholders for a context-relevant scalar quantity, such as mass or charge.
For simplicity, we will refer to this quantity only as to the \emph{mass} of the particle, and call any particle system as above \emph{massive}.

We assume the sequence of masses~$s_i$ to be randomly distributed in the \emph{ordered} $N$-simplex according to some arbitrary law~$\pi$.
Additionally, each particle experiences a noise, with volatility inversely proportional to its mass:
\begin{equation}\label{eq:Free}
\diff X^i_t = \diff \mssW^i_{t/s_i}\comma \qquad i\leq N\fstop
\end{equation}
Here,~$\mssW^i_\bullet$ represents \emph{independent} instances of a same Markov process~$\mssW_\bullet$, possibly with jumps, such as a standard Wiener process or an $\alpha$-stable L\'evy process.

When the ambient space is `open', low-mass particles can rapidly escape every bounded region, leading to potential mass dissipation.
To prevent this, we require the total mass of a massive system to be conserved.
Precisely, we make the following assumption:
\begin{assumption}[Setting, cf.\ Ass.~\ref{ass:Setting}]\label{ass:IntroSetting}
The \emph{ambient space}~$(M,\T)$ is a locally compact second-countable Hausdorff topological space, equipped with an atomless Borel probability measure~$\nu$.
The \emph{driving noise}~$\mssW_\bullet$ is a $\nu$-reversible irreducible recurrent (thus conservative) Hunt process on~$M$ with bounded continuous transition functions and exponential convergence to equilibrium.
\end{assumption}

For example, the assumption holds when~$\mssW$ is the Brownian motion on any connected closed (i.e., compact boundaryless) smooth Riemannian manifold.

\begin{theorem}[Well-posedness, see~\S\ref{s:FIS}]\label{t:Intro1}
For any initial condition~$\seq{X^i_0}_{i\leq N}$ and any initial mass distribution~$\seq{s_i}_{i\leq N}$, the system~\eqref{eq:Free} has a unique global solution for all~$t\geq 0$. 
\end{theorem}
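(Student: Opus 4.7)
The plan is to reduce the well-posedness of~\eqref{eq:Free} to that of a single copy of the driving noise~$\mssW_\bullet$, which is a well-defined Hunt process by Assumption~\ref{ass:IntroSetting}. In the free case the equations for distinct particles are decoupled, so the joint system splits into $N$ independent scalar problems, each of which is a deterministic time change of the underlying Markov process. One has to interpret~\eqref{eq:Free} as saying precisely that $X^i$ is a Markov process with generator $s_i^{-1}\mssL_{\mssW}$ started at $X^i_0$, where $\mssL_{\mssW}$ is the (Dirichlet-form) generator of $\mssW$.

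For existence, I would construct, on a product probability space, $N$ independent copies~$\mssW^i$ of~$\mssW$ started respectively at the prescribed $X^i_0$, and set $X^i_t \defeq \mssW^i_{t/s_i}$. Since $s_i>0$, the time change $t\mapsto t/s_i$ is a homeomorphism of $[0,\infty)$, and since recurrence of~$\mssW$ in Assumption~\ref{ass:IntroSetting} entails conservativity, each copy~$\mssW^i$ is a.s.\ defined for all times. In particular no particle explodes in finite time, regardless of how small its mass $s_i$ may be, so $X^i_t$ is globally defined and the joint system $\seq{X^i_t}_{i\leq N}$ exists for all $t\geq 0$.

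For uniqueness, let $\seq{Y^i_t}_{i\leq N}$ be any solution. For each $i$, the reverse time change $t\mapsto Y^i_{s_i t}$ is a Markov process with generator $\mssL_{\mssW}$ started at $X^i_0$, and by uniqueness of the Hunt process properly associated to $\mssL_{\mssW}$ this forces $Y^i_{s_i\emparg}$ to coincide in law with $\mssW^i$. Reapplying the time change recovers equality in law for each~$X^i$, and independence of the components then gives equality in law for the full system.

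The main subtlety, and the reason the theorem is non-trivial in the regime $N=\infty$, is measurability and genuine path-space existence for the infinite product of time-changed processes when the masses $s_i$ are arbitrarily small. Here conservativity of the driver is essential: were $\mssW$ killed at a cemetery, the sub-probability mass defect summed across particles could be positive, destroying the interpretation of $\mu_t\defeq \sum_{i\leq N}s_i\,\delta_{X^i_t}$ as a probability-measure-valued process. Under Assumption~\ref{ass:IntroSetting} this is ruled out, and a routine Kolmogorov-type argument on the product of càdlàg path spaces $D\tparen{[0,\infty);M}^{\times N}$ yields a Borel version of the solution.
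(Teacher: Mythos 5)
Your existence construction (product probability space, independent copies, deterministic time change) is sound as one way of producing \emph{a} solution, and your point that recurrence $\Rightarrow$ conservativity $\Rightarrow$ infinite lifetime despite arbitrarily small $s_i$ is correct. But you misdiagnose the actual difficulty. It is not path-space measurability --- the Kolmogorov/Ionescu--Tulcea step is indeed routine --- but the absolute continuity of the infinite-product transition kernel $\bmssh_t(\mbfx,\diff\emparg)=\prod_k \mssh_{t/s_k}(x_k,\diff\emparg)$ with respect to $\boldnu=\nu^{\otimes\infty}$. By Kakutani's dichotomy (the Berg example on $\mbbS^1$ is discussed explicitly in the Introduction), this absolute continuity \emph{fails} when $\sum_i s_i=\infty$, and it holds precisely because the masses are summable: Proposition~\ref{p:BendikovSaloffCosteRescaling} verifies the Bendikov--Saloff-Coste condition \ref{i:t:BendikovSaloffCoste:e} of Theorem~\ref{t:BendikovSaloffCoste} (namely $\sum_k e^{-2(\lambda/s_k)t}<\infty$ for all $t>0$) from summability of $\mbfs$ via Abel--Olivier--Pringsheim and the root test. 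Your argument never invokes the finite-size constraint $\sum_i s_i=1$; if it were complete as written, the theorem would hold for arbitrary positive marks, which is exactly the regime the paper flags as pathological.

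The more serious gap is uniqueness. You invert the time change coordinate-wise, identify each marginal law with that of $\mssW$, and then close by invoking ``independence of the components'' --- but that independence is precisely what a competing solution is \emph{not} assumed to have. The theorem, as realized in Corollary~\ref{c:Wellposedness}, asserts uniqueness of the $\boldnu$-substationary special standard process solving the martingale problem for the product generator $(\bmssL^\mbfs,\Cyl{}{})$, with no a priori product structure on the candidate; this requires essential self-adjointness of $(\bmssL^\mbfs,\Cyl{}{})$ on $L^2(\boldnu)$ (Theorem~\ref{t:BendikovSaloffCoste}\ref{i:t:BendikovSaloffCoste:2}--\ref{i:t:BendikovSaloffCoste:6}), which in turn hinges on the absolute continuity above and on the invariance $\bmssH_t\Cyl{}{}\subset\Cyl{}{}$. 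Your argument establishes uniqueness only within the class of solutions that come pre-packaged as independent time-changed copies, which makes the claim near-tautological and decouples it from the Dirichlet-form/SPDE uses made of Theorem~\ref{t:Intro1} downstream. Finally, well-posedness for \emph{every} (not merely $\bmssE^\mbfs$-quasi-every) initial condition rests on the joint space-time continuity of the product kernel from Corollary~\ref{c:BSCFeller}, which a bare Kolmogorov construction does not produce.
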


\begin{figure}[htb!]
\includegraphics[scale=.5, trim={0 70 0 100}, clip]{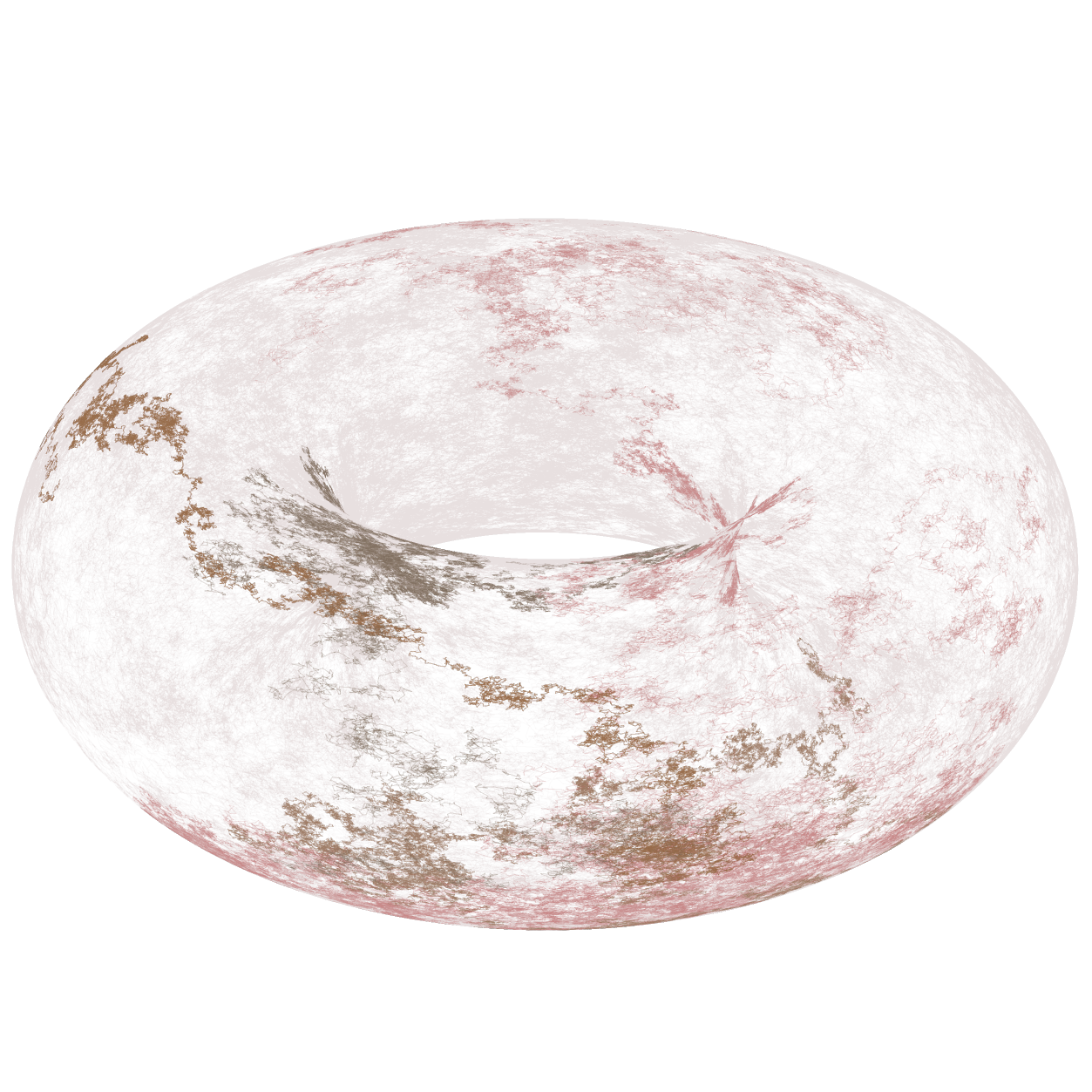}
\caption{The trajectory up to time~$1$ of a free massive system consisting of five independent massive Brownian particles on a two-dimensional torus.
The trajectory of each particle is represented by a different color, with saturation proportional to the mass carried by the particle.
As the lightest particle has already explored the whole space, the heavier particles remain more localized due to lower diffusivity.}
\end{figure}

\subsubsection{Main difficulties I: singularity of product densities}
While the case~$N<\infty$ is rather straightforward, the claim in Theorem~\ref{t:Intro1} in the case~$N=\infty$ becomes more nuanced.

On the one hand, the system's finite size plays a crucial role.
Even if the transition kernel of~$\mssW$ has a continuous bounded density with respect to the invariant measure~$\nu$, this property does not necessarily extend to the transition kernel~$\bmssh_\bullet$ of a solution to~\eqref{eq:Free} relative to the product measure~$\nu^\otym{\infty}$.
Indeed, if~$\sum_i^\infty s_i=\infty$, then~$\bmssh_\bullet$ may be singular with respect to~$\nu^\otym{\infty}$.
This phenomenon is evident even in simple cases such as ~$M=\mbbS^1$, as demonstrated by Ch.~Berg in~\cite{Ber76} in light of S.~Kakutani's  characterization~\cite{Kak48} of absolute continuity for infinite-product measures.
Notably, if~$\bmssh_\bullet$ is not absolutely continuous with respect to~$\nu^\otym{\infty}$, neither the $L^2$-theory, nor many results in the Feller theory apply.
In the first case this is due to the lack of a natural invariant measure; in the second case since the density of~$\bmssh_\bullet$ (even when it exists) may fail to be continuous.

On the other hand, given that the system has finite size, the sequence~$\seq{s_i}_{i}$ is summable, and thus vanishing.
Consequently, for~$N=\infty$, particles experience arbitrarily large noise strength~$s_i^{-1}$, leading to further complexities in the system’s stochastic behavior such as the absence of natural space-time scaling.

\subsubsection{Main results~I: a unified setting}
In proving Theorem~\ref{t:Intro1}, we greatly expand on results by T.~Gill, by A.~Bendikov and L.~Saloff-Coste, and by S.A.~Albeverio,~A.Yu.~Daletskii, and Yu.G.~Kondratiev. 
Their respective contributions concern infinite-product strongly continuous contraction semigroups, infinite-product semigroups of heat-kernel measures, and Dirichlet forms on infinite-product spaces.
Specifically (see Thm.~\ref{t:BendikovSaloffCoste} below), we achieve the following:

\begin{enumerate}[$(1)$, leftmargin=1.75em]
\item\label{i:BSCADK:1} \emph{Extending absolute-continuity results.} We generalize the proof by A.~Bend\-ikov and L.~Saloff-Coste~\cite{BenSaC97} of the absolute continuity of infinite-product heat-kernel measures with respect to infinite-product probability reference measures. Their original work was restricted to compact spaces; our extension applies to more general non-compact spaces.

\item\label{i:BSCADK:2} \emph{Generalizing Dirichlet-form constructions.} We extend the canonical Dirichlet-form construction on infinite products of compact Riemannian manifolds, originally developed by S.A.~Albeverio, A.Yu.~Daletskii, and Yu.G.~Kondratiev~\cite{AlbDalKon97,AlbDalKon00}, to non-local, non-smooth, and non-compact settings.

\item\label{i:BSCADK:3} \emph{Identifying correspondences between semigroups and forms.} We establish that the semigroup of infinite-product heat-kernel measures from~\ref{i:BSCADK:1} corresponds precisely to the Dirichlet form constructed in~\ref{i:BSCADK:2}, and that the latter is in turn isomorphic to an infinite-product Dirichlet form in the sense of~\cite[\S{V.2.2}]{BouHir91}.

\item \emph{Characterizing the semigroup kernel.} We further show that the infinite-product heat-kernel measure in~\ref{i:BSCADK:1} is the integral kernel of the infinite-tensor-product semigroup on a specific von Neumann-incomplete infinite tensor product of the corresponding $L^2$-spaces, as constructed by T.~Gill in~\cite{Gil78, Gil74}.
\end{enumerate}

For a visual overview of our main results, see Figure~\ref{fig:Diagram}.

\subsection{The measure representation}\label{ss:IntroMeasRep}
Each massive system can be associated with an \emph{empirical measure}, defined by the map
\begin{equation}\label{eq:IntroMeasureRep}
\EM \colon \tseq{(s_i,X^i_t)}_{i\leq N} \longmapsto \mu_t\eqdef \sum_i^N s_i\, \delta_{X^i_t} \comma \qquad t>0\fstop
\end{equation}

The process~$\mu_\bullet$ is then a stochastic process taking values in the space~$\msP$ of all Borel probability measures on the ambient space~$M$.
We emphasize that, generally,~$\mu_t$ does \emph{not} display the usual properties of empirical measures of particle systems. 
Typically, the number~$N$ of particles is infinite and, for each~$t>0$, the positions~$\tseq{X^i_t}_i$ form a \emph{dense} subset of the ambient space.
In particular,~$\mu_t$ cannot be modelled as a configuration in the sense of, e.g.,~\cite{AlbKonRoe98}, and the topological support of~$\mu_t$ satisfies~$\supp \mu_t= M$ for every~$t>0$.

To describe further properties of~$\mu_\bullet$, we make a second assumption:
\begin{assumption}[Non-intersecting trajectories]\label{ass:QPPIntro}
The trajectories of two particles never meet a.s.
\end{assumption}

We will later give a sufficient condition for this assumption to hold, expressed in terms of the $\mssW_\bullet$-capacity of the diagonal of~$M$; see Assumption~\ref{ass:QPP}.
For now, note that this holds, for example, when~$\mssW_\bullet$ is the (drifted) Brownian motion on any weighted Riemannian manifold of dimension~$d\geq 2$ ---but \emph{not} when~$d=1$.
For more discussion, see also~\S\ref{sss:IntroZeroRange}.

\subsubsection{Main results~II: form, generator, ergodic decomposition}
Since the processes~$\mssW_\bullet^i$ are independent and $\nu$-reversible, and since the marks~$s_i$ remain constant in time, the process~$\mu_\bullet$ has an invariant measure~$\mcQ_{\pi,\nu}$ on~$\msP$, defined as the image of the product measure~$\pi\otimes \nu^\otym{N}$ under~$\EM$.
When particle trajectories do not meet, $\mu_\bullet$~retains the Markov property from the corresponding massive system, in turn inherited from that of~$\mssW_\bullet$.
Accordingly, we may study~$\mu_\bullet$ via the corresponding Dirichlet form on~$L^2(\mcQ_{\pi,\nu})$.

\begin{theorem}[See~\S\ref{s:FISMeas}]\label{t:IntroMeasRep}
Under Assumptions~\ref{ass:IntroSetting} and~\ref{ass:QPPIntro}, the process~$\mu_\bullet$ is
\begin{itemize}[leftmargin=1.5em]
\item properly associated with a quasi-regular recurrent (in particular: conservative) symmetric Dirichlet form on~$L^2(\mcQ_{\pi,\nu})$;
\item the solution ---unique in law--- to the martingale problem for the form's generator;
\item a diffusion (i.e., with a.s.\ continuous sample paths) if and only if so is~$\mssW_\bullet$;
\item ergodic if and only if~$\pi$ is concentrated on a singleton in the $N$-simplex.
Otherwise, Borel $\mu_\bullet$-invariant sets (up to $\mcQ_{\pi,\nu}$-equivalence) are in one-to-one correspondence with Borel subsets of the $N$-simplex (up to $\pi$-equivalence).
\end{itemize}
Furthermore, the form, its generator, and the predictable quadratic variation for the martingale problem are all computed explicitly on an explicit core.
\end{theorem}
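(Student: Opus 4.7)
The strategy is to construct an explicit Dirichlet form on the product model, push it forward along the empirical-measure map $\EM$, and then read off all the claimed properties at the level of $\msP$. Concretely, for each mass vector $\boldsymbol{s}$ in the ordered simplex, the $i$-th particle's law is that of $\mssW_\bullet$ time-changed by the factor $1/s_i$, hence is $\nu$-symmetric with Dirichlet form $s_i^{-1}\Esf{}$ in the $i$-th coordinate. Using the infinite-product constructions summarized in Theorem~\ref{t:BendikovSaloffCoste} (items~\ref{i:BSCADK:1}--\ref{i:BSCADK:3} of \S\ref{ss:FreeMassiveSystems}), I would assemble
\[
\bmssE_{\boldsymbol{s}}(F,F)=\sum_i s_i^{-1}\,\Esf{}^{(i)}(F,F)
\]
into a quasi-regular symmetric form on $L^2(\nu^\otym{N})$ for $\pi$-a.e.\ $\boldsymbol{s}$, and then integrate against~$\pi$ to obtain a $\pi\otimes\nu^\otym{N}$-symmetric quasi-regular Dirichlet form $\bmssE_\pi$ on $\Delta\tym{}M^{N}$, whose associated process is precisely the free massive system of~\S\ref{ss:FreeMassiveSystems}.

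The pushforward under $\EM$ is the decisive step. Assumption~\ref{ass:QPPIntro} means that $\EM$ is a.s.\ injective (up to the $\pi$-preserving action on labels), so one can unambiguously define, on cylinder functions $F(\mu)=f(\mu\phi_1,\ldots,\mu\phi_k)$,
\[
\mcE(F,F)\defeq\bmssE_\pi(F\circ\EM,F\circ\EM),
\]
and by the chain rule for cylinder functions the integrand simplifies to $\sum_{k,l}\partial_k f\,\partial_l f\,\int\mssGamma(\phi_k,\phi_l)\diff\mu$, which depends only on $\mu$ and not on the product representation. I would take this algebra of cylinder functions as the candidate core, use closability of $\bmssE_\pi$ plus invariance of the cylinder algebra under pullback to show $\mcE$ is closable, and transport quasi-regularity from $\bmssE_\pi$ via $\EM$ (one constructs $\mcE$-nests of compact sets as images of $\bmssE_\pi$-nests). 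Identifying the associated Hunt process with $\mu_\bullet=\EM(\mathbf{s},X_\bullet)$ follows because $F\circ\EM$ lies in the domain of $\bmssE_\pi$ and Fukushima's decomposition pulled back by $\EM$ yields a solution of the martingale problem for the generator $\mssL_\mcE$ computed above.

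For uniqueness in law I would establish essential self-adjointness of $\mssL_\mcE$ on the cylinder core, exploiting the fact that $\bmssE_\pi$ is itself essentially self-adjoint on a product core (by Nelson-type arguments applied coordinatewise and closed under tensor products, together with the ultracontractivity-type regularity inherited from~$\mssW_\bullet$), and transferring this through $\EM$. Essential self-adjointness then promotes well-posedness of the martingale problem from existence to uniqueness in law, as advertised in the preface. The predictable quadratic variation is read off from $\mssGamma$ on cylinders. The diffusion characterization is immediate: continuity of $\mssW_\bullet$-paths yields continuity of each $X^i_\bullet$, and the weak-$*$ topology on $\msP$ is controlled by $\sum_i s_i<\infty$; conversely, a jump of any $X^i_\bullet$ produces a weak-$*$ jump of $\mu_\bullet$ since the particles have a.s.\ distinct positions by Assumption~\ref{ass:QPPIntro}.

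Finally, the ergodic decomposition is forced by the conservation of masses: the map $\boldsymbol{s}\colon\mu\mapsto(s_i)$ extracting the (ordered) atomic weights is $\mu_\bullet$-invariant, so every Borel subset of the simplex pulls back to an $\mcE$-invariant set. Conversely, conditional on $\boldsymbol{s}$ the driving product process is ergodic by Assumption~\ref{ass:IntroSetting} (irreducibility of $\mssW_\bullet$ plus independence of coordinates), hence any $\mu_\bullet$-invariant set is measurable with respect to $\boldsymbol{s}$ up to $\mcQ_{\pi,\nu}$-null sets, yielding the claimed bijection. The main obstacle is the pushforward/essential self-adjointness pair: transporting quasi-regularity and proving a dense range for $(\lambda-\mssL_\mcE)$ on the cylinder core in the presence of the degenerate weights $s_i^{-1}\to\infty$ as $i\to\infty$, for which the uniform control afforded by items~\ref{i:BSCADK:1}--\ref{i:BSCADK:3} is essential.
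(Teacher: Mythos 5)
Your architecture — assemble the conformally rescaled product form $\bmssE_\mbfs$, integrate against $\pi$ to get $\widehat\bmssE_\pi$ on $\widehat\mbfM$, push forward via $\EM$, transfer quasi-regularity and essential self-adjointness — is exactly the paper's, and the ergodic-decomposition argument via the invariant mass projection is also correct. However, there is a genuine gap in the choice of core, and it is not a minor omission but the central technical point of the construction.

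You take as your explicit core the standard cylinder functions $u=F\circ\mbff^\trid$ with $u(\mu)=F(\mu\phi_1,\dotsc,\mu\phi_k)$, i.e.\ the algebra $\FC{\msA}{}$, and claim this is the algebra on which the generator and quadratic variation are computed. This fails: $\FC{\msA}{}$ is \emph{not} contained in the domain of the generator. Applying the operator~\eqref{eq:l:DiffusionP:0.2} to $u=\phi^\trid$ with $\phi\in\msA$ gives (after unwinding the notation)
\[
\widehat\mcL\, \phi^\trid(\mu)\;=\;\int_M \frac{(\mssL\phi)(x)}{\mu\!\set{x}}\,\diff\mu(x)\;=\;\sum_{x\in\mu}(\mssL\phi)(x)\,,
\]
which is a divergent series as soon as $\mu$ has infinitely many atoms and $\phi$ is not $\mssL$-harmonic — which is the $\mcQ_\pi$-a.e.\ situation when $\pi$ is concentrated on $\mbfT_\circ$. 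The role of the extended cylinder functions $(\molli\otimes f)^\trid(\mu)=\int\molli(\mu\!\set{x})f(x)\,\diff\mu(x)$ with $\molli$ supported in $(\eps,1]$ (the algebra $\hCylP{}{0}$, Definition~\ref{d:CylinderF}) is precisely to cut off the contribution of atoms with small mass, making the drift a \emph{finite} sum, as in~\eqref{eq:FundFuncTrid}. Without this cutoff, the very statement of the martingale problem — and hence the uniqueness claim — is ill-posed on your core. Moreover, your Nelson-type essential self-adjointness step transfers to $\EM_*(\mbbV_\pi\otimes\Cyl{}{})$, and it is the algebra $\hCylP{}{0}$ (not $\FC{\msA}{}$) that, on the product side, contains $\Cyl{}{}$ fiberwise over $\mbfT_\circ$, which is what makes the transfer go through (Prop.~\ref{p:PDirIntESA}). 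The identity between the closure of $(\widehat\mcE_\pi,\FC{\msA}{})$ and the closure of $(\widehat\mcE_\pi,\hCylP{}{0})$ does hold — but it is a nontrivial theorem requiring capacity estimates (Prop.~\ref{p:StandardCyl}\ref{i:p:StandardCyl:4}, Lemma~\ref{l:CoincidenceDomains}), not a consequence of the chain rule.

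Two further, smaller issues. First, your quasi-regularity transfer requires not merely that $\EM$ be a.s.\ injective, but that the off-diagonal complement $\widehat\mbfM\setminus\widehat\mbfM_\circ$ be $\widehat\bmssE_\pi$-\emph{polar} (which is strictly stronger than $\widehat\boldnu_\pi$-null) so that $\EM$ is a quasi-homeomorphism; this is exactly what Assumption~\eqref{eq:QPP} plus the explicit off-diagonal nests $\mbfF_n$ of~\eqref{eq:Nest} provide. Second, your claim that a jump of any $X^i_\bullet$ produces a weak-$*$ jump of $\mu_\bullet$ is false: when $s_i$ is small, a jump of $X^i$ of any magnitude moves $\mu_\bullet$ by only $O(s_i)$ in the narrow (weak-$*$) topology $\T_\mrmn$. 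The diffusion/jump dichotomy holds and is detected in the \emph{weak atomic topology} $\T_\mrma$ of~\eqref{eq:WATrho}, which is precisely why the process lives on $(\msP^\pa_\iso,\T_\mrma)$ rather than $(\msP,\T_\mrmn)$.
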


As part of Theorem~\ref{t:IntroMeasRep}, we identify the topology on~$\msP$ natural for the construction of the measure representation~$\mu_\bullet$.
This is the \emph{weak atomic topology}~$\T_\mrma$ introduced by S.N.~Ethier and~T.G.~Kurtz in~\cite{EthKur94}, a Polish topology on~$\msP$ finer than the usual narrow topology~$\T_\mrmn$.

The identification is relevant when~$\mssW_\bullet$ is additionally a diffusion, in which case~$\mu_\bullet$ has a.s.~$\T_\mrma$-continuous sample paths.
Also, it will be relevant in the discussion of a stochastic partial differential equation (\textsc{spde}) solved by~$\mu_\bullet$, namely the Dean--Kawasaki \textsc{spde} below.

\subsubsection{Wasserstein geometry}
Let~$(M,g)$ be a complete Riemannian manifold with intrinsic distance~$\mssd_g$. 
We denote by~$(\msP_2,W_2)$ the \emph{$L^2$-Kantorovich--Rubinstein} (also: \emph{Wasserstein}) \emph{space} over~$(M,\mssd_g)$, see~\eqref{eq:WassersteinSp}.
Since the foundational work of Y.~Brenier, R.J.~McCann, F.~Otto~\cite{Bre91,BenBre00,McC97,Ott01,McC01}, and others, suitable infinite-dimensional analogues on~$\msP_2$ of standard geometric objects have been introduced, including: F.~Otto's \emph{gradient}~$\boldnabla$ and \emph{tangent space}~$T_\eta\msP_2$ to~$\msP_2$ at a measure~$\eta$~\cite{Ott01}, J.~Lott's \emph{connection}~\cite{Lot07}, N.~Gigli's \emph{exponential map}~\cite{Gig11}, etc.
Indeed,~$\msP_2$ is often seen as carrying a formal `infinite-dimensional Riemannian structure', inherited from that of~$(M,g)$, with~$W_2$ as intrinsic distance.
Further merits and limitations of this analogy were eventually pointed out by Gigli in~\cite{Gig12}, including the potential lack of a natural Laplacian --- a gap which we later addressed here by identifying a generator via Dirichlet-form theory.

While \emph{differential}-geometric aspects of~$\msP_2$ are relatively well-studied, \emph{Riemannian} aspects remain less understood.
In particular, the quest is ongoing for a truly natural volume measure on~$\msP_2$ compatible with the Riemannian structure.
Some proposed candidates include:
M.-K.\ von Renesse and K.-T.\ Sturm's \emph{entropic measure}~\cite{vReStu09} on~$\msP_2(\mbbS^1)$,
a construction eventually extended by Sturm~\cite{Stu11,Stu24} on~$\msP_2$ over a closed Riemannian manifold;
the \emph{Dirichlet--Ferguson measure}~\cite{Fer73} on~$\msP_2$, proposed by the author in~\cite{LzDS17+};
P.~Ren and F.-Y.~Wang's \emph{Ornstein--Uhlenbeck} measure~\cite{RenWan24} on~$\msP_2(\R^d)$;
and many others, especially on~$\msP_2(\mbbS^1)$.

From a purely metric-geometric point of view, uniqueness results for such a measure ---if any--- ought not to be expected.
Indeed, M.~Fornasier, G.~Savar\'e, and G.E.~Sodini have recently shown in~\cite{ForSavSod22} that~$\msP_2$ is \emph{universally infinitesimally Hilbertian}: every reference measure~$\mcQ$ on~$\msP_2$ gives rise to an infinitesimally Hilbertian metric measure space in the sense of~\cite{Gig13}, i.e.\ the natural \emph{Cheeger energy} functional
\begin{equation}
\Ch_{W_2,\mcQ}(f)\eqdef \inf \set{\liminf_{n\to\infty} \int \slo{f_n}^2 \diff\mcQ : \begin{aligned} f_n \in \Lip(\msP_2,W_2)\comma \\ \lim_{n\to\infty}\norm{f_n-f}_{L^2(\mcQ)}=0\end{aligned}}
\end{equation}
induced by~$W_2$ and~$\mcQ$ is \emph{quadratic}.
Furthermore, even imposing strong relations between the geometric structure and the measure does not restrict the focus.
Indeed, as pointed out by the author in~\cite{LzDS20}, there exist mutually singular measures~$\mcQ$ with full support on~$\msP_2(M)$ all satisfying the \emph{Rademacher Theorem} ($\mcQ$-a.e.\ \emph{Fr\'echet} differentiability of $W_2$-Lipschitz functions).

Nonetheless, once a reference measure~$\mcQ$ is fixed, the geometric construct of~$\msP_2$ induces a canonical pre-Dirichlet energy functional
\begin{equation}\label{eq:IntroCheeger}
\mcE_\mcQ= \int \tscalar{(\boldnabla \emparg)_\eta}{(\boldnabla \emparg)_\eta}_{T_\eta\msP_2}^2 \diff\mcQ(\eta) \fstop
\end{equation}
Provided the latter is closable on a sufficiently large class of `smooth functions', its closure is a quasi-regular strongly local Dirichlet form, and it is further identical to~$\Ch_{W_2,\mcQ}$ by the abstract results in~\cite{ForSavSod22}.
By quasi-regularity,~$\mcE_\mcQ$ is properly associated with a Markov diffusion, namely the \emph{$\mcQ$-reversible Brownian motion} on~$\msP_2$.
Establishing the closability of~$\mcE_\mcQ$ is usually quite challenging and has been proved ---on a case by case basis--- for the entropic measure on~$\msP_2(\mbbS^1)$, giving rise to the \emph{Wasserstein diffusion}~\cite{vReStu09}; for the Dirichlet--Ferguson measure, giving rise to the \emph{Dirichlet--Ferguson diffusion}~\cite{LzDS17+}; for the Ornstein--Uhlenbeck process~\cite{RenWan24} on~$\msP_2(\R^d)$;
for a class of measures satisfying strict quasi-invariance assumptions with respect to the natural action on~$\msP_2$ of the diffeomorphism group of~$M$ in~\cite{LzDS19b}.

\subsubsection{Free massive systems as Wasserstein Brownian motions}\label{sss:WBM}
To see how free massive systems enter this picture, let us specialize Theorem~\ref{t:IntroMeasRep} to a case of particular interest.
All objects in the next assumption are supposed to be smooth.

\begin{assumption}[Assumption~\ref{ass:WRM}]\label{ass:IntroManifold}
The ambient space~$(M,g,\nu)$ is a complete weighted Riemannian manifold with Riemannian metric~$g$, of dimension~$d\geq 2$.
Precisely,~$\nu=\rho\vol_g$ for some nowhere-vanishing probability density~$\rho>0$, and~$\mssW$ is the $\nu$-reversible {Brownian motion with drift}, i.e.\ the stochastic process with generator the {drifted Laplace--Beltrami operator}~$\Delta_g+(\diff\log\rho)\circ\nabla_g$ on~$L^2(\nu)$.
Finally, assume that~$\rho$ is so that~$\mssW$ satisfies Assumption~\ref{ass:IntroSetting}. 
\end{assumption}

As an instance, let us note that Assumption~\ref{ass:IntroManifold} is satisfied, for any smooth~$\rho>0$, whenever the manifold~$(M,g)$ is closed.
(For sufficient conditions in the case of non-compact manifolds see the discussion after Assumption~\ref{ass:WRM}.)

In the next result, for a function~$f\colon M\to\R$ and for the generator~$\mssL$ of~$\mssW$, we write~$\mssL^z\restr{z=x}f(z)$ to indicate that~$\mssL$ is acting on~$f$ in the variable~$z$ at the point~$x\in M$.

\begin{theorem}[\S\ref{sss:Wasserstein}]\label{t:IntroCheeger}
In the setting of Assumption~\ref{ass:IntroManifold}, suppose further that~$\nu\in\msP_2$.
Then,
\begin{enumerate}[$(i)$]
\item the measure representation~$\mu_\bullet$ is properly associated with the form~$\mcE_\mcQ$ in~\eqref{eq:IntroCheeger} for~$\mcQ=\mcQ_{\pi,\nu}$;
\item its generator is the Friedrichs extension of the operator\footnote{Heuristically, this operator captures the infinitesimal motion of mass in~$\eta$ obtained by moving its atom at~$x$ according to~$\mssW$, weighted by local mass proportions.}
\begin{equation}\label{eq:GeneratorIntro}
(\mcL u)(\eta)\eqdef \int_M \frac{\mssL^z\restr{z=x} u(\eta+\eta\!\set{x}\delta_z-\eta\!\set{x}\delta_x)}{(\eta\!\set{x})^2} \diff\eta(x)
\end{equation}
on the algebra of extended cylinder functions~\eqref{eq:HCylinderFEps}.
\item the form~$\mcE_\mcQ$ coincides with the Cheeger energy~$\Ch_{W_2,\mcQ}$;
\item the Rademacher Theorem holds for~$\mcE_\mcQ$ on~$(\msP_2,W_2)$, that is: every $W_2$-Lipschitz function~$u\colon\msP_2\to\R$ is an element of the form domain, is Fr\'echet differentiable $\mcQ$-a.e., and satisfies
\[
\norm{\boldnabla u}_{T_\eta\msP_2} \leq \Lip_{W_2}[u] \fstop
\]
\item the semigroup~$\mcH_\bullet$ of~$\mcE_\mcQ$ satisfies the following one-sided integral Varadhan short-time asymptotic estimate: for every pair of Borel sets~$A_1,A_2\subset\msP_2$ with~$\mcQ A_1, \mcQ A_2>0$,
\[
-2\, \lim_{t\downarrow 0} t \log \scalar{\car_{A_1}}{\mcH_t \car_{A_2}}_{L^2(\mcQ)} \ \geq\ \mcQ\text{-}\essinf_{\eta_i\in A_i} W_2(\eta_1,\eta_2)^2 \comma
\]
while the opposite inequality fails as soon as~$\pi$ is not concentrated on a singleton.
\end{enumerate}
\end{theorem}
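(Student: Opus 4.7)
Parts~$(i)$ and~$(ii)$: I would transfer the Dirichlet form for the particle system from Theorem~\ref{t:IntroMeasRep} to $L^2(\mcQ_{\pi,\nu})$ through the empirical-measure map $\EM$ of~\eqref{eq:IntroMeasureRep}, and then identify its action on cylinder functions with $\mcE_\mcQ$. The key calculation is that if $u(\eta)=U(\eta\phi_1,\dots,\eta\phi_k)$ is a cylinder function on $\msP$ with lift $\tilde u$ to $M^\N$, then $\nabla_{X^i}\tilde u = s_i\,(\boldnabla u)(X^i)$, where $(\boldnabla u)_\eta = \sum_j(\partial_j U)\,\nabla\phi_j$ is Otto's Wasserstein gradient. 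The particle form weighs the $i$th coordinate by $s_i^{-1}$ (time-change $t\mapsto t/s_i$ in~\eqref{eq:Free}), so $\sum_i s_i^{-1}\abs{s_i(\boldnabla u)(X^i)}_g^2=\int_M\abs{\boldnabla u}_g^2\diff\eta$, and integration against $\mcQ_{\pi,\nu}$ yields $\mcE_\mcQ(u)$. An analogous rewriting of $\sum_i s_i^{-1}\mssL^{X^i}\tilde u$, using $s_i=\eta\{X^i\}$ and $\int_M f\diff\eta=\sum_i s_i f(X^i)$, produces formula~\eqref{eq:GeneratorIntro}; symmetry of $\mssL$ and the tensor-product structure ensure essential self-adjointness on the cylinder core, identifying the form's generator with the Friedrichs extension of $\mcL$.

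For~$(iii)$, I would invoke the universal infinitesimal Hilbertianity of $(\msP_2,W_2,\mcQ)$ from~\cite{ForSavSod22}, by which $\Ch_{W_2,\mcQ}$ is a quadratic Dirichlet form on $L^2(\mcQ)$. On the cylinder algebra, $\abs{\boldnabla u}_g$ equals the asymptotic Lipschitz constant $\slo{u}$ of $u$ at $\mcQ$-a.e.~$\eta$, so $\mcE_\mcQ$ and $\Ch_{W_2,\mcQ}$ coincide on this $L^2(\mcQ)$-dense core; $L^2$-closedness of both forms then promotes this to identity of domains and forms.

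Part~$(iv)$ is then a consequence of~$(iii)$: any $W_2$-Lipschitz function $u$ has metric slope $\slo{u}\le\Lip_{W_2}[u]$ pointwise, so it lies in $\msD(\Ch_{W_2,\mcQ})$ with minimal relaxed gradient ---hence $\norm{\boldnabla u}_{T_\eta\msP_2}$--- bounded by $\Lip_{W_2}[u]$. Fréchet differentiability along $T_\eta\msP_2$ at $\mcQ$-a.e.~$\eta$ I would deduce from the purely atomic structure of $\mcQ$-a.e.~$\eta=\sum_i s_i\delta_{X^i}$: such an $\eta$ admits a $W_2$-bi-Lipschitz embedding of a neighborhood into $M^\N$, reducing the differentiability statement to a weighted classical Rademacher in the atoms, along the lines of~\cite{LzDS20}.

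For~$(v)$, the lower bound follows from Hino--Ram\'irez--Sturm-type short-time Varadhan estimates for strongly local quasi-regular Dirichlet forms, giving $-2t\log\scalar{\car_{A_1}}{\mcH_t\car_{A_2}}_{L^2(\mcQ)}\to\mssd_{\mcE_\mcQ}^2$ in the essinf sense; the intrinsic-distance bound $\mssd_{\mcE_\mcQ}\ge W_2$ is immediate from~$(iv)$ since every $W_2$-$1$-Lipschitz function lies in the form domain with $\norm{\boldnabla u}\le 1$. The main obstacle is the failure of the reverse inequality when $\pi$ is not a Dirac. I would derive it from the ergodic decomposition in Theorem~\ref{t:IntroMeasRep}: the marks $\{s_i\}_i$ are invariants of $\mu_\bullet$, so $\msP$ splits into disjoint $\mcH_t$-invariant strata indexed by $\pi$-a.e.~simplex points, and $\scalar{\car_{A_1}}{\mcH_t\car_{A_2}}_{L^2(\mcQ)}\equiv 0$ whenever $A_1,A_2$ lie in distinct strata. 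The hardest step is engineering such $A_1,A_2$ with arbitrarily small $W_2$-essinf: distinct ordered mass sequences can produce measures that are $W_2$-close yet sit in sharply separated strata for the Ethier--Kurtz weak atomic topology $\T_\mrma$, and exploiting this mismatch between the two topologies is exactly what forces the LHS to be identically $+\infty$ while the RHS is arbitrarily small.
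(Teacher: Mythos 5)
Your overall route is the paper's: transfer via $\EM$ to $L^2(\mcQ_{\pi,\nu})$, compute the generator and carr\'e du champ on cylinder functions via the weighted chain-rule identity $\nabla_{X^i}\tilde u=s_i(\boldnabla u)(X^i)$, appeal to~\cite{ForSavSod22} for the Cheeger identification, invoke Hino--Ram\'irez for the Varadhan lower bound, and use the ergodic decomposition for the failure of the reverse inequality. However, two steps are treated as automatic when they are not. For~$(ii)$, \emph{``symmetry of $\mssL$ and the tensor-product structure ensure essential self-adjointness on the cylinder core''} glosses over a genuine obstruction: essential self-adjointness of an infinite tensor-product generator is not a formal consequence of the tensor structure. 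The paper earns it in two stages --- Reed--Simon's invariant-domain criterion~\cite[Thm.~X.49]{ReeSim75}, applicable only because of the $\mssH_\bullet$-invariance of~$\msA$ (Assumption~\ref{ass:Setting}\ref{i:ass:Setting:c3}), gives Theorem~\ref{t:BendikovSaloffCoste}\ref{i:t:BendikovSaloffCoste:2} at fixed~$\mbfs$; a direct-integral essential self-adjointness lemma (Lemma~\ref{l:DirIntESA}, Proposition~\ref{p:PDirIntESA}) then averages over~$\mbfs$. A proof must account for both.

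For~$(iv)$, the claim that a purely atomic~$\eta$ \emph{``admits a $W_2$-bi-Lipschitz embedding of a neighborhood into $M^\N$''} is false as stated: every $W_2$-ball around~$\eta$ contains atomless measures (e.g.\ small smoothings of~$\eta$), which have no $\EM$-preimage, so no such embedding of a $W_2$-neighborhood exists. What is true is that the slice of purely atomic measures sharing $\eta$'s ordered mass sequence is locally $W_2$-isometric to a weighted product of copies of~$M$, and that $T_\eta\msP_2$ for such~$\eta$ is exhausted by atom-preserving perturbations; but promoting directional to genuine Fr\'echet differentiability, and establishing an infinite-dimensional weighted Rademacher on that slice, is exactly what~\cite[Prop.~6.19, Lem.~A.21, Prop.~A.22]{LzDS17+} supply, and what the paper observes carries over verbatim to general~$\mcQ_\pi$. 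Finally, your construction in~$(v)$ of $A_1,A_2$ with arbitrarily small $W_2$-essinf is more than needed: a single non-trivial invariant set~$A$ with $A_1=A$, $A_2=A^\complement$ already makes the left side~$+\infty$, and the right side is automatically finite since $W_2$ takes finite values on~$\msP_2$.
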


In other words,
\begin{quotation}
the measure representation~$\mu_\bullet$ of any free massive particle system is the Brownian motion for both the geometric and the metric measure structure of~$(\msP_2,W_2,\mcQ_{\pi,\nu})$.
\end{quotation}

Furthermore, Theorem~\ref{t:IntroCheeger} shows ---at once for the reversible measure of every free massive system--- that the form~\eqref{eq:IntroCheeger} is closable, and provides the much sought after explicit expression~\eqref{eq:GeneratorIntro} for the Laplacian of~$(\msP_2,W_2,\mcQ_{\pi,\nu})$, its generator.

In particular, 
\begin{itemize}
\item (see Ex.~\ref{ex:DFDiffusion}) choosing~$\pi= \Pi_\beta$ the \emph{Poisson--Dirichlet distribution} on the infinite simplex (e.g.~\cite{DonGri93}), the measure~$\mcQ_{\pi,\nu}$ is the \emph{Dirichlet--Ferguson measure}~\cite{Fer73} with intensity measure~$\nu$. 
When~$(M,g)$ is closed (i.e.\ compact and boundaryless) and~$\mssW_\bullet$ is its standard Brownian motion,~$\mu_\bullet$ is the \emph{Dirichlet--Ferguson diffusion}~\cite{LzDS17+}, i.e.\ the measure representation of a massive particle system of infinitely many Brownian particles.
That is, Theorem~\ref{t:IntroCheeger} extends the results in~\cite{LzDS17+} from the case of closed manifolds to that of complete weighted manifolds with bounded heat kernel densities;

\item (see Ex.~\ref{ex:OU}) choosing~$M=\R^d$ and~$\rho$ the standard Gaussian density, this proves the structure of the Laplacian of~$\msP_2(\R^d)$ conjectured by Ren and Wang in~\cite{RenWan24}, and provides an explicit class of cylinder functions on which this Laplacian is essentially $L^2(\mcQ_{\pi,\nu})$-selfadjoint.
\end{itemize}

Finally, Theorem~\ref{t:IntroCheeger} greatly expands the class of measures~$\mcQ$ satisfying the Rademacher theorem on~$(\msP_2,W_2)$, extending the results of~\cite{LzDS19b} to all measures of the form~$\mcQ_{\pi,\nu}$.
These measures are only \emph{partially} quasi-invariant (Dfn.~\ref{d:PQI}) with respect to the natural action on~$\msP_2$ of the group of diffeomorphisms of~$M$, (as opposed to: quasi-invariant with Radon--Nikod\'ym derivative satisfying~\cite[Ass.~2.7]{LzDS19b}) and do not generally satisfy the integration-by-parts formula~\cite[Eqn.~(2.7)]{LzDS19b} for~$\boldnabla$ on standard cylinder functions.

\subsubsection{Main difficulties~II: identification of the process}
Dirichlet forms as in~\eqref{eq:IntroCheeger} have been studied for a relatively wide range of reference measures~$\mcQ$.
While being the starting point of the theory, the \emph{closability} of these forms is \emph{not} the main technical hurdle, and has been achieved both by \emph{ad hoc} and by standard techniques for different choices of~$\mcQ$, or entirely bypassed by considering lower semicontinuous relaxations ---in particular: Cheeger energies--- which are automatically closed.
Analogously, \emph{regularity} and thus the \emph{existence} of a Markov process properly associated with the form, is a standard consequence of the Stone--Weierstra\ss\ Theorem ---at least when~$M$, therefore~$\msP$, is compact.

The main difficulty in the analysis is rather the \emph{identification} of the Hunt process in question, which would necessarily go through the identification of the form's generator and strongly continuous contraction semigroup.
Especially for the latter, no explicit expression is usually available, in the least when a Cheeger energy is considered.
As a consequence, the understanding of the process is usually very limited in all the aforementioned examples.
In sharp contrast with the literature, for the choice(s) of~$\mcQ$ considered here we are able to characterize the Markov process associated to the Dirichlet forms~\eqref{eq:IntroCheeger} \emph{to the fullest possible extent}.

\subsection{The Dean--Kawasaki equation}\label{ss:IntroDK}
Let~$\mbbT^d$ be the standard $d$-dimensional torus and denote by~$\msD'$ the space of distributions on~$\mbbT^d$.
For a potential~$F\colon\msD'\to\R$, we call \emph{Dean--Kawasaki equation with $F$-interaction on~$\mbbT^d$} the \textsc{spde}
\begin{equation}\label{eq:DK0}
\partial_t\rho = T\,\Delta\rho +\nabla\cdot \paren{\rho\, \nabla \tfrac{\delta F}{\delta\rho}(\rho)} + \nabla\cdot(\sqrt{T\rho}\, \xi) \fstop
\end{equation}
Here, $T>0$ is a parameter, $\xi$ is an $\R^d$-valued space-time white noise, and the variable~$\rho$ is a time-dependent $\msD'$-valued random field.
Finally, for any sufficiently regular~$F\colon \msD'\to\R$, for a distribution~$\mu$ on~$\mbbT^d$ the `\emph{extrinsic}' \emph{derivative} $\tfrac{\delta F}{\delta \mu}$ at~$\mu$ is defined as the function
\begin{equation}\label{eq:ExtrinsicDerivative}
\frac{\delta F}{\delta\mu}(\mu)\colon x\longmapsto \diff_s\restr{s=0} F(\mu+s\delta_x)\comma \qquad x\in\mbbT^d \comma
\end{equation}
i.e., the derivative along the increment of~$\mu$ by an added point mass. When $\mu$ is a measure,
\[
\nabla \frac{\delta F}{\delta\mu}(\mu) = (\boldnabla F)_\mu
\]
is the $L^2$-Kantorovich--Rubinstein gradient (also `intrinsic' derivative) of~$F$ at~$\mu$, cf.~e.g.~\cite[\S3.1]{LzDS17+} or~\cite{RenWan21}.

As it will be clear later on, there is no reason to confine oneself to the torus, and one may as well consider the equation on ---for instance--- the Euclidean space, or a Riemannian manifold.
In this introduction, we choose~$\mbbT^d$ as the ambient space in order to remark that the noise~$\xi$ takes values in a space of sections of the tangent bundle to the ambient space.
On the one hand, the construction of the noise~$\xi$ poses no issue, since~$\mbbT^d$ is parallelizable; on the other hand, it will be important to distinguish points (in~$\mbbT^d$) from tangent vectors (in~$\R^d$).

Equation~\eqref{eq:DK0} has been independently proposed by D.S.~Dean in~\cite[Eqn.~(19)]{Dea96} and by K.~Kawasaki in~\cite[Eqn.~(2.28)]{Kaw94}.
From a physical point of view, the equation describes the density function of a system of $N\gg 1$ particles subject to a diffusive Langevin dynamics at temperature~$T$, combining a deterministic pair-potential interaction~$F$ with a noise~$\xi$ describing the particles' thermal fluctuations.
The equation is an instance of a broader class of Ginzburg--Landau stochastic phase field models:
together with its variants, it has been used to describe super-cooled liquids, colloidal suspensions, the glass-liquid transition, some bacterial patterns, and other systems; see, e.g.,~\cite{FruHay00,KimKawJacvWi14,DelOllLopBlaHer16,MarTar99} and the recent reviews~\cite{VruLoeWitt20,Ill24}.

On~$\R^d$ (respectively, on $\mbbT^d$) equations similar to~\eqref{eq:DK0} ---with a non-linear viscous term~$\Delta\Phi(\rho)$ in place of~$\Delta\rho$--- model in the continuum the \emph{fluctuating hydrodynamics} of interacting particle systems on (respectively, periodic) lattices, e.g.\ the weakly asymmetric simple exclusion process,~\cite[\S4.2]{GiaLebPre99}; see the survey~\cite{BerDeSGabJon15}.
These equations are also used to describe the hydrodynamic large deviations of simple exclusion and zero-range particle processes, e.g.~\cite{QuaRezVar99,DirStaZim16,BenKipLan95,FehGes23}.

\subsubsection{Relations with Wasserstein geometry}
From a mathematical point of view, the interest in~\eqref{eq:DK0} partly arises from the structure of its noise in connection with the geometry of~$(\msP_2,W_2)$.
Indeed, let
\[
E(\rho)\eqdef \int \rho\log\rho \diff\Leb^d
\]
be the Boltzmann--Shannon entropy functional on~$\mbbT^d$.
In the free case~$F\equiv 0$, we can rewrite~\eqref{eq:DK0} as
\begin{equation}\label{eq:IntroDKfree}
\partial_t\rho = \nabla\cdot \paren{T \rho\, \nabla\frac{\delta E}{\delta \rho}(\rho)} + \nabla\cdot\tparen{\sqrt{T\rho\,}\, \xi} \fstop
\end{equation}
In this case,~$\rho$ is an intrinsic random perturbation of the gradient flow of~$E$ on~$\msP_2$ by a noise~$\xi$ distributed according to the energy dissipated by the system, i.e.\ by the natural isotropic noise arising from the Riemannian structure of~$\msP_2$.
As the $W_2$-gradient flow of~$E$ is a solution to the heat equation, it was suggested by Sturm and von~Renesse in~\cite{vReStu09} that~\eqref{eq:IntroDKfree} can be formally understood as a `stochastic heat equation with intrinsic noise' on the base manifold.
By `intrinsic noise' we mean a noise acting on the argument of the solution rather than (e.g., additively or multiplicatively) on its values.

This eventually led to the construction of stochastic processes on~$\msP_2$ formally solving other \textsc{spde}s with same noise term and different `drift term', including: Sturm--von Renesse's \emph{Wasserstein diffusion}~\cite{vReStu09} on~$\msP_2(\mbbS^1)$; V.V.~Konarovskyi and von Renesse's \emph{coalescing-fragmentating Wasserstein dynamics}~\cite{KonvRe17} on~$\msP_2(\R)$; the aforementioned Dirichlet--Ferguson diffusion on~$\msP_2(M)$ over a closed Riemannian manifold~$M$.

The connection between~\eqref{eq:IntroDKfree} and Wasserstein geometry was also made apparent by R.L.~Jack and J.~Zimmer in~\cite{JacZim14}, also cf.~\cite{EmbDirZimRei18}, linking the deterministic hydrodynamic evolution of the particle system with the steepest descent of the free energy.
Indeed, solutions to~\eqref{eq:IntroDKfree} are \emph{formally} related to the most probable paths of \emph{mesoscopic dissipative systems}, and are again linked to their fluctuating hydrodynamics,~\cite{JacZim14}.

\subsubsection{Well-posedness in approximate settings and rigidity of exact solutions}
Even in their simplest form~\eqref{eq:IntroDKfree}, equations of Dean--Kawasaki-type are ---in every dimension~$d\geq 1$--- `scaling supercritical' in the language of regularity structures~\cite{Hai14} and of paracontrolled calculus~\cite{GubImkPer15}, making these equations inaccessible to both frameworks, cf.~e.g.~\cite{FehGes24, DjuKrePer24}.

\paragraph{Colored-noise approximations}
Partially in order to overcome this issue, Dean--Kawasaki and related equations are often considered in the approximation of truncated noise~\cite{CorFis23,DjuKrePer24}, spatially correlated noise~\cite{FehGes24,MarMay22}, additional additive noise~\cite{MarMay22}, etc.
In particular: A.~Djurdjevac, H.~Kremp, and N.~Perkowski~\cite{DjuKrePer24} show well-posedness of measure-valued solutions with $L^2$~initial datum for a suitable truncation of the noise;
B.~Fehrmann and B.~Gess~\cite{FehGes24} prove well-posedness and construct a robust solution theory for a very large class of Dean--Kawasaki-type equations with non-linear viscosity term and a noise of Stratonovich-type colored in space and white in time;
F.~Cornalba and J.~Fischer~\cite{CorFis23,CorFis23b}, also cf.~\cite{CorFisIngRai23}, show that the description of hydrodynamics fluctuations via solutions to~\eqref{eq:IntroDKfree} is still particularly effective.


\paragraph{Rigidity}
Coming back to the original Dean--Kawasaki equation, Konarov\-skyi, T.~Lehmann, and von~Renesse, eventually showed in~\cite{KonLehvRe19} that the free Dean--Kawasaki equation~\eqref{eq:IntroDKfree} may be equivalently formulated as a martingale-type problem, which is then meaningful in the far more general abstract setting of standard Markov triples with essential selfadjointness~\cite[\S3.4.2, p.~170]{BakGenLed14} satisfying a Bakry–\'Emery Ricci-curvature lower bound~\cite[Eqn.~(2.3)]{KonLehvRe19} ---a setting similar to, but logically independent from, the one in Assumption~\ref{ass:IntroSetting}.

This martingale-type problem satisfies a striking rigidity result~\cite[Thm.~2.2]{KonLehvRe19}. (See~Thm.~\ref{t:KLR}.)
It admits $\msP$-valued solutions if and only if~$T=\tfrac{1}{n}$ and~$\rho_0=\tfrac{1}{n}\sum_i^n \delta_{x_i}$, and in this case the solution is unique and satisfies~$\rho_t=\tfrac{1}{n}\sum_i^n \delta_{X^i_{nt}}$ for all~$t>0$.
Analogous rigidity results were subsequently shown by Konarovskyi, Lehmann, and von~Renesse in~\cite{KonLehvRe19b} in the case of sufficiently smooth interactions; and more recently: by Konarovskyi and F.~M\"uller~\cite{KonMue23} for tempered-distributional solutions on~$\R^d$; by M\"uller, von~Renesse, and Zimmer~\cite{MuevReZim24} for Dean--Kawasaki models of Vlasov--Fokker--Planck type on~$\R^d$.
Importantly, the results in~\cite{MuevReZim24} also apply to certain \emph{non-symmetric} driving diffusions and as such are not recovered by our approach via symmetric Dirichlet forms.

\subsubsection{Dean--Kawasaki revisited}\label{sss:IntroDKRevisited}
Let us now rephrase the Dean--Kawasaki equation into a more general equation which will be the object of our analysis.
We shall focus on the free case, viz.\ ---in stochastic notation---
\begin{equation}\tag*{$(\mathsc{dk})$}\label{eq:DK}
\diff\mu_t = \nabla\cdot(\sqrt{\mu_t}\, \xi) + \alpha \Delta\mu_t \diff t \comma
\end{equation}
postponing the case of interacting systems to~\S\ref{sss:PairwiseInteraction} below.
Here,~$\alpha>0$ is a parameter, replacing the temperature~$T$, exactly as in~\cite{KonLehvRe19}.
(The \emph{pure-noise} case~$\alpha=0$ has recently been shown to be ill-posed; see~\cite{LzDSKon25}.)

For simplicity of notation, given any measure~$\mu$ and any~$x\in M$, in the following we write~$x\in\mu$ to indicate that~$\mu\set{x}>0$, i.e., that~$x$ is an atom for~$\mu$.
Fix a positive integer~$n$.
In light of Konarovskyi--Lehmann--von Renesse's rigidity, the only Equation~\ref{eq:DK} with non-trivial solutions for the initial datum~$\mu_0=\tfrac{1}{n}\sum_i^n \delta_{x_i}$ reads
\[
\diff\mu_t = \nabla\cdot(\sqrt{\mu_t}\, \xi) + n\, \Delta\mu_t \diff t\fstop
\]
Owing to rigidity that we only have solutions of the form~$\mu_t=\tfrac{1}{n}\sum_i^n \delta_{Y^i_t}$ for some $M$-valued stochastic processes~$Y^i_\bullet$, $i\leq n$, we may rewrite the drift coefficient as
\[
n\, \Delta\mu_t =n\, \Delta \frac{1}{n}\sum_i^n \delta_{Y^i_t} = \sum_i^n \Delta \delta_{Y^i_t} = \sum_{x\in \mu_t} \Delta \delta_x\comma
\]
so that~\ref{eq:DK} becomes
\begin{equation}\label{eq:Intro:TrueDK}
\diff\mu_t =  \nabla\cdot(\sqrt{\mu_t}\, \xi) + \sum_{x\in \mu_t} \Delta \delta_x \diff t \fstop
\end{equation}
This observation allows us to reinterpret the drift in a way that naturally extends to purely atomic measures with arbitrary (even \emph{infinite}) support, independent of~$n$.
Also, this is the Dean--Kawasaki equation with `singular drift' considered in~\cite{KonvRe17} on the real line.
In the notation of~\cite{KonvRe17}, it amounts to take~$\beta=0$ and~$i=0$ in the family of models considered there.
Whereas this choice of~$\beta$ required some justification in~\cite{KonvRe17}, it is in fact the only natural one in light of the rigidity result~\cite{KonLehvRe19}.\footnote{We note that~\cite{KonvRe17} in fact predates~\cite{KonLehvRe19} by several years.} 
It is clear that every solution to~\ref{eq:DK} is as well a solution to~\eqref{eq:Intro:TrueDK}.
Solutions to this new equation will be completely characterized in the next section.

In spite of being called a `drift', the second summand in the right-hand side of~\eqref{eq:Intro:TrueDK} is not `driving' solutions towards a certain region of~$\msP$.
Rather than driving motion in the classical sense, this term enforces a singular constraint on the dynamics, effectively projecting the evolution onto the space~$\msP^\pa$ of \emph{purely atomic} probability measures on~$M$ for all times.
In a sense it is thus rather a `boundary term', as detailed in~\cite[\S2.7]{LzDS17+}.

\subsubsection{Main results~III: Dean--Kawasaki and massive particle systems}
Let us now turn to a further characterization of the measure representation~$\mu_\bullet$ of a free massive system as the solution to some \textsc{spde} in the same form as~\eqref{eq:Intro:TrueDK}.
In the generality of Assumption~\ref{ass:IntroSetting}, assume further that~$\mssW$ is a diffusion.
Further let~$\mssL$ be the generator of~$\mssW$, and denote by~$\mssL'$ its distributional adjoint on measures on~$M$.

The theory of Hilbert tangent bundles to a Dirichlet space~\cite{IonRogTep12,HinRoeTep13,Ebe99} equips each local Dirichlet space with a bundle structure mimicking Riemannian geometry, thereby enabling the definition of vector fields and divergence operators.
Thus, even when~$M$ does not have any proper differential structure we can give rigorous meaning to a divergence operator~$\msfdiv$, and to a `vector-valued' white noise~$\xi$ on~$M$.

\begin{theorem}[See~\S\ref{s:DK}]\label{t:IntroDK}
In the setting of Assumptions~\ref{ass:IntroSetting} and~\ref{ass:QPPIntro}, suppose further that~$\mssW$ is a diffusion with generator~$\mssL$.
Then,~$\mu_\bullet$ is the unique analytically weak martingale solution to the \textsc{spde}
\begin{equation}\label{eq:IntroSPDE}
\diff \mu_t = \msfdiv(\sqrt{\mu_t }\, \xi)+\sum_{x\in \mu_t} \mssL'\delta_x \diff t\fstop
\end{equation}
Furthermore, if~$\mu_0=\tfrac{1}{n}\sum_{i=1}^n \delta_{x_i}$, then~$\mu_t= \tfrac{1}{n}\sum_i^n \delta_{X^i_{n t}}$ for all~$t>0$ and $\seq{x_i}_{i\leq n}\subset M$, and~$\mu_\bullet$ is the unique analytically weak martingale solution to the \emph{$\mssL$-driven Dean--Kawasaki equation} on~$M$
\[
\diff\mu_t = \msfdiv(\sqrt{\mu_t}\, \xi) + n\,\mssL' \mu_t \diff t\fstop
\]
\end{theorem}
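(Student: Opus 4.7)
The strategy is to deduce the analytically weak formulation~\eqref{eq:IntroSPDE} from the martingale problem for the Dirichlet-form generator~$\mcL$ established in Theorem~\ref{t:IntroMeasRep}, whose explicit action on extended cylinder functions is provided by~\eqref{eq:GeneratorIntro} in Theorem~\ref{t:IntroCheeger}. Concretely, I would derive the drift and predictable quadratic variation of the martingale problem by evaluating~$\mcL$ and its carr\'e du champ on linear test functions $u(\eta)\eqdef \tsharpb{f,\eta}$, with~$f$ ranging in a core of~$\mssL$, and then identify the resulting semimartingale decomposition of $t\mapsto \tsharpb{f,\mu_t}$ with the weak form of~\eqref{eq:IntroSPDE}.

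For the drift, the only $z$-dependent summand in $u(\eta+\eta\!\set{x}\delta_z - \eta\!\set{x}\delta_x) = \tsharpb{f,\eta} + \eta\!\set{x}\big(f(z)-f(x)\big)$ is $\eta\!\set{x}\,f(z)$, so $\mssL^z\restr{z=x}u = \eta\!\set{x}\,\mssL f(x)$. Substituting into~\eqref{eq:GeneratorIntro} and using that~$\eta$ is purely atomic $\mcQ_{\pi,\nu}$-a.s.\ collapses the integral to
\[
(\mcL u)(\eta) \ =\ \int_M \frac{\mssL f(x)}{\eta\!\set{x}}\,\diff\eta(x) \ =\ \sum_{x\in\eta}\mssL f(x) \ =\ \tsharpb{f,\, \textstyle\sum_{x\in\eta}\mssL'\delta_x}\fstop
\]
For the noise, I would read off~\eqref{eq:IntroCheeger}, after identifying $(\boldnabla u)_\eta$ with~$\mrmD f$ in the Hilbert tangent bundle over $(M,\mssL)$, that $\Gamma(u,u)(\eta) = \int_M \Gamma^\mssW(f,f)\,\diff\eta$. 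By the Fukushima decomposition, the process $M^f_t\eqdef\tsharpb{f,\mu_t}-\tsharpb{f,\mu_0}-\int_0^t\sum_{x\in\mu_s}\mssL f(x)\,\diff s$ is then a continuous square-integrable martingale with predictable quadratic variation $\int_0^t\int_M \Gamma^\mssW(f,f)\,\diff\mu_s\,\diff s$. A Motoo--Watanabe-type representation on the Hilbert tangent bundle of the base space, in the spirit of~\cite{IonRogTep12,HinRoeTep13,Ebe99}, realises $M^f$ as $-\int_0^t\tsharpb{\mrmD f,\sqrt{\mu_s}\,\xi}$ for an intrinsic vector-valued white noise~$\xi$; this is precisely the analytically weak meaning of~\eqref{eq:IntroSPDE}.

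Uniqueness in law among analytically weak martingale solutions then reduces to uniqueness of the martingale problem for~$\mcL$, itself a consequence of the essential self-adjointness asserted in Theorem~\ref{t:IntroMeasRep}. For the final statement, the Konarovskyi--Lehmann--von Renesse rigidity (cf.\ Theorem~\ref{t:KLR}) combined with Assumption~\ref{ass:QPPIntro} forces every $\msP$-valued weak solution with initial datum $\mu_0=\tfrac1n\sum_{i=1}^n\delta_{x_i}$ to remain of the form $\mu_t=\tfrac1n\sum_{i=1}^n\delta_{Y^i_t}$; the free system~\eqref{eq:Free} with all masses $s_i=1/n$ satisfies $\diff X^i_t=\diff\mssW^i_{nt}$, so by uniqueness of marginals $Y^i_t = X^i_{nt}$. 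On $n$-atom configurations of common mass $1/n$, the identity $\sum_{x\in\mu}\mssL'\delta_x = n\,\mssL'\mu$ holds in distributional pairing against any test function, reducing~\eqref{eq:IntroSPDE} to the $\mssL$-driven Dean--Kawasaki equation.

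\textbf{Main obstacle.} The delicate step is the stochastic-integral realisation of~$M^f$: on the non-smooth base space~$M$, one must carefully reconcile the Hilbert tangent bundle construction with an $L^2$-theory of divergence on $(\msP_2,\mcQ_{\pi,\nu})$ ensuring that $\msfdiv(\sqrt{\mu}\,\xi)$ acquires unambiguous meaning for $\mcQ_{\pi,\nu}$-typical atomic measures supported on infinitely many atoms, and that the drift sum $\sum_{x\in\mu}\mssL f(x)$ converges $\mcQ_{\pi,\nu}\otimes\diff t$-almost surely.
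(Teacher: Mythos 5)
Your plan founders on the choice of test functions, and this is not a technical nuisance but the central obstruction the paper is designed to circumvent. Evaluating the generator~\eqref{eq:GeneratorIntro} on the \emph{linear} cylinder function $u(\eta)=\langle f,\eta\rangle = f^\trid(\eta)$, $f\in\msA$, does produce $(\mcL u)(\eta) = \sum_{x\in\eta}\mssL f(x)$ as you compute; but for $\mcQ_{\pi,\nu}$-typical $\eta$ the support is $\T$-dense in $M$ and, unless $f$ is $\mssL$-harmonic, this sum is genuinely \emph{infinite} rather than merely requiring careful estimates. You flag the convergence of $\sum_{x\in\mu}\mssL f(x)$ as an ``obstacle'' in your closing paragraph, but there is no convergence to establish --- the expression diverges, the operator $\rho\mapsto\sum_{x\in\rho}\mssL'\delta_x$ does not act on $\msA$, and the martingale $M^f_t$ you wish to decompose is simply not well-defined. (The paper states this explicitly immediately after~\eqref{eq:ExtensionGeneratorCyl}.) The resolution in the paper is to replace the linear test class by extended cylinder functions $(\molli\otimes f)^\trid$ with $\molli\in\msR_0$ supported in $(\eps,1]$ for some $\eps>0$; this truncates the inner sum to finitely many terms (cf.~\eqref{eq:FundFuncTrid}), on which the drift is given exactly by the well-defined pairing~\eqref{eq:DualityGenerator}. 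This is also the class on which essential self-adjointness of the generator is actually proved (Proposition~\ref{p:PDirIntESA}, Theorem~\ref{t:TransferP}); the weighted test functions are not a convenience but the reason the whole argument closes.

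A secondary divergence from the paper: you propose to render the noise term $\msfdiv(\sqrt{\mu_t}\,\xi)$ rigorous via a Motoo--Watanabe representation of $M^f$ on the Hilbert tangent bundle of $M$. The paper does not do this, and deliberately so: the square-root nonlinearity is precisely the part of the equation that cannot be given literal meaning (cf.~\eqref{eq:SquareRootFalse}), so the paper \emph{defines} an analytically weak solution as a solution to the shadow martingale problem~$\hmartp{}{\mssL}$ (Definition~\ref{d:ShadowMP}), and then justifies this definition only by a heuristic computation matching quadratic variations at the end of \S\ref{sss:WhiteNoiseFormalDerivation}. Attempting to construct the stochastic integral against $\sqrt{\mu}\,\xi$ would require resolving the square-root pathology, which is strictly harder than the martingale-problem reformulation. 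Finally, your route to the rigid finite-particle identity via Theorem~\ref{t:KLR} puts the cart before the horse: rigidity is a statement about $\martp{n}{\mssL}$, and the logical link to~$\hmartp{}{\mssL}$ runs through Lemma~\ref{l:Extension} and the identification in Theorem~\ref{t:DK}, which in turn relies on the corrected test-function class you have not yet introduced.
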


In other words, the \textsc{spde}~\eqref{eq:IntroSPDE} is well-posed ---in the sense of a suitable martingale problem akin to~\cite{KonLehvRe19,KonLehvRe19b}--- for every diffusive recurrent Markov generator~$\mssL$ with spectral gap and ultracontractive semigroup. 
Its unique solution is the measure representation of the free massive system with driving noise~$\mssW$ the Markov diffusion process generated by~$\mssL$.

\paragraph{Particular cases}
Let us point out some particular cases of interest:
\begin{itemize}[leftmargin=1.5em]
\item when the massive system consists of~$n$ particles with identical mass, \eqref{eq:IntroSPDE} reduces to~\ref{eq:DK}, and its unique solution coincides with the one constructed by Konarovskyi, Lehmann, and von Renesse;
\item when~$M=\mbbS^1$ is the standard unit circle and~$\mssL$ is the one-dimensional Laplacian,~\eqref{eq:IntroSPDE} is the \emph{Dean--Kawasaki equation} `with singular drift' considered by Ko\-narov\-skyi and von Renesse in~\cite[Eqn.~(1.2)]{KonvRe18}.
In this \emph{one}-dimensional case, solutions are not unique by~\cite{KonvRe17}, and indeed Assumption~\ref{ass:QPPIntro} does not hold.

\item when $M=\mbbI$ is the standard unit interval and~$\mssL$ is the one-dimensional Laplacian with Neumann boundary conditions, a solution~$\mu_\bullet$ to~\eqref{eq:IntroSPDE} would provide a construction for solutions to the Dean--Kawasaki \textsc{spde} with purely reflecting boundary conditions, a problem recently considered with a thermodynamic-limit approach by P.C.~Bresloff in~\cite{Bre23}.
\end{itemize}

\begin{figure}[htb!]
\begin{subfigure}[t!]{.5\textwidth}
\centering
\includegraphics[scale=.35,trim={20 0 20 0},clip]{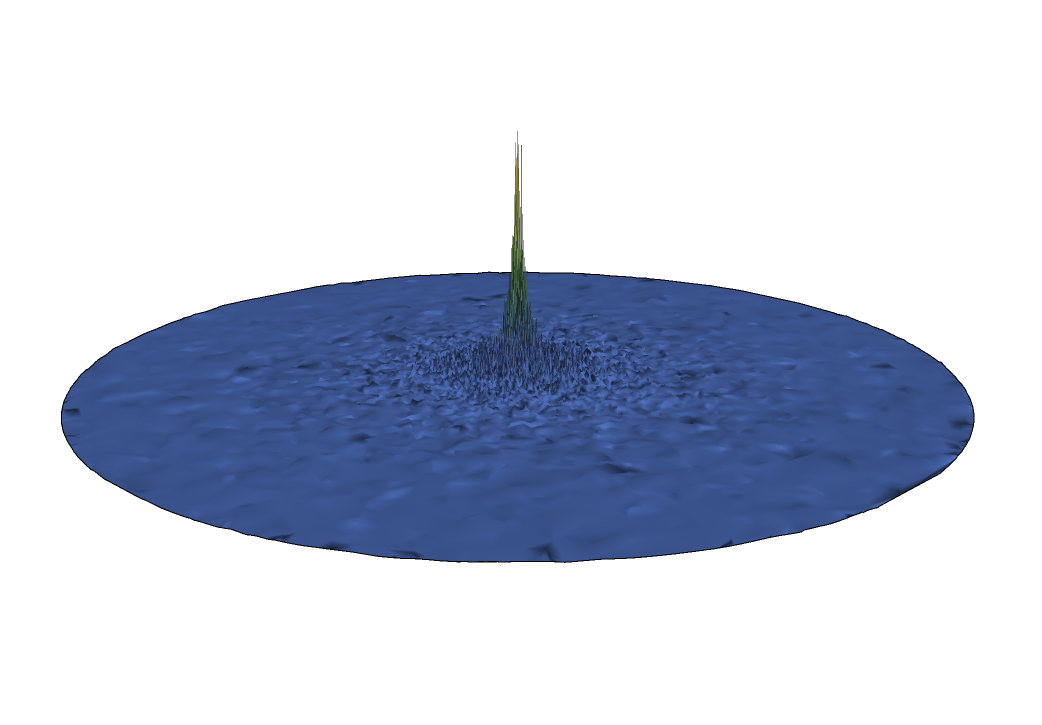}
\end{subfigure}
~~
\begin{subfigure}[t!]{.5\textwidth}
\centering
\includegraphics[scale=.35,trim={20 0 20 0},clip]{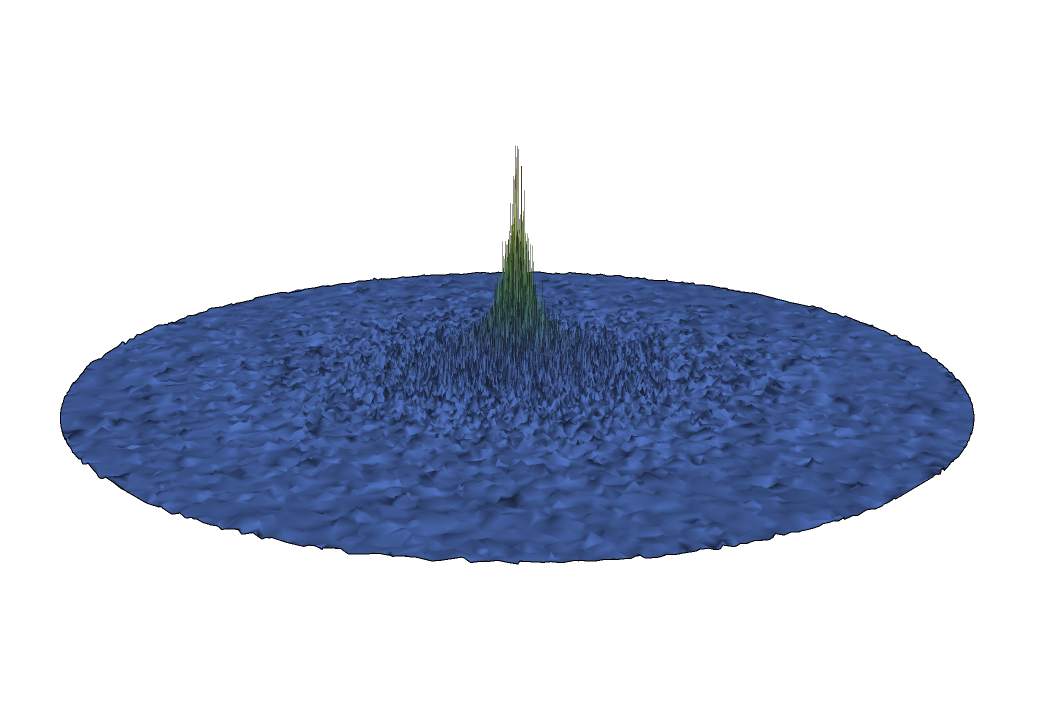}
\end{subfigure}

\begin{subfigure}[t!]{.5\textwidth}
\centering
\includegraphics[scale=.35,trim={20 0 20 0},clip]{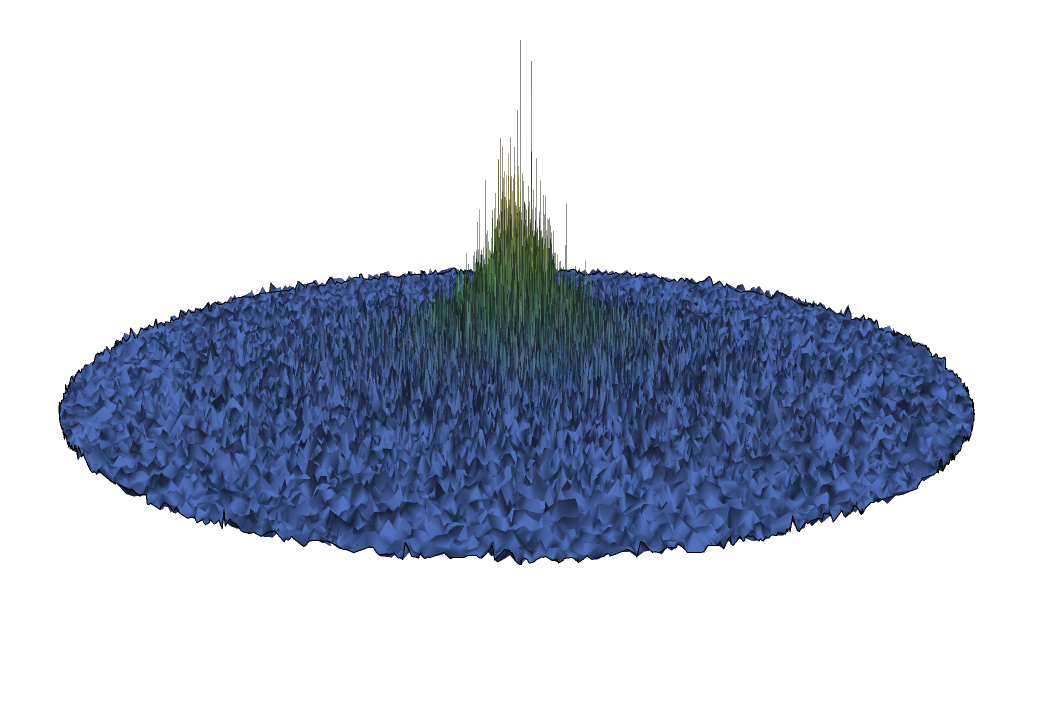}
\end{subfigure}
~~
\begin{subfigure}[t!]{.5\textwidth}
\centering
\includegraphics[scale=.35,trim={20 0 20 0},clip]{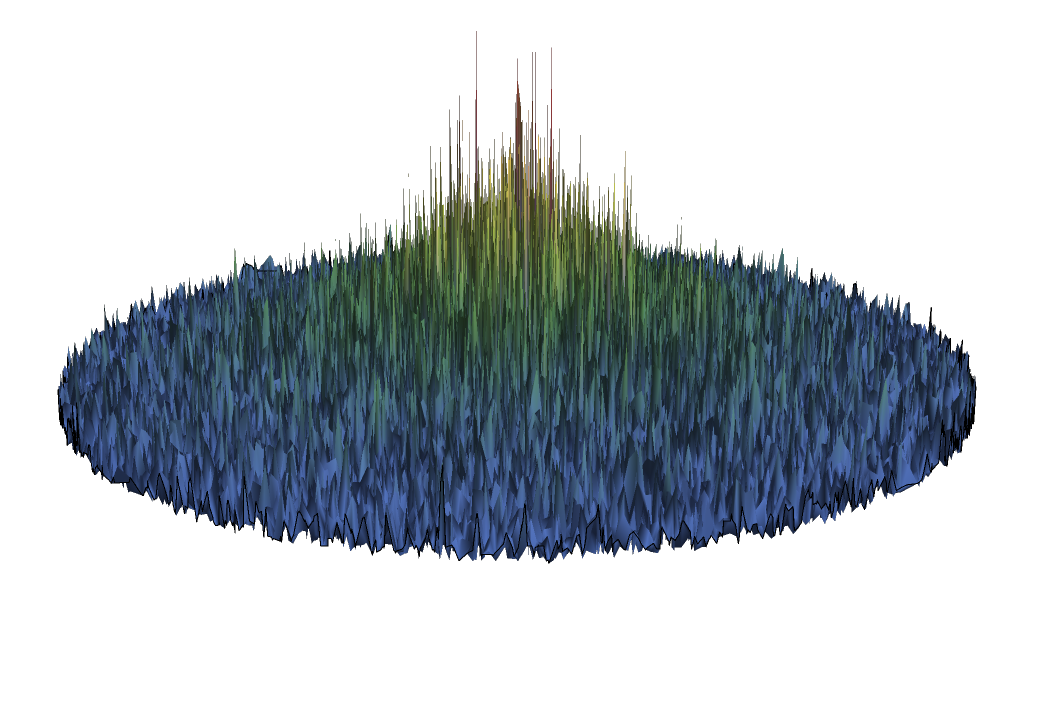}
\end{subfigure}

\begin{subfigure}[t!]{.5\textwidth}
\centering
\includegraphics[scale=.35,trim={20 0 20 0},clip]{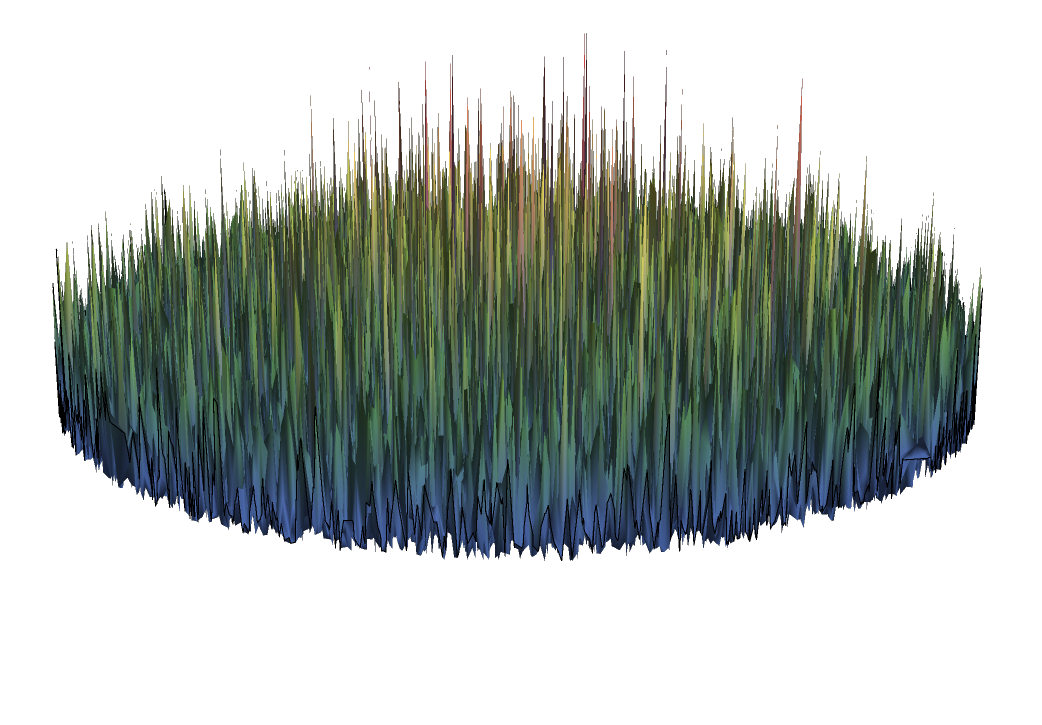}
\end{subfigure}
~~
\begin{subfigure}[t!]{.5\textwidth}
\centering
\includegraphics[scale=.35,trim={20 0 20 0},clip]{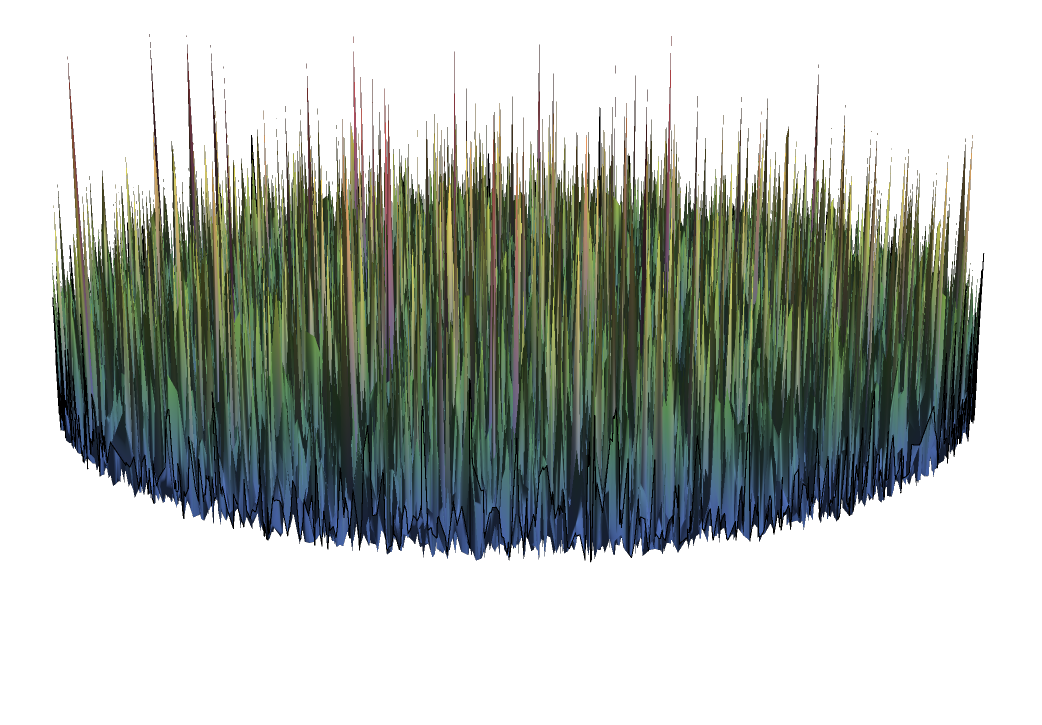}
\end{subfigure}

\caption{The diffusive nature of solutions to~\eqref{eq:IntroSPDE} is exemplified by the evolution of a concentrated initial datum.
Here, the measure representation~$\mu_\bullet$ as in~\eqref{eq:IntroMeasureRep} of a massive particle system driven by the Laplacian on the disk is shown at increasing times, with~$N=10^5$,  initial datum concentrated around~$X^i_0=\zero$, and masses~$\seq{s_i}_{i\leq N}$ sampled from a standard Dirichlet distribution over the $N$-simplex.}
\label{fig:2}
\end{figure}

\paragraph{Systems at the mesoscopic scale}
The free Dean--Kawasaki equation~\ref{eq:DK} has occasionally been deemed \guillemotleft \emph{of dubious mathematical meaning}\guillemotright, e.g.~\cite[p.~2]{DjuKrePer24} and~\cite[p.~2]{CorFis23}.
On the one hand, this is partially due to the singular gradient noise and to the square-root non-linearity; on the other hand, whereas solutions to~\ref{eq:DK} are supposed to model particle systems at the \emph{mesoscopic scale}, see e.g.~\cite{JacZim14,KimKawJacvWi14}, this is seemingly not the case in light of Konarovsky--Lehmann--von Renesse's rigidity, as solutions exactly describe free systems of~$n$ identical particles.

Theorem~\ref{t:IntroDK} reconciles at once both these aspects with the original intuition.
Firstly, solutions to~\eqref{eq:IntroSPDE} truly describe systems at the mesoscopic scale:
while solutions still retain a particulate ---i.e., microscopic--- nature, the singular-drift term allows for the emergence of a macroscopic mass profile, enabling the system to reflect observable mesoscopic structure; see Fig.~\ref{fig:2}.
Secondly, the \textsc{spde}~\eqref{eq:IntroSPDE} may be given a rigorous meaning in the space~$\msP^\pa$, interpreted in the dual pairing with test functions that are continuous in the weak atomic topology~$\T_\mrma$.
Indeed, if for a purely atomic~$\eta$ we define its \emph{squared measure}
\[
\eta^2\eqdef \sum_{x\in\eta} (\eta\!\set{x})^2 \, \delta_x \comma
\]
then~$\T_\mrma$ is the coarsest topology on~$\msP^\pa$ for which the map~$\eta\mapsto \eta^2$ is continuous, see~\cite[Lem.~2.2]{EthKur94}.
This completes the aforementioned identification of~$\T_\mrma$ as the natural topology in the study of (analytically weak) solutions to Dean--Kawasaki-type equations, as it is the coarsest one making dualization continuous on a point-separating class of continuous test functions.

\subsubsection{Main difficulties~III: uniqueness}
In light of the results in~\cite{KonvRe17} in the case when~$M=\R$, it is not entirely surprising that massive particle systems driven by Brownian motion are solutions to the Dean--Kawasaki equation with singular drift~\eqref{eq:Intro:TrueDK}.
Also, albeit technically more demanding, the extension to~\eqref{eq:IntroSPDE} ought to be expected, given that it matches the level of generality we chose for our description of massive particle systems.

Our main result about Dean--Kawasaki equations 
is rather \emph{uniqueness}.
We show that the generator on~$L^2(\mcQ)$ of the martingale problem identifying solutions to~\eqref{eq:IntroSPDE} is essentially self-adjoint.
Throughout the literature on Dirichlet forms, this is widely recognized as a most challenging task, especially on infinite-dimensional non-flat spaces; here, on~$\msP$.
Thanks to essential self-adjointness we conclude uniqueness of solutions \emph{in the strongest possible form} expected for Dean--Kawasaki-type equations, even in the presence of non-linearity, singularity, and for \emph{white} noise.

Also, we show that uniqueness is tightly linked to the geometric setting. On the one hand, the \emph{shape} of the ambient space must grant essential self-adjointness for the generator defined on a class of test functions; on the other hand, its \emph{dimension} plays a critical role, with uniqueness failing in $d=1$ due to collisions.

\subsubsection{On the level of generality}
Let us point out here that the level of generality we chose for our discussion, while seemingly ostentatious, is not whimsical.

In fact, allowing for the ambient space to have a distinct shape ---possibly: non-smooth and with a boundary--- is functional to the discussion of interacting systems below, and more generally it is necessary to the applications.
See e.g.: \cite{VenIllGraBen25},~detailing the dynamics of overdamped Brownian particles interacting through soft pairwise potentials on a \emph{comb-like structure};
\cite{Bre23},~considering a generalized Dean--Kawasaki equation for an interacting Brownian gas in a \emph{partially absorbing} one-dimensional {medium} (a half-line with partially or totally reflecting boundary);
\cite{ShuEijvdB07},~discussing droplet formation within multiphase flow in micro- and nano\emph{channels};
\cite{OshPopDie17},~concerned with lab-on-chip applications of the dynamics of colloids constrained by a \emph{spherical} interacting surface.

\subsection{Interacting massive systems}\label{ss:IntroInteraction}
Let us now proceed and consider \emph{interacting} particle systems. 
It is convenient and interesting to also discuss \emph{zero-range} interaction, which can arise even in free massive systems when Assumption~\ref{ass:QPPIntro} fails.

\subsubsection{Zero-range interaction}\label{sss:IntroZeroRange}
Fix~$\mbfs\eqdef \seq{s_i}_{i\leq N}$ in the ordered $N$-simplex.
We classify the behaviour of an interacting massive system in terms of the \emph{first collision time}
\begin{equation}\label{eq:Intro:FCT}
\tau_c=\tau^\mbfs_c\eqdef \sup\set{t>0: X^i_r\neq X^j_r \text{ for all $r<t$, for all~$i\neq j$}} \fstop
\end{equation}
We call the interaction
\begin{itemize}[leftmargin=1.5em]
\item \emph{paroxysmal} if~$\tau_c=0$ with positive probability;
\item \emph{strong} if~$0<\tau_c<\infty$ almost surely;
\item \emph{moderate}\footnote{In the free case, this amounts to no interaction at all. However, we choose the terminology to also include interacting systems with interaction different from zero-range, see below.} if~$\tau_c=\infty$ almost surely.
\end{itemize}

When~$\mssW$ is a diffusion, it is clear that the above distinction is most interesting in the case of infinitely many particles, as this is the only case when paroxysmal interaction may occur.
Thus, let us restrict our attention to the case~$N=\infty$.

If~$\tau_c<\infty$ with positive probability, the stochastic dynamics of the measure representation~$\mu_\bullet=\seq{\mu_t}_{t\leq \tau_c}$ will have multiple Markovian extensions after~$\tau_c$, two trivial and extremal ones being: the process killed upon reaching~$\tau_c$, and the free dynamics discussed above, in which collisions have no effect whatsoever.
All these extensions are in one-to-one correspondence with Markovian extensions of restrictions of the generator of~$\mcE_\mcQ$ to classes of functions describing specific `boundary conditions'.
Indeed, this non-uniqueness after~$\tau_c$ reflects a genuine ambiguity in the post-collision dynamics, and forces one to select among competing Markovian extensions, each corresponding to a different physical or probabilistic interpretation.

\paragraph{Modified massive Arratia flow and coalescing-fragmentating Wasserstein dynamics}
The necessity of considering the paroxysmal regime arises even in the simplest case when the driving noise~$\mssW$ is a Brownian motion on the unit interval or the real line.
This is exemplified by the following two (classes of) natural extensions:
\begin{enumerate*}[$(a)$]
\item Konarovsky's \emph{modified massive Arratia flow}~\cite{Kon17,KonvRe18}, i.e.\ the extension with \emph{sticky boundary conditions}, in which  colliding particles coalesce after their first collision, and behave as a single particle driven by an independent instance of the noise with volatility the inverse of the total mass of the collided particles; 
\item the aforementioned Konarovsky--von Renesse \emph{coalescing-fragmentating Wasserstein dynamics}~\cite{KonvRe17}, a class of extensions with non-trivial boundary conditions, in which colliding particles stick together for a random time and eventually split again in a random way.
\end{enumerate*}
In both cases, it is shown that when~$N=\infty$ the interaction is paroxysmal, i.e.\ particles collide immediately after~$t=0$ a.s..
In fact, in the case of the modified massive Arratia flow, the system is cofinitely degenerate, in the sense that the number of particles in the system is finite for every~$t>0$.

\paragraph{Zero-range interaction in the free case}
In the free case, moderate interaction (i.e.\ non-interaction) is considered in Theorem~\ref{t:Intro1}.
As for paroxysmal interaction, we have the following characterization, where ‘non-polarity of points’ means that singletons have positive capacity with respect to $\mssW$ ---intuitively, that particles can collide--- while ‘quantitative polarity’ ensures collisions are avoided.

\begin{proposition}[See~\S\S\ref{sss:Collisions} and~\ref{sss:Nests}]
Paroxysmal and moderate interaction of a free massive system only depend on the $\mssW$-capacity of points. In the setting of Assumption~\ref{ass:IntroSetting},
\begin{enumerate}[$(i)$, leftmargin=1.75em]
\item the \emph{quantitative polarity of points} Assumption~\eqref{eq:QPP} is sufficient to grant that only moderate interaction occurs;
\item the \emph{non-polarity of points} Assumption~\eqref{eq:QNPP} is sufficient to grant that paroxysmal interaction occurs a.s.\ for $\nu^\otym{\infty}$-distributed starting positions.
\end{enumerate}
In the setting of Assumption~\ref{ass:IntroManifold}, each of the above conditions is both necessary and sufficient for the corresponding conclusion.
In particular, in this setting~\eqref{eq:QPP} and~\eqref{eq:QNPP} form a dichotomy and, if~\eqref{eq:QPP} holds, then solutions to the free Dean--Kawasaki equation~\eqref{eq:IntroSPDE} are unique for each given starting point in~$\msP^\pa$.
\end{proposition}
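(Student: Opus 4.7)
The plan is to reduce the collision analysis to a \emph{pairwise capacity computation} for the product process on $M\times M$, then combine this with a Borel--Cantelli argument over the countably many pairs; the manifold dichotomy follows from a direct heat-kernel computation, and the uniqueness claim is obtained by feeding (QPP) into Theorem~\ref{t:IntroDK}.

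For part~(i), the first collision time of particles $i,j$ is the first hitting time~$\tau_{ij}$ of the diagonal~$\diag(M)$ by the pair process $(X^i_t,X^j_t)$ on $M\times M$. I will argue that (QPP) ---the \emph{quantitative} vanishing of $\mssW$-capacities of points--- transfers, via a tensor-product Dirichlet-form estimate and the independence of the time-changes $t/s_i$ and $t/s_j$, to vanishing product-capacity of $\diag(M)$ for every fixed mass pair $(s_i,s_j)$. Hence $\Pr[\tau_{ij}<\infty]=0$ for each ordered pair, and a union bound over the countable collection of pairs yields $\tau_c=\infty$ almost surely.

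For part~(ii), I will exploit that $\sum_i s_i=1$ forces $s_i\downarrow 0$, so that the time-change $t\mapsto t/s_i$ effectively runs the $i$-th particle over arbitrarily long time horizons on any fixed interval $[0,\varepsilon]$. Under (QNPP) together with recurrence and exponential convergence to equilibrium from Assumption~\ref{ass:IntroSetting}, the probability that the pair $(X^{2k-1},X^{2k})$ collides within time~$\varepsilon$ tends to~$1$ as $k\to\infty$. Since the pair processes indexed by disjoint index-pairs $\{(2k-1,2k)\}_k$ are independent (the $\mssW^i$ are independent copies and the $X^i_0$ are i.i.d.~$\nu$), the second Borel--Cantelli lemma yields infinitely many such collisions in $[0,\varepsilon]$ a.s., and since $\varepsilon>0$ was arbitrary, $\tau_c=0$ a.s. I expect the main obstacle here to be the uniform lower bound on the collision probability: it has to be produced jointly over $\pi$-typical mass sequences and $\nu^{\otimes\infty}$-typical starting configurations, and ultimately rests on quantitative heat-kernel lower bounds extracted from the bounded continuous transition densities of~$\mssW$.

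For the manifold setting of Assumption~\ref{ass:IntroManifold}, a direct computation of the Newtonian capacity of points for the drifted Brownian motion yields the dichotomy: points are $\mssW$-polar iff $d\geq 2$ (so that (QPP) holds) and non-polar iff $d=1$ (so that (QNPP) holds). Necessity of each assumption for its corresponding conclusion is then automatic by excluding the alternative. Finally, whenever (QPP) holds, Assumption~\ref{ass:QPPIntro} is in force, so Theorem~\ref{t:IntroDK} applies and the measure representation $\mu_\bullet$ is the unique analytically weak martingale solution of~\eqref{eq:IntroSPDE}; since each $\mu_0\in\msP^{\pa}$ uniquely encodes ---up to relabeling--- the initial configuration $\{(s_i,x_i)\}_i$ of a massive particle system, the particle-level well-posedness from Theorem~\ref{t:Intro1} transfers to the \textsc{spde} solution issued from that initial datum.
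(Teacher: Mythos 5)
Your treatment of part~(i) is essentially the paper's: reduce to pairwise hitting of the diagonal, transfer (QPP) through a product-capacity estimate (Lemma~\ref{l:Capacity2} and Lemma~\ref{l:ExceptionalFiniteDim}), and conclude polarity of the whole off-diagonal set by countable subadditivity (Proposition~\ref{p:AndiPolar}). The manifold dichotomy and the uniqueness consequence are handled as you sketch.

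For part~(ii), however, your proposed route diverges from the paper and has a real gap. You want to extract a lower bound on the collision probability of the pair $(X^{2k-1},X^{2k})$ on $[0,\varepsilon]$ from \guillemotleft quantitative heat-kernel lower bounds extracted from the bounded continuous transition densities\guillemotright, then feed that into the second Borel--Cantelli lemma using independence of disjoint pairs. But Assumption~\ref{ass:IntroSetting} gives only \emph{upper} bounds and continuity for the transition densities, not the Gaussian-type \emph{lower} bounds near the diagonal you would need; and (QNPP), the hypothesis you must actually exploit, is a statement about the positive $\nu$-average of $\Cap(\{x\})$ --- it does not translate directly into a pointwise heat-kernel estimate. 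The paper sidesteps this entirely: since the form is recurrent, it invokes the hitting-time/capacity identity $E_{\nu^{\otimes 2}}[e^{-\tau_{\Delta M}}]=\Cap_{a,b}(\Delta M)$ (\cite[Thm.~4.2.5]{FukOshTak11}), and Lemma~\ref{l:ProductPointHitting} provides the key algebraic estimate
\[
\Cap_{a,b}(\Delta M)\ \geq\ \kappa_\mssE^{1/(a\wedge b)}\ =\ \kappa_\mssE^{s_i\vee s_j}\ \longrightarrow\ 1 \quad\text{as } s_i\vee s_j\downarrow 0 \comma
\]
directly from (QNPP) and a Jensen-type argument on the equilibrium potential of the diagonal. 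This forces $\tau^{a_i,a_j}_{\Delta M}\to 0$ in probability, hence $\tau^\mbfs_c=0$ a.s., with no Borel--Cantelli and no independence needed. Your strategy could perhaps be salvaged by replacing the heat-kernel input with the same capacity estimate --- at which point Borel--Cantelli becomes superfluous --- so the honest comparison is that the paper's purely potential-theoretic argument both fills the gap you flagged and is structurally simpler.
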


As a consequence of the proposition, the modified massive \emph{drifted} Arratia flow ---if any--- exists only on one-dimensional manifolds.

\subsubsection{Main results~IV: positive-range singular interaction}\label{sss:PairwiseInteraction}
Let us now turn to the case of positive-range interaction.
In~\S\ref{ss:Girsanov}, we will give a broad treatment of general interactions, external-field potentials, and other modifications of massive particle systems for a general (diffusive) driving noise under Assumption~\ref{ass:IntroSetting}.
These modifications correspond to Girsanov transforms of the measure representation by a general weight~$\phi$ in the broad local space of the Dirichlet form in Theorem~\ref{t:IntroMeasRep}.
Unlike prior results requiring $\mathcal{C}^2$-smoothness in Wasserstein space (e.g., \cite{KonLehvRe19b}), our method accommodates singular and non-smooth weights~$\phi$, thus capturing a much broader class of interactions, possibly neither pairwise nor mean-field.

In order to make sense of~\eqref{eq:IntroSPDE} in the presence of an additional drift accounting for the interaction, similarly to the case of~$F$ in~\eqref{eq:DK0}, let us first note that the extrinsic derivative in~\eqref{eq:ExtrinsicDerivative} ---restricted to measures--- is defined independently of any structure on~$\mbbT^d$, and is thus well-defined for measures on an arbitrary ambient space~$M$.

Again via the theory of Hilbert tangent bundles to a Dirichlet space, an `exterior differential'~$\mssd$ is induced on functions on~$M$ with values in sections of a `tangent bundle' to~$M$, and we may replace the standard gradient~$\nabla$ in~\eqref{eq:DK0} with such~$\mssd$.

\begin{theorem}[See~\S\ref{sss:GeneralGirsanov}]\label{t:IntroGirsanov2}
In the setting of Assumptions~\ref{ass:IntroSetting} and~\ref{ass:QPPIntro}, suppose further that~$\mssW$ is a diffusion with generator~$\mssL$.
Assume~$0<\phi^2<\infty$ is in the domain of the Dirichlet form of the measure representation~$\mu_\bullet$, i.e.\ of the solution to the free Dean--Kawasaki equation with singular drift~\eqref{eq:IntroSPDE}.
Then, the Dean--Kawasaki-type equation
\begin{equation}\label{eq:IntroSPDEInteractions}
\diff\mu_t= \mssdiv(\sqrt{\mu_t}\, \xi)+\sum_{x\in \mu_t} \mssL'\delta_x \diff t + \mssdiv\paren{\mu_t\, \mssd\frac{\delta \phi}{\delta\mu_t}(\mu_t)} \diff t
\end{equation}
admits a unique (analytically weak) solution~$\mu_\bullet^\phi$, which is the Girsanov transform of~$\mu_\bullet$ by weight~$\phi$.
\end{theorem}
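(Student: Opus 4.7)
The plan is to realize $\mu_\bullet^\phi$ as the Markov process properly associated with the Dirichlet form obtained from $\mcE_\mcQ$ (Theorem~\ref{t:IntroCheeger}), $\mcQ\eqdef\mcQ_{\pi,\nu}$, by a Girsanov-type reweighting of the reference measure by $\phi^2$, then to match its martingale problem to~\eqref{eq:IntroSPDEInteractions}, and finally to conclude uniqueness from the essential self-adjointness established in Theorem~\ref{t:IntroDK}.

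\emph{Form perturbation and generator.} On the extended cylinder algebra~\eqref{eq:HCylinderFEps} define
\[
\mcE^\phi(u,v)\eqdef \int \boldGamma(u,v)\,\phi^2\diff\mcQ,
\]
with $\boldGamma$ the carré du champ of $\mcE_\mcQ$. Since $\phi^2\in\dom{\mcE_\mcQ}$ and $0<\phi^2<\infty$ $\mcQ$-a.e., standard perturbation theory for strongly local quasi-regular symmetric Dirichlet forms yields that $\mcE^\phi$ is closable in $L^2(Z^{-1}\phi^2\mcQ)$ with $Z\eqdef\int\phi^2\diff\mcQ$, and that its closure is a quasi-regular strongly local Dirichlet form with the same carré du champ as $\mcE_\mcQ$. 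Let $\mu_\bullet^\phi$ be its properly associated Markov process. Integration by parts against $\phi^2\diff\mcQ$ gives the generator
\[
\mcL^\phi u \ =\ \mcL u + \tfrac{2}{\phi}\,\boldGamma(\phi,u)
\]
on the extended cylinder algebra, with $\mcL$ the free generator~\eqref{eq:GeneratorIntro}.

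\emph{Matching the \textsc{spde}.} By Theorem~\ref{t:IntroCheeger}(ii), $\boldnabla u|_\eta=\mssd\tfrac{\delta u}{\delta\mu}(\eta)$ on cylinder $u$, and the divergence theorem on the Hilbert tangent bundle dualizes the added drift $\tfrac{2}{\phi}\boldGamma(\phi,u)$, against linear test cylinder functions $u(\eta)=\int f\diff\eta$, to the additional term $\mssdiv\bigl(\mu\,\mssd\tfrac{\delta \phi}{\delta\mu}(\mu)\bigr)$ appearing in~\eqref{eq:IntroSPDEInteractions}. Since the carré du champ is unchanged, the martingale part in the Fukushima decomposition of $\mu_\bullet^\phi$ coincides with that of $\mu_\bullet$, preserving the gradient-type noise $\mssdiv(\sqrt{\mu}\,\xi)$ identified in Theorem~\ref{t:IntroDK}. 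Thus $\mu_\bullet^\phi$ is an analytically weak martingale solution to~\eqref{eq:IntroSPDEInteractions}.

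\emph{Uniqueness.} Two complementary routes are available. The first is a direct Girsanov argument at the level of the martingale problem: any analytically weak martingale solution $\tilde\mu_\bullet$ to~\eqref{eq:IntroSPDEInteractions} has, after removing the exponential martingale associated with the added drift, the law of an analytically weak martingale solution to the free equation~\eqref{eq:IntroSPDE}, whose law is uniquely determined by Theorem~\ref{t:IntroDK}; reversing the Girsanov transform pins down the law of $\mu_\bullet^\phi$. The second route deduces essential self-adjointness of $\mcL^\phi$ on the extended cylinder algebra from that of $\mcL$ (the key input of Theorem~\ref{t:IntroDK}) by a Kato--Rellich-type argument, exploiting that $\tfrac{2}{\phi}\boldGamma(\phi,\emparg)$ is a symmetric first-order perturbation in $L^2(\phi^2\mcQ)$.

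\emph{Main obstacle.} The central difficulty is that $\phi^2$ is merely a Dirichlet-form element, so neither smooth, nor a cylinder function, nor bounded away from $0$ and $\infty$. Extending the identification $\boldnabla \phi|_\mu=\mssd\tfrac{\delta \phi}{\delta\mu}(\mu)$ beyond the cylinder algebra by a capacity/density argument, and controlling $\tfrac{1}{\phi}\boldGamma(\phi,\emparg)$ sharply enough to run either Kato--Rellich or the exponential-martingale construction under the sole hypothesis $\phi^2\in\dom{\mcE_\mcQ}$, are the technically most demanding steps. The direct Girsanov route is expected to be the more robust one, since it requires only closability and quasi-regularity of $\mcE^\phi$, both already ensured by the standing hypothesis on $\phi$.
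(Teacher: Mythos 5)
The high-level architecture of your proposal --- perturb the reference measure by $\phi^2$, identify the transformed generator on cylinder functions, dualize the first-order drift correction against linear cylinder tests to recover the extra term in the \textsc{spde}, then argue uniqueness --- is indeed how the paper proceeds (Theorem~\ref{t:Girsanov}, Theorem~\ref{i:t:GirsanovDK}). But you skip the step the paper singles out as the main technical obstacle (\S\ref{ss:IntroInteraction}, ``Main difficulties IV''): \emph{conservativeness} of the transformed process $\widehat\mcW_\pi^\phi$. Closability and quasi-regularity of $\widehat\mcE_\pi^\phi$, which you invoke, yield only a properly associated right process with a priori \emph{finite} lifetime; to obtain a global-in-time solution one must rule out explosion, coagulation, and escape of particles under the added interaction. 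The paper does this by verifying the hypotheses of Kuwae's conservation criterion [Kuw98b, Thm.~5.1], in particular condition $(\mbfA)$ there, which forces the introduction of an auxiliary sub-intrinsic metric (the bounded-Lipschitz distance $\rho_{\mathrm{bL}}$ on $(\msP,\T_\mrmn)$) and thus the Lipschitz structure of Assumption~\ref{ass:Girsanov}. Your proposal neither addresses conservativeness nor invokes the machinery needed for it.

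On uniqueness, your second route (Kato--Rellich toward essential self-adjointness of $\widehat\mcL^\phi$) is a dead end and is \emph{not} what the paper does: under the sole hypothesis $\phi^2\in\dom{\widehat\mcE_\pi}$ the drift $\phi^{-1}\widehat\mcG_\pi(\phi,\cdot)$ is not relatively bounded, and more fundamentally the setting --- a new Hilbert space $L^2(\mcQ_\pi^\phi)$ in which the cylinder algebra is not a domain of an unperturbed essentially self-adjoint operator --- is not a Kato--Rellich situation. The paper instead proves the strictly weaker \emph{Markov uniqueness} of $(\widehat\mcL^\phi,\hCylP{}{0})$ on $L^2(\mcQ_\pi^\phi)$ via Wu's $L^1$-uniqueness theorem (Theorem~\ref{t:Girsanov}\ref{i:t:Girsanov:6}--\ref{i:t:Girsanov:7}, under the extra moment condition~\eqref{eq:t:Girsanov:000}), and then closes the martingale-problem equivalence in Theorem~\ref{i:t:GirsanovDK} by inverting the Girsanov transform and falling back on essential self-adjointness of the \emph{free} generator. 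Your first route (remove the exponential martingale) is closer in spirit, but as stated it elides the need for that exponential local martingale to be a true martingale --- precisely the issue that conservativeness and Markov uniqueness resolve. Minor but worth tracking: your generator formula $\mcL^\phi u = \mcL u + \tfrac{2}{\phi}\boldGamma(\phi,u)$ carries a spurious factor $2$ relative to Theorem~\ref{t:Girsanov}\ref{i:t:Girsanov:1}, where $\widehat\mcL^\phi u = \widehat\mcL u + \phi^{-1}\widehat\mcG_\pi(\phi,u)$; this traces back to the paper's carré-du-champ convention, which omits the customary factor $\tfrac12$, and the discrepancy propagates into the matching of the drift term.
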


Besides the general setting, one main advantage of our Dirichlet-form-theory approach to Girsanov transforms is that we only need a \emph{Sobolev-type regularity} for the weight~$\phi$.
Thus, Theorem~\ref{t:IntroGirsanov2} improves on the analogous result by Konarovsky, Lehmann, and von Renesse~\cite{KonLehvRe19b} in the rigid case (i.e.\ for systems consisting of finitely many particles with equal mass on~$\R^d$) and with interaction potentials~$F$ on~$\msP$ which are \emph{$\mcC^2$-differentiable} in the Kantorovich--Rubinstein (i.e.\ intrinsic) sense.

\paragraph{Examples} Two simple examples to which our theory applies are as follows.
Consider the standard $d$-dimensional torus~$\mbbT^d\eqdef (\R/\Z)^d$ with the flat metric~$g=g_{\Flat}$ and the associated intrinsic distance~$\mssd=\mssd_{\Flat}$.
We identify~$\mbbT^d$ with the unit cube~$[0,1)^d$, inheriting from~$\R^d$ the additive group structure and the Lebesgue measure.
Up to this identification,
\[
\mssd=\mssd_{\Flat}(x,y)^2=  \sum_k^d \tparen{\abs{x_k-y_k}\wedge \abs{1-x_k+y_k}}^2 \fstop
\]

\begin{example}[Subcritical Riesz interaction on~$\mbbT^d$ for~$d\geq 3$]
Let~$\mssW$ be a Brownian motion on~$\mbbT^d$ and~$\xi$ be an $\R^d$-valued space-time white noise. Fix~$p\in [1,d-1)$. 
For each~$\seq{s_i}_i$ in the infinite ordered simplex the infinite system of \textsc{sde}s
\begin{equation*}
\diff X^i_t = \diff \mssW^i_{t/s_i} - \frac{\beta}{2} \sum_j^N s_i\, s_j \frac{\nabla^{g} \mssd(X^i_t,\emparg) (X^j_t)}{\mssd(X^i_t,X^j_t)^{p+1}} \diff t\comma \qquad i\in \N\fstop
\end{equation*}
admits a unique solution and its measure representation solves the \textsc{spde}
\begin{equation*}
\diff \mu_t = \div^{g}(\sqrt{\mu_t }\, \xi) +\sum_{x\in \mu_t} \Delta\delta_x \, \diff t + \frac{\beta}{2} \div^{g}\paren{\mu_t \int_{\mbbT^d} \frac{\nabla^{g} \mssd(y,\emparg)}{\mssd(y,\emparg)^{p+1}} \diff\mu_t(y)} \diff t\fstop
\end{equation*}
\end{example}

\begin{example}[Dyson Brownian motion on~$\mbbT^d$ for~$d\geq 2$]
Let~$\mssW$ be a Brownian motion on~$\mbbT^d$ and~$\xi$ be an $\R^d$-valued space-time white noise.
For each~$\seq{s_i}_i$ in the infinite ordered simplex the infinite system of \textsc{sde}s
\begin{equation*}
\diff X^i_t = \diff \mssW^i_{t/s_i} - \frac{\beta}{2} \sum_j^N s_i\, s_j \frac{\nabla^{g} \mssd(X^i_t,\emparg) (X^j_t)}{\mssd(X^i_t,X^j_t)}  \diff t\comma \qquad i\in \N\fstop
\end{equation*}
admits a unique solution and its measure representation solves the \textsc{spde}
\begin{equation*}
\diff \mu_t =  \div^{g}(\sqrt{\mu_t }\, \xi) +\sum_{x\in \mu_t} \Delta\delta_x  \, \diff t + \frac{\beta}{2} \div^{g}\paren{\mu_t \int_{\mbbT^d} \frac{\nabla^{g} \mssd(y,\emparg)}{\mssd(y,\emparg)}  \diff\mu_t(y)} \diff t\fstop
\end{equation*}
\end{example}

These examples demonstrate that classical interacting particle systems, such as Dyson Brownian motion or Riesz-type models, can be rigorously embedded into our general framework ---even under singular noise and infinite-particle settings.

\subsubsection{Main difficulties IV: conservativeness and Markov uniqueness}
In light of the measure representation for free systems, the existence of massive-particle solutions to~\eqref{eq:IntroSPDEInteractions} \emph{up to the first collision time} follows by a careful application to our setting of the general theory of Girsanov transforms for quasi-regular Dirichlet forms, developed in~\cite{Ebe96, Kuw98b, CheZha02,CheSun06}.

The main difficulty of our analysis rather lies in showing that Girsanov weights~$\phi$ which are merely locally of Sobolev type~$W^{1,4}$ give rise to interactions which are still moderate in the sense of the classification in~\S\ref{sss:IntroZeroRange} ---that is, no particle collision occurs.
A proof of this fact is non-trivial even in the case of \emph{repulsive} interactions.
Indeed ---especially on compact ambient spaces--- it is not unreasonable to expect that several particles with very small mass may aggregate into a single particle of larger mass, thus constituting a better competitor in an energy landscape driven by repulsive mean-field interactions.
This form of \emph{intermittency} is indeed observed in low-dimensional systems of overdamped Brownian particles; see e.g.~\cite[Fig.~3]{DelOllLopBlaHer16}.

In stochastic terms, moderate interactions correspond to conservativeness of the transformed process on the cozero set of~$\phi^{-1}$.
We prove the latter conservativeness thanks to a general result by K.~Kuwae,~\cite{Kuw98b}.
As it is often the case for Dirichlet forms on abstract state spaces, the assertions in~\cite{Kuw98b} depend on a good interplay between the ambient's original topology and the geometry induced on it by the form via some sub-intrinsic distance; cf.\ the notions of \emph{strict locality} and \emph{strong regularity} for Dirichlet forms.
In the present case, this good interplay is granted by the existence of an auxiliary topology coarser than the natural weak atomic topology~$\T_\mrma$ ---namely, the narrow topology~$\T_\mrmn$--- metrized by a sub-intrinsic distance ---the bounded-Lipschitz distance on~$\msP$ or the $L^2$-Kantorovich--Rubinstein distance on~$\msP_2$.

In other words, there is much more to massive particle systems than the solution theory to infinite systems of \textsc{sde}s on infinite product spaces:
While considerations about the narrow topology~$\T_\mrmn$ or the $L^2$-Kantorovich--Rubinstein distance~$W_2$ do not intervene in the construction of an interacting massive system, its identification with the corresponding measure representation as an element of~$\msP$ endowed with~$\T_\mrmn$ and~$W_2$ becomes an essential tool in assessing some of the system's fundamental stochastic properties, such as conservativeness or Varadhan's short-time asymptotics.

\subsection{Outlook}\label{ss:Outlook}
Let us collect here results in connection with other topics.

\subsubsection{Regularization by noise vs.\ turbulence}
At the intersection of control and game theories, stochastic analysis, and calculus of variations, the theory of mean-field models has recently seen a fast development; see, e.g., the monographs~\cite{CarDelLasLio19,CarDel18a,CarDel18b}.
In the mean-field regime, control and game problems may be rephrased as partial differential equations on spaces of probability measures; for an account of recent developments in this direction see e.g.~\cite{DelHam22,CecDel22,GanMaySwi21} and references therein.
Approaches to well-posedness of such equations include ---among others--- metric gradient flows~\cite{ConKraTon23,ConKraTon23b,ConKraTamTon24}, and regularization by noise, e.g.~\cite{ChoGan17,DelHam22,DelHam24}.

\paragraph{Common noise and smoothing}
In particular for this second approach, it is a relevant goal of the theory to construct semigroups acting on functionals on~$\msP$ and displaying good smoothing properties.
On the one hand, the strength of the smoothing effect depends on the reach of the noise: a finite-dimensional noise may induce Sobolev-to-continuously-differentiable regularization, as it is the case for the process generated by Y.T.~Chow and W.~Gangbo's \emph{partial Laplacian}~\cite{ChoGan17} on~$\msP_2(\R^d)$; while an infinite-dimensional noise may induce the stronger bounded-to-Lipschitz regularization, as recently shown by F.~Delarue and W.R.P.~Hammersley for the \emph{rearranged stochastic heat equation}~\cite{DelHam22,DelHam24} on~$\msP(\R)$.
On the other hand, however, the very presence of a smoothing effect is related to the structural properties of the noise, and usually arises in mean-field models in which players are subject to a \emph{common} noise.

\paragraph{Private noise and turbulence}
By contrast, the measure representation~$\mu_\bullet$ of a free massive system experiences only \emph{private} noise, in that each particle in the system is subject to its own independent instance of the driving noise~$\mssW$.
This begs the following question, already hinted at in~\cite[pp.~2f.]{DelHam22} and~\cite[p.~3]{DelHam24}.

\begin{question}[Delarue--Hammersley]
Does the semigroup~$\mcH_\bullet$ of~$\mu_\bullet$ enjoy any smoothing property?
\end{question}

The answer to this question is resoundingly negative.
In order to see this, let~$\T_\mathrm{conn}$ be any topology on~$\msP$ for which~$\msP$ is connected, e.g.\ the narrow topology, the vague topology, the $L^p$-Wasserstein topology for any~$p\geq 1$, or the norm topology induced by the total variation.
Then, there exist (plenty of) bounded Borel functions~$u\colon \msP\to\R$ such that

\begin{answer}[negative]
$\mcH_t u$ is \emph{not} $\T_\mathrm{conn}$-continuous for any~$t>0$.
\end{answer}

Indeed, it follows from Theorem~\ref{t:IntroMeasRep} that~$\mu_\bullet$ is generically not ergodic.
Thus, since~$\mcH_\bullet$ fixes the indicator functions of (non-trivial) $\mu_\bullet$-invariant sets,~$\mcH_t\car_A=\car_A$ is continuous for some topology on~$\msP$ if and only if~$A$ is simultaneously open and closed.
In particular,~$\mcH_\bullet$ is not bounded-to-Lipschitz regularizing, for any distance for which~$\msP$ is connected.
In other words, the absence of ergodicity ---encoded in the non-trivial invariant $\sigma$-algebra--- manifests analytically as a failure of even basic regularization properties of the semigroup $\mcH_\bullet$.

Furthermore, since not only particles experience private noise, but also the \emph{strength} of the noise is private,~$\mu_\bullet$ is not a stochastic flow; and in the absence of the flow property, any smoothing effect of the semigroup~$\mssH_\bullet$ of~$\mssW$ is not immediately reflected by the semigroup~$\mcH_\bullet$ of~$\mu_\bullet$, not even on invariant sets.
Precisely,~$\mu_\bullet$ is \emph{not} a \emph{stochastic flow} in the sense of Le~Jan--Raimond~\cite{LeJRai04}, since the family of semigroups~$\mssP^\sym{n}_\bullet$ describing the evolution of the largest~$n$ particles of~$\mu_\bullet$ does not satisfy the compatibility condition in~\cite[Dfn.~1.1]{LeJRai04}.
In particular, as a consequence of the characterization in~\cite{LeJRai04}, there exists no stochastic flow of random measurable mappings~$\phi_\bullet\eqdef\seq{\phi_{s,t}}_{0\leq s\leq t}$, with~$\phi_{s,t}\colon M\to M$, so that~$\mu_t={\phi_{0,t}}_\pfwd \mu_0$ for~$t>0$.
Intuitively, the `flow'~$\mu_\bullet$ is so `turbulent' that the ---arbitrarily fast--- particles in~$\mu_\bullet$ `tear apart the fabric of space in a non-measurable way'.
This is further reflected by the non-linear nature of the operator in the singular drift term of~\eqref{eq:IntroSPDE}, which does not depend on the masses of the atoms of~$\mu_\bullet$, but rather only on their locations.

\subsubsection{Possible extensions: jump noise, multiple species, momenta}
We conclude this Introduction by listing some further possible generalizations and extensions to be addressed elsewhere.

\paragraph{The Dean--Kawasaki equation with jump noise}
From a physical point of view,
\begin{quotation}
\guillemotleft [\emph{mesoscopic systems}] \emph{are macroscopic in extent yet microscopic in their reflection of quantum mechanical character.
Mesoscopic systems manifest a multitude of unusual quantum phenomena from localization to strong sample-to-sample fluctuations} [\emph{...}].\guillemotright
\cite[p.~302]{AltSim10}.
\end{quotation}

In mathematical terms, we expect these quantum phenomena ---a simple one being quantum tunnelling--- to be modelled by a noise~$\mssW$ with non-trivial jump part.
Due to our use of the machinery of Hilbert tangent bundles, we are not able to give a rigorous meaning to~\eqref{eq:IntroSPDE} when~$\mssL$ generates such a noise~$\mssW$.
Indeed, in the jump setting, the absence of a tangent structure on the underlying space prevents the direct application of Hilbert-module constructions for the divergence operators needed to rigorously formulate the \textsc{spde}.
However, by analogy with the diffusive case, we expect~\eqref{eq:IntroSPDE} to admit solutions given as the measure representation of a free particle system driven by some jump noise~$\mssW$ generated by~$\mssL$, exactly as in the diffusive case.
Such systems and their measure representations exist and are unique by Theorems~\ref{t:Intro1} and \ref{t:IntroMeasRep}.

\paragraph{Multiple species}
In this work we only consider single-species systems.
That is, all the particles are subject to (independent instances of) the \emph{same} driving noise.
While we focus here on single-species systems for clarity, our framework ---particularly the product Dirichlet-form construction--- naturally accommodates heterogeneous species with differing dynamics and interaction types, as it is readily derived from Theorem~\ref{t:BendikovSaloffCoste} below.

\paragraph{Momenta and other physically relevant quantities}
Our description of massive particle systems relies on the markings~$s_i$.
The interpretation of these markings as the mass of each particle is however merely expedient.
Indeed, we may as well replace the markings by any other set of spins~$q_i$ in some abstract spin space~$Q$ (e.g.~$\R$,~$\R^d$,~$\mbbS^{d-1}$, etc.), together with a function~$\kappa\colon Q\to \R_+$, and satisfying~$\sum_i \kappa(q_i)=1$.
These new markings~$q_i$ may then be used to better describe interactions in a multispecies system, where particles belonging to different species are still driven by the same noise with volatility~$\kappa(q_i)$, but their interaction takes into account the additional degrees of freedom encoded by the spins~$q_i$'s.
The simplest example is that of charged particles: $q_i$ is the (positive or negative) charge of a particle, driven by a noise with strength~$\kappa(q_i)=\abs{q_i}$ simply proportional to the charge's magnitude.

\bigskip

These directions suggest that the mathematical theory of massive systems at the mesoscopic scale is not only robust, but also deeply extensible ---bridging probability, geometry, and physics across several dimensions.

\begin{figure}[htb!]
\centerline{
\xymatrix@C=4em{
&& \txt{Wasserstein-type\\diffusions~\cite{LzDS17+,KonvRe17,KonvRe18}} \ar@{<=>}_{\txt{\tiny local\\ \tiny case}}[d]
\\
& \txt{form on\\marked space} \ar@{<=>}[r]^{\txt{\tiny quasi-homeo.}}_{\txt{\tiny Thm.~\ref{t:TransferP}}} & \txt{form on probab.\\ measures}\ar@/^1.5pc/[r]^{\txt{\tiny Girsanov transf.}}_{\txt{\tiny Thm.~\ref{t:Girsanov}}} \ar@{<=>}[dr] & \txt{interacting\\system} \ar@{=>}[d]
\\
\txt{$\infty$-product\\form~\cite{BouHir91}} \ar@{<=>}[r]^{\txt{\tiny Thm.~\ref{t:BendikovSaloffCoste}}} \ar@{<=>}[d]_{\acts} & \txt{form on\\$\infty$-product~\cite{AlbDalKon00}} \ar@{<=>}[d]^{\txt{\tiny Thm.~\ref{t:BendikovSaloffCoste}}}  \ar[u]^{\txt{\tiny direct\\ \tiny integration}}_{\txt{\tiny Thm.~\ref{t:DirInt}}} & \txt{Dean--Kawasaki\\\textsc{spde}~\cite{KonLehvRe19,KonLehvRe19b,KonMue23}} \ar@{<=>}[d]_{\acts} & \txt{martingale\\problem} \ar@{<=>}[l]_{\txt{\quad \tiny local case}}
\\
\txt{$\infty$-product\\semigroup~\cite{Gil74}} \ar@{<=>}[r]_{\txt{\tiny Cor.~\ref{c:Gill}}} & \txt{$\infty$-product\\kernel~\cite{BenSaC97}} \ar@{<=>}[r]^{\txt{\tiny Cor.~\ref{c:Wellposedness}}} & \txt{free massive\\system}
}
}
\caption{A diagram of the main results}\label{fig:Diagram}

\end{figure}

\newpage

\section{Free massive systems: particle representation}\label{s:FIS}
Throughout this work, we shall make extensive use of the theory of Dirichlet forms both in its analytic and its probabilistic aspects.
We adhere to the standard terminology and notation in the monographs~\cite{MaRoe92,FukOshTak11} and in~\cite{Kuw98,CheMaRoe94}.
For ease of exposition and for reference, all the relevant definitions and some standard facts are recalled in the Appendix \S\ref{app:DirichletForms}.

\subsection{Infinite-product spaces}
We will be interested in several constructions on products of up-to-countably many factors.
In order to discuss infinite products, it will be convenient to establish the following shorthand notation. 

\begin{notation}[Product notation]
Whenever~$\square$ is a product symbol (e.g.\ a tensor product, or a Cartesian product) of objects~$a_j,b_j$ indexed over a totally ordered index set~$J$, we write
\[
a_j \square_{k\neq j} b_j \eqdef \paren{\square_{k<j} b_j }\ \square\ a_j\ \square \paren{\square_{k>j} b_j} \fstop
\]
This shorthand is convenient in abbreviating longer expressions. No misunderstanding may arise on its meaning, since~$J$ is totally ordered and the product~$\square_{j\in J} a_j$ is only considered in the order of~$J$.
\end{notation}

\subsubsection{Main assumptions}
Let us state the main assumptions on each factor.

\begin{assumption}\label{ass:SettingK}
For each~$k\in\N$, assume that
\begin{itemize}[wide, labelindent=0pt]
\item $(M_k,\T_k)$ is a second-countable locally compact Hausdorff space;
\item $\nu_k$ is a Borel probability measure on~$M_k$;
\item $\tparen{\mssE_k,\dom{\mssE_k}}$ is a $\T_k$-\emph{regular}, \emph{conservative} (symmetric) Dirichlet form on~$L^2(\nu_k)$;
\item $\tparen{\mssL_k,\dom{\mssL_k}}$ is the non-negative self-adjoint generator of~$\tparen{\mssE_k,\dom{\mssE_k}}$ on~$L^2(\nu_k)$;
\item $\mssH_{k,\bullet}$ is the self-adjoint Markov semigroup on~$L^2(\nu_k)$ generated by $\tparen{\mssL_k,\dom{\mssL_k}}$;
\item $\msA_k\subset \Cz(\T_k)\cap\dom{\mssE_k}$ is an algebra of functions both $\Cz$-dense in~$\Cz(\T_k)$ and~$(\mssE_k)^{1/2}_1$-dense in~$\dom{\mssE_k}$;\footnote{The reader will note that~$\msA_k$ is \emph{not} a \emph{core} for~$\tparen{\mssE_k,\dom{\mssE_k}}$ in the standard sense, e.g.~\cite[p.~6]{FukOshTak11}, in that we do not require~$\msA_k\subset \Cc(\T_k)$. (Note that~\cite{FukOshTak11} indicates by the symbol~$\Cz$ what we indicate by~$\Cc$.)}
\item $\mssW_k$ is the $\nu_k$-symmetric \emph{Hunt} process properly associated with~$\tparen{\mssE_k,\dom{\mssE_k}}$.
\end{itemize}
\end{assumption}

In order to discuss infinite tensor products below, we need the following definition.

\begin{notation}[Unital extension]\label{n:UnitalExtension}
For any algebra~$\msA$, we denote by~$\msA^\mrmu\eqdef \msA\oplus\R\car$ its smallest unital extension, the appropriate unit~$\car$ ---usually, the constant function~$\equiv 1$--- being apparent from context.
\end{notation}

If~$M_k$ is non-compact, then~$\msA_k\subsetneq \msA_k^\mrmu$.
If~$M_k$ is compact, we can assume~$\msA_k^\mrmu=\msA_k$ without loss of generality.
We will comment on why this is possible in Remark~\ref{r:Unitality} below.

\subsubsection{Cylinder functions}
In order to define a Dirichlet form on the infinite product of the factors~$M_k$, we need a good class of `smooth' functions defined on it.
This will be defined as the natural image of a class of functions in the form of an infinite tensor product.

\paragraph{Infinite tensor products}
We will consider \emph{von Neumann incomplete infinite tensor products},~\cite{Neu39}.
We recall the main definitions of such tensor products after A.~Guichardet~\cite{Gui69a,Gui69b}.
Everywhere in this section,~$A=\set{a_1,\dotsc,a_{\card{A}}}$ is a \emph{finite} subset of~$\N$.

For each~$k\in\N$ let~$(H_k,\scalar{\emparg}{\emparg}_k)$ be a Hilbert space, and fix a unit vector~$t_k\in H_k$.
For each~$\mbft\eqdef\seq{t_k}_k$, we denote by~$\sotimes_k^{\mbft} E_k$ the linear span of all elementary tensors of the form
\[
\sotimes_k f_k\comma \qquad f_k\in E_k\comma \quad f_k= t_k \text{ for every } k\notin A\comma
\]
for some \emph{finite}~$A\subset \N$.
The scalar product defined as the linear extension from elementary tensors of
\[
\scalar{\sotimes_k f_k}{\sotimes_k g_k}= \sprod_k \scalar{f_k}{g_k}_k
\]
is a pre-Hilbert scalar product on~$\sotimes_k^\mbft H_k$, and we denote by~$\shotimes_k^\mbft H_k$ its Hilbert completion.

\medskip

Now, let~$\mbfM\eqdef \sprod_k M_k$ be endowed with the product topology~$\T_\mrmp$ and with the Borel probability measure~$\boldnu\eqdef\otimes_k \nu_k$.
Further note that~$\car_k$ is a unit vector in~$L^2(\nu_k)$ for each~$k$, and set~$\car\eqdef \seq{\car_1,\car_2,\dotsc}$.
For each finite~$A\subset \N$, let~$M_A\eqdef \sprod_{a\in A} M_a$ be endowed with the product $\sigma$-algebra and with the product measure~$\nu_A\eqdef \otimes_{a\in A} \nu_a$.
Further set
\[
\mbfx_A\eqdef \ttseq{x_{a_1},\dotsc, x_{a_{\card{A}}}}\comma \qquad \mbfx\in\mbfM\fstop
\]

Consider the infinite tensor product of Hilbert spaces~$\sotimes_k^{\car} L^2(\nu_k)$.
Elementary tensors in~$\sotimes_k^{\car}L^2(\nu_k)$ have the form
\begin{equation}\label{eq:ElemTensorAlgebra}
F= F_A= \sotimes_k f_k \comma \qquad f_k\in L^2(\nu_k) \comma \quad f_k\equiv \car_k \text{ for every } k\notin A\comma
\end{equation}
for some finite~$A\subset \N$.
The \emph{evaluation}~$\ev$ is the linear extension to~$\sotimes_k^{\car} L^2(\nu_k)$ of the map defined on elementary tensors by
\[
\ev \colon F_A \longmapsto F_A(\emparg_A)\colon \mbfx\mapsto \sprod_{a\in A} f_a(x_a) \comma
\]
where~$F_A$ is as in~\eqref{eq:ElemTensorAlgebra}.

\begin{lemma}[Guichardet,~{\cite[\S6, no. 6.4, Cor.~6, p.~30']{Gui69b}}]\label{l:Guichardet}
The map~$\ev$ has a unique (non-relabel\-ed) linear extension
\begin{equation}\label{eq:l:Guichardet:0}
\ev \colon \shotimes^{\car}_k L^2(\nu_k) \longrar L^2(\boldnu)
\end{equation}
and the latter is an isomorphism of Hilbert spaces.
\end{lemma}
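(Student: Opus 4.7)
The plan is to verify that~$\ev$ is well-defined as an isometry on the algebraic infinite tensor product~$\sotimes_k^\car L^2(\nu_k)$, then extend by continuity to the Hilbert completion, and finally establish surjectivity by a density argument.

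\emph{Isometry on elementary tensors.} First I would take two elementary tensors~$F_A=\sotimes_k f_k$ and~$G_B=\sotimes_k g_k$ as in~\eqref{eq:ElemTensorAlgebra}, and let~$C\eqdef A\cup B$, which is a finite subset of~$\N$. Setting~$f_k=\car_k$ for~$k\notin A$ and~$g_k=\car_k$ for~$k\notin B$, I would write
\[
\ev(F_A)(\mbfx)\cdot \overline{\ev(G_B)(\mbfx)} = \sprod_{c\in C} f_c(x_c)\,\overline{g_c(x_c)}\fstop
\]
Fubini's theorem for the finite product measure~$\nu_C$, coupled with the fact that~$\boldnu$ projects to~$\nu_C$ on the~$C$-coordinates and that the remaining factors integrate to~$\scalar{\car_k}{\car_k}_k=1$, yields
\[
\scalar{\ev F_A}{\ev G_B}_{L^2(\boldnu)} = \sprod_{c\in C} \int_{M_c} f_c\,\overline{g_c}\diff\nu_c = \sprod_{k\in\N} \scalar{f_k}{g_k}_k = \scalar{F_A}{G_B}\fstop
\]
The latter is precisely the inner product on~$\sotimes_k^\car L^2(\nu_k)$, so~$\ev$ is a linear isometry on the algebraic tensor product.

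\emph{Extension and injectivity.} Since~$\sotimes_k^\car L^2(\nu_k)$ is by definition dense in~$\shotimes_k^\car L^2(\nu_k)$, the BLT theorem gives a unique continuous linear extension~$\ev\colon \shotimes_k^\car L^2(\nu_k)\to L^2(\boldnu)$, and this extension remains isometric; in particular it is injective.

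\emph{Surjectivity via density of cylinder functions.} The main remaining task, and the only mildly delicate step, is to show that~$\ev\tparen{\sotimes_k^\car L^2(\nu_k)}$ is dense in~$L^2(\boldnu)$. Its image contains all functions of the form~$\mbfx\mapsto \sprod_{a\in A} f_a(x_a)$ for finite~$A\subset \N$ and~$f_a\in L^2(\nu_a)$, hence by linearity it contains all finite linear combinations of product indicators~$\sprod_{a\in A} \car_{E_a}(x_a)$ for Borel~$E_a\subset M_a$, i.e.\ indicators of measurable cylinders. The collection of finite unions of such cylinders forms an algebra generating the product~$\sigma$-algebra underlying~$\boldnu$, so by the monotone class theorem (or the standard density of simple functions based on a generating algebra) their linear span is dense in~$L^2(\boldnu)$. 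Thus~$\ev$ has dense range, and being an isometry between Hilbert spaces, it is a surjective isometric isomorphism.

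The only place where one must be genuinely careful is the passage from finite to infinite products: one must verify that the infinite scalar product~$\sprod_k\scalar{f_k}{g_k}_k$ reduces to the finite one over~$C$ (true here because~$f_k=g_k=\car_k$ and~$\norm{\car_k}_k=1$ outside a finite set), and that~$\boldnu$ is genuinely a probability measure on the product~$\sigma$-algebra (which is granted by Kolmogorov's extension theorem applied to the sequence~$\seq{\nu_k}_k$ of probability measures). Once these book-keeping points are in place, the conclusion is immediate.
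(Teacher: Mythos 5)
Your argument is correct, and since the paper itself offers no proof of this lemma (it is cited directly from Guichardet), there is no in-text proof to compare against; what you have written is essentially the standard argument one finds in the literature. The three steps are exactly right: the inner-product computation on a pair of elementary tensors (reducing to the finite index set $C=A\cup B$, using that the factors $\car_k$ outside $C$ contribute $\|\car_k\|_{L^2(\nu_k)}^2=1$), the sesquilinear extension to the algebraic tensor product followed by the BLT extension to the Hilbert completion, and the surjectivity via density of cylinder-set indicators. Two remarks worth recording. First, the isometry computation is doing double duty: it not only proves $\ev$ preserves norms but also shows that $\ev$ vanishes on the kernel of the pre-Hilbert semi-norm, which is precisely what makes the linear extension to the algebraic tensor product well-defined in the first place; it is worth stating this explicitly rather than treating well-definedness as given. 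Second, the density step is where the second-countability of the $M_k$ enters implicitly, since one needs the Borel $\sigma$-algebra of the product topology on $\mbfM$ to coincide with the product $\sigma$-algebra $\otimes_k\msB(M_k)$ for the monotone-class argument to give density in $L^2(\boldnu)$; this is guaranteed by Assumption~\ref{ass:SettingK} but deserves a word. Your closing caveat about the finiteness of the total mass of each $\nu_k$ (so that the infinite scalar product collapses) and the existence of $\boldnu$ as a bona fide product probability measure is exactly the right book-keeping.
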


\paragraph{Cylinder functions}
Similarly to the case of the infinite product of $L^2$-spaces discussed above, consider the infinite tensor product of (unital) algebras~$\sotimes_k^{\car} \msA^\mrmu_k$, see~\cite[\S3, p.~10]{Gui69b}.
Elementary tensors in~$\sotimes_k^{\car} \msA^\mrmu_k$ have the form~\eqref{eq:ElemTensorAlgebra} with~$f_k\in\msA_k^\mrmu$ for every~$k$.
It is readily verified that~$\sotimes_k^{\car} \msA^\mrmu_k$ is an algebra when endowed with (the linear extension of) the product
\[
\sotimes_k f_k \cdot \sotimes_k g_k \eqdef \sotimes_k f_kg_k \fstop
\]

\begin{definition}[Cylinder functions]\label{d:CylinderProduct}
We say that a function~$u\colon \mbfM\to\R$ is a \emph{cylinder function} if~$u=\ev F$ for some~$F\in \sotimes_k^{\car} \msA^\mrmu_k$.
A cylinder function~$u=\ev F$ is \emph{elementary} if~$F=F_A$ is elementary, in which case~$u=\sprod_{a\in A} f_a(\emparg_a)$.
We denote by~$\Cyl{}{}\eqdef \ev (\sotimes_k^{\car} \msA^\mrmu_k)$ the space of all cylinder functions.
\end{definition}

\begin{remark}
The evaluation~$\ev$ is an algebra isomorphism; in particular,~$\Cyl{}{}$ is an algebra and the representation of a cylinder function~$u=\ev F$ by~$F\in \sotimes_k^{\car} \msA^\mrmu_k$ is unique.
\end{remark}

\subsubsection{The generator and the Dirichlet form}
Define an operator~$(\bmssL,\Cyl{}{})$ as the linear extension to~$\Cyl{}{}$ of the operator defined on elementary cylinder functions by
\begin{equation}\label{eq:t:BendikovSaloffCoste:00}
\bmssL u \eqdef \ssum_k (\mssL_k f_k)(\emparg_k)\, \sprod_{i\neq k} f_i(\emparg_i)\comma \qquad u= \sprod_k f_k(\emparg_k) \fstop
\end{equation}

\begin{lemma}\label{l:GeneratorWellDefined}
The operator~$\bmssL$ is well-defined on~$\Cyl{}{}$ and
\begin{equation}\label{eq:t:BendikovSaloffCoste:000}
\bmssL u = \ssum_{a\in A} (\mssL_a f_a)(\emparg_a)\, \sprod_{\substack{i\in A\\ i\neq a}} f_i(\emparg_i) \comma \qquad u=\sprod_{a\in A} f_a(\emparg_a) \fstop
\end{equation}

\begin{proof}
Since~$\mssH_{k,\bullet}$ is conservative by Assumption~\ref{ass:SettingK}, we have~$\mssL_k \car_k\equiv 0$ for every~$k$.
It follows that the series in~\eqref{eq:t:BendikovSaloffCoste:00} reduces to the finite sum in~\eqref{eq:t:BendikovSaloffCoste:000}.
\end{proof}
\end{lemma}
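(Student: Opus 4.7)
The plan is that the entire claim hinges on the infinitesimal consequence of conservativeness. Since by Assumption~\ref{ass:SettingK} the semigroup~$\mssH_{k,\bullet}$ is conservative, $\mssH_{k,t}\car_k=\car_k$ for every $t\geq 0$, so the constant function $\car_k$ lies in $\dom{\mssL_k}$ with $\mssL_k\car_k=0$, for every $k\in\N$. I would record this as the first step, as it is the only non-formal input.

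Next, given an elementary cylinder function $u=\sprod_{a\in A} f_a(\emparg_a)$ with $f_k\equiv\car_k$ for all $k\notin A$, inspect the $k$-th summand of~\eqref{eq:t:BendikovSaloffCoste:00}: for $k\notin A$ the factor $(\mssL_k f_k)(\emparg_k)=(\mssL_k\car_k)(\emparg_k)$ vanishes identically, hence the summand is zero; for $k\in A$ the product $\sprod_{i\neq k} f_i(\emparg_i)$ reduces to $\sprod_{i\in A\setminus\set{k}} f_i(\emparg_i)$, since every missing factor is identically $1$. Thus the formal series~\eqref{eq:t:BendikovSaloffCoste:00} truncates to the finite sum~\eqref{eq:t:BendikovSaloffCoste:000}, in particular defines a bona fide bounded function, and actually another cylinder function.

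It remains to verify that the right-hand side of~\eqref{eq:t:BendikovSaloffCoste:000} depends only on~$u$ and not on the finite set~$A$ chosen to represent it. The only ambiguity is to replace $A$ by some $A'\supset A$ while padding $f_k\equiv\car_k$ for $k\in A'\setminus A$; the value of~\eqref{eq:t:BendikovSaloffCoste:000} is unaffected by this enlargement precisely because each additional summand again contains the factor $\mssL_k\car_k=0$, and any two finite representations admit a common refinement. Equivalently, this is exactly the compatibility required by the equivalence relation defining the von Neumann incomplete tensor product $\sotimes_k^{\car}\msA_k^\mrmu$. Linear extension from elementary tensors to all of $\sotimes_k^{\car}\msA_k^\mrmu$, and then transfer via the algebra isomorphism $\ev$ of Lemma~\ref{l:Guichardet} to $\Cyl{}{}$, defines $\bmssL$ unambiguously on every cylinder function; the remark preceding the lemma guarantees the uniqueness of the representation $u=\ev F$ needed for this transfer.

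I do not anticipate a substantive obstacle: the lemma is essentially a direct consequence of conservativeness via $\mssL_k\car_k=0$, and the only bookkeeping is the consistency check across refinements of the indexing set~$A$, which is a formal matter.
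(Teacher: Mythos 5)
Your proof is correct and takes the same approach as the paper: the key point in both is that conservativeness gives $\mssL_k\car_k\equiv 0$, which truncates the formal series~\eqref{eq:t:BendikovSaloffCoste:00} to the finite sum~\eqref{eq:t:BendikovSaloffCoste:000}. Your additional verification that the result is independent of the finite indexing set $A$ (via common refinement and the uniqueness of the representation through $\ev$) is a useful explicit elaboration of the well-definedness that the paper leaves implicit, but it is not a different argument.
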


We are now ready to state the main result of this section.

\begin{theorem}\label{t:BendikovSaloffCoste}
The quadratic form~$(\bmssE,\Cyl{}{})$ on~$L^2(\boldnu)$ defined by
\begin{equation}\label{eq:t:BendikovSaloffCoste:0}
\bmssE(u,v)\eqdef \scalar{u}{-\bmssL v}_{L^2(\boldnu)} \comma \qquad u,v\in \Cyl{}{}\comma
\end{equation}
is closable.
Its closure~$\tparen{\bmssE,\dom{\bmssE}}$ is a quadratic form on~$L^2(\boldnu)$ with generator the $L^2(\boldnu)$-Friedrichs extension~$\tparen{\bmssL,\dom{\bmssL}}$ of~$(\bmssL,\Cyl{}{})$.

Further define the following assumptions:
\begin{enumerate}[$(a)$]
\item\label{i:t:BendikovSaloffCoste:a} $\mssH_{k,\bullet}$ is irreducible and is represented by a kernel~$\mssh_{k,\bullet}$ absolutely continuous w.r.t.~$\nu_k$, and $\tparen{-\mssL_k,\dom{\mssL_k}}$ has spectral gap
\[
\lambda_k\eqdef \inf \set{\scalar{-\mssL_k f}{f}_{L^2(\nu_k)}: f\in\dom{\mssL_k}\comma \norm{f}_{L^2(\nu_k)}=1\comma \int_{M_k} f \diff\nu_k=0}>0 \fstop
\]
\item\label{i:t:BendikovSaloffCoste:d} there exist constants $c,C>0$ such that, for each~$k$,
\[
\nu_k^\otym{2}\text{-}\esssup_{x,y\in M_k} \mssh_{k,\eps_k}(x,y) \leq C \quad \text{for some~$\eps_k$ satisfying~$\eps_k\leq c/\lambda_k$}\semicolon
\]
\item\label{i:t:BendikovSaloffCoste:e} setting~$t_*\eqdef\inf\set{t>0: \sum_k e^{-2t\lambda_k}<\infty}$ it holds that~$t_*=0$;
\item\label{i:t:BendikovSaloffCoste:f} $\msA_k\subset\dom{\mssL_k}$ and~$\mssH_{k,t}\msA_k\subset \msA_k$ for every~$t>0$.
\end{enumerate}

If~\ref{i:t:BendikovSaloffCoste:a}--\ref{i:t:BendikovSaloffCoste:e} hold, then
\begin{enumerate}[$(i)$]
\item\label{i:t:BendikovSaloffCoste:1} the measure $\bmssh_t(\mbfx,\diff\mbfy)\eqdef \sprod_k \mssh_{k,t}(x_k,\diff y_k)$ is absolutely continuous w.r.t.~$\boldnu$ for every~$t>0$ and every~$\mbfx\in\mbfM$ if and only if~\ref{i:t:BendikovSaloffCoste:e} holds.
In this case,~$\bmssh_t$ has density~$\bmssh_t(\mbfx,\mbfy)\eqdef \sprod_k \mssh_{k,t}(x_k,y_k)$ in~$L^\infty(\boldnu)$.
\end{enumerate}

If~\ref{i:t:BendikovSaloffCoste:a}--\ref{i:t:BendikovSaloffCoste:f} hold, then, additionally,
\begin{enumerate}[$(i)$]\setcounter{enumi}{1}
\item\label{i:t:BendikovSaloffCoste:2} $(\bmssL,\Cyl{}{})$ is essentially self-adjoint on~$L^2(\boldnu)$;
\item\label{i:t:BendikovSaloffCoste:3} the semigroup~$\bmssT_\bullet$ of~$\tparen{\bmssL,\dom{\bmssL}}$ coincides with the Markov semigroup~$\bmssH_\bullet$ represented by~$\bmssh_\bullet$. In particular,~$(\bmssE,\dom{\bmssE})$ is a Dirichlet form;
\item\label{i:t:BendikovSaloffCoste:3.5} the semigroup~$\bmssT_\bullet$ is irreducible;
\item\label{i:t:BendikovSaloffCoste:4} $(\bmssE,\dom{\bmssE})$ is a $\T_\mrmp$-quasi-regular Dirichlet form, and it is $\T_\mrmp$-regular if and only if cofinitely many~$M_k$'s are compact.
\item\label{i:t:BendikovSaloffCoste:5} $(\bmssE,\dom{\bmssE})$ is properly associated with a Hunt process~$\bmssW$ identical to~$\seq{\mssW_k}_k$ in distribution;
\item\label{i:t:BendikovSaloffCoste:6} for every Borel probability measure~$\boldsigma$ on~$\mbfM$ with~$\boldsigma\ll\boldnu$, the process~$\bmssW$ is the ---unique up to $\boldnu$-equivalence--- $\boldnu$-substationary (see Dfn.~\ref{d:Substationary}) $\boldnu$-special standard process solving the martingale problem for~$(\bmssL,\Cyl{}{},\boldsigma)$. 
In particular, denoting by~$\mbfX_\bullet$ the trajectories of~$\bmssW$, and by~$\bmssGamma$ the carr\'e du champ of~$\bmssL$, viz.
\[
\bmssGamma(u,v)\eqdef \bmssL(uv)-u\bmssL v -v\bmssL u\comma \qquad u,v\in\Cyl{}{}\comma
\]
then, for every~$u\in\Cyl{}{}$ ($\subset \Cb(\T_\mrmp)$), the process
\begin{equation}\label{eq:t:BendikovSaloffCoste:1}
M^{\class{u}}_t\eqdef u(\mbfX_t)- u(\mbfX_0)-\int_0^t (\bmssL u)(\mbfX_s)\diff s
\end{equation}
is an adapted square-integrable $\boldsigma$-martingale with predictable quadratic variation
\begin{equation}\label{eq:t:BendikovSaloffCoste:2}
\tsharpb{M^{\class{u}}}_t = \int_0^t \bmssGamma(u)(\mbfX_s)\diff s \fstop
\end{equation}
\end{enumerate}
\end{theorem}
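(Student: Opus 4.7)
The plan is to combine three main ingredients: (a) Guichardet's identification~\eqref{eq:l:Guichardet:0} between $L^2(\boldnu)$ and the von~Neumann incomplete infinite tensor product~$\shotimes_k^\car L^2(\nu_k)$; (b) Gill's construction of infinite tensor products of $C_0$-semigroups on such spaces; (c) Bendikov--Saloff-Coste-type heat-kernel estimates under spectral gap and ultracontractivity.

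\emph{Closability and the generator.}\quad Define $\bmssH_t \eqdef \shotimes_k^\car \mssH_{k,t}$, which is well-defined as a self-adjoint Markov semigroup on $\shotimes_k^\car L^2(\nu_k) \cong L^2(\boldnu)$ because each $\mssH_{k,t}$ is a self-adjoint contraction fixing $\car_k$ (conservativeness). For an elementary cylinder $u=\ev(\sotimes_k f_k)$ supported on the finite set $A$, a direct computation using $\mssH_{k,t}\car_k=\car_k$ shows $\bmssH_t u \in \Cyl{}{}$ and, differentiating at $t=0$, recovers~\eqref{eq:t:BendikovSaloffCoste:000} for the generator on $\Cyl{}{}$. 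Symmetry on~$L^2(\boldnu)$ then gives closability of the pre-form~\eqref{eq:t:BendikovSaloffCoste:0}, and the closure is the $L^2(\boldnu)$-Friedrichs extension of $(\bmssL,\Cyl{}{})$ by standard theory.

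\emph{Part~(i).}\quad For fixed $t>0$, condition~\ref{i:t:BendikovSaloffCoste:d} together with the spectral decomposition of the $\nu_k$-symmetric Markov semigroup yields, for $t>\eps_k$, the pointwise bound $|\mssh_{k,t}(x,y)-1| \le C\, e^{-(t-\eps_k)\lambda_k}$ after splitting $\mssh_{k,t}=\mssh_{k,\eps_k}\ast\mssh_{k,t-\eps_k}$ and using the Poincar\'e inequality implied by~\ref{i:t:BendikovSaloffCoste:a}. Thus $\sprod_k \mssh_{k,t}(x_k,y_k)$ converges in $L^\infty(\boldnu)$ exactly when $\sum_k e^{-2t\lambda_k}<\infty$, i.e.\ when $t>t_*$; hence~\ref{i:t:BendikovSaloffCoste:e} is equivalent to absolute continuity for every $t>0$. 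Kakutani's dichotomy applied to the factorwise densities confirms the dichotomy $\bmssh_t\ll\boldnu$ vs.\ $\bmssh_t\perp\boldnu$.

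\emph{Parts~(ii)--(iii) and~(iii.5).}\quad Under~\ref{i:t:BendikovSaloffCoste:f}, the invariance $\mssH_{k,t}\msA_k\subset \msA_k$ extends tensorwise to $\bmssH_t\Cyl{}{}\subset\Cyl{}{}$. Combined with the density of $\Cyl{}{}$ in $L^2(\boldnu)$ (which follows from Guichardet plus the $(\mssE_k)^{1/2}_1$-density of $\msA_k$ in $\dom{\mssE_k}$ and conservativeness), a standard semigroup-invariance criterion (Chernoff/Reed--Simon type) yields essential self-adjointness of $(\bmssL,\Cyl{}{})$, proving~(ii). Since both $\bmssH_\bullet$ and the semigroup of $\tparen{\bmssL,\dom{\bmssL}}$ are $C_0$-semigroups generated by extensions of the same essentially self-adjoint operator, they coincide, yielding~(iii). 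Irreducibility~(iii.5) follows because any $\bmssH_\bullet$-invariant set $\mbfB\subset\mbfM$ projects under each coordinate to a $\mssH_{k,\bullet}$-invariant subset of $M_k$; by~\ref{i:t:BendikovSaloffCoste:a} each projection is trivial, so $\mbfB$ is trivial up to $\boldnu$.

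\emph{Parts~(iv)--(vi).}\quad Quasi-regularity follows from the product construction of Bouleau--Hirsch~\cite{BouHir91} applied factorwise (the regularity of each $(\mssE_k,\dom{\mssE_k})$ implies quasi-regularity of the product form), together with the density of $\Cyl{}{}\subset\Cb(\T_\mrmp)$ in the form domain. For regularity: cylinder functions are compactly supported in $\T_\mrmp$ only when cofinitely many $f_k\equiv\car_k$ are supported on compact $M_k$, giving the stated dichotomy. By the theorems in~\cite{MaRoe92}, $(\bmssE,\dom{\bmssE})$ is then properly associated with a $\boldnu$-symmetric Hunt process $\bmssW$, and because the finite-dimensional projections of $\bmssW$ solve the martingale problems of the factors (which are Markov-unique by regularity), $\bmssW$ has the law of the product of independent copies $\seq{\mssW_k}_k$, giving~(v). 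The martingale problem~(vi) is standard Fukushima-type theory: \eqref{eq:t:BendikovSaloffCoste:1}--\eqref{eq:t:BendikovSaloffCoste:2} follow from the Fukushima decomposition applied to $u\in\Cyl{}{}\subset\dom{\bmssL}$; uniqueness among $\boldnu$-substationary special standard processes is exactly the output of the essential self-adjointness in~(ii), invoking the uniqueness criterion of Albeverio--Kondratiev--R\"ockner / Eberle.

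\emph{Main obstacle.}\quad The delicate point is~(ii): under Assumption~\ref{ass:SettingK} the algebras $\msA_k$ need not contain compactly supported functions and the factor spaces $M_k$ may be non-compact, so one must carefully propagate the invariance~\ref{i:t:BendikovSaloffCoste:f} across the unital extension $\msA_k^\mrmu$ and the incomplete tensor product without losing control over the core property. Handling this cleanly, while simultaneously ensuring that the infinite-product kernel of part~(i) is bounded (and not merely $\sigma$-finite) on all of $\mbfM$, is the technical heart of the argument; every subsequent identification ---semigroup, Hunt process, uniqueness for the martingale problem--- rests on it.
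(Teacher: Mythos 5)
Your high-level plan for parts (i)--(iii.5) (Guichardet identification, Gill's tensor semigroup, essential self-adjointness via $\bmssH_t$-invariance of $\Cyl{}{}$ plus Reed--Simon) matches the paper in spirit, and the closability step and generator identification are correct. However, there is a genuine gap: you never resolve the compact/non-compact dichotomy, and you yourself flag this (``\emph{the technical heart of the argument}'') without supplying the missing device. The paper's crucial trick is the Alexandrov compactification of each factor~$M_k$ (Notation~\ref{n:Alexandrov}) and the map~$\boldupalpha$ in~\eqref{eq:Compactification}: one first proves the whole statement when every~$M_k$ is compact --- where $\msA_k^\mrmu\subset\mcC(\T_k)=\Cz(\T_k)$, Stone--Weierstra{\ss} gives density, and the cited Bendikov--Saloff-Coste kernel bounds apply verbatim --- and then transfers everything through the isomorphism~$\boldupalpha_*$ of $L^2$-spaces, which is simultaneously a quasi-homeomorphism of Dirichlet forms. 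This is what converts the regularity available on the compact product into \emph{quasi}-regularity on the non-compact product, where no compact nests are available a priori (an infinite product of locally compact spaces is locally compact iff cofinitely many factors are compact). Your appeal to Bouleau--Hirsch's product Dirichlet form is also premature: the paper's Remark~\ref{r:BHTensor} shows that the identification of~$(\bmssE,\dom{\bmssE})$ with the Bouleau--Hirsch tensor form is a \emph{consequence} of the essential self-adjointness~(ii), not an independent input.

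Two further sub-arguments need repair. For~(iii.5) you claim that a $\bmssH_\bullet$-invariant set~$\mbfB$ ``projects under each coordinate to a $\mssH_{k,\bullet}$-invariant subset'': this is false in general (coordinate projections of invariant sets are not invariant; think of diagonal-type sets). The correct statement is cited in the paper from Bendikov--Saloff-Coste and rests on the strict positivity of the product kernel density. For~(iv), the regularity dichotomy is a statement about whether~$(\mbfM,\T_\mrmp)$ is locally compact, not about whether cylinder functions have compact support; your criterion is neither necessary nor sufficient. Finally, for~(v) the identification~$\bmssW\overset{d}{=}\seq{\mssW_k}_k$ follows immediately from the semigroup identity~(iii), rather than from Markov-uniqueness of the finite-dimensional projections: the latter route would require a separate well-posedness argument for each finite product, which is avoidable.
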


\begin{remark}
We note that, since we are only concerned with $\nu$-\emph{symmetric} objects (semigroups, generators, Dirichlet forms) on \emph{probability} spaces, the notion of ergodicity and that of irreducibility coincide, see e.g.~\cite[Cor.~2.33, Prop.~3.5]{BezCimRoe18}.
Thus, for simplicity, we replaced the assumption of ergodicity in~\cite{BenSaC97} with that of irreducibility.
\end{remark}

\begin{remark}\label{r:NecessityBendikovSaloffCoste}
Let us note that the assumptions in~\ref{i:t:BendikovSaloffCoste:a},~\ref{i:t:BendikovSaloffCoste:d}, and~\ref{i:t:BendikovSaloffCoste:e} are in fact necessary to the conclusion of~\ref{i:t:BendikovSaloffCoste:1}, as shown in~\cite[Thm.~3.1]{BenSaC97}.
Indeed, if, for example,~$t_*>0$ in~\ref{i:t:BendikovSaloffCoste:e}, then there exists~$\mbfx\in\mbfM$ and~$t<t_*$ such that~$\bmssh_t(\mbfx,\diff\emparg)$ is singular w.r.t.~$\boldnu$.
\end{remark}

\begin{remark}\label{r:BHTensor}
In light of the essential self-adjointness of~$(\bmssL,\Cyl{}{})$ in Theorem~\ref{t:BendikovSaloffCoste}\ref{i:t:BendikovSaloffCoste:2}, the Dirichlet form~$\tparen{\bmssE,\dom{\bmssE}}$ coincides with the standard tensor-product Dirichlet form~$\tparen{\sotimes_k \mssE_k,\dom{\sotimes_k \mssE_k}}$ constructed in~\cite[Thm.~V.2.2.1]{BouHir91}.
\end{remark}

\begin{remark}\label{r:Unitality}
As anticipated, when~$(M_k,\T_k)$ is \emph{compact} it is not restrictive to assume that~$\msA_k$ is additionally unital.
Indeed suppose~$\msA_k$ is \emph{not} unital, and let~$\msA_k^\mrmu$ be its smallest unital extension.
Since~$(M_k,\T_k)$ is compact, we still have~$\msA_k^\mrmu\subset \Cz(\T_k)$.
Since~$\nu_k$ is a finite measure, we have~$\car_k\in\dom{\mssL_k}\subset\dom{\mssE_k}$.
Furthermore, since~$\tparen{\mssE_k,\dom{\mssE_k}}$ is conservative by Assumption~\ref{ass:SettingK}, for every~$t>0$ we have~$\mssH_{k,t}\car_k=\car_k$, and therefore~$\mssH_{k,t}\car_k\in \msA_k^\mrmu$.
Thus, if~$\msA_k$ satisfies~\ref{i:t:BendikovSaloffCoste:f}, then~$\msA_k^\mrmu$ does so as well, and we may therefore replace~$\msA_k$ with~$\msA_k^\mrmu$.
\end{remark}

As already hinted to in Remark~\ref{r:Unitality}, the proof of the non-compact case in Theorem~\ref{t:BendikovSaloffCoste} relies on the assertion in the compact case.
In order to transfer results from the compact case to the non-compact one, we will make use of Alexandrov compactifications.

\begin{notation}[Alexandrov compactifications]\label{n:Alexandrov}
Let~$(X,\T)$ be a second-countable locally compact Hausdorff topological space.
We denote by~$(X^\upalpha,\T^\upalpha)$ the \emph{Alexandrov} (\emph{one-point}) \emph{compactification} of~$(X,\T)$ with point-at-infinity~$*$.
We note that~$(X^\upalpha,\T^\upalpha)$ is a compact Polish space, and that 
the compactification embedding~$\upalpha\colon X\to X^\upalpha$ is $\T/\T^\upalpha$-continuous, thus Borel/Borel measurable.
\end{notation}

\begin{proof}[Proof of Theorem~\ref{t:BendikovSaloffCoste}]
The closability follows by the representation in~\eqref{eq:t:BendikovSaloffCoste:0} and standard arguments,~\cite[Thm.~X.23]{ReeSim75}.
We divide the rest of the proof into several steps.

\boldparagraph{Compact case}
Assume first that~$M_k$ is compact for \emph{every}~$k$.
In light of Remark~\ref{r:Unitality} we and will assume with no loss of generality that~$\msA_k$ is additionally unital.
Since~$\msA_k$ is ---only in the compact case--- $\Cz$-dense in~$\Cz(\T_k)=\Cb(\T_k)$, it is point-separating.

Note that~$\mbfM$ is compact and~$\boldnu$ is a Radon measure will full support.

\paragraph{Proof of~\ref{i:t:BendikovSaloffCoste:1}--\ref{i:t:BendikovSaloffCoste:3.5}} Assuming~\ref{i:t:BendikovSaloffCoste:a}--\ref{i:t:BendikovSaloffCoste:e}, assertion~\ref{i:t:BendikovSaloffCoste:1} is~\cite[Thm.~3.1]{BenSaC97}.
The boundedness of the heat-kernel density is~\cite[Lem.~4.3(2)]{BenSaC97}.

As a consequence of~\ref{i:t:BendikovSaloffCoste:1},~$\bmssH_\bullet$ is a Markovian semigroup on~$L^2(\boldnu)$.
Denote by $\tparen{\tilde\bmssL,\dom{\tilde\bmssL}}$ its generator.
Then, for every~$u=F_A(\emparg_A)\in\Cyl{}{}$ with~$F_A=\otimes_{a\in A} f_a$ and~$f_a\in\msA_a$, for~$\boldnu$-a.e.~$\mbfx\in\mbfM$,
\begin{align*}
(\tilde\bmssL u)(\mbfx)=&\ \lim_{t\downarrow 0} \tfrac{1}{t} (\bmssH_t u-u)(\mbfx)
\\
=&\ \lim_{t\downarrow 0} \frac{1}{t} \paren{\int_\mbfM \prod_k^\infty \mssh_{k,t}(x_k,y_k) F_A(\mbfy_A)\diff\boldnu(\mbfy_A)-F_A(\mbfx_A)}
\\
=&\ \lim_{t\downarrow 0} \frac{1}{t} \paren{\int_{M_A} \prod_{a\in A} \mssh_{a,t}(x_k,y_k) F_A(\mbfy_A)\diff \nu_A(\mbfy_A)-F_A(\mbfx_A)}
\\
=&\ \lim_{t\downarrow 0} \frac{1}{t} \prod_{a\in A} \paren{\int_{M_a}\mssh_{a,t}(x_k,y_k) f_a(y_a)\diff \nu_a(y_a)-f_a(x_a)}
\\
=&\ (\bmssL u) (\mbfx)\fstop
\end{align*}
That is,~$(\tilde\bmssL,\Cyl{}{})=(\bmssL,\Cyl{}{})$.
In order to prove~\ref{i:t:BendikovSaloffCoste:2} it suffices to show that~$(\tilde\bmssL,\Cyl{}{})$ is essentially self-adjoint.
Indeed, in this case, since~$(\tilde\bmssL,\Cyl{}{})=(\bmssL,\Cyl{}{})$, the Friedrichs extension~$\tparen{\bmssL,\dom{\bmssL}}$ is a self-adjoint extension of~$(\tilde\bmssL,\Cyl{}{})$ and thus $\tparen{\tilde\bmssL,\dom{\tilde\bmssL}}=\tparen{\bmssL,\dom{\bmssL}}$ by uniqueness of the self-adjoint extension of~$(\tilde\bmssL,\Cyl{}{})$.
Assuming~\ref{i:t:BendikovSaloffCoste:f}, it is readily verified that~$\bmssH_t \Cyl{}{}\subset\Cyl{}{}$ for every~$t>0$ by definition of~$\Cyl{}{}$ and the assumption.
Since~$\Cyl{}{}$ is dense in~$L^2(\boldnu)$, the operator~$(\tilde\bmssL,\Cyl{}{})$ is essentially self-adjoint by~\cite[Thm.~X.49]{ReeSim75}, which shows~\ref{i:t:BendikovSaloffCoste:2}.

Assertion~\ref{i:t:BendikovSaloffCoste:3} is a consequence of the equality of the corresponding generators, which in turn follows from~$(\tilde\bmssL,\Cyl{}{})=(\bmssL,\Cyl{}{})$ above together with~\ref{i:t:BendikovSaloffCoste:2}.
Since~$\bmssH_\bullet$ is Markovian,~$\tparen{\bmssE,\dom{\bmssE}}$ is a Dirichlet form.

As already claimed in~\cite[p.~172]{BenSaC97},~$\bmssH_\bullet$ is irreducible (ergodic) by irreducibility~\ref{i:t:BendikovSaloffCoste:a} of each~$\mssH_{k,\bullet}$.
Assertion~\ref{i:t:BendikovSaloffCoste:3.5} is a consequence of this fact together with the identification~$\bmssT_\bullet=\bmssH_\bullet$ in~\ref{i:t:BendikovSaloffCoste:3}.

\paragraph{Regularity}
Now, since~$\msA_k\subset\Cz(\T_k)=\mcC(\T_k)$ for every~$k$, the family~$\Cyl{}{}$ is a unital point-separating subalgebra of~$\mcC(\T_\mrmp)$.
By Stone--Weierstra\ss\ Theorem,~$\Cyl{}{}$ is uniformly dense in~$\mcC(\T_\mrmp)\subset L^2(\boldnu)$, and thus dense in~$L^2(\boldnu)$ by the Radon property of~$\boldnu$.
Thus,~$(\bmssE,\Cyl{}{})$ is densely defined on~$L^2(\boldnu)$ and~$\Cyl{}{}$ is a form-core for~$\tparen{\bmssE,\dom{\bmssE}}$.
Furthermore, since~$\Cyl{}{}$ is uniformly dense in~$\Cz(\T_\mrmp)=\mcC(\T_\mrmp)$, it is a core for~$\tparen{\bmssE,\dom{\bmssE}}$ in the usual sense.

Since~$\mbfM$ is compact (in particular: locally compact), since~$\boldnu$ is a Radon measure will full support, and since~$\Cyl{}{}$ is a core for~$\tparen{\bmssE,\dom{\bmssE}}$, the latter form is a regular Dirichlet form, and the existence of~$\bmssW$ follows by the standard theory of Dirichlet forms.

\boldparagraph{General case}
If~$M_k$ is not compact for some~$k$ we replace it with its Alexandrov compactification, see Notation~\ref{n:Alexandrov}.
For each~$f_k\colon M_k\to\R$ let~$f^\upalpha_k$ be defined by~$f^\upalpha_k(\upalpha_k(x))\eqdef f_k(x)$ for~$x\in M_k$ and~$f^\upalpha_k(*_k)=0$.
It is readily verified that the $\upalpha$-image form~$\tparen{\mssE^\upalpha_k,\dom{\mssE^\upalpha_k}}$ on~$L^2(\nu^\upalpha_k)$ as in~\S\ref{sss:ImageObjects} with~$\upalpha$ in place of~$j$ is a conservative Dirichlet form on~$L^2(\nu^\upalpha_k)$ with semigroup the $\upalpha$-image~$\mssH^\upalpha_{k,\bullet}$ of~$\mssH_{k,\bullet}$ and generator the $\upalpha$-image~$\tparen{\mssL^\upalpha_k,\dom{\mssL^\upalpha_k}}$ of~$\tparen{\mssL_k,\dom{\mssL_k}}$.

Now, set
\begin{equation}\label{eq:Compactification}
\mbfM^\boldupalpha\eqdef \sprod_k M^\upalpha_k \ (\footnote{This is different from the Alexandrov compactification of~$\mbfM$.})\comma \qquad \boldnu^\boldupalpha\eqdef \otimes_k \nu^\upalpha_k\comma \qquad \text{and} \qquad \boldupalpha\eqdef \sprod_k \upalpha_k\colon \mbfM\to\mbfM^\boldupalpha \fstop
\end{equation}
It is readily verified that~$\boldupalpha$ is injective, $\T_\mrmp/\T_\mrmp^\boldupalpha$-continuous (thus Borel) and that~$\boldupalpha_\pfwd \boldnu=\boldnu^\boldupalpha$.
Since~$\boldupalpha$ is injective,~$\boldupalpha_*\colon L^2(\boldnu)\to L^2(\boldnu^\boldupalpha)$ is an isomorphism of $L^2$-spaces.
Define~$\Cyl{\boldupalpha}{}$,~$\tparen{\bmssL^\boldupalpha,\Cyl{\boldupalpha}{}}$, $\tparen{\bmssE^\boldupalpha,\dom{\bmssE^\boldupalpha}}$, and~$\bmssH^\boldupalpha_\bullet$ as in~\S\ref{sss:ImageObjects} with~$\boldupalpha$ in place of~$j$ and note that this definitions are consistent with~\eqref{eq:t:BendikovSaloffCoste:00} and~\eqref{eq:t:BendikovSaloffCoste:0} when replacing~$\mssL_k$ with~$\mssL^\upalpha_k$.

\paragraph{Closability and Markovianity}
Applying the first part of the proof to the spaces~$M^\upalpha_k$, we conclude that~$\tparen{\bmssE^\boldupalpha,\dom{\bmssE^\boldupalpha}}$ is a Dirichlet form on~$L^2(\boldnu^\boldupalpha)$ with generator the Friedrichs extension~$\tparen{\bmssL^\boldupalpha,\dom{\bmssL^\boldupalpha}}$ of~$(\bmssL^\boldupalpha,\Cyl{\boldupalpha}{})$.
Since~$\boldupalpha$ is an isomorphism of $L^2$-spaces, this concludes that~$(\bmssE,\Cyl{}{})$ is closable, and that its closure~$\tparen{\bmssE,\dom{\bmssE}}$ is a Dirichlet form on~$L^2(\boldnu)$ with generator the $L^2(\boldnu)$-Friedrichs extension~$\tparen{\bmssL,\dom{\bmssL}}$ of~$(\bmssL,\Cyl{}{})$ satisfying~$\boldupalpha_*\tparen{\bmssL,\dom{\bmssL}}=\tparen{\bmssL^\boldupalpha,\dom{\bmssL^\boldupalpha}}$.
In particular,~$\Cyl{}{}$ is dense in~$L^2(\boldnu)$.

\paragraph{Proof of~\ref{i:t:BendikovSaloffCoste:1}} 
We denote by~\ref{i:t:BendikovSaloffCoste:a}$_\upalpha$--\ref{i:t:BendikovSaloffCoste:e}$_\upalpha$ the assertions in~\ref{i:t:BendikovSaloffCoste:a}--\ref{i:t:BendikovSaloffCoste:e} \emph{mutatis mutandis}, resp.\ by~\ref{i:t:BendikovSaloffCoste:1}$_\boldupalpha$--\ref{i:t:BendikovSaloffCoste:3}$_\boldupalpha$ the assertions~\ref{i:t:BendikovSaloffCoste:1}--\ref{i:t:BendikovSaloffCoste:3} \emph{mutatis mutandis}.
Since~$\upalpha_k^*\colon L^2(\nu^\upalpha_k)\to L^2(\nu)$ is an isomorphism of $L^2$-spaces,~\ref{i:t:BendikovSaloffCoste:a}--\ref{i:t:BendikovSaloffCoste:e} hold if and only so do~\ref{i:t:BendikovSaloffCoste:a}$_\upalpha$--\ref{i:t:BendikovSaloffCoste:e}$_\upalpha$.
Since~$\boldupalpha$ is $\T_\mrmp/\T^\boldupalpha_\mrmp$-continuous and since~$\boldupalpha_\pfwd\boldnu=\boldnu^\boldupalpha$,~\ref{i:t:BendikovSaloffCoste:1} holds if and only so does~\ref{i:t:BendikovSaloffCoste:1}$_\boldupalpha$.
Thus, applying the proof in the compact case shows~\ref{i:t:BendikovSaloffCoste:1}.

\paragraph{Proof of~\ref{i:t:BendikovSaloffCoste:2}--\ref{i:t:BendikovSaloffCoste:3}}
Assume~\ref{i:t:BendikovSaloffCoste:f} and set~$\msA^\upalpha_k\eqdef \set{f^\upalpha_k: f_k\in\msA_k}$.
Since~$\msA_k\subset \Cz(\T_k)$ we have~$\msA^\upalpha_k\subset \Cb(\T^\upalpha_k)$. 
Since~$\msA^\upalpha_k$ is not unital, we replace it by its unital extension~$\msA^{\upalpha\,\mrmu}_k$.
As in Remark~\ref{r:Unitality}, it is clear that~$\msA^{\upalpha\, \mrmu}_k$ satisfies~\ref{i:t:BendikovSaloffCoste:f} with~$\tparen{\mssL^\upalpha_k,\dom{\mssL^\upalpha_k}}$ in place of~$\tparen{\mssL_k,\dom{\mssL_k}}$ and~$\mssH^\upalpha_{k,\bullet}$ in place of~$\mssH_{k,\bullet}$.
Since~$\boldupalpha$ is an isomorphism of $L^2$-spaces,~\ref{i:t:BendikovSaloffCoste:2}--\ref{i:t:BendikovSaloffCoste:3} hold if and only so do~\ref{i:t:BendikovSaloffCoste:2}$_\boldupalpha$--\ref{i:t:BendikovSaloffCoste:3}$_\boldupalpha$.
Thus, applying the proof in the compact case shows~\ref{i:t:BendikovSaloffCoste:2}--\ref{i:t:BendikovSaloffCoste:3}.

\paragraph{Regularity vs.\ quasi-regularity}
Whereas the transfer to compactifications preserves analytical properties, it does not preserve topological properties, so that the regularity of~$\tparen{\bmssE^\boldupalpha,\dom{\bmssE^\boldupalpha}}$ does not imply that of~$\tparen{\bmssE,\dom{\bmssE}}$.
Indeed, an infinite product of locally compact (Hausdorff) spaces is locally compact if and only if cofinitely many factors are compact, so that~$\tparen{\bmssE,\dom{\bmssE}}$ may be a regular form only in the latter case.
We proved the regularity in the case that \emph{all} the factors~$M_k$ are compact.
The case when cofinitely many~$M_k$'s are compact requires only simple adjustments which are left to the reader.
Thus, we turn to show the \emph{quasi}-regularity when infinitely many factors are non-compact.

\paragraph{Proof of quasi-regularity}
It follows from the compact case that the form $\tparen{\bmssE^\boldupalpha,\dom{\bmssE^\boldupalpha}}$ is regular.
Therefore, it suffices to show that the forms $\tparen{\bmssE^\boldupalpha,\dom{\bmssE^\boldupalpha}}$ and~$\tparen{\bmssE,\dom{\bmssE}}$ are quasi-homeomorphic (Dfn.~\ref{d:QuasiHomeo}).
To this end, note that~$\boldupalpha$ is invertible on its image and that its inverse~$\boldupalpha^{-1}$ is $\T^\boldupalpha_\mrmp/\T_\mrmp$-continuous (hence Borel) since it is the product of the $\T^\upalpha_k/\T_k$-continuous maps~$\upalpha_k^{-1}$, each defined on~$\upalpha_k(M_k)$.
Thus,~$\boldupalpha$ is a homeomorphism onto its image.
Since~$\boldnu^\boldupalpha \mbfM^\boldupalpha_*=0$ we have that~$(\boldupalpha^{-1})_\pfwd\boldnu^\boldupalpha=\boldnu$, that~$\boldupalpha^*=(\boldupalpha^{-1})_*\colon L^2(\boldnu^\boldupalpha)\to L^2(\boldnu)$ is the isomorphism of $L^2$-spaces inverting~$\boldupalpha_*$, and that it intertwines~$\tparen{\bmssE^\boldupalpha,\dom{\bmssE^\boldupalpha}}$ and~$\tparen{\bmssE,\dom{\bmssE}}$.

Finally, it is straightforward that every homeomorphism intertwining two Dirichlet forms is indeed a quasi-homeomorphism, which concludes the proof of quasi-regularity.

\bigskip

For the sake of further discussion, let us however give as well a constructive proof of the quasi-regularity, by verifying \ref{i:d:QuasiReg:1}--\ref{i:d:QuasiReg:3} in Definition~\ref{d:QuasiReg}.
Since~$\mssH_{k,\bullet}$ is conservative, it is a tedious yet straightforward verification that $\upalpha_k\colon M_k\to M_k^\upalpha$ is a quasi-homeomorphism for each~$k$.
Thus,~$*_k$ is $\mssE^\upalpha_k$-polar and therefore~$\mssW^\upalpha_{k,\bullet}$-polar.
In light of~\ref{i:t:BendikovSaloffCoste:5} for the compact case, we conclude that
\begin{equation}\label{eq:PolarMAlpha}
\mbfM^\boldupalpha_* \eqdef \mbfM^\boldupalpha\setminus \boldupalpha(\mbfM)= \scup_j \tparen{ \set{*_j} \times \sprod_{k\neq j} M^\upalpha_k}
\end{equation}
is $\bmssW^\boldupalpha_\bullet$-polar, and therefore $\bmssE^\boldupalpha$-polar (in particular, $\boldnu^\boldupalpha$-negligible).

Since~$\mbfM^\boldupalpha_*$ is $\bmssE^\boldupalpha$-polar, there exists a $\T^\boldupalpha_\mrmp$-compact $\bmssE^\boldupalpha$-nest~$\seq{\mbfF^\boldupalpha_n}_n$ with~$\mbfF^\boldupalpha_n\subset \boldupalpha(\mbfM)$ for every~$n$.
Define~$\mbfF_n\eqdef \boldupalpha^{-1}(\mbfF^\boldupalpha_n)$.
As noted above,~$\boldupalpha^{-1}$ is $\T^\boldupalpha_\mrmp/\T_\mrmp$-continuous, thus~$\mbfF_n$ is $\T_\mrmp$-compact for every~$n$.
Since the forms~$\tparen{\bmssE^\boldupalpha,\dom{\bmssE^\boldupalpha}}$ and~$\tparen{\bmssE,\dom{\bmssE}}$ are intertwined,~$\seq{\mbfF_n}_n$ is a $\T_\mrmp$-compact $\bmssE$-nest, which shows \ref{i:d:QuasiReg:1}.
As noted above, $\Cyl{}{}$~is a subalgebra of~$\Cb(\T_\mrmp)$, which, together with the $\bmssE^{1/2}_1$-density of~$\Cyl{}{}$ in~$\dom{\bmssE}$, verifies~\ref{i:d:QuasiReg:2}.
Finally, recall that~$\msA_k$ is $\Cz$-dense in~$\Cz(\T_k)$ for every~$k$.
Since~$(M_k,\T_k)$ is second countable,~$\Cz(\T_k)$ is a separable Banach space, thus~$\msA_k$ admits a countable subset~$D_k$ $\Cz$-dense in~$\Cz$.
It follows that~$D_k$ is point-separating on~$M_k$.
Thus,~$\cup_j \tparen{\set{D_j}\times \sprod_{k\neq j} D_k}$ is a countable set point-separating on~$\mbfM$, which verifies~\ref{i:d:QuasiReg:3}.

\boldparagraph{Proof of~\ref{i:t:BendikovSaloffCoste:5}--\ref{i:t:BendikovSaloffCoste:6}}
By~\ref{i:t:BendikovSaloffCoste:4} and~\cite[Thm.~IV.3.5]{MaRoe92} there exists a $\boldnu$-special standard process~$\bmssW$ properly associated with~$\tparen{\bmssE,\dom{\bmssE}}$.
Since each~$\mssH_{k,\bullet}$ is conservative,~$\bmssH_\bullet$ too is conservative, thus~$\bmssW$ is a Hunt process.
The identification with~$\seq{\mssW_k}_k$ is a consequence of~\ref{i:t:BendikovSaloffCoste:3}.
This concludes the proof of~\ref{i:t:BendikovSaloffCoste:5}.

Since~$\boldnu \mbfM=1$, since~$\boldnu$ has full $\T_\mrmp$-support, and since every~$u\in\Cyl{}{}$ is $\T_\mrmp$-continuous, Assertion~\ref{i:t:BendikovSaloffCoste:6} is a consequence of~\ref{i:t:BendikovSaloffCoste:2} by~\cite[Thm.~3.5]{AlbRoe95}.
Since~$\Cyl{}{}\subset\dom{\bmssL}$ is an algebra, the computation of the predictable quadratic variation is standard.
\end{proof}

\begin{corollary}\label{c:BSCFeller}
In the setting of Theorem~\ref{t:BendikovSaloffCoste} and under assumptions~\ref{i:t:BendikovSaloffCoste:a}--\ref{i:t:BendikovSaloffCoste:f} there, suppose further that
\begin{enumerate}[$(a)$]\setcounter{enumi}{4}
\item $(t,x,y)\mapsto\mssh_{k,t}(x,y)$ is continuous on~$\R^+\times M_k^\tym{2}$ for every~$k$, and~$\mssh_{k,t}\in\Cb(\T_k^\tym{2})$ for every~$t>0$ and every~$k$.
\end{enumerate}

Then,
\begin{enumerate}[$(i)$]\setcounter{enumi}{7}
\item\label{i:c:BSCFeller:1} $(t,\mbfx,\mbfy)\mapsto \bmssh_t(\mbfx,\mbfy)$ is continuous on~$\R^+\times \mbfM^\tym{2}$ and~$\bmssh_t\in\Cb(\T_\mrmp^\tym{2})$ for every~$t>0$.
\end{enumerate}

In particular, there exists a unique $\boldnu$-version of~$\bmssW$ with space-time continuous transition kernel.
\begin{proof}
\ref{i:c:BSCFeller:1} is~\cite[Lem.~4.3]{BenSaC97}.
The uniqueness of~$\bmssW$ follows since~$\boldnu$ is Radon and has full $\T_\mrmp$-support.
\end{proof}
\end{corollary}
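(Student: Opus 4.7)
The plan is to establish~\ref{i:c:BSCFeller:1} by showing uniform convergence of the partial products
\[
\bmssh_t^{N}(\mbfx,\mbfy)\eqdef \prod_{k\leq N}\mssh_{k,t}(x_k,y_k)
\]
to~$\bmssh_t(\mbfx,\mbfy)$, locally uniformly in~$(t,\mbfx,\mbfy)\in\R^+\times\mbfM^\tym{2}$. Each partial product is jointly continuous by the new assumption~(e) and bounded (uniformly on compact subintervals of~$t$) thanks to~\ref{i:t:BendikovSaloffCoste:d}, so uniform convergence transfers both properties to the limit~$\bmssh_t$, giving at once its joint continuity on~$\R^+\times\mbfM^\tym{2}$ and membership in~$\Cb(\T_\mrmp^\tym{2})$ for every~$t>0$. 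This is the strategy carried out by Bendikov--Saloff-Coste in~\cite[Lem.~4.3]{BenSaC97}, which I would adapt to our (possibly non-compact) setting.

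The core estimate I would use is a uniform rate of convergence to equilibrium of the form
\[
\sup_{x,y\in M_k}\abs{\mssh_{k,t}(x,y)-1}\leq C' e^{-c t\lambda_k}\comma
\]
valid for all~$t$ in a fixed compact subinterval of~$\R^+$ and all~$k$ sufficiently large (possible since~$\eps_k\leq c/\lambda_k$ and~$\lambda_k\to\infty$ as forced by~\ref{i:t:BendikovSaloffCoste:e}). I would derive this by splitting~$\mssh_{k,t}=\mssh_{k,\eps_k}\cdot\mssh_{k,t-2\eps_k}\cdot\mssh_{k,\eps_k}$ via Chapman--Kolmogorov, combining the pointwise bound $\mssh_{k,\eps_k}\leq C$ from~\ref{i:t:BendikovSaloffCoste:d} (now pointwise, thanks to~(e)) with the spectral-gap $L^2$-decay $\ttnorm{\mssh_{k,t-2\eps_k}-1}_{L^2(\nu_k^\otym{2})}\leq e^{-(t-2\eps_k)\lambda_k}$ coming from~\ref{i:t:BendikovSaloffCoste:a}. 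Since~$t_*=0$, the series $\sum_k e^{-ct\lambda_k}$ converges for every~$t>0$, and the elementary tail estimate $\bigl|\prod_{k>N}(1+a_k)-1\bigr|\leq \exp\bigl(\sum_{k>N}\abs{a_k}\bigr)-1$ then yields the sought uniform convergence of the tail products to~$1$ as~$N\to\infty$.

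For the \emph{in particular} part, once~$\bmssh_t$ admits a space-time continuous representative defined pointwise on all of~$\mbfM^\tym{2}$, the transition function $\bmssP_t(\mbfx,\diff\mbfy)\eqdef \bmssh_t(\mbfx,\mbfy)\,\diff\boldnu(\mbfy)$ is defined for \emph{every} starting point~$\mbfx\in\mbfM$, not merely $\boldnu$-a.e. Any two $\boldnu$-versions of~$\bmssW$ with space-time continuous transition kernels must have kernels that coincide $\boldnu\otimes\boldnu$-a.e.\ (by Theorem~\ref{t:BendikovSaloffCoste}\ref{i:t:BendikovSaloffCoste:5} and absolute continuity), hence everywhere by joint continuity (reducing to~$\supp\boldnu\tym{2}$ if needed), which gives uniqueness of the continuous-transition-kernel $\boldnu$-version.

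The principal obstacle I anticipate is the \emph{uniformity} of the tail estimate in the starting configuration~$\mbfx$ in the non-compact regime: the spectral gap only furnishes $L^2$-decay to equilibrium, and upgrading this to an $L^\infty$-decay on each factor ---necessary in order to control the infinite product pointwise everywhere rather than just $\boldnu$-a.e.--- requires the full strength of the ultracontractivity assumption~\ref{i:t:BendikovSaloffCoste:d} combined with the semigroup property of~$\mssH_{k,\bullet}$.
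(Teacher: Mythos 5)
Your argument is correct and reconstructs the proof of the cited~\cite[Lem.~4.3]{BenSaC97}, which the paper's own proof simply invokes by reference; the paper handles the \emph{in particular} clause exactly as you do, via joint continuity together with full $\T_\mrmp$-support and Radon-ness of~$\boldnu$. One small repair: the inequality $\ttnorm{\mssh_{k,t-2\eps_k}-1}_{L^2(\nu_k^\otym{2})}\leq e^{-(t-2\eps_k)\lambda_k}$ is not what the spectral gap gives --- that left-hand side is the \emph{Hilbert--Schmidt} norm of $\mssH_{k,t-2\eps_k}-P_0$ (with~$P_0$ the orthogonal projection onto constants), which the gap alone does not control --- whereas the gap does yield the \emph{operator}-norm decay $\norm{\mssH_{k,t-2\eps_k}-P_0}_{L^2(\nu_k)\to L^2(\nu_k)}\leq e^{-(t-2\eps_k)\lambda_k}$, and that is exactly what the Chapman--Kolmogorov plus Cauchy--Schwarz step requires: writing $\mssh_{k,t}(x,y)-1=\tscalar{(\mssH_{k,t-2\eps_k}-P_0)\mssh_{k,\eps_k}(\emparg,y)}{\mssh_{k,\eps_k}(x,\emparg)}_{L^2(\nu_k)}$ and using $\norm{\mssh_{k,\eps_k}(x,\emparg)}_{L^2(\nu_k)}^2=\mssh_{k,2\eps_k}(x,x)\leq C$ from the (now pointwise) heat-kernel bound gives precisely your uniform tail estimate $\sup_{x,y}\abs{\mssh_{k,t}(x,y)-1}\leq C\,e^{2c}\,e^{-t\lambda_k}$.
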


As a further characterization of the semigroup~$\bmssT_\bullet$, we identify it with the infinite tensor product of the semigroups~$\mssH_{k,\bullet}$.
Indeed, let~$\ev$ be as in~\eqref{eq:l:Guichardet:0}.

\begin{corollary}\label{c:Gill}
In the setting of Theorem~\ref{t:BendikovSaloffCoste} and under assumptions~\ref{i:t:BendikovSaloffCoste:a}--\ref{i:t:BendikovSaloffCoste:f} there,
\[
\bmssT_\bullet = \ev \circ (\shotimes_k \mssH_{k,\bullet}) \circ \ev^{-1} \fstop
\]
\begin{proof}
Let~$(\bmssL^\otym{},\dom{\bmssL^\otym{}})$ be the generator of~$\shotimes_k \mssH_{k,\bullet}$ on~$\shotimes_k^{\car} L^2(\nu_k)$. By~\cite[Thm.~4.10]{Gil74} we have $\sotimes_k^{\car}\dom{\mssL_k}\subset \dom{\bmssL^\otym{}}$ and
\[
\bmssL^\otym{} (\sotimes_k f_k) = \ssum_k (\mssL_k f_k) \sotimes_{i\neq k} f_i\comma \qquad \sotimes_k f_k\in \sotimes_k^{\car}\dom{\mssL_k} \fstop
\]
As a consequence,~$\ev^{-1}\circ \bmssL^\otym{} \circ \ev =\tilde\bmssL$ on~$\ev\tparen{ \sotimes_k^{\car} \dom{\mssL_k}} \supset \Cyl{}{}$.
Since, as shown in the proof of Theorem~\ref{t:BendikovSaloffCoste}\ref{i:t:BendikovSaloffCoste:2}, the operator~$(\tilde\bmssL,\Cyl{}{})$ is essentially self-adjoint on~$L^2(\boldnu)$, the operator~$\tparen{\bmssL^\otym{},\sotimes_k^{\car}\dom{\mssL_k}}$ is essentially self-adjoint on~$\shotimes_k^{\car} L^2(\nu_k)$, and~$(\tilde\bmssL,\dom{\tilde\bmssL})\circ\ev = \ev \circ (\bmssL^\otym{},\dom{\bmssL^\otym{}})$.
It follows that the corresponding semigroups too are intertwined by~$\ev$, i.e.~$\bmssH_\bullet \circ \ev= \ev \circ (\shotimes_k \mssH_{k,\bullet})$, and the conclusion follows by the identification~$\bmssH_\bullet=\bmssT_\bullet$ in Theorem~\ref{t:BendikovSaloffCoste}\ref{i:t:BendikovSaloffCoste:3}.
\end{proof}
\end{corollary}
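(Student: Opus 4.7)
The plan is to identify the two semigroups by identifying their generators on a common essentially self-adjoint core, then invoking uniqueness of the semigroup associated with a self-adjoint operator. The natural core is the image under $\ev$ of the algebraic tensor product of the individual generator domains; cylinder functions are contained in it, and Theorem~\ref{t:BendikovSaloffCoste}\ref{i:t:BendikovSaloffCoste:2} already supplies essential self-adjointness on this smaller algebra.

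First, I would denote by $\tparen{\bmssL^\otym{},\dom{\bmssL^\otym{}}}$ the self-adjoint generator of the infinite tensor product semigroup $\shotimes_k \mssH_{k,\bullet}$ on $\shotimes_k^{\car} L^2(\nu_k)$. By Gill's construction \cite{Gil74}, the algebraic tensor product $\sotimes_k^{\car} \dom{\mssL_k}$ lies in $\dom{\bmssL^\otym{}}$, and on elementary tensors
\[
\bmssL^\otym{}\ttparen{\sotimes_k f_k} = \ssum_k (\mssL_k f_k) \sotimes_{i\neq k} f_i \fstop
\]
Transporting along the algebra isomorphism $\ev$ (Lemma~\ref{l:Guichardet}) and using $\mssL_k \car_k \equiv 0$ to truncate the formally infinite sum to finitely many nonzero terms, this formula matches exactly~\eqref{eq:t:BendikovSaloffCoste:000}. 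Hence $\ev^{-1}\circ \bmssL^\otym{}\circ \ev$ and the Friedrichs generator $\bmssL$ of Theorem~\ref{t:BendikovSaloffCoste} agree on the algebra $\Cyl{}{} = \ev\ttparen{\sotimes_k^{\car}\msA_k^\mrmu}$, which by assumption~\ref{i:t:BendikovSaloffCoste:f} is contained in $\ev\ttparen{\sotimes_k^{\car}\dom{\mssL_k}}$.

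The key step is then essential self-adjointness: Theorem~\ref{t:BendikovSaloffCoste}\ref{i:t:BendikovSaloffCoste:2} gives essential self-adjointness of $(\bmssL,\Cyl{}{})$ on $L^2(\boldnu)$, which transfers through $\ev$ to essential self-adjointness of $\bmssL^\otym{}$ on the corresponding subspace of $\shotimes_k^{\car} L^2(\nu_k)$. Since two self-adjoint operators agreeing on a common essentially self-adjoint core must coincide, we conclude $\ev^{-1} \circ \tparen{\bmssL^\otym{},\dom{\bmssL^\otym{}}} \circ \ev = \tparen{\bmssL,\dom{\bmssL}}$. By the spectral theorem (functional calculus), equality of generators implies equality of the associated strongly continuous semigroups, so $\ev^{-1} \circ (\shotimes_k \mssH_{k,\bullet}) \circ \ev = \bmssH_\bullet$, and the identification $\bmssT_\bullet = \bmssH_\bullet$ from Theorem~\ref{t:BendikovSaloffCoste}\ref{i:t:BendikovSaloffCoste:3} delivers the claim.

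The main obstacle, already handled by the main theorem, is the essential self-adjointness of $(\bmssL,\Cyl{}{})$: without it one would only know that $\ev^{-1}\circ \bmssL^\otym{}\circ \ev$ is \emph{some} self-adjoint extension of the cylinder operator, and would have to argue via a more delicate density computation on the tensor-product side. Granted that step, the remainder is essentially bookkeeping: checking that Gill's formula and the paper's definition of $\bmssL$ on cylinder functions literally coincide under $\ev$, and invoking the functional calculus to lift the identification from generators to semigroups.
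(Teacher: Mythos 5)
Your proposal is correct and follows essentially the same route as the paper: identify $\ev^{-1}\circ\bmssL^\otym{}\circ\ev$ with the cylinder generator via Gill's formula, use the essential self-adjointness of $(\bmssL,\Cyl{}{})$ from Theorem~\ref{t:BendikovSaloffCoste}\ref{i:t:BendikovSaloffCoste:2} to upgrade agreement on the core to equality of self-adjoint operators, then pass to semigroups and invoke $\bmssT_\bullet=\bmssH_\bullet$. The only minor elaborations you add beyond the paper's proof — truncating the formally infinite sum via $\mssL_k\car_k\equiv 0$, and noting that $\msA_k^\mrmu\subset\dom{\mssL_k}$ (which also uses $\car_k\in\dom{\mssL_k}$ from conservativeness, not just assumption~\ref{i:t:BendikovSaloffCoste:f}) — are harmless and indeed clarifying.
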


\subsubsection{The strongly local case}
We define a bilinear form~$(\bmssGamma,\Cyl{}{})$ as the bilinear extension to~$\Cyl{}{}$ of the bilinear form on elementary cylinder functions defined by polarization of the quadratic form
\[
\bmssGamma(u)= \ssum_k \mssGamma_k(f_k)(\emparg_k)\, \sprod_{i\neq k} f_i(\emparg_i)\comma \qquad u=\sprod_k f_k(\emparg_k)\fstop
\]

The next lemma is the analogue for carr\'e du champ operators of Lemma~\ref{l:GeneratorWellDefined}.

\begin{lemma}
The carr\'e du champ operator~$\bmssGamma$ is well-defined on~$\Cyl{}{}$ and
\[
\bmssGamma(u)= \ssum_{a\in A} \mssGamma_a(f_a)(\emparg_a)\, \sprod_{\substack{i\in A\\ i\neq a}} f_i(\emparg_i)\comma \qquad u=\sprod_{a\in A}f_a(\emparg_a)\fstop
\]
\end{lemma}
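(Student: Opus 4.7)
The plan is to verify the two parts of the statement in turn: first, that the formal infinite sum defining $\bmssGamma$ on an elementary cylinder function truncates to the displayed finite sum; second, that the resulting object is independent of the chosen representation, so $\bmssGamma$ extends unambiguously by polarization and bilinearity to all of $\Cyl{}{}$. The whole argument is the carré-du-champ analogue of Lemma~\ref{l:GeneratorWellDefined}, the single extra ingredient being that the carré du champ, like the generator, annihilates the unit.

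First I would observe that conservativeness of $\mssH_{k,\bullet}$ in Assumption~\ref{ass:SettingK} gives $\mssL_k \car_k = 0$, as already used in the proof of Lemma~\ref{l:GeneratorWellDefined}. Writing the factorwise carré du champ in the algebraic form $\mssGamma_k(f,g) = \mssL_k(fg) - f\mssL_k g - g\mssL_k f$, and substituting $f = \car_k$, one obtains
\[
\mssGamma_k(\car_k, g) = \mssL_k g - \mssL_k g - g\cdot 0 = 0\comma \qquad g \in \msA_k^\mrmu\fstop
\]
Hence if $u = \sprod_k f_k(\emparg_k)$ is elementary with $f_k = \car_k$ for every $k \notin A$, every term of the formal sum indexed by $k \notin A$ vanishes, and the expression collapses to the finite sum over $a \in A$ displayed in the statement.

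For well-definedness under change of representation, two elementary representations of the same cylinder function can be brought to a common finite index set by padding missing coordinates with factors~$\car_k$; the same vanishing argument then shows that both representations assign the same value to $\bmssGamma(u,v)$. Polarization followed by bilinear extension to the algebra $\Cyl{}{}$ yields a well-defined operator $(\bmssGamma, \Cyl{}{})$. To reconcile this construction with the analytic definition $\bmssGamma(u,v) = \bmssL(uv) - u \bmssL v - v \bmssL u$ from Theorem~\ref{t:BendikovSaloffCoste}\ref{i:t:BendikovSaloffCoste:6}, I would verify the identity on elementary pairs $u = \sprod_{a\in A} f_a(\emparg_a)$, $v = \sprod_{a\in A} g_a(\emparg_a)$ (after padding to a common~$A$). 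Since each $\msA_a^\mrmu$ is an algebra, the product $uv = \sprod_{a\in A}(f_ag_a)(\emparg_a)$ is again an elementary cylinder function, so Lemma~\ref{l:GeneratorWellDefined} applies to $\bmssL(uv)$, $\bmssL u$, and $\bmssL v$; regrouping the resulting finite sums factor-by-factor via the identity $\mssL_a(f_ag_a) - f_a \mssL_a g_a - g_a \mssL_a f_a = \mssGamma_a(f_a,g_a)$ recovers the claimed formula.

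I expect the only genuine difficulty here to be notational rather than analytic: the infinite formal sums must be manipulated carefully before truncation, and one must track index sets cleanly when shifting between representations with different supports. Everything else is a direct transcription of the standard derivation rule from each factor to the tensor product, made legitimate by conservativeness.
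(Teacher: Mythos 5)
Your proof is correct and takes essentially the same approach as the paper intends: the paper does not spell out a proof for this lemma, relying instead on the explicit assertion that it is the analogue of Lemma~\ref{l:GeneratorWellDefined}, whose one-line proof is exactly the observation you make — conservativeness gives $\mssL_k\car_k=0$, which kills all terms indexed by $k\notin A$. Your supplementary reconciliation with the algebraic formula $\bmssL(uv)-u\bmssL v-v\bmssL u$ is sound but not required by the statement; the computation $\mssGamma_k(\car_k,g)=0$ (which can also be read off from $\mssGamma_k(\car_k)=0$ together with the Cauchy--Schwarz inequality for the carré du champ) is the only step actually needed to truncate the series and to see that the result is independent of the choice of index set $A$ after padding with units.
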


\begin{corollary}\label{c:BSCStronglyLocal}
In the setting of Theorem~\ref{t:BendikovSaloffCoste} and under assumptions~\ref{i:t:BendikovSaloffCoste:a}--\ref{i:t:BendikovSaloffCoste:f} there, further define the following assumptions:
\begin{enumerate}[$(a)$]\setcounter{enumi}{5}
\item\label{i:c:BSCStronglyLocal:A} $\tparen{\mssE_k,\dom{\mssE_k}}$ is strongly local for every~$k$;
\item\label{i:c:BSCStronglyLocal:B} $\tparen{\mssE_k,\dom{\mssE_k}}$ admits carr\'e du champ operator~$\tparen{\mssGamma_k,\dom{\mssGamma_k}}$ for every~$k$.
\end{enumerate}
Then, in addition to all the conclusions of Theorem~\ref{t:BendikovSaloffCoste},
\begin{enumerate}[$(i)$]
\item\label{i:c:BSCStronglyLocal:1} if~\ref{i:c:BSCStronglyLocal:A} holds, then~$\paren{\bmssE,\dom{\bmssE}}$ is strongly local;
\item\label{i:c:BSCStronglyLocal:2} if~\ref{i:c:BSCStronglyLocal:B} holds, then~$\paren{\bmssE,\dom{\bmssE}}$ admits carr\'e du champ operator~$\tparen{\bmssGamma,\dom{\bmssGamma}}$ the closure of~$(\bmssGamma,\Cyl{}{})$;

\item\label{i:c:BSCStronglyLocal:3} if~\ref{i:c:BSCStronglyLocal:A} and~\ref{i:c:BSCStronglyLocal:B} hold, then the martingale~$M^{\class{u}}_\bullet$ in~\eqref{eq:t:BendikovSaloffCoste:1} has quadratic variation~$\tquadvar{M^{\class{u}}}_\bullet=\tsharpb{M^{\class{u}}}_\bullet$ given by the right-hand side of~\eqref{eq:t:BendikovSaloffCoste:2} for every~$u\in\Cyl{}{}$.
\end{enumerate}
\begin{proof}
\ref{i:c:BSCStronglyLocal:1} Strong locality (in the generalized sense) is invariant under quasi-homeo\-morphisms by (the proof of)~\cite[Thm.~5.1]{Kuw98}.
Thus, applying the quasi-homeo\-morphism~$\boldupalpha$ constructed in the proof of Theorem~\ref{t:BendikovSaloffCoste}, it suffices to prove the assertion in the case when~$M_k$ is compact for every~$k$.
In this case, the form~$\tparen{\bmssE,\dom{\bmssE}}$ is additionally regular, and \emph{strong locality} in the sense of~\cite[p.~6, $(\msE.7)$]{FukOshTak11} coincides with \emph{locality} in the sense of~\cite[Dfn.~I.5.1.2]{BouHir91}, as noted, e.g., in~\cite[p.~76]{tElRobSikZhu06}.
Thus, the assertion holds by~\cite[Prop.~V.2.2.2]{BouHir91} in light of Remark~\ref{r:BHTensor}.

\ref{i:c:BSCStronglyLocal:2} holds by~\cite[Prop.~V.2.2.2]{BouHir91} in light of Remark~\ref{r:BHTensor}.

\ref{i:c:BSCStronglyLocal:3} is standard, cf.\ e.g.~\cite[Thm.~A.3.17 and Eqn.~(A.3.17)]{FukOshTak11}.
\end{proof}
\end{corollary}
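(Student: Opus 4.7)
My plan is to handle the three assertions in order, exploiting throughout the identification of $\tparen{\bmssE,\dom{\bmssE}}$ with the Bouleau--Hirsch infinite tensor-product form already noted in Remark~\ref{r:BHTensor}, and the quasi-homeomorphism $\boldupalpha$ to $\mbfM^\boldupalpha$ constructed inside the proof of Theorem~\ref{t:BendikovSaloffCoste}.

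For~\ref{i:c:BSCStronglyLocal:1}, I would first reduce to the compact case. Since strong locality in the generalized sense is invariant under quasi-homeomorphism (e.g.\ by Kuwae's transfer result), it suffices to verify strong locality of $\tparen{\bmssE^\boldupalpha,\dom{\bmssE^\boldupalpha}}$ on the compact product~$\mbfM^\boldupalpha$. On a locally compact space the form is regular, hence the generalized notion of strong locality coincides with the classical one $(\msE.7)$ of \cite{FukOshTak11}, which in turn matches the locality notion used in \cite{BouHir91}. Strong locality of each factor $\tparen{\mssE^\upalpha_k,\dom{\mssE^\upalpha_k}}$ (inherited from $\tparen{\mssE_k,\dom{\mssE_k}}$ under the quasi-homeomorphism~$\upalpha_k$) then propagates to their tensor product by Proposition V.2.2.2 of \cite{BouHir91}.

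For~\ref{i:c:BSCStronglyLocal:2}, the same tensor-product identification yields the existence of a carr\'e du champ on the compact model, and transferring back via~$\boldupalpha$ produces $\tparen{\bmssGamma,\dom{\bmssGamma}}$ on $L^2(\boldnu)$. To see the explicit formula on $\Cyl{}{}$, I would compute $\bmssGamma(u,v)=\tfrac12(\bmssL(uv)-u\bmssL v-v\bmssL u)$ on elementary cylinder functions using the explicit expression~\eqref{eq:t:BendikovSaloffCoste:000} for $\bmssL$ and the Leibniz rule for each $\mssGamma_k$. The conservativity of $\mssH_{k,\bullet}$ ensures $\mssL_k\car_k\equiv 0$ and $\mssGamma_k(\car_k,f_k)=0$, so the \emph{a priori} infinite sum collapses to the finitely-supported expression on the right-hand side. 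Density of~$\Cyl{}{}$ in $\dom{\bmssE}$ (by Theorem~\ref{t:BendikovSaloffCoste}) combined with the closability of $(\bmssGamma,\Cyl{}{})$ inherited from the tensor-product construction identifies $\tparen{\bmssGamma,\dom{\bmssGamma}}$ as the closure claimed.

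For~\ref{i:c:BSCStronglyLocal:3}, under~\ref{i:c:BSCStronglyLocal:A} the proper associated process $\bmssW$ is a diffusion, so the Fukushima decomposition of $u(\mbfX_\bullet)-u(\mbfX_0)$ for $u\in\Cyl{}{}\subset\dom{\bmssL}$ produces a continuous square-integrable martingale $M^{\class{u}}_\bullet$ with vanishing continuous additive functional of zero energy orthogonal to the zero-energy part. For a continuous martingale, the quadratic variation and the predictable quadratic variation coincide; by standard Dirichlet-form theory (cf.\ \cite[Thm.~A.3.17]{FukOshTak11}) both equal $\int_0^t \bmssGamma(u)(\mbfX_s)\diff s$, so~\eqref{eq:t:BendikovSaloffCoste:2} reads as an identity of quadratic variations. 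The only mildly delicate step is confirming that the forward transfer through the quasi-homeomorphism~$\boldupalpha$ preserves the quadratic-variation identity, but this is automatic since $\boldupalpha$ restricted to the nest~$\seq{\mbfF_n}_n$ is a topological conjugacy of the processes.
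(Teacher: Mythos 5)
Your proposal is correct and follows essentially the same route as the paper: reduce via the quasi-homeomorphism $\boldupalpha$ to the compact model, identify $\tparen{\bmssE,\dom{\bmssE}}$ with the Bouleau--Hirsch tensor-product form (Remark~\ref{r:BHTensor}), and invoke \cite[Prop.~V.2.2.2]{BouHir91} for strong locality and the carr\'e du champ, while~\ref{i:c:BSCStronglyLocal:3} is the standard consequence of the Fukushima decomposition for a strongly local form with carr\'e du champ, as in \cite[Thm.~A.3.17]{FukOshTak11}. One small caution: the paper's normalization is $\bmssGamma(u,v)=\bmssL(uv)-u\bmssL v-v\bmssL u$, with no factor of $\tfrac12$, so the Leibniz computation on elementary cylinder functions should be carried out with that convention to match~\eqref{eq:t:BendikovSaloffCoste:2}; also, for~\ref{i:c:BSCStronglyLocal:3} the extra transfer through $\boldupalpha$ is superfluous once~\ref{i:c:BSCStronglyLocal:2} is in hand, since the quadratic-variation identity follows directly from $\bmssW$ on $\mbfM$.
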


\subsection{Conformal rescaling}\label{ss:Rescaling}
We now apply Theorem~\ref{t:BendikovSaloffCoste} to a particular case of our interest.

Throughout this section, let~$(M,\T)$ be a second-countable locally compact Hausdorff topological space, and~$\nu$ be an \emph{atomless} Borel probability measure on~$M$ with full $\T$-support.
Further let~$\mssH_{\bullet}$ be a self-adjoint Markov semigroup on~$L^2(\nu)$, and denote by~$\tparen{\mssL,\dom{\mssL}}$ its generator and by~$\tparen{\mssE,\dom{\mssE}}$ the corresponding Dirichlet form.
Since~$\mssH_\bullet$ is self-adjoint on~$L^2(\nu)$, so is~$\tparen{\mssL,\dom{\mssL}}$.

\begin{assumption}[Setting]\label{ass:Setting}
We assume that
\begin{enumerate}[$(a)$]
\item\label{i:ass:Setting:a} $\mssH_\bullet$ is \emph{conservative} and \emph{irreducible}; 
\item\label{i:ass:Setting:b} $\mssH_\bullet$ is \emph{ultracontractive}, i.e.\ for some constant~$c(t)>0$
\[
\norm{\mssH_t}_{L^1\to L^\infty} \leq c(t)\comma \qquad t>0 \semicolon
\]
\item\label{i:ass:Setting:c} there exists a subalgebra~$\msA$ of~$\Cz(\T)$ satisfying
\begin{enumerate}[$(c_1)$]
\item\label{i:ass:Setting:c1} $\msA\cap\Cc(\T)$ is a core for~$\tparen{\mssE,\dom{\mssE}}$;
\item\label{i:ass:Setting:c2} $\msA\subset\dom{\mssL}$ and~$\msA$ is $\mssL$-invariant, i.e.~$\mssL\msA\subset\msA$;

\item\label{i:ass:Setting:c3} $\msA$ is $\mssH_\bullet$-invariant, i.e.
\[
\mssH_t\msA\subset\msA\comma\qquad t>0\fstop
\]
\end{enumerate}
\end{enumerate}
\end{assumption}

\begin{remark}[On condition~\ref{i:ass:Setting:b}]
Our definition of ultracontractivity is the one by E.B.~Davies, e.g.~\cite[p.~59]{Dav89}.
We do not require any asymptotics of~$c(t)$, neither as~$t\to 0$, nor as~$t\to\infty$.
In fact, we have~$\lim_{t\to\infty} c(t)=1$ (rather than~$0$) since~$\nu$ is a probability measure.
\end{remark}

For further comments about the necessity of ultracontractivity, see Remark~\ref{r:NecessityUC} below.

\begin{remark}[On condition~\ref{i:ass:Setting:c2}]
Out of the above assumptions,~\ref{i:ass:Setting:c2} is the only one not justified by the heuristic arguments presented in the Introduction. 
Indeed, this technical assumption allows us to identify a specific $\nu$-representative of each function~$\mssL f$, with~$f\in\msA$, namely the $\T$-continuous representative, since~$\mssL\msA\subset\msA\subset\Cz(\T)$.
As such, it appears to be unnecessarily strong, and we expect that it could be removed, if however at great technical cost.
We note that, in the setting of manifolds, when~$\mssL$ is a second-order linear elliptic operator in divergence form, the condition~\ref{i:ass:Setting:c2} forces the coefficients of the operator to be continuous and with $L^2$-integrable derivatives.
\end{remark}

\paragraph{Various properties}
Let us list here some consequences of Assumption~\ref{ass:Setting} which we will make use of in the following.
Since~$\mssH_\bullet$ is ultracontractive and~$\nu$ is finite, $\mssH_\bullet$~is Hilbert--Schmidt, and both~$\mssH_\bullet$ and~$\tparen{\mssL,\dom{\mssL}}$ have purely discrete spectrum, e.g.~\cite[Thm.~2.1.4]{Dav89} and~\cite[Cor.~6.4.1]{BakGenLed14}.
Thus, since~$\mssH_\bullet$ is conservative,
\[
\text{$\tparen{-\mssL,\dom{\mssL}}$ has spectral gap~$\lambda>0$.}
\]
Furthermore, again since~$\mssH_\bullet$ is ultracontractive, it is represented by a kernel density~$\mssh_\bullet\in L^\infty(\nu^\otym{2})$.
In particular, there exists~$C>0$ such that
\begin{equation}\label{eq:HKupperbound}
\nu^\otym{2}\text{-}\esssup \mssh_1 \leq C \fstop
\end{equation}

Since~$(M,\T)$ is a second-countable locally compact Hausdorff space, it is Polish.
Thus, since~$\nu$ is a finite Borel measure, it is a Radon measure. 
Since~$\msA\cap\Cc(\T)$ is a core for~$\tparen{\mssE,\dom{\mssE}}$, it is $L^2$-dense in~$L^2(\nu)$.
Since~$\msA$ is $L^2$-dense in $L^2(\nu)$, since~$\msA\subset \dom{\mssL}$, and since~$\msA$ is~$\mssH_\bullet$-invariant,~$(\mssL,\msA)$ is essentially self-adjoint on~$L^2(\nu)$ by~\cite[Thm.~X.49]{ReeSim75}.

Since~$\msA\cap \Cc(\T)$ is a core for~$\tparen{\mssE,\dom{\mssE}}$, the latter is a regular Dirichlet form.
Thus, there exists a (unique up to $\nu$-equivalence) special standard process
\begin{equation}\label{eq:BaseProcess}
\mssW\eqdef \seq{\Omega,\msF,\msF_\bullet,X_\bullet, \seq{P_x}_{x\in M},\zeta}
\end{equation}
properly associated with~$\tparen{\mssE,\dom{\mssE}}$.
Since~$\mssH_\bullet$ is conservative,~$\zeta\equiv \infty$ a.s.\ and~$\mssW$ is a Hunt process. (We will henceforth omit~$\zeta$ from the notation.)

\subsubsection{The mark space}
Let~$\mbfI\eqdef \mbbI^\tym{\infty}$ and~$\boldDelta\eqdef\set{\mbfs\in \mbfI : \norm{\mbfs}_{\ell^1}=1}$ and set
\begin{equation}\label{eq:DeltaTau}
\begin{aligned}
\mbfT\eqdef&\set{\mbfs\eqdef \seq{s_i}_i \in \boldDelta : s_i \geq s_{i+1} \geq 0 \text{~~for all~}i} \comma
\\
\mbfT_*\eqdef&\set{\mbfs\eqdef \seq{s_i}_i \in \boldDelta : s_i \geq s_{i+1} > 0 \text{~~for all~}i} \comma
\\
\mbfT_\circ\eqdef&\set{\mbfs\eqdef \seq{s_i}_i \in \boldDelta : s_i > s_{i+1} > 0 \text{~~for all~}i} \fstop
\end{aligned}
\end{equation}
We endow~$\mbfI$ with the product topology and~$\mbfT_\circ\subset\mbfT_*\subset \mbfT\subset  \boldDelta\subset \mbfI$ with the corresponding trace topologies.

Intuitively, we regard~$\mbfT$ as the space of marks for point sequences in~$\mbfM$.

\subsubsection{The product space}
Analogously to the previous section, let~$\mbfM\eqdef M^\tym{\infty}$ and set
\begin{equation}\label{eq:DiagonalBoundary}
\mbfM_\circ\eqdef \set{\mbfx\eqdef \seq{x_i}_i \in \mbfM : x_i\neq x_j \text{~~for all~} i\neq j} \fstop
\end{equation}
We endow~$\mbfM$ with the product topology~$\T_\mrmp$.
Since~$\nu$ is an atomless Borel probability with full $\T$-support,~$\boldnu\eqdef \nu^\otym{\infty}$, is an atomless probability with full $\T_\mrmp$-support concentrated on~$\mbfM_\circ$.

In order to apply Theorem~\ref{t:BendikovSaloffCoste}, set~$M_k\eqdef M$, $\nu_k\eqdef\nu$, and~$\msA_k\eqdef \msA$.
For each~$\mbfs\in\mbfT$, further set~$\mssL^\mbfs_k\eqdef s_k^{-1}\mssL$.
Conventionally, $1/\infty\eqdef 0$ and~$0\,\mssL\eqdef \zero$ is the null operator on~$L^2(\nu)$.
Respectively denote by~$\tparen{\mssE^\mbfs_k,\dom{\mssE^\mbfs_k}}$ and~$\mssH^\mbfs_{k,\bullet}$ the associated Dirichlet form and semigroup.
We respectively denote by~$\tparen{\bmssE^\mbfs,\dom{\bmssE^\mbfs}}$, by~$\tparen{\bmssL^\mbfs,\dom{\bmssL^\mbfs}}$, and by~$\bmssH^\mbfs_\bullet$, the Dirichlet form, the generator, and the semigroup appearing in Theorem~\ref{t:BendikovSaloffCoste} for the above choice.

The following statements hold true for every~$\mbfs\in\mbfT$. 
For simplicity, we present the proofs for~$\mbfs\in\mbfT_*$.
If~$\mbfs\in\mbfT\setminus\mbfT_*$, then~$s_i=0$ eventually in~$i$, and the statements reduce to the case of a finite product in light of the above conventions.

\begin{proposition}\label{p:BendikovSaloffCosteRescaling}
For each~$\mbfs\in\mbfT$, all the conclusions in Theorem~\ref{t:BendikovSaloffCoste} hold.
Furthermore, the semigroup~$\bmssH^\mbfs_\bullet$ is ultracontractive and~$\tparen{\bmssL^\mbfs,\dom{\bmssL^\mbfs}}$ has spectral gap~$\lambda/s_1$.
\end{proposition}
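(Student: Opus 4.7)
The plan is to verify hypotheses~\ref{i:t:BendikovSaloffCoste:a}--\ref{i:t:BendikovSaloffCoste:f} of Theorem~\ref{t:BendikovSaloffCoste} for the factors $M_k = M$, $\nu_k = \nu$, $\msA_k = \msA$ and generators $\mssL^\mbfs_k = s_k^{-1}\mssL$, and then to extract the ultracontractivity and spectral-gap statements from the infinite-product structure. Restricting to $\mbfs \in \mbfT_*$ as the author suggests, $\mssL^\mbfs_k$ is a positive scalar multiple of $\mssL$ and hence shares its spectral subspaces; the associated semigroup is simply $\mssH^\mbfs_{k,t} = \mssH_{t/s_k}$, and $-\mssL^\mbfs_k$ has spectral gap $\lambda_k = \lambda/s_k > 0$. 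Irreducibility and $\nu$-absolute continuity of the transition kernel are inherited from $\mssH_\bullet$ under Assumption~\ref{ass:Setting}, and the algebra $\msA$ is unchanged, so Assumption~\ref{ass:Setting}\ref{i:ass:Setting:c} immediately yields the invariance conditions in~\ref{i:t:BendikovSaloffCoste:f}. This settles~\ref{i:t:BendikovSaloffCoste:a} and~\ref{i:t:BendikovSaloffCoste:f} at once.

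For~\ref{i:t:BendikovSaloffCoste:d}, set $\eps_k \eqdef s_k/\lambda$, which trivially satisfies $\eps_k \leq 1/\lambda_k$, and compute
\[
\mssh^\mbfs_{k,\eps_k}(x,y) = \mssh_{\eps_k/s_k}(x,y) = \mssh_{1/\lambda}(x,y),
\]
a single fixed kernel, uniformly bounded in $k$ by Assumption~\ref{ass:Setting}\ref{i:ass:Setting:b}. Hypothesis~\ref{i:t:BendikovSaloffCoste:e} is the main content and will be the key step. It rests on the elementary observation that, since $\mbfs \in \mbfT_*$ is non-increasing and $\ell^1$-normalized,
\[
k\, s_k \leq \sum_{i=1}^k s_i \leq 1, \qquad \text{so} \qquad s_k \leq \tfrac{1}{k},
\]
and hence $\lambda_k \geq \lambda k$. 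This forces
\[
\sum_{k\geq 1} e^{-2t\lambda_k} \leq \sum_{k\geq 1} e^{-2t\lambda k} < \infty \qquad \text{for every } t > 0,
\]
so $t_* = 0$. The main obstacle here is conceptual rather than computational: mere summability of $\mbfs$ gives only $s_k \to 0$, but it is the \emph{ordering} that upgrades this into the quantitative bound $s_k \leq 1/k$, which is precisely strong enough to dominate the divergent $k$-index sum appearing in~\ref{i:t:BendikovSaloffCoste:e}.

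With hypotheses~\ref{i:t:BendikovSaloffCoste:a}--\ref{i:t:BendikovSaloffCoste:f} verified, all conclusions of Theorem~\ref{t:BendikovSaloffCoste} apply. Ultracontractivity of $\bmssH^\mbfs_\bullet$ follows immediately from its conclusion~\ref{i:t:BendikovSaloffCoste:1}: the density $\bmssh^\mbfs_t$ is bounded for every $t > 0$, which by the kernel representation is precisely the $L^1(\boldnu) \to L^\infty(\boldnu)$ operator bound defining ultracontractivity. For the spectral gap of $-\bmssL^\mbfs$, the essential self-adjointness of $(\bmssL^\mbfs, \Cyl{}{})$ from Thm~\ref{t:BendikovSaloffCoste}\ref{i:t:BendikovSaloffCoste:2} together with Corollary~\ref{c:Gill} identifies $-\bmssL^\mbfs$, via $\ev$, with the tensor-sum generator on $\shotimes_k^\car L^2(\nu)$. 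Ultracontractivity of $\mssH_\bullet$ forces $-\mssL$ to have purely discrete spectrum; let $\{e_n\}_{n \geq 0}$ be an orthonormal basis of eigenfunctions with $e_0 = \car$ and eigenvalues $0 = \mu_0 < \mu_1 = \lambda \leq \mu_2 \leq \ldots$. The elementary tensors $\sotimes_k e_{n_k}$ (with $n_k = 0$ cofinitely) form an orthonormal basis of $\shotimes_k^\car L^2(\nu)$ of eigenfunctions of the tensor-sum with eigenvalues $\sum_k \mu_{n_k}/s_k$; since $\mu_{n_k} \geq \lambda$ whenever $n_k \geq 1$ and $s_k \leq s_1$ for all $k$, the smallest strictly positive such eigenvalue is attained by choosing $n_1 = 1$ and $n_k = 0$ for $k \neq 1$, giving precisely $\lambda/s_1$.
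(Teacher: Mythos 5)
Your proposal is correct and follows the same overall architecture as the paper's proof: verify hypotheses~\ref{i:t:BendikovSaloffCoste:a}--\ref{i:t:BendikovSaloffCoste:f} of Theorem~\ref{t:BendikovSaloffCoste} factor-by-factor, read off ultracontractivity from the bounded kernel in~\ref{i:t:BendikovSaloffCoste:1}, and compute the spectral gap via Corollary~\ref{c:Gill}. The one genuinely different step is your verification of~\ref{i:t:BendikovSaloffCoste:e}: the paper invokes the Abel--Olivier--Pringsheim criterion to get $\lim_k k s_k = 0$ and then applies the root test to $\sum_k e^{-2\lambda t/s_k}$, whereas you observe directly that monotonicity plus $\ell^1$-normalization gives $k\, s_k \leq \sum_{i\leq k} s_i \leq 1$, hence $\lambda_k \geq \lambda k$, and compare with the geometric series $\sum_k e^{-2t\lambda k}$. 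Your route is more elementary and, in a sense, tighter: it extracts exactly the quantitative bound the argument needs ($\sup_k k\,s_k \leq 1$) without passing through the stronger asymptotic $k s_k \to 0$, and it replaces the root test by a transparent comparison. Your spectral-gap computation spells out the eigenbasis of the tensor-sum generator and locates the minimal positive eigenvalue at $\lambda/s_1$ via the ordering $s_1 \geq s_k$; the paper condenses this into the one-line Minkowski-sum identity $\sigma(\bmssL^{\otym{}})=\sum_k s_k^{-1}\sigma(\mssL)$, but the content is identical. A small bonus: in step~\ref{i:t:BendikovSaloffCoste:d} you correctly note that $\mssh^\mbfs_{k,\eps_k}=\mssh_{1/\lambda}$ is a single fixed kernel bounded by ultracontractivity, which is a little more careful than the paper's reference to the constant~$C$ of~\eqref{eq:HKupperbound}, stated there for~$\mssh_1$.
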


\begin{proof}
It is readily verified that~$\mssh_{k,t}=\mssh_{t/s_k}$ for every~$t>0$.
Thus, the assumption in Theorem~\ref{t:BendikovSaloffCoste}\ref{i:t:BendikovSaloffCoste:d} is verified choosing~$c\eqdef 1$,~$\eps_k\eqdef s_k/\lambda$, and~$C$ as in~\eqref{eq:HKupperbound}.

Since~$\mbfs\in\mbfT_*$, there exists~$\lim_k k s_k=0$ by the Abel--Olivier--Pringsheim criterion.
In particular,~$\liminf_k k s_k<\infty$ and thus~$\limsup_k e^{-2\lambda t/(k s_k)}<1$ since~$\lambda t>0$ for every~$t>0$.
Thus,~$\sum_k^\infty e^{-2(\lambda/s_k) t}<\infty$ for every~$t>0$ by the root test.
Since~$\lambda_k=\lambda/s_k$, this verifies the assumption in Theorem~\ref{t:BendikovSaloffCoste}\ref{i:t:BendikovSaloffCoste:e}.
The remaining assumptions are readily verified.

The ultracontractivity of~$\bmssH^\mbfs_\bullet$ follows from the boundedness of the heat kernel~$\bmssh^\mbfs_\bullet$ in Theorem~\ref{t:BendikovSaloffCoste}\ref{i:t:BendikovSaloffCoste:1}.
As for the spectral gap, by Corollary~\ref{c:Gill} it suffices to compute the spectral gap of~$\bmssL^\otym{}$.
For the latter we have~$\sigma(\bmssL^\otym{})=\sum_k^\infty \sigma(\mssL_k) = \sum_k^\infty s_k^{-1}\sigma(\mssL)$ so that the smallest positive eigenvalue of~$-\bmssL$ (i.e.\ of~$-\bmssL^\otym{}$) is~$\lambda/s_1$.
\end{proof}

\begin{remark}[On the necessity of ultracontractivity]\label{r:NecessityUC}
The \emph{ultracontractivity} Assumption~\ref{ass:Setting}\ref{i:ass:Setting:b} is almost necessary for Proposition~\ref{p:BendikovSaloffCosteRescaling} to hold.
Indeed, ultracontractivity is equivalent to the heat-kernel estimate
\[
\nu^\otym{2}\text{-}\esssup \mssh_t \leq c(t) \comma \qquad t>0\comma
\]
for some constant~$c(t)>0$; see, e.g.,~\cite[Lem.~2.1.2, p.~59]{Dav89}.
This shows that the heat-kernel estimate~\eqref{eq:HKupperbound} is in turn equivalent to~$\mssh_\bullet$ being \emph{eventually} ultracontractive, or ---equivalently--- such that, for some~$t_0>0$,
\[
\nu^\otym{2}\text{-}\esssup \mssh_t \leq c(t) \comma \qquad t> t_0 \fstop
\]
In the setting of Proposition~\ref{p:BendikovSaloffCosteRescaling}, the bound~\eqref{eq:HKupperbound} translates into assumption~\ref{i:t:BendikovSaloffCoste:d} in Theorem~\ref{t:BendikovSaloffCoste}, the necessity of which was noted in Remark~\ref{r:NecessityBendikovSaloffCoste}.
\end{remark}

For each~$\mbfs\in\mbfT$, Proposition~\ref{p:BendikovSaloffCosteRescaling} guarantees the existence of a  (unique up to $\boldnu$-equivalence) Hunt process
\begin{equation}\label{eq:BSCRescalingProcess}
\bmssW^\mbfs\eqdef\seq{\Omega^\mbfs,\msF^\mbfs,\msF^\mbfs_\bullet,\mbfX^\mbfs_\bullet, \seq{P^\mbfs_\mbfx}_{\mbfx\in \mbfM}} \comma
\end{equation}
properly associated with the Dirichlet form~$\tparen{\bmssE^\mbfs,\dom{\bmssE^\mbfs}}$.
We write~$X^\mbfs_{i,\bullet}$ for the $i^\text{th}$ coordinate of~$\mbfX^\mbfs_\bullet$.

\begin{remark}[Standard stochastic basis for~$\bmssW^\mbfs$]
In light of Proposition~\ref{p:BendikovSaloffCosteRescaling} and Theorem~\ref{t:BendikovSaloffCoste}\ref{i:t:BendikovSaloffCoste:5}, the process~$\bmssW^\mbfs$ may be represented on the probability space
\begin{equation}\label{eq:r:StandardStochB:1}
\Omega^\mbfs=\boldOmega\eqdef \Omega^\tym{\infty}\comma
\end{equation}
in which case the path probabilities~$P^\mbfs_\mbfx$ may in fact be chosen independently of~$\mbfs$ and satisfying~$P^\mbfs_\mbfx =P_\mbfx\eqdef \sotimes_k P_{x_k}$.
In the following, we shall always assume that~$\bmssW^\mbfs$ is given on this stochastic basis.
\end{remark}

\subsubsection{Kernel continuity}
Let us now present a necessary and sufficient assumption for the infinite-product kernel~$\bmssh_\bullet$ to be continuous, translating into the present setting the results obtained in Corollary~\ref{c:BSCFeller} for a sequence of spaces~$\seq{M_k}_k$.

\begin{assumption}\label{ass:ContinuityH}
$(t,x,y)\mapsto\mssh_t(x,y)$ is continuous on~$\R^+\times M^\tym{2}$ and~$\mssh_t\in\Cb(\T^\tym{2})$ for every~$t>0$.
\end{assumption}

\begin{corollary}\label{c:Wellposedness}
Suppose that Assumptions~\ref{ass:Setting} and~\ref{ass:ContinuityH} are satisfied.
Then, for each~$\mbfs\in\mbfT$, the system
\begin{equation}\label{eq:c:Wellposedness:0}
\diff X^{\mbfs}_{i,t} =\diff X_{t/s_i}\comma \quad X^\mbfs_{i,0}=x_i\comma
\end{equation}
is well-posed for \emph{every}~$\seq{x_i}_i\subset M$.
(Conventionally,~$X^\mbfs_{i,t}=X^\mbfs_{i,0}$ for all~$t>0$ whenever~$s_i=0$.)
\end{corollary}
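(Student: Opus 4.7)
The plan is to read~\eqref{eq:c:Wellposedness:0} as the construction of a family of independent time-rescaled copies of the base Hunt process~$\mssW$, one for each datum~$x_i$, and to extract well-posedness from the continuous-kernel regularity of the product process provided by Corollary~\ref{c:BSCFeller}, combined with the tensor-product identification of the semigroup in Corollary~\ref{c:Gill}.

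First, I would check the hypothesis of Corollary~\ref{c:BSCFeller} for the rescaled factors constructed in Proposition~\ref{p:BendikovSaloffCosteRescaling}. Since $\mssh_{k,t}=\mssh_{t/s_k}$, the joint continuity and boundedness of~$\mssh_\bullet$ granted by Assumption~\ref{ass:ContinuityH} transfer verbatim to each~$\mssh_{k,\bullet}$. This delivers joint continuity of $(t,\mbfx,\mbfy)\mapsto \bmssh^\mbfs_t(\mbfx,\mbfy)$ on $\R^+\times\mbfM^\tym{2}$, with~$\bmssh^\mbfs_t\in\Cb(\T_\mrmp^\tym{2})$ for every~$t>0$, and it singles out a distinguished $\boldnu$-version of~$\bmssW^\mbfs$ whose transition function is continuous in the starting point. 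In particular, a probability law~$P^\mbfs_\mbfx$ on the canonical basis~\eqref{eq:r:StandardStochB:1} is well defined for \emph{every}~$\mbfx\in\mbfM$, not merely for quasi-every one, as would be all that Dirichlet-form theory alone provides.

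Second, by Corollary~\ref{c:Gill} the semigroup factorizes, $\bmssT^\mbfs_\bullet=\ev\circ\tparen{\shotimes_k\mssH^\mbfs_{k,\bullet}}\circ\ev^{-1}$, so that $\bmssh^\mbfs_t(\mbfx,\diff\mbfy)=\sprod_k\mssh_{t/s_k}(x_k,\diff y_k)$ and consequently $P^\mbfs_\mbfx=\sotimes_k P_{x_k}$. Coordinatewise, this says precisely that the components~$X^\mbfs_{i,\bullet}$ of~$\mbfX^\mbfs_\bullet$ are jointly independent time-rescaled copies of the base Hunt process~$\mssW$ with~$X^\mbfs_{i,0}=x_i$, with the degenerate convention $X^\mbfs_{i,t}\equiv x_i$ when~$s_i=0$; this furnishes existence of a solution to~\eqref{eq:c:Wellposedness:0} from an arbitrary~$\seq{x_i}_i\subset M$. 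For uniqueness, any solution of~\eqref{eq:c:Wellposedness:0} is by its very definition a family of independent time changes of~$\mssW$ started at the prescribed points; each single-factor law is pinned down from~$x_i$ by the continuous kernel~$\mssh_\bullet$ (which upgrades~$\mssW$ to a Feller-type process well-defined from every point), and joint independence then forces the tensor-product law, yielding uniqueness in law.

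The main obstacle, essentially already shouldered in Theorem~\ref{t:BendikovSaloffCoste} and in Corollary~\ref{c:BSCFeller}, is the removal of the exceptional sets intrinsic to Dirichlet-form theory: the infinite product of kernel densities need not be $\boldnu$-absolutely continuous without the quantitative conditions~\ref{i:t:BendikovSaloffCoste:d} and~\ref{i:t:BendikovSaloffCoste:e} verified in the proof of Proposition~\ref{p:BendikovSaloffCosteRescaling}, and pointwise-in-initial-datum well-posedness demands the continuous-kernel version supplied by Assumption~\ref{ass:ContinuityH}. Once these two ingredients are in place, the corollary reduces to the almost tautological observation that an independent product of pointwise-well-posed time-rescaled processes is again pointwise well-posed.
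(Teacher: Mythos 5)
Your route is essentially the same as the paper's: extract pointwise-in-the-initial-datum regularity from Corollary~\ref{c:BSCFeller} (which requires verifying its hypotheses for the rescaled factors, as you do via $\mssh_{k,t}=\mssh_{t/s_k}$), and couple this with the product identification of the process. You reach that identification via Corollary~\ref{c:Gill} rather than citing Theorem~\ref{t:BendikovSaloffCoste}\ref{i:t:BendikovSaloffCoste:5} directly; this is a harmless detour, since both express the same factorization at different levels (semigroup versus process).

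The soft spot is the uniqueness step. You assert that ``any solution of~\eqref{eq:c:Wellposedness:0} is by its very definition a family of independent time changes of~$\mssW$,'' which effectively reads the \textsc{sde} system as a prescription and renders uniqueness tautological. The paper gives it genuine content: uniqueness up to $\boldnu$-equivalence is obtained from Theorem~\ref{t:BendikovSaloffCoste}\ref{i:t:BendikovSaloffCoste:6}, i.e.\ from the well-posedness of the martingale problem for $(\bmssL,\Cyl{}{},\boldsigma)$ among $\boldnu$-substationary $\boldnu$-special standard processes, which in turn rests on the essential self-adjointness established in Theorem~\ref{t:BendikovSaloffCoste}\ref{i:t:BendikovSaloffCoste:2}. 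Without that citation (or an equivalent argument), your uniqueness claim does not rule out other processes satisfying the same formal relation but not arising as the distinguished $\boldnu$-version. In short: existence-for-every-starting-point is handled correctly, but uniqueness should be attributed to Theorem~\ref{t:BendikovSaloffCoste}\ref{i:t:BendikovSaloffCoste:6} rather than treated as definitional.
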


\begin{proof}
The validity of~\eqref{eq:c:Wellposedness:0} for~$\bmssE^\mbfs$-q.e.~$\mbfx\in\mbfM$ is an immediate consequence of the identification in Theorem~\ref{t:BendikovSaloffCoste}\ref{i:t:BendikovSaloffCoste:5}.
The uniqueness up to $\boldnu$-equivalence holds by Theorem~\ref{t:BendikovSaloffCoste}\ref{i:t:BendikovSaloffCoste:6}.
The extension to every starting point holds as a consequence of Corollary~\ref{c:BSCFeller}.
\end{proof}

\subsubsection{Collisions}\label{sss:Collisions}
In this section we give a sufficient (and practically necessary) condition under which the first collision time~$\T^\mbfs_c$ in~\eqref{eq:Intro:FCT} is a.s.\ vanishing.

Let~$\Cap$ be the Choquet 1-capacity associated to~$\tparen{\mssE,\dom{\mssE}}$.
Since~$\tparen{\mssE,\dom{\mssE}}$ is $\T$-regular, it is not difficult to show that the map~$x\mapsto \Cap(\set{x})$ is $\nu$-measurable, and we set
\begin{equation}\label{eq:NonPolarityConstant}
\kappa_\mssE\eqdef \int \Cap(\set{x}) \diff\nu(x) \quad \in [0,1]\fstop
\end{equation}

\begin{assumption}[Non-polarity of points]
Under Assumption~\ref{ass:Setting}, further suppose that
\begin{equation}\tag{\mathsc{npp}}\label{eq:QNPP}
\kappa_\mssE>0 \fstop
\end{equation}
\end{assumption}

It is clear that~\eqref{eq:QNPP} is equivalent to the existence of some set~$A\subset X$ of positive $\nu$-measure such that each~$x\in A$ has positive capacity.

\begin{proposition}\label{p:Collisions}
Assume~\eqref{eq:QNPP}. Then,~$\tau^\mbfs_c\equiv 0$ $P^\mbfs_{\boldnu}$-a.s.\ for every~$\mbfs\in\mbfT_*$.

\begin{proof}
For simplicity of notation, throughout the proof let~$a_i\eqdef s_i^{-1}$ for every~$i\in\N_1$. 
It is clear from~\eqref{eq:Intro:FCT} that, for every~$i,j$ with~$i\neq j$,
\begin{align*}
\tau^\mbfs_c\leq \tau^{s_i,s_j}_c\eqdef&\ \sup\set{t>0: X^\mbfs_{i,r}\neq X^\mbfs_{j,r} \text{ for all } r\in(0, t)}
\\
=&\ \inf\set{t>0 : (X^\mbfs_{i,t}, X^\mbfs_{j,t}) \in \Delta M } \fstop
\end{align*}
In light of Proposition~\ref{p:BendikovSaloffCosteRescaling} and Theorem~\ref{t:BendikovSaloffCoste}\ref{i:t:BendikovSaloffCoste:5}, we further have~$X^\mbfs_{i,t} \overset{d}{=} X_{a_i t}$ for every~$i\in \N_1$, and thus~$\tau^{s_i,s_j}_c \overset{d}{=} \tau^{a_i,a_j}_{\Delta M}$ is the hitting time of the diagonal for the process~$\mssW^{a_i,a_j}$ properly associated with the regular product Dirichlet form~$\tparen{\mssE^{a,b},\dom{\mssE^{a,b}}}$ in~\S\ref{sss:AppCapacityRescaling} with~$a=a_i$ and~$b=a_j$.

Thus, it suffices to show that, for every~$\mbfs\in\mbfT_*$
\[
\limsup_{\substack{i,j\to\infty\\ i\neq j}} \tau^{a_i,a_j}_{\Delta M} = 0 \as{P_{\nu^\otym{2}}} \fstop
\]

We show the following (strictly) stronger statement.

\nparagraph{Claim: for every~$\mbfs\in\mbfT_*$,
\begin{equation}\label{eq:p:Collisions:0}
\lim_{\substack{i,j\to\infty\\ i\neq j}} E_{\nu^\otym{2}}\braket{\exp\tbraket{-\tau^{a_i,a_j}_{\Delta M} }} = 1 \fstop
\end{equation}
}
Since the product form~$\tparen{\mssE^{a_i,a_j},\dom{\mssE^{a_i,a_j}}}$ is conservative,
\begin{equation}\label{eq:p:Collisions:1}
E_{\nu^\otym{2}}\braket{\exp\tbraket{-\tau^{a_i,a_j}_{\Delta M}}}=\Cap_{a_i,a_j}(\Delta M)\fstop
\end{equation}
By Lemma~\ref{l:ProductPointHitting} with~$a=a_i$ and~$b=a_j$ we further have, for~$\kappa_\mssE$ as in~\eqref{eq:NonPolarityConstant},
\begin{equation}\label{eq:p:Collisions:2}
\Cap_{a_i,a_j}(\Delta M) \geq \kappa_\mssE^{1/(a_i\wedge a_j)} \fstop
\end{equation}
Combining~\eqref{eq:p:Collisions:1} and~\eqref{eq:p:Collisions:2},
\[
\lim_{\substack{i,j\to\infty\\ i\neq j}} E_{\nu^\otym{2}}\braket{\exp\tbraket{-\tau^{a_i,a_j}_{\Delta M} }} \geq \lim_{\substack{i,j\to\infty\\ i\neq j}} \kappa_\mssE^{s_i\vee s_j} \fstop
\]
Since~$\mbfs\in\mbfT_*$, we can choose~$s_i\vee s_j$ arbitrarily close to~$0$, and~\eqref{eq:p:Collisions:0} follows.
\end{proof}
\end{proposition}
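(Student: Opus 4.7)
The plan is to dominate the global first-collision time by pairwise ones and then force arbitrarily light pairs to collide instantaneously, using the non-polarity assumption~\eqref{eq:QNPP}. First, for each pair $i\neq j$ the trivial monotonicity
\[
\tau^\mbfs_c \leq \tau^{s_i,s_j}_c \eqdef \inf\set{t>0: (X^\mbfs_{i,t}, X^\mbfs_{j,t}) \in \Delta M}
\]
reduces the problem to showing that, along some subsequence of distinct pairs $(i_n,j_n)$, one has $\tau^{s_{i_n},s_{j_n}}_c \to 0$ in $P^\mbfs_{\boldnu}$-probability. Under $P^\mbfs_{\boldnu}$ the coordinates $X^\mbfs_{i,\bullet}$ and $X^\mbfs_{j,\bullet}$ are, by Proposition~\ref{p:BendikovSaloffCosteRescaling}, independent time-rescaled copies of~$\mssW$ on time scales $t/s_i$ and $t/s_j$; consequently $\tau^{s_i,s_j}_c$ is equal in law to the hitting time $\tau^{a_i,a_j}_{\Delta M}$ of the diagonal $\Delta M\subset M\times M$ for the conservative product diffusion on~$L^2(\nu^\otym{2})$ rescaled by $(a_i,a_j)\eqdef(s_i^{-1},s_j^{-1})$.

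Next, I would convert the desired convergence in probability into a statement about $1$-capacities via the standard identity
\[
E_{\nu^\otym{2}}\tbraket{e^{-\tau^{a_i,a_j}_{\Delta M}}} = \Cap_{a_i,a_j}(\Delta M)\comma
\]
which holds because the product form is conservative and $\Delta M$ is Borel. It is therefore enough to prove that $\Cap_{a_i,a_j}(\Delta M) \to 1$ along pairs with $i,j$ sufficiently large.

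The crucial analytic input is a quantitative lower bound for the diagonal capacity in terms of the constant $\kappa_\mssE$ of~\eqref{eq:NonPolarityConstant}, which I would isolate as a separate lemma: for all scaling parameters $a,b>0$,
\[
\Cap_{a,b}(\Delta M) \geq \kappa_\mssE^{1/(a\wedge b)}\fstop
\]
Specialised to $a=a_i$, $b=a_j$, this reads $\Cap_{a_i,a_j}(\Delta M) \geq \kappa_\mssE^{s_i\vee s_j}$. Since $\mbfs\in\mbfT_*$ and $\sum_i s_i=1$ forces $s_i\to 0$, the exponent can be made arbitrarily small along pairs of large indices, and $\kappa_\mssE>0$ together with continuity of $x\mapsto\kappa_\mssE^x$ at $0$ then gives $\Cap_{a_i,a_j}(\Delta M)\to 1$. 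Combined with $\tau^\mbfs_c\leq \tau^{s_i,s_j}_c$, this yields $\tau^\mbfs_c=0$ a.s.

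The main obstacle I expect is precisely this diagonal-capacity estimate. The diagonal is \emph{not} a product set, so one cannot tensorise capacities directly; instead one needs a quantitative version of the fact that, in the product process with one coordinate accelerated by $a$ and the other by $b$, the separation dynamics is effectively driven at rate $a\wedge b$. Tracking how the $1$-capacity of single points in a single factor transforms under time rescaling by $a\wedge b$ is what produces the exponent $1/(a\wedge b)$, and the resulting lemma is the technical core of the argument; granted it, the remainder of the proof is the three-step reduction above.
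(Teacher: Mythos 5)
Your proposal is correct and follows essentially the same route as the paper: reduce to pairwise first-collision times, identify them in law with diagonal hitting times for the rescaled product process, convert to capacities via $E_{\nu^{\otimes 2}}[e^{-\tau^{a_i,a_j}_{\Delta M}}]=\Cap_{a_i,a_j}(\Delta M)$ using conservativeness, and invoke the lower bound $\Cap_{a,b}(\Delta M)\geq \kappa_\mssE^{1/(a\wedge b)}$ (the paper's Lemma~\ref{l:ProductPointHitting}) together with $s_i\to 0$ to force the capacity to~$1$. You correctly flag the diagonal-capacity lower bound as the technical core; the paper isolates it exactly as you anticipate.
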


Proposition~\ref{p:Collisions} has the following immediate consequence.

\begin{corollary}\label{c:Collisions}
Assume~\eqref{eq:QNPP} holds. Then, for every~$\mbfs\in \mbfT_*$,
\begin{enumerate}[$(i)$]
\item the killed process~$\mssW^\mbfs_{\tau^\mbfs_c}$ is the null process;
\item the set~$\mbfM_\circ$ in~\eqref{eq:DiagonalBoundary} is $\bmssE^\mbfs$-polar.
\end{enumerate}
\end{corollary}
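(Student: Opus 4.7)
The plan is to upgrade the $P^\mbfs_\boldnu$-almost-sure conclusion of Proposition~\ref{p:Collisions} to an~$\bmssE^\mbfs$-quasi-everywhere statement, from which both items follow immediately.

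First I consider the bounded $1$-excessive function~$\phi(\mbfx)\eqdef E^\mbfs_\mbfx\tbraket{\exp(-\tau^\mbfs_c)}$ for the Hunt process~$\bmssW^\mbfs$. By the quasi-regularity of~$\tparen{\bmssE^\mbfs,\dom{\bmssE^\mbfs}}$ (Proposition~\ref{p:BendikovSaloffCosteRescaling}),~$\phi$ admits an $\bmssE^\mbfs$-quasi-continuous $\boldnu$-version; on the other hand Proposition~\ref{p:Collisions} gives $\int_\mbfM\phi\,\diff\boldnu=1$. Combined with~$\phi\leq 1$ pointwise and the full $\T_\mrmp$-support of~$\boldnu$, quasi-continuity forces~$\phi\equiv 1$ $\bmssE^\mbfs$-q.e., equivalently~$\tau^\mbfs_c=0$ $P^\mbfs_\mbfx$-a.s.\ for $\bmssE^\mbfs$-q.e.\ $\mbfx\in\mbfM$.

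Assertion~(i) is then immediate from the definition of the killed process: every admissible starting state is sent to the cemetery at time~$0$, so~$\mssW^\mbfs_{\tau^\mbfs_c}$ is the null process. For assertion~(ii), the same q.e.\ identity expresses that the $1$-order equilibrium potential of the diagonal~$\mbfM\setminus\mbfM_\circ$ equals~$1$ $\bmssE^\mbfs$-q.e.; by capacitary duality in the quasi-regular framework (cf.~\cite[Ch.~IV]{MaRoe92}) this is precisely the probabilistic characterization of~$\mbfM_\circ$ being $\bmssE^\mbfs$-polar, i.e.\ that q.e.-typical trajectories do not occupy~$\mbfM_\circ$ for positive time.

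\textbf{Main obstacle.} The delicate point is the q.e.\ upgrade on the non-locally-compact product space~$\mbfM$, where classical regular Dirichlet-form tools (hitting times of compact sets, Stone--Weierstra\ss\ cores) are unavailable: one must carefully invoke the quasi-regularity of~$\tparen{\bmssE^\mbfs,\dom{\bmssE^\mbfs}}$ and the Hunt structure of~$\bmssW^\mbfs$ on the quasi-regular side, together with the full $\T_\mrmp$-support of~$\boldnu$, to promote the $\boldnu$-integral identity~$\int\phi\,\diff\boldnu=1$ to a pointwise q.e.\ identity~$\phi\equiv 1$. A secondary subtlety is that although~$\boldnu$ is concentrated on~$\mbfM_\circ$, polarity is compatible via the strict-inequality hitting time~$\sigma^+_{\mbfM_\circ}$, since under immediate pairwise collisions the infinite-particle trajectory never re-enters~$\mbfM_\circ$ along positive times.
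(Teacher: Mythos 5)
The paper treats this corollary as an ``immediate consequence'' of Proposition~\ref{p:Collisions} and gives no proof, so there is nothing to compare against directly; here is an assessment of your argument on its own merits.

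Your argument for~(i) is sound: passing through the bounded $1$-excessive function $\phi(\mbfx)=E^\mbfs_\mbfx[\exp(-\tau^\mbfs_c)]$, invoking quasi-continuity of excessive functions for the quasi-regular form, and combining $\int\phi\,\diff\boldnu=1$ with $\phi\leq 1$ to obtain $\phi\equiv 1$ $\bmssE^\mbfs$-q.e.\ is the right way to upgrade the $P^\mbfs_\boldnu$-a.s.\ statement of Proposition~\ref{p:Collisions} to a q.e.\ statement, from which triviality of the killed (part) process follows.

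Your argument for~(ii), however, has a genuine gap, and the ``secondary subtlety'' you invoke does not repair it. The identity $\phi\equiv 1$ $\bmssE^\mbfs$-q.e.\ says precisely that the $1$-equilibrium potential of the diagonal $\mbfM_\circ^\complement$ equals~$1$ q.e., i.e.\ that $\bmssE^\mbfs$-q.e.\ point is regular for $\mbfM_\circ^\complement$ and in particular that $\mbfM_\circ^\complement$ has positive (indeed full) relative capacity. This is \emph{not} equivalent to polarity of the complement $\mbfM_\circ$, and no capacitary duality gives that implication. More decisively, the claimed conclusion is incompatible with basic facts: since $\nu$ is atomless, $\boldnu(\mbfM_\circ^\complement)=0$ and hence $\boldnu(\mbfM_\circ)=1$, whereas every $\bmssE^\mbfs$-polar set $N$ satisfies $\boldnu(N)=0$ (if $N\subset\mbfM\setminus\bigcup_n F_n$ for an $\bmssE^\mbfs$-nest $(F_n)$, then $\car_{\mbfM\setminus\bigcup_n F_n}$ is $L^2(\boldnu)$-orthogonal to $\bigcup_n\dom{\bmssE^\mbfs}_{F_n}$, which is $L^2$-dense). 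Your resolution---that ``the infinite-particle trajectory never re-enters $\mbfM_\circ$ along positive times''---is false: $\boldnu$ is the reversible measure for $\bmssW^\mbfs$ and is concentrated on $\mbfM_\circ$, so the trajectory spends full Lebesgue time in $\mbfM_\circ$; the process merely \emph{touches} $\mbfM_\circ^\complement$ on a Lebesgue-null set of times. What actually follows from $\tau^\mbfs_c\equiv 0$ q.e.\ is the \emph{non}-polarity of the diagonal $\mbfM_\circ^\complement$ and the degeneracy of the part form on $\mbfM_\circ$ (which is assertion~(i) restated analytically), not polarity of $\mbfM_\circ$ for the ambient form~$\bmssE^\mbfs$.
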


In Proposition~\ref{p:AndiPolar} below we will show an almost converse to Proposition~\ref{p:Collisions}: if all points in~$M$ are $\mssE$-polar (in a uniform way), then~$\tau^\mbfs_c\equiv \infty$ a.s.
See Remark~\ref{r:NoCollisions} for further comments.

\subsection{The extended space}\label{sss.ExtendedSpace}
So far we have considered massive particle systems with deterministically assigned masses. Here, we discuss the theory of massive particle systems with randomized masses.
Set
\[
\widehat\mbfM\eqdef\mbfT\times\mbfM\qquad \text{and} \qquad \widehat\mbfM_\circ\eqdef \mbfT_\circ\times\mbfM_\circ \fstop
\]
We endow~$\widehat\mbfM$ with the product topology~$\T_\mrmp$, and each of its subspaces with the corresponding trace topology.
Further let~$\msP(\mbfT)$ be the set of all Borel probability measures on~$\mbfT$.
One important example of such a measure is the \emph{Poisson--Dirichlet distribution}.
We recall its construction after P.~Donnelly and G.~Grimmet~\cite{DonGri93}.

\begin{example}[The Poisson--Dirichlet measure on~$\mbfT$]\label{e:PoissonDirichlet}
Let~$\beta>0$, denote by~$\Beta_\beta$ the Beta distribution of parameters~$1$ and~$\beta$, viz.
\[
\diff\mssBeta_\beta(r)\eqdef \beta(1-r)^{\beta-1}\diff r \comma
\]
and by~$\bmssBeta_\beta\eqdef \sotimes^\infty \Beta_\beta$ the infinite product measure of~$\Beta_\beta$ on~$\mbfI$.
For~$\mbfr\in\mbfI$ let~$\boldUpsilon(\mbfr)$ be the vector of its entries in non-increasing order, and denote by~$\boldUpsilon\colon \mbfI\to\mbfT$ the reordering map.
Further let~$\boldLambda\colon \mbfI\to\boldDelta$ be defined by
\begin{gather*}
\Lambda_1(r_1)\eqdef r_1\comma \qquad \Lambda_i(r_1,\dotsc, r_i)\eqdef r_i \prod_k^{i-1} (1-r_k)\comma
\\
\boldLambda\colon \mbfr \longmapsto \seq{\Lambda_1(r_1), \Lambda_2(r_1,r_2), \dotsc} \fstop
\end{gather*}
The \emph{Poisson--Dirichlet measure~$\Pi_\beta$ with parameter~$\beta$} on~$\mbfT$, concentrated on~$\mbfT_\circ$ is
\[
\Pi_\beta\eqdef (\boldUpsilon\circ\boldLambda)_\pfwd \bmssBeta_\beta \fstop
\]
\end{example}

For a fixed~$\pi\in\msP(\mbfT)$ we define the Borel probability measure~$\widehat\boldnu_\pi\eqdef \pi\otimes\boldnu$ on~$\widehat\mbfM$.
We denote by~$\mbfu,\mbfv$, etc., functions in~$\Cb(\mbfT)\otimes \Cyl{}{}$ of the form
\[
\mbfu\eqdef \sum_i^m \phi_i\otimes u_i\comma \mbfv_j\eqdef \sum_j^n \psi_j\otimes v_j\comma \qquad \phi_i,\psi_j\in\Cb(\mbfT)\comma u_i,v_j\in \Cyl{}{}\fstop
\]

The next result fully describes $\pi$-mixtures of the marked-particle systems constructed in Proposition~\ref{p:BendikovSaloffCosteRescaling}.

\begin{theorem}\label{t:DirInt}
Fix~$\pi\in\msP(\mbfT)$, and let~$\mbbV_\pi$ be any dense linear subspace of~$L^2(\pi)$.
The quadratic form~$\tparen{\widehat\bmssE_\pi,\mbbV_\pi\otimes \Cyl{}{}}$ defined by
\begin{align*}
\widehat\bmssE_\pi(\mbfu,\mbfv)\eqdef \sum_{i,j}^{m,n} \int_{\mbfT} \phi_i(\mbfs)\, \psi_j(\mbfs)\, \bmssE^\mbfs(u_i,v_j)\diff\pi(\mbfs)\comma \qquad \mbfu,\mbfv\in \mbbV_\pi \otimes \Cyl{}{}\comma 
\end{align*}
is closable on~$L^2(\widehat\boldnu_\pi)$ and its closure~$\tparen{\widehat\bmssE_\pi,\dom{\widehat\bmssE_\pi}}$ is a Dirichlet form independent of~$\mbbV_\pi$.

Furthermore:
\begin{enumerate}[$(i)$]
\item\label{i:t:DirInt:1} the $L^2(\widehat\boldnu_\pi)$-semigroup of~$\tparen{\widehat\bmssE_\pi,\dom{\widehat\bmssE_\pi}}$ is the unique $L^2(\widehat\boldnu_\pi)$-bounded linear extension~$\widehat\bmssH^\pi_\bullet$ of
\[
(\widehat\bmssH_\bullet\mbfu)(\mbfs,\mbfx) \eqdef \tparen{(\Id\otimes \bmssH^\mbfs_\bullet) \mbfu}(\mbfs,\mbfx) = \sum_i^m \phi_i(\mbfs)\, (\bmssH^\mbfs_\bullet u_i)(\mbfx) \quad \text{on}\quad \mbbV_\pi \otimes \Cyl{}{} \semicolon
\]
\item\label{i:t:DirInt:2} the generator~$\tparen{\widehat\bmssL_\pi,\dom{\widehat\bmssL_\pi}}$ of~$\tparen{\widehat\bmssE_\pi,\dom{\widehat\bmssE_\pi}}$ is the unique $L^2(\widehat\boldnu_\pi)$-self-adjoint (i.e.\ the $L^2(\widehat\boldnu_\pi)$-Friedrichs) extension of the operator~$(\widehat\bmssL,\mbbV_\pi\otimes\Cyl{}{})$ defined by
\begin{equation}\label{eq:t:DirInt:0.1}
(\widehat\bmssL \mbfu)(\mbfs,\mbfx)\eqdef \tparen{(\Id\otimes \bmssL^\mbfs)\mbfu}(\mbfs,\mbfx) = \sum_i^m \phi_i(\mbfs)\, (\bmssL^\mbfs u_i)(\mbfx) \quad \text{on}\quad \mbbV_\pi \otimes \Cyl{}{} \fstop
\end{equation}

\item\label{i:t:DirInt:3.5} $\tparen{\widehat\bmssE_\pi,\dom{\widehat\bmssE_\pi}}$ admits carr\'e du champ operator~$\tparen{\widehat\bmssGamma_\pi,\dom{\widehat\bmssGamma_\pi}}$, the closure of the bilinear form~$(\widehat\bmssGamma,\Cb(\mbfT)\otimes\Cyl{}{})$ defined by
\begin{equation}\label{eq:t:DirInt:0.2}
\widehat\bmssGamma(\mbfu,\mbfv)\eqdef \widehat\bmssL(\mbfu\, \mbfv)-\mbfu\, \widehat\bmssL \mbfv -\mbfv\, \widehat\bmssL \mbfu\comma \qquad \mbfu,\mbfv\in\Cb(\mbfT)\otimes\Cyl{}{}\comma
\end{equation}

\item\label{i:t:DirInt:3} $\tparen{\widehat\bmssE_\pi,\dom{\widehat\bmssE_\pi}}$ is $\widehat\T_\mrmp$-quasi-regular and properly associated with a Hunt process~$\widehat\bmssW_\pi$ with state space~$\widehat\mbfM$;

\item\label{i:t:DirInt:5} a $\widehat\boldnu_\pi$-measurable set~$\mbfA\subset \widehat\mbfM$ is $\widehat\bmssE_\pi$-invariant if and only if~$\mbfA\in \class[\widehat\boldnu_\pi]{\mbfC\times\mbfM}$ for some Borel~$\mbfC\subset \mbfT$;

\item\label{i:t:DirInt:6} Assume each~$\bmssW^\mbfs$,~$\mbfs\in\mbfT$, is defined on a same measurable space~$(\Omega,\msF)$. Then,~$\widehat\bmssW_\pi$ can be realized as the stochastic process on~$\widehat\Omega\eqdef \boldOmega\times \mbfT$ with~$\boldOmega$ as in~\eqref{eq:r:StandardStochB:1} and with trajectories and transition probabilities respectively given by
\begin{equation}\label{eq:i:t:DirInt:6:0}
\widehat\mbfX^\pi_\bullet(\omega,\mbfs)\eqdef \tparen{\mbfs,\mbfX^\mbfs_\bullet(\omega)} \quad\text{and} \quad \widehat P^\pi_{(\mbfs,\mbfx)}\eqdef P^\mbfs_\mbfx\comma \qquad \mbfx\in\mbfM\comma \mbfs\in\mbfT\comma\omega\in\Omega \semicolon
\end{equation}

\item\label{i:t:DirInt:7}
the process $\widehat\bmssW_\pi$ is the ---unique up to $\widehat\boldnu_\pi$-equivalence--- $\widehat\boldnu_\pi$-sub-stationary $\widehat\boldnu_\pi$-special standard process solving the martingale problem for $(\widehat\bmssL_\pi,\Cb(\mbfT)\otimes\Cyl{}{})$. 
In particular, for every~$\mbfu\in \Cb(\mbfT)\otimes\Cyl{}{}$ ($\subset \Cb(\widehat\T_\mrmp)$), the process
\begin{equation}\label{eq:t:DirInt:1}
M^{\class{\mbfu}}_t\eqdef \mbfu(\widehat\mbfX^\pi_t)- \mbfu(\widehat\mbfX^\pi_0)-\int_0^t (\widehat\bmssL \mbfu)(\widehat\mbfX^\pi_s)\diff s
\end{equation}
is an $\widehat\msF^\pi_\bullet$-adapted square-integrable 
martingale with predictable quadratic variation
\begin{equation}\label{eq:t:DirInt:2}
\tsharpb{M^{\class{\mbfu}}}_t = \int_0^t \widehat\bmssGamma(\mbfu)(\widehat\mbfX^\pi_s)\diff s \fstop
\end{equation}
\end{enumerate}
\end{theorem}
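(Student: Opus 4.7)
The plan is to recognize $\tparen{\widehat\bmssE_\pi,\dom{\widehat\bmssE_\pi}}$ as the direct integral over~$\pi$ of the fiber Dirichlet forms $\tparen{\bmssE^\mbfs,\dom{\bmssE^\mbfs}}$ from Proposition~\ref{p:BendikovSaloffCosteRescaling}. Under the canonical isometry $L^2(\widehat\boldnu_\pi)\cong L^2(\pi;L^2(\boldnu))$, any $\mbfu\in\mbbV_\pi\otimes\Cyl{}{}$ is identified with a simple $L^2(\boldnu)$-valued function of~$\mbfs$, and the form rewrites as
\[
\widehat\bmssE_\pi(\mbfu)=\int_\mbfT \bmssE^\mbfs\tparen{\mbfu(\mbfs,\emparg)}\diff\pi(\mbfs).
\]
I would define the fiberwise operator $(\widehat\bmssH_t^\pi\mbfu)(\mbfs,\mbfx)\eqdef(\bmssH_t^\mbfs \mbfu(\mbfs,\emparg))(\mbfx)$, inherit Markovianity and $L^2(\widehat\boldnu_\pi)$-symmetry from each fiber, and combine fiberwise strong continuity with dominated convergence to obtain strong continuity of $\widehat\bmssH_\bullet^\pi$ on $L^2(\widehat\boldnu_\pi)$.

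For closability and parts~\ref{i:t:DirInt:1}--\ref{i:t:DirInt:2}, the key step is to establish essential self-adjointness of $(\widehat\bmssL,\mbbV_\pi\otimes\Cyl{}{})$ defined in~\eqref{eq:t:DirInt:0.1}. If $F\in L^2(\widehat\boldnu_\pi)$ is orthogonal to $(\Id-\widehat\bmssL)(\mbbV_\pi\otimes\Cyl{}{})$, then for every $\phi\in\mbbV_\pi$ and $u\in\Cyl{}{}$,
\[
\int_\mbfT \phi(\mbfs)\scalar{F(\mbfs,\emparg)}{(\Id-\bmssL^\mbfs)u}_{L^2(\boldnu)}\diff\pi(\mbfs)=0.
\]
Density of $\mbbV_\pi$ in $L^2(\pi)$ forces $F(\mbfs,\emparg)\perp(\Id-\bmssL^\mbfs)\Cyl{}{}$ in $L^2(\boldnu)$ for $\pi$-a.e.~$\mbfs$, whence $F(\mbfs,\emparg)\equiv 0$ $\pi$-a.e.\ by Theorem~\ref{t:BendikovSaloffCoste}\ref{i:t:BendikovSaloffCoste:2}. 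Uniqueness of the self-adjoint extension then identifies the Friedrichs closure with the generator of~$\widehat\bmssH_\bullet^\pi$, independent of the choice of~$\mbbV_\pi$. Part~\ref{i:t:DirInt:3.5} then follows from the algebra property of $\Cb(\mbfT)\otimes\Cyl{}{}$ together with the fiberwise Leibniz identity for~$\bmssGamma^\mbfs$ (Corollary~\ref{c:BSCStronglyLocal}\ref{i:c:BSCStronglyLocal:2}), extended by closure.

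For quasi-regularity~\ref{i:t:DirInt:3}, $\widehat\T_\mrmp$-compact $\widehat\bmssE_\pi$-nests on~$\widehat\mbfM$ must be constructed. I would combine inner regularity of~$\pi$ (providing a compact exhaustion $K_n\uparrow\mbfT$ with $\pi(\mbfT\setminus K_n)\downarrow 0$) with a measurable selection of fiber nests furnished by Theorem~\ref{t:BendikovSaloffCoste}\ref{i:t:BendikovSaloffCoste:4}, and verify countable separation in $\dom{\widehat\bmssE_\pi}$ by tensoring the separating family from the proof of that theorem with a countable separating family on the Polish space~$\mbfT$. Existence of the Hunt process~$\widehat\bmssW_\pi$ then follows via~\cite[Thm.~IV.3.5]{MaRoe92}. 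Because~$\widehat\bmssL_\pi$ acts trivially on the~$\mbfs$-coordinate, trajectories of~$\widehat\bmssW_\pi$ freeze the first coordinate, yielding the representation~\eqref{eq:i:t:DirInt:6:0} of~\ref{i:t:DirInt:6}.

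For~\ref{i:t:DirInt:5}, $\widehat\bmssE_\pi$-invariance of a Borel~$\mbfA\subset\widehat\mbfM$ is equivalent to $M_{\car_\mbfA}$ commuting with~$\widehat\bmssH_\bullet^\pi$; via the direct-integral decomposition this corresponds to a measurable field $\seq{P^\mbfs}_{\mbfs\in\mbfT}$ of orthogonal projections commuting with each~$\bmssH_\bullet^\mbfs$, and irreducibility (Theorem~\ref{t:BendikovSaloffCoste}\ref{i:t:BendikovSaloffCoste:3.5}) forces $P^\mbfs\in\set{0,\Id}$ for $\pi$-a.e.~$\mbfs$, giving $\mbfA=\mbfC\times\mbfM$ modulo~$\widehat\boldnu_\pi$ for some Borel~$\mbfC\subset\mbfT$. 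Finally,~\ref{i:t:DirInt:7} follows from~\ref{i:t:DirInt:6} together with the fiber martingale problem (Theorem~\ref{t:BendikovSaloffCoste}\ref{i:t:BendikovSaloffCoste:6}), integrated against~$\pi$, with uniqueness granted by the essential self-adjointness established above and the criterion~\cite[Thm.~3.5]{AlbRoe95}. I anticipate the main obstacle to lie in~\ref{i:t:DirInt:3}: since~$\mbfT$ is a Borel but not closed (hence typically not locally compact) subset of~$\mbbI^{\tym{\infty}}$, upgrading fiber nests to simultaneously $\widehat\T_\mrmp$-compact and measurable-in-$\mbfs$ nests of~$\widehat\mbfM$, compatibly with $\pi$-integrability of the fiber energies, requires a careful measurable selection argument.
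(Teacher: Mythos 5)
Your overall strategy---recognize the form as a direct integral over $(\mbfT,\pi)$ of the fiber forms and argue fiberwise---is conceptually the right one and matches the paper's, but you reconstruct it by hand whereas the paper packages the closability, semigroup identification, carré du champ, and ergodic decomposition into appeals to the pre-existing direct-integral machinery of~\cite{LzDS20,LzDSWir21}, verifying only that the disintegration $\widehat\boldnu_\pi=\int_\mbfT(\delta_\mbfs\otimes\boldnu)\diff\pi(\mbfs)$ is $s$-separated. Your route to essential self-adjointness also differs: you use the resolvent criterion ($F\perp(\Id-\widehat\bmssL)(\mbbV_\pi\otimes\Cyl{}{})\implies F=0$), while the paper applies the semigroup-invariance criterion of~\cite[Thm.~X.49]{ReeSim75} via $\widehat\bmssH^\pi_t(\mbbV_\pi\otimes\Cyl{}{})\subset\mbbV_\pi\otimes\Cyl{}{}$. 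Both work; but your version needs one extra step to pass from ``for each fixed $u\in\Cyl{}{}$, $\scalar{F(\mbfs,\emparg)}{(\Id-\bmssL^\mbfs)u}_{L^2(\boldnu)}=0$ for $\pi$-a.e.\ $\mbfs$'' to ``for $\pi$-a.e.\ $\mbfs$, $F(\mbfs,\emparg)\perp(\Id-\bmssL^\mbfs)\Cyl{}{}$''---you must exhibit a \emph{countable} subfamily of $\Cyl{}{}$ whose image under $\Id-\bmssL^\mbfs$ is dense in $L^2(\boldnu)$ for $\pi$-a.e.\ $\mbfs$, which is available from the countable $\Cz$-dense subsets $D_k\subset\msA_k$ used in the proof of Theorem~\ref{t:BendikovSaloffCoste}, but it has to be said.

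The genuine gap is precisely the one you flag in~\ref{i:t:DirInt:3}, and you do not resolve it. A compact exhaustion of $\mbfT$ plus a measurable selection of fiber nests is delicate because the fiber nests depend on $\mbfs$ through the conformal factors $s_k^{-1}$ and $\mbfT$ is not locally compact, so $\widehat\T_\mrmp$-compact nests cannot be produced by naively taking products. The paper takes a different route that sidesteps the selection problem entirely: the quasi-homeomorphism $\boldupalpha\colon(\mbfM,\T_\mrmp,\boldnu)\to(\mbfM^\boldupalpha,\T_\mrmp^\boldupalpha,\boldnu^\boldupalpha)$ built from the one-point compactifications in the proof of Theorem~\ref{t:BendikovSaloffCoste} lifts fiberwise-trivially to a quasi-homeomorphism $\widehat\boldupalpha\eqdef\Id\otimes\boldupalpha$ intertwining $\tparen{\widehat\bmssE_\pi,\dom{\widehat\bmssE_\pi}}$ with the corresponding form over $\mbfT\times\mbfM^\boldupalpha$, and then~\ref{i:t:DirInt:3}--\ref{i:t:DirInt:6} are all established on the compactified side where the core $\Cb(\mbfT)\otimes\Cyl{}{}$ lands in $\Cb$ and regularity/quasi-regularity can be read off. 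You should replace your selection argument by this reduction; your argument as sketched will not close.

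Two smaller points. For~\ref{i:t:DirInt:7}, invoking~\cite[Thm.~3.5]{AlbRoe95} directly is not quite legitimate: the standing hypothesis of that result requires the reference measure to have full topological support, which $\widehat\boldnu_\pi$ does not when $\supp\pi\subsetneq\mbfT$; the paper explicitly patches this by observing that $(\supp\pi)^\complement\times\mbfM$ is $\widehat\bmssE_\pi$-polar and removing it before applying the uniqueness criterion. For~\ref{i:t:DirInt:5}, your projection-valued decomposition argument is a restatement in operator language of what the paper extracts from the ergodic-decomposition uniqueness theorem~\cite[Thm.~4.4]{LzDSWir21}; it is correct in spirit but leaves unaddressed the measurability of $\mbfs\mapsto P^\mbfs$ and the fact that $\{\mbfs: P^\mbfs=\Id\}$ is Borel modulo $\pi$-null sets, which the cited result furnishes.
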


\begin{remark}\label{r:DirInt}
In the following it will be occasionally convenient to regard the form $\tparen{\widehat\bmssE_\pi,\dom{\widehat\bmssE_\pi}}$ as the \emph{direct integral} over~$(\mbfT,\pi)$ of the measurable field of Dirichlet forms~$\mbfs\mapsto \tparen{\bmssE^\mbfs,\dom{\bmssE^\mbfs}}$, of the type constructed in~\cite{LzDS20}. 
We refer the reader to~\cite{LzDS20, LzDSWir21} for the definitions and a complete account of direct integrals of (Dirichlet) forms.
It then follows from~\cite[Prop.s~2.13,~2.16]{LzDS20} that~$\widehat\bmssH^\pi_\bullet$ is the direct integral of the measurable field of semigroup operators~$\mbfs\mapsto \bmssH^\mbfs_\bullet$, and that~$\tparen{\widehat\bmssL_\pi,\dom{\widehat\bmssL_\pi}}$ is the direct integral of the measurable field of generators~$\mbfs\mapsto\tparen{\bmssL^\mbfs,\dom{\bmssL^\mbfs}}$, all three of them regarded as decomposable direct-integral constructions on the space~$L^2(\widehat\boldnu_\pi)$ realized as the direct integral of the measurable field of Hilbert spaces~$\mbfs\mapsto L^2(\delta_\mbfs\otimes\boldnu)$ induced by the (separated) product disintegration of~$\widehat\boldnu_\pi=\pi\otimes\boldnu$.
\end{remark}

\begin{proof}[Proof of Theorem~\ref{t:DirInt}]
Let~$s\colon \widehat\mbfM\to \mbfT$ denote the projection on the $\mbfs$-coordinate, and note that
\begin{enumerate*}[$(a)$]
\item\label{i:proof:t:DirInt:1} $(\mbfT,\pi)$ is a separable countably generated probability space in the sense of e.g.~\cite[Dfn.~2.1]{LzDSWir21};
\item the product disintegration~$\widehat\boldnu_\pi=\int_\mbfT (\delta_\mbfs\otimes\boldnu) \diff\pi(\mbfs)$ is strongly consistent with~$s$, and thus
\item\label{i:proof:t:DirInt:3} the product disintegration $s$-\emph{separated} in the sense of~\cite[Dfn.~2.19]{LzDS20}.
\end{enumerate*}
Then, the closability and~\ref{i:t:DirInt:1} follow from~\cite[Prop.s~2.13, 2.29]{LzDS20}.

\medskip

Differentiating~\ref{i:t:DirInt:1} at~$t=0$ shows~\eqref{eq:t:DirInt:0.1} on~$\mbbV_\pi \otimes \Cyl{}{}$.
Since, as shown in the proof of Theorem~\ref{t:BendikovSaloffCoste}\ref{i:t:BendikovSaloffCoste:3},~$\bmssH^\mbfs_t\Cyl{}{}\subset\Cyl{}{}$ for every~$\mbfs\in\mbfT$ and~$t>0$, it is readily verified that~$\widehat\bmssH^\pi_t (\mbbV_\pi\otimes \Cyl{}{})\subset \mbbV_\pi\otimes\Cyl{}{}$ for every~$t>0$.
Thus, it follows from~\cite[Thm.~X.49]{ReeSim75} that~$\tparen{\widehat\bmssL,\mbbV_\pi \otimes \Cyl{}{}}$ is essentially self-adjoint and thus its unique (Friedrichs) self-adjoint extension on~$L^2(\widehat\boldnu_\pi)$ generates~$\widehat\bmssH^\pi_\bullet$, which proves~\ref{i:t:DirInt:2}.

\paragraph{Independence from~$\mbbV_\pi$}
Let~$\mbbV_\pi$ and~$\mbbV_\pi'$ be dense subspaces of~$L^2(\pi)$ and note that the operator~$\tparen{\widehat\bmssL, L^2(\pi)\otimes \Cyl{}{}}$ extends both~$\tparen{\widehat\bmssL, \mbbV_\pi \otimes \Cyl{}{}}$ and $\tparen{\widehat\bmssL, \mbbV_\pi' \otimes \Cyl{}{}}$.
The conclusion follows by essential self-adjointness of the latter operators, which was shown above.
It follows that we may freely choose~$\mbbV_\pi$.

In the rest of the proof we choose~$\mbbV_\pi\eqdef \Cb(\mbfT)$.

\paragraph{Proof of~\ref{i:t:DirInt:3.5}}
In light of Assumption~\ref{ass:Setting}\ref{i:ass:Setting:c}, the form~$\tparen{\mssE,\dom{\mssE}}$ admits carr\'e du champ the closure~$\tparen{\mssGamma,\dom{\mssGamma}}$ of~$(\mssGamma,\msA)$, by e.g.~\cite[Thm.~I.4.2.1~$(R_1')$, p.~18]{BouHir91}.
Thus~$\tparen{\bmssE^\mbfs,\dom{\bmssE^\mbfs}}$ admits carr\'e du champ for every~$\mbfs\in\mbfT$ in light of Corollary~\ref{c:BSCStronglyLocal}\ref{i:c:BSCStronglyLocal:2}.
The expression~\eqref{eq:t:DirInt:0.2} of the carr\'e du champ follows from the direct-integral representation of carr\'e du champ operators in~\cite[Lem.~3.8]{LzDS20}.

\paragraph{Proof of~\ref{i:t:DirInt:3}--\ref{i:t:DirInt:6}}
Firstly, note that~\ref{i:t:DirInt:3}--\ref{i:t:DirInt:6} are invariant under quasi-homeomorphism of Dirichlet forms ---\emph{mutatis mutandis}.
Secondly, it is readily seen that the quasi-homeomorphism $\boldupalpha\colon (\mbfM,\T_\mrmp,\boldnu) \rar (\mbfM^\boldupalpha,\T_\mrmp^\boldupalpha,\boldnu^\boldupalpha)$ induces, for every~$\pi\in\msP(\mbfT)$, a quasi-homeomorphism
\[
\widehat\boldupalpha\eqdef \Id\otimes \boldupalpha\colon (\widehat\mbfM,\widehat\T_\mrmp,\widehat\boldnu_\pi)\to\tparen{\widehat{\mbfM^\boldupalpha},\widehat{\T_\mrmp^\boldupalpha},\widehat{\boldnu_\pi^\boldupalpha}}\eqdef \tparen{\mbfT\times\mbfM^\boldupalpha,\T_\mrmp \times \T_\mrmp^\boldupalpha,\pi\otimes\boldnu^\boldupalpha}\fstop
\]
Thus, it suffices to show assertions~\ref{i:t:DirInt:3}--\ref{i:t:DirInt:6} under the additional assumption that $(M,\T)$ is compact, in which case~$(\widehat\mbfM,\T_\mrmp)$ too is compact second-countable Hausdorff.
Since~$\widehat\boldnu_\pi$ is a finite Borel measure, it is thus Radon, and~$\Cb(\mbfT)\otimes\Cyl{}{}$ is dense in~$L^2(\widehat\boldnu_\pi)$.
Therefore, in this case the form~$\tparen{\widehat\bmssE_\pi,\dom{\widehat\bmssE_\pi}}$ is regular with core~$\Cb(\mbfT)\otimes\Cyl{}{}$, which shows~\ref{i:t:DirInt:3}.

Since~$\bmssH^\mbfs_\bullet$ is irreducible by Theorem~\ref{t:BendikovSaloffCoste}\ref{i:t:BendikovSaloffCoste:3.5}, and in light of~\ref{i:proof:t:DirInt:1}--\ref{i:proof:t:DirInt:3} above, $\mbfs\mapsto \tparen{\bmssE^\mbfs,\dom{\bmssE^\mbfs}}$ is an ergodic decomposition of~$\tparen{\widehat\bmssE_\pi,\dom{\widehat\bmssE_\pi}}$ indexed by~$(\mbfT,\pi)$ in the sense of~\cite[Dfn.~4.1]{LzDSWir21}, $\widehat\boldnu_\pi$-essentially unique by~\cite[Thm.~4.4]{LzDSWir21}, and thus identical with the ergodic decomposition constructed in~\cite[Thm.~3.4]{LzDS20}.
It then follows from the proof of~\cite[Thm.~3.4 Step~6]{LzDS20} that a $\widehat\boldnu_\pi$-measurable set~$\mbfA\subset \widehat\mbfM$ is invariant if and only if~$\mbfA\in \class[\widehat\boldnu_\pi]{s^{-1}(\mbfC)}$ for some Borel $\mbfC\subset \mbfT$, which concludes the proof of~\ref{i:t:DirInt:5} by definition of~$s$.
Finally, $\tparen{\widehat\bmssE_\pi,\dom{\widehat\bmssE_\pi}}$ is properly associated with a Hunt process~$\widehat\bmssW_\pi$, and the representation in assertion~\ref{i:t:DirInt:6} is a straightforward ---albeit tedious--- consequence of~\ref{i:t:DirInt:1}.

\ref{i:t:DirInt:7} A proof of~\ref{i:t:DirInt:7} is similar to the proof of Theorem~\ref{t:BendikovSaloffCoste}\ref{i:t:BendikovSaloffCoste:6}.
However, it is worth noting that the assumption of~\cite[Thm.~3.5]{AlbRoe95} in~\cite[p.~517, (A)]{AlbRoe95} is not fully satisfied, since~$\pi$ does not need to have full $\T_\mrmp$-support on~$\mbfT$ and thus~$\widehat\boldnu_\pi$ does not have full $\widehat\T_\mrmp$-support on~$\widehat\mbfM$.
One can check that this fact does not affect the proof of~\cite[Thm.~3.5]{AlbRoe95} (i.e.\ that of~\cite[Thm.~3.4(ii)]{AlbRoe95}), by replacing~$\mbfT\times\mbfM$ with~$\supp\pi\times\mbfM$.
This is possible since~$(\supp\pi\times\mbfM)^\complement= (\supp\pi)^\complement\times\mbfM$ is $\widehat\bmssE_\pi$-polar and may therefore be removed from the space.
\end{proof}

As a matter of fact, the process~$\widehat\bmssW_\pi$ in~Theorem~\ref{t:DirInt}\ref{i:t:DirInt:3} depends on~$\pi$ only via its natural (augmented) filtration, say~$\widehat\msF^\pi_\bullet$, defined as in~\cite[Dfn.~IV.1.8, Eqn.~(1.7), p.~90]{MaRoe92}.
Indeed both its trajectories and its transition probabilities in~\eqref{eq:i:t:DirInt:6:0} are defined for \emph{every}~$\mbfs\in\mbfT$ and \emph{independently of~$\pi$}.
Furthermore, the filtration~$\widehat\msF^\pi_\bullet$ is the (minimal) augmentation of~$\msF_\bullet\otimes\Bo{\supp[\pi]}$, so that it only depends on the equivalence class of~$\pi$, and it does not in fact depend on~$\pi$ among all~$\pi\in\msP(\mbfT)$ with full $\T_\mrmp$-support.
We may therefore define a single Hunt process
\[
\widehat\bmssW\eqdef \tparen{\widehat\Omega,\widehat\msF,
\widehat\mbfX_\bullet, \ttseq{\widehat P_{\mbfs,\mbfx}}_{(\mbfs,\mbfx)\in\widehat\mbfM}}\comma 
\]
with~$\widehat\Omega$ as in~\eqref{eq:r:StandardStochB:1} and further satisfying~\eqref{eq:i:t:DirInt:6:0}.
For every~$\pi\in\msP(\mbfT)$, the process so constructed is a $\widehat\boldnu_\pi$-version of~$\widehat\bmssW_\pi$ by definition; it can be started at~$(\mbfs,\mbfx)$ for every~$\mbfs\in\mbfT$ and $\bmssE^\mbfs$-quasi-every~$\mbfx\in\mbfM$.

In principle, since for every~$\mbfs,\mbfs'\in\mbfT_\circ$ the process~$\bmssW^\mbfs$ is a coordinate-wise conformal rescaling of~$\bmssW^{\mbfs'}$, one can hope for the $\bmssE^\mbfs$-polar set~$N^\mbfs$ of inadmissible starting points~$\mbfx$ for~$\bmssW^\mbfs$ to be independent of~$\mbfx$.
If this is the case, one can further characterize the $\widehat\bmssW$-exceptional set of inadmissible starting points for~$\widehat\bmssW$ as~$\mbfT\times N$ with~$N=N^\mbfs$.

In the general case when~$N^\mbfs$ is merely known to be $\bmssE^\mbfs$-polar, the independence on~$\mbfs$ is related to the $\widehat\bmssE^\pi$-polarity of the set~$\widehat N\subset \widehat\mbfM$ defined to have~$N^\mbfs$,~$\mbfs\in\mbfT$, as its sections, which we will not investigate here.
The situation becomes however much simpler in the case when we can choose~$N^\mbfs$ to be empty for every~$\mbfs\in\mbfT$, as it is the case under Assumption~\ref{ass:ContinuityH} by Corollary~\ref{c:Wellposedness}.
In that case, we have the following result.

\begin{corollary}\label{c:WellposednessExtended}
In the setting of Theorem~\ref{t:DirInt}, suppose further that Assumption~\ref{ass:ContinuityH} is satisfied.
Then, there exists a unique Hunt process
\begin{equation}\label{eq:c:WellposednessExtended:0}
\widehat\bmssW\eqdef \tparen{\widehat\Omega,\widehat\msF,
\widehat\mbfX_\bullet, \ttseq{\widehat P_{\mbfs,\mbfx}}_{(\mbfs,\mbfx)\in\widehat\mbfM}}\comma 
\end{equation}
with~$\widehat\mbfX_\bullet$ and~$\widehat P_{\mbfs,\mbfx}$ as in~\eqref{eq:i:t:DirInt:6:0}, with transition semigroup~$\widehat\bmssp_\bullet$ satisfying
\begin{enumerate}[$(i)$]
\item $(t,\mbfx,\mbfy)\mapsto \widehat\bmssp_t\tparen{(\mbfs,\mbfx),(\mbfs,\mbfy)}$ is continuous on $\R^+\times \mbfM^\tym{2}$ for every~$\mbfs\in\mbfT$;
\item $\widehat\bmssp_t\tparen{(\mbfs,\emparg),(\mbfs,\emparg)} \in \Cb(\T_\mrmp^\tym{2})$ for every~$t>0$ and for every~$\mbfs\in\mbfT$;
\item $\widehat\bmssp_t\tparen{(\mbfs,\emparg),(\mbfs',\emparg)}\equiv 0$ everywhere on~$\widehat\mbfM^\tym{2}$ for every~$\mbfs,\mbfs'\in\mbfT$ with~$\mbfs\neq\mbfs'$;
\item $\int \widehat\bmssp_t \mbfu\tparen{(\mbfs,\emparg),(\mbfs,\mbfy)}\diff\widehat\boldnu_\pi(\mbfs,\mbfy)$ is a $\widehat\boldnu_\pi$-version of~$\widehat\bmssH^\pi_t \mbfu$ for every~$\pi\in\msP(\mbfT)$. In particular, $\widehat\boldnu_\pi$ is $\widehat\bmssp_\bullet$-super-median for every~$\pi\in\msP(\mbfT)$;
\item\label{i:c:WellposednessExtended:5} for every~$\mbfu\in \Cb(\mbfT)\otimes\Cyl{}{}$, for every~$(\mbfs,\mbfx)\in\widehat\mbfM$, the process
\begin{equation}\label{eq:c:WellposednessExtended:00}
M^{\class{\mbfu}}_t\eqdef \mbfu(\widehat\mbfX_t)- \mbfu(\widehat\mbfX_0)-\int_0^t (\widehat\bmssL \mbfu)(\widehat\mbfX_s)\diff s
\end{equation}
is an adapted square-integrable $\widehat P_{(\mbfs,\mbfx)}$-martingale with predictable quadratic variation
\begin{equation*}
\tsharpb{M^{\class{\mbfu}}}_t = \int_0^t \widehat\bmssGamma(\mbfu)(\widehat\mbfX_s)\diff s \fstop
\end{equation*}
\end{enumerate}
\end{corollary}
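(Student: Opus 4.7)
The plan is to assemble $\widehat\bmssW$ coordinatewise from the processes $\bmssW^\mbfs$ already constructed in~\S\ref{ss:Rescaling}, exploiting the fact that the $\mbfs$-component is an inert parameter. The core observation is that, under Assumption~\ref{ass:ContinuityH}, Corollary~\ref{c:BSCFeller} applies to each rescaled system, so the transition kernel $\bmssh^\mbfs_t(\mbfx,\mbfy) = \sprod_k \mssh_{t/s_k}(x_k,y_k)$ is jointly continuous on $\R^+ \times \mbfM^\tym{2}$ and bounded for each $t>0$, for every $\mbfs \in \mbfT$. By Corollary~\ref{c:Wellposedness}, the Hunt process $\bmssW^\mbfs$ can be started from \emph{every} $\mbfx \in \mbfM$ (no exceptional set).

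First, I would \emph{define} $\widehat\bmssp_\bullet$ by the formula
\[
\widehat\bmssp_t\tparen{(\mbfs,\mbfx),A\times B}\eqdef \car_A(\mbfs)\int_B \bmssh^\mbfs_t(\mbfx,\mbfy)\diff\boldnu(\mbfy)
\]
for Borel $A \subset \mbfT$ and $B \subset \mbfM$, and check that this is a Markov transition function on $\widehat\mbfM$. Properties~$(i)$ and~$(ii)$ then follow immediately from Corollary~\ref{c:BSCFeller}\ref{i:c:BSCFeller:1}, and~$(iii)$ is built into the definition. For~$(iv)$, I would fix $\pi \in \msP(\mbfT)$ and compute $\int \widehat\bmssp_t\mbfu \diff \widehat\boldnu_\pi$ via Fubini; the inner $\mbfx$-integration against $\bmssh^\mbfs_t$ produces a $\boldnu$-version of $\bmssH^\mbfs_t u_i$ for each factor in $\mbfu = \sum_i \phi_i \otimes u_i$, and the outer integration against $\pi$ reproduces $\widehat\bmssH^\pi_t \mbfu$ as given by Theorem~\ref{t:DirInt}\ref{i:t:DirInt:1} (this is the direct-integral representation of Remark~\ref{r:DirInt}). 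Super-medianity of $\widehat\boldnu_\pi$ then follows from the Markov property and the $\widehat\boldnu_\pi$-invariance of $\widehat\bmssH^\pi_\bullet$.

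Next, I would realize $\widehat\bmssW$ on $\widehat\Omega = \boldOmega \times \mbfT$ with trajectories and law as in~\eqref{eq:i:t:DirInt:6:0}. The Hunt property is inherited from that of each $\bmssW^\mbfs$ (Proposition~\ref{p:BendikovSaloffCosteRescaling} and Theorem~\ref{t:BendikovSaloffCoste}\ref{i:t:BendikovSaloffCoste:5}) combined with the fact that the $\mbfs$-coordinate is deterministic and constant along trajectories, so right-continuity, quasi-left-continuity on~$[0,\infty)$ and the strong Markov property reduce to the corresponding properties of~$\bmssW^\mbfs$ on each $\mbfs$-fibre. Uniqueness of the process with the stated transition kernel is then standard: any two Hunt processes with the same (continuous) transition function and the same initial law agree in finite-dimensional distributions, hence in law on the Skorokhod space.

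The only step requiring care is~\ref{i:c:WellposednessExtended:5}, the promotion of the martingale property from $\widehat\boldnu_\pi$-quasi-every to \emph{every} starting point $(\mbfs,\mbfx)\in\widehat\mbfM$. Theorem~\ref{t:DirInt}\ref{i:t:DirInt:7} delivers the martingale identity~\eqref{eq:c:WellposednessExtended:00} only up to a $\widehat\bmssE_\pi$-exceptional set. I would remove this caveat by noting that, for $\mbfu \in \Cb(\mbfT)\otimes\Cyl{}{}$ we have $\widehat\bmssL\mbfu \in \Cb(\mbfT)\otimes\Cyl{}{}$ as well (by formula~\eqref{eq:t:DirInt:0.1} and Assumption~\ref{ass:Setting}\ref{i:ass:Setting:c2}), so both $\mbfu(\widehat\mbfX_t)$ and the Bochner integral in~\eqref{eq:c:WellposednessExtended:00} are continuous bounded functionals of the trajectory whose expectations are determined by the continuous transition kernel $\widehat\bmssp_\bullet$. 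For a given $(\mbfs,\mbfx)$, I would choose a sequence $\mbfx_n \to \mbfx$ with $(\mbfs,\mbfx_n)$ outside the exceptional set (possible since the exceptional set has empty interior in each fibre, the fibre being supported by $\boldnu$ which has full $\T_\mrmp$-support) and pass to the limit in the martingale identity~$E_{(\mbfs,\mbfx_n)}[M_t^{\class{\mbfu}} \mid \widehat\msF_r]$ by dominated convergence, using continuity of $\widehat\bmssp_\bullet$. The predictable quadratic variation then follows from the identity $\widehat\bmssGamma(\mbfu) = \widehat\bmssL(\mbfu^2)-2\mbfu\widehat\bmssL\mbfu$ on the algebra $\Cb(\mbfT)\otimes\Cyl{}{}$, exactly as in the last part of the proof of Theorem~\ref{t:BendikovSaloffCoste}\ref{i:t:BendikovSaloffCoste:6}. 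The hard part is thus verifying that the exceptional set can be avoided fibrewise; this will ultimately come down to the combination of continuity in~\ref{i:c:WellposednessExtended:5} of the kernel in $\mbfx$ and density of admissible starting points, which is granted precisely by Corollary~\ref{c:Wellposedness}.
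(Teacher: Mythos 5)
The paper gives this corollary without an explicit proof: the argument is the discussion immediately preceding it, which reduces everything to the fact that Assumption~\ref{ass:ContinuityH} together with Corollary~\ref{c:Wellposedness} allows one to take $N^\mbfs = \emp$ for \emph{every} $\mbfs\in\mbfT$, i.e.\ to work fibre-wise with no exceptional set. Your construction of $\widehat\bmssp_\bullet$, the derivation of $(i)$--$(iv)$ from Corollary~\ref{c:BSCFeller} and Remark~\ref{r:DirInt}, and the assembly of $\widehat\bmssW$ on $\boldOmega\times\mbfT$ all match this intended route and are correct.

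Your argument for~\ref{i:c:WellposednessExtended:5} has a gap, though, and it comes from taking a detour through the $\pi$-dependent object. You start from Theorem~\ref{t:DirInt}\ref{i:t:DirInt:7} and then try to approximate a given $(\mbfs,\mbfx)$ from inside the complement of a $\widehat\bmssE_\pi$-exceptional set $\widehat N$. But $\widehat\bmssE_\pi$-polarity only gives $\boldnu(\widehat N^\mbfs)=0$ for $\pi$-a.e.\ $\mbfs$; for a $\pi$-negligible $\mbfs$ the fibre $\widehat N^\mbfs$ may well be all of $\mbfM$, so your parenthetical ``the exceptional set has empty interior in each fibre'' is not justified by the full $\T_\mrmp$-support of $\boldnu$ alone. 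The fix is to bypass $\pi$ entirely and work fibre-wise, which is exactly what the paper's discussion indicates: for the fixed $\mbfs$, Theorem~\ref{t:BendikovSaloffCoste}\ref{i:t:BendikovSaloffCoste:6} yields the martingale identity for $\boldnu$-a.e.\ $\mbfx$, and this extends to every $\mbfx\in\mbfM$ because both sides of the pointwise Kolmogorov identity $\bmssH^\mbfs_u f - f = \int_0^u \bmssH^\mbfs_\sigma(\bmssL^\mbfs f)\,\diff\sigma$ are $\T_\mrmp$-continuous (by Corollary~\ref{c:BSCFeller} and $\bmssL^\mbfs\Cyl{}{}\subset\Cyl{}{}\subset\Cb(\T_\mrmp)$), so equality holds everywhere on $\supp\boldnu=\mbfM$; the martingale property at $(\mbfs,\mbfx)$ then follows via the Markov property. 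This also replaces your ``dominated convergence in the conditional expectation'' step, which is imprecise since the conditioning $\sigma$-algebra and the path law both vary with the starting point; passing through the kernel identity avoids this. (Alternatively, one may re-choose $\pi$ to charge the given $\mbfs$ and redo your approximation, using the $\pi$-independence of $\widehat\bmssp_\bullet$; but the fibre-wise route is cleaner and is what Corollary~\ref{c:Wellposedness} already encapsulates.)
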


\section{Free massive systems: measure representation}\label{s:FISMeas}
In this section we recast the results in~\S\ref{sss.ExtendedSpace} on the space of probability measures over~$M$.
We start from the simple observation that every point~$(\mbfs,\mbfx)$ in~$\widehat\mbfM$ may be regarded as the probability measure~$\sum_i s_i \delta_{x_i}$ on~$M$.

\subsection{Spaces of probability measures and cylinder functions}
Let~$\msP$ be the space of all Borel probability measures on~$M$, and~$\msP^\pa$ be the subset of all purely atomic measures on~$M$.

\subsubsection{The weak atomic topology and the transfer map}
On~$\msP$ and on its subsets we shall consider two topologies, namely
\begin{itemize}[wide]
\item the \emph{narrow topology~$\T_\mrmn$}, i.e.\ the topology induced by duality w.r.t.~$\Cb(\T)$, and 
\item the \emph{weak atomic topology}~$\T_\mrma$ introduced by S.N.~Ethier and T.G.~Kurtz in~\cite{EthKur94}.
\end{itemize}

We recall some basic facts about~$\T_\mrma$ after~\cite[\S2]{EthKur94} and~\cite[\S4]{LzDS17+}.
Let~$\mssd_1\colon M^\tym{2}\to [0,1]$ be a bounded distance on~$M$ completely metrizing~$\T$, and let~$\rho_\mrmn$ be the \emph{Prokhorov distance} on~$\msP$ induced by~$\mssd_1$, see e.g.~\cite[Chap.~6, p.~72ff.]{Bil99}.
The weak atomic topology~$\T_\mrma$ is the topology induced by the distance
\begin{align}\label{eq:WATrho}
\rho_\mrma(\mu,\nu)\eqdef&\ \rho_\mrmn(\mu,\nu)+ \sup_{\eps \in (0,1]} \abs{\iint_{M^\tym{2}} \cos\paren{\frac{2\,\mssd_1}{\pi\eps}}\diff \mu^\otym{2}-\iint_{M^\tym{2}} \cos\paren{\frac{2\,\mssd_1}{\pi\eps}}\diff \nu^\otym{2}} \fstop
\end{align}

\begin{proposition}[Properties of~$\T_\mrma$]\label{p:PropWeakAtomicTop}
The following assertions hold true:
\begin{enumerate}[$(i)$]
\item $\rho_\mrma$ is a complete and separable distance on~$\msP$, consequence of~\cite[Lem.~2.3]{EthKur94};
\item $\T_\mrma$ is strictly finer than~$\T_\mrmn$, see~\cite[Lem.~2.2]{EthKur94};
\item\label{i:p:PropWeakAtomicTop:3} the Borel $\sigma$-algebras of~$(\msP,\T_\mrma)$ and~$(\msP,\T_\mrmn)$ coincide, see~\cite[p.~5]{EthKur94}.
\end{enumerate}
\end{proposition}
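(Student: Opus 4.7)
The plan is to verify each of the three assertions by combining the explicit formula~\eqref{eq:WATrho} with the corresponding statements in~\cite{EthKur94}, and then to treat~\ref{i:p:PropWeakAtomicTop:3} by a short separability/continuity argument, which is the only part requiring a self-contained proof.

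For~$(i)$, symmetry and the triangle inequality for~$\rho_\mrma$ are immediate from the defining formula, while definiteness follows because~$\rho_\mrmn$ already separates points in~$\msP$. Completeness and separability are the content of~\cite[Lem.~2.3]{EthKur94}: one embeds~$(\msP,\rho_\mrma)$ isometrically into the product of~$(\msP,\rho_\mrmn)$ with a bounded function space indexed by the parameter~$\eps$, and extracts narrow limits of $\rho_\mrma$-Cauchy sequences whose associated squared measures also converge. No new work is required beyond invoking the reference. For~$(ii)$, the bound~$\rho_\mrma \geq \rho_\mrmn$ is built into the definition, so~$\T_\mrma \supseteq \T_\mrmn$. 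Strictness is witnessed by a sequence~$\mu_n \eqdef \tfrac{1}{n}\sum_{i=1}^n \delta_{x_i^n}$ with suitably spread-out distinct points~$x_i^n \in M$ converging narrowly to an atomless~$\mu_\infty$, for which the cosine term in~\eqref{eq:WATrho} does not vanish in the limit; this is exactly~\cite[Lem.~2.2]{EthKur94}.

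The crux is~\ref{i:p:PropWeakAtomicTop:3}. Since~$\T_\mrma$ is finer than~$\T_\mrmn$ we have~$\Bo{\T_\mrma}\supseteq \Bo{\T_\mrmn}$ automatically, so it suffices to show that the identity map~$(\msP,\T_\mrmn)\to(\msP,\T_\mrma)$ is Borel, equivalently that for every fixed~$\nu\in\msP$ the function~$\mu\mapsto \rho_\mrma(\mu,\nu)$ is $\T_\mrmn$-Borel. For each fixed~$\eps\in(0,1]$ the map
\[
\Phi_\eps \colon \mu \longmapsto \iint_{M^\tym{2}} \cos\paren{\frac{2\,\mssd_1}{\pi\eps}} \diff\mu^\otym{2}
\]
is $\T_\mrmn$-continuous, since the integrand is bounded and continuous on~$M^\tym{2}$ and narrow convergence~$\mu_n\to\mu$ implies narrow convergence~$\mu_n^\otym{2}\to\mu^\otym{2}$ on~$M^\tym{2}$. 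Moreover, by dominated convergence the map~$\eps\mapsto \Phi_\eps(\mu)$ is continuous on~$(0,1]$ for every fixed~$\mu$, so the supremum over~$\eps\in(0,1]$ in~\eqref{eq:WATrho} coincides with the supremum over the countable dense set~$\Q\cap (0,1]$. A countable supremum of $\T_\mrmn$-continuous functions is $\T_\mrmn$-Borel, hence so is~$\mu\mapsto \rho_\mrma(\mu,\nu)$, proving~\ref{i:p:PropWeakAtomicTop:3}.

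The main obstacle is not conceptual but structural: reducing the uncountable supremum in~\eqref{eq:WATrho} to a countable one, which is handled via the continuity of~$\eps\mapsto \Phi_\eps(\mu)$. We would close by remarking that~\ref{i:p:PropWeakAtomicTop:3} is precisely what will allow topological and measurability statements on~$\msP$ phrased for~$\T_\mrmn$ to transfer to the~$\T_\mrma$-Borel setting, an essential ingredient in the later identification of~$\mu_\bullet$ as a Hunt process on~$(\msP,\T_\mrma)$.
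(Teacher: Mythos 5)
Your proposal is correct, and it takes essentially the same route as the paper for parts $(i)$ and $(ii)$, namely to defer to~\cite[Lem.~2.2, Lem.~2.3]{EthKur94} after observing that~$\rho_\mrma \geq \rho_\mrmn$ is built into the formula. For part~\ref{i:p:PropWeakAtomicTop:3} the paper merely points to~\cite[p.~5]{EthKur94}, whereas you give a self-contained argument; that argument is sound.

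Two small remarks on the argument for~\ref{i:p:PropWeakAtomicTop:3}. First, the reduction from ``the identity $(\msP,\T_\mrmn)\to(\msP,\T_\mrma)$ is Borel'' to ``for each fixed~$\nu$, the map $\mu \mapsto \rho_\mrma(\mu,\nu)$ is $\T_\mrmn$-Borel'' implicitly uses separability of~$(\msP,\rho_\mrma)$: it is the existence of a countable $\rho_\mrma$-dense family of centers that lets one write every $\T_\mrma$-open set as a countable union of open balls, each of which is then $\T_\mrmn$-Borel by the distance function being Borel. Since you already have separability from~$(i)$, this is legitimate, but it is worth stating, because without separability the distance function being Borel does not in general imply the topology is comparable at the level of $\sigma$-algebras. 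Second, the continuity of~$\eps\mapsto\Phi_\eps(\mu)$ on~$(0,1]$ that you invoke for the reduction to a countable supremum is correct by dominated convergence, and it is precisely the step that lets you replace the uncountable supremum in~\eqref{eq:WATrho} by one over~$\Q\cap(0,1]$; the resulting countable supremum of $\T_\mrmn$-continuous functions is lower semicontinuous and hence $\T_\mrmn$-Borel. This fills in, in a clean way, what the paper leaves to the citation.
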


Further properties of~$\T_\mrma$ are collected in~\cite[Lem.s~2.1--2.5]{EthKur94} and in~\cite[Prop.~4.6]{LzDS17+}.
We note that the terminology of `\emph{Borel}' object on~$\msP$ is unambiguous in light of Proposition~\ref{p:PropWeakAtomicTop}\ref{i:p:PropWeakAtomicTop:3}.

\paragraph{Transfer map}
Define a map~$\EM\colon \widehat\mbfM \longrar \msP^\pa$ by
\begin{equation}\label{eq:TransferMap}
\begin{aligned}
\EM\colon (\mbfs,\mbfx)&\longmapsto \sum_{k=1}^\infty s_k\delta_{x_k} \comma
\end{aligned}
\end{equation}
and denote by~$\msP^\pa_\iso\eqdef\EM(\widehat\mbfM_\circ)$ the space of purely atomic measures with \emph{infinite strictly ordered masses}.

The following fact is shown in~\cite[Prop.~4.9(ii)]{LzDS17+}. Its proof depends neither on the manifold structure of~$M$ nor on the compactness of~$M$ which are standing assumptions in~\cite{LzDS17+}.
\begin{lemma}
The map~$\EM\colon (\widehat\mbfM_\circ,\widehat\T_\mrmp)\to (\msP^\pa_\iso,\T_\mrma)$ is a homeomorphism.
\end{lemma}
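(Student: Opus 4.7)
The strategy is to verify three properties in turn: bijectivity of $\EM$ onto $\msP^\pa_\iso$, continuity of $\EM$, and continuity of $\EM^{-1}$.

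\textbf{Bijectivity.} By the very definition of $\msP^\pa_\iso = \EM(\widehat\mbfM_\circ)$, the map is surjective. For injectivity, if $\eta \in \msP^\pa_\iso$, then the set of its atom weights, read in strictly decreasing order, is forced by the condition $\mbfs \in \mbfT_\circ$, and the labeling of the support points is then forced by the distinctness condition $\mbfx \in \mbfM_\circ$ together with this ordering.

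\textbf{Continuity of $\EM$.} Suppose $(\mbfs_n,\mbfx_n) \to (\mbfs,\mbfx)$ in $\widehat\T_\mrmp$, i.e.\ coordinatewise. Since each $\mbfs_n$ and $\mbfs$ lie in $\boldDelta$ (total mass $1$) and $s_{n,k} \to s_k$ pointwise in $k$, the discrete Scheff\'e lemma yields $\|\mbfs_n - \mbfs\|_{\ell^1} \to 0$, and the elementary inequality $|s_{n,k}^2 - s_k^2| \le 2|s_{n,k}-s_k|$ yields $\|\mbfs_n^2 - \mbfs^2\|_{\ell^1} \to 0$ as well. For every $f \in \Cb(\T)$,
\begin{equation*}
\Bigl| \langle\EM(\mbfs_n,\mbfx_n),f\rangle - \langle\EM(\mbfs,\mbfx),f\rangle \Bigr|
\le \|f\|_\infty \|\mbfs_n-\mbfs\|_{\ell^1} + \sum_k s_k\,|f(x_{n,k})-f(x_k)|,
\end{equation*}
the last sum vanishing by dominated convergence (majorant $2\|f\|_\infty s_k$). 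Hence $\EM(\mbfs_n,\mbfx_n) \to \EM(\mbfs,\mbfx)$ narrowly, and the same reasoning applied with $s_{n,k}^2$ in place of $s_{n,k}$ gives narrow convergence of the squared measures $\EM(\mbfs_n,\mbfx_n)^2 \to \EM(\mbfs,\mbfx)^2$. Since $\T_\mrma$ is the coarsest topology on $\msP^\pa$ making the squaring map $\eta \mapsto \eta^2$ narrowly continuous (\cite[Lem.~2.2]{EthKur94}), we conclude $\T_\mrma$-convergence.

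\textbf{Continuity of $\EM^{-1}$.} Assume $\mu_n \to \mu$ in $(\msP^\pa_\iso,\T_\mrma)$, and write $\mu_n = \EM(\mbfs_n,\mbfx_n)$, $\mu = \EM(\mbfs,\mbfx)$. From the definition of $\rho_\mrma$ in \eqref{eq:WATrho}, or equivalently from \cite[Lem.~2.3]{EthKur94}, the vector of atom masses of $\mu_n$ (ordered nonincreasingly) converges in $\ell^1$ to that of $\mu$, giving $s_{n,k} \to s_k$ for every $k$. Since $\mbfs \in \mbfT_\circ$ has strictly decreasing entries, the labeling of atoms is unambiguous for all sufficiently large $n$: the $k$-th largest mass of $\mu_n$ is eventually separated from the others by more than half the gap $s_k - s_{k+1}$, and so the $k$-th atom of $\mu_n$ is well-defined. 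Using then the narrow convergence $\mu_n \to \mu$ tested against a suitable bump function isolating $x_k$ (possible because the $x_k$ are distinct), one extracts $x_{n,k} \to x_k$ for each $k$, i.e.\ $(\mbfs_n,\mbfx_n) \to (\mbfs,\mbfx)$ in $\widehat\T_\mrmp$.

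\textbf{Expected obstacle.} The most delicate step is the last one: one must rule out a priori pathologies in which atoms of $\mu_n$ either coalesce or split in the limit in a manner incompatible with the product topology on $\widehat\mbfM$. The strict ordering in $\mbfT_\circ$ and the distinctness condition in $\mbfM_\circ$ are precisely what force a canonical labeling and prevent such pathologies; the restriction of the codomain to $\msP^\pa_\iso$ rather than to all of $\msP^\pa$ is essential here.
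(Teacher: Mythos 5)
The paper does not prove this lemma directly; it refers to~\cite[Prop.~4.9(ii)]{LzDS17+} with the remark that the argument there uses neither the manifold structure nor the compactness of~$M$. Your direct verification of bijectivity, continuity, and inverse continuity is thus the natural replacement, and its overall architecture is sound.

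Two places deserve a closer look. First, the continuity of~$\EM$: you establish that both~$\mu_n\eqdef\EM(\mbfs_n,\mbfx_n)\to\mu\eqdef\EM(\mbfs,\mbfx)$ and~$\mu_n^2\to\mu^2$ narrowly, and then invoke the characterization that~$\T_\mrma$ is the initial topology on~$\msP^\pa$ for the pair~$(\id,\,\eta\mapsto\eta^2)$ with narrow codomain. That characterization does come out of Ethier--Kurtz (and is stated in this paper's Introduction with the same citation), so this step is fine; you could equally point to~\cite[Prop.~4.6]{LzDS17+}, where the equivalence is spelled out in exactly the form you need.

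Second, and more seriously, the continuity of~$\EM^{-1}$: you correctly use~$\T_\mrma$-convergence to get~$s_{n,k}\to s_k$ for every~$k$, but the subsequent claim that ``narrow convergence tested against a bump function isolating~$x_k$'' gives~$x_{n,k}\to x_k$ is not yet an argument. Narrow convergence plus convergence of the masses does not by itself forbid, say, several later atoms~$x_{n,j}$ ($j>k$) from crowding into a small ball around~$x_k$ and carrying mass~$\approx s_k$ while~$x_{n,k}$ drifts away; to exclude this you must also feed in the narrow convergence of the squared measures (so that the second moment of the ball can only be achieved by a single dominant atom), or, much more simply, invoke the atom-pairing characterization of~$\T_\mrma$-convergence (\cite[Lem.~2.5]{EthKur94}): for every~$\eps$ lying in a mass gap~$(s_{k+1},s_k)$, the finitely many atoms of~$\mu_n$ with mass~$>\eps$ are eventually in order-preserving bijection with those of~$\mu$, with convergence of both masses and locations. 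Since~$\mbfs\in\mbfT_\circ$ guarantees a gap at every rank~$k$, this immediately gives~$x_{n,k}\to x_k$ for every~$k$ and closes the argument. You identified the right obstacle (ruling out coalescence/splitting, via the strictness in~$\mbfT_\circ$ and the distinctness in~$\mbfM_\circ$); it is the verification that needs this extra ingredient.
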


\begin{corollary}\label{c:PpaisoPolish}
$(\msP^\pa_\iso,\T_\mrma)$ is a Polish space.
\begin{proof}
Since~$(M,\T)$ is Polish by assumption (cf.~\S\ref{ss:Rescaling}), its countable product~$(\mbfM,\T_\mrmp)$ is Polish as well.
The space~$(\mbfT,\T_\mrmp)$ too is Polish since it is a closed subset of the infinite cube~$\mbfI$.
In light of the above lemma, it suffices to show that~$(\widehat\mbfM_\circ,\widehat\T_\mrmp)$ is a Polish space.
In turn, since the product of two Polish spaces is Polish, the conclusion follows if we show that~$(\mbfM_\circ,\T_\mrmp)$ and~$(\mbfT_\circ,\T_\mrmp)$ are both Polish.
Since they are clearly~$G_\delta$ sets in~$\mbfM$ and~$\mbfT$ respectively, the fact that they are Polish follows from the latter spaces being Polish, by Alexandrov's Theorem, e.g.~\cite[Thm.~3.11, p.~17]{Kec95}.
\end{proof}
\end{corollary}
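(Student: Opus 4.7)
The plan is to transport everything through the homeomorphism $\EM\colon(\widehat\mbfM_\circ,\widehat\T_\mrmp)\to(\msP^\pa_\iso,\T_\mrma)$ supplied by the preceding lemma. Since Polishness is a topological invariant, it suffices to show that $(\widehat\mbfM_\circ,\widehat\T_\mrmp)$ is Polish. Writing $\widehat\mbfM_\circ=\mbfT_\circ\times\mbfM_\circ$ as a product with the product topology, and using the fact that the product of two Polish spaces is Polish, I reduce to showing each factor is Polish separately.

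For the ambient factor, $(M,\T)$ is Polish (it is second-countable locally compact Hausdorff by Assumption~\ref{ass:Setting}), hence the countable product $\mbfM=M^{\tym{\infty}}$ is Polish. Then I would identify $\mbfM_\circ$ in~\eqref{eq:DiagonalBoundary} as a $G_\delta$ subset of $\mbfM$ by writing
\[
\mbfM_\circ=\bigcap_{\substack{i,j\in\N_1\\ i\neq j}}\set{\mbfx\in\mbfM : x_i\neq x_j}\comma
\]
each set in the intersection being open in $\mbfM$ as the preimage under a coordinate projection of the complement of the diagonal in $M\tym M$. Alexandrov's theorem (e.g.~\cite[Thm.~3.11]{Kec95}) then gives that $\mbfM_\circ$ is Polish.

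For the mark factor, the infinite cube $\mbfI=\mbbI^{\tym\infty}$ is Polish, and $\mbfT_\circ$ is a $G_\delta$ subset: the monotonicity conditions $s_i>s_{i+1}>0$ are countably many open conditions on $\mbfI$, and the normalization $\sum_i s_i=1$ is a $G_\delta$ condition on the set of non-negative non-increasing sequences, being the intersection of the closed condition $\sup_N\sum_{i\leq N}s_i\leq 1$ with $\bigcap_{\eps\in\Q_+}\bigcup_N\set{\sum_{i\leq N}s_i>1-\eps}$. Another application of Alexandrov's theorem yields that $\mbfT_\circ$ is Polish, and the conclusion follows.

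The only mildly delicate point is the $G_\delta$-character of the normalization constraint, since $\mbfs\mapsto\sum_i s_i$ is not continuous in the product topology; restricting to the closed cone of non-negative non-increasing sequences is what makes the argument work. Everything else is routine invocation of standard facts about Polish spaces.
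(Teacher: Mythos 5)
Your proposal follows the same overall scaffold as the paper's proof---transport Polishness through $\EM$, split the product $\widehat\mbfM_\circ=\mbfT_\circ\times\mbfM_\circ$, and identify each factor as a $G_\delta$ subset of a Polish space---but you handle the mark factor $\mbfT_\circ$ more carefully, and this turns out to matter. The paper argues that $\mbfT$ is \emph{closed} in $\mbfI=\mbbI^{\tym\infty}$ and then that $\mbfT_\circ$ is $G_\delta$ in $\mbfT$. The closedness claim is, as stated, false: the sequence $\mbfs^{(n)}=(\tfrac1n,\dotsc,\tfrac1n,0,0,\dotsc)\in\mbfT$ (with $n$ non-zero entries) converges in $\T_\mrmp$ to the zero sequence, which fails the normalization $\sum_i s_i=1$ and hence lies outside $\boldDelta\supset\mbfT$. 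Your route---writing $\mbfT_\circ$ directly as a $G_\delta$ subset of $\mbfI$, decomposing the normalization into the closed condition $\bigcap_N\set{\sum_{i\le N}s_i\le 1}$ intersected with the $G_\delta$ condition $\bigcap_{\eps\in\Q_+}\bigcup_N\set{\sum_{i\le N}s_i>1-\eps}$, and intersecting with the countably many open strict-inequality conditions---is the correct fix, and it is essentially what is needed to repair the paper's argument (one could equivalently salvage the paper's route by observing $\mbfT$ is $G_\delta$, not closed, in $\mbfI$). Your closing caveat is unnecessary, though: both the closed and the $G_\delta$ half of the normalization constraint hold on all of $\mbfI$, by lower semicontinuity of $\mbfs\mapsto\sum_i s_i$ on $[0,1]^{\tym\infty}$, with no need to first restrict to the non-increasing cone. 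Your treatment of $\mbfM_\circ$ is the same as the paper's, merely spelled out.
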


\subsubsection{Cylinder functions}
We aim to move the Dirichlet form~$(\widehat\bmssE_\pi,\dom{\widehat\bmssE_\pi})$ on~$\widehat\mbfM$ to its image via~$\EM$ on~$\msP$, defined as in~\S\ref{sss:ImageObjects}.
In order to explicitly construct such a form, we define several algebras of cylinder functions on~$\msP$, which will play the role of different cores for Dirichlet forms and generators on~$\msP$.

\begin{definition}[Testing]
For each~$\molli\in\Bb(\mbbI)$ and~$f\in\Bb(\T)$, we define a functional on~$\msP$ by
\begin{align}\label{eq:FundAugmentedCyl}
(\molli\otimes f)^\trid(\mu)\eqdef \int \molli(\mu_x)\, f(x) \,\diff\mu(x) \comma
\end{align}
and ---for notational simplicity, with slight abuse of notation--- we let
\[
\boldmolli\otimes \mbff\eqdef \seq{\molli_1\otimes f_1,\dotsc, \molli_k\otimes f_k}\comma \qquad k\in \N_1\comma \molli\in\Bb(\mbbI)\comma f_i\in \Bb(\T)\comma
\]
and we denote by
\[
(\boldmolli\otimes \mbff)^\trid \eqdef \tseq{(\molli_1\otimes f_1)^\trid,\dotsc, (\molli_k\otimes f_k)^\trid}\colon \msP\longrar\R^k
\]
the functional obtained by taking~$\square^\trid$ componentwise.
Finally, we simply write
\begin{equation}\label{eq:SimpleTrid}
f^\trid \eqdef\ (\car\otimes f)^\trid \colon \mu\longmapsto \int f \diff\mu \comma \qquad f\in \Bb(\T)\fstop
\end{equation}
\end{definition}

Let us recall that~$(\molli\otimes f)^\trid$ is generally \emph{non}-linear, while~$f^\trid$ is always linear.

\begin{definition}
Let~$\msR$ be any subalgebra of~$\Bb(\mbbI)$, and set
\begin{equation}\label{eq:MassAlgebra}
\msR_\eps\eqdef \set{f\in\msR: \supp f \subset (\eps, 1]}\comma \qquad \eps\in\mbbI \fstop
\end{equation}

We say that~$\msR_0$ \emph{separates points from open sets} (\emph{in~$(0,1]$}) if for every~$s\in (0,1]$ and every open~$U\subset \mbbI \setminus\set{s}$ there exists a function~$\molli_s\in \msR_0$ such that~$\molli_s(s)\neq 0$ and~$\molli_s\equiv 0$ on~$U$.
\end{definition}

Note that~$\msR_0=\scup_{\eps>0} \msR_\eps$ and, in general,~$\msR_0\subsetneq \msR$. Furthermore, suppose that~$\msR_0$ separates points from open sets.
Since~$\msR_0$ is an algebra, we may fix the value of~$\molli_s(s)$ as we please.
Everywhere in this work, we assume the following.

\begin{assumption}\label{ass:SubalgebraR}
$\msR$ is a subalgebra of~$\Bb(\mbbI)$ and~$\msR_0$ separates points from open sets.
\end{assumption}

In fact, all constructions in the following will \emph{not} depend on the choice of~$\msR$ (provided that Assumption~\ref{ass:SubalgebraR} be satisfied).
However, it will be from time to time convenient to assume some largeness or smallness for~$\msR$.

\medskip

Now, let~$\R[t_1,\dotsc, t_k]$ be the space of all $k$-variate polynomials on~$\R$.

\begin{definition}[Cylinder functions]\label{d:CylinderF}
Set
\begin{subequations}
\begin{align}
\hCylP{}{}\eqdef& \set{u\colon \msP\to\R : \begin{gathered} u=F\circ {(\boldmolli\otimes \mbff)^\trid}\comma F\in \R[t_1,\dotsc, t_k] \comma
\\
k\in \N_0\comma \seq{\molli_i}_{i\leq k}\subset \msR\comma \seq{f_i}_{i\leq k}\subset\msA \end{gathered}}\comma
\intertext{and, for every~$\eps\in\mbbI$,}
\label{eq:HCylinderFEps}
\hCylP{}{\eps}\eqdef& \set{u\colon \msP\to\R : \begin{gathered} u=F\circ {(\boldmolli\otimes \mbff)^\trid}\comma F\in \R[t_1,\dotsc, t_k] \comma
\\
k\in \N_0\comma \seq{\molli_i}_{i\leq k}\subset \msR_\eps\comma \seq{f_i}_{i\leq k}\subset\msA \end{gathered}}\fstop
\end{align}
\end{subequations}
For each~$u\in\hCylP{}{0}$ further define the \emph{vanishing threshold}~$\eps_u$ of~$u$ as
\begin{equation}\label{eq:VanishingThreshold}
\eps_u\eqdef \inf\set{\eps>0: u\in\hCylP{}{\eps}}>0 \fstop
\end{equation}
\end{definition}

Let us collect here some properties of these classes of cylinder functions on~$\msP$.
\begin{proposition}[Properties of cylinder functions]\label{p:PropertiesCylinder}
The following assertions hold:
\begin{enumerate}[$(i)$]
\item\label{i:p:PropertiesCylinder:1} $\hCylP{}{}$,~$\hCylP{}{\eps}$ are algebras w.r.t.\ the pointwise multiplication for every~$\eps\in\mbbI$;
\item\label{i:p:PropertiesCylinder:2} $\eps\mapsto \hCylP{}{\eps}$ is strictly decreasing and left-continuous, i.e.
\[
\hCylP{}{\delta}\subsetneq \hCylP{}{\eps}\comma \quad \delta>\eps\comma \qquad \text{and} \quad \hCylP{}{\eps}=\scup_{\delta>\eps}\hCylP{}{\delta} \semicolon
\]
\item\label{i:p:PropertiesCylinder:3} $\hCylP{}{0}$ generates the Borel $\sigma$-algebra on~$\msP^\pa$;
\item\label{i:p:PropertiesCylinder:4} every function in~$\hCylP{}{}$ is $\T_\mrma$-continuous (in particular: Borel-measurable);
\item\label{i:p:PropertiesCylinder:5} every non-constant function in~$\hCylP{}{0}$ is $\T_\mrmn$-discontinuous at every~$\mu\in\msP^\pa$;
\item\label{i:p:PropertiesCylinder:6} $\hCylP{}{0}$ is dense in~$L^2(\mcQ)$ for \emph{every} Borel probability measure~$\mcQ$ on~$\msP^\pa$.
\end{enumerate}
\end{proposition}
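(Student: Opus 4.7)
Items~(i) and~(ii) reduce to direct bookkeeping with the definitions. For~(i), given two representatives $u=F\circ(\boldmolli\otimes\mbff)^\trid$ and $v=G\circ(\boldmolli'\otimes\mbff')^\trid$, I will concatenate the tuples and form $F+G$ and $F\cdot G$ on the enlarged tuple; stability of $\msR$ and $\msR_\eps$ under products (note $\supp(\molli\molli')\subset\supp\molli\cap\supp\molli'\subset (\eps,1]$) preserves membership. For~(ii), monotonicity is immediate from $\msR_\delta\subset\msR_\eps$. For strictness at $\eps<\delta$ I will invoke Assumption~\ref{ass:SubalgebraR} to produce $\molli\in\msR_\eps$ with $\supp\molli\subset (\eps,\delta]$ and $\molli\not\equiv 0$ there, and test $\molli^\trid$ on a one-parameter family of atomic probabilities $\mu_{s_0}=s_0\delta_{y_0}+\sum_{j\geq 1} r\,\delta_{y_j}$ with $s_0\in(\eps,\delta]$ and $r<\eps$ so that $\molli^\trid(\mu_{s_0})=\molli(s_0)s_0$ varies with $s_0$; any representation of the same function using generators in $\msR_\delta$ would vanish on each such $\mu_{s_0}$ (all atom masses being~$\leq\delta$), contradiction. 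Left-continuity follows because each $\supp\molli_i$ is closed in $\mbbI$ and contained in $(\eps,1]$, hence $\eps_i'\eqdef\inf\supp\molli_i>\eps$; any choice $\delta\in(\eps,\min_i\eps_i')$ embeds the tuple into $\msR_\delta$ and thus $u$ into $\hCylP{}{\delta}$.

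For~(iv), by polynomial composition it suffices to establish $\T_\mrma$-continuity of a single generator $(\molli\otimes f)^\trid$. Via the homeomorphism $\EM\colon\widehat\mbfM_\circ\to\msP^\pa_\iso$ this pulls back to $(\mbfs,\mbfx)\mapsto\sum_i\molli(s_i)f(x_i)\, s_i$, a uniformly convergent series ($\norm{\molli}_\infty\norm{f}_\infty\sum s_i<\infty$) of $\T_\mrmp$-continuous summands on $\widehat\mbfM_\circ$, continuity of each summand relying on the natural regularity of the elements of $\msR$; extension to all of $\msP^\pa$ proceeds via Proposition~\ref{p:PropWeakAtomicTop}\ref{i:p:PropWeakAtomicTop:3}. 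For~(iii), the inclusion $\sigma(\hCylP{}{0})\subset\mathrm{Bor}(\msP^\pa)$ is then immediate from~(iv). For the reverse inclusion, I will use the separation hypothesis on $\msR_0$ to pick $\molli_n\in\msR_{1/n}$ with $0\leq\molli_n\leq 1$ and $\molli_n\to\car$ pointwise on $(0,1]$; for every $\mu\in\msP^\pa$ and every $f\in\msA$, dominated convergence then yields $(\molli_n\otimes f)^\trid(\mu)\to f^\trid(\mu)$, exhibiting each $f^\trid$ as a pointwise limit of $\hCylP{}{0}$-functions, hence $\sigma(\hCylP{}{0})$-measurable. Since $\msA$ is $\Cz$-dense and $\{f^\trid:f\in\Cz(\T)\}$ generates the narrow Borel $\sigma$-algebra on $\msP$, this suffices.

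For~(v), let $u=F\circ(\boldmolli\otimes\mbff)^\trid\in\hCylP{}{0}$ be non-constant with vanishing threshold $\eps_u>0$, and let $\mu=\sum_i s_i\delta_{x_i}\in\msP^\pa$. For any finite index set $S$ I will construct $\mu_n^S\to\mu$ narrowly by keeping the atoms $\ttset{(s_i,x_i)}_{i\in S}$ unchanged while splitting each remaining atom $s_i\delta_{x_i}$, $i\notin S$, into $n$ equal pieces $(s_i/n)\delta_{y_{i,k,n}}$ at pairwise-distinct points $y_{i,k,n}\to x_i$. For $n$ large, every mass $s_i/n$ drops below $\eps_u$, and the generators of $u$ converge to $\mbft_S\eqdef\ttseq{\sum_{i\in S}\molli_j(s_i)f_j(x_i)s_i}_{j\leq k}$, yielding $u(\mu_n^S)\to F(\mbft_S)$; if some $S$ gives $F(\mbft_S)\neq u(\mu)$ we are done. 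The main obstacle I anticipate is the residual case in which $F$ happens to take the value $u(\mu)$ on the entire reachable set $\ttset{\mbft_S:S\subset\N\text{ finite}}$: this set contains arbitrary partial sums of the vectors $\ttseq{\molli_j(s_i)f_j(x_i)s_i}_{j\leq k}$, and I will argue by a discrete-differencing / lattice-density argument that constancy of $F$ on all such partial sums forces $F$ constant on the $\Z$-span of a spanning family, which together with the flexibility of varying the perturbation contradicts the non-constancy of $u$ on $\msP$; this algebraic compatibility between the reachable image and the polynomial $F$ is where the separation hypothesis on $\msR_0$ plays its essential role beyond mere bookkeeping. Finally, for~(vi), by~(iii) $\hCylP{}{0}$ is a unital algebra (constants appear with $k=0$) of bounded Borel functions generating $\mathrm{Bor}(\msP^\pa)$; the functional monotone-class theorem then promotes its $L^2(\mcQ)$-closure to every bounded Borel function for any Borel probability $\mcQ$ on $\msP^\pa$, whence the claimed $L^2(\mcQ)$-density.
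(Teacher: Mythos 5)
The paper disposes of items (i)--(v) by pointing to the corresponding statements in \cite[Rmk.~5.3, Lem.~5.4]{LzDS17+}, and of item (vi) to \cite[Prop.~A.3]{LzDS17+}, remarking only that those proofs rely on neither the manifold structure nor the compactness of~$M$. Your blind proof is a self-contained direct route, and items (i), (ii), and (vi) are essentially correct as you give them. However, there are two genuine gaps.

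In (iv), the $\EM$-pullback argument gives $\T_\mrma$-continuity only on $\msP^\pa_\iso$, where $\EM$ is a homeomorphism; passing to ``all of $\msP^\pa$ via Proposition~\ref{p:PropWeakAtomicTop}\ref{i:p:PropWeakAtomicTop:3}'' is a non-sequitur, since that item only identifies the $\T_\mrma$- and $\T_\mrmn$-Borel $\sigma$-algebras and transfers no continuity across the strict inclusion $\msP^\pa_\iso\subsetneq\msP^\pa$. Moreover, ``natural regularity of the elements of $\msR$'' conceals a real hypothesis: Assumption~\ref{ass:SubalgebraR} only grants $\msR\subset\Bb(\mbbI)$, so the summands $(\mbfs,\mbfx)\mapsto s_k\,\molli(s_k)f(x_k)$ need not be $\T_\mrmp$-continuous at all, and the uniform-convergence step collapses without continuity of each $\molli$. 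In (v), the ``residual case'' you flag is not a corner case but the generic one: since each $\molli_j$ is supported in $(\eps_u,1]$, your reachable vectors satisfy $\mbft_S=\mbft_{S\cap S_{\eps_u}}$, where $S_{\eps_u}\eqdef\{i:s_i\geq\eps_u\}$ is finite, so $\{\mbft_S:S\subset\N\text{ finite}\}$ is a \emph{finite} set and the ``lattice-density'' heuristic has no lattice to act on. Any repair must split the heavy atoms $s_i\delta_{x_i}$, $i\in S_{\eps_u}$, into pieces of prescribed, not-all-sub-$\eps_u$ masses at nearby distinct points so as to access a continuum of mass profiles --- and even then the case where all $f_j$ vanish on $\supp\mu$ (in which every narrow approximant forces $u(\mu_n)\to F(\zero)$ regardless of the imposed mass profile) requires separate treatment.
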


\begin{proof}
Proofs of~\ref{i:p:PropertiesCylinder:1}-\ref{i:p:PropertiesCylinder:5} may easily be deduced from those of the corresponding statements in~\cite[Rmk.~5.3, Lem.~5.4]{LzDS17+}, which make use of neither the manifold structure of~$M$ nor its compactness.
Assertion~\ref{i:p:PropertiesCylinder:6} follows from~\cite[Prop.~A.3]{LzDS17+}.
\end{proof}

Furthermore,~$\EM$ intertwines cylinder functions in the sense of the next Lemma.
Indeed, in the following let~$\msS$ stand for either~$\Bb$ or~$\Cb$.
\begin{lemma}\label{l:TransferCyl}
Assume that~$\msR\subset \msS(\mbbI)$. Then,
\[
\EM^*u\eqdef u\circ \EM \in \msS(\mbfT)\otimes\Cyl{}{} \quad \text{for every} \quad u\in \hCylP{}{0} \fstop
\]
\begin{proof}
Since~$\msS(\mbfT)\otimes\Cyl{}{}$ is an algebra, it suffices to show that~$\EM^*u \in \msS(\mbfT)\otimes\Cyl{}{}$ for every~$u\in \hCylP{}{0}$ of the form~$u=(\molli\otimes f)^\trid$.
For every such~$u$, and since~$u\in \hCylP{}{0}$, there exists~$\eps=\eps_u>0$ such that~$\molli\in\msR_\eps$.
Setting~$n\eqdef \ceiling{1/\eps}\in \N$, we then have
\begin{equation}\label{eq:FundFuncTrid}
(\molli\otimes f)^\trid\tparen{\EM(\mbfs,\mbfx)}= \sum_{k=1}^\infty s_k\molli(s_k) f(x_k)= \sum_{k=1}^n s_k\molli(s_k) f(x_k)
\fstop
\end{equation}
Since~$\msR\subset \msS(\mbbI)$, the function~$\mbfs\mapsto s_k\molli(s_k)$ is an element of~$\msS(\mbfT)$ for every~$k$, by continuity (Borel measurability) of the coordinate projections.
Furthermore, since~$f\in\msA$, the function~$\mbfx\mapsto f(x_k)$ is an element of~$\Cyl{}{}$ for every~$k$, for the choice~$A=\set{k}$.
Thus,~$(\mbfs,\mbfx)\mapsto s_k\molli(s_k) f(x_k)$ is an element of~$\msS(\mbfT)\otimes\Cyl{}{}$, hence so is~$u\circ \EM$ being a finite a sum of functions therein.
\end{proof}
\end{lemma}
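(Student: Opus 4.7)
The plan is to exploit the algebra structure on both sides of the inclusion, together with the positive vanishing threshold $\eps_u$ of Definition~\ref{d:CylinderF}, so as to reduce the claim to a finite-sum computation on the generating functionals $(\molli\otimes f)^\trid$.

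First, I would observe that $\hCylP{}{0}$ is generated as an algebra by the functionals of the form $(\molli\otimes f)^\trid$ with $\molli\in\msR_0$ and $f\in\msA$: indeed, a general element of $\hCylP{}{0}$ has the shape $F\circ (\boldmolli\otimes\mbff)^\trid$ with $F$ a polynomial, and $(\boldmolli\otimes\mbff)^\trid$ has components $(\molli_i\otimes f_i)^\trid$; hence, since $\EM^*$ is a ring homomorphism and $\msS(\mbfT)\otimes\Cyl{}{}$ is an algebra (this is where Proposition~\ref{p:PropertiesCylinder}\ref{i:p:PropertiesCylinder:1} and the algebra structure on the codomain enter), it is enough to verify $\EM^*(\molli\otimes f)^\trid\in\msS(\mbfT)\otimes\Cyl{}{}$ for each such generator.

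Next, for such a generator, I would unfold the definition of $\EM$ from~\eqref{eq:TransferMap} and of $\square^\trid$ from~\eqref{eq:FundAugmentedCyl} to get the a priori infinite sum
\[
(\molli\otimes f)^\trid\bigl(\EM(\mbfs,\mbfx)\bigr)=\sum_{k=1}^\infty s_k\,\molli(s_k)\,f(x_k).
\]
The key step is to truncate this to a finite sum. Since $u\in\hCylP{}{0}$, the vanishing threshold $\eps=\eps_u>0$ from~\eqref{eq:VanishingThreshold} guarantees $\molli\in\msR_\eps$, so $\molli$ vanishes on $[0,\eps]$. On the other hand, $\mbfs\in\mbfT$ is a non-increasing sequence summing to $1$, hence $s_k\leq 1/k$ for all $k$. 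Setting $n\eqdef\lceil 1/\eps\rceil$, we have $s_k\leq\eps$ for every $k>n$, and so $\molli(s_k)=0$ for every such $k$, uniformly in $\mbfs\in\mbfT$. This is exactly the mechanism that reduces~\eqref{eq:FundFuncTrid} to a finite sum.

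Finally, I would verify termwise membership. For each fixed $k\leq n$, the map $\mbfs\mapsto s_k\molli(s_k)$ is a composition of the $k$-th coordinate projection $\mbfs\mapsto s_k$ (which is continuous, hence in $\Cb(\mbfT)\subset\msS(\mbfT)$) with the map $s\mapsto s\,\molli(s)\in\msS(\mbbI)$, where the assumption $\msR\subset\msS(\mbbI)$ is used. Since $f\in\msA$, the function $\mbfx\mapsto f(x_k)$ is an elementary cylinder in $\Cyl{}{}$ (Definition~\ref{d:CylinderProduct}, with $A=\set{k}$). Therefore each summand lies in the algebraic tensor product $\msS(\mbfT)\otimes\Cyl{}{}$, and so does the finite sum, completing the proof. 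The only subtle point is the uniform-in-$\mbfs$ truncation via $s_k\leq 1/k$; everything else is bookkeeping inherent to the notation introduced in Definitions~\ref{d:CylinderProduct} and~\ref{d:CylinderF}.
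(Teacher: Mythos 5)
Your proof is correct and follows essentially the same route as the paper's: reduce to generators $(\molli\otimes f)^\trid$ via the algebra property of both sides, truncate the infinite sum using the positive vanishing threshold $\eps_u$, and check termwise membership. You make explicit the step the paper leaves implicit — namely that $s_k\leq 1/k$ for $\mbfs\in\mbfT$ (non-increasing, summing to $1$), so $s_k\leq\eps$ uniformly in $\mbfs$ once $k>\ceiling{1/\eps}$, which is what legitimizes the truncation in~\eqref{eq:FundFuncTrid} — and this is a worthwhile clarification rather than a different argument.
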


\subsection{Analytic theory}
Here, we show how to transfer the form~$\tparen{\widehat\bmssE_\pi,\dom{\widehat\bmssE_\pi}}$ to a Dirichlet form on~$\msP$.
We start by computing a suitable generator.

\subsubsection{Generators on measures}
As a consequence of Lemma~\ref{l:TransferCyl}, we may compute~$\widehat\bmssL$ on functions in~$\EM^*\hCylP{}{0}$.
In particular, for elementary functions as in~\eqref{eq:FundFuncTrid} we have
\begin{align}\label{eq:ExtendedLonElementaryCyl}
\widehat\bmssL \tparen{\EM^* (\molli\otimes f)^\trid}(\mbfs,\mbfx) =\sum_{k=0}^\infty s_k \molli(s_k)\, (\mssL_k f)(x_k) = \sum_{k=0}^\infty \molli(s_k)\, (\mssL f)(x_k) \fstop
\end{align}

\begin{proposition}\label{p:PDirIntESA}
Fix~$\pi\in\msP(\mbfT_\circ)$.
Then, the operator~$\tparen{\widehat\bmssL, \EM^*\hCylP{}{0}}$ is essentially self-adjoint on~$L^2(\widehat\boldnu_\pi)$.
Furthermore, its closure~$\tparen{\widehat\bmssL_\pi,\dom{\widehat\bmssL_\pi}}$ is independent of~$\msR$.
\begin{proof}
We divide the proof into three steps.

\paragraph{Fiber-wise essential self-adjointness}
Fix~$\mbfs\in\mbfT_\circ$.
We claim that the operator~$\tparen{\Id\otimes \bmssL^\mbfs, \EM^*\hCylP{}{0}}$ is essentially self-adjoint on~$L^2(\delta_\mbfs\otimes \boldnu)$ for every~$\mbfs\in\mbfT$.

We note that~$\Cyl{}{}$ embeds as~$L^2(\delta_\mbfs)\otimes \Cyl{}{}=\R\otimes \Cyl{}{}\cong \Cyl{}{}$ into~$L^2(\delta_\mbfs\otimes\boldnu)$, and we shall therefore, whenever necessary, regard~$\Cyl{}{}$ as a subspace of~$L^2(\delta_\mbfs\otimes\boldnu)$ without further mention to the embedding.
Since~$(\bmssL^\mbfs,\Cyl{}{})$ is essentially self-adjoint on~$L^2(\boldnu)$ for every~$\mbfs\in\mbfT$, the operator~$(\Id\otimes\bmssL^\mbfs,\Cyl{}{})$ is essentially self-adjoint on~$L^2(\delta_\mbfs\otimes\boldnu)$ for every~$\mbfs\in\mbfT$ by~\cite[Thm.~VIII.33]{ReeSim80a}.

Then, it suffices to show that~$\EM^*\hCylP{}{0}$ contains~$\Cyl{}{}$.
To this end, we argue as follows. 
Since~$\msR_0$ is an algebra and separates points from open sets, and since~$\mbfs\in\mbfT_\circ$ (as opposed to~$\mbfT$), for each~$k$ there exists~$\molli_k\in \msR_0$ with~$\molli_k(s_k)=s_k^{-1}$ and~$\molli_k\equiv 0$ on~$[0,s_{k+1}]\cup [s_{k-1},1]$ (where, conventionally,~$s_0\eqdef 1$).
Then, for each~$f\in\Cyl{}{}$, the function~$(\molli_k\otimes f)^\trid$ satisfies~$(\molli_k\otimes f)^\trid\in \EM^*\hCylP{}{0}$.
Furthermore, in light of~\eqref{eq:FundFuncTrid}, we have~$(\molli_k\otimes f)^\trid \tparen{\EM(\mbfs,\mbfx)}= f(x_k)$ for every~$k$.
Since~$f$ was arbitrary in~$\msA$, since every element of~$\Cyl{}{}$ has the form~$\sprod_{a\in A} f_a(\emparg_a)$ for some finite~$A\subset\N$, and since~$\EM^*\hCylP{}{0}$ is an algebra, this shows that~$\EM^*\hCylP{}{0}\supset \Cyl{}{}$, which concludes the proof of the claim.

\paragraph{Essential self-adjointness of the direct integral}
In light of Remark~\ref{r:DirInt}, the operator~$\tparen{\widehat\bmssL, \EM^*\hCylP{}{0}}$ is the direct integral of the measurable field of closable operators~$\mbfs\mapsto \tparen{\Id\otimes \bmssL^\mbfs, \EM^*\hCylP{}{0}}$.
The conclusion then follows from Lemma~\ref{l:DirIntESA} and the first step.

\paragraph{Independence from~$\msR$}
It is clear from~\eqref{eq:ExtendedLonElementaryCyl} that, independently of the choice of~$\msR$, the domain~$\dom{\widehat\bmssL_\pi}$ contains the algebra~$\hCylP{}{0}$ defined with~$\msR=\Bb(\mbbI)$.
The conclusion then follows from the essential self-adjointness of $\tparen{\widehat\bmssL, \EM^*\hCylP{}{0}}$, similarly to the proof of the independence from~$\mbbV_\pi$ in the proof of Theorem~\ref{t:DirInt}.
\end{proof}
\end{proposition}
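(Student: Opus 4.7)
The plan is to reduce the claim to fiberwise essential self-adjointness on each copy of $L^2(\boldnu)$ via the direct-integral representation of Remark~\ref{r:DirInt}, then exploit the Bendikov--Saloff-Coste essential self-adjointness already established for $(\bmssL^\mbfs,\Cyl{}{})$ in Theorem~\ref{t:BendikovSaloffCoste}\ref{i:t:BendikovSaloffCoste:2}. Concretely, since $\widehat\boldnu_\pi=\pi\otimes\boldnu$ admits the separated disintegration $\widehat\boldnu_\pi=\int_\mbfT (\delta_\mbfs\otimes\boldnu)\diff\pi(\mbfs)$, the operator $(\widehat\bmssL,\EM^*\hCylP{}{0})$ decomposes as the direct integral of the fiber operators $(\Id\otimes\bmssL^\mbfs,\EM^*\hCylP{}{0})$ on $L^2(\delta_\mbfs\otimes\boldnu)\cong L^2(\boldnu)$. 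A general lifting lemma for direct integrals of closable symmetric operators (cited later in the paper as Lemma~\ref{l:DirIntESA}) then yields essential self-adjointness of the direct integral from fiberwise essential self-adjointness on a $\pi$-full set of fibers.

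For fiberwise essential self-adjointness, I would fix $\mbfs\in\mbfT_\circ$ and show the inclusion $\EM^*\hCylP{}{0}\supset \Cyl{}{}$. Since $(\bmssL^\mbfs,\Cyl{}{})$ is essentially self-adjoint on $L^2(\boldnu)$ by Theorem~\ref{t:BendikovSaloffCoste}\ref{i:t:BendikovSaloffCoste:2}, and essential self-adjointness is inherited by any symmetric extension contained in a self-adjoint extension, this inclusion forces $(\Id\otimes\bmssL^\mbfs,\EM^*\hCylP{}{0})$ to be essentially self-adjoint on the fiber. The inclusion itself is where the two standing hypotheses interact decisively: the strict inequalities $s_k>s_{k+1}$ in $\mbfT_\circ$ mean the entries of $\mbfs$ are mutually distinct, and Assumption~\ref{ass:SubalgebraR} provides, for each $k$, a function $\molli_k\in\msR_0$ with $\molli_k(s_k)=s_k^{-1}$ and $\molli_k\equiv 0$ on $[0,s_{k+1}]\cup[s_{k-1},1]$. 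Plugging such $\molli_k$ together with $f\in\msA$ into~\eqref{eq:FundFuncTrid} yields $(\molli_k\otimes f)^\trid\circ\EM(\mbfs,\mbfx)=f(x_k)$, so every elementary cylinder in $\Cyl{}{}$ is reached (and the algebra property closes the argument on products).

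For independence from $\msR$, I would argue by comparison to a maximal choice. Observe that the definition of $\widehat\bmssL$ in~\eqref{eq:ExtendedLonElementaryCyl} makes sense and is symmetric on the larger algebra $\hCylP{}{0}$ defined with $\msR=\Bb(\mbbI)$, which in turn contains every $\hCylP{}{0}$ constructed from a smaller $\msR$. The closure of $\widehat\bmssL$ on any of these algebras is therefore a symmetric restriction of the closure obtained from the maximal choice; once the smaller domain is shown essentially self-adjoint (which is what the previous two paragraphs give), uniqueness of self-adjoint extensions of a symmetric operator forces both closures to agree.

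The main obstacle is the fiberwise inclusion $\EM^*\hCylP{}{0}\supset\Cyl{}{}$, because it is the one point where the hypothesis $\pi\in\msP(\mbfT_\circ)$ is genuinely used: without the \emph{strict} inequalities $s_k>s_{k+1}$, no scalar function $\molli$ on $\mbbI$ can isolate the $k$-th coordinate of $\mbfs$, the separating-from-open-sets property of $\msR_0$ becomes impotent, and elementary coordinate tests $f(x_k)$ are no longer recoverable from $\EM^*\hCylP{}{0}$. Everything else (the direct integral lift, the algebra manipulations, the comparison between different $\msR$'s) is structural.
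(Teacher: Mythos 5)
Your proof is correct and follows essentially the same route as the paper's: the direct-integral decomposition of $(\widehat\bmssL,\EM^*\hCylP{}{0})$ into fibers $(\Id\otimes\bmssL^\mbfs,\EM^*\hCylP{}{0})$ via Remark~\ref{r:DirInt}, the lift via Lemma~\ref{l:DirIntESA}, the fiberwise inclusion $\Cyl{}{}\subset\EM^*\hCylP{}{0}$ (using the $\molli_k$'s from $\msR_0$, which is where $\mbfs\in\mbfT_\circ$ is essential), and the comparison with the maximal $\msR=\Bb(\mbbI)$ for independence. The only cosmetic divergence is that the paper invokes the Reed--Simon tensor-product theorem to pass from $(\bmssL^\mbfs,\Cyl{}{})$ to $(\Id\otimes\bmssL^\mbfs,\Cyl{}{})$ on the fiber, while you treat that step as the trivial isometry $L^2(\delta_\mbfs\otimes\boldnu)\cong L^2(\boldnu)$ --- both are fine.
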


\subsubsection{Measure representation}
For notational simplicity, let us set
\begin{align}\label{eq:DefGeneratorCylP}
\mcQ_\pi\eqdef \EM_\pfwd \widehat\boldnu_\pi\qquad \text{and} \qquad \widehat\mcL\eqdef \EM_*\circ \widehat\bmssL \circ \EM^* \fstop
\end{align}

\begin{remark}\label{r:SupportQPi}
In light of Proposition~\ref{p:PropWeakAtomicTop}\ref{i:p:PropWeakAtomicTop:3},~$\mcQ_\pi$ is a Borel measure for both the weak atomic topology~$\T_\mrma$ and the narrow topology~$\T_\mrmn$.
The support properties of~$\mcQ_\pi$, however, sensibly depend on the chosen topology.
Indeed, on the one hand it is not difficult to show that~$\T_\mrmn$-$\supp\mcQ_\pi=\msP$ whenever~$\pi\mbfT_\circ>0$;
on the other hand, $\T_\mrma$-$\supp\mcQ_\pi = \EM(\T_\mrmp\text{-}\supp\, \pi \times \mbfM)\subset\msP^\pa$ for every~$\pi\in\msP(\mbfT)$.
In particular, choosing~$\pi=\delta_\mbfs$ with~$\mbfs\in\mbfT_\circ$ shows that~$\T_\mrma$-$\supp\mcQ_\pi$ can be very small even if~$\T_\mrmn$-$\supp\mcQ_\pi=\msP$.
\end{remark}

We recall one important example of a measure~$\mcQ_\pi$.
\begin{example}[The Dirichlet--Ferguson measure]\label{ex:DirichletFerguson}
Fix~$\beta>0$, and let~$\Pi_\beta$ be the Poisson--Dirichlet measure with parameter~$\beta$ constructed in Example~\ref{e:PoissonDirichlet}.
Then, $\mcQ_\pi$~is the \emph{Dirichlet--Ferguson measure with intensity~$\beta\nu$} in~\cite{Fer73}, cf.\ e.g.~\cite[App.~2]{Kin75}.
\end{example}

\begin{lemma}\label{l:SimpleTridLp}
Fix~$p\in [1,\infty)$ and~$\pi\in\msP(\mbfT)$.
Let~$f\in L^p(\nu)$. Further let~$\rep{f}$ be a any $\nu$-representative of~$f$ and let~$\reptwo{f}^\trid\colon \msP \to \R\cup\set{\pm\infty}$ be defined as in~\eqref{eq:SimpleTrid}.
Then,
\begin{enumerate}[$(i)$]
\item $\rep{f}^\trid$ is well-defined and independent of the choice of the representative;
\item its class~$f^\trid\eqdef \tclass[\mcQ_\pi]{\rep{f}^\trid}$ satisfies~$f^\trid\in L^p(\mcQ_\pi)$;
\item\label{i:l:SimpleTridLp:3} the map~$\square^\trid\colon L^p(\nu)\to L^p(\mcQ_\pi)$ is Lipschitz non-expansive.
\end{enumerate}

\begin{proof}
Assume first that~$f\geq 0$ and take~$\rep{f}\geq 0$ (everywhere on~$M$). Then $\rep{f}^\trid\colon\msP\to [0,\infty]$ is well-defined (possibly infinite), and, by Jensen's inequality,
\begin{equation}\label{eq:l:SimpleTridLp:1}
\int (\rep{f}^\trid)^p \diff\mcQ_\pi \leq \int (\rep{f}^p)^\trid \diff\mcQ_\pi = \int \sum_i^Ns_i \int \rep{f}(x_i)^p\diff\nu(x_i) \diff\pi(\mbfs) = \norm{f}_{L^p(\nu)}^p\fstop
\end{equation}
This proves all the assertions for~$f\in L^p(\nu)^+$. The conclusion follows by applying~\eqref{eq:l:SimpleTridLp:1} to the positive and negative part of~$\rep{f}^\trid$.
\end{proof}
\end{lemma}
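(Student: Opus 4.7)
The plan is to first handle the case $f\geq 0$ using a pointwise Jensen inequality and Fubini (exactly as sketched in the excerpt), and then to extend to signed $f$ via the decomposition $f=f^+-f^-$. The Lipschitz non-expansiveness will then follow from linearity of $\square^\trid$.

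For $f\in L^p(\nu)^+$, I would choose a pointwise nonnegative representative $\rep f\geq 0$ so that $\rep f^\trid(\mu)\eqdef \int \rep f\diff\mu \in [0,\infty]$ is unambiguously defined for every $\mu\in\msP$, with no $\infty-\infty$ ambiguity. For independence of the representative, I would use that $\mcQ_\pi=\EM_\pfwd(\pi\otimes\boldnu)$ with $\boldnu=\nu^\otym{\infty}$ and $\EM(\mbfs,\mbfx)=\sum_i s_i\delta_{x_i}$: if $\rep f_1=\rep f_2$ $\nu$-a.e., then $N\eqdef\set{\rep f_1\neq \rep f_2}$ is $\nu$-null, hence $\boldnu$-a.s.\ no coordinate $x_i$ lies in $N$, so $\mu(N)=0$ and $\rep f_1^\trid(\mu)=\rep f_2^\trid(\mu)$ for $\mcQ_\pi$-a.e.~$\mu$.

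For the integrability, a pointwise Jensen inequality against the probability measure $\mu$ yields
\[
\abs{\rep f^\trid(\mu)}^p \leq \int \abs{\rep f}^p \diff\mu = (\abs{\rep f}^p)^\trid(\mu)\comma \qquad \mu\in\msP\comma
\]
and Fubini through the transfer map, together with the $\pi$-a.s.\ identity $\sum_i s_i=1$, gives
\[
\int (\abs{\rep f}^p)^\trid \diff\mcQ_\pi = \int_\mbfT \sum_i s_i \int_M \abs{\rep f(x_i)}^p \diff\nu(x_i) \diff\pi(\mbfs) = \norm{f}_{L^p(\nu)}^p\comma
\]
which is precisely the estimate~\eqref{eq:l:SimpleTridLp:1}. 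In particular $f^\trid\in L^p(\mcQ_\pi)$ with $\norm{f^\trid}_{L^p(\mcQ_\pi)}\leq \norm{f}_{L^p(\nu)}$.

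For general signed $f\in L^p(\nu)$, I would apply the nonnegative case to $f^+$ and $f^-$: both $(f^+)^\trid$ and $(f^-)^\trid$ lie in $L^p(\mcQ_\pi)$, hence are $\mcQ_\pi$-a.e.\ finite, so $f^\trid\eqdef (f^+)^\trid-(f^-)^\trid$ is well-defined $\mcQ_\pi$-a.e.\ and the class $\tclass[\mcQ_\pi]{\rep f^\trid}$ depends only on the class of $f$ in $L^p(\nu)$. The estimate $\norm{f^\trid}_{L^p(\mcQ_\pi)}\leq \norm{\,\abs{f}^\trid}_{L^p(\mcQ_\pi)}\leq \norm{f}_{L^p(\nu)}$, combined with the linearity of $\square^\trid$, yields non-expansiveness:
\[
\norm{f^\trid-g^\trid}_{L^p(\mcQ_\pi)}= \norm{(f-g)^\trid}_{L^p(\mcQ_\pi)}\leq \norm{f-g}_{L^p(\nu)}\fstop
\]
The statement is essentially a textbook consequence of Jensen and Fubini; there is no substantive obstacle, the only mild subtlety being the representative-independence in the absence of any continuity assumption, which is handled cleanly through the product disintegration $\mcQ_\pi=\EM_\pfwd(\pi\otimes\boldnu)$.
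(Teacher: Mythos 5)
Your proof is correct and follows essentially the same route as the paper: Jensen's inequality against the probability measure $\mu$ followed by Fubini through the transfer map $\EM$ for nonnegative $f$, then decomposition into positive and negative parts for the signed case. You make two steps explicit that the paper leaves implicit---the independence of representative via the product disintegration $\mcQ_\pi=\EM_\pfwd(\pi\otimes\boldnu)$ and countable subadditivity, and the non-expansiveness via $\abs{f^\trid}\leq\abs{f}^\trid$ together with linearity---but these are elaborations of the same argument, not a different approach.
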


\begin{corollary}
The random measure~$\mcQ_\pi$ has intensity measure~$\nu$, viz., for every Borel~$A\subset M$,
\[
\int \eta A \diff\mcQ_\pi(\eta) = \nu A \fstop
\]
\begin{proof}
Apply Lemma~\ref{l:SimpleTridLp}\ref{i:l:SimpleTridLp:3} with~$p=1$ and~$\reptwo{f}=\car_A$. The inequality in~\eqref{eq:l:SimpleTridLp:1} is in fact an equality since~$p=1$.
\end{proof}
\end{corollary}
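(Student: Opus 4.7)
The plan is to apply Lemma~\ref{l:SimpleTridLp} with the specific choice~$f\eqdef \car_A$ and~$p\eqdef 1$, and to observe that the only inequality in the chain~\eqref{eq:l:SimpleTridLp:1} degenerates to an equality in this exponent.

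First, I would note that~$f=\car_A\in L^1(\nu)^+$ with~$\norm{f}_{L^1(\nu)}=\nu A$, and that for any Borel representative (here~$\rep{f}=\car_A$ itself) the functional
\[
f^\trid(\eta)=\int \car_A\diff\eta = \eta A
\]
is pointwise well-defined for every~$\eta\in\msP$. Thus Lemma~\ref{l:SimpleTridLp} applied with~$p=1$ already ensures that~$\eta\mapsto \eta A$ defines a class in~$L^1(\mcQ_\pi)$, so the integral~$\int \eta A\diff\mcQ_\pi(\eta)$ makes sense.

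Next, I would inspect the chain of (in)equalities in~\eqref{eq:l:SimpleTridLp:1}. The only inequality there is an application of Jensen's inequality to the convex function~$t\mapsto t^p$ against the probability weights~$\seq{s_i}_i$, namely
\[
\tparen{\textstyle\sum_i s_i\rep{f}(x_i)}^p \leq \sum_i s_i\, \rep{f}(x_i)^p \fstop
\]
When~$p=1$ this is an equality (both sides are linear in~$\rep{f}$), so the full chain collapses to
\[
\int f^\trid\diff\mcQ_\pi = \int\sum_i s_i \int \rep{f}(x_i)\diff\nu(x_i)\diff\pi(\mbfs) = \norm{f}_{L^1(\nu)}\fstop
\]
Substituting~$f=\car_A$ on the outer terms gives the claimed identity
\[
\int \eta A\diff\mcQ_\pi(\eta)=\nu A\fstop
\]

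There is essentially no obstacle: the statement is a direct unpacking of the $p=1$ case of the preceding lemma, together with the pointwise identification~$\car_A^\trid(\eta)=\eta A$ and the fact that~$\sum_i s_i=1$ under~$\pi$. The only minor care needed is to verify that exchanging the order of integration in~$\int s_i\int \car_A(x_i)\diff\nu(x_i)\diff\pi(\mbfs)=\nu A\cdot\int\sum_i s_i\diff\pi=\nu A$ is justified, which is immediate by Tonelli since the integrand is non-negative.
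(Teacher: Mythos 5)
Your proposal is correct and follows exactly the paper's argument: apply Lemma~\ref{l:SimpleTridLp} with $p=1$ and $\rep{f}=\car_A$, and observe that the Jensen step in~\eqref{eq:l:SimpleTridLp:1} is an equality when $p=1$. Your added remarks on the pointwise identification $\car_A^\trid(\eta)=\eta A$, on $\sum_i s_i=1$, and on Tonelli merely make explicit what the paper leaves implicit.
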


We are now ready to present the main definitions of objects on~$\msP$.

\begin{theorem}\label{t:TransferP}
Fix~$\pi\in\msP(\mbfT_\circ)$.
Then,
\begin{enumerate}[$(i)$, leftmargin=2.5em]
\item\label{i:t:TransferP:1} the operator~$\tparen{\widehat\mcL, \hCylP{}{0}}$ is essentially self-adjoint on~$L^2(\mcQ_\pi)$, and its closure $\tparen{\widehat\mcL_\pi,\dom{\widehat\mcL_\pi}}$ is a Markov generator;

\item\label{i:t:TransferP:2} the Dirichlet form~$\tparen{\widehat\mcE_\pi,\dom{\widehat\mcE_\pi}}$ generated by~$\tparen{\widehat\mcL_\pi,\dom{\widehat\mcL_\pi}}$ is the image form of~$\tparen{\widehat\bmssE_\pi,\dom{\widehat\bmssE_\pi}}$ via~$\EM$, and admits carr\'e du champ operator~$\tparen{\widehat\mcG_\pi,\dom{\widehat\mcG_\pi}}$, the closure of the bilinear form~$(\widehat\mcG,\hCylP{}{0})$ defined by
\[
\widehat\mcG(u,v)\eqdef \widehat\mcL(uv)-u \widehat\mcL v -v \widehat\mcL u\comma \qquad u,v\in\hCylP{}{0}\comma
\]

\item\label{i:t:TransferP:3} a $\mcQ_\pi$-measurable set~$\mbfA\subset\msP$ is $\widehat\mcE_\pi$-invariant if and only if~$\mbfA\in \class[\mcQ_\pi]{\EM(\mbfC\times\mbfM)}$ for some Borel~$\mbfC\subset\mbfT_\circ$;
\end{enumerate}
\end{theorem}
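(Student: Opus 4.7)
The plan is to transport everything through the map~$\EM$ by showing that it induces an isomorphism of $L^2$-spaces and hence of all the associated structures. The key preliminary observation is that, since~$\nu$ is atomless, $\boldnu$~is concentrated on~$\mbfM_\circ$, and since~$\pi\in\msP(\mbfT_\circ)$ by assumption, $\widehat\boldnu_\pi$~is concentrated on~$\widehat\mbfM_\circ$. Because $\EM\colon(\widehat\mbfM_\circ,\widehat\T_\mrmp)\to(\msP^\pa_\iso,\T_\mrma)$ is a homeomorphism (already recorded in the text), it follows that~$\EM$ is a Borel isomorphism modulo~$\widehat\boldnu_\pi$-null sets onto its image modulo~$\mcQ_\pi$-null sets, and hence that
\[
\EM^*\colon L^2(\mcQ_\pi)\longrar L^2(\widehat\boldnu_\pi)\fstop
\]
is a unitary algebra isomorphism whose inverse is~$\EM_*$; it further preserves positivity, constants, and normal contractions.

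For~\ref{i:t:TransferP:1}, I would first verify via Lemma~\ref{l:TransferCyl} that $\EM^*\hCylP{}{0}\subset \dom{\widehat\bmssL_\pi}$ and that $\widehat\bmssL\circ\EM^*=\EM^*\circ\widehat\mcL$ on $\hCylP{}{0}$, the latter being exactly the defining identity~\eqref{eq:DefGeneratorCylP}. Essential self-adjointness of $(\widehat\bmssL,\EM^*\hCylP{}{0})$ on~$L^2(\widehat\boldnu_\pi)$ was already shown in Proposition~\ref{p:PDirIntESA}; conjugating by the unitary~$\EM^*$ then yields essential self-adjointness of $(\widehat\mcL,\hCylP{}{0})$ on~$L^2(\mcQ_\pi)$, and identifies $\widehat\mcL_\pi$ with~$\EM_*\widehat\bmssL_\pi\EM^*$. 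Markovianity of the closure follows because~$\EM^*$ intertwines normal contractions, so that the associated semigroup $\EM_*\widehat\bmssH^\pi_\bullet\EM^*$ is sub-Markovian; conservativity is inherited from~$\widehat\bmssH^\pi_\bullet$.

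For~\ref{i:t:TransferP:2}, the form $\widehat\mcE_\pi$ generated by $\widehat\mcL_\pi$ is by construction the image form of~$\widehat\bmssE_\pi$ via~$\EM$ in the sense of~\S\ref{sss:ImageObjects}: this is a formal consequence of the unitary identity $\widehat\mcL_\pi=\EM_*\widehat\bmssL_\pi\EM^*$. Since $\EM^*$ is an algebra isomorphism, the pointwise identity
\[
\widehat\mcG(u,v)=\widehat\mcL(uv)-u\,\widehat\mcL v -v\,\widehat\mcL u \qquad \text{on } \hCylP{}{0}
\]
pulls back to the analogous identity on $\EM^*\hCylP{}{0}$ which, by Theorem~\ref{t:DirInt}\ref{i:t:DirInt:3.5}, defines the carr\'e du champ $\widehat\bmssGamma$ of~$\widehat\bmssE_\pi$. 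Closability of $(\widehat\mcG,\hCylP{}{0})$ and the fact that its closure is the carr\'e du champ of~$\widehat\mcE_\pi$ therefore transfer directly from $(\widehat\bmssGamma,\Cb(\mbfT)\otimes\Cyl{}{})$ via the unitary $\EM^*$, after noting that~$\EM^*\hCylP{}{0}$ is a core for the closure of~$\widehat\bmssGamma$ by essential self-adjointness established in Proposition~\ref{p:PDirIntESA}.

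Finally for~\ref{i:t:TransferP:3}, invariance is preserved by any unitary intertwining generators that respects the algebra structure. A $\mcQ_\pi$-measurable set~$\mbfA\subset\msP$ is $\widehat\mcE_\pi$-invariant if and only if~$\EM^{-1}(\mbfA)$ is $\widehat\bmssE_\pi$-invariant, and Theorem~\ref{t:DirInt}\ref{i:t:DirInt:5} characterizes the latter as $\class[\widehat\boldnu_\pi]{\mbfC\times\mbfM}$ for some Borel~$\mbfC\subset\mbfT$; since~$\pi$ is supported on~$\mbfT_\circ$ we may intersect with~$\mbfT_\circ$ without changing the $\widehat\boldnu_\pi$-class, and applying~$\EM$ yields the claimed form $\class[\mcQ_\pi]{\EM(\mbfC\times\mbfM)}$. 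The only delicate point in the whole argument ---and the one I would check most carefully--- is the compatibility of the domain of $\widehat\mcL_\pi$ with the non-linear cylinder algebra $\hCylP{}{0}$; this is where the left-continuity of $\eps\mapsto\hCylP{}{\eps}$ from Proposition~\ref{p:PropertiesCylinder}\ref{i:p:PropertiesCylinder:2} and the key computation~\eqref{eq:ExtendedLonElementaryCyl} (showing that $\widehat\bmssL$ maps pullbacks of cylinder functions to finite sums of tensor-product terms) are essential.
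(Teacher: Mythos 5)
Your proposal is correct and follows essentially the same strategy as the paper's own proof: both arguments reduce to Proposition~\ref{p:PDirIntESA} for essential self-adjointness, exploit that~$\EM^*$ is a unitary Riesz (lattice) isomorphism intertwining the operators via~\eqref{eq:DefGeneratorCylP} to transfer Markovianity and the carr\'e du champ, and then pull back the characterization of invariant sets from Theorem~\ref{t:DirInt}\ref{i:t:DirInt:5}. You merely arrange the Markovianity argument under part~\ref{i:t:TransferP:1} rather than~\ref{i:t:TransferP:2} and spell out the preliminary observation that~$\widehat\boldnu_\pi$ is concentrated on~$\widehat\mbfM_\circ$, which the paper leaves implicit.
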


\begin{proof}
\ref{i:t:TransferP:1} The essential self-adjointness of~$\tparen{\widehat\mcL,\hCylP{}{0}}$ holds as a consequence of Proposition~\ref{p:PDirIntESA}.

\ref{i:t:TransferP:2} Since~$\EM^*\colon L^2(\mcQ_\pi)\to L^2(\widehat\boldnu_\pi)$ is a unitary operator intertwining $\tparen{\widehat\mcL_\pi,\dom{\widehat\mcL_\pi}}$ and~$\tparen{\widehat\bmssL_\pi,\dom{\widehat\bmssL_\pi}}$, it also intertwines the corresponding quadratic forms $\tparen{\widehat\mcE_\pi,\dom{\widehat\mcE_\pi}}$ and $\tparen{\widehat\bmssE_\pi,\dom{\widehat\bmssE_\pi}}$.
Since~$\EM^*$ is a additionally a Riesz isomorphism, it preserves Markovianity, thus~$\tparen{\widehat\mcE_\pi,\dom{\widehat\mcE_\pi}}$ is a Dirichlet form since so is~$\tparen{\widehat\bmssE_\pi,\dom{\widehat\bmssE_\pi}}$.
The existence and expression for the carr\'e du champ of~$\tparen{\widehat\mcE_\pi,\dom{\widehat\mcE_\pi}}$ are immediate from the definition of carr\'e du champ operator and the existence and expression for the carr\'e du champ of~$\tparen{\widehat\bmssE_\pi,\dom{\widehat\bmssE_\pi}}$.

\ref{i:t:TransferP:3} The characterization of $\widehat\mcE_\pi$-invariant sets follows by the intertwining property of~$\EM$ from the analogous characterization of $\widehat\bmssE_\pi$-invariant sets in Theorem~\ref{t:DirInt}\ref{i:t:DirInt:5}.
\end{proof}

\subsection{Potential theory and the quasi-homeomorphism}
Theorem~\ref{t:TransferP} establishes that the transfer by~$\EM$ preserves the analytical properties of $\tparen{\widehat\bmssE_\pi,\dom{\widehat\bmssE_\pi}}$.
The preservation of stochastic properties, and in particular the association with $\tparen{\widehat\mcE_\pi,\dom{\widehat\mcE_\pi}}$ of a stochastic process, is more subtle and requires a thorough discussion of the potential theory for~$\tparen{\widehat\bmssE_\pi,\dom{\widehat\bmssE_\pi}}$.

\subsubsection{Nests and polar sets}\label{sss:Nests}
Let us firstly introduce one other main assumption on the reference form~$\tparen{\mssE,\dom{\mssE}}$, of which we will make use in the following.
Recall that we denote by~$\Cap$ be the first-order capacity associated to~$\tparen{\mssE,\dom{\mssE}}$;
we further let~$\Cap_{1,1}$ be the first-order capacity associated to the product Dirichlet form~$\tparen{\mssE^{1,1},\dom{\mssE^{1,1}}}\eqdef \tparen{\mssE,\dom{\mssE}}^\otym{2}$.
(See~\S\ref{app:Capacity}.)
Finally,~$\Delta M$ is the diagonal in~$M^\tym{2}$.

\begin{assumption}[Quantitative polarity of points]\label{ass:QPP}
Assumption~\ref{ass:Setting} holds, and additionally 
\begin{equation}\tag{\textsc{qpp}}\label{eq:QPP}
\Cap_{1,1}(\Delta M)=0\fstop
\end{equation}
\end{assumption}

Let us collect some observations about Assumption~\ref{ass:QPP}.

\begin{remark}\label{r:QPP}

In turn,~\eqref{eq:QPP} is implied by mildly quantitative assertions on the capacity of small sets, relating relative $\mssE$-capacities, $\nu$-volumes, and covering number.
To better see this, suppose further that~$(M,\T)$ is compact, and that~$\mssd$ is a distance metrizing~$\T$.
Further choose~$\seq{B_r^i}_{i\leq c_r}$ to be a minimal $\T$-open covering of~$\Delta M$ consisting of $\mssd$-ball of radius~$r>0$.
(Since~$M$ is compact, the minimal number of such balls covering~$\Delta M$, i.e.\ the \emph{covering number}~$c_r$, is finite.)
Finally, for each~$i\leq c_r$, let~$e^{r,2r,i}$ be the equilibrium $\mssE$-potential of the pair~$(B_r^i,B_{2r}^i)$.
Then, by Lemma~\ref{l:Capacity1},
\begin{align*}
\Cap_{1,1}(\Delta M) &\leq \sum_i^{c_r} \Cap_{1,1}\tparen{{B_r^i}^\tym{2}} \leq \sum_i^{c_r} \Cap_{1,1}\tparen{{B_r^i}^\tym{2},{B_{2r}^i}^\tym{2}}
\\
&\leq 2 \sum_i^{c_r} \Cap(B_r^i, B_{2r}^i)\norm{e^{r,2r,i}}_{L^2(\nu)}^2 \leq c_r \, \Cap(B_r, B_{2r})\, \nu B_{2r}\fstop
\end{align*}
Thus, we see that~\eqref{eq:QPP} is implied by
\[
\limsup_{r\to 0} \Cap(B_r, B_{2r})=0\qquad \text{and} \qquad c_r \, \nu B_{2r} \in O(1) \quad \text{as} \quad r\to 0 \fstop
\]
%
\end{remark}

\bigskip

Let~$\pr_n\colon \mbfM\to M^\tym{n}$ be the projection on the first $n$ coordinates.
As a consequence of~Proposition~\ref{p:BendikovSaloffCosteRescaling} and Theorem~\ref{t:BendikovSaloffCoste}\ref{i:t:BendikovSaloffCoste:5}, the process~$\mssW^{n,\mbfs}\eqdef {\pr_n}_*\bmssW^\mbfs$ is a Hunt process with state space~$M^\tym{n}$ and satisfies
\[
X^{n,\mbfs}_t = \pr_n \circ \mbfX^\mbfs_t=\seq{X_{t/s_1},\dotsc, X_{t/s_n}}\comma \qquad P^{n,\mbfs}_{\pr_n(\mbfx)}= {\pr_n}_\pfwd P_\mbfx = \sotimes_k^n P_{x_k} \fstop
\]
As for~$\bmssW$, the path probabilities~$P^{n,\mbfs}_{\pr_n(\mbfx)}$ may be constructed on the probability space~$\Omega^\tym{n}$, in which case they do not depend on~$\mbfs$.
We will therefore drop the superscript~$\mbfs$ from the notation, thus writing simply~$\sotimes_k^n P_{x_k}$ in place of~$P^{n,\mbfs}_{\pr_n(\mbfx)}$.

\bigskip

In the next lemma, let~$M^\tym{n}_\circ\eqdef \pr_n (\mbfM_\circ)\subset M^\tym{n}$.
We let~$\tparen{\mssE^{n,\mbfs},\dom{\mssE^{n,\mbfs}}}$ be the regular Dirichlet form properly associated to~$\mssW^{n,\mbfs}$, and by~$\Cap_{1,\dotsc, n}$ its Choquet capacity.

\begin{lemma}\label{l:ExceptionalFiniteDim}
Assume~\eqref{eq:QPP} holds.
Then,~$(M^\tym{n}_\circ)^\complement$ is $\mssW^{n,\mbfs}$-exceptional for every~$n\in\N$ and every~$\mbfs\in\mbfT$.
\begin{proof}
By e.g.~\cite[Thm.~IV.5.29(i)]{MaRoe92}, the assertion is equivalent to:~$(M^\tym{n}_\circ)^\complement$ is $\mssE^{n,\mbfs}$-polar, which we now prove.
For each~$i\neq j$ set $M^n_{i,j}\eqdef \set{\mbfx\in M^\tym{n} : x_i=x_j}$.
By the countable sub-additivity of Choquet capacities, and since~$(M^\tym{n}_\circ)^\complement=\bigcup_{i,j:i\neq j} M^n_{i,j}$, it suffices to show that~$M^n_{i,j}$ is $\mssE^{n,\mbfs}$-polar for every~$i,j$ with~$i\neq j$.
We show that~$M^n_{1,2}$ is $\mssE^{n,\mbfs}$-polar.
A proof for~$M^n_{i,j}$ is analogous yet notationally more involved.
For simplicity of notation, put~${}_2\mbfs\eqdef \seq{s_3, s_4,\dotsc}$.
Denote by~$\Cap_{2,\mbfs}$ the Choquet capacity of the Dirichlet form~$\tparen{\mssE^{2,\mbfs},\dom{\mssE^{2,\mbfs}}}$ on~$L^2(\nu^\otym{2})$ and by~$\Cap_{n-2,{}_2\mbfs}$ the Choquet capacity of the form~$\tparen{\mssE^{n-2,{}_2\mbfs},\dom{\mssE^{n-2,{}_2\mbfs}}}$ on~$L^2(\nu^{\otym{n-2}})$.

Let~$e_{2,\mbfs}$ be the equilibrium $\mssE^{2,\mbfs}$-potential for~$(\Delta M, M^\tym{2})$.
By Lemma~\ref{l:Capacity1} applied to~$\mssE^1=\mssE^{2,\mbfs}$ and~$\mssE^2=\mssE^{n-2,{}_2\mbfs}$,
\begin{align*}
\Cap_{n,\mbfs}(M^n_{1,2})=&\ \Cap_{n,\mbfs}(\Delta M \times M^\tym{n-2})
\\
\leq&\ \Cap_{2,\mbfs} (\Delta M) \norm{\car}_{L^2(\nu^\otym{n-2})}^2 + \Cap_{n-2,{}_2\mbfs}(M^\tym{n-2})\norm{e_{2,\mbfs}}_{L^2(\nu^\otym{2})}^2
\\
\leq&\ 2\, \Cap_{2,\mbfs}(\Delta M) \fstop
\end{align*}
The latter term vanishes choosing~$a\eqdef s_1^{-1}\geq 1$ and~$b\eqdef s_2^{-1}\geq 1$ in Lemma~\ref{l:Capacity2}, which proves the assertion.
\end{proof}
\end{lemma}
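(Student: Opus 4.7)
The plan is to translate exceptionality into polarity and then reduce the problem, via the product structure, to the two-dimensional assumption~\eqref{eq:QPP}. Since $\tparen{\mssE^{n,\mbfs},\dom{\mssE^{n,\mbfs}}}$ is a regular (in fact conservative) Dirichlet form properly associated to the Hunt process~$\mssW^{n,\mbfs}$, the equivalence between exceptional sets and polar sets (see e.g.~\cite[Thm.~IV.5.29]{MaRoe92}) reduces the claim to showing $\mssE^{n,\mbfs}$-polarity of $(M^\tym{n}_\circ)^\complement$. I would then write
\[
(M^\tym{n}_\circ)^\complement=\bigcup_{1\leq i<j\leq n} M^n_{i,j}\comma \qquad M^n_{i,j}\eqdef\set{\mbfx\in M^\tym{n}:x_i=x_j}\comma
\]
and invoke countable (in fact, finite) subadditivity of the Choquet capacity, so that it suffices to show each $M^n_{i,j}$ is $\mssE^{n,\mbfs}$-polar.

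By symmetry of the product construction it is enough to treat~$M^n_{1,2}$, which has the form $\Delta M\times M^\tym{n-2}$. The natural step is to decompose the generator (equivalently, the form) as the sum of the rescaled two-dimensional generator on the first two factors and the rescaled $(n-2)$-dimensional generator on the last $n-2$ factors. Applying the product-capacity inequality from Lemma~\ref{l:Capacity1} (picking as auxiliary potential the equilibrium $\mssE^{2,\mbfs}$-potential of~$\Delta M$ in~$M^\tym{2}$) bounds $\Cap_{n,\mbfs}(\Delta M\times M^\tym{n-2})$ by a sum whose two summands can, up to constants depending only on $\norm{\car}_{L^2(\nu^\otym{n-2})}$ and on the $L^2$-norm of the equilibrium potential, be controlled by $\Cap_{2,\mbfs}(\Delta M)$.

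The final step will be to exploit an auxiliary lemma (Lemma~\ref{l:Capacity2} in the text) that compares the $\mssE^{2,\mbfs}$-capacity of the diagonal with the unrescaled $\mssE^{1,1}$-capacity $\Cap_{1,1}(\Delta M)$; the key point is that conformally rescaling each factor by~$s_k^{-1}$ only rescales the form, not its zero sets, so the vanishing in~\eqref{eq:QPP} transfers to $\Cap_{2,\mbfs}(\Delta M)=0$ for every admissible~$\mbfs$. I expect the main obstacle, modulo correctly invoking Lemmas~\ref{l:Capacity1}--\ref{l:Capacity2}, to be bookkeeping the rescaling constants uniformly in $\mbfs\in\mbfT$ (recall that coordinates with $s_k=0$ must be interpreted via the null-operator convention of \S\ref{ss:Rescaling}): one must ensure that the comparison $\Cap_{2,\mbfs}(\Delta M)\lesssim \Cap_{1,1}(\Delta M)$ degenerates gracefully when some $s_i$ vanishes, so that the conclusion holds for every $\mbfs\in\mbfT$ and not only for $\mbfs\in\mbfT_*$.
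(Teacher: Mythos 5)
Your proposal reproduces the paper's argument step for step: reduce exceptionality to $\mssE^{n,\mbfs}$-polarity via \cite[Thm.~IV.5.29]{MaRoe92}, decompose $(M^\tym{n}_\circ)^\complement$ as $\bigcup_{i<j}M^n_{i,j}$, apply subadditivity and symmetry to reduce to $\Delta M\times M^\tym{n-2}$, invoke the product-capacity inequality of Lemma~\ref{l:Capacity1} to bound by $\Cap_{2,\mbfs}(\Delta M)$, and conclude via Lemma~\ref{l:Capacity2} (with $a=s_1^{-1}$, $b=s_2^{-1}$) using~\eqref{eq:QPP}. Your caution about degenerate $\mbfs\in\mbfT\setminus\mbfT_*$ is harmless and is disposed of in the text by the convention (stated just before Proposition~\ref{p:BendikovSaloffCosteRescaling}) that coordinates with $s_k=0$ carry the null operator, reducing those cases to a finite product.
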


\begin{proposition}\label{p:AndiPolar}
Assume~\eqref{eq:QPP} holds.
Then,~$\mbfM_\circ^\complement$ is $\bmssW^\mbfs$-exceptional for all~$\mbfs\in\mbfT$.

\begin{proof}
Up to resorting to the compactification~$\mbfM^\boldupalpha$ of~$\mbfM$ and to the quasi-homeo\-morphism~$\boldupalpha$ constructed~\eqref{eq:Compactification} in the proof of Theorem~\ref{t:BendikovSaloffCoste}, we may and will assume with no loss of generality that~$(M,\T)$ (hence~$(\mbfM,\T_\mrmp)$) is compact.

For~$\mbfB\subset\mbfM$, define the \emph{first touching time}~\cite[\S{IV.5}, Eqn.~(5.14)]{MaRoe92} of~$\mbfB$ for~$\bmssW^\mbfs$ as
\begin{equation}\label{eq:p:AndiPolar:1}
\tau^\mbfs_\mbfB\eqdef \inf\set{t\geq 0 : \overline{\mbfX^\mbfs_{[0,t]}}\cap \mbfB\neq \emp}\comma
\end{equation}
where, for simplicity of notation,~$\overline{\mbfX^\mbfs_{[0,t]}}$ is the $\T_\mrmp$-closure of~$\set{\mbfX^\mbfs_r(\omega): r\in [0,t]}$.

In the rest of the proof, we choose~$\mbfB=\mbfM_\circ^\complement$ and omit it from the notation.
Let~$P^\mbfs_{\boldnu}$ be defined according to~\eqref{eq:MarkovProbab} with~$P^\mbfs_\emparg$ as in~\eqref{eq:BSCRescalingProcess}.
Since~$\mbfM_\circ^\complement$ is measurable and~$\bmssW^\mbfs$ has infinite lifetime by Proposition~\ref{p:BendikovSaloffCosteRescaling} (cf.\ Thm.~\ref{t:BendikovSaloffCoste}\ref{i:t:BendikovSaloffCoste:5}), it suffices to show that
\begin{equation}\label{eq:p:AndiPolar:0}
P^\mbfs_{\boldnu}\set{\tau^\mbfs<\infty}=0\fstop
\end{equation}
To this end, note that~$\mbfM_\circ^\complement=\scap_n \pr_n^{-1}\tparen{(M^\tym{n}_\circ)^\complement}$ and ${\pr_n}_* \bmssW^\mbfs = \mssW^{n,\mbfs}$.
Since $(\mbfM,\T_\mrmp)$ is a compact space,~$\pr_n$ is a closed map by e.g.~\cite[Cor.~3.1.11]{Eng89}, and therefore~$\pr_n(\overline{\mbfX^\mbfs_{[0,t]}})=\overline{X^{n,\mbfs}_{[0,t]}}$.
Thus, letting~$\tau^{n,\mbfs}$ be the first touching time of~$(M^\tym{n}_\circ)^\complement$ for~$\mssW^{n,\mbfs}$,
\begin{equation}\label{eq:p:AndiPolar:1.5}
\set{\tau^{n,\mbfs}<\infty} \subset \set{\tau^{n+1,\mbfs}<\infty}\subset \set{\tau^\mbfs<\infty}= \lim_n \set{\tau^{n,\mbfs}<\infty} \fstop
\end{equation}
Furthermore, since~$(M^\tym{n}_\circ)^\complement$ is $\mssW^{n,\mbfs}$-exceptional by Lemma~\ref{l:ExceptionalFiniteDim},
\begin{equation}\label{eq:p:AndiPolar:2}
P_{\boldnu} \set{\tau^{n,\mbfs}<\infty}= P_{\nu^\otym{n}}\set{\tau^{n,\mbfs}<\infty}=0 \fstop
\end{equation}
Thus, finally, combining~\eqref{eq:p:AndiPolar:1.5} and~\eqref{eq:p:AndiPolar:2} yields~\eqref{eq:p:AndiPolar:0} by the Borel--Cantelli Lem\-ma.
\end{proof}
\end{proposition}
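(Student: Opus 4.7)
\textbf{Proof plan for Proposition~\ref{p:AndiPolar}.} The plan is to show that $\mbfM_\circ^\complement$ is $\bmssE^\mbfs$-polar, which is equivalent to $\bmssW^\mbfs$-exceptionality in view of the quasi-regularity established in Theorem~\ref{t:BendikovSaloffCoste}\ref{i:t:BendikovSaloffCoste:4} and standard potential theory (e.g.~\cite[Thm.~IV.5.29]{MaRoe92}). As a preliminary reduction, I would pass to the Alexandrov compactification $\boldupalpha\colon\mbfM\to\mbfM^\boldupalpha$ constructed during the proof of Theorem~\ref{t:BendikovSaloffCoste}: this is a quasi-homeomorphism and polarity is invariant under it, so one may assume $(M,\T)$ ---and hence $(\mbfM,\T_\mrmp)$--- compact for the remainder of the argument.

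Next I would decompose
\[
\mbfM_\circ^\complement = \bigcup_{1\leq i<j} M_{i,j}\comma \qquad M_{i,j}\eqdef\set{\mbfx\in\mbfM:x_i=x_j}\comma
\]
and use countable subadditivity of the Choquet $\bmssE^\mbfs$-capacity to reduce to proving that each $M_{i,j}$ is $\bmssE^\mbfs$-polar. For fixed $i\neq j$ and any $n\geq\max(i,j)$, write $M_{i,j}=\pr_n^{-1}(M^\tym{n}_{i,j})$ with $M^\tym{n}_{i,j}\eqdef\set{\mbfy\in M^\tym{n}:y_i=y_j}\subset (M^\tym{n}_\circ)^\complement$. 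By Lemma~\ref{l:ExceptionalFiniteDim} the set $M^\tym{n}_{i,j}$ is $\mssE^{n,\mbfs}$-polar, hence $\mssW^{n,\mbfs}$-exceptional. The identification $\mssW^{n,\mbfs}=\pr_n\circ\bmssW^\mbfs$ from Theorem~\ref{t:BendikovSaloffCoste}\ref{i:t:BendikovSaloffCoste:5} then allows transferring this negligibility upward: if $\tau^{n,\mbfs}_{M^\tym{n}_{i,j}}=\infty$ almost surely, so is the first touching time of $M_{i,j}$ by $\bmssW^\mbfs$.

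The main technical obstacle is that ``exceptionality'' is governed by \emph{touching} rather than mere hitting of trajectory closures: for the above transfer to be rigorous one needs
\[
\pr_n\tbraket{\overline{\mbfX^\mbfs_{[0,t]}}}=\overline{X^{n,\mbfs}_{[0,t]}}\comma
\]
i.e.\ that $\pr_n$ commute with topological closure of images. This is exactly where the initial compactification pays off: on the compact space $(\mbfM,\T_\mrmp)$ the projection $\pr_n$ is a closed map (e.g.~\cite[Cor.~3.1.11]{Eng89}), and the identity above follows. With this in hand, a Borel--Cantelli argument applied to the monotone approximation $\set{\tau^{n,\mbfs}_{M^\tym{n}_{i,j}}<\infty}\uparrow\set{\tau^\mbfs_{M_{i,j}}<\infty}$ gives $P^\mbfs_\boldnu\set{\tau^\mbfs_{M_{i,j}}<\infty}=0$, and summing over $(i,j)$ concludes. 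In sum, the difficulty is not analytic but topological: lifting the finite-dimensional vanishing of hitting probabilities of the diagonal (already packaged in Lemma~\ref{l:ExceptionalFiniteDim}) to the infinite product in a way compatible with the definition of exceptional sets.
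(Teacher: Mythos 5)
Your proposal takes essentially the same route as the paper, and it is correct. Both proofs (i) reduce to the compact case via the Alexandrov quasi-homeomorphism so that $\pr_n$ becomes a closed map, (ii) invoke Lemma~\ref{l:ExceptionalFiniteDim} for the finite-dimensional problem, and (iii) exploit $\pr_n\tbraket{\overline{\mbfX^\mbfs_{[0,t]}}}=\overline{X^{n,\mbfs}_{[0,t]}}$ to identify first touching times across the projection. The only structural difference is that you decompose $\mbfM_\circ^\complement=\bigcup_{i<j}M_{i,j}$ and apply countable subadditivity of capacity, whereas the paper works with the increasing family $\pr_n^{-1}\tparen{(M^\tym{n}_\circ)^\complement}$ (note the paper's $\scap_n$ there is a typo for $\scup_n$, as the sets are increasing) and phrases the conclusion in probabilistic terms via Borel--Cantelli. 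These are equivalent bookkeepings.

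One small inaccuracy worth flagging: your sentence about ``a Borel--Cantelli argument applied to the monotone approximation $\set{\tau^{n,\mbfs}_{M^\tym{n}_{i,j}}<\infty}\uparrow\set{\tau^\mbfs_{M_{i,j}}<\infty}$'' is misplaced. For a \emph{fixed} pair $(i,j)$ and any $n\geq\max(i,j)$ one has $M_{i,j}=\pr_n^{-1}(M^\tym{n}_{i,j})$, so (after the closed-map argument) the touching time of $M_{i,j}$ by $\bmssW^\mbfs$ is \emph{equal} to that of $M^\tym{n}_{i,j}$ by $\mssW^{n,\mbfs}$ --- there is no monotone limit and no Borel--Cantelli needed at that stage. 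The countability is used only in the final summation over pairs $(i,j)$, which is exactly where your subadditivity-of-capacity step belongs. This does not affect the correctness of the argument; it is only a mislabelling of where the limiting step occurs.
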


\begin{remark}[No collisions]\label{r:NoCollisions}
Let us note that the first collision time~\eqref{eq:Intro:FCT} for~$\bmssW^\mbfs$ is always larger than the first touching time~\eqref{eq:p:AndiPolar:1} of~$\mbfM_\circ$ for~$\bmssW^\mbfs$.
As a consequence, it follows from Lemma~\ref{p:AndiPolar} that~\eqref{eq:QPP} implies~$\tau_c\equiv +\infty$ a.s.; in particular, no collisions occur.
This (almost) complements the results in~\S\ref{sss:Collisions} in light of Remark~\ref{r:QPP}.
\end{remark}

\paragraph{Off-diagonal nests}
In order to further our analysis of~$\bmssW^\mbfs$, we give an explicit nest for the properly associated Dirichlet form.
This nest will in fact be independent of~$\mbfs$.

Since~$(M,\T)$ is second-countable locally compact Hausdorff, it is Polish, and in particular metrizable.
Let~$\mssd$ be any distance metrizing~$\T$.
Further let~$\vareps\colon \R^+\to\R^+$ be any Borel-measurable function satisfying
\begin{equation}\label{eq:NestPhi}
\vareps\quad \text{is non-increasing} \quad \text{and}\quad \lim_{t\to\infty} \vareps(t)=0 \fstop
\end{equation}
Finally set
\begin{equation}\label{eq:NestUij}
\mbfU_{i,j,\delta}\eqdef \set{\mbfx\in\mbfM : \mssd(x_i,x_j)< \delta}\comma \qquad i,j\in\N\comma \delta>0\comma
\end{equation}
and
\begin{equation}\label{eq:Nest}
\mbfU_n\eqdef \bigcup_{\substack{i,j\\i<j}} \mbfU_{i,j,\vareps(ijn)}\comma 
\quad \mbfF_n\eqdef\mbfU_n^\complement \comma
\qquad n\in \N\fstop
\end{equation}

\begin{lemma}\label{l:Nest}
Fix~$\mbfs\in\mbfT$.
Assume~\eqref{eq:QPP} holds.
Then,~$\seq{\mbfF_n}_n$ is an $\bmssE^\mbfs$-nest.

\begin{proof}
Let~$\mbfM^\boldupalpha$ and~$\boldupalpha\colon \mbfM\to \mbfM^\boldupalpha$ be defined as in the proof of Theorem~\ref{t:BendikovSaloffCoste}.
As in the proof of that theorem,~$\boldupalpha$ is a homeomorphism onto its image and a quasi-homeomorphism of Dirichlet forms intertwining~$\tparen{\bmssE^\mbfs,\dom{\bmssE^\mbfs}}$ and~$\tparen{\bmssE^{\mbfs,\boldupalpha},\dom{\bmssE^{\mbfs,\boldupalpha}}}$.
As a consequence, letting~$\Cap_\mbfs$ be the first-order capacity associated to~$\tparen{\bmssE^\mbfs,\dom{\bmssE^\mbfs}}$, and analogously for~$\Cap_\mbfs^\boldupalpha$, we have~$\Cap_\mbfs(\mbfA)=\Cap_\mbfs^\boldupalpha(\boldupalpha(\mbfA))$ for every capacitable~$\mbfA\subset\mbfM$.

Now, note that~$\mbfU_{i,j,\delta}$ is $\T_\mrmp$-open, hence so is~$\mbfU_n$, so that~$\mbfF_n^\complement$ is $\T_\mrmp$-closed.
Furthermore, it is readily seen that~$\mbfU_n\downarrow_n \mbfM_\circ^\complement$, hence that~$\mbfF_n \uparrow_n \mbfM_\circ$.
Thus, it remains to show that~$\limsup_n \Cap_\mbfs(\mbfF_n^\complement)=0$.

To this end, note that~$\overline{\boldupalpha(\mbfU_n)}$ is $\T^\boldupalpha_\mrmp$-compact in~$\mbfM^\boldupalpha$.
Therefore,
\begin{align*}
\limsup_n \Cap_\mbfs(\mbfF_n^\complement)=&\ \limsup_n \Cap^\boldupalpha_\mbfs\tparen{\boldupalpha(\mbfF_n^\complement)} \leq \limsup_n \Cap^\boldupalpha_\mbfs\tparen{\overline{\boldupalpha(\mbfF_n^\complement)}}
\\
=&\ \inf_n \Cap^\boldupalpha_\mbfs\tparen{\overline{\boldupalpha(\mbfU_n)}}= \Cap^\boldupalpha_\mbfs\tparen{\scap_n \overline{\boldupalpha(\mbfU_n)}}
\end{align*}
where the last equality holds by properties of Choquet capacities and since $\overline{\boldupalpha(\mbfU_n)}$ is $\T^\boldupalpha_\mrmp$-compact.
Then,
\begin{align*}
\limsup_n \Cap_\mbfs(\mbfF_n^\complement)\leq&\ \Cap^\boldupalpha_\mbfs\tparen{\scap_n \overline{\boldupalpha(\mbfU_n)}}
\leq \Cap^\boldupalpha_\mbfs\tparen{\scap_n \boldupalpha(\overline{\mbfU_n}) \cup \mbfM^\boldupalpha_*}
\\
\leq&\ \Cap^\boldupalpha_\mbfs\tparen{\scap_n \boldupalpha(\mbfU_n)} + \Cap^\boldupalpha_\mbfs\tparen{\mbfM^\boldupalpha_*}
\\
=&\ \Cap_\mbfs\tparen{\scap_n \mbfU_n} + 0
= \Cap_\mbfs\tparen{\mbfM_\circ^\complement}=0\comma
\end{align*}
where~$\mbfM^\boldupalpha_*$ is defined as in~\eqref{eq:PolarMAlpha} and $\bmssE^{\mbfs,\boldupalpha}$-polar by the arguments there, and the last equality holds by Lemma~\ref{p:AndiPolar}.
\end{proof}
\end{lemma}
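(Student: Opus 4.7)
My plan is to verify the two defining properties of an $\bmssE^\mbfs$-nest: that each $\mbfF_n$ is $\T_\mrmp$-closed with $\mbfF_n \uparrow \mbfM_\circ$, and that $\Cap_\mbfs(\mbfF_n^\complement) \to 0$. The topological step is routine: each $\mbfU_{i,j,\delta}$ is $\T_\mrmp$-open as the preimage of $[0,\delta)$ under the composition of the $\T_\mrmp$-continuous coordinate projection $\mbfx \mapsto (x_i,x_j)$ with $\mssd$, hence the countable union $\mbfU_n$ is open and $\mbfF_n$ is closed. Since $\vareps$ is non-increasing in its argument, $\mbfU_{i,j,\vareps(ij(n+1))} \subseteq \mbfU_{i,j,\vareps(ijn)}$, so the sequence $\mbfU_n$ is decreasing and $\mbfF_n$ is increasing; one readily checks $\cap_n \mbfU_n = \mbfM_\circ^\complement$, using $\vareps(t) \to 0$ at infinity together with the fact that if $x_i=x_j$ then $\mbfx$ sits in $\mbfU_{i,j,\delta}$ for every $\delta$.

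For the capacity estimate, the main obstacle is that $\mbfM$ is generally non-compact when $M$ is: the sets $\mbfU_n$ are not relatively compact, which blocks a direct appeal to the continuity of Choquet capacity along decreasing compacta. I would circumvent this by transferring the problem to the Alexandrov compactification $\mbfM^\boldupalpha$ via the quasi-homeomorphism $\boldupalpha$ constructed inside the proof of Theorem~\ref{t:BendikovSaloffCoste}. Because $\boldupalpha$ is injective, a homeomorphism onto its image, and intertwines the two Dirichlet forms, it preserves the Choquet capacities of capacitable sets, i.e.\ $\Cap_\mbfs(\mbfA) = \Cap_\mbfs^\boldupalpha(\boldupalpha(\mbfA))$. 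Crucially, the image closures $\overline{\boldupalpha(\mbfU_n)}$ are now $\T_\mrmp^\boldupalpha$-compact, sitting inside the compact product $\mbfM^\boldupalpha$.

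With this reduction in place I would apply the right-continuity of Choquet capacity along decreasing sequences of compacts to interchange limit and intersection,
\[
\lim_n \Cap_\mbfs^\boldupalpha\tparen{\overline{\boldupalpha(\mbfU_n)}} = \Cap_\mbfs^\boldupalpha\tparen{\scap_n \overline{\boldupalpha(\mbfU_n)}}\comma
\]
and then bound the right-hand side through the set-theoretic inclusion $\scap_n \overline{\boldupalpha(\mbfU_n)} \subseteq \boldupalpha\tparen{\scap_n \mbfU_n} \cup \mbfM_*^\boldupalpha$, where $\mbfM_*^\boldupalpha$ is the ``boundary at infinity'' from~\eqref{eq:PolarMAlpha}. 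Subadditivity of capacity then reduces the proof to two claims: first, $\mbfM_*^\boldupalpha$ has zero capacity for the image form, which was established in the proof of Theorem~\ref{t:BendikovSaloffCoste}; second, $\scap_n \mbfU_n = \mbfM_\circ^\complement$ is $\bmssE^\mbfs$-polar, which is exactly Proposition~\ref{p:AndiPolar} and is the only point where the standing hypothesis~\eqref{eq:QPP} is invoked. Together these yield $\Cap_\mbfs(\mbfF_n^\complement) = \Cap_\mbfs(\mbfU_n) \to 0$, completing the verification.
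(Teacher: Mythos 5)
Your proposal follows essentially the same line as the paper's own proof: verify closedness and monotonicity of $\seq{\mbfF_n}_n$ directly, transfer the capacity estimate to the Alexandrov compactification $\mbfM^{\boldupalpha}$ via $\boldupalpha$ so that the relevant sets acquire compact closures, apply continuity of Choquet capacity along decreasing compacta, peel off the polar boundary $\mbfM^{\boldupalpha}_*$ by subadditivity, and conclude from the $\bmssE^{\mbfs}$-polarity of $\mbfM_\circ^\complement$ established in Proposition~\ref{p:AndiPolar}. The paper records an intermediate step through $\scap_n \boldupalpha(\overline{\mbfU_n})$ before reducing to $\scap_n \boldupalpha(\mbfU_n)$, whereas you collapse this into a single set-theoretic inclusion, but the two formulations are equivalent.
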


\begin{remark}\label{r:Nest}
Let us note that the sets~$\mbfF_n$ \emph{depend on~$\mssd$} and on~$\vareps$, yet they form an $\bmssE^\mbfs$-nest for every~$\mssd$ (metrizing~$\T$), for every~$\vareps$ as in~\eqref{eq:NestPhi}, and for every~$\mbfs\in\mbfT$.
\end{remark}

\begin{proposition}\label{p:AndiHatPolar}
Let~$\mbfF_n$ be as in~\eqref{eq:Nest} and assume that~\eqref{eq:QPP} holds. Then,
\begin{enumerate}[$(i)$]
\item\label{i:p:AndiHatPolar:1} setting~$\widehat\mbfF_n\eqdef \mbfT\times\mbfF_n$ defines an $\widehat\bmssE_\pi$-nest~$\tseq{\widehat\mbfF_n}_n$;

\item\label{i:p:AndiHatPolar:2} the set~$\widehat\mbfM_\circ$ is~$\widehat\bmssW_\pi$-coexceptional.
\end{enumerate}

\begin{proof}
The family~$\seq{\mbfF_n}_n$ as in~\eqref{eq:Nest} is an $\bmssE^\mbfs$-nest for every~$\mbfs\in\mbfT$ by Lemma~\ref{l:Nest} (also cf.~Rmk.~\ref{r:Nest}) and~\ref{i:p:AndiHatPolar:1} follows in a straightforward way.
\ref{i:p:AndiHatPolar:2} is a consequence of~\ref{i:p:AndiHatPolar:1} by~\cite[Thm.~IV.5.29(i)]{MaRoe92}.
\end{proof}
\end{proposition}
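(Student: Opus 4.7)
My plan is to lift the fiberwise nest property established in Lemma~\ref{l:Nest} to the direct-integral form $\tparen{\widehat\bmssE_\pi,\dom{\widehat\bmssE_\pi}}$ of Remark~\ref{r:DirInt}. For~\ref{i:p:AndiHatPolar:1}, first I would check that each $\widehat\mbfF_n=\mbfT\times\mbfF_n$ is $\widehat\T_\mrmp$-closed: indeed $\mbfT$ is closed in $\mbfI$ since it is cut out by the non-strict inequalities in~\eqref{eq:DeltaTau}, while $\mbfF_n=\mbfU_n^\complement$ is closed by construction. The sequence $\seq{\widehat\mbfF_n}_n$ is manifestly non-decreasing, so it remains to show that the $\widehat\bmssE_\pi$-capacity, henceforth $\widehat\Cap_\pi$, of $\widehat\mbfF_n^\complement=\mbfT\times\mbfU_n$ vanishes as $n\to\infty$.

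To this end, for each $\mbfs\in\mbfT$ let $e_n^\mbfs$ denote the $1$-equilibrium $\bmssE^\mbfs$-potential of $\mbfU_n$, so that $0\leq e_n^\mbfs\leq 1$, $e_n^\mbfs\geq 1$ $\bmssE^\mbfs$-q.e.\ on $\mbfU_n$, and $\bmssE^\mbfs_1(e_n^\mbfs,e_n^\mbfs)=\Cap_\mbfs(\mbfU_n)$, where $\Cap_\mbfs$ denotes the first-order capacity of $\bmssE^\mbfs$. Uniqueness of the equilibrium potential together with the measurable structure of the direct integral of Dirichlet forms in~\cite{LzDS20} gives that $\widehat e_n\colon(\mbfs,\mbfx)\mapsto e_n^\mbfs(\mbfx)$ is a measurable section of the Hilbert field $\mbfs\mapsto \dom{\bmssE^\mbfs}$, hence an element of $\dom{\widehat\bmssE_\pi}$, with
\begin{equation*}
\widehat\bmssE_{\pi,1}(\widehat e_n,\widehat e_n)=\int_\mbfT \bmssE^\mbfs_1(e_n^\mbfs,e_n^\mbfs)\diff\pi(\mbfs)=\int_\mbfT \Cap_\mbfs(\mbfU_n)\diff\pi(\mbfs)\fstop
\end{equation*}
By Lemma~\ref{l:Nest} the integrand decreases to $0$ for every $\mbfs\in\mbfT$ and is dominated by~$1$, so dominated convergence yields $\widehat\bmssE_{\pi,1}(\widehat e_n,\widehat e_n)\to 0$. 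Since $\widehat e_n\geq 1$ $\widehat\bmssE_\pi$-q.e.\ on $\widehat\mbfF_n^\complement$, this entails $\widehat\Cap_\pi(\widehat\mbfF_n^\complement)\to 0$, concluding~\ref{i:p:AndiHatPolar:1}.

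For~\ref{i:p:AndiHatPolar:2}, part~\ref{i:p:AndiHatPolar:1} directly gives that $\widehat\mbfM\setminus\scup_n\widehat\mbfF_n=\mbfT\times\mbfM_\circ^\complement$ is $\widehat\bmssE_\pi$-polar. Under the working hypothesis $\pi(\mbfT_\circ)=1$ of this section, the set $(\mbfT\setminus\mbfT_\circ)\times\mbfM$ is $\widehat\boldnu_\pi$-negligible, hence $\widehat\bmssE_\pi$-polar as well. The decomposition
\begin{equation*}
\widehat\mbfM\setminus\widehat\mbfM_\circ=\tparen{(\mbfT\setminus\mbfT_\circ)\times\mbfM}\scup\tparen{\mbfT\times\mbfM_\circ^\complement}
\end{equation*}
then shows that $\widehat\mbfM_\circ^\complement$ is $\widehat\bmssE_\pi$-polar, whence~\cite[Thm.~IV.5.29(i)]{MaRoe92} yields its $\widehat\bmssW_\pi$-exceptionality.

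The main delicate step is the measurable-selection argument placing $\widehat e_n$ in $\dom{\widehat\bmssE_\pi}$ with the integral identity for $\widehat\bmssE_{\pi,1}(\widehat e_n,\widehat e_n)$; once invoked from the direct-integral framework developed in~\cite{LzDS20}, the rest is routine.
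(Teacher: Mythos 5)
Your fleshing-out of part~\ref{i:p:AndiHatPolar:1} via the $\mbfs$-indexed family of equilibrium potentials $e_n^\mbfs$ and the direct-integral identity for $\widehat\bmssE_{\pi,1}$ is a perfectly sound way to make the paper's ``follows in a straightforward way'' explicit, and the dominated-convergence step with the trivial bound $\Cap_\mbfs(\mbfU_n)\leq 1$ is correct. You are also right to flag the measurable-selection step as the nontrivial input: it, together with the lifting of the fiberwise q.e.\ bounds $e_n^\mbfs\geq 1$ on $\mbfU_n$ to a $\widehat\bmssE_\pi$-q.e.\ statement, is exactly what the direct-integral machinery of~\cite{LzDS20} is cited for. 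Two remarks, one cosmetic and one substantive.

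The cosmetic one: $\mbfT$ is \emph{not} closed in $\mbfI$. The constraint $\norm{\mbfs}_{\ell^1}=1$ is only lower semicontinuous for the product topology; e.g.\ $\mbfs^{(k)}\eqdef(k^{-1},\dotsc,k^{-1},0,\dotsc)\in\mbfT$ converges to $\zero\notin\mbfT$. This does not matter, because the ambient space is $\widehat\mbfM=\mbfT\times\mbfM$ (not $\mbfI\times\mbfM$), so closedness of $\widehat\mbfF_n=\mbfT\times\mbfF_n$ in $\widehat\T_\mrmp$ only requires $\mbfF_n$ closed in $\mbfM$, which holds by construction.

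The substantive gap is in your treatment of $(\mbfT\setminus\mbfT_\circ)\times\mbfM$ in part~\ref{i:p:AndiHatPolar:2}. You correctly observe that the nest from~\ref{i:p:AndiHatPolar:1} only yields polarity of $\mbfT\times\mbfM_\circ^\complement$, so the slab over $\mbfT\setminus\mbfT_\circ$ must be handled separately --- a point the paper's proof leaves implicit (it is addressed obliquely in the proof of Theorem~\ref{t:DirInt}\ref{i:t:DirInt:7} for $(\supp\pi)^\complement\times\mbfM$). However, the inference ``$\widehat\boldnu_\pi$-negligible, hence $\widehat\bmssE_\pi$-polar'' is false in general Dirichlet-form theory (a singleton in $\R^2$ with $H^1$ is negligible but not polar). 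What makes the conclusion true here is the product structure of the form, which has no energy in the $\mbfs$-direction. A correct argument: pick compact $K_m\subset\mbfT_\circ$ with $\pi(K_m)\to 1$, choose Urysohn functions $\psi_m\in\Cb(\mbfT)$ with $\psi_m\equiv 1$ on the closed set $\mbfT\setminus\mbfT_\circ$, $\psi_m\equiv 0$ on $K_m$, and $0\leq\psi_m\leq 1$. Then $u_m\eqdef\psi_m\otimes\car\in\dom{\widehat\bmssE_\pi}$ is continuous, satisfies $\widehat\bmssE_\pi(u_m)=0$ (each section $u_m(\mbfs,\emparg)$ is constant) and $u_m\geq 1$ on $(\mbfT\setminus\mbfT_\circ)\times\mbfM$, so
\begin{equation*}
\widehat\Cap_\pi\tparen{(\mbfT\setminus\mbfT_\circ)\times\mbfM}\leq\widehat\bmssE_{\pi,1}(u_m)=\norm{\psi_m}_{L^2(\pi)}^2\leq\pi(\mbfT\setminus K_m)\longrightarrow 0\fstop
\end{equation*}
With this replacement your decomposition and the appeal to~\cite[Thm.~IV.5.29(i)]{MaRoe92} close the argument.
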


\subsubsection{Quasi-homeomorphism}
We are now ready to transfer the stochastic properties of~$\tparen{\widehat\bmssE_\pi,\dom{\widehat\bmssE_\pi}}$ to~$\tparen{\widehat\mcE_\pi,\dom{\widehat\mcE_\pi}}$.

\begin{theorem}\label{t:TransferQuasiHomeo}
Fix~$\pi\in\msP(\mbfT_\circ)$ and assume~\eqref{eq:QPP} holds.
Then,
\begin{enumerate}[$(i)$, leftmargin=2em]
\item\label{i:t:TransferP:4} $\EM$ is a quasi-homeomorphism intertwining~$\tparen{\widehat\mcE_\pi,\dom{\widehat\mcE_\pi}}$ and~$\tparen{\widehat\bmssE_\pi,\dom{\widehat\bmssE_\pi}}$;

\item\label{i:t:TransferP:5} $\tparen{\widehat\mcE_\pi,\dom{\widehat\mcE_\pi}}$ is a $\T_\mrma$-quasi-regular Dirichlet form properly associated with the $\mcQ_\pi$-symmetric Hunt process with state space~$\msP^\pa_\iso$
\begin{equation}\label{eq:t:TransferP:0}
\widehat\mcW_\pi\eqdef \EM_*\widehat\bmssW_\pi=\seq{\widehat\Omega_\circ,\widehat\msF^\pi,\widehat\msF^\pi_\bullet,\widehat\mcX^\pi_\bullet, \tseq{\widehat\mcP^\pi_\eta}_{\eta\in\msP^\pa_\iso}} \semicolon
\end{equation}
with
\[
\widehat\mcX^\pi_\bullet= \EM\circ \widehat\mbfX^\pi_\bullet \qquad \text{and} \qquad \widehat\mcP^\pi_\eta= \widehat P^\pi_{\EM^{-1}(\eta)}\comma \quad \eta\in\msP^\pa_\iso \semicolon
\]

\item\label{i:t:TransferP:6} 
$\widehat\mcW_\pi$ is the ---unique up to~$\mcQ_\pi$-equivalence--- $\mcQ_\pi$-sub-stationary $\mcQ_\pi$-special standard process process solving to the martingale problem for~$\tparen{\widehat\mcL,\hCylP{}{0}}$.
In particular, for every~$u\in\hCylP{}{0}$ ($\subset \Cb(\T_\mrma)$), the process
\begin{equation}\label{eq:t:Transfer:1}
M^{\class{u}}_t\eqdef u(\widehat\mcX^\pi_t)- u(\widehat\mcX^\pi_0)-\int_0^t (\widehat\mcL u)(\widehat\mcX^\pi_s)\diff s
\end{equation}
is an adapted square-integrable martingale with predictable quadratic variation
\begin{equation}\label{eq:t:Transfer:2}
\tsharpb{M^{\class{u}}}_t = \int_0^t \widehat\mcG(u)(\widehat\mcX^\pi_s)\diff s \fstop
\end{equation}
\end{enumerate}

\begin{proof}
\ref{i:t:TransferP:4} By Proposition~\ref{p:AndiHatPolar}\ref{i:p:AndiHatPolar:2} the set~$\widehat\mbfM_\circ$ is $\widehat\bmssE_\pi$-polar.
Since~$\EM\colon \widehat\mbfM_\circ\to\msP^\pa_\iso$ is a $\widehat\T_\mrmp/\T_\mrma$-homeomorphism intertwining~$\tparen{\widehat\bmssE_\pi,\dom{\widehat\bmssE_\pi}}$ and~$\tparen{\widehat\mcE_\pi,\dom{\widehat\mcE_\pi}}$, it is a quasi-homeomorphism.

\ref{i:t:TransferP:5} The $\T_\mrma$-quasi-regularity of~$\tparen{\widehat\mcE_\pi,\dom{\widehat\mcE_\pi}}$ follows from~\ref{i:t:TransferP:4} and the $\widehat\T_\mrmp$-quasi-regularity of $\tparen{\widehat\bmssE_\pi,\dom{\widehat\bmssE_\pi}}$ shown in Theorem~\ref{t:DirInt}\ref{i:t:DirInt:3}.

\ref{i:t:TransferP:6} A proof follows similarly to the proof of Theorem~\ref{t:DirInt}\ref{i:t:DirInt:7}.
Again~$\mcQ_\pi$ does not need to have full $\T_\mrma$-support on~$\msP^\pa_\iso$, but one can restrict the arguments to~$\supp\mcQ_\pi$ since~$(\supp\mcQ_\pi)^\complement$ is $\widehat\bmssE_\pi$-polar.
\end{proof}
\end{theorem}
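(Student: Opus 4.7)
The plan is to propagate through the map $\EM$ every object already constructed in Theorem~\ref{t:DirInt} on $\widehat\mbfM$, using that $\EM$ becomes a genuine homeomorphism once restricted to $\widehat\mbfM_\circ$ and that the latter is $\widehat\bmssW_\pi$-coexceptional by Proposition~\ref{p:AndiHatPolar}\ref{i:p:AndiHatPolar:2}. The standard quasi-homeomorphism machinery of Chen--Ma--R\"ockner and Kuwae does the rest.

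\emph{For \ref{i:t:TransferP:4},} I would verify Definition~\ref{d:QuasiHomeo} directly. Proposition~\ref{p:AndiHatPolar}\ref{i:p:AndiHatPolar:1} exhibits a concrete $\widehat\bmssE_\pi$-nest $\tseq{\widehat\mbfF_n}_n$ with $\widehat\mbfF_n\subset \widehat\mbfM_\circ$, and on~$\widehat\mbfM_\circ$ the map~$\EM$ is a $\widehat\T_\mrmp/\T_\mrma$-homeomorphism onto~$\msP^\pa_\iso$. Theorem~\ref{t:TransferP}\ref{i:t:TransferP:2} identifies $\tparen{\widehat\mcE_\pi,\dom{\widehat\mcE_\pi}}$ as the $\EM$-image form of $\tparen{\widehat\bmssE_\pi,\dom{\widehat\bmssE_\pi}}$, so $\EM^*\colon L^2(\mcQ_\pi)\to L^2(\widehat\boldnu_\pi)$ intertwines the two form-domains isometrically. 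It then only remains to observe that $\seq{\EM(\widehat\mbfF_n)}_n$ is an $\widehat\mcE_\pi$-nest in $\msP^\pa_\iso$, which is immediate from the homeomorphism property combined with the form-intertwining: a function $u\in\dom{\widehat\mcE_\pi}$ has quasi-compact support through $\EM(\widehat\mbfF_n)$ iff $\EM^*u\in\dom{\widehat\bmssE_\pi}$ has quasi-compact support through $\widehat\mbfF_n$.

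\emph{For \ref{i:t:TransferP:5},} quasi-regularity is invariant under quasi-homeomorphism, so from~\ref{i:t:TransferP:4} and Theorem~\ref{t:DirInt}\ref{i:t:DirInt:3} we conclude that $\tparen{\widehat\mcE_\pi,\dom{\widehat\mcE_\pi}}$ is $\T_\mrma$-quasi-regular. Ma--R\"ockner IV.3.5 then yields a $\mcQ_\pi$-special standard process properly associated with it; conservativeness inherited via $\EM$ from $\widehat\bmssW_\pi$ upgrades it to a Hunt process. Because $\msP^\pa_\iso=\EM(\widehat\mbfM_\circ)$ and both $\widehat\mbfM\setminus\widehat\mbfM_\circ$ and its image are exceptional, the representation~\eqref{eq:t:TransferP:0} is forced: the process $\EM_*\widehat\bmssW_\pi$ defined pointwise on the coexceptional set is properly associated with the image form, and uniqueness of such a process (up to $\mcQ_\pi$-equivalence) pins down $\widehat\mcW_\pi$.

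\emph{For \ref{i:t:TransferP:6},} the plan mirrors that of Theorem~\ref{t:DirInt}\ref{i:t:DirInt:7}. Theorem~\ref{t:TransferP}\ref{i:t:TransferP:1} already supplies the essential self-adjointness of $\tparen{\widehat\mcL,\hCylP{}{0}}$ on $L^2(\mcQ_\pi)$, and Proposition~\ref{p:PropertiesCylinder}\ref{i:p:PropertiesCylinder:4} ensures $\hCylP{}{0}\subset\Cb(\T_\mrma)$. These are precisely the ingredients required by the Albeverio--R\"ockner uniqueness criterion (\cite{AlbRoe95}, Thm.~3.5). The one technical subtlety is that $\mcQ_\pi$ need not have full $\T_\mrma$-support on~$\msP^\pa_\iso$ (cf.\ Rmk.~\ref{r:SupportQPi}); as in the proof of Theorem~\ref{t:DirInt}\ref{i:t:DirInt:7}, this is handled by restricting the state space to $\supp\mcQ_\pi$, whose complement is $\widehat\mcE_\pi$-polar. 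Formulae~\eqref{eq:t:Transfer:1}--\eqref{eq:t:Transfer:2} then come out of the standard Fukushima decomposition, with the carr\'e-du-champ expression for the predictable quadratic variation guaranteed by the fact that $\hCylP{}{0}$ is an algebra contained in $\dom{\widehat\mcL_\pi}$ (Thm.~\ref{t:TransferP}\ref{i:t:TransferP:2}). The hardest analytic work---Proposition~\ref{p:AndiHatPolar}, which made Lemma~\ref{l:Nest} possible through the quantitative polarity assumption~\eqref{eq:QPP}, and Proposition~\ref{p:PDirIntESA} on essential self-adjointness---has already been paid for, so the proof reduces to cleanly orchestrating the transfer via~$\EM$.
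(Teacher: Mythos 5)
Your proposal is correct and follows essentially the same route as the paper: transfer via $\EM$ using the coexceptionality of $\widehat\mbfM_\circ$ from Proposition~\ref{p:AndiHatPolar}, the form-intertwining from Theorem~\ref{t:TransferP}, invariance of quasi-regularity under quasi-homeomorphism, and the Albeverio--R\"ockner criterion (plus the support restriction) for the martingale problem. You spell out the nest transfer in~\ref{i:t:TransferP:4} a bit more explicitly than the paper does, but this is the same argument.
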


\begin{example}[Crystallization and thermalization]
Fix~$\beta>0$ and let~$\pi=\Pi_\beta$ be the Poisson--Dirichlet distribution in Example~\ref{e:PoissonDirichlet}, in which case~$\mcQ_\pi$ is the Dirichlet--Ferguson measure~$\DF{\beta\nu}$ with intensity~$\beta\nu$ in Example~\ref{ex:DirichletFerguson}.
In the context of Theorem~\ref{t:TransferQuasiHomeo}, the parameter~$\beta$ may be understood as an \emph{inverse temperature}, cf.~\cite[Rmk.~3.16]{LzDS19a}.
Whenever~$\mu_0$ is randomly distributed according to~$\mcQ=\mcQ_\beta\ll\DF{\beta\nu}$, its can be shown as a consequence of~\cite[Cor.~3.14]{LzDS19a} that the law of~$\widehat\mcW_\pi$ on the path space crystallizes as~$\beta\to\infty$ to the invariant measure~$\delta_\nu$ and thermalizes as~$\beta\to\infty$ to the law $\delta_\pfwd(\law\mssW)$ of the Dirac measure centered at a single $\mssW$-driven particle.
\end{example}

\subsection{The strongly local case}\label{ss:StronglyLocalCase}
Let us now address the case when~$\tparen{\mssE,\dom{\mssE}}$ is additionally a local form.
We shall see that the local property has far-reaching implications, including the existence of an \emph{$\mssL$-induced geometry on~$\msP$}.

\medskip

In the same setting of \S\ref{ss:Rescaling}, we further make the following assumption.

\begin{assumption}[Strongly local case]\label{ass:SettingLocal}
Suppose Assumption~\ref{ass:Setting} is satisfied.
We further assume that the form~$\tparen{\mssE,\dom{\mssE}}$ is strongly local.
\end{assumption}

Let us first collect some standard consequences of this assumption.
As a consequence of Corollary~\ref{c:BSCStronglyLocal}, and since strong locality is invariant under quasi-homeomorphism (see e.g.~\cite[Thm.~5.1]{Kuw98}), we have the following.

\begin{corollary}[Strongly local case]
Fix~$\pi\in\msP(\mbfT_\circ)$.
If Assumption~\ref{ass:SettingLocal} 
holds, then
\begin{enumerate}[$(i)$, leftmargin=2em]
\item in the setting of Theorem~\ref{t:DirInt},
\begin{enumerate}[$({i}_1)$]
\item the Dirichlet form~$\tparen{\widehat\bmssE_\pi,\dom{\widehat\bmssE_\pi}}$ on~$L^2(\widehat\boldnu_\pi)$ is additionally strongly local;
 \item the process~$\widehat\bmssW_\pi$ in~\eqref{eq:i:t:DirInt:6:0} is a conservative $\T_\mrmp$-diffusion on~$\widehat\mbfM_\circ$;
 \item the martingale~$M^{\class{\mbfu}}_\bullet$ in~\eqref{eq:t:DirInt:1} has quadratic variation~$\quadvar{M^{\class{\mbfu}}}_\bullet=\tsharpb{M^{\class{\mbfu}}}_\bullet$ given by the right-hand side of~\eqref{eq:t:DirInt:2} for every~$\mbfu\in\Cb(\mbfT)\otimes\Cyl{}{}$;
 \end{enumerate}

\item in the setting of Theorem~\ref{t:TransferQuasiHomeo},
\begin{enumerate}[$({ii}_1)$]
\item the Dirichlet form~$\tparen{\widehat\mcE_\pi,\dom{\widehat\mcE_\pi}}$ on~$L^2(\mcQ_\pi)$ is additionally strongly local;
\item the process~$\widehat\mcW_\pi$ in~\eqref{eq:t:TransferP:0} is a conservative~$\T_\mrma$-diffusion on~$\msP^\pa$;
\item the martingale~$M^{\class{u}}_\bullet$ in~\eqref{eq:t:Transfer:1} has quadratic variation~$\quadvar{M^{\class{u}}}_\bullet=\tsharpb{M^{\class{u}}}_\bullet$ given by the right-hand side of~\eqref{eq:t:Transfer:2} for every~$u\in\hCylP{}{0}$.
\end{enumerate}
\end{enumerate}
\end{corollary}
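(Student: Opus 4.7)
The plan is to establish the claims on the product side first and then transfer them to the measure side via the quasi-homeomorphism $\EM$ from Theorem~\ref{t:TransferQuasiHomeo}\ref{i:t:TransferP:4}. For the first group of claims, I would fix $\mbfs\in\mbfT$ and apply Corollary~\ref{c:BSCStronglyLocal}\ref{i:c:BSCStronglyLocal:1}--\ref{i:c:BSCStronglyLocal:2} to the sequence of rescaled generators $\mssL_k^\mbfs = s_k^{-1}\mssL$: strong locality and existence of a carr\'e du champ are clearly preserved by the conformal rescaling $\mssL \mapsto s_k^{-1}\mssL$, so $\tparen{\bmssE^\mbfs,\dom{\bmssE^\mbfs}}$ is strongly local and admits carr\'e du champ $\tparen{\bmssGamma^\mbfs,\dom{\bmssGamma^\mbfs}}$ for every $\mbfs$.

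To promote strong locality from the fibers to the direct integral $\tparen{\widehat\bmssE_\pi,\dom{\widehat\bmssE_\pi}}$, I would use the explicit fiber-wise description of its carr\'e du champ in Theorem~\ref{t:DirInt}\ref{i:t:DirInt:3.5}: since $\widehat\bmssGamma$ is the closure of an operator that acts only in the $\mbfx$-variable through the strongly local $\bmssGamma^\mbfs$, the condition $\widehat\bmssGamma(\mbfu,\mbfv)=0$ on sets where one of $\mbfu,\mbfv$ is constant follows fiber-wise and passes to the closure (alternatively, invoke the direct-integral characterization of strong locality from~\cite{LzDS20}). Strong locality of $\tparen{\widehat\bmssE_\pi,\dom{\widehat\bmssE_\pi}}$ together with its $\widehat\T_\mrmp$-quasi-regularity (Thm.~\ref{t:DirInt}\ref{i:t:DirInt:3}) then yields via standard Dirichlet-form theory (e.g.\ \cite[Thm.~V.1.11]{MaRoe92}) that $\widehat\bmssW_\pi$ has $\widehat\T_\mrmp$-continuous sample paths; conservativeness is inherited from the fiber-wise conservativity provided by Theorem~\ref{t:BendikovSaloffCoste}\ref{i:t:BendikovSaloffCoste:5} (or read off from $\widehat\bmssH_\bullet \car = \car$). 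The identification $\quadvar{M^{\class{\mbfu}}}_\bullet=\tsharpb{M^{\class{\mbfu}}}_\bullet$ is then the standard fact that in the strongly local case the martingale additive functional associated to $\mbfu \in \dom{\widehat\bmssL_\pi}$ is continuous and its quadratic variation coincides with the integrated carr\'e du champ (\cite[Thm.~A.3.17 and Eqn.~(A.3.17)]{FukOshTak11}), exactly as in Corollary~\ref{c:BSCStronglyLocal}\ref{i:c:BSCStronglyLocal:3}.

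For the second group of claims on $(\msP^\pa,\T_\mrma)$, the central tool is that strong locality, existence of carr\'e du champ, and the associated path regularity are all invariants of quasi-homeomorphism. By Theorem~\ref{t:TransferQuasiHomeo}\ref{i:t:TransferP:4}, $\EM$ is a quasi-homeomorphism intertwining $\tparen{\widehat\bmssE_\pi,\dom{\widehat\bmssE_\pi}}$ and $\tparen{\widehat\mcE_\pi,\dom{\widehat\mcE_\pi}}$, so strong locality of the former transfers to the latter by (the proof of) \cite[Thm.~5.1]{Kuw98}; the carr\'e du champ $\widehat\mcG_\pi$ from Theorem~\ref{t:TransferP}\ref{i:t:TransferP:2} is then the image of $\widehat\bmssGamma_\pi$ under $\EM$. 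Since $\EM$ restricted to $\widehat\mbfM_\circ$ is a homeomorphism onto $\msP^\pa_\iso$ and $\widehat\mcW_\pi = \EM_*\widehat\bmssW_\pi$ by~\eqref{eq:t:TransferP:0}, the $\widehat\T_\mrmp$-continuity of the trajectories of $\widehat\bmssW_\pi$ pushes forward to $\T_\mrma$-continuity of the trajectories of $\widehat\mcW_\pi$ on $\msP^\pa$, and conservativeness is preserved. The identity for $\quadvar{M^{\class{u}}}_\bullet$ finally follows from the fiber-level statement, pulling back $u \in \hCylP{}{0}$ to $\EM^* u \in \EM^*\hCylP{}{0}\subset \dom{\widehat\bmssL_\pi}$ and using the intertwining of generators and carr\'e du champ operators by $\EM$.

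The step I expect to demand the most care is the passage from fiber-wise strong locality to strong locality of the direct integral $\tparen{\widehat\bmssE_\pi,\dom{\widehat\bmssE_\pi}}$: one must verify the defining truncation property on a core (such as $\Cb(\mbfT)\otimes\Cyl{}{}$) and then extend it to $\dom{\widehat\bmssE_\pi}$, keeping track of the measurable field structure on $(\mbfT,\pi)$; everything else reduces to standard transfer arguments combined with results already proved in Theorems~\ref{t:DirInt} and~\ref{t:TransferQuasiHomeo}.
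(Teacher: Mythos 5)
Your proposal is correct and follows essentially the same route the paper takes. The paper (which gives no explicit proof for this corollary, only a one-sentence lead-in) appeals to exactly the two ingredients you identify: fiber-wise strong locality via Corollary~\ref{c:BSCStronglyLocal}, and invariance of strong locality and its consequences under quasi-homeomorphism (Kuwae, Thm.~5.1) to pass from $\tparen{\widehat\bmssE_\pi,\dom{\widehat\bmssE_\pi}}$ to $\tparen{\widehat\mcE_\pi,\dom{\widehat\mcE_\pi}}$ via~$\EM$. You correctly flag the one step the paper glosses over---lifting strong locality from the fibers~$\bmssE^\mbfs$ to the direct integral~$\widehat\bmssE_\pi$---and the resolution you propose (either via the fiber-wise formula for~$\widehat\bmssGamma$ in Theorem~\ref{t:DirInt}\ref{i:t:DirInt:3.5}, or via the direct-integral results in~\cite{LzDS20}) is exactly the machinery the paper cites in the proof of Theorem~\ref{t:DirInt} for the carr\'e du champ. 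The diffusion property then follows by the standard correspondence between quasi-regular strongly local Dirichlet forms and continuous-sample-path processes, and the identity~$\quadvar{M^{\class{\mbfu}}}=\tsharpb{M^{\class{\mbfu}}}$ follows from~\cite[Thm.~A.3.17]{FukOshTak11}, exactly as in Corollary~\ref{c:BSCStronglyLocal}\ref{i:c:BSCStronglyLocal:3}.
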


\subsubsection{Intrinsic representation of the generator and the carr\'e du champ}
We start with a simple computation.

\begin{lemma}[Diffusion property and representation]\label{l:DiffusionP}
Let~$u\eqdef F\circ \hat\mbff^\trid$ and $v\eqdef G\circ \hat\mbfg^\trid$ be in~$\hFC{}{0}$. Then,~$u,v\in\dom{\widehat\mcL_\pi}$ ($\subset\dom{\widehat\mcG_\pi}$) and
\begin{gather}
\label{eq:l:DiffusionP:0.1}
\widehat\mcG_\pi(u,v)= \sum_{i,j}^{k,h} (\partial_i F) \circ \hat\mbff^\trid \cdot (\partial_j G) \circ \hat\mbfg^\trid \cdot \mssGamma(\hat f_i, \hat g_j)^\trid \comma
\\
\label{eq:l:DiffusionP:0.2}
\widehat\mcL_\pi u = \sum_i^k (\partial_i F)\circ \hat\mbff^\trid \cdot \tparen{(\tfrac{1}{\id_\mbbI}\otimes\mssL) \hat f_i}^\trid + \sum_{i,j}^{k,k} (\partial_{ij}^2F)\circ \hat\mbff^\trid \cdot \mssGamma(\hat f_i,\hat f_j)^\trid\comma
\end{gather}
are well-defined (i.e., independent of the choice of the representatives~$F$,~$\hat\mbff$ for~$u$ and~$G$,~$\hat\mbfg$ for~$v$) and independent of~$\pi\in\msP(\mbfT)$.

Furthermore, we have the intrinsic representations
\begin{align}
\label{eq:l:DiffusionP:0.3}
\widehat\mcG(u,v)_\eta =& \int_M \mssGamma^z\restr{z=x} \tparen{u(\eta+\eta_x\delta_z-\eta_x\delta_x), u(\eta+\eta_x\delta_z-\eta_x\delta_x)} \diff\eta(x) \comma
\\
\label{eq:l:DiffusionP:0.4}
\widehat\mcL(u)_\eta=& \int_M \frac{\mssL^z\restr{z=x}u(\eta+\eta_x\delta_z-\eta_x\delta_x)}{{\eta_x}^2} \diff\eta(x) \fstop
\end{align}

\begin{proof}
We prove the assertions for the generator. The argument for the carr\'e du champ operator is completely analogous and therefore it is omitted.
The well-posedness and the independence from~$\pi$ follow at once from the intrinsic representation~\eqref{eq:l:DiffusionP:0.4}, so it suffices to show the latter.
To this end, choose freely~$\hat f,\hat f_1,\hat f_2\in\msR_0\otimes \msA$. Since~$\hat f^\trid\eta$ is a \emph{finite} sum, simply by linearity of~$\mssL$ (bilinearity of~$\mssGamma$), we may compute
\begin{align}
\nonumber
\mssL^z\restr{z=x}&\ \tparen{\hat f^\trid(\eta+\eta_x\delta_z-\eta_x\delta_x)} =
\\
\nonumber
=&\ \mssL^z\restr{z=x} \braket{\int \hat f(\eta_y,y) \diff\eta(y)+ \eta_x\hat f_i(\eta_x,z)-\eta_x\hat f_i(\eta_x,x)}
\\
\label{eq:l:DiffusionP:4}
=&\ \eta_x (\mssL \hat f)(\eta_x,x)\comma
\end{align}
and
\begin{align}
\nonumber
\mssGamma^z\restr{z=x}&\ \paren{\hat f_1^\trid (\eta+\eta_x\delta_z-\eta_x\delta_x),\hat f_2^\trid (\eta+\eta_x\delta_z-\eta_x\delta_x)}=
\\
\nonumber
=&\ \mssGamma^z\restr{z=x}\Big[\hat f_1^\trid\eta +\eta_x\hat f_i(\eta_x,z)-\eta_x\hat f_1(\eta_x,x), 
\hat f_2^\trid\eta +\eta_x\hat f_2(\eta_x,z)-\eta_x\hat f_2(\eta_x,x)\Big]
\\
\nonumber
=&\ \mssGamma^z\restr{z=x}\braket{\eta_x\hat f_1(\eta_x,z), \eta_x\hat f_2(\eta_x,z)}
\\
\label{eq:l:DiffusionP:5}
=&\ {\eta_x}^2\, \mssGamma(\hat f_1,\hat f_2)(\eta_x,x) \fstop
\end{align}

Now, by the diffusion property for~$(\mssL,\msA)$,
\begin{align}
\nonumber
\mssL^z\restr{z=x}&\ u\tparen{\eta+\eta_x\delta_z-\eta_x\delta_x}=
\\
\label{eq:l:DiffusionP:3}
=&\ \sum_i^k (\partial_i F)(\hat\mbff^\trid \eta)\, \mssL^z\restr{z=x} \tparen{\hat f_i^\trid(\eta+\eta_x\delta_z-\eta_x\delta_x)}
\\
\nonumber
&+ \sum_{i,j}^{k,k} (\partial^2_{ij} F)(\hat\mbff^\trid \eta)\, \mssGamma^z\restr{z=x} \Big(\hat f_i^\trid (\eta+\eta_x\delta_z-\eta_x\delta_x),
\hat f_j^\trid (\eta+\eta_x\delta_z-\eta_x\delta_x)\Big)\fstop
\end{align}

Substituting~\eqref{eq:l:DiffusionP:4} and~\eqref{eq:l:DiffusionP:5} into~\eqref{eq:l:DiffusionP:3}, dividing by~${\eta_x}^2$ and integrating w.r.t.~$\eta$ we obtain the right-hand side of~\eqref{eq:l:DiffusionP:0.2}, and thus the conclusion by~\eqref{eq:l:DiffusionP:0.2} and the diffusion property for~$\widehat\mcL_\pi$.
\end{proof}
\end{lemma}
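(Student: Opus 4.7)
My approach would factor through the quasi-homeomorphism $\EM$ together with the diffusion (chain-rule) property of $\widehat\mcL_\pi$. First, it suffices to establish the formulas on the linear functionals $\hat f^\trid$ for $\hat f \in \msR_0\otimes \msA$, and then extend via the chain rule to polynomial compositions. The starting point is the identity $\EM^*\hat f^\trid(\mbfs,\mbfx)=\sum_k s_k\, \hat f(s_k,x_k)$, which reduces to a finite sum since $\hat f(s,\emparg)$ vanishes for $s<\eps_{\hat f}$, so $\EM^*\hat f^\trid \in \Cb(\mbfT)\otimes \Cyl{}{}\subset \dom{\widehat\bmssL_\pi}$ and Theorem~\ref{t:TransferP}\ref{i:t:TransferP:1} places $\hat f^\trid$ in $\dom{\widehat\mcL_\pi}$. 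Applying $\widehat\bmssL=\Id\otimes \bmssL^\mbfs$ with $\bmssL^\mbfs=\sum_k s_k^{-1}\mssL_k$ and using $\mssL_k\car_k=0$, all terms collapse except those acting on the nontrivial coordinates, yielding $\widehat\bmssL(\EM^*\hat f^\trid)=\sum_k(\mssL \hat f)(s_k, x_k)/1$ after the cancellation between the $s_k^{-1}$ in $\bmssL^\mbfs$ and the $s_k$ prefactor from $\EM^*\hat f^\trid$; dualizing back through $\EM_*$ gives $\widehat\mcL_\pi (\hat f^\trid)=[(\tfrac{1}{\id_\mbbI}\otimes \mssL)\hat f]^\trid$. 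The analogous product computation via Corollary~\ref{c:BSCStronglyLocal} and Theorem~\ref{t:TransferP}\ref{i:t:TransferP:2} yields $\widehat\mcG_\pi(\hat f^\trid,\hat g^\trid)=\mssGamma(\hat f,\hat g)^\trid$.

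Second, I would upgrade to polynomial compositions using the diffusion property. Since $\tparen{\widehat\mcL_\pi,\dom{\widehat\mcL_\pi}}$ is a Markov generator with carr\'e du champ and $\hCylP{}{0}$ is an algebra contained in its domain, Bakry's standard calculus (e.g.~the chain rule from~\cite{BakGenLed14}) gives
\[
\widehat\mcL_\pi(F\circ \hat\mbff^\trid) = \sum_i (\partial_iF)\circ \hat\mbff^\trid \cdot \widehat\mcL_\pi \hat f_i^\trid + \sum_{i,j}(\partial^2_{ij}F)\circ \hat\mbff^\trid \cdot \widehat\mcG_\pi(\hat f_i^\trid,\hat f_j^\trid)
\]
and similarly for $\widehat\mcG_\pi(u,v)$. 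Plugging in the linear-functional formulas from the first step immediately produces~\eqref{eq:l:DiffusionP:0.1}--\eqref{eq:l:DiffusionP:0.2}. Well-definedness (independence of representative) and independence from $\pi$ are then read off from the intrinsic formulas in the last step, which a priori do not refer to any decomposition.

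Third, I would verify the intrinsic expressions~\eqref{eq:l:DiffusionP:0.3}--\eqref{eq:l:DiffusionP:0.4} by a direct substitution: for fixed $\eta\in\msP^\pa$ and an atom $x$ of mass $\eta_x$, consider the deformation $\eta_z\eqdef \eta+\eta_x\delta_z-\eta_x\delta_x$, and observe that
\[
\hat f^\trid(\eta_z) = \tparen{\hat f^\trid \eta - \eta_x\hat f(\eta_x,x)} + \eta_x\hat f(\eta_x,z),
\]
so as a function of $z$ only the last term is non-constant. Applying $\mssL^z$ at $z=x$ thus returns $\eta_x(\mssL\hat f)(\eta_x,x)$, and analogously $\mssGamma^z|_{z=x}(\hat f_1^\trid(\eta_z),\hat f_2^\trid(\eta_z))=\eta_x^2\,\mssGamma(\hat f_1,\hat f_2)(\eta_x,x)$ by bilinearity. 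Combining via the diffusion property applied pointwise (for $\mssL$ on $\msA$) yields the integrand of~\eqref{eq:l:DiffusionP:0.4} after division by $\eta_x^2$; integration against $\eta=\sum_{x\in\eta}\eta_x\delta_x$ reproduces the sum over coordinates in~\eqref{eq:l:DiffusionP:0.2}, and symmetrically for~\eqref{eq:l:DiffusionP:0.3}.

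\textbf{Expected obstacle.} The only delicate point is the algebraic bookkeeping linking the three viewpoints: the coordinate-wise generator $\bmssL^\mbfs$ carries factors $s_k^{-1}$, the testing $\square^\trid$ injects factors $s_k$, and the intrinsic formula~\eqref{eq:l:DiffusionP:0.4} carries a factor $\eta_x^{-2}$. One must track that these cancellations yield a coherent expression independent of the representative $(F,\hat\mbff)$ and of $\pi$. Everything else ---closability, essential self-adjointness, the diffusion property--- has been established in Theorems~\ref{t:BendikovSaloffCoste},~\ref{t:DirInt}, and~\ref{t:TransferP}, so there is no analytic difficulty beyond this bookkeeping, and Assumption~\ref{ass:Setting}\ref{i:ass:Setting:c2} guarantees that $\mssL\hat f$ is a bona-fide element of the relevant algebra so that $[(\tfrac{1}{\id_\mbbI}\otimes \mssL)\hat f]^\trid$ is meaningful pointwise in~$\eta$.
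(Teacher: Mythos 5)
Your proposal is correct and takes essentially the same route as the paper: it relies on the action of $\widehat\mcL_\pi$ on linear functionals $\hat f^\trid$ obtained via $\EM$ (which the paper records earlier in~\eqref{eq:ExtendedLonElementaryCyl}), the chain rule for the diffusion generator $\widehat\mcL_\pi$, and the pointwise substitution identity for the deformation $\eta \mapsto \eta + \eta_x\delta_z - \eta_x\delta_x$, which is exactly the paper's computation in~\eqref{eq:l:DiffusionP:4}--\eqref{eq:l:DiffusionP:3}. The only difference is organizational — you build forward from the product-space formula and the chain rule to~\eqref{eq:l:DiffusionP:0.2}, then verify~\eqref{eq:l:DiffusionP:0.4}, whereas the paper starts from~\eqref{eq:l:DiffusionP:0.4} as the target and closes the loop by appealing to the diffusion property for $\widehat\mcL_\pi$ — but the ingredients and the core calculation are identical.
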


\subsubsection{Standard cylinder functions}\label{sss:StandardCylFormDom}
While the smallness of~$\hCylP{}{0}$ better serves to the purpose of showing essential self-adjointness, it will be helpful to compute the generator~$\widehat\mcL$ and the square field~$\widehat\mcG$ on a larger class of cylinder functions.

Furthermore, even if we have already shown that the weak atomic topology~$\T_\mrma$ is a natural topology in relation to the measure representation of massive systems, we also aim at a better understanding of the properties of the form~$\tparen{\widehat\mcE_\pi,\dom{\widehat\mcE_\pi}}$ w.r.t.\ the more standard narrow topology~$\T_\mrmn$.
As noted in Proposition~\ref{p:PropertiesCylinder}\ref{i:p:PropertiesCylinder:5}, functions in~$\hCylP{}{0}$ are not $\T_\mrmn$-continuous, and therefore they are not suitable for the above purpose.
It will be interesting to see that~$\dom{\widehat\mcE_\pi}$ contains `sufficiently many' $\T_\mrmn$-continuous functions.

\begin{definition}[Standard cylinder functions]\label{d:StandardFC}
Set
\begin{gather}\label{eq:d:StandardFC:0}
\FC{\msA}{}\eqdef \set{u\colon \msP\to\R : \begin{gathered} u=F\circ \mbff^\trid\comma F\in \Cb^\infty(\R^k) \comma
\\
k\in \N_0\comma f_i \in \msA\comma i\leq k \end{gathered}}\fstop
\end{gather}

Further let~$\hFC{\msA}{}$ and~$\hFC{\msA}{\eps}$ with~$\eps\in\mbbI$ be defined as in Definition~\ref{d:CylinderF} by replacing the space of polynomials~$\R[t_1,\dotsc, t_k]$ there with the space~$\Cb^\infty(\R^k)$ of smooth functions with continuous and bounded derivatives of all orders.
\end{definition}

\begin{remark}\label{r:ExtensionPropertiesCA}
It is not difficult to show that all the assertions in Proposition~\ref{p:PropertiesCylinder} remain valid when~$\hCylP{}{*}$ is replaced by~$\hFC{\msA}{*}$ with~$*=\emp$ or~$\eps\in \mbbI$.
In fact, they were proved in~\cite[Rmk.~5.3, Lem.~5.4]{LzDS17+} for a larger still class of functions.
Furthermore, since functions in~$\msA$ are $\T$-continuous, we have~$\FC{\msA}{}\subset \Cb(\T_\mrmn)$.
\end{remark}

Lemma~\ref{l:DiffusionP} has the following important consequence.

\begin{proposition}[Closability of standard cylinder functions]\label{p:StandardCyl}
Fix~$\pi\in\msP(\mbfT_\circ)$.
Then,
\begin{enumerate}[$(i)$]
\item\label{i:p:StandardCyl:1} the space of cylinder functions~$\FC{\msA}{}$ is a Markovian subspace of~$\dom{\widehat\mcE_\pi}$;
\item\label{i:p:StandardCyl:2} the quadratic form~$\tparen{\widehat\mcE_\pi,\FC{\msA}{}}$ is closable on~$L^2(\mcQ_\pi)$.
Its closure \linebreak $\tparen{\mcE_\pi,\dom{\mcE_\pi}}$ is a conservative strongly local Dirichlet form;
\item\label{i:p:StandardCyl:3} the form~$\tparen{\mcE_\pi,\dom{\mcE_\pi}}$ is $\T_\mrmn$-regular if and only if~$(M,\T)$ is compact;
\item\label{i:p:StandardCyl:4} if additionally~\eqref{eq:QPP} holds, then~$\tparen{\mcE_\pi,\dom{\mcE_\pi}}=\tparen{\widehat\mcE_\pi,\dom{\widehat\mcE_\pi}}$ and the identity map $\id\colon (\msP,\T_\mrma)\to(\msP,\T_\mrmn)$ is a quasi-homeomorph\-ism.
In particular, the form is additionally $\T_\mrmn$-quasi-regular.
\end{enumerate}

\begin{proof}
\ref{i:p:StandardCyl:1} Since~$\msR_0$ separates points from open sets, by Stone--Weierstra\ss\ Theorem it is dense in~$\Cz((0,1])$.
Thus, we can find a sequence~$\seq{\molli_n}_n\subset\msR_0$ satisfying~$\nlim \molli_n=1$ locally uniformly on~$(0,1]$.
Fix~$u=F\circ\mbff^\trid\in\FC{\msA}{}$ with~$\mbff=\seq{f_1,\dotsc, f_k}$, set~$\hat\mbff_n\eqdef \seq{\molli_n \otimes f_1, \dotsc, \molli_n \otimes f_k}$, and~$u_n\eqdef F\circ \hat\mbff_n$.
It follows from~\eqref{eq:l:DiffusionP:0.1} that the function~$\mu\mapsto\widehat\mcG(u_n)_\mu$ is uniformly bounded in~$n$ and converges pointwise on~$\msP$ to a function~$\mu\mapsto \widehat\mcG_\pi(u)_\mu$ defined by the right-hand side of~\eqref{eq:l:DiffusionP:0.1} with~$\mbff$ in place of~$\hat\mbff$.
Thus,~$\widehat\mcG_\pi(u)\in L^1(\mcQ_\pi)$ by Dominated Convergence, and therefore~$u\in \dom{\widehat\mcE}$.
This shows that~$\FC{\msA}{}\subset \dom{\widehat\mcE_\pi}$.
As for its Markovianity, it suffices to notice that~$\FC{\msA}{}$ is closed under post-composition with smooth functions.

Since~$\car\in \FC{\msA}{}$, \ref{i:p:StandardCyl:2} is a standard consequence of~\ref{i:p:StandardCyl:1}, see e.g.~\cite[\S{V.5.1}]{BouHir91}.

\ref{i:p:StandardCyl:3}
It is readily seen that~$\FC{\msA}{}$ is a unital point-separating sub-algebra of~$\Cb(\T_\mrmn)$, which proves~\ref{i:d:QuasiReg:2} since~$\FC{\msA}{}$ is dense in~$\dom{\mcE_\pi}$ (by definition of the latter).
If~$(M,\T)$ is compact~$(\msP,\T_\mrmn)$ too is compact.
Thus~$\FC{\msA}{}$ is uniformly dense in~$\Cc(\T_\mrmn)=\Cb(\T_\mrmn)$ by the above reasoning and Stone--Weierstra\ss\ Theorem.
Again since~$\FC{\msA}{}$ is dense in~$\dom{\mcE_\pi}$, it is a core for~$\tparen{\mcE_\pi,\dom{\mcE_\pi}}$ which shows that the latter form is $\T_\mrmn$-regular.
If otherwise~$(M,\T)$ is not compact, then~$(\msP,\T_\mrmn)$ is not locally compact, thus~$\tparen{\mcE_\pi,\dom{\mcE_\pi}}$  is not $\T_\mrmn$-regular.

\ref{i:p:StandardCyl:4} In order to show the equality it suffices to show that~$\hCylP{}{0}\subset\dom{\mcE_\pi}$.
This requires the capacity estimates in Appendix~\ref{app:Capacity} and is eventually shown in Lemma~\ref{l:CoincidenceDomains}.

Since~$\tparen{\widehat\mcE_\pi,\dom{\widehat\mcE_\pi}}$ is $\T_\mrma$-quasi-regular, it admits a $\T_\mrma$-compact $\widehat\mcE_\pi$-nest~$\seq{\mcK_n}_n$.
Since~$\T_\mrmn$ is coarser than~$\T_\mrma$, the identity~$\id\colon (\msP,\T_\mrma)\to(\msP,\T_\mrmn)$ is continuous.
In particular, it is continuous and bijective on the compact Hausdorff space~$(\mcK_n,\T_\mrma)$ with values in the Hausdorff space~$(\mcK_n,\T_\mrmn)$.
It is then a standard topological fact that~$\id\colon (\mcK_n,\T_\mrma) \to (\mcK_n,\T_\mrmn)$ is a homeomorphism, and that~$\mcK_n$ is $\T_\mrmn$-compact, hence in particular $\T_\mrmn$-closed.
Since $\seq{\mcK_n}_n$ is an $\widehat\mcE_\pi$-nest, it follows that~$\id\colon (\msP,\T_\mrma)\to(\msP,\T_\mrmn)$ is a quasi-homeomorphism. 

The $\T_\mrmn$-quasi-regularity then follows from the $\T_\mrma$-quasi-regularity, proved in Theorem~\ref{t:TransferQuasiHomeo}.
\end{proof}
\end{proposition}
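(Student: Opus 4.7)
My plan is to handle the four parts in sequence, leveraging the intrinsic expressions for~$\widehat\mcG_\pi$ and~$\widehat\mcL_\pi$ from Lemma~\ref{l:DiffusionP} together with the $\T_\mrma$-quasi-regular form~$\tparen{\widehat\mcE_\pi,\dom{\widehat\mcE_\pi}}$ constructed in Theorem~\ref{t:TransferP}. The natural idea is to view~$\FC{\msA}{}$ as a ``limit at threshold~$\eps\downarrow 0$'' of the extended cylinder classes~$\hFC{\msA}{\eps}$ and to transfer known facts from the latter to the former via dominated convergence.

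For~\ref{i:p:StandardCyl:1}, fix~$u=F\circ\mbff^\trid\in\FC{\msA}{}$. Since the algebra~$\msR_0$ separates points from open sets, by Stone--Weierstra\ss\ on $(0,1]$ there exists a sequence~$\molli_n\in\msR_0$ with~$\molli_n\to 1$ locally uniformly on~$(0,1]$. Set $\hat\mbff_n\eqdef\seq{\molli_n\otimes f_1,\dotsc,\molli_n\otimes f_k}$ and~$u_n\eqdef F\circ\hat\mbff_n^\trid\in\hFC{\msA}{0}\subset\dom{\widehat\mcE_\pi}$. Using~\eqref{eq:l:DiffusionP:0.1}, write~$\widehat\mcG_\pi(u_n)$ explicitly: the only $n$-dependence is through factors~$\molli_n(\eta_x)$, which are uniformly bounded and converge pointwise to~$1$ at every atom~$x$ of~$\eta$. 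A dominated-convergence step in~$L^1(\mcQ_\pi)$ yields~$\widehat\mcG_\pi(u_n)\to \widehat\mcG_\pi(u)$ in~$L^1(\mcQ_\pi)$, where $\widehat\mcG_\pi(u)$ is given by the same formula with~$\mbff$ replacing~$\hat\mbff_n$. Hence~$u\in\dom{\widehat\mcG_\pi}\subset\dom{\widehat\mcE_\pi}$. Markovianity is immediate since~$\FC{\msA}{}$ is stable under post-composition with~$\Cb^\infty$-functions, which via smooth approximation of normal contractions produces Markovianity of the closure \cite[\S{V.5.1}]{BouHir91}.

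Part~\ref{i:p:StandardCyl:2} is then formal: any Markovian subspace of~$L^2(\mcQ_\pi)$ that contains~$\car$ on which a symmetric positive form is defined admits a Markovian closure~\cite[\S{V.5.1}]{BouHir91}; strong locality passes from~$\widehat\mcE_\pi$ to the subform~$\mcE_\pi$, and conservativity follows from~$\car\in\FC{\msA}{}$ with~$\widehat\mcG_\pi(\car)=0$. Part~\ref{i:p:StandardCyl:3} is a Stone--Weierstra\ss\ exercise: since~$\msA$ separates points of~$M$, the algebra~$\FC{\msA}{}$ is a unital point-separating subalgebra of~$\Cb(\T_\mrmn)$; when~$M$ is compact so is~$(\msP,\T_\mrmn)$, and~$\FC{\msA}{}$ is uniformly dense in~$\Cb(\T_\mrmn)=\Cc(\T_\mrmn)$, giving a core in the usual regular-form sense. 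Conversely, if~$M$ is not compact then~$(\msP,\T_\mrmn)$ fails to be locally compact, precluding any $\T_\mrmn$-regular Dirichlet form on~$L^2(\mcQ_\pi)$.

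For~\ref{i:p:StandardCyl:4}, the only non-trivial piece is the equality of domains. Since~$\widehat\mcE_\pi$ extends both~$\hCylP{}{0}$ (by construction) and~$\FC{\msA}{}$ (by~\ref{i:p:StandardCyl:1}), the inclusion~$\dom{\mcE_\pi}\subset\dom{\widehat\mcE_\pi}$ is immediate. The reverse reduces to~$\hCylP{}{0}\subset\dom{\mcE_\pi}$: granted this, the $\widehat\mcE_\pi$-closure of~$\hCylP{}{0}$, which equals~$\dom{\widehat\mcE_\pi}$, sits inside the closed space~$\dom{\mcE_\pi}$, yielding equality. I expect this inclusion to be the main obstacle: a function~$u=F\circ(\boldmolli\otimes\mbff)^\trid\in\hCylP{}{0}$ has its non-$\T_\mrmn$-continuity concentrated on configurations with atoms close to the vanishing threshold of~$\boldmolli$, and approximating~$u$ in $\mcE_\pi$-norm by~$\FC{\msA}{}$-functions requires bounding the $\widehat\mcG_\pi$-cost of smoothing the~$\molli$-cut-offs. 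This demands capacity-type estimates controlling sections of the form~$\set{\eta:\eta\!\set{x}\in[\delta,\delta']}$ under~\eqref{eq:QPP}; I would package this in an auxiliary Lemma~\ref{l:CoincidenceDomains}. Granted the equality, the quasi-homeomorphism is painless: take a $\T_\mrma$-compact $\widehat\mcE_\pi$-nest~$\seq{\mcK_n}_n$ from Theorem~\ref{t:TransferQuasiHomeo}\ref{i:t:TransferP:5}; because~$\T_\mrmn\subset\T_\mrma$, each identity map~$(\mcK_n,\T_\mrma)\to(\mcK_n,\T_\mrmn)$ is a continuous bijection from a compact space to a Hausdorff one, hence a homeomorphism, so each~$\mcK_n$ is $\T_\mrmn$-compact and~$\seq{\mcK_n}_n$ is also an $\mcE_\pi$-nest for~$\T_\mrmn$, delivering $\T_\mrmn$-quasi-regularity.
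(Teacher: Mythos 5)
Your proof follows essentially the same route as the paper's: a Stone--Weierstra\ss\ approximation of~$\car$ by~$\molli_n\in\msR_0$ together with dominated convergence for~\ref{i:p:StandardCyl:1}, the same \cite[\S{V.5.1}]{BouHir91} argument for~\ref{i:p:StandardCyl:2}, the same Stone--Weierstra\ss\ / local-compactness dichotomy for~\ref{i:p:StandardCyl:3}, deferral of~$\hCylP{}{0}\subset\dom{\mcE_\pi}$ to a capacity lemma, and the same compact-nest argument for the quasi-homeomorphism in~\ref{i:p:StandardCyl:4}. The only wrinkle is your phrasing in~\ref{i:p:StandardCyl:2} (``any Markovian subspace \ldots\ admits a Markovian closure''), which overstates the general theorem; closability here comes from~$\FC{\msA}{}$ being a subspace of the already-closed form domain~$\dom{\widehat\mcE_\pi}$, and only then does~\cite[\S{V.5.1}]{BouHir91} give Markovianity of the closure, but your surrounding reasoning makes it clear this is what you intend.
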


In order to discuss some examples, let us make the following observation.

\begin{remark}\label{r:IndependenceUltraC}
Fix~$\pi\in\msP(\mbfT)$. Suppose that the form~$\tparen{\widehat\mcE_\pi,\hCylP{}{0}}$ defined by
\[
\widehat\mcE_\pi(u,v)\eqdef \int \widehat\mcG(u,v)\diff\mcQ_\pi\comma \qquad u,v\in \hCylP{}{0}\comma
\]
with~$\widehat\mcG$ as in~\eqref{eq:l:DiffusionP:0.3} is closable on~$L^2(\mcQ_\pi)$, and that its closure~$\tparen{\widehat\mcE_\pi, \dom{\widehat\mcE_\pi}}$ is a strongly local Dirichlet form.
In this case, the proof of Proposition~\ref{p:StandardCyl}\ref{i:p:StandardCyl:1}-\ref{i:p:StandardCyl:3}, and the proof of the identification~$\tparen{\mcE_\pi,\dom{\mcE_\pi}}=\tparen{\widehat\mcE_\pi,\dom{\widehat\mcE_\pi}}$ in~\ref{i:p:StandardCyl:4} hold without any further assumption.
That is, the proof is independent of Assumption~\ref{ass:Setting}, and in particular it is \emph{not} necessary that~$\mssH_\bullet$ be ultracontractive.
\end{remark}

\subsection{Examples}
Let us now introduce some classes of examples.

\subsubsection{Diffusions over manifolds and the Wasserstein gradient}\label{sss:Wasserstein}
Let~$(M,g)$ be a smooth complete connected boundaryless Riemannian manifold with Laplace--Bel\-trami operator~$\Delta_g$ and Riemannian volume measure~$\vol_g$.
When~$M$ is closed (i.e.\ additionally compact),~$\vol_g$ is a finite measure, and we may choose~$\nu$ to be its normalization.
We may thus apply the results in the previous section to construct the generator~$(\widehat\mcL_g,\hCylP{}{0})$ obtained by choosing~$(\mssL,\msA)= (\Delta_g,\mcC^\infty_0)$ in~\eqref{eq:l:DiffusionP:0.4}, and the corresponding carr\'e du champ operator~$(\widehat\mcG_g,\hCylP{}{0})$ obtained by choosing~$\mssGamma=\mssGamma_g\eqdef \abs{\nabla_g\emparg}^2$ in~\eqref{eq:l:DiffusionP:0.3}.

\begin{example}[The Dirichlet--Ferguson diffusion on~$\msP(M)$]\label{ex:DFDiffusion}
It is the main result in~\cite{LzDS17+} that the assertions in Theorems~\ref{t:TransferP} and~\ref{t:TransferQuasiHomeo} hold for the particular case when~$M$ is a closed Riemannian manifold of dimension~$d\geq 2$,~$(\mssL,\msA)=(\Delta_g,\Cb^\infty)$ is the Laplace--Beltrami operator, and~$\pi=\Pi_\beta$, with~$\beta>0$, is the Poisson--Dirichlet distribution in Example~\ref{e:PoissonDirichlet}, in which case~$\mcQ_\pi$ is the Dirichlet--Ferguson measure~$\DF{\beta\vol_g}$ with intensity~$\beta\vol_g$ in Example~\ref{ex:DirichletFerguson}.
Then,~$\widehat\mcW_{\Pi_\beta}$ is the \emph{Dirichlet--Ferguson diffusion on~$\msP(M)$} in~\cite{LzDS17+}, i.e.\ the free massive system of Wiener processes on~$M$ with $\Pi_\beta$-distributed masses.
We note that the closability of the form~$\tparen{\mcE_{\Pi_\beta},\dom{\mcE_{\Pi_\beta}}}$ in~\cite{LzDS17+} is shown by resorting to a Mecke-type identity~\cite{LzDSLyt17} for~$\DF{\beta\vol_g}$, which completely characterizes~$\DF{\beta\vol_g}$, and therefore does not hold for~$\pi\neq \Pi_\beta$.
Furthermore, the presentation in~\cite{LzDS17+} relies in a crucial way on the smooth structure of~$(M,g)$, and in particular on a natural action on~$\msP$ of the group of diffeomorphisms of~$M$.
\end{example}

When~$M$ is non-compact, we may take~\eqref{eq:l:DiffusionP:0.3} and~\eqref{eq:l:DiffusionP:0.4} as definitions of~$\widehat\mcG_g$ and~$\widehat\mcL_g$ respectively, thus extending the construction of these operators to the case of non-compact base spaces as well.
In this case however, the results in the previous sections do not apply, since $\Delta_g$ and~$\nu=\vol_g$ do not satisfy Assumption~\ref{ass:Setting}.

Nevertheless, let~$\rho\in \mcC^\infty\cap L^1_+$ be a probability density strictly positive everywhere on~$M$, set~$\nu\eqdef\rho\vol_g\in\msP$, and consider the drift Laplacian
\begin{gather*}
\mssL^\rho= \Delta_g-\diff(\log\rho)\,\nabla_g\comma
\end{gather*}
with carr\'e du champ operator~$\mssGamma_g$ as above.
Then, in light of~\eqref{eq:l:DiffusionP:0.3} and~\eqref{eq:l:DiffusionP:0.4}, we have
\[
\widehat\mcL_g^\rho = \widehat\mcL_g + \widehat\mcG_g\paren{\paren{\tfrac{1}{\id_\mbbI}\otimes\nabla_g\log\rho}^\trid, \emparg} \fstop
\]

In order to apply the results of the previous sections to this setting, we need to verify Assumption~\ref{ass:Setting}.

\begin{assumption}[Weighted Riemannian manifolds]\label{ass:WRM}
Let~$(M,g,\nu)$ be a connected weighted Riemannian manifold with weight~$\rho\in \mcC^\infty\cap L^1_+$ a smooth strictly positive probability density, and choose~$(\mssL,\msA)$ to be the operator~$(\mssL^\rho,\Cc^\infty)$.
Further assume that
\begin{enumerate}[$(a)$]
\item the Riemannian manifold~$(M,g)$ is (metrically) complete\footnote{As a consequence of the Hopf--Rinow Theorem (e.g.~\cite[p.~295]{Gri09}), this is equivalent to~\cite[Dfn.~11.1, p.~295]{Gri09}.};
\item the weighted Riemannian manifold~$(M,g,\nu)$ is stochastically complete;
\item the weight~$\rho$ is such that the heat kernel of~$(M,g,\nu)$ is ultracontractive.
\end{enumerate}
\end{assumption}

We refer to~\cite[\S11.4]{Gri09} for a thorough account of stochastic completeness and for sufficient conditions in terms of the volume growth of~$\nu$; to~\cite[\S14]{Gri09} for a thorough account of ultracontractivity on weighted manifolds and for sufficient conditions in terms of, e.g., Nash and Faber--Krahn inequalities; to~\cite[Cor.~2.2.8]{Dav89} for sufficient conditions for ultracontractivity in terms of the logarithmic Sobolev inequality with excess; to e.g.~\cite[\S3]{GuiZeg02} for sufficient conditions for ultracontractivity in terms of the Sobolev inequality.
Since~$\rho$ is smooth and everywhere positive, it is clear that~$\mssL^\rho\, \Cc^\infty\subset \Cc^\infty$.

It follows from ultracontractivity and the smoothness of the heat kernel that Assumption~\ref{ass:Setting}\ref{i:ass:Setting:c3} is satisfied.
Under the assumption of completeness, the operator~$(\mssL^\rho,\Cc^\infty)$ is essentially self-adjoint by~\cite[Thm.~11.5]{Gri09}\footnote{Note that~\cite{Gri09} indicates by the symbol~$C^\infty_0$ what we indicate by~$\Cc^\infty$.}.

\bigskip

Let us now show how the measure-representation of a massive particle system related to the geometry of $L^2$-optimal transportation.
We refer the reader to~\cite{LzDS17+,LzDS20,LzDSSod24} for a discussion of the interplay between this geometry and the action on~$\msP_2$ of diffeomorphisms of~$M$, which we briefly recall below.

\paragraph{Wasserstein spaces} Let~$\mssd_g$ be the intrinsic (Riemannian) distance induced on~$M$ by~$g$.
We denote by
\begin{subequations}\label{eq:WassersteinSp}
\begin{equation}
\msP_2\eqdef \set{\eta\in\msP: \int \mssd_g^2(o,\emparg)\diff\eta<\infty}
\end{equation}
the \emph{$L^2$-Wasserstein space} over~$M$, i.e.\ the set of all Borel probability measures on~$M$ with finite second $\mssd_g$-moment,
for some (hence any) reference point~$o$ in~$M$.
For~$i=1,2$, a \emph{coupling} of~$\eta_i\in\msP_2$ is a Borel probability measure~$\pi$ on~$M^\tym{2}$ with marginals~$\eta_i$.
We endow~$\msP_2$ with its natural \emph{$L^2$-Kantorovich--Rubinstein distance}
\begin{equation}
W_2(\eta_1,\eta_2)\eqdef \inf_\pi \sqrt{\pi( \mssd_g^2)}\comma\qquad \eta_1,\eta_2\in\msP_2\comma
\end{equation}
where the infimum runs over all couplings~$\pi$ of~$\eta_1,\eta_2$.
\end{subequations}

\paragraph{Diffeomorphisms}
Further let~$\Diff^\infty_c(M)$ be the group of smooth, orient\-ation-preserving and compactly non-identical diffeomorphisms of~$M$.
The group acts naturally and continuously on~$\msP_2$ by push-forward of measures, viz.
\begin{equation}\label{eq:DiffeoAction}
.\colon \Diff^\infty_c(M)\times \msP_2 \longrightarrow \msP_2\comma \qquad  .\colon(\psi,\eta)\longmapsto \psi_\pfwd \eta \fstop
\end{equation}

\paragraph{Group actions}
Let~$(\Omega,\msF,\mbbP)$ be a probability space, and~$G$ be a measurable group acting measurably on~$\Omega$ with action denoted by~$.\colon (h,\omega)\mapsto h.\omega$.

\begin{definition}[Partial quasi-invariance~{\cite[Dfn.~9]{KonLytVer15}}]\label{d:PQI}
We say that~$\mbbP$ is \emph{partially $G$-quasi-invariant} if there exists a $\mbbP$-negligible set~$N\subset \Omega$ and a filtration~$\msF_\bullet\eqdef\seq{\msF_t}_{t\in T}$ of~$\Omega$, indexed by a totally ordered set~$T$, such that
\begin{enumerate}[$(a)$]
\item the trace $\sigma$-algebra of~$\msF$ on~$\Omega\setminus N$ coincides with the $\sigma$-algebra generated by~$\msF_\bullet$;
\item for each~$g\in G$ and each~$s\in T$ there exists~$t\in T$ so that~$g.\msF_s\subset \msF_t$;
\item for each~$g\in G$ and~$s\in T$ there exists an $\msF_s$-measurable function~$R_s^g\colon \Omega\to [0,\infty]$ so that
\[
\int_\Omega u \diff h._\pfwd\mbbP = \int_\Omega R_s^g \diff \mbbP
\]
for each $\msF_s$-measurable semi-bounded~$u\colon \Omega\to [-\infty,\infty]$.
\end{enumerate}
\end{definition}

In the next proposition, let~$\mbbI$ be the unit interval with reverse natural order.
For each~$\eps>0$, further let~$\sigma(\hCylP{}{\eps})$ be the $\sigma$-algebra on~$\msP_2$ generated by the algebra of functions~$\hCylP{}{\eps}$ in~\eqref{eq:HCylinderFEps}.
It follows from Proposition~\ref{p:PropertiesCylinder}\ref{i:p:PropertiesCylinder:2} and~\ref{i:p:PropertiesCylinder:3} that~$\tseq{\sigma(\hCylP{}{\eps})}_{\eps\in\mbbI}$ is a filtration of the Borel $\sigma$-algebra of~$(\msP^\pa,\T_\mrmn)$.

\begin{theorem}\label{t:Wasserstein}
Let~$(M,g,\nu)$ be a weighted Riemannian manifold as in Assumption~\ref{ass:WRM}.
Assume further that~$\nu\in \msP_2$ and fix~$\pi\in\msP(\mbfT_\circ)$.
Then,
\begin{enumerate}[$(i)$]
\item\label{i:t:Wasserstein:1} $\mcQ_\pi$ is concentrated on~$\msP_2$, i.e.~$\mcQ_\pi(\msP\setminus\msP_2)=0$;
\item\label{i:t:Wasserstein:1.5} $\mcQ_\pi$ is partially quasi-invariant under the action~\eqref{eq:DiffeoAction} w.r.t.\ the filtration~$\tseq{\sigma(\hCylP{}{\eps})}_{\eps\in\mbbI}$, with Radon--Nikod\'ym derivative
\begin{equation}\label{eq:t:Wasserstein:0}
R^\psi_\eps(\eta)= \prod_{x\in \mu : \eta\set{x}>\eps} \frac{\diff\psi_\pfwd\nu}{\diff \nu}(x) \comma \qquad \eps\in\mbbI\comma \quad \eta\in\msP_2\comma \quad \psi\in \Diff^\infty_c(M)\fstop
\end{equation}
\end{enumerate}
Furthermore,
\begin{enumerate}[$(i)$, resume]
\item\label{i:t:Wasserstein:2} the form~$\tparen{\widehat\mcE_\pi,\FC{\msA}{}}$ in Proposition~\ref{p:StandardCyl} satisfies assertions~\ref{i:p:StandardCyl:1}-\ref{i:p:StandardCyl:3} in Proposition~\ref{p:StandardCyl};
\item\label{i:t:Wasserstein:3} its closure~$\tparen{\mcE_\pi,\dom{\mcE_\pi}}$ coincides with the Cheeger energy~$\Ch_{W_2,\mcQ_\pi}$ of the metric measure space~$(\msP_2,W_2,\mcQ_\pi)$;
\item\label{i:t:Wasserstein:4} $\tparen{\mcE_\pi,\dom{\mcE_\pi}}$ is quasi-regular for the $W_2$-topology;
\item\label{i:t:Wasserstein:5} $\tparen{\mcE_\pi,\dom{\mcE_\pi}}$ satisfies the Rademacher Theorem on~$(\msP_2,W_2)$. That is, every Lipschitz function~$u\colon \msP_2\to\R$ belongs to~$\dom{\mcE_\mcQ}$, is Fr\'echet differentiable $\mcQ$-a.e.\ and its Otto gradient~$\boldnabla u\colon \msP_2\to T\msP_2$ satisfies
\[
\norm{\boldnabla u}_{T_\eta\msP_2}\leq \Lip_{W_2}[u] \fstop
\]
\item\label{i:t:Wasserstein:6} the semigroup~$\mcH_\bullet$ associated with~$\tparen{\mcE_\pi,\dom{\mcE_\pi}}$ satisfies the following one-sided integral Varadhan short-time asymptotic estimate: for every pair of Borel subsets~$A_1,A_2\subset\msP_2$ with~$\mcQ_\pi A_1, \mcQ_\pi A_2>0$,
\begin{equation}\label{eq:t:Wasserstein:1}
-2\, \lim_{t\downarrow 0} t \log \scalar{\car_{A_1}}{\mcH_t \car_{A_2}}_{L^2(\mcQ_\pi)} \ \geq\ \mcQ_\pi\text{-}\essinf_{\eta_i\in A_i} W_2(\eta_1,\eta_2)^2 \comma
\end{equation}

\end{enumerate}
\begin{proof}
\ref{i:t:Wasserstein:1} is a simple consequence of choosing~$f=\mssd_g(o,\emparg)$ in Lemma~\ref{l:SimpleTridLp}.

By assumption, the density~$\nu=\rho\vol_g$ is continuous and everywhere positive.
Thus, since each~$\psi\in \Diff^\infty_c(M)$ is compactly non-identical, the Radon--Nikod\'ym derivative~$\frac{\diff\psi_\pfwd\nu}{\diff\nu}$ is continuous, and uniformly bounded away from~$0$ and infinity on~$M$.
This shows that the product in~\eqref{eq:t:Wasserstein:0} is well-defined, finite, and non-zero for every~$\eps>0$, every~$\eta\in\msP_2$, and every~$\psi\in\Diff^\infty_c(M)$.
As a consequence,~$\psi_\pfwd\nu$ too is an element of~$\msP_2$.
Finally, for each~$\mbfx\eqdef\seq{x_1,\dotsc, x_N}\in M^\tym{N}$ (including in the case~$N=\infty$) and every~$\psi\in\Diff^\infty_c(M)$, set~$\psi^\diamond(\mbfx)\eqdef \seq{\psi(x_1),\dotsc, \psi(x_N)}$.
Note that the action of~$\Diff^\infty_c(M)$ on~$\msP_2$ commutes with the measure representation~$\EM$ in~\eqref{eq:TransferMap}, and we have
\[
\psi_\pfwd \EM(\mbfs,\mbfx)= \EM(\mbfs, \psi^\diamond(\mbfx)) \qquad \text{and}\qquad {\psi_\pfwd}_\pfwd\mcQ_{\pi,\nu}= \mcQ_{\psi_\pfwd\nu,\pi}\fstop
\]
Then,~\ref{i:t:Wasserstein:1.5} holds similarly to the proof of the same assertion for the Dirichlet--Ferguson measure in~\cite[Prop.~5.20]{LzDS17+}.

\ref{i:t:Wasserstein:2} is shown in Proposition~\ref{p:StandardCyl}. 
\ref{i:t:Wasserstein:3} is a consequence of~\cite[Rmk.~6.3]{ForSavSod22}.
\ref{i:t:Wasserstein:4} is a consequence of the identification with~$\Ch_{W_2,\mcQ_\pi}$ and either: the quasi-regularity of Cheeger energies shown in~\cite[Thm.~4.1]{Sav14}\footnote{The assumption on synthetic Ricci-curvature lower bound in~\cite[Thm.~4.1]{Sav14} is not necessary and indeed not used in the proof.}; or a consequence of quasi-regularity statements for general Dirichlet forms~\cite[Prop.~3.21]{LzDSSuz21}, or~\cite[Thm.~3.4]{RoeSch95}. (We omit the details.)

\ref{i:t:Wasserstein:5}
The proof follows as in~\cite[Prop.~6.19]{LzDS17+}.
It suffices to note that the auxiliary results~\cite[Lem.~A.21 and Prop.~A.22]{LzDS17+} do not in fact depend on properties of the Dirichlet--Ferguson measure and thus hold with~$\DF{\mssm}$ (in the notation of~\cite{LzDS17+}) replaced by~$\mcQ_\pi$ for any~$\pi\in\msP(\mbbT_\circ)$.

\ref{i:t:Wasserstein:6} follows from~\cite[Lem.~4.16]{LzDSSuz20} and Hino--Ram\'irez' integral Varadhan short-time asymptotics for general Dirichlet spaces,~\cite[Thm.~1.1]{HinRam03}, or~\cite[Thm.~5.2]{AriHin05}.
\end{proof}
\end{theorem}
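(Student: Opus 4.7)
The plan is to dispatch the seven assertions sequentially, with most reducing to an already-established tool either from within the paper or from the cited literature. For assertion~(i), I would apply Lemma~\ref{l:SimpleTridLp} with $p=2$ to $f=\mssd_g(o,\emparg)$; the hypothesis $\nu\in\msP_2$ gives $f\in L^2(\nu)$, and inspecting the intermediate inequality in that lemma's proof (before the Jensen step) yields $\int\int f^2\diff\eta\diff\mcQ_\pi=\norm{f}_{L^2(\nu)}^2<\infty$, so $\eta\in\msP_2$ for $\mcQ_\pi$-a.e.\ $\eta$. Assertion~(iii) is an immediate consequence of Proposition~\ref{p:StandardCyl}, whose hypotheses are subsumed by Assumption~\ref{ass:WRM}; no fresh work is required.

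For~(ii), the key structural fact is the intertwining of the $\Diff^\infty_c(M)$-action with the measure representation, $\psi_\pfwd\circ\EM=\EM\circ(\id_\mbfT\times\psi^\diamond)$, which yields ${\psi_\pfwd}_\pfwd\mcQ_{\pi,\nu}=\mcQ_{\pi,\psi_\pfwd\nu}$ at the level of pushforwards. Restricting to the sub-$\sigma$-algebra generated by $\hCylP{}{\eps}$ corresponds to recording only the at most $\lfloor 1/\eps\rfloor$ atoms of $\eta$ of mass exceeding~$\eps$; since $\rho$ is smooth and strictly positive and $\psi$ is compactly non-identical, $\diff(\psi_\pfwd\nu)/\diff\nu$ is continuous and uniformly bounded away from~$0$ and~$\infty$, making the finite product in~\eqref{eq:t:Wasserstein:0} both well-defined and $\sigma(\hCylP{}{\eps})$-measurable. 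The verification that it is the correct Radon--Nikod\'ym derivative along the filtration then parallels the argument of~\cite[Prop.~5.20]{LzDS17+} based on the product structure of $\boldnu=\nu^\otym{\infty}$ and~\eqref{eq:TransferMap}.

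The remaining assertions~(iv)--(vii) all hinge on identifying $\mcE_\pi$ with the Cheeger energy $\Ch_{W_2,\mcQ_\pi}$ of the metric measure space $(\msP_2,W_2,\mcQ_\pi)$. For~(iv), I would invoke the universal infinitesimal Hilbertianity of $\msP_2$ from~\cite[Rmk.~6.3]{ForSavSod22}, which forces any closable quadratic pre-energy whose carr\'e du champ agrees with the Otto--Wasserstein form on a sufficiently rich dense class to coincide with $\Ch_{W_2,\mcQ_\pi}$. From this, (v) follows by combining~\cite[Thm.~4.1]{Sav14} (whose synthetic curvature hypothesis is not actually used in its quasi-regularity portion) with~(iv); alternatively one may appeal to the general criteria of~\cite{RoeSch95,LzDSSuz21} applied to the $W_2$-continuous point-separating algebra $\FC{\msA}{}$. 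Assertion~(vi), the Rademacher property, is obtained by transplanting the argument of~\cite[Prop.~6.19]{LzDS17+}, whose auxiliary ingredients~\cite[Lem.~A.21, Prop.~A.22]{LzDS17+} are general facts about $(\msP_2,W_2)$ and make no use of properties specific to the Dirichlet--Ferguson measure. Finally~(vii) follows from the Hino--Ram\'irez integral short-time asymptotics~\cite[Thm.~1.1]{HinRam03} once the intrinsic distance of $\mcE_\pi$ has been compared to $W_2$ via~\cite[Lem.~4.16]{LzDSSuz20}. The main technical obstacle I anticipate is~(ii): the interplay between the reverse-ordered filtration on~$\mbbI$ and the truncation to atoms exceeding the mass threshold~$\eps$ must be handled delicately so that the cylindrical Radon--Nikod\'ym derivatives~$R^\psi_\eps$ are mutually compatible across varying~$\eps$ and correctly encode the action of $\psi$ in the limit.
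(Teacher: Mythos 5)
Your proposal follows the paper's proof essentially line for line: the same choice of test function in Lemma~\ref{l:SimpleTridLp} for~(i), the same intertwining identity ${\psi_\pfwd}_\pfwd\mcQ_{\pi,\nu}=\mcQ_{\pi,\psi_\pfwd\nu}$ and appeal to~\cite[Prop.~5.20]{LzDS17+} for~(ii), the direct invocation of Proposition~\ref{p:StandardCyl} for~(iii), and the identical chain of references (\cite{ForSavSod22}, \cite{Sav14}/\cite{RoeSch95}/\cite{LzDSSuz21}, \cite[Prop.~6.19, Lem.~A.21, Prop.~A.22]{LzDS17+}, \cite{LzDSSuz20}+\cite{HinRam03}) for~(iv)--(vii). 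The only cosmetic divergence is in~(i), where you extract the second-moment bound from the intermediate step of the lemma's proof at $p=2$ rather than applying the lemma's conclusion at $p=1$ to $\mssd_g^2(o,\emparg)$ directly, but these are interchangeable.
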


\begin{remark}
Assume~$\pi\neq \delta_\mbfs$ for every~$\mbfs\in\mbfT$. Then, the converse inequality to~\eqref{eq:t:Wasserstein:1} does not hold.
Indeed, in this case~$\mcH_\bullet$ is not ergodic as a consequence of Theorem~\ref{t:TransferP}\ref{i:t:TransferP:3}.
Thus, there exists a non-trivial $\mcH_\bullet$-invariant set~$A\subset \msP_2$ satisfying~$\mcQ_\pi A, \mcQ_\pi A^\complement>0$, and we have~$\scalar{\car_{A}}{\mcH_t\car_{A^\complement}}_{L^2(\mcQ_\pi)}=0$ for every~$t>0$, so that the left-hand side of~\eqref{eq:t:Wasserstein:1} is infinite for~$A_1=A$ and~$A_2=A^\complement$.
\end{remark}

\subsubsection{Breaking local conservation of mass and gaining ergodicity}\label{sss:Bhattacharya}
Massive particle systems are heavily constrained by their intrinsic pointwise conservation of mass:
as soon as Assumption~\ref{ass:QPP} is satisfied, the mass of each particle is constant for all times.
Equivalently, there is no dynamics in the mass component, which is the very reason of the ergodic decomposition in Theorem~\ref{t:TransferP}\ref{i:t:TransferP:3}.

While intrinsic to (the measure representation of) massive particle systems, the lack of ergodicity is undesirable.
However, it would not be difficult to combine the dynamics of positions in Theorem~\ref{t:BendikovSaloffCoste} with natural dynamics on the space of masses, resulting in non-trivial \emph{irreducibile} dynamics on~$\msP$.

Indeed, the Dirichlet form $\tparen{\mcE_\pi,\dom{\mcE_\pi}}$ in Theorem~\ref{t:Wasserstein} can be combined with other known Dirichlet forms on~$\msP$.
Two examples are the form~$\tparen{\mcE^\FleVio,\dom{\mcE^\FleVio}}$ associated to the \emph{Fleming--Viot process with parent-independent mutation} introduced in~\cite{FleVio79}, and the form $\tparen{\mcE^\DawWat,\dom{\mcE^\DawWat}}$ of the \emph{Dawson--Watanabe super-process}, see, e.g., the monograph~\cite{Per02}.
Since these examples are very similar in spirit, we only discuss the first one, for which a thorough description via Dirichlet form is available, see~\cite{OveRoeSch95}.
In order to use the results in~\cite{OveRoeSch95}, we restrict our attention to case when~$\pi=\Pi_\beta$ is the Poisson--Dirichlet distribution, and~$\mcQ_\pi=\DF{\nu}$ is the Dirichlet--Ferguson measure with intensity~$\nu$ (see Ex.~\ref{ex:DFDiffusion}).

\begin{example}\label{ex:DF+FV}
Let~$(M,g,\nu)$ be a weighted Riemannian manifold as in Assumption~\ref{ass:WRM}.
Then, the form
\[
\tparen{\mcE_{\DF{\nu}} + \mcE^\FleVio,\FC{\msA}{}}
\]
is closable on~$L^2(\DF{\nu})$.
Its closure~$\tparen{\mcE^\BatKan,\dom{\mcE^\BatKan}}$ is a conservative strongly local Dirichlet form on~$L^2(\DF{\nu})$, quasi-regular for the narrow topology~$\T_\mrmn$.
The properly associated Markov process is a $\T_\mrmn$-diffusion~$\mu_\bullet^\BatKan$ with state space~$\msP^\pa$.

\begin{proof}
We note that~$\FC{\msA}{}\subset \dom{\mcE^\FleVio}$, where~$\dom{\mcE^\FleVio}$ is defined in~\cite[\S2]{OveRoeSch95}, since~$\FC{\msA}{}$ is contained in the algebra of cylinder functions in~\cite[Eqn.~(2.7)]{OveRoeSch95}.
It follows that both~$\mcE_{\DF{\nu}}$ and~$\mcE^{\FleVio}$ are closable on~$\FC{\msA}{}$, so that their sum is closable by standard arguments, e.g.~\cite[Prop.~I.3.7]{MaRoe92}.
Conservativeness holds since~$\car\in L^2(\DF{\nu})$ and~$\mcE^\BatKan(\car)=0$.
Strong locality follows from the diffusion property of both~$\tparen{\mcE_{\DF{\nu}},\dom{\mcE_{\DF{\nu}}}}$ and~$\tparen{\mcE^\FleVio,\dom{\mcE^\FleVio}}$, since~$\FC{\msA}{}$ is a Markovian form core by definition.

Since~$\FC{\msA}{}$ is a form core for~$\tparen{\mcE^\BatKan,\dom{\mcE^\BatKan}}$, the latter satisfies Definition~\ref{d:QuasiReg}\ref{i:d:QuasiReg:2} and~\ref{i:d:QuasiReg:3} for~$\T_\mrmn$ since~$\FC{\msA}{}\subset \Cb(\T_\mrmn)$ by Remark~\ref{r:ExtensionPropertiesCA}.
Further recall that the form~$\tparen{\mcE_{\DF{\nu}},\dom{\mcE_{\DF{\nu}}}}$ is $\T_\mrmn$-quasi-regular by Proposition~\ref{p:StandardCyl}\ref{i:p:StandardCyl:4} and note that the form~$\tparen{\mcE^\FleVio,\dom{\mcE^\FleVio}}$ too is $\T_\mrmn$-quasi-regular by~\cite[Thm.\ 3.5(i)]{OveRoeSch95}.
The existence of a $\T_\mrmn$-compact $\mcE^\BatKan$-nest as in Definition~\ref{d:QuasiReg}\ref{i:d:QuasiReg:1} follows from the existence of analogous nests for~$\tparen{\mcE_{\DF{\nu}},\dom{\mcE_{\DF{\nu}}}}$ and~$\tparen{\mcE^\FleVio,\dom{\mcE^\FleVio}}$.
It follows that~$\tparen{\mcE^\BatKan,\dom{\mcE^\BatKan}}$ is $\T_\mrmn$-quasi-regular.
\end{proof}

The importance of the form~$\tparen{\mcE^\BatKan,\dom{\mcE^\BatKan}}$ arises from the following considerations.
Recall that, under Assumption~\eqref{eq:QPP}, the process~$\mu_\bullet$ properly associated with~$\tparen{\mcE_{\DF{\nu}},\dom{\mcE_{\DF{\nu}}}}$ is the measure representation of a massive particle system.
In particular, $\mu_\bullet$ is a.s.\ a purely atomic measure, the masses of its atoms are constant in time, and the locations of its atoms move diffusively in~$M$ according to the driving noise~$\mssW$.
On the contrary, the Fleming--Viot process~$\mu^\FleVio_\bullet$ properly associated with~$\tparen{\mcE^\FleVio,\dom{\mcE^\FleVio}}$ is a.s.\ a purely atomic measure, the locations of its atoms are fixed, while the masses of its atoms move diffusively in~$[0,1]$.
As a consequence, the presence of the `vertical component'~$\mcE^\FleVio$ in~$\mcE^\BatKan$ breaks the shape of $\mcE_{\DF{\nu}}$-invariant sets as classified in Theorem~\ref{t:DirInt}\ref{i:t:DirInt:5}.
This strongly suggests that~$\tparen{\mcE^\BatKan,\dom{\mcE^\BatKan}}$ has no non-trivial invariant sets, and thus that ---in sharp contrast with both~$\tparen{\mcE_{\DF{\nu}},\dom{\mcE_{\DF{\nu}}}}$ and~$\tparen{\mcE^\FleVio,\dom{\mcE^\FleVio}}$--- it is \emph{irreducible} (ergodic).

Secondly, similarly to the case of~$\tparen{\mcE_{\DF{\nu}},\dom{\mcE_{\DF{\nu}}}}$, it is possible to identify the geometry on~$\msP$ associated with~$\tparen{\mcE^\BatKan,\dom{\mcE^\BatKan}}$ as the \emph{Bhattacharya--Kantorovich geometry}, defined analogously to the \emph{Hellinger--Kantorovich} (also: \emph{Wasserstein--Fisher--Rao}) \emph{geometry}~\cite{LieMieSav17, LieMieSav22,ChiPeySchVia16,KonMonVor16}.
Indeed, it follows from the definition that~$\tparen{\mcE^\BatKan,\dom{\mcE^\BatKan}}$ has carré du champ operator given by the sum of the carr\'e du champ operators of its two components.
The carré du champ of~$\tparen{\mcE_{\DF{\nu}},\dom{\mcE_{\DF{\nu}}}}$ is precisely Otto's Riemannian metric on~$\msP$ computed at $L^2$-Wasserstein gradients, and thus induces the $L^2$-Kantorovich--Rubinstein distance.
The carré du champ of~$\tparen{\mcE^\FleVio,\dom{\mcE^\FleVio}}$ is identified in~\cite[Eqn.~(2.13)]{OveRoeSch95}; the induced intrinsic distance is identified in~\cite[Thm.~1.1]{Sch97} as the \emph{Bhattacharya distance}
\[
\delta_{\mathsc{B}}(\mu_1,\mu_2) \eqdef \arccos \int \sqrt{\frac{\diff \mu_1}{\diff\sigma} \frac{\diff\mu_2}{\diff\sigma}}\diff\sigma \comma \qquad \text{for any $\sigma$ with $\mu_1,\mu_2\ll\sigma$}\fstop
\]
By analogy with the case of the Hellinger--Kantorovich geometry, thoroughly discussed in~\cite{LzDSSod24}, and by the characterization~\cite{DePSodTam25} of the Hellinger--Kantorovich distance as a metric inf-convolution, these facts suggest that the intrinsic distance induced by the carr\'e du champ of~$\tparen{\mcE^\BatKan,\FC{\msA}{}}$ is the metric inf-convolution of the Bhattacharya distance with the $L^2$-Kantorovich--Rubinstein distance.
\end{example}

\subsubsection{The geometry on probability measures induced by a Markov diffusion}\label{sss:Lifting}
Throughout the previous sections, we linked together objects (forms, semigroups, generators) on infinite-product spaces with objects on spaces of probability measures.
The main purpose of this link is to provide a robust way to identify the Markov process~$\mu_\bullet$ properly associated with the Dirichlet form~$\tparen{\widehat\mcE_\pi,\dom{\widehat\mcE_\pi}}$.

On the one hand, this identification is much stronger than the abstract one provided by the general theory of Dirichlet forms: it is explicit, and allows us to construct the process from \emph{every} starting point in~$\msP^\pa$ (as opposed to $\widehat\mcE_\pi$-\emph{quasi-every} starting point, as in the general theory).

On the other hand, the identification comes at the expense of generality: firstly, because we need the driving noise to be ergodic, recurrent and ---foremost--- ultracontractive and converging exponentially fast to equilibrium, and secondly because we need the invariant measure~$\mcQ_\pi$ to be of the specific form~$\mcQ_\pi\eqdef \EM_\pfwd\widehat\boldnu_\pi$ in~\eqref{eq:DefGeneratorCylP} for some \emph{probability} measure~$\nu$, which forces~$\mcQ_\pi$ to be itself a probability measure.

\medskip

However, if we only focus on Dirichlet-form theory, then we may formulate a far more general version of Theorem~\ref{t:TransferQuasiHomeo}, lifting practically any Markov diffusion operator to a generator on~$\msP$.

\begin{theorem}\label{t:GeneralForm}
Let~$(M,\T)$ be a metrizable Lusin topological space endowed with a $\sigma$-finite Borel measure~$\sigma$, and~$\msP$ be the space of all Borel probability measures on~$M$ endowed with its usual narrow topology.
Further let
\begin{enumerate*}[$(a)$, leftmargin=2em]
\item\label{i:t:GeneralForm:A} $\msA\subset L^2(\sigma)\cap \Cb(\T)$ be an algebra of continuous functions, dense in~$L^2(\sigma)$;
\item\label{i:t:GeneralForm:B} $\tparen{\mssL,\dom{\mssL}}$ be a self-adjoint Markov diffusion operator on~$L^2(\sigma)$ such that~$\mssL\msA\subset\msA$;
\item\label{i:t:GeneralForm:C} $\hFC{\msA}{0}$ be the algebra of cylinder functions on~$\msP$ defined as in Definition~\ref{d:StandardFC};
\item\label{i:t:GeneralForm:D} $\mcS$ be a $\sigma$-finite Borel measure on~$\msP$ such that~$\tclass[\mcS]{\hFC{\msA}{0}}\subset L^2(\mcS)$.
\end{enumerate*}

Then,
\begin{enumerate}[$(i)$, leftmargin=2em]
\item\label{i:t:GeneralForm:1} the operator~$\tparen{\widehat\mcL, \hFC{\msA}{0}}$ defined by the right-hand side of~\eqref{eq:l:DiffusionP:0.4} takes values in~$L^2(\mcS)$.
\end{enumerate}

Further assume that the measure~$\mcS$ is such that
\begin{enumerate}[$(a)$, leftmargin=2em]\setcounter{enumi}{4}
\item\label{i:t:GeneralForm:E} $\hFC{\msA}{0}$ is dense in~$L^2(\mcS)$
\item\label{i:t:GeneralForm:F} the operator~$\tparen{\widehat\mcL, \hFC{\msA}{0}}$ is $L^2(\mcS)$-symmetric.
\end{enumerate}

Then, the following assertions hold:
\begin{enumerate}[$(i)$, leftmargin=2em]\setcounter{enumi}{1}
\item\label{i:t:GeneralForm:2} the quadratic form~$\tparen{\widehat\mcE,\hFC{\msA}{0}}$ defined by~$\widehat\mcE(u,v)\eqdef \ttscalar{u}{-\widehat\mcL v}_{L^2(\mcS)}$ is closable on~$L^2(\mcS)$ and its closure~$\tparen{\widehat\mcE,\dom{\widehat\mcE}}$ is a Dirichlet form with generator the Friedrichs extension of~$\tparen{\widehat\mcE,\hFC{\msA}{0}}$;

\item\label{i:t:GeneralForm:3} the Dirichlet form~$\tparen{\widehat\mcE,\dom{\widehat\mcE}}$ admits carr\'e du champ operator
\[
\widehat\mcG(u,v)_\eta= \int \mssGamma^z\restr{z=x} \tparen{u(\eta+\eta_x\delta_z-\eta_x\delta_x), u(\eta+\eta_x\delta_z-\eta_x\delta_x)} \diff\eta(x) \comma\quad u,v\in \hFC{\msA}{0}\fstop
\]
\end{enumerate}
\end{theorem}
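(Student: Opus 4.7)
The plan is to adapt the arguments of Lemma~\ref{l:DiffusionP} and Proposition~\ref{p:StandardCyl}, originally carried out for $\mcS = \mcQ_\pi$, to the present general setting. The proof of Lemma~\ref{l:DiffusionP} is in fact purely algebraic: it relies only on the diffusion chain rule for $(\mssL, \msA)$ and the algebra structure of $\hFC{\msA}{0}$, both of which are granted here by hypotheses \ref{i:t:GeneralForm:A}--\ref{i:t:GeneralForm:C}. Consequently, the coordinate formula \eqref{eq:l:DiffusionP:0.2} and the intrinsic formula \eqref{eq:l:DiffusionP:0.4} for $\widehat\mcL$, together with the corresponding formulas for the carr\'e du champ $\widehat\mcG$ stated in \ref{i:t:GeneralForm:3}, all remain valid on $\hFC{\msA}{0}$ without reference to any particular measure on $\msP$.

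For \ref{i:t:GeneralForm:1}, I would use \eqref{eq:l:DiffusionP:0.2} to show that $\widehat\mcL u$ is bounded on $\msP$. Indeed, for $u = F \circ \hat\mbff^\trid \in \hFC{\msA}{0}$ the derivatives of $F$ are bounded, each mark function $\molli_i$ is supported in $(\eps, 1]$ for some $\eps > 0$, and both $\mssL f_i$ and $\mssGamma(f_i, f_j)$ lie in $\msA \subset \Cb(\T)$ by hypothesis \ref{i:t:GeneralForm:B} and the algebraic definition of $\mssGamma$. Any probability measure $\eta \in \msP$ admits at most $\lfloor 1/\eps\rfloor$ atoms of mass exceeding $\eps$, so each summand in \eqref{eq:l:DiffusionP:0.2} is bounded by a constant depending only on $u$. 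Since $1 \in \hFC{\msA}{0}$, hypothesis \ref{i:t:GeneralForm:D} forces $\mcS(\msP) < \infty$, and boundedness then yields $\widehat\mcL u \in L^\infty(\msP)\cap L^2(\mcS)$.

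For \ref{i:t:GeneralForm:2}, I would first extract $\mcS$-invariance of $\widehat\mcL$ from hypothesis \ref{i:t:GeneralForm:F}: since $\widehat\mcL 1 = 0$, symmetry gives $\int\widehat\mcL u\, d\mcS = \langle 1, \widehat\mcL u\rangle_{L^2(\mcS)} = \langle \widehat\mcL 1, u\rangle_{L^2(\mcS)} = 0$ for all $u \in \hFC{\msA}{0}$. The chain-rule identity
\[
\widehat\mcL(uv) = u\,\widehat\mcL v + v\,\widehat\mcL u + 2\,\widehat\mcG(u, v) \qquad \text{on } \hFC{\msA}{0}
\]
is an algebraic consequence of the diffusion property of $\mssL$ via the computations \eqref{eq:l:DiffusionP:3}--\eqref{eq:l:DiffusionP:5}. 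Applied with $u = v$ and integrated against $\mcS$, invariance then yields
\[
\widehat\mcE(u, u) = -\int u\,\widehat\mcL u\, d\mcS = \int \widehat\mcG(u, u)\, d\mcS \geq 0,
\]
since $\mssGamma \geq 0$. Thus $(\widehat\mcE, \hFC{\msA}{0})$ is a densely defined (by \ref{i:t:GeneralForm:E}), symmetric, non-negative quadratic form on $L^2(\mcS)$, hence closable; its closure $(\widehat\mcE, \dom{\widehat\mcE})$ is a closed symmetric non-negative form whose generator is, by construction, the Friedrichs extension of $(\widehat\mcL, \hFC{\msA}{0})$. Markovianity of the closure follows from the chain-rule identity $\widehat\mcG(\phi \circ u, \phi \circ u) = (\phi'(u))^2\, \widehat\mcG(u, u)$ for $\phi \in \Cb^\infty$ a normal contraction, combined with a standard approximation of general normal contractions by smooth ones.

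Assertion \ref{i:t:GeneralForm:3} is then immediate: the chain-rule identity above identifies $\widehat\mcG$ on $\hFC{\msA}{0}$ as the carr\'e du champ of $(\widehat\mcE, \hFC{\msA}{0})$, and it extends by closability and density to $(\widehat\mcG, \dom{\widehat\mcG})$ on the full form domain, as in, e.g., \cite[Thm.~I.4.2.1]{BouHir91}. The main technical hurdle I anticipate is not any single step but ensuring that each algebraic manipulation carries over cleanly from the concrete $\mcQ_\pi$ setting to the abstract one; in particular, that the chain-rule computation for $\widehat\mcL(uv)$ uses only the Leibniz identity for $\mssL$ on $\msA$ and never implicitly invokes Mecke-type identities specific to the Dirichlet--Ferguson measure.
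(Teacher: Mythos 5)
Your proof is correct and follows essentially the same route as the paper: the coordinate representation~\eqref{eq:l:DiffusionP:0.2} of $\widehat\mcL$ for~\ref{i:t:GeneralForm:1}, closability and the Friedrichs extension from symmetry and non-negativity of $-\widehat\mcL$ on $\hFC{\msA}{0}$ for~\ref{i:t:GeneralForm:2}, and the diffusion chain rule for the carr\'e du champ in~\ref{i:t:GeneralForm:3}. Your explicit verification that $-\widehat\mcL$ is non-negative on $\hFC{\msA}{0}$ (via $\mcS$-invariance from symmetry plus the chain-rule identity $\widehat\mcL(u^2)=2u\widehat\mcL u+2\widehat\mcG(u,u)$) makes precise a step the paper leaves implicit in its appeal to~\cite[Thm.~X.23]{ReeSim75}.
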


Let us first comment about the statement of the theorem.
The reader will note that conditions~\ref{i:t:GeneralForm:A}-\ref{i:t:GeneralForm:C} merely describe the setting: the only true assumptions (on~$\mcS$) are~\ref{i:t:GeneralForm:D},~\ref{i:t:GeneralForm:E}, and ---foremost---~\ref{i:t:GeneralForm:F}.
The first part of assumption~\ref{i:t:GeneralForm:D} requires $\mcS$-classes of Borel functions in~$\hFC{\msA}{0}$ to be $\mcS$-square-integrable.
This is trivially satisfied as soon as~$\mcS$ is a finite measure, since functions in~$\hFC{\msA}{0}$ are uniformly bounded.
The second part of assumption~\ref{i:t:GeneralForm:D} requires the algebra of all such classes to be sufficiently large.
This requirement is non-trivial and poses great limitations on sets of measures on which~$\mcS$ is concentrated.
For instance, if~$\mcS$ is concentrated on atomless measures, then~$\tclass[\mcS]{\hFC{\msA}{0}}$ reduces to the zero function.
However, if~$\mcS$ is a finite measure concentrated on~$\msP^\pa$, then~\ref{i:t:GeneralForm:D} holds in light of Proposition~\ref{p:PropertiesCylinder}\ref{i:p:PropertiesCylinder:6}.

Assumption~\ref{i:t:GeneralForm:F} is certainly the most delicate, and the hardest to verify in practice, as it relates~$\mcS$ with~$\sigma$.
We note, however, that (since~$\mssL$ is $L^2(\sigma)$-symmetric by assumption~\ref{i:t:GeneralForm:B}) we can always choose~$\sigma=\nu$ and~$\mcS=\mcQ_{\pi,\nu}$, irrespectively of all other assumptions on~$\mssL$, hence in particular irrespectively of ergodicity, conservativeness, spectral gap, ultracontractivity, etc.
This will be relevant in light of Example~\ref{ex:OU} below.

\begin{proof}[Proof of Theorem~\ref{t:GeneralForm}]
Since~$\mssL\msA\subset\msA$ by~\ref{i:t:GeneralForm:B}, we have~$\mssGamma (\msA^\tym{2})\subset \msA$ as well.

\paragraph{Proof of~\ref{i:t:GeneralForm:1}}
By the diffusion property for~$(\mssL,\msA)$, it follows from Lemma~\ref{l:DiffusionP} that we have the representation~\eqref{eq:l:DiffusionP:0.2}.
Thus, it follows from this representation that~$\widehat\mcL\, \hFC{\msA}{0}\subset \hFC{\msA}{0}\subset L^2(\mcS)$, where the last inclusion holds by~\ref{i:t:GeneralForm:D}.

\paragraph{Proof of~\ref{i:t:GeneralForm:2} and~\ref{i:t:GeneralForm:3}}
Since~$\tparen{\widehat\mcL, \hFC{\msA}{0}}$ is densely defined by~\ref{i:t:GeneralForm:D}, the closability assertion and the qualification of the generator as a Friedrichs extension are a consequence of~\ref{i:t:GeneralForm:1} and standard arguments,~\cite[Thm.~X.23]{ReeSim75}.
Since~$(\mssGamma,\msA)$ is the carr\'e du champ of~$(\mssL,\msA)$, assertion~\ref{i:t:GeneralForm:3} follows from~\ref{i:t:GeneralForm:2} by a direct computation.
\end{proof}

\begin{example}[The Ornstein--Uhlenbeck process on~$\msP_2(\R^d)$]\label{ex:OU}
Recall the notation for weighted Riemannian manifolds in Assumption~\ref{ass:WRM}, and choose~$M=\R^d$, $d\geq 2$, and~$\rho$ the standard Gaussian density, so that~$\nu\eqdef\rho\Leb^d=\gamma_d$ is the standard normal distribution on~$\R^d$, an element of~$\msP_2(\R^d)$.
In this case,
\[
\mssL^\textsc{ou}\eqdef\mssL^\rho=\Delta-x\cdot\nabla\comma
\]
and the corresponding Markov process~$\mssW$ is the standard $d$-dimensional Ornstein--Uhlenbeck process.
Since~$d\geq 2$, the process~$\mssW$ satisfies~\eqref{eq:QPP} by Proposition~\ref{p:VerificationPolarityManifolds}.
However, note that the corresponding heat semigroup~$\mssH_\bullet$ is \emph{not} ultracontractive, see e.g.~\cite[Thm.~4]{Nel73} or~\cite[p.~13]{LunMetPal20} on~$L^1(\nu)$.

\medskip

Now, the operator~$\tparen{\widehat\mcL, \hFC{\msA}{0}}$ defined by the right-hand side of~\eqref{eq:l:DiffusionP:0.4} is
\[
\widehat\mcL^\textsc{ou} = \widehat\mcL_\mathrm{eucl} + \widehat\mcG_\mathrm{eucl}\paren{\paren{\tfrac{1}{\id_\mbbI}\otimes\id_{\R^d}}^\trid, \emparg} \fstop
\]
For every~$\pi\in\msP(\mbfT)$, we may apply Theorem~\ref{t:GeneralForm} with~$\mcS=\mcQ_{\pi,\nu}$ and~$\nu=\gamma_d$ as above, to conclude that the form~$\tparen{\widehat\mcE^\textsc{ou}_\pi,\FC{\msA}{0}}$ is closable. 
As usual, we denote its closure by~$\tparen{\widehat\mcE^\textsc{ou}_\pi,\widehat\mcE^\textsc{ou}_\pi}$.

Therefore, by the identification in Proposition~\ref{p:StandardCyl}\ref{i:p:StandardCyl:4} (which does not require the ultracontractivity of~$\mssH^\textsc{ou}_\bullet$, see Rmk.~\ref{r:IndependenceUltraC}), the Dirichlet form $\tparen{\widehat\mcE^\textsc{ou}_\pi,\dom{\widehat\mcE^\textsc{ou}_\pi}}$ coincides with~$\tparen{\mcE^\textsc{ou}_\pi,\dom{\mcE^\textsc{ou}_\pi}}$ in Proposition~\ref{p:StandardCyl}\ref{i:p:StandardCyl:2}.
In turn, applying Theorem~\ref{t:Wasserstein}\ref{i:t:Wasserstein:3} and~\ref{i:t:Wasserstein:4} (which again do not require the ultracontractivity of~$\mssH^\textsc{ou}_\bullet$), the form~$\tparen{\mcE^\textsc{ou}_\pi,\dom{\mcE^\textsc{ou}_\pi}}$ is $\T_\mrmn$-quasi-regular.
Thus, there exists a properly associated Markov $\T_\mrmn$-diffusion~$\widehat\mcW_\pi^\textsc{ou}$, which we call the \emph{Ornstein--Uhlenbeck process on~$\msP_2(\R^d)$}.

Since~$\mssH^\textsc{ou}_\bullet$ is not ultracontractive, we may not directly apply the theory of massive particle systems developed in \S\ref{s:FIS} to identify~$\widehat\mcW_\pi^\textsc{ou}$.
However, since~$\mssH^\textsc{ou}_\bullet$ is represented by a continuous (though unbounded) heat kernel, it is nonetheless reasonable to expect that~$\widehat\mcW_\pi^\textsc{ou}$ is a free massive system of Ornstein--Uhlenbeck processes on~$\R^d$ with $\pi$-distributed masses.

\medskip

We remark that other Dirichlet forms/processes on~$\msP_2(\R^d)$ have also been called Ornstein--Uhlenbeck forms/processes on~$\msP$,~e.g.~\cite{RenWan24}.
These constructions do not coincide, since the chosen invariant measure in~\cite{RenWan24} is concentrated on measures with densities, and thus it is singular with respect to~$\mcQ_\pi$ for every choice of~$\pi$.
\end{example}

\section{Free massive systems: spde theory}\label{s:DK}
In this section we give a meaning to the Dean--Kawasaki-type \textsc{spde}~\eqref{eq:IntroSPDE} and prove that it is well-posed, i.e.\ that existence and uniqueness of solutions holds, in a suitable sense.
We further identify solutions to~\eqref{eq:IntroSPDE} as the empirical measures of free massive particle systems described in the previous section.

\subsection{Equivalence of martingale problems}
We start with a brief review of~\cite{KonLehvRe19}, in which solutions to~\eqref{eq:DK0} are \emph{defined} as solutions to a suitable martingale problem.
We then adapt the definition of such martingale problem to our setting, and show that solutions to this new martingale problem coincide with the measure representation of free massive particle systems.

\subsubsection{Konarovskyi–Lehmann–von Renesse martingale problem}\label{sss:KLvR}
Rigorous well-pos\-ed\-ness and triviality results for~\ref{eq:DK} were recently achieved by V.~Konarov\-sky, T.~Lehmann, and M.-K.~von Renesse in~\cite{KonLehvRe19, KonLehvRe19b}.
In fact, they classified solutions to~\ref{eq:DK} in the strongest possible way and in the very general setting of standard Markov triples satisfying Ricci-curvature lower bounds.

Let us recall the concept of solutions to~\ref{eq:DK} in~\cite{KonLehvRe19}, expressed in terms of a corresponding martingale problem.
We call this the \emph{shadow martingale problem} in order to distinguish it from the `true' martingale problem on~$\msP$ in Theorem~\ref{t:TransferQuasiHomeo}\ref{i:t:TransferP:6}, as we will explain later on.
Precisely, let~$M$ be a Polish space,~$\nu$ be a Radon measure on~$M$ (not necessarily finite), and~$\msP$ denote the space of all Borel probability measures on~$M$. 
Further let~$(\mssL,\msA)$ be an essentially self-adjoint diffusion operator on~$L^2(\nu)$ with carr\'e du champ
\[
\mssGamma(f,g)\eqdef \mssL (fg)- f\,\mssL g - g\,\mssL f \comma \qquad f,g\in\msA\fstop
\]
Finally, let~$\tparen{\Omega,\msF,\msF_\bullet,\mbbP}$ be a stochastic basis satisfying the usual conditions.

\begin{definition}[Shadow martingale problem,~{\cite[Dfn.~2.1]{KonLehvRe19}, cf.~\cite[Dfn.~1]{KonLehvRe19b}}]\label{d:ProjMP}
A $\msP$-valued process with $\mbbP$-a.s.\ $\T_\mrmn$-continuous trajectories~$\rho_\bullet$ is a \emph{distributional solution} to the \emph{shadow martin\-gale problem~$\martp{\alpha}{\mssL}$ with parameter~$\alpha\in\R$} if
\begin{enumerate}[$(i)$]
\item\label{i:d:ProjMP:1} for each $f\in\msA$
\[
M^{\class{f}}_t\eqdef \rho_t f-\rho_0 f -\alpha \int_0^t \rho_s(\mssL f) \diff s
\]
is an $\msF_\bullet$-adapted $\mbbP$-martingale,

\item\label{i:d:ProjMP:2} with quadratic variation
\[
\quadvar{M^{\class{f}}}_t=\int_0^t \rho_s \mssGamma(f) \diff s \fstop
\]
\end{enumerate}
\end{definition}

It is one main intuition of~\cite{KonLehvRe19} that, choosing~$(\mssL,\msA)$ as the standard Laplacian~$(\Delta,\mcC^\infty)$, then~$\rho_\bullet$ is a solution to~$\martp{\alpha}{\Delta}$ if and only if it is a distributional solution to~\ref{eq:DK}.
As in~\cite{KonLehvRe19}, one may thus consider the analogous martingale problem~$\martp{\alpha}{\mssL}$ for a more general operator~$(\mssL,\msA)$ as above, and call a solution~$\rho_\bullet$ to~$\martp{\alpha}{\mssL}$ a $\msP$-valued distributional solution to the \emph{$\mssL$-driven Dean--Kawasaki equation with~$\IE$-interaction}.
At this stage however, no \textsc{spde} (not even at the formal level) is defined for~$\rho_\bullet$.

\paragraph{Ill-posedness vs.\ triviality}
We note straight away that~$\martp{\alpha}{\mssL}$ is \emph{not} a martingale problem in the usual sense.
In particular it is not the martingale problem for any transpose~$\mssL'$ of~$\mssL$ on `distributions', nor it is the martingale problem for the $L^2$-adjoint operator~$\mssL^*$ of~$\mssL$, even when~$\rho_\bullet\ll \nu$ for all times.
For this reason, and due to the presence of the parameter~$\alpha$ (which is not part of the generator), the specification of the quadratic variation in~\ref{i:d:ProjMP:2} is necessary and cannot be omitted.

As it will turn out (see Theorem~\ref{t:DK} below), the shadow martingale problem~$\martp{\alpha}{\mssL}$ is rather the `projection' to~$L^2(\nu)$ of a true martingale problem for some self-adjoint operator~$\tparen{\mcL,\dom{\mcL}}$ induced by~$\mssL$ on the $L^2$-space of some measure~$\mcQ$ on~$\msP$.

We recall the main result in~\cite{KonLehvRe19}.
We refer the reader to~\cite[\S3.4.2, p.~170]{BakGenLed14} for the definition of \emph{standard Markov triple}, and to~\cite[Eqn.~(2.3)]{KonLehvRe19} for the one of \emph{Bakry--\'Emery Ricci-curvature lower bound} for a standard Markov triple.
 
\begin{theorem}[Ill-posedness vs.\ triviality~{\cite[Thm.~2.2]{KonLehvRe19}}]\label{t:KLR}
Assume \linebreak $(M,\nu,\msA,\mssL,\mssGamma)$ is a standard Markov triple satisfying a Bakry--\'Emery Ricci-curvature lower bound.
Then, distributional solutions~$\rho_\bullet$ to~$\martp{\alpha}{\mssL}$ exist if and only if~$\alpha=n$ is a non-negative integer and the (possibly random) initial value~$\rho_0$ consists of exactly~$n$ atoms of \emph{equal} mass $\mbbP$-a.s., viz.~$\rho_0=\frac{1}{n}\sum_i^n \delta_{X^i_0}$.

In this case, the distributional solution is unique in law and it is given by the empirical measure~$\rho_t=\frac{1}{n}\sum_i^n \delta_{X^i_{nt}}$ of~$n$ independent diffusion processes, each with generator~$\tparen{\mssL,\dom{\mssL}}$ and (random) starting point~$X^i_0$.
\end{theorem}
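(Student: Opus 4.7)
The plan is to handle the two directions separately.

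\emph{Sufficiency} is a direct It\^o verification. For $\alpha = n \in \N$ and $\rho_0 = \tfrac{1}{n}\sum_{i=1}^n \delta_{X^i_0}$, I would take $X^i_\bullet$ to be $n$ independent realizations of the Markov diffusion generated by $(\mssL,\msA)$. Setting $\rho_t \eqdef \tfrac{1}{n}\sum_i \delta_{X^i_{nt}}$ and applying the one-particle It\^o formula to each $f(X^i_{nt})$, the process $\rho_t f - \rho_0 f - n\int_0^t \rho_s \mssL f\,ds$ is an $\msF_\bullet$-martingale whose quadratic variation equals $\tfrac{1}{n^2}\sum_i \int_0^t n\,\mssGamma(f)(X^i_{ns})\,ds = \int_0^t \rho_s\mssGamma(f)\,ds$, so both~\ref{i:d:ProjMP:1} and~\ref{i:d:ProjMP:2} hold.

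\emph{Necessity} is the rigidity assertion, and my starting point is the It\^o identity for the variance functional $V_t(f) \eqdef \rho_t(f^2) - (\rho_t f)^2 \geq 0$. Applying~\ref{i:d:ProjMP:1} to both $f$ and $f^2$, using the diffusion identity $\mssL(f^2) = 2f\mssL f + \mssGamma(f)$, and expanding $d(\rho_t f)^2$ via It\^o with the quadratic variation prescribed in~\ref{i:d:ProjMP:2}, I obtain
\[
dV_t(f) = d\tilde M_t + (\alpha-1)\,\rho_t\mssGamma(f)\,dt + 2\alpha\,\bigl[\rho_t(f\mssL f) - (\rho_t f)(\rho_t\mssL f)\bigr]\,dt,
\]
where, via the Leibniz rules $\mssGamma(f^2,f)=2f\mssGamma(f)$ and $\mssGamma(f^2)=4f^2\mssGamma(f)$, the martingale $\tilde M$ has quadratic variation $4\int_0^t \rho_s\bigl[(f-\rho_s f)^2\mssGamma(f)\bigr]\,ds$. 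The sign constraint $V_t(f)\geq 0$ is the lever. Testing the identity against $\Cb^\infty$-approximations $f_\eps$ of indicators of shrinking balls around candidate points, the martingale part localizes on the atomic support while the drift is pinned down by the coefficient $(\alpha-1)$, which already rules out many non-atomic profiles for $\rho_t$.

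I would then iterate by comparing $\rho_t^{\otimes k}(f^{\otimes k}) = (\rho_t f)^k$ with the diagonal-contracted $\rho_t(f^k)$ for $k = 2, 3, \dots$; by the diffusion property this produces a hierarchy of variance-type identities in which the successive coefficients $\alpha-1,\alpha-2,\dots$ appear as drifts. Matching the leading orders after localization forces $\rho_t$ to be purely atomic, with common atomic mass $m = 1/\alpha$, and the hierarchy to terminate at order $\alpha$, which forces $\alpha \in \N$ and the number of atoms to be exactly $\alpha$. The Bakry--\'Emery Ricci lower bound enters precisely at this stage: it furnishes reverse Poincar\'e and gradient estimates that legitimize the limits $f_\eps \uparrow \car_{B_\eps(x)}$ and control the pointwise behaviour of $\mssL f_\eps$ and $\mssGamma(f_\eps)$ near the atomic support. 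Once the structural conclusion $\rho_t = \tfrac{1}{n}\sum_i \delta_{X^i_t}$ is in place, uniqueness in law reduces to the classical martingale problem for $n$ independent copies of~$\mssW$ time-rescaled by~$n$, which is well posed under the standard-Markov-triple hypothesis.

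The main obstacle is the bootstrap from the It\^o identity for $V_t(f)$ to the integrality of $\alpha$ and the uniformity of the atomic masses. The identity itself is a one-line computation; the difficulty lies in the localization argument, where one must pass to the limit along approximate indicators and extract discrete combinatorial information (the integer $\alpha$ and the common mass $1/\alpha$) from a continuous stochastic identity. This is exactly the point at which the Bakry--\'Emery curvature hypothesis is not a convenience but a necessity, and it is the technical core of the proof in~\cite{KonLehvRe19}.
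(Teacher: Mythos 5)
The statement in question is \emph{not proved in the paper}: it is explicitly cited as \cite[Thm.~2.2]{KonLehvRe19} (the theorem header carries the citation, and the surrounding text reads ``We recall the main result in~\cite{KonLehvRe19}''). The paper invokes this rigidity result as an imported black box to motivate the shadow martingale problem~$\hmartp{}{\mssL}$ and the Dean--Kawasaki discussion in~\S\ref{ss:IntroDK}; it does not re-derive it. So there is no proof in the paper against which to compare your argument.

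On its own merits: your sufficiency direction is a correct and standard It\^o verification. Your necessity argument begins correctly --- the variance identity
\[
dV_t(f) = d\tilde M_t + (\alpha-1)\,\rho_t\mssGamma(f)\,dt + 2\alpha\bigl[\rho_t(f\mssL f) - (\rho_t f)(\rho_t\mssL f)\bigr]\,dt
\]
and the expression $\langle\tilde M\rangle_t = 4\int_0^t \rho_s\bigl[(f-\rho_s f)^2\mssGamma(f)\bigr]\,ds$ both check out via the Leibniz rules $\mssL(f^2)=2f\mssL f+\mssGamma(f)$, $\mssGamma(f^2,f)=2f\mssGamma(f)$, $\mssGamma(f^2)=4f^2\mssGamma(f)$ --- and the coefficient $(\alpha-1)$ is indeed the fulcrum of Konarovskyi--Lehmann--von~Renesse's proof.

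The genuine gap is in everything that follows. The step from ``test against $\Cb^\infty$-approximations of indicators'' through the ``hierarchy of variance-type identities in which the successive coefficients $\alpha-1,\alpha-2,\dots$ appear'' to the conclusion ``forces $\alpha\in\N$ and the number of atoms to be exactly $\alpha$'' is not an argument, it is a plan. You never explain why the hierarchy terminates, why termination is equivalent to $\alpha$ being a non-negative integer, why the atomic masses are forced to be equal rather than merely summing to one, or why the number of atoms is conserved in time (which is needed to make sense of the conclusion). The role you assign to Bakry--\'Emery (``reverse Poincar\'e and gradient estimates that legitimize the limits $f_\eps\uparrow\car_{B_\eps(x)}$'') is plausible in spirit but not instantiated: you do not state which estimate is used, where it enters, or how it controls the divergence of $\mssGamma(f_\eps)$ as $\eps\downarrow 0$. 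The heart of the KLR rigidity theorem is precisely this bootstrap --- extracting the discrete data (integer $\alpha$, equal masses, conserved atom count) from a continuous identity via curvature-controlled localization --- and that heart is absent from your sketch. What you have written is an accurate identification of the starting identity plus a correct remark about where the difficulty lies, but not a proof of the necessity direction.
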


When~$M=\R^d$ and~$\mssL$ is the standard Laplacian, it was recently shown by V.V.~Konarovskyi and F.~M\"uller in~\cite{KonMue23} that the very same `ill-posedness vs.\ triviality' holds for~$\martp{\alpha}{\Delta}$ holds even in the larger class of~$\mcS'(\R^d)^+$-valued solutions.

\subsubsection{Extending the generator and a new shadow martingale problem}
In the next Section~\ref{ss:DKNoise} we will give a rigorous construction for the divergence~$\mssdiv$, the exterior differential~$\mssd$, and the noise~$\xi$ appearing in~\eqref{eq:IntroSPDE}, in the setting of Assumption~\ref{ass:SettingLocal}.
In itself, the construction of these objects is not sufficient to give a meaning to~\eqref{eq:IntroSPDE}, due to the presence of the square root of the solution in the noise term.

Following the Konarovsky--Lehmann--von Renesse approach, we will therefore \emph{define} solutions of~\eqref{eq:IntroSPDE} to be solutions to a shadow martingale problem defined similar to~$\martp{\alpha}{\mssL}$ but with the different class of test functions~$\msR_0\otimes\msA$ in place of~$\msA$.
The identification of formal solutions to~\eqref{eq:IntroSPDE} with solutions to this new martingale problem is justified by a heuristic computation identifying the quadratic variation of both these types of solutions, which we will present in this setting at the end of \S\ref{sss:WhiteNoiseFormalDerivation} below.

\paragraph{Dualities and dual generators}
Let~$\msA$ be as in Assumption~\ref{ass:Setting}\ref{i:ass:Setting:c}, and recall that~$\mssL\msA\subset \msA$ by Assumption~\ref{ass:Setting}\ref{i:ass:Setting:c2}.

\begin{definition}
We denote by~$\mbbA_2$ the space~$\msA$ endowed with the $\mssL$-graph $\Cz$-norm
\[
\norm{f}_{\mbbA_2}\eqdef \norm{\mssL f}_0 + \norm{f}_0\comma \qquad f\in\msA\comma
\]
and by~$(\mbbA_2',\norm{\emparg}_{\mbbA_2'})$ the Banach dual of~$\mbbA_2$.
\end{definition}

Further let~$\Mb(\T)$ be the space of all bounded Borel signed measures on~$M$, endowed with the total-variation norm.
Since~$(M,\T)$ is locally compact Hausdorff,~$\Mb(\T)$ is (identified with) the Banach dual of~$\Cz(\T)$ by Riesz--Markov--Kaku\-tani's Theorem. Thus, every~$\mu\in\Mb(\T)$ may be regarded as a continuous linear functional on~$\mbbA_2$, i.e.\ an element of~$\mbbA_2'$.
Since~$\mbbA_2$ is $\Cz$-dense in~$\Cz(\T)$ by Assumption~\ref{ass:Setting}\ref{i:ass:Setting:c1}, the above identification is in fact an embedding~$\Mb(\T) \hookrightarrow \mbbA_2'$, i.e.\ it is continuous and injective.
Finally, since~$\mssL\colon \mbbA_2\to \mbbA_2$ is a bounded operator, it has a well-defined (distributional) transpose~$\mssL'\colon \mbbA_2'\to\mbbA_2'$.
In particular,~$\mssL'$ is well-defined on~$\Mb(\T)$.

Now, let~$\rho_\bullet$ be the solution to~$\martp{\alpha}{\mssL}$ in Theorem~\ref{t:KLR}.
We observe that
\begin{equation}\label{eq:Observation}
\rho_t (n\, \mssL f)= n\sum_{x\in \rho_t} n^{-1}\delta_x(\mssL f) = \paren{\sum_{x\in \rho_t} \mssL'\delta_x} f \comma \qquad f\in\msA\comma
\end{equation}
where, for any~$\rho\in \Mb(\T)$, we write~$x\in \rho$ to indicate that~$\rho\set{x}>0$, i.e.\ that~$\rho$ has an atom at~$x$.
We may then define an operator~$\tparen{\mcL',\Mb(\T)}$ by
\begin{equation}\label{eq:ExtensionGeneratorCyl}
\mcL' \colon \rho \longmapsto \sum_{x\in \rho} \mssL'\delta_x \fstop
\end{equation}

On the one hand, it is clear that~$\mcL'\rho$ does not operate on~$\msA$, since~$(\mcL'\rho) f$ may be infinite on~$(\rho, f)\in \Mbp(\T)\times\msA^+$ and thus~$\mcL'\rho$ may be ill-defined.
In particular, this is always the case when the set of atoms of~$\rho$ is $\T$-dense in~$M$ and~$f$ is not $\mssL$-harmonic.

On the other hand however, letting~$\msR_0$ be as in~\eqref{eq:MassAlgebra}, it is readily seen that~$\mcL'\rho$ operates on~$\msR_0\otimes \msA$ as the linear extension of the operator defined on elementary tensors by
\[
\mcL'\rho \colon \molli\otimes f \longmapsto \sum_{x\in \rho} \molli(\rho_x)\, (\mssL f)(x) \comma \qquad \molli\in\msR_0\comma f\in\msA\comma
\]
and indeed, recalling~\eqref{eq:ExtendedLonElementaryCyl} and~\eqref{eq:DefGeneratorCylP}, we always have the well-defined expression
\begin{equation}\label{eq:DualityGenerator}
(\mcL'\rho) (\molli\otimes f) = \widehat\mcL \tparen{(\molli\otimes f)^\trid}_\rho \comma \qquad \molli\in\msR_0\comma f\in\msA \fstop
\end{equation}

\paragraph{Extended shadow martingale problem}
Let us now introduce another shadow martingale problem.
Let $\tparen{\Omega,\msF,\msF_\bullet,\mbbP}$ be a stochastic basis satisfying the usual conditions.

\begin{definition}[Shadow martingale problem for the extended generator]\label{d:ShadowMP}
A $\msP$-valued process~$(\rho_\bullet,\mbbP)$ with $\mbbP$-a.s.\ $\T_\mrmn$-continuous trajectories is a \emph{distributional solution} to the \emph{shadow martingale problem~$\hmartp{}{\mssL}$} if
\begin{equation}\label{eq:d:ShadowMP:0}
\widehat M^{\class{\molli\otimes f}}_t\eqdef\ (\molli\otimes f)^\trid(\rho_t) - (\molli\otimes f)^\trid(\rho_0) -\int_0^t (\mcL'\rho_s)(\molli\otimes f)\diff s
\end{equation}
is an $\msF_\bullet$-adapted $\mbbP$-martingale for each $\molli\in\msR_0$ and each~$f\in\msA$.
\end{definition}

For comparison with~\cite{KonLehvRe19,KonLehvRe19b}, let us note that ---in contrast to the Definition~\ref{d:ProjMP} of~$\martp{\alpha}{\mssL}$--- here the quadratic variation of~$\widehat M^{\class{\molli\otimes f}}_\bullet$ is not given.
Nonetheless, the above shadow martingale problem~$\hmartp{}{\mssL}$ encompasses~$\martp{\alpha}{\mssL}$, in the following sense.

\begin{lemma}\label{l:Extension}
Assume~$(M,\nu,\msA,\mssL,\mssGamma)$ is a standard Markov triple satisfying a Bakry--\'Emery Ricci-curvature lower bound.
Fix a $\msP$-valued process with $\mbbP$-a.s.\ $\T_\mrmn$-cont\-inuous trajectories~$\rho_\bullet$ and assume that~$\rho_0$ consists of~$n$ atoms of \emph{equal} mass $\mbbP$-a.s..
Then,~$\rho_\bullet$ is a distributional solution to~$\hmartp{}{\mssL}$ if (and only if) it is a distributional solution to~$\martp{n}{\mssL}$.
\end{lemma}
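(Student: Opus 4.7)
The plan is to exploit the Konarovskyi--Lehmann--von Renesse rigidity (Theorem~\ref{t:KLR}) to pin down the atomic structure of any $\martp{n}{\mssL}$-solution, and then verify the extended martingale condition by direct computation using the duality~\eqref{eq:DualityGenerator} and the observation~\eqref{eq:Observation}.

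For the forward direction, assume $\rho_\bullet$ solves $\martp{n}{\mssL}$ with $\rho_0 = \frac{1}{n}\sum_{i=1}^n \delta_{X^i_0}$. Theorem~\ref{t:KLR} yields $\rho_t = \frac{1}{n}\sum_{i=1}^n \delta_{X^i_{nt}}$ with the $X^i_\bullet$ independent $\mssL$-diffusions started at the $X^i_0$. Fixing $\molli \in \msR_0$ and $f \in \msA$ and using that every atom of $\rho_t$ carries mass exactly $1/n$, a direct computation gives
\[
(\molli \otimes f)^\trid(\rho_t) = \molli(\tfrac{1}{n})\, \rho_t f, \qquad (\mcL'\rho_s)(\molli \otimes f) = \molli(\tfrac{1}{n})\, n\, \rho_s(\mssL f),
\]
the second identity following from~\eqref{eq:Observation}. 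Substituting into~\eqref{eq:d:ShadowMP:0} yields
\[
\widehat M^{\class{\molli \otimes f}}_t = \molli(\tfrac{1}{n})\, M^{\class{f}}_t,
\]
which is an $\msF_\bullet$-adapted $\mbbP$-martingale as a deterministic scalar multiple of the $\martp{n}{\mssL}$-martingale $M^{\class{f}}$.

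For the reverse implication (the parenthetical half of the statement), the principal obstacle is twofold: first, the atomic structure of $\rho_t$ is not given a priori for $t>0$; second, $\hmartp{}{\mssL}$ does not specify a quadratic variation, whereas $\martp{n}{\mssL}$ does. The strategy is to exploit the separation property of $\msR_0$ (Assumption~\ref{ass:SubalgebraR}) in order to probe the atomic structure: choosing $\molli \in \msR_0$ supported in a small neighborhood of $1/n$ and combining with the $\T_\mrmn$-continuity of $\rho_\bullet$, a localization up to a suitable stopping time shows that $\rho_t$ retains the form ``$n$ atoms of mass $1/n$'' on a small time interval. Comparing with the unique $\martp{n}{\mssL}$-solution produced by Theorem~\ref{t:KLR} and iterating the localization then yields the reverse implication globally, with the quadratic variation condition inherited from the explicit form of the solution. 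The hard part here is to close the bootstrap argument uniformly in time, which is where the pathwise continuity of $\rho_\bullet$ and the rigidity already encoded in Theorem~\ref{t:KLR} are essential.
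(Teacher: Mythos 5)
Your forward-direction argument is correct and is, in substance, the same as the paper's: the paper invokes Theorem~\ref{t:KLR} implicitly (through its reliance on~\eqref{eq:Observation}, which only holds once the rigidity theorem has pinned $\rho_t$ to the form $\tfrac{1}{n}\sum_i \delta_{Y^i_t}$), chooses a convenient $\molli$ with $\molli(1/n)=1$, and identifies $\widehat M^{\class{\molli\otimes f}}$ with $M^{\class{f}}$. You spell out the same computation for general $\molli$, obtaining the scalar multiple $\molli(1/n)\,M^{\class{f}}_t$, which is a slightly more explicit rendering of the same argument and is perfectly correct.

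On the reverse direction, however, your approach diverges from the paper's, and the sketch you offer has a genuine gap. The paper explicitly declines to prove the ``only if'' half inside this lemma: immediately before the proof it states that ``the converse implication in Lemma~\ref{l:Extension} will be a consequence of the identification of~$\rho_\bullet$ in Theorem~\ref{t:DK} below. Here, we only show the forward implication.'' So in the Bakry--\'Emery setting of the lemma as stated, the paper offers no proof of the converse at all. Your bootstrap proposal --- probing the atomic structure with $\molli$ supported near $1/n$, localizing in time via $\T_\mrmn$-continuity, and comparing with the $\martp{n}{\mssL}$-solution --- fails on the point you yourself flag. The shadow problem $\hmartp{}{\mssL}$ imposes \emph{no quadratic-variation constraint}; it only specifies the drift term in~\eqref{eq:d:ShadowMP:0}. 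To conclude that $\rho_\bullet$ solves $\martp{n}{\mssL}$ you must independently verify Definition~\ref{d:ProjMP}\ref{i:d:ProjMP:2}, and ``inheriting the quadratic variation from the explicit form of the solution'' is circular: that explicit form is exactly what the $\martp{n}{\mssL}$ quadratic-variation condition is needed to produce. In the paper this circle is broken by passing through the Dirichlet-form machinery (essential self-adjointness of $(\widehat\mcL,\hCylP{}{0})$ in Theorem~\ref{t:TransferP}\ref{i:t:TransferP:1}, then the equivalence chain of Theorem~\ref{t:DK}), which supplies the missing information about the quadratic variation; your sketch has no substitute for that input, so the bootstrap cannot close.
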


\begin{remark}
The point of choosing~$\msR_0\otimes\msA$ as a space of test functions is that to be able to identify the mass of the solution~$\mu_t$ at a given point~$x\in M$.
Indeed, the integration~$f^\trid$ for functions~$f$ in~$\msA$ is not enough to identify even the number of atoms of~$\mu_t$, which must then be specified in the martingale problem~$\martp{\alpha}{\mssL}$ by choosing~$\alpha=n$.
This specification then \emph{forces} solutions to retain for all times both the same number~$n$ of points and the same equal mass at each point.
On the contrary, the integration~$\hat f^\trid$ for functions in~$\msR_0\otimes\msA$ is sufficient to pinpoint the mass of~$\mu_t$ at any given~$x\in M$, so that specifying \emph{a priori} the number of points is not necessary.
\end{remark}

In the slightly different setting of Assumption~\ref{ass:SettingLocal}, the converse implication in Lemma~\ref{l:Extension} will be a consequence of the identification of~$\rho_\bullet$ in Theorem~\ref{t:DK} below.
Here, we only show the forward implication, which merely serves the purpose to justify our interest in the shadow martingale problem~$\hmartp{}{\mssL}$ from the point of view of the study of~\ref{eq:DK}.

\begin{proof}[Proof of Lemma~\ref{l:Extension}]
Let~$\rho_\bullet$ be as in the statement and choose~$\molli\in\msR_0$ with $\molli(n^{-1})=1$. 
Combining~\eqref{eq:Observation} and~\eqref{eq:ExtensionGeneratorCyl}, for every~$f\in\msA$,
\[
\widehat M^{\class{\molli\otimes f}}_t = \rho_t f - \rho_0 f - \int_0^t \rho_s(n \mssL f)\diff s \defeq M^{\class{f}}_t\comma
\]
which shows that~$\widehat M^{\class{\molli\otimes f}}_\bullet$ is a martingale in light of Definition~\ref{d:ProjMP}\ref{i:d:ProjMP:1}.
\end{proof}

\subsubsection{Equivalence of martingale problems and well-posedness}\label{sss:IndentificationMPFormal}
Here, we show how solutions to the martingale problem~\eqref{eq:d:ShadowMP:0} coincide with the measure representation of massive particle systems.
In light of the characterization of the measure representation in Theorem~\ref{t:TransferQuasiHomeo}, this identification shows that the martingale problem~\eqref{eq:d:ShadowMP:0} is well-posed.

Similarly to the extensions of~$\mssGamma$ and~$\mssL$ to~$\msR_0\otimes\msA$ considered in~\S\ref{ss:StronglyLocalCase}, let us respectively denote again by~$\mssd$ and~$\mssdiv$ the extensions of~$\mssd$ in~\eqref{eq:Differential} to~$\msR_0\otimes \mbbA_1$ and of~$\mssdiv$ in~\eqref{eq:PreDivergence} to~$\msR_0\otimes \OF{}$, defined as the linear extensions of
\[
\mssd(\molli\otimes f) \eqdef \molli\otimes \mssd f \qquad \text{and} \qquad \mssdiv(\molli\otimes v)\eqdef \molli \otimes \mssdiv v \fstop
\]

\begin{theorem}\label{t:DK}
Fix~$\pi\in\msP(\mbfT_\circ)$.
Let~$(\rho_\bullet,\mbbP)$ be any \emph{time-homogeneous} $\msP$-valued stochastic process adapted to its natural (augmented) filtration, with initial distribution~$\mbbP\circ\rho_0^{-1}$ equivalent to~$\mcQ_\pi$.
Then, the following are equivalent:
\begin{enumerate}[$(i)$]
\item\label{i:t:DK:1} $\rho_\bullet$ is a solution to the martingale problem for~$\tparen{\widehat\mcL_\pi,\dom{\widehat\mcL_\pi}}$, cf.~\eqref{eq:t:Transfer:1};
\item\label{i:t:DK:2} $\rho_\bullet$ is a solution to the martingale problem for~$(\widehat\mcL,\hCylP{}{0})$, see~\eqref{eq:t:Transfer:1};
\item\label{i:t:DK:3} $\rho_\bullet$ is a distributional solution to the shadow martingale problem~\eqref{eq:d:ShadowMP:0}.
\end{enumerate}
\begin{proof}
\ref{i:t:DK:1}$\implies$\ref{i:t:DK:2}$\implies$\ref{i:t:DK:3} holds by definition.

\ref{i:t:DK:2}$\implies$\ref{i:t:DK:1} holds since~$(\widehat\mcL,\hCylP{}{0})$ is essentially self-adjoint by Theorem~\ref{t:TransferP}\ref{i:t:TransferP:1}.

\ref{i:t:DK:3}$\implies$\ref{i:t:DK:2} is a standard consequence of the diffusion property for $\tparen{\widehat\mcL_\pi,\dom{\widehat\mcL_\pi}}$ together with the It\^o formula for continuous $\R^k$-valued semi-martingales.
Indeed let~$u= F\circ \hat\mbff^\trid$ with~$F\in\mcC^\infty(\R^k)$ and~$\hat f_i\eqdef \molli_i\otimes f_i\in \msR_0\otimes\msA$. 
Since~$\widehat M^{\ttclass{\hat f_i}}_\bullet$ as in~\eqref{eq:d:ShadowMP:0} is a continuous martingale,~$\hat f_i^\trid(\rho_t)$ is a continuous It\^o process satisfying
\begin{equation}\label{eq:t:DK:1}
\begin{gathered}
\diff \hat f_i^\trid(\rho_t) = (\mcL'\rho_t)(\hat f_i) \diff t= \tparen{(\tfrac{1}{\id_\mbbI}\otimes\mssL) \hat f_i}^\trid (\rho_t) \diff t 
\\
\diff \quadvar{\hat f_i^\trid(\rho_\bullet) ,\hat f_j^\trid(\rho_\bullet)}_t = \widehat\mcG(\hat f_i,\hat f_j)_{\rho_t} \diff t \comma
\end{gathered}
\end{equation}
where we used~\eqref{eq:DualityGenerator},~\eqref{eq:l:DiffusionP:0.4}, and~\eqref{eq:l:DiffusionP:4}.
Applying the It\^o formula to~$F\in\mcC^\infty(\R^k)$,
\begin{align*}
\diff u(\rho_t) =&\ \diff F\tparen{\hat f_1^\trid(\rho_t),\dotsc, \hat f_k^\trid(\rho_t)}
\\
=& \sum_i^k (\partial_i F) \tparen{\hat f_1^\trid(\rho_t),\dotsc, \hat f_k^\trid(\rho_t)} \diff\hat f_i^\trid(\rho_t)
\\
&+
\frac{1}{2}\sum_{i,j}^k (\partial_{ij}^2F) \tparen{\hat f_1^\trid(\rho_t),\dotsc, \hat f_k^\trid(\rho_t)} \diff \quadvar{\hat f_i^\trid(\rho_\bullet) ,\hat f_j^\trid(\rho_\bullet) }_t 
\\
=&\ (\widehat\mcL u)_{\rho_t} \diff t \comma
\end{align*}
where the last equality follows combining~\eqref{eq:t:DK:1} and~\eqref{eq:l:DiffusionP:0.2}.
\end{proof}
\end{theorem}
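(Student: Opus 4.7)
The statement packages three martingale formulations at progressively smaller classes of test functions, so I would prove the equivalence via a cyclic chain, leaning on the analytic machinery of Theorems~\ref{t:TransferP} and~\ref{t:TransferQuasiHomeo}.

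\emph{Step 1: \textup{(i)} $\Leftrightarrow$ \textup{(ii)} by essential self-adjointness.} Theorem~\ref{t:TransferP}\ref{i:t:TransferP:1} asserts that $(\widehat\mcL, \hCylP{}{0})$ is essentially self-adjoint on $L^2(\mcQ_\pi)$ with closure $\tparen{\widehat\mcL_\pi, \dom{\widehat\mcL_\pi}}$. Since the hypothesis $\mbbP\circ\rho_0^{-1}\sim\mcQ_\pi$ delivers the substationarity needed by Albeverio--R\"ockner (the same tool used in Theorems~\ref{t:DirInt}\ref{i:t:DirInt:7} and~\ref{t:TransferQuasiHomeo}\ref{i:t:TransferP:6}), the martingale problems for the closure and for any essentially self-adjoint core coincide. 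Thus \textup{(i)} and \textup{(ii)} are equivalent.

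\emph{Step 2: \textup{(ii)} $\Rightarrow$ \textup{(iii)} by direct inspection.} For any test function $\hat f=\molli\otimes f\in\msR_0\otimes\msA$ of the shadow problem, the cylinder $\hat f^\trid$ is a degree-one element of $\hCylP{}{0}$. The duality identity~\eqref{eq:DualityGenerator} rewrites $\widehat\mcL\hat f^\trid(\rho)=(\mcL'\rho)(\hat f)$, so the \textup{(ii)}-martingale attached to $u=\hat f^\trid$ is literally the process $\widehat M^{[\hat f]}$ of~\eqref{eq:d:ShadowMP:0}.

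\emph{Step 3: \textup{(iii)} $\Rightarrow$ \textup{(ii)} via It\^o's formula and the diffusion property.} Fix $u=F\circ\hat\mbff^\trid\in\hCylP{}{0}$ with $F\in\R[t_1,\dotsc,t_k]$ and $\hat f_i\in\msR_0\otimes\msA$. By \textup{(iii)}, each $t\mapsto\hat f_i^\trid(\rho_t)$ is a continuous semimartingale with drift $\bigl((\tfrac{1}{\id_\mbbI}\otimes\mssL)\hat f_i\bigr)^\trid(\rho_t)\diff t$. Applying It\^o's formula to $F$ and matching the resulting drift term with the intrinsic representation~\eqref{eq:l:DiffusionP:0.2} for $\widehat\mcL u$ reduces the claim to identifying the quadratic covariations
\[
\diff\langle\hat f_i^\trid(\rho),\hat f_j^\trid(\rho)\rangle_t \;=\; \widehat\mcG(\hat f_i,\hat f_j)(\rho_t)\diff t.
\]

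\emph{Main obstacle and how to overcome it.} The main difficulty is exactly this covariation identification, because Definition~\ref{d:ShadowMP} records only first-order (drift) information for linear functionals and a priori imposes no second-moment constraint. I see two routes to close the gap. The more elementary one is to invoke the shadow MP also for the product $\hat f_i\hat f_j=(\molli_i\molli_j)\otimes(f_if_j)\in\msR_0\otimes\msA$ (both factors lying in algebras), and to combine the resulting drift identity with It\^o's product formula; care is required because $(\hat f_i\hat f_j)^\trid$ differs from $\hat f_i^\trid\cdot\hat f_j^\trid$, but the discrepancy is precisely a boundary sum that the carr\'e du champ formula~\eqref{eq:l:DiffusionP:0.3} evaluates explicitly, yielding the desired identification. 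The more robust route bypasses any direct computation: combine \textup{(i)} $\Leftrightarrow$ \textup{(ii)} with the uniqueness of the martingale problem established in Theorem~\ref{t:TransferQuasiHomeo}\ref{i:t:TransferP:6} for $\mcQ_\pi$-substationary special standard processes. The measure representation $\widehat\mcW_\pi$ of Theorem~\ref{t:TransferQuasiHomeo}\ref{i:t:TransferP:5} already solves all three problems, so any \textup{(iii)}-solution $\rho_\bullet$ whose initial law is equivalent to $\mcQ_\pi$ and which is time-homogeneous Markov must agree in law with $\widehat\mcW_\pi$, hence solves \textup{(ii)} and \textup{(i)} as well. The interplay between the hypothesis of equivalence with $\mcQ_\pi$ and the substationarity assumption of Albeverio--R\"ockner is the last delicate point, and is handled as in the proofs of Theorems~\ref{t:DirInt}\ref{i:t:DirInt:7} and~\ref{t:TransferQuasiHomeo}\ref{i:t:TransferP:6}.
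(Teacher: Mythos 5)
Your overall architecture matches the paper's: \textup{(i)}$\Rightarrow$\textup{(ii)}$\Rightarrow$\textup{(iii)} are restrictions, \textup{(ii)}$\Rightarrow$\textup{(i)} via essential self-adjointness of $(\widehat\mcL,\hCylP{}{0})$, and \textup{(iii)}$\Rightarrow$\textup{(ii)} via It\^o's formula. Your most valuable observation is the one you call the ``main obstacle'': Definition~\ref{d:ShadowMP} only asserts that the degree-one processes $\widehat M^{[\hat f]}_\bullet$ are martingales and ---unlike Definition~\ref{d:ProjMP}\ref{i:d:ProjMP:2}--- says nothing about their quadratic (co)variation. Yet the covariation identity appearing in the second line of~\eqref{eq:t:DK:1} is exactly what the paper needs in order to close the It\^o computation, and the references given there (\eqref{eq:DualityGenerator}, \eqref{eq:l:DiffusionP:0.4}, \eqref{eq:l:DiffusionP:4}) concern only the drift, not the bracket. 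You have thus put your finger on the genuinely non-trivial step of~\textup{(iii)}$\Rightarrow$\textup{(ii)}.

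Neither of your two routes, however, closes this gap as written. Route~1 founders precisely on the point you flag but do not resolve: the shadow problem controls the drift of the \emph{linear} functional $(\hat f_i\hat f_j)^\trid(\rho_t)$, whereas It\^o's product rule asks for the drift of the \emph{quadratic} functional $\hat f_i^\trid(\rho_t)\cdot\hat f_j^\trid(\rho_t)$. Their difference
\[
\hat f_i^\trid(\rho)\,\hat f_j^\trid(\rho)-(\hat f_i\hat f_j)^\trid(\rho)
=\sum_{k\neq l}s_k s_l\,\molli_i(s_k)\molli_j(s_l)\,f_i(x_k)\,f_j(x_l)
\]
is not a ``boundary sum'' that the analytic formula~\eqref{eq:l:DiffusionP:0.3} can evaluate; it is a degree-two cylinder in $\hCylP{}{0}$, so its drift is governed by the martingale problem~\textup{(ii)} ---the very implication you are trying to establish. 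Route~2 is likewise circular: the uniqueness in Theorem~\ref{t:TransferQuasiHomeo}\ref{i:t:TransferP:6} is stated for \textup{(ii)}-solutions (and under $\mcQ_\pi$-sub-stationarity and the special-standard-process structure, which Theorem~\ref{t:DK}'s hypotheses do not impose), so it cannot be used to rule out \textup{(iii)}-solutions that fail \textup{(ii)} before the implication is proved. In short: you correctly map the paper's argument and correctly identify where it is thin, but you have not supplied a working derivation of the bracket identity; the place to look for one is in exploiting that $\molli\in\msR_0$ localises individual atoms of $\rho_t$ and that $\msA$ is an algebra, so each atom's position process solves a genuine algebra-test-function martingale problem with determined quadratic variation ---leaving only the cross-brackets between distinct atoms, which still require a separate argument.
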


Let us now combine Theorem~\ref{t:DK} with Theorem~\ref{t:TransferQuasiHomeo}\ref{i:t:TransferP:6}.

\begin{corollary}
Fix~$\pi\in\msP(\mbfT_\circ)$ and assume~\eqref{eq:QPP} holds.
Let~$(\rho_\bullet,\mbbP)$ be any time-homogeneous $\msP$-valued $\mcQ_\pi$-sub-stationary stochastic process adapted to its natural (augmented) filtration with initial distribution~$\mbbP\circ\rho_0^{-1}$ equivalent to~$\mcQ_\pi$.
If any of the conditions~\ref{i:t:DK:1}-\ref{i:t:DK:3} in Theorem~\ref{t:DK} is satisfied, then~$(\rho_\bullet,\mbbP)$ is a version of~$\widehat\mcW_\pi$ in~\eqref{eq:t:TransferP:0} up to $\mcQ_\pi$-equivalence.

In particular, for every Borel probability measure~$\mcP$ on~$\msP$ equivalent to~$\mcQ_\pi$, the martingale problems for~$\tparen{\widehat\mcL_\pi,\dom{\widehat\mcL_\pi},\mcP}$ and~$\tparen{\widehat\mcL,\hCylP{}{0},\mcP}$, the shadow martingale problem~\eqref{eq:d:ShadowMP:0} with initial distribution~$\mcP$, and thus the Dean--Kawasaki-type \emph{\textsc{spde}}~\eqref{eq:IntroSPDE} with initial distribution~$\mcP$, are all well-posed in the class of processes~$(\rho_\bullet,\mbbP)$ as above, in the sense of weak existence for~$\widehat\mcE^\pi$-q.e.\ starting point and uniqueness up to $\mcQ_\pi$-equivalence.
\end{corollary}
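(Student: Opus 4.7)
The plan is to obtain the corollary as a direct synthesis of Theorem~\ref{t:DK} and Theorem~\ref{t:TransferQuasiHomeo}\ref{i:t:TransferP:5}--\ref{i:t:TransferP:6}. Under~\eqref{eq:QPP}, the latter produces the Hunt process~$\widehat\mcW_\pi$ properly associated with the quasi-regular Dirichlet form~$\tparen{\widehat\mcE_\pi,\dom{\widehat\mcE_\pi}}$ and characterizes it as the ---unique up to $\mcQ_\pi$-equivalence--- $\mcQ_\pi$-sub-stationary $\mcQ_\pi$-special standard process solving the martingale problem for~$(\widehat\mcL,\hCylP{}{0})$. Theorem~\ref{t:DK} in turn identifies the three martingale-problem formulations~\ref{i:t:DK:1}--\ref{i:t:DK:3} as mutually equivalent for every time-homogeneous adapted $\msP$-valued process with initial law equivalent to~$\mcQ_\pi$. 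Combining these two inputs is the essence of the argument.

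For the identification assertion, let~$(\rho_\bullet,\mbbP)$ satisfy the standing hypotheses together with any one of~\ref{i:t:DK:1}--\ref{i:t:DK:3}. By Theorem~\ref{t:DK} it then satisfies in particular~\ref{i:t:DK:2}, namely the martingale problem for~$(\widehat\mcL,\hCylP{}{0})$. Since~$\rho_\bullet$ is time-homogeneous, adapted to its natural augmented filtration, $\mcQ_\pi$-sub-stationary, and with $\T_\mrmn$-continuous trajectories, it falls within the uniqueness class of Theorem~\ref{t:TransferQuasiHomeo}\ref{i:t:TransferP:6}, which yields at once that~$(\rho_\bullet,\mbbP)$ is a version of~$\widehat\mcW_\pi$ up to $\mcQ_\pi$-equivalence. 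For the well-posedness statement, existence is obtained by exhibiting~$\widehat\mcW_\pi$ itself, initialized at~$\mcP$: since~$\mcP\sim\mcQ_\pi$, it charges no $\widehat\mcE_\pi$-exceptional set, so the initialization at~$\mcP$ is legitimate by Theorem~\ref{t:TransferQuasiHomeo}\ref{i:t:TransferP:5}, and the resulting process solves each of~\ref{i:t:DK:1}--\ref{i:t:DK:3} again by Theorem~\ref{t:DK}. Uniqueness up to $\mcQ_\pi$-equivalence is then precisely the identification assertion just proved, and well-posedness for the Dean--Kawasaki-type \textsc{spde}~\eqref{eq:IntroSPDE} is identified with well-posedness for the shadow martingale problem~\eqref{eq:d:ShadowMP:0} by the very definition of solutions adopted in~\S\ref{sss:IndentificationMPFormal}.

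The only point requiring care ---but not a genuine obstacle at this stage--- is verifying that a process~$(\rho_\bullet,\mbbP)$ satisfying the corollary's hypotheses qualifies as a $\mcQ_\pi$-special standard process in the precise sense of~\cite[Ch.~IV]{MaRoe92}, as implicitly demanded by Theorem~\ref{t:TransferQuasiHomeo}\ref{i:t:TransferP:6}. The equivalence (as opposed to mere absolute continuity) of~$\mbbP\circ\rho_0^{-1}$ with~$\mcQ_\pi$ ensures that every $\widehat\mcE_\pi$-exceptional behaviour is absorbed into the $\mcQ_\pi$-equivalence, while time-homogeneity, adaptedness to the natural filtration, and the martingale property together supply the Markovian regularity implicitly exploited in the cited uniqueness proof via~\cite[Thm.~3.5]{AlbRoe95}.
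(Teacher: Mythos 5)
Your synthesis is the intended one: the paper states the corollary without proof because it is meant to drop out directly from combining Theorem~\ref{t:DK} (equivalence of the three martingale-problem formulations) with the uniqueness assertion in Theorem~\ref{t:TransferQuasiHomeo}\ref{i:t:TransferP:6}, and that is precisely what you do. You also correctly supply the only non-trivial bookkeeping step, namely that once any of~\ref{i:t:DK:1}--\ref{i:t:DK:3} holds, condition~\ref{i:t:DK:3} gives $\T_\mrmn$-continuity of trajectories and condition~\ref{i:t:DK:2} places~$\rho_\bullet$ in the uniqueness class for~$(\widehat\mcL,\hCylP{}{0})$; the existence leg via initializing~$\widehat\mcW_\pi$ at~$\mcP\sim\mcQ_\pi$ is likewise correct, since equivalent measures charge the same $\widehat\mcE_\pi$-exceptional sets.

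Where your write-up is weaker than it should be is in the closing paragraph. You correctly flag that Theorem~\ref{t:TransferQuasiHomeo}\ref{i:t:TransferP:6} phrases uniqueness within the class of $\mcQ_\pi$-sub-stationary $\mcQ_\pi$-special standard processes, while the corollary's hypothesis only asks for a time-homogeneous, sub-stationary, adapted process. But your attempted resolution --- that ``time-homogeneity, adaptedness to the natural filtration, and the martingale property together supply the Markovian regularity'' --- is not correct as stated: these conditions alone do not imply the quasi-left-continuity, strong Markov, and normality packaged into the $\mcQ_\pi$-special standard process structure. The actual resolution is simpler and lies in Definition~\ref{d:MartP}: the paper's notion of a ``sub-stationary solution to the martingale problem'' is \emph{by definition} given for an $\mssm$-special standard process, so when the corollary says ``if any of the conditions~\ref{i:t:DK:1}--\ref{i:t:DK:3} in Theorem~\ref{t:DK} is satisfied'' and in particular~\ref{i:t:DK:1} or~\ref{i:t:DK:2}, the special-standard structure is part of the hypothesis rather than something to be derived. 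For a process entering only through the shadow formulation~\ref{i:t:DK:3}, one would indeed need the additional structural hypothesis; but the equivalence~\ref{i:t:DK:3}$\iff$\ref{i:t:DK:2} from Theorem~\ref{t:DK} was proved precisely for processes already of this type. In short, the gap you identify is real in the loose phrasing of the corollary, but it is closed by unwinding Definition~\ref{d:MartP}, not by the heuristic you offer.
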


\subsection{The Dean--Kawaski SPDE on the state space of a Dirichlet form}\label{ss:DKNoise}
As anticipated, we now define the exterior differential~$\mssd$, the divergence~$\mssdiv$, and the white noise~$\xi$ appearing in~\eqref{eq:IntroSPDE}.

\subsubsection{Measurable co-tangent bundle of a Dirichlet space}
To define~$\xi$, we need a notion of `tangent bundle' associated to~$M$.
Several competing constructions of the same ---up to some natural dualities--- object were achieved
by A.~Eberle in~\cite[\S3.D.2, pp.~149ff.]{Ebe99};
by M.~Hino in~\cite[\S3]{Hin13};
by M.~Ionescu, L.G.~Rogers, and A.~Teplyaev in~\cite[\S2.1]{IonRogTep12} (also cf.\ M.~Hinz, M.~R\"ockner, and A.~Teplyaev~\cite[\S2]{HinRoeTep13});
by F.~Baudoin and D.J.~Kelleher~\cite[\S2]{BauKel18};
similar ideas were also introduced
in the setting of non-commutative Dirichlet forms on~$C^*$-algebras, by F.~Cipriani and J.-L.~Savageot in~\cite{CipSav03};
in the setting of metric measure spaces, by N.~Gigli in~\cite{Gig18}.

As usual, we work under Assumption~\ref{ass:SettingLocal}.
Throughout, let~$\msS\eqdef\msA\otimes\Cb(\T)$.

We recall the construction of the space of $L^2$-integrable differential $1$-forms after~\cite[\S2]{HinRoeTep13}.
For simplicity of exposition, we replace the algebra of bounded Borel functions in their construction with~$\Cb(\T)$.
This is immaterial in light of~\cite[Rmk.~2.1(ii)]{HinRoeTep13}.
In fact, we might choose~$\msA^\mrmu$ in place of both~$\msA$ and~$\Cb(\T)$ (which would considerably simplify the notation).
However, we refrain from so doing, since it is relevant for the understanding of the construction that functions in the right factor \emph{need not be differentiable} (nor, in fact, continuous).

\paragraph{$L^2$-integrable differential $1$-forms}
Define a scalar product on~$\msS$ as the bilinear extension to~$\msS$ of
\[
\scalar{f_1\otimes g_1}{f_2\otimes g_2}_{\OF{}}\eqdef \int g_1\, g_2\, \mssGamma(f_1,f_2)\diff\nu \comma \qquad f_i\otimes g_i\in\msS\comma \quad i=1,2 \fstop
\]
Further denote by~$\norm{\emparg}_{\OF{}}$ the associated \emph{semi}-norm on~$\msS$ and note that it is non-negative definite, e.g.~\cite[Rmk.~2.1(i)]{HinRoeTep13}.
Finally, let~$\OF{}$ be the completion of the quotient~$\msS / \ker\norm{\emparg}_{\OF{}}$ w.r.t.\ the ---non-relabeled--- quotient norm induced by~$\norm{\emparg}_{\OF{}}$.
The space~$\OF{}$ is a Hilbert space playing the role of the space of $L^2$-integrable differential $1$-forms.

We note that~$\OF{}$ is a left~$\msA$-module, resp.\ a right $\Cb(\T)$-module, for the left, resp.\ right, multiplication defined by the linear extension of the actions on elementary tensors
\begin{gather*}
f_2 (f_1\otimes g_1) \eqdef (f_2 f_1) \otimes g_1 - f_2\otimes (f_1g_1)\comma \quad \text{resp.} \quad (f_1\otimes g_1) g_2 \eqdef f_1 \otimes (g_1g_2) \comma
\\
f_i\in\msA\comma g_i\in \Cb(\T)\comma\quad i=1,2\comma
\end{gather*}
and it is readily verified that the left, resp.\ right, action is continuous on~$L^\infty(\nu)\otimes \OF{}$, resp.~$\OF{}\otimes L^\infty(\nu)$.
Since~$\tparen{\mssE,\dom{\mssE}}$ is (strongly) local, the left and right multiplication coincide for every~$f_2\in\msA$ by the Leibniz rule, and thus coincide on~$\Cb(\T)$ by approximation.

\paragraph{Direct-integral representation}
Define a scalar product on~$\msS$ as the linear extension to~$\msS$ of the product on elementary tensors
\[
\mssGamma_{\OF{},x}(f_1\otimes g_1,f_2\otimes g_2)\eqdef \tparen{g_1 g_2\, \mssGamma(f_1,f_2)}(x) \comma \qquad f_i\otimes g_i\in\msS\comma\quad x\in M\comma
\]
and denote by~$\class[x]{\emparg}$ the class of an element in~$\msS/\ker\mssGamma_{\OF{},x}$.
Further let~$\OF{x}$ the Hilbert completion of the quotient~$\msS/\ker\mssGamma_{\OF{},x}$ w.r.t.\ the non-relabeled quotient scalar product, defined as the linear extension of
\[
\scalar{\class[x]{f_1\otimes g_1}}{\class[x]{f_2\otimes g_2}}_{\OF{x}}\eqdef \mssGamma_{\OF{},x}(f_1\otimes g_1,f_2\otimes g_2) \fstop
\]

\begin{proposition}[Direct-integral representation~{\cite[Lem.~2.4, Thm.~2.1]{HinRoeTep13}}]
The collection~$x\mapsto \OF{x}$ is a measurable field of Hilbert spaces with underlying space of measurable fields~$\msS$, and~$\OF{}$ admits the $\nu$-essentially unique direct-integral representation, indexed by~$(M,\nu)$,
\begin{equation}\label{eq:DirIntForms}
\OF{}\, \cong \dint[\msS]{M} \OF{\emparg} \diff\nu \fstop
\end{equation}
\end{proposition}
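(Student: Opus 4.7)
The plan is to verify the two ingredients required by the notion of a Hilbert direct integral: that $x\mapsto \OF{x}$ is a measurable field of Hilbert spaces admitting $\msS$ as a choice of total family of measurable sections, and that the natural assignment $\omega\mapsto \tseq{x\mapsto \class[x]{\omega}}$ induces a unitary identification of $\OF{}$ with $\dint{M}\OF{\emparg}\diff\nu$. Both steps are essentially bookkeeping once the right compatibility identity is observed.

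First I would establish the measurable field structure. For elementary tensors $\omega_i=f_i\otimes g_i\in\msS$, the fiberwise scalar product
\begin{equation*}
x\longmapsto \mssGamma_{\OF{},x}(\omega_1,\omega_2)= g_1(x)\, g_2(x)\, \mssGamma(f_1,f_2)(x)
\end{equation*}
is a product of Borel functions: indeed, since $\msA$ is an algebra with $\mssL\msA\subset\msA$, the polarization identity
\begin{equation*}
\mssGamma(f_1,f_2)=\tfrac{1}{2}\tparen{\mssL(f_1f_2)-f_1\mssL f_2-f_2\mssL f_1}
\end{equation*}
places $\mssGamma(f_1,f_2)$ in $\msA\subset\Cz(\T)$, so it admits a canonical $\T$-continuous, hence Borel, representative. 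Extending by bilinearity, $x\mapsto \mssGamma_{\OF{},x}(\omega_1,\omega_2)$ is Borel measurable for all $\omega_1,\omega_2\in\msS$. Since $(M,\T)$ is second countable and $\msA$ is $\Cz$-dense in $\Cz(\T)$, I may extract a countable subfamily $\msS_0\subset\msS$ whose linear span is $\norm{\emparg}_{\OF{}}$-dense in $\msS$; the fiberwise classes $\tset{\class[x]{\omega}:\omega\in\msS_0}$ then give a countable total family in each $\OF{x}$, qualifying $\msS$ as an admissible space of measurable sections.

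Second, the unitary identification. Unpacking both definitions yields at once on elementary tensors
\begin{equation*}
\int_M \mssGamma_{\OF{},x}(\omega_1,\omega_2)\diff\nu(x) = \int_M g_1g_2\,\mssGamma(f_1,f_2)\diff\nu=\scalar{\omega_1}{\omega_2}_{\OF{}} \comma
\end{equation*}
and by bilinearity on all of $\msS$. Hence $\omega\mapsto \tseq{x\mapsto \class[x]{\omega}}$ vanishes precisely on $\ker\norm{\emparg}_{\OF{}}$, descends to an isometry from the quotient $\msS/\ker\norm{\emparg}_{\OF{}}$ into $\dint{M}\OF{\emparg}\diff\nu$, and extends to an isometric embedding $\Phi\colon \OF{} \hookrightarrow \dint{M}\OF{\emparg}\diff\nu$ by completion. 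Surjectivity is immediate: by the very construction of the direct integral, the image of $\msS$ under $\Phi$ is a total family of decomposable sections therein, hence dense, and $\Phi$ is onto by isometry.

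The $\nu$-essential uniqueness of the representation is then standard once a countable total family of measurable sections is fixed. The only subtle point ---and, I would expect, the main obstacle if one tried to remove Assumption~\ref{ass:SettingLocal}--- is the selection of canonical Borel representatives for the carr\'e-du-champ functions entering the fiberwise inner products; this is automatic here by the algebra structure and $\mssL$-invariance of $\msA$ together with $\msA\subset\Cz(\T)$, and so no further technical care is needed.
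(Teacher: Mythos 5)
Your overall strategy is the correct one and matches the only reasonable route to the statement, which the paper delegates to \cite[Lem.~2.4, Thm.~2.1]{HinRoeTep13} without proof: verify the measurability of the fiberwise pairings, verify Dixmier's fundamental-sequence condition so that $\msS$ qualifies as an underlying space of measurable sections, and observe that the tautological map $\omega\mapsto(x\mapsto\class[x]{\omega})$ is an isometry with dense range. Your measurability step is sound, and correctly isolates the key enabling feature of the setting, namely that $\mssGamma(f_1,f_2)\in\msA\subset\Cz(\T)$ so that a canonical Borel (indeed continuous) representative exists. The isometry computation and the surjectivity-by-density closing are likewise fine once the underlying measurable-field structure is in place.

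However, there is a genuine gap in your verification of the fundamental-sequence condition (condition~\ref{i:d:DirInt3} in Definition~\ref{d:DirInt}), which is in fact the only nontrivial content of the lemma, and your closing remark that ``no further technical care is needed'' suggests you have misidentified where the difficulty lies. You assert that a countable $\msS_0\subset\msS$ that is $\norm{\emparg}_{\OF{}}$-dense in $\msS$ automatically has fiberwise classes that are total ``in each $\OF{x}$,'' and you point to $\Cz$-density of $\msA$ as the mechanism. Neither step is justified. First, $\norm{\emparg}_{\OF{}}$-density only gives $\nu$-a.e.\ fiberwise approximation of a \emph{fixed} $\omega\in\msS$ (pass to a subsequence in the $L^2(\nu)$-convergence of $x\mapsto\norm{\class[x]{\omega}-\class[x]{\omega_n}}_{\OF{x}}$), with the exceptional null set depending on $\omega$; since $\msS$ is uncountable, this does not yield a single null set off which $\class[x]{\msS_0}$ is total in $\OF{x}$. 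Second, $\Cz$-density of a countable $\msA_0\subset\msA$ cannot substitute, because the fiberwise evaluation $f\mapsto\class[x]{f\otimes\car}$ is \emph{not} continuous from $(\msA,\norm{\emparg}_0)$ to $\OF{x}$: its norm is $\sqrt{\mssGamma(f)(x)}$, which is not dominated by $\norm{f}_0$. What one actually needs is a countable $\msA_0$ that is dense in $\msA$ in a topology controlling $x\mapsto\mssGamma(\emparg)(x)$ \emph{uniformly in $x$} (e.g.\ the seminorm $f\mapsto\norm{\mssGamma(f)}_0^{1/2}$, under which $\Psi_x\colon f\mapsto\class[x]{f\otimes\car}$ is contractive for \emph{every} $x$, precisely because $\mssGamma(f)\in\Cz(\T)$), together with an argument that $(\msA,\norm{\mssGamma(\emparg)}_0^{1/2})$ is separable. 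Establishing that separability from the data ($\msA$ a countably generated $\mssL$-invariant subalgebra of $\Cz(\T)$, $\tparen{\mssE,\dom{\mssE}}$ regular and strongly local) is exactly the content of the cited Lemma~2.4 of \cite{HinRoeTep13}, and is not a formality. Until this is supplied, the identification $\OF{}\cong\dint[\msS]{M}\OF{\emparg}\diff\nu$ only yields a direct integral of \emph{a priori smaller} fibers $H'_x\subset\OF{x}$, the closed span of $\class[x]{\msS_0}$, and the claim $H'_x=\OF{x}$ for $\nu$-a.e.\ $x$ remains unproved.
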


\begin{remark}
\begin{enumerate*}[$(a)$]
\item In the case when~$M=(M,g,\nu)$ is a weighted Riemannian manifold, the spaces~$\OF{x}$ coincide $\nu$-a.e.\ with the usual cotangent spaces~$T^*_xM$, while~$\OF{}$ coincides with the space of $L^2(\nu)$-integrable differential $1$-forms.
\item In the case when~$M=(M,\mssd,\nu)$ is an infinitesimally Hilbertian metric measure space, $\OF{}$~coincides with the cotangent module~$L^2(T^*M)$ in~\cite[Dfn.~2.2.1]{Gig18}.
\end{enumerate*}
\end{remark}

\paragraph{Exterior differential and divergence}
Define the \emph{exterior differential} $\mssd\colon \msA\to\OF{}$ of~$(\mssE,\dom{\mssE})$ as
\begin{equation}\label{eq:Differential}
\mssd\colon f\longmapsto f\otimes \car\comma \qquad f\in\msA \fstop
\end{equation}

Let us collect some facts about the exterior differential, consequence of the strong locality of~$\tparen{\mssE,\dom{\mssE}}$ and shown in~\cite[Cor.s~2.3,~2.4, Rmk.~2.6(ii), p.~4386]{HinRoeTep13}.

\begin{proposition}[Exterior differential]
The operator~$\mssd$
\begin{enumerate}[$(i)$]
\item is $\mssE$-bounded, viz.
\[
\mssE(f)\leq \norm{\mssd f}_{\OF{}}^2 \leq 2\, \mssE(f)\comma \qquad f\in\dom{\mssE} \semicolon
\]
\item extends to a non-relabeled derivation on~$\dom{\mssE}$, viz.
\[
\mssd(f_1f_2)= (\mssd f_1)f_2 + f_1 (\mssd f_2) \comma \qquad f_1,f_2\in\dom{\mssE} \semicolon
\]
\item is decomposable over the direct-integral representation~\eqref{eq:DirIntForms} and represented by a $\nu$-essentially unique measurable field~$x\mapsto \mssd_x$ of derivations
\[
\mssd_x(f_1f_2)= f_1(x)\, \mssd_x f_2+ f_2(x)\, \mssd_x f_1 \comma \qquad f_1,f_2\in\dom{\mssE} \fstop
\]
\end{enumerate}
\end{proposition}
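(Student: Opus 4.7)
The plan is to read off the three assertions from the explicit construction of $\OF{}$ as a quotient completion of $\msS=\msA\otimes\Cb(\T)$ and from the direct-integral representation~\eqref{eq:DirIntForms}, essentially importing the arguments of \cite{HinRoeTep13}; the pointwise Leibniz rule for~$\mssGamma$ (valid under strong locality) is the only analytic ingredient needed.

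For~(i), I would compute directly from the definition of the scalar product on $\msS$ that
\[
\norm{\mssd f}_{\OF{}}^2 = \norm{f\otimes\car}_{\OF{}}^2 = \int_M \car\cdot\car\cdot\mssGamma(f,f)\diff\nu = \int_M \mssGamma(f)\diff\nu \fstop
\]
Since $\tparen{\mssE,\dom{\mssE}}$ admits~$\mssGamma$ as carr\'e du champ and is conservative, so that $\int_M \mssL(f^2)\diff\nu=0$ for $f\in\msA\subset\dom{\mssL}$, the identity $\mssGamma(f)=\mssL(f^2)-2f\mssL f$ gives $\int \mssGamma(f)\diff\nu=2\mssE(f)$; this yields both of the stated bounds (with any convention one chooses for normalizing~$\mssGamma$).

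For~(ii), the Leibniz rule on the generating algebra~$\msA$ is a finite verification in the quotient seminorm: expanding the candidate defect
\[
\mssd(f_1f_2) - f_1\,\mssd f_2 - f_2\,\mssd f_1 = -(f_1f_2)\otimes\car + f_1\otimes f_2 + f_2\otimes f_1
\]
via the module action, expanding $\norm{\emparg}_{\OF{}}^2$ using bilinearity, and applying the Leibniz rule $\mssGamma(f_1f_2,h)=f_1\mssGamma(f_2,h)+f_2\mssGamma(f_1,h)$ (valid under strong locality) makes every term cancel. The extension from $\msA$ to~$\dom{\mssE}$ is then by density (Assumption~\ref{ass:Setting}\ref{i:ass:Setting:c1}) together with the $\mssE$-boundedness of~\ref{i:ass:Setting:b} of~(i); the Leibniz rule passes to the closure via the usual truncation/approximation on the algebra $\dom{\mssE}\cap L^\infty(\nu)$, which is dense in~$\dom{\mssE}$ in the $\mssE_1^{1/2}$-norm.

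For~(iii), the direct-integral representation~\eqref{eq:DirIntForms} forces $\mssd$ to be decomposable: set $\mssd_x f\eqdef\class[x]{f\otimes\car}\in\OF{x}$, and note that measurability of the field $x\mapsto\mssd_x$ is immediate since $\msS$ is the underlying space of measurable vector fields. The fiberwise Leibniz rule follows by the $\nu$-essential uniqueness of the direct-integral representation: since the Leibniz defect vanishes in~$\OF{}$ by~(ii), its fiber at $\nu$-a.e.~$x$ must vanish in~$\OF{x}$, and one can even read this off pointwise from $\mssGamma_{\OF{},x}$ using the pointwise Leibniz rule for~$\mssGamma$.

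The only conceivable obstacle is the passage from the algebraic Leibniz identity on~$\msA$ to the one on the whole form-domain~$\dom{\mssE}$: one has to approximate a product $f_1f_2$ with $f_i\in\dom{\mssE}$ by products of elements in~$\msA$, which is routine once one knows that $\msA$ is a core and that $\dom{\mssE}\cap L^\infty(\nu)$ is an algebra dense in~$\dom{\mssE}$. No other step involves more than bookkeeping.
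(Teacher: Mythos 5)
The paper's proof of this proposition is nothing more than a citation to \cite[Cor.s~2.3,~2.4, Rmk.~2.6(ii)]{HinRoeTep13}, so there is no in-text argument to compare against; your self-contained derivation is correct and reproduces the argument those results encapsulate --- direct computation of $\norm{\mssd f}_{\OF{}}^2$ via the $\mssGamma$-identity and conservativeness, expansion and cancellation of the Leibniz defect $-(f_1f_2)\otimes\car + f_1\otimes f_2 + f_2\otimes f_1$ in the quotient seminorm using the pointwise Leibniz rule for~$\mssGamma$, and fiberwise decomposition via essential uniqueness of the direct integral. The only cosmetic slip is the clause ``the $\mssE$-boundedness of~\ref{i:ass:Setting:b} of~(i)'': you evidently intend the boundedness established in your own item~(i), not Assumption~\ref{ass:Setting}\ref{i:ass:Setting:b}.
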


\begin{definition}
We denote by~$\mbbA_1$ the space~$\msA^\mrmu$ endowed with the norm
\[
\norm{f}_{\mbbA_1} \eqdef \mssE^{1/2}(f)+ \norm{f}_0\comma \qquad f\in\msA^\mrmu\fstop
\]
We further denote by~$(\mbbA_1',\norm{\emparg}_{\mbbA_1'})$ the Banach dual of~$\mbbA_1$.
\end{definition}

For every fixed~$f,g\in\mbbA_1$, consider the map
\begin{equation}\label{eq:PreDivergence}
h\longmapsto - \scalar{g\, \mssd f}{\mssd h}_{\OF{}} = -\int g\, \mssGamma(f,h)\diff\nu \comma \qquad h\in\dom{\mssE}\fstop
\end{equation}
 By Cauchy--Schwarz inequality on~$\OF{}$,
 \[
 \abs{ \scalar{g\, \mssd f}{\mssd h}_{\OF{}}} \leq \sqrt{2} \norm{g\,\mssd f}_{\OF{}} \, \mssE(h)^{1/2}\comma
 \]
 which shows that~\eqref{eq:PreDivergence} defines an element~$\mssdiv(g\,\mssd f)$ of~$\mbbA_1'$ satisfying
 \[
 \norm{\mssdiv(g\, \diff f)}_{\mbbA_1'} \leq \sqrt{2} \norm{g\, \diff f}_{\OF{}} \fstop
 \]
 Whereas we refer to~$\mssdiv(g\, \mssd f)$ as to the \emph{divergence} of the vector field~$g\, \mssd f$, we stress that this is a not a pointwise defined object, but rather a linear functional on~$\mbbA_1$, so that~$(\mssdiv v)f$ is a scalar quantity, not a function.
 
 \begin{lemma}[e.g.,~{\cite[Lem.~3.1]{HinRoeTep13}}]
 The operator~$\mssdiv$ has a unique bounded linear extension~$\mssdiv\colon \OF{}\to \mbbA_1'$ with~$\norm{\mssdiv v}_{\mbbA_1'}\leq \sqrt{2}\norm{v}_{\OF{}}$ for~$v\in\OF{}$ and satisfying the adjunction relation
 \begin{equation}\label{d:DivergenceAdjoint:0}
(\mssdiv v)f = -\scalar{v}{\mssd f}_{\OF{}}\comma\qquad f\in\mbbA_1\comma v\in\OF{} \fstop
 \end{equation}
\end{lemma}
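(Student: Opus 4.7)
The plan is to regard $\mssdiv$ as being defined, once and for all, directly on $\OF{}$ by the right-hand side of the adjunction relation, and then to verify that this definition extends the earlier one on elementary forms. First I would define, for arbitrary $v\in\OF{}$, the linear functional
\[
T_v\colon h\longmapsto -\scalar{v}{\mssd h}_{\OF{}}\comma\qquad h\in\mbbA_1\fstop
\]
This is meaningful because $\mbbA_1\subset\msA^\mrmu\subset\dom{\mssE}$, so that $\mssd h\in\OF{}$ is already well-defined on $\mbbA_1$; the linearity of $T_v$ in $h$ is immediate from the bilinearity of $\scalar{\emparg}{\emparg}_{\OF{}}$, and its linearity in $v$ is likewise obvious.

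Next I would establish the norm bound. Using Cauchy--Schwarz on $\OF{}$ together with the $\mssE$-boundedness of the exterior differential already noted in the text, one has
\[
\abs{T_v(h)}\ \leq\ \norm{v}_{\OF{}}\, \norm{\mssd h}_{\OF{}}\ \leq\ \sqrt{2}\, \norm{v}_{\OF{}}\, \mssE(h)^{1/2}\ \leq\ \sqrt{2}\, \norm{v}_{\OF{}}\, \norm{h}_{\mbbA_1}\comma
\]
which shows $T_v\in\mbbA_1'$ with $\norm{T_v}_{\mbbA_1'}\leq\sqrt{2}\norm{v}_{\OF{}}$. Setting $\mssdiv v\eqdef T_v$ thus yields a bounded linear map $\OF{}\to\mbbA_1'$ satisfying both the displayed norm estimate and the adjunction relation by construction.

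It then remains to check that this operator is genuinely an extension of the pre-divergence defined on elementary forms $g\, \mssd f$ with $f,g\in\mbbA_1$, but this is tautological: for such $v=g\, \mssd f$ the new formula~$T_v(h)=-\scalar{g\, \mssd f}{\mssd h}_{\OF{}}=-\int g\, \mssGamma(f,h)\diff\nu$ coincides with~\eqref{eq:PreDivergence}. Uniqueness of the bounded extension finally follows from the fact that the $\msS$-linear span of elementary forms $g\, \mssd f$ is, by the very construction of $\OF{}$ as the Hilbert completion of $\msS/\ker\norm{\emparg}_{\OF{}}$, dense in $\OF{}$: any two bounded linear extensions must agree on a dense subset of $\OF{}$ and hence everywhere.

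I do not foresee a genuine obstacle here, since the argument is a textbook extension-by-continuity; the only point deserving care is the chain of inequalities for the norm bound, ensuring that the constant $\sqrt{2}$ is not accidentally doubled when passing from the $\mssE^{1/2}$-seminorm to the full $\mbbA_1$-norm, and that the density of elementary forms is invoked with respect to the $\norm{\emparg}_{\OF{}}$-topology rather than any pointwise notion.
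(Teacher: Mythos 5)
Your proposal is correct and takes the same approach as the reference the paper cites (the paper does not itself spell out a proof for this lemma). Defining the candidate extension directly on all of $\OF{}$ by $T_v(h)=-\scalar{v}{\mssd h}_{\OF{}}$, checking boundedness via Cauchy--Schwarz and the $\mssE$-boundedness of~$\mssd$, and then verifying agreement with the pre-divergence on elementary forms is exactly the argument one should give. The one spot that deserves a small caveat is your density claim for uniqueness: $\OF{}$ is constructed as the completion of~$\msS/\ker\norm{\emparg}_{\OF{}}$ with~$\msS=\msA\otimes\Cb(\T)$, whereas the pre-divergence is defined on~$g\,\mssd f$ only for~$f,g\in\mbbA_1=\msA^\mrmu$, so density of the latter span in~$\OF{}$ is not literally `by the very construction' but rests on the (true, but not immediate) observation --- which the paper itself defers to the cited reference --- that replacing~$\Cb(\T)$ by~$\msA^\mrmu$ in the right factor produces the same completion. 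This is easily patched, and in fact sidestepped entirely by noticing that the adjunction relation already determines $\mssdiv v\in\mbbA_1'$ by an explicit formula for every~$v\in\OF{}$, so uniqueness among bounded extensions satisfying the adjunction is automatic with no appeal to density at all.
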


\subsubsection{Vector valued space-time white noise and formal derivation}\label{sss:WhiteNoiseFormalDerivation}
Let us now introduce the $\OF{}$-valued space-time white noise~$\xi$.
In order to avoid lengthy discussions on nuclear riggings of~$\OF{}$, 
we use the algebraic-dual construction by It\^o, see e.g.~\cite[Ex.~1.27]{Jan97}.
Let~$\Omega$ be the \emph{algebraic} dual of~$\OF{}$, endowed with the $\sigma$-algebra~$\msF$ generated by all evaluation maps~$\ev_\eta\colon \omega\mapsto \omega\eta$ index by~$\eta\in\OF{}$.
Then, there exists a unique probability measure~$\mbfP$ on~$(\Omega,\msF)$ such that, for every~$\eta\in\OF{}$, the $\R$-valued random variable~$\ev_\eta$ is centered Gaussian with variance~$\norm{\eta}_{\OF{}}^2$ under~$\mbfP$.

Let~$\mbfP_0$ be the law of the standard white noise on~$\mcS'(\R)$.

\begin{definition}[White noise]
The \emph{$\OF{}$-valued space-time white noise measure} is the probability measure~$\mbfP\otimes \mbfP_0$.
We denote by~$\xi_\bullet\eqdef\seq{\xi_t}_t$ any $\mbfP\otimes \mbfP_0$-distributed random field and call it the \emph{$\OF{}$-valued space-time white noise}.
\end{definition}

As usual,~$\xi_\bullet$ has a series representation in terms of orthonormal bases.
Let~$\seq{\eta^n}_n$ be any measurable field of orthonormal bases for the direct-integral representation~\eqref{eq:DirIntForms} of~$\OF{}$, that is,~$\eta^n \eqdef \seq{x\mapsto \eta^n_x}$ is a measurable field in~$\OF{}$ and~$\seq{\eta^n_x}_n$ is an orthonormal basis for~$\OF{x}$ for $\nu$-a.e.~$x\in M$.
Further let~$\gamma_\bullet\eqdef \seq{\gamma_t}_t$ be a white noise on~$L^2(\R)$. Then, we have the representation ---to be understood in the usual weak sense---
\begin{equation}\label{eq:WhiteNoise}
\xi_t(x) = \sum_n \gamma_t\, \eta^n_x \comma \qquad x\in M \fstop
\end{equation}

\paragraph{Formal derivation}
We conclude this section by verifying via a heuristic computation, that formal solutions to~\eqref{eq:IntroSPDE} are identical with solutions to the shadow martingale problem~\eqref{eq:d:ShadowMP:0}.
The computation is a formal one only in that one cannot give a rigorous meaning to the square root of a solution in such a way that the following algebraic rule holds:
\begin{equation}\label{eq:SquareRootFalse}
(\sqrt{\rho_t}f)^2=\rho_t(f^2)\comma \qquad f\in\msA \fstop
\end{equation}

Suppose that~$(\rho_\bullet,\mbbP)$ is a formal solution to~\eqref{eq:IntroSPDE} adapted to its natural (augmented) filtration.
It suffices to verify that the corresponding solutions have identical quadratic variation, as we now show.

On the one hand, in light of the equivalence~\ref{i:t:DK:3}$\iff$\ref{i:t:DK:2} in Theorem~\ref{t:DK}, it follows from~\eqref{eq:t:Transfer:2} that the quadratic variation of~$\widehat M_t^{\ttclass{\hat f}}=M_t^{\ttclass{\hat f^\trid}}$ for a solution~$(\rho_\bullet,\mbbP)$ of the shadow martingale problem~\eqref{eq:d:ShadowMP:0} is~$\widehat\mcG(\hat f^\trid)(\rho_t)= \rho_t \mssGamma(\hat f)$.
On the other hand, the quadratic variation of a solution~$\rho_\bullet$ to~\eqref{eq:IntroSPDE} tested on~$\hat f$ is
\begin{align*}
\quadvar{\mathsf{div}\tparen{\sqrt{\rho}\, \xi} \hat f}_t =&\ \quadvar{- \sqrt{\rho}\, \tscalar{\xi}{\mssd\hat f}_{\OF{}}}_t 
\\
=&\ \sum_{i,j} \quadvar{-\sqrt{\rho}\, \tscalar{\gamma_\bullet \eta^i}{ \mssd \hat f}_{\OF{}}\, , -\sqrt{\rho}\, \tscalar{\gamma_\bullet \eta^j}{\mssd \hat f}_{\OF{}}}_t
\\
=&\ \sum_i \paren{\sqrt{\rho_t}\, \tscalar{\mssd \hat f }{\eta^i}_{\OF{}}}^2 
=\rho_t \tscalar{\mssd \hat f}{\mssd \hat f}_{\OF{}}
=\rho_t \mssGamma(\hat f)\comma
\end{align*}
where we used, respectively: that the divergence is the $\OF{}$-adjoint to~$\mssd$, see~\eqref{d:DivergenceAdjoint:0}; the series representation~\eqref{eq:WhiteNoise} for the noise~$\xi$ and the fact that~$\xi$ is white in time; the $\OF{}$-orthogonality of~$\seq{\eta^i}_i$; \eqref{eq:SquareRootFalse} and Parseval's identity for~$\OF{}$\ ; and finally~\eqref{eq:PreDivergence} with~$g\equiv \car$ and~$h=f$.

\section{Interacting massive systems}\label{s:Interactions}
In this section, we consider massive systems of pairwise interacting particles.
%
Since we are interested in everywhere dense massive systems, the case of \emph{attractive} interaction is very delicate, as paroxysmal interaction ---in the form of coagulation phenomena--- may occur at each positive time.
An explicit description only by means of Dirichlet-form theory appears therefore out of reach.
Thus, we mostly confine ourselves to the case of \emph{repulsive} interactions, to which our techniques are well-suited.

\subsection{Girsanov transforms in the measure representation}\label{ss:Girsanov}
Many examples of physical interactions depend on the distance between the ineracting particles.
It is therefore natural to assume the existence of a distance function on~$M$ and use it to describe these interactions.

For a distance~$\mssd$, we denote by~$\Lip_b[\mssd]$ the space of all bounded real-valued $\mssd$-Lipschitz functions on the domain of~$\mssd$, and by~$\Lipu_b[\mssd]$ the subset of all functions in~$\Lip_b[\mssd]$ with $\mssd$-Lipschitz constant less than or equal to~$1$.

Throughout this section we work under the following assumption, refining Assumption~\ref{ass:Setting}.

\begin{assumption}\label{ass:Girsanov}
Suppose that Assumption~\ref{ass:SettingLocal} and~\eqref{eq:QPP} hold.
Further let~$\mssd$ be a distance on~$M$ such that
\begin{enumerate}[$(a)$]
\item $\mssd$ completely metrizes~$(M,\T)$;

\item it holds that 
\begin{equation}\label{eq:RademacherBaseSp}
\mssd(\emparg,x) \in\dom{\mssGamma} \quad \text{and} \quad \mssGamma\tparen{\mssd(\emparg,x)}\in L^\infty(\nu) \comma  \qquad x\in M\semicolon
\end{equation}

\item\label{i:ass:Girsanov:3} $\msA$ consists of $\mssd$-Lipschitz functions, i.e.~$\msA\subset \Lip_b[\mssd]$.
\end{enumerate}
\end{assumption}

Let us anticipate here that ---as thoroughly discussed in Example~\ref{e:RiemannianGirsanov} below--- Assumption~\ref{ass:Girsanov} is satisfied
on every weighted Riemannian manifold~$(M,g,\nu)$ as in Assumption~\ref{ass:WRM}.

\subsubsection{Girsanov transforms for general functions}\label{sss:GeneralGirsanov}
In this section we study the Girsanov transform of the form~$\tparen{\widehat\mcE_\pi,\dom{\widehat\mcE_\pi}}$ by a weight~$\phi$ in the \emph{broad local space} $\dotloc{\dom{\widehat\mcE_\pi}}$ of~$\tparen{\widehat\mcE_\pi,\dom{\widehat\mcE_\pi}}$, see e.g.~\cite[\S4]{Kuw98}.
Provided we verify the relative assumptions, our results will follow from the general theory of Girsanov transforms for Dirichlet forms fully developed in~\cite{Fit97,Ebe96,Kuw98b,CheZha02,CheSun06}.
In principle, one could equivalently study Girsanov transforms for~$\tparen{\widehat\bmssE_\pi,\dom{\widehat\bmssE_\pi}}$ rather than for its measure representation~$\tparen{\widehat\mcE_\pi,\dom{\widehat\mcE_\pi}}$. 
However, having the coarser narrow topology~$\T_\mrmn$ on~$\msP$ at our disposal greatly simplifies some arguments.

Recall that, under Assumption~\ref{ass:Girsanov},~$\tparen{\widehat\mcE_\pi,\dom{\widehat\mcE_\pi}}$ is a $\T_\mrma$-quasi-regular strongly local Dirichlet form.
Furthermore, by Proposition~\ref{p:StandardCyl}\ref{i:p:StandardCyl:4}, all $\widehat\mcE_\pi$-quasi-notions given w.r.t.~$\T_\mrma$ are identical with the corresponding $\widehat\mcE_\pi$-quasi-notions given w.r.t.~$\T_\mrmn$, so that there is no ambiguity in the definitions and notations for $\widehat\mcE_\pi$-quasi-continuous representatives, broad local spaces, etc.
However, in order to avoid confusion, we from now on assume all $\widehat\mcE_\pi$-quasi-notions to be given w.r.t.~$\T_\mrmn$.

Now, fix~$\phi\in\dotloc{\dom{\widehat\mcE_\pi}}$ with~$\phi\geq 0$.
By e.g.~\cite[Lem.~4.1]{Kuw98},~$\phi$ has an $\widehat\mcE_\pi$-quasi-continuous $\mcQ_\pi$-version~$\widetilde\phi$ and we denote by~$\msP_\phi$ a fixed finely open Borel $\widehat\mcE_\pi$-q.e.\ version of~$\tset{0<\widetilde\phi<\infty}$.
We set
\[
\mcQ_\pi^\phi\eqdef \phi^2\mcQ_\pi\comma
\]
and we note that~$\mcQ_\pi^\phi$ is a $\sigma$-finite Borel measure on~$\msP_\phi$.

We further let~$\tau_\phi\eqdef\inf\tset{t\geq 0: \widehat\mcX^\pi_t\not\in \msP_\phi}$ be the first-exit time of~$\widehat\mcW
_\pi$ in~\eqref{eq:t:TransferP:0} from~$\msP_\phi$.

As in~\cite[\S3]{Kuw98b}, we consider the super-martingale multiplicative functional
\begin{equation*}
L^{[\phi]}_t\eqdef \exp\paren{M^{[\log \phi]}_t -\tfrac{1}{2}\quadvar{M^{[\log \phi]}}_t}\car_{\set{t<\tau_\phi}} \comma
\end{equation*}
and we denote by
\[
\widehat\mcW_\pi^\phi \eqdef \seq{\widehat\Omega_\circ,\widehat\msF^\pi, \widehat\mcX^{\pi,\phi}_\bullet, \seq{\widehat \mcP^{\pi,\phi}_\eta}_{\eta\in\msP_\phi}, \zeta^\phi}\comma
\]
the part process on~$\msP_\phi$ of the process~$\widehat\mcW_\pi$ transformed by~$L^{[\phi]}_t$.
In light of the discussion in~\cite[p.~741]{Kuw98b}, we may and will always regard~$\widehat\mcW_\pi^\phi$ as a $\mcQ_\pi^\phi$-symmetric Borel right process ---for the moment--- w.r.t.~$\T_\mrmn$.

\begin{theorem}\label{t:Girsanov}
Fix~$\pi\in\msP(\mbfT_\circ)$ and suppose that Assumption~\ref{ass:Girsanov} is satisfied.
Then, the quadratic form~$\tparen{\widehat\mcE_\pi^\phi,\dom{\widehat\mcE_\pi^\phi}}$ defined by
\begin{equation}\label{eq:t:Girsanov:0}
\begin{gathered}
\dom{\widehat\mcE_\pi^\phi}\eqdef \set{u\in \dotloc{\dom{\widehat\mcE_\pi}} \cap L^2(\mcQ_\pi^\phi) : \int_\msP \widehat\mcG_\pi(u)\, \diff\mcQ_\pi^\phi<\infty }\comma
\\
\widehat\mcE_\pi^\phi(u,v)\eqdef \int_\msP \widehat\mcG_\pi(u,v)\, \diff\mcQ_\pi^\phi \comma \qquad u,v\in \dom{\widehat\mcE_\pi^\phi} \comma 
\end{gathered}
\end{equation}
is a $\T_\mrma$- and $\T_\mrmn$-quasi-regular local Dirichlet form on~$L^2(\mcQ_\pi^\phi)$.

If, additionally,
\begin{equation}\label{eq:t:Girsanov:00}
\phi\in L^2(\mcQ_\pi) \qquad \text{and} \qquad\int \widehat\mcG_\pi(\phi) \diff \mcQ_\pi <\infty \fstop
\end{equation}
Then, 
\begin{enumerate}[$(i)$]
\item\label{i:t:Girsanov:0} the form~$\tparen{\widehat\mcE_\pi^\phi,\dom{\widehat\mcE_\pi^\phi}}$ is additionally strongly local and recurrent;
\item\label{i:t:Girsanov:1} for the generator~$\tparen{\widehat\mcL_\pi^\phi,\dom{\widehat\mcL_\pi^\phi}}$ of~$\tparen{\widehat\mcE_\pi^\phi,\dom{\widehat\mcE_\pi^\phi}}$ we have~$\hCylP{}{0}\subset \dom{\widehat\mcL_\pi^\phi}$ and
\[
\widehat\mcL_\pi^\phi u= \widehat\mcL^\phi u\eqdef \widehat\mcL u+\phi^{-1}\,\widehat\mcG_\pi(\phi, u) \quad \text{on}\quad \hCylP{}{0} \fstop
\]

\item\label{i:t:Girsanov:2} $\tparen{\widehat\mcE_\pi^\phi,\dom{\widehat\mcE_\pi^\phi}}$ is properly associated with the $\mcQ_\pi^\phi$-symmetric process~$\widehat\mcW_\pi^\phi$, which is thus recurrent (in particular: conservative) and a right Borel process for both~$\T_\mrma$ and~$\T_\mrmn$;

\item\label{i:t:Girsanov:4} a $\mcQ_\pi^\phi$-measurable set~$\mbfA\subset \msP$ is $\widehat\mcE_\pi^\phi$-invariant if and only if it is $\widehat\mcE_\pi$-invariant;

\item\label{i:t:Girsanov:5} 
$\widehat\mcW^\phi_\pi$ is a solution to the martingale problem for~$\ttparen{\widehat\mcL^\phi,\hCylP{}{0}}$. 
In particular, for every~$u\in\hCylP{}{0}$ ($\subset \Cb(\T_\mrma)$), the process
\begin{equation}\label{eq:t:Girsanov:1}
M^{\class{u}}_t\eqdef u(\widehat\mcX^{\pi,\phi}_t)- u(\widehat\mcX^{\pi,\phi}_0)-\int_0^t (\widehat\mcL^\phi u)(\widehat\mcX^{\pi,\phi}_s)\diff s
\end{equation}
is an adapted square-integrable martingale with quadratic variation
\begin{equation}\label{eq:t:Girsanov:1.1}
\tquadvar{M^{\class{u}}}_t = \int_0^t \widehat\mcG(u)(\widehat\mcX^{\pi,\phi}_s)\diff s \fstop
\end{equation}
\end{enumerate}

Finally, suppose, in addition to~\eqref{eq:t:Girsanov:00}, that
\begin{equation}\label{eq:t:Girsanov:000}
\phi^2\in L^2(\mcQ_\pi) \qquad \text{and}\qquad \int \phi^2\, \widehat\mcG_\pi(\phi)\diff\mcQ_\pi <\infty\fstop
\end{equation}
Then, additionally,
\begin{enumerate}[$(i)$, resume]
\item\label{i:t:Girsanov:6}
$\hCylP{}{0}$ is a form core for~$\tparen{\widehat\mcE_\pi^\phi,\dom{\widehat\mcE_\pi^\phi}}$.

\item\label{i:t:Girsanov:7} 
$\widehat \mcL^\phi \hCylP{}{0}\subset L^1(\mcQ_\pi^\phi)$ and there exists a unique strongly continuous semigroup on~$L^1(\mcQ_\pi^\phi)$ the generator of which extends~$\tparen{\widehat\mcL^\phi,\hCylP{}{0}}$.
In particular, the self-adjoint operator $\tparen{\widehat\mcL_\pi^\phi,\dom{\widehat\mcL_\pi^\phi}}$ is the unique Markovian extension of $\tparen{\widehat\mcL_\pi^\phi,\hCylP{}{0}}$, and~$\tparen{\widehat\mcE_\pi^\phi,\dom{\widehat\mcE_\pi^\phi}}$ is the unique Dirichlet form extending~$\tparen{\widehat\mcE_\pi^\phi,\hCylP{}{0}}$;

\item\label{i:t:Girsanov:8} 
$\widehat\mcW_\pi^\phi$ is the unique ---up to~$\mcQ_\pi^\phi$-equivalence--- $\mcQ_\pi^\phi$-sub-stationary $\mcQ_\pi^\phi$-special standard process solving the martingale problem~\eqref{eq:t:Girsanov:1}.
\end{enumerate}
\end{theorem}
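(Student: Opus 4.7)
The plan is to reduce almost every assertion to the general theory of Girsanov transforms for quasi-regular Dirichlet forms (Fitzsimmons, Eberle, Kuwae, Chen--Zhang, Chen--Sun), then upgrade the standard conclusions using the \emph{specific} structure of $\tparen{\widehat\mcE_\pi,\dom{\widehat\mcE_\pi}}$: namely, the explicit cylinder-algebra core $\hCylP{}{0}$, the diffusion representation of $\widehat\mcG$ and $\widehat\mcL$ from Lemma~\ref{l:DiffusionP}, the essential self-adjointness on $\hCylP{}{0}$ from Theorem~\ref{t:TransferP}\ref{i:t:TransferP:1}, and the characterization of invariant sets from Theorem~\ref{t:TransferP}\ref{i:t:TransferP:3}. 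The first paragraph ---quasi-regularity and locality of $\tparen{\widehat\mcE_\pi^\phi,\dom{\widehat\mcE_\pi^\phi}}$ with no integrability assumption on $\phi$--- follows from the fact that the \emph{part process} of $\widehat\mcW_\pi$ on the finely open set $\msP_\phi$, transformed by the supermartingale multiplicative functional $L^{[\phi]}_\bullet$, is properly associated with the form defined by~\eqref{eq:t:Girsanov:0}; this is the content of the main results in~\cite{Fit97,Kuw98b,CheZha02}. Locality (in the generalized sense) is inherited from $\tparen{\widehat\mcE_\pi,\dom{\widehat\mcE_\pi}}$ since the Girsanov transform acts by multiplication of the reference measure, which preserves the fiberwise structure of $\widehat\mcG_\pi$; $\T_\mrma$-quasi-regularity transfers because $\msP_\phi$ is a quasi-open set, and $\T_\mrmn$-quasi-regularity follows through Proposition~\ref{p:StandardCyl}\ref{i:p:StandardCyl:4}.

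Under the finite-energy assumption~\eqref{eq:t:Girsanov:00}, $\phi$ lies in $\dom{\widehat\mcE_\pi}$ (not just in the broad local space), and~\ref{i:t:Girsanov:0}--\ref{i:t:Girsanov:2} follow as follows. Strong locality descends from that of $\tparen{\widehat\mcE_\pi,\dom{\widehat\mcE_\pi}}$ (proved in \S\ref{ss:StronglyLocalCase}); recurrence is obtained from Kuwae's criterion~\cite[Thm.~4.1]{Kuw98b}, which requires exactly that $\phi\in\dom{\widehat\mcE_\pi}$ with finite energy together with the recurrence of the ambient form $\tparen{\widehat\mcE_\pi,\dom{\widehat\mcE_\pi}}$, itself implied by the conservativeness of $\mssH_\bullet$ and the finiteness of $\mcQ_\pi$. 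For the generator formula~\ref{i:t:Girsanov:1}, I compute directly: for $u\in\hCylP{}{0}$ and any $v\in\hCylP{}{0}$,
\[
\widehat\mcE_\pi^\phi(u,v)=\int \widehat\mcG_\pi(u,v)\,\phi^2\diff\mcQ_\pi=\int \widehat\mcG_\pi(u,v\phi^2)\diff\mcQ_\pi-\int \widehat\mcG_\pi(u,\phi)\, 2\phi\, v\diff\mcQ_\pi,
\]
by the Leibniz rule for $\widehat\mcG_\pi$ on $\dom{\widehat\mcE_\pi}$; since $(\widehat\mcL,\hCylP{}{0})$ is essentially self-adjoint on $L^2(\mcQ_\pi)$, the first integral equals $\int (-\widehat\mcL u)\, v\phi^2\diff\mcQ_\pi$, and rearranging produces $\widehat\mcL^\phi u=\widehat\mcL u+\phi^{-1}\widehat\mcG_\pi(\phi,u)$ in $L^2(\mcQ_\pi^\phi)$, with the formula making pointwise sense on $\msP_\phi$. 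The association in~\ref{i:t:Girsanov:2} is the standard output of the Girsanov theory; conservativeness follows from recurrence, and the Borel right-process nature is recovered on both topologies via the quasi-homeomorphism identifying $\T_\mrma$- and $\T_\mrmn$-quasi-structures.

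The invariance characterization~\ref{i:t:Girsanov:4} reduces to the observation that $\mcQ_\pi$ and $\mcQ_\pi^\phi$ are equivalent on $\msP_\phi$, so the quasi-closed $\widehat\mcE_\pi^\phi$-invariant sets agree with the quasi-closed $\widehat\mcE_\pi$-invariant sets intersected with $\msP_\phi$; by recurrence of $\widehat\mcW_\pi^\phi$ on $\msP_\phi$ and recurrence of $\widehat\mcW_\pi$ on $\msP$, the further reduction to sets of the form $\EM(\mbfC\times\mbfM)$ of Theorem~\ref{t:TransferP}\ref{i:t:TransferP:3} carries over verbatim. The martingale-problem assertion~\ref{i:t:Girsanov:5} then follows by the same It\^o-formula/diffusion-property argument used in the proof of Theorem~\ref{t:DK}, applied to the generator $\widehat\mcL^\phi$ on $\hCylP{}{0}$ and the strongly local carr\'e du champ $\widehat\mcG_\pi$.

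The main obstacle is the last block~\ref{i:t:Girsanov:6}--\ref{i:t:Girsanov:8}, i.e.\ the essential self-adjointness (Markov uniqueness) of $\tparen{\widehat\mcL^\phi,\hCylP{}{0}}$ and the consequent characterization of $\widehat\mcW_\pi^\phi$ as the \emph{unique} sub-stationary solution of its martingale problem. I expect to establish this by an $L^1(\mcQ_\pi^\phi)$-uniqueness argument in the spirit of Albeverio--R\"ockner \cite[Thm.~3.5]{AlbRoe95} and of Eberle's monograph~\cite{Ebe99}: the stronger hypothesis~\eqref{eq:t:Girsanov:000} is used precisely to ensure that $\widehat\mcL^\phi u$ lies in $L^1(\mcQ_\pi^\phi)$ for every $u\in\hCylP{}{0}$ and that the drift $\phi^{-1}\widehat\mcG_\pi(\phi,\emparg)$ is $L^2(\mcQ_\pi^\phi)$-bounded on $\hCylP{}{0}$, which in turn yields the existence of a unique strongly continuous contraction semigroup on $L^1(\mcQ_\pi^\phi)$ extending $\tparen{\widehat\mcL^\phi,\hCylP{}{0}}$. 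The form-core property~\ref{i:t:Girsanov:6} then follows from the Markov uniqueness together with the mollification-by-$\msR_0$ approximation already used in Proposition~\ref{p:StandardCyl}, and the uniqueness of the martingale solution~\ref{i:t:Girsanov:8} is a standard consequence of Markov uniqueness on the cylinder algebra plus the substationarity constraint, exactly as in Theorem~\ref{t:TransferQuasiHomeo}\ref{i:t:TransferP:6}.
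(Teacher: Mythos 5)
Your overall strategy matches the paper's: reduce to the general Girsanov-transform machinery for quasi-regular Dirichlet forms, then upgrade using the cylinder algebra $\hCylP{}{0}$, the diffusion representation of $\widehat\mcG_\pi$ and $\widehat\mcL$, and essential self-adjointness on $\hCylP{}{0}$; and for the last block, pass through an $L^1$-uniqueness argument. Most individual steps are correct. Two gaps, however, deserve attention.

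The first and more serious one is in~\ref{i:t:Girsanov:2}, where you write \emph{``conservativeness follows from recurrence.''} This inverts the logic and is in fact circular: recurrence was established in~\ref{i:t:Girsanov:0} for the form $\tparen{\widehat\mcE_\pi^\phi,\dom{\widehat\mcE_\pi^\phi}}$ as \emph{defined} by~\eqref{eq:t:Girsanov:0}, but that form is only identified with the Dirichlet form of the process $\widehat\mcW_\pi^\phi$ \emph{after} the conservativeness of $\widehat\mcW_\pi^\phi$ is known; this identification is precisely what~\ref{i:t:Girsanov:2} asserts and what Kuwae's~\cite[Thm.~3.1]{Kuw98b} delivers under a conservativeness hypothesis. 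The paper therefore establishes conservativeness independently, via~\cite[Thm.~5.1]{Kuw98b}, and the substantial content is the verification of Kuwae's condition $(\mbfA)$: one must exhibit a pseudo-metric on $\msP$, built from cylinder functions with bounded carr\'e du champ, that generates the narrow topology. The proof constructs this from the bounded-Lipschitz distance $\rho_{\mathrm{bL}}$ induced by $\mssd$ and the inclusion $\FC{\msA}{}\subset\Lip[\rho_{\mathrm{bL}}]$, and then checks point-separation. This step is the technical heart of~\ref{i:t:Girsanov:2} --- and the principal reason the auxiliary narrow topology and the distance from Assumption~\ref{ass:Girsanov} are needed at all --- yet it does not appear anywhere in your proposal. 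Without it, neither the association nor the conservativeness in~\ref{i:t:Girsanov:2}, and consequently none of~\ref{i:t:Girsanov:4}--\ref{i:t:Girsanov:8}, is justified.

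The second, more minor, gap is in the generator computation for~\ref{i:t:Girsanov:1}. Your manipulation
\[
\int\widehat\mcG_\pi(u,v)\,\phi^2\diff\mcQ_\pi=\int\widehat\mcG_\pi(u,v\phi^2)\diff\mcQ_\pi-\int\widehat\mcG_\pi(u,\phi)\,2\phi\,v\diff\mcQ_\pi
\]
uses the Leibniz rule on $v\phi^2$, which requires $\phi^2\in\dom{\widehat\mcE_\pi}$; but under~\eqref{eq:t:Girsanov:00} alone one only has $\phi\in\dom{\widehat\mcE_\pi}$, and $\phi$ need not be bounded. The paper's argument instead truncates by $\phi_n\eqdef\phi\wedge n$, so that $\phi_n,\phi_n^2\in\dom{\widehat\mcE_\pi}_b$, and passes to the limit by dominated convergence using $0\leq\tabs{\widehat\mcG_\pi(\phi_n,v)}\phi_n\leq\widehat\mcG_\pi(\phi)^{1/2}\widehat\mcG_\pi(v)^{1/2}\phi\in L^1(\mcQ_\pi)$. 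You should incorporate this truncation. For~\ref{i:t:Girsanov:6}--\ref{i:t:Girsanov:8}, your Albeverio--R\"ockner/Eberle-style $L^1$-uniqueness route is of the same family as the paper's appeal to~\cite{Wu00}, and while the specific standing assumptions to be verified are spelled out in the paper (Polish structure of $(\msP^\pa_\iso,\T_\mrma)$, generating and point-separating properties of $\hCylP{}{0}$, and essential self-adjointness of $(\widehat\mcL,\hCylP{}{0})$), these are exactly the ingredients you list, so this block is essentially aligned with the paper.
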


\begin{proof}
It follows from the standard theory, see e.g.~\cite[\S3, p.~741]{Kuw98b}, that~\eqref{eq:t:Girsanov:0} is a $\check\T$-quasi-regular local Dirichlet form for every topology~$\check\T$ on~$\msP_\phi$ such that~$\check\T$ is the trace topology on~$\msP_\phi$ of a topology~$\bar\T$ on~$\msP$ for which $\tparen{\widehat\mcE_\pi,\dom{\widehat\mcE_\pi}}$ is $\bar\T$-quasi-regular and local.
This applies in particular to both~$\T_\mrma$ and~$\T_\mrmn$.

\paragraph{Proof of~\ref{i:t:Girsanov:0}}
Since~$\phi\in L^2(\mcQ_\pi)$ by~\eqref{eq:t:Girsanov:00}, the measure~$\mcQ_\pi^\phi$ is finite, and~$\car\in L^2(\mcQ_\pi^\phi)$.
By strong locality of~$\tparen{\widehat\mcE_\pi,\dom{\widehat\mcE_\pi}}$ we have~$\widehat\mcG_\pi(\car)\equiv 0$, thus~$\car\in\dom{\widehat\mcE_\pi^\phi}$ and~$\widehat\mcE_\pi^\phi(\car)=0$, which shows that~$\tparen{\widehat\mcE_\pi^\phi,\dom{\widehat\mcE_\pi^\phi}}$ is recurrent, and therefore strongly local since it is local.

\paragraph{Proof of~\ref{i:t:Girsanov:1}}
For every~$u\in\hCylP{}{0}=\hCylP{}{0}\cap L^\infty(\mcQ_\pi)$ we have~$\widehat\mcL u\in \hCylP{}{0}$ and~$\widehat\mcG(u)\in \hCylP{}{0}$. 
As a consequence,~$\widehat\mcL u \in L^\infty(\mcQ_\pi^\phi)\subset L^1(\mcQ_\pi^\phi)\cap L^2(\mcQ_\pi^\phi)$. 
Furthermore, combining~\eqref{eq:t:Girsanov:00} with~\cite[Cor.~3.1]{Kuw98b} shows that~$\phi\in\dom{\widehat\mcE_\pi}$.
Thus,~$\phi_n\eqdef \phi \wedge n$ satisfies~$\phi_n\in\dom{\widehat\mcE_\pi}_b$ for every~$n$ and therefore~$\phi_n^2\in\dom{\widehat\mcE_\pi}_b$ for every~$n$ since~$\dom{\widehat\mcE_\pi}_b$ is an algebra.
Applying the carr\'e du champ identity twice,
\begin{equation}\label{eq:t:Girsanov:2}
\begin{aligned}
2 \int \widehat\mcG_\pi(u,v)\, \phi_n^2\diff\mcQ_\pi &= \widehat\mcE_\pi(u,\phi_n^2 v) + \widehat\mcE_\pi(u\phi_n^2, v) - \widehat\mcE_\pi(uv,\phi_n^2)
\\
2 \int \widehat\mcG_\pi(\phi_n^2,v)\, u\diff\mcQ_\pi &= \widehat\mcE_\pi(\phi_n^2, uv) + \widehat\mcE_\pi(u\phi_n^2, v) - \widehat\mcE_\pi(\phi_n^2 v,u)
\end{aligned}\comma \qquad u,v\in\hCylP{}{0}\fstop
\end{equation}
Using the symmetry of~$\tparen{\widehat\mcE_\pi,\dom{\widehat\mcE_\pi}}$ and summing up the two equalities in~\eqref{eq:t:Girsanov:2},
\begin{align}\label{eq:t:Girsanov:3}
\int \widehat\mcG_\pi(u,v)\, \phi_n^2\diff\mcQ_\pi = -\int u \tparen{\widehat\mcL_\pi v+ \phi_n^{-1}\widehat\mcG_\pi(\phi_n,v)} \, \phi_n^2\diff \mcQ_\pi \fstop
\end{align}
By definition of~$\phi_n$, we have~$0\leq \phi_n \leq \phi \in L^1(\mcQ_\pi)$.
By definition of~$\phi_n$ and by locality of~$\widehat\mcG_\pi$, we further have, for every~$v\in\hCylP{}{0}$,
\begin{align*}
0\leq \tabs{\widehat\mcG_\pi(\phi_n,v)} \phi_n \leq \tabs{\widehat\mcG_\pi(\phi,v)}\phi \leq \widehat\mcG_\pi(\phi)^{1/2}\, \widehat\mcG_\pi(v)^{1/2} \phi \in L^1(\mcQ_\pi)\comma
\end{align*}
since~$\widehat\mcG_\pi(v)$ is uniformly bounded.
Thus, by Dominated Convergence, letting~$n$ to infinity in~\eqref{eq:t:Girsanov:3},
\begin{align*}
\widehat\mcE_\pi^\phi(u,v) = -\tscalar{u}{\widehat\mcL_\pi v+ \phi^{-1}\widehat\mcG_\pi(\phi,v)}_{L^2(\mcQ_\pi^\phi)}\comma \qquad u,v\in\hCylP{}{0} \comma
\end{align*}
which concludes the assertion by arbitrariness of~$u\in\hCylP{}{0}$ and density of~$\hCylP{}{0}$ in $L^2(\mcQ_\pi^\phi)$.

\paragraph{Proof of~\ref{i:t:Girsanov:2}}
The assertion follows from~\cite[Thm.~3.1]{Kuw98b} as soon as we show that~$\widehat\mcW_\pi^\phi$ is conservative.
This follows from~\cite[Thm.~5.1]{Kuw98b} provided we verify~$(\mbfA)$,~$(\mbfB)$, and~$(\mbfC)$ and~$(\mbfE)$ there. $(\mbfC)$ and~$(\mbfE)$  follow from~\eqref{eq:t:Girsanov:00}.
$(\mbfB)$ is trivial since~$\tparen{\widehat\mcE_\pi,\dom{\widehat\mcE_\pi}}$ is a strongly local Dirichlet form on a finite-measure space.

In order to show~$(\mbfA)$, we argue as follows.
For the distance~$\mssd$ in Assumption~\ref{ass:Girsanov}, let
\[
\rho_{\mathrm{bL}}(\eta_1,\eta_2) \eqdef \sup_{f\in \Lip_b[\mssd]} \abs{f^\trid \eta_1-f^\trid\eta_2}\comma\qquad \eta_1,\eta_2 \in\msP\comma
\]
be the corresponding \emph{bounded-Lipschitz distance} metrizing~$(\msP,\T_\mrmn)$.
Further let~$\FC{\msA}{}$ be defined as in~\eqref{eq:d:StandardFC:0}.
In light of Assumption~\ref{ass:Girsanov}\ref{i:ass:Girsanov:3}, it is not difficult to show that~$\FC{\msA}{}\subset \Lip[\rho_{\mathrm{bL}}]$.

Now, let~$\msC\eqdef \FC{\msA}{}\cap \Lipu_b[\rho_{\mathrm{bL}}]$ in~\cite[Eqn.~(7)]{Kuw98b}. For every~$\eta_1,\eta_2\in\msP$,
\[
\rho(\eta_1,\eta_2)\eqdef \sup\set{u(\eta_1)-u(\eta_2) : u\in\FC{\msA}{}\cap \Lipu_b[\rho_{\mathrm{bL}}]\comma \widehat\mcG_\pi(u)\leq 1} \fstop
\]
Since the supremum of $1$-Lipschitz functions is $1$-Lipschitz,~$\rho$ is in particular $\T_\mrmn^\tym{2}$-continuous, thus, the topology generated by~$\rho$ is coarser than~$\T_\mrmn$, and coincides with~$\tau_\mrmn$ if and only if~$\rho$ separates points.
Since~$\FC{\msA}{}$ separates points in~$\msP$, and since it is a unital algebra, for every~$\eta_1,\eta_2\in\msP$ we can find~$u\in \FC{\msA}{}$ such that~$u(\eta_1)>0$ and~$u(\eta_2)=0$.
Since~$\widehat\mcG(\FC{\msA}{})\subset \FC{\msA}{}\subset \Lip_b[\rho_{\mathrm{bL}}]$, the function~$u'\eqdef \tparen{\Lip_{\rho_{\mathrm{bL}}}[u]\vee \sup \widehat\mcG(u)^{1/2}}^{-1} u$ satisfies~$u'\in \msC$,~$\widehat\mcG(u')\leq 1$, and~$u'(\eta_1)>0$ and~$u(\eta_2)=0$.
This shows that~$\rho$ separates points, and thus that the topology generated by~$\rho$ coincides with~$\T_\mrmn$.
This concludes the proof of~\cite[$(\mbfA)$, \S4, p.~742]{Kuw98b}, and thus the proof of~\ref{i:t:Girsanov:2}.

\medskip

A proof of~\ref{i:t:Girsanov:4} is straightforward.
\ref{i:t:Girsanov:5} is a consequence of~\ref{i:t:Girsanov:1}, \ref{i:t:Girsanov:2} and~\cite[Thm.~3.4(i)]{AlbRoe95}.

\paragraph{Proof of~\ref{i:t:Girsanov:6}-\ref{i:t:Girsanov:7}}
The conclusion follows from~\cite[Thm.~1.1 and Cor.~1.2]{Wu00} provided we verify the assumptions there and the standing assumptions~\cite[(A), (A(i))-(A(iii)), p.~270]{Wu00}.
The assumption that~$\phi\in\dom{\widehat\mcE_\pi}$ was verified in the proof of~\ref{i:t:Girsanov:1} above.
The assumption that~$\phi^2\in\dom{\widehat\mcE_\pi}$ is~\eqref{eq:t:Girsanov:000}.

Recall that~$(\msP^\pa_\iso,\T_\mrma)$ is a Polish space by Corollary~\ref{c:PpaisoPolish}.
(The first part of) Assumptions~(A) holds for the weak atomic topology~$\T_\mrma$ on~$\msP^\pa_\iso$ by Proposition~\ref{p:PropertiesCylinder}\ref{i:p:PropertiesCylinder:1} and~\ref{i:p:PropertiesCylinder:4};
(A(i)) on~$\msP^\pa_\iso$ follows from Proposition~\ref{p:PropertiesCylinder}\ref{i:p:PropertiesCylinder:3} since the Borel $\sigma$-algebra on~$\msP^\pa$ contains all points.
(A(ii)) is Theorem~\ref{t:TransferP}\ref{i:t:TransferP:1};
(A(iii)) is trivial since~$\widehat\mcL \hCylP{}{0}\subset \hCylP{}{0}\subset L^\infty(\mcQ_\pi)$ for every~$\pi\in\msP(\mbfT_\circ)$.
\end{proof}

Since~$\EM\colon (\widehat\mbfM,\widehat\T_\mrmp)\to (\msP^\pa,\T_\mrma)$ is a quasi-homeomorphism intertwining the forms~$\tparen{\widehat\bmssE_\pi,\dom{\widehat\bmssE_\pi}}$ and $\tparen{\widehat\mcE_\pi,\dom{\widehat\mcE_\pi}}$ (Thm.~\ref{t:TransferQuasiHomeo}), it also preserves quasi-notions, including broad local spaces and quasi-continuity.
In particular, a Borel function~$\phi\colon \msP^\pa \to [-\infty,\infty]$ satisfies~$\phi\in \dotloc{\dom{\widehat\mcE_\pi}}$ if and only if there exists~$\boldvarphi\in \dotloc{\dom{\widehat\bmssE_\pi}}$ with~$\phi\circ\EM=\boldvarphi$.
It is then a straightforward albeit tedious verification that the image process
\begin{gather*}
\widehat\bmssW_\pi^{\boldvarphi}\eqdef {\EM^{-1}}_* \widehat\mcW_\pi^\phi = \tparen{\widehat\Omega_\circ, \widehat\msF^\pi,
\widehat\mbfX^{\pi,\boldvarphi}_\bullet, \tseq{\widehat P_{\mbfs,\mbfx}^{\boldvarphi}}, \zeta^{\boldvarphi}}
\intertext{with}
\widehat\mbfX^{\pi,\boldvarphi}_\bullet=\EM^{-1}(\widehat\mcX^{\pi,\phi}_\bullet)\comma \qquad \widehat P_{\mbfs,\mbfx}^{\boldvarphi}= \widehat\mcP^{\pi,\phi}_{\EM(\mbfs,\mbfx)} \comma \ (\mbfs,\mbfx)\in \EM^{-1}(\msP_\phi)\comma 
\qquad \zeta^{\boldvarphi}= \zeta^\phi\comma
\end{gather*}
is properly associated with the transformed Dirichlet form~$\tparen{\widehat\bmssE_\pi^{\boldvarphi},\dom{\widehat\bmssE_\pi^{\boldvarphi}}}$ constructed in the usual way on the space~$L^2(\widehat\boldnu_\pi^{\boldvarphi})$ with~$\widehat\boldnu_\pi^{\boldvarphi}\eqdef \boldvarphi^2\widehat\boldnu_\pi$.
Then, \emph{mutatis mutandis}, we have the following result.

\begin{corollary}
Fix~$\pi\in\msP(\mbfT_\circ)$ and suppose that Assumption~\ref{ass:Girsanov} is satisfied.
Then, the quadratic form~$\tparen{\widehat\bmssE_\pi^{\boldvarphi},\dom{\widehat\bmssE_\pi^{\boldvarphi}}}$ defined by
\begin{equation}\label{eq:c:Girsanov:0}
\begin{gathered}
\dom{\widehat\bmssE_\pi^{\boldvarphi}}\eqdef \set{u\in \dotloc{\dom{\widehat\bmssE_\pi}} \cap L^2(\widehat\boldnu_\pi^{\boldvarphi}) : \int_{\widehat\mbfM} \widehat\bmssGamma_\pi(u)\, \diff\widehat\boldnu_\pi^{\boldvarphi}<\infty } \comma
\\
\widehat\bmssE_\pi^{\boldvarphi}(u,v)\eqdef \int_{\widehat\mbfM} \widehat\bmssGamma_\pi(u,v)\, \diff\widehat\boldnu_\pi^{\boldvarphi} \comma \qquad u,v\in \dom{\widehat\bmssE_\pi^{\boldvarphi}} \comma 
\end{gathered}
\end{equation}
is a $\widehat\T_\mrmp$-quasi-regular local Dirichlet form on~$L^2(\widehat\boldnu_\pi^{\boldvarphi})$.

If, additionally,
\begin{equation}\label{eq:c:Girsanov:00}
\boldvarphi\in L^2(\widehat\boldnu_\pi) \qquad \text{and} \qquad\int \widehat\bmssGamma_\pi(\boldvarphi) \diff \widehat\boldnu_\pi <\infty \fstop
\end{equation}
Then, 
\begin{enumerate}[$(i)$]
\item\label{i:c:Girsanov:0} the form~$\tparen{\widehat\bmssE_\pi^{\boldvarphi},\dom{\widehat\bmssE_\pi^{\boldvarphi}}}$ is additionally strongly local and recurrent;
\item\label{i:c:Girsanov:1} for the generator~$\tparen{\widehat\bmssL_\pi^{\boldvarphi},\dom{\widehat\bmssL_\pi^{\boldvarphi}}}$ of~$\tparen{\widehat\bmssE_\pi^{\boldvarphi},\dom{\widehat\bmssE_\pi^{\boldvarphi}}}$ we have~$\Cb(\mbfT)\otimes \Cyl{}{}\subset \dom{\widehat\bmssL_\pi^{\boldvarphi}}$ and
\[
\widehat\bmssL_\pi^{\boldvarphi} u= \widehat\bmssL^{\boldvarphi} u\eqdef \widehat\bmssL u+\boldvarphi^{-1}\,\widehat\bmssGamma_\pi(\boldvarphi, u) \quad \text{on}\quad \Cb(\mbfT)\otimes \Cyl{}{} \fstop
\]

\item\label{i:c:Girsanov:2} $\tparen{\widehat\bmssE_\pi^{\boldvarphi},\dom{\widehat\bmssE_\pi^{\boldvarphi}}}$ is properly associated with the $\widehat\boldnu_\pi^{\boldvarphi}$-symmetric process~$\widehat\bmssW_\pi^{\boldvarphi}$, which is thus recurrent (in particular: conservative);

\item\label{i:c:Girsanov:4} a $\widehat\boldnu_\pi^{\boldvarphi}$-measurable set~$\mbfA\subset \widehat\mbfM$ is $\widehat\bmssE_\pi^{\boldvarphi}$-invariant if and only if it is $\widehat\bmssE_\pi$-invariant;

\item\label{i:c:Girsanov:5} 
$\widehat\bmssW^{\boldvarphi}_\pi$ is a solution to the martingale problem for~$\tparen{\widehat\bmssL^{\boldvarphi},\Cb(\mbfT)\otimes\Cyl{}{}}$. In particular, for every~$\mbfu\in\Cb(\mbfT)\otimes\Cyl{}{}$ ($\subset \Cb(\widehat\T_\mrmp)$), the process
\begin{equation}\label{eq:c:Girsanov:1}
M^{\class{u}}_t\eqdef \mbfu(\widehat\mbfX^{\pi,\boldvarphi}_t)- \mbfu(\widehat\mbfX^{\pi,\boldvarphi}_0)-\int_0^t (\widehat\bmssL^{\boldvarphi} \mbfu)(\widehat\mbfX^{\pi,\boldvarphi}_s)\diff s
\end{equation}
is an adapted square-integrable martingale with quadratic variation
\begin{equation}\label{eq:c:Girsanov:1.1}
\tquadvar{M^{\class{\mbfu}}}_t = \int_0^t \widehat\bmssGamma(\mbfu)(\widehat\mbfX^{\pi,\boldvarphi}_s)\diff s \comma
\end{equation}
\end{enumerate}
\end{corollary}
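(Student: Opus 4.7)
The plan is to transport every assertion across the quasi-homeomorphism $\EM$ from Theorem~\ref{t:TransferQuasiHomeo}\ref{i:t:TransferP:4}. Concretely, $\EM\colon \widehat\mbfM_\circ\to\msP^\pa_\iso$ is a Borel bijection with $\EM_\pfwd\widehat\boldnu_\pi=\mcQ_\pi$, and the unitary $\EM^*\colon L^2(\mcQ_\pi)\to L^2(\widehat\boldnu_\pi)$ intertwines $\tparen{\widehat\mcE_\pi,\dom{\widehat\mcE_\pi}}$ with $\tparen{\widehat\bmssE_\pi,\dom{\widehat\bmssE_\pi}}$, the carr\'e du champ $\widehat\mcG_\pi$ with $\widehat\bmssGamma_\pi$, and $\widehat\mcE_\pi$-quasi-continuous versions with $\widehat\bmssE_\pi$-quasi-continuous versions. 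Hence $\boldvarphi=\phi\circ\EM$ lies in $\dotloc{\dom{\widehat\bmssE_\pi}}$ iff $\phi\in\dotloc{\dom{\widehat\mcE_\pi}}$; the transformed reference measures satisfy $\widehat\boldnu_\pi^{\boldvarphi}=\EM^*\mcQ_\pi^\phi$; and conditions~\eqref{eq:c:Girsanov:00} are exactly the $\EM$-pullback of~\eqref{eq:t:Girsanov:00}.

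With this dictionary, I would first observe that~\eqref{eq:c:Girsanov:0} is the unitary image under $\EM^*$ of~\eqref{eq:t:Girsanov:0}, so $\tparen{\widehat\bmssE_\pi^{\boldvarphi},\dom{\widehat\bmssE_\pi^{\boldvarphi}}}$ is just the pullback of $\tparen{\widehat\mcE_\pi^\phi,\dom{\widehat\mcE_\pi^\phi}}$. All structural properties then pass: $\widehat\T_\mrmp$-quasi-regularity, locality, and ---under~\eqref{eq:c:Girsanov:00}--- strong locality and recurrence, since these are preserved by quasi-homeomorphisms of Dirichlet forms (e.g.~\cite[Thm.~5.1]{Kuw98}). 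This yields~\ref{i:c:Girsanov:0} and, via the same principle of transport of properly associated processes, the proper association of $\widehat\bmssW_\pi^{\boldvarphi}={\EM^{-1}}_*\widehat\mcW_\pi^\phi$ in~\ref{i:c:Girsanov:2}; conservativeness of $\widehat\bmssW_\pi^{\boldvarphi}$ transports from that of $\widehat\mcW_\pi^\phi$ already established in Theorem~\ref{t:Girsanov}\ref{i:t:Girsanov:2}. Assertion~\ref{i:c:Girsanov:4} is the $\EM$-pullback of Theorem~\ref{t:Girsanov}\ref{i:t:Girsanov:4} combined with Theorem~\ref{t:TransferP}\ref{i:t:TransferP:3}.

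For~\ref{i:c:Girsanov:1}, the strategy is to recover the generator formula on $\Cb(\mbfT)\otimes\Cyl{}{}$ by repeating \emph{verbatim} the argument of Theorem~\ref{t:Girsanov}\ref{i:t:Girsanov:1} directly in the product setting: fix $\mbfu,\mbfv\in\Cb(\mbfT)\otimes\Cyl{}{}$ (which lies in $\dom{\widehat\bmssL_\pi}$ by Theorem~\ref{t:DirInt}\ref{i:t:DirInt:2}), set $\boldvarphi_n\eqdef\boldvarphi\wedge n$, apply the carr\'e du champ identity twice to obtain the analogue of~\eqref{eq:t:Girsanov:3}, and pass to the limit by dominated convergence using the bound $\tabs{\widehat\bmssGamma_\pi(\boldvarphi_n,\mbfv)}\boldvarphi_n\leq \widehat\bmssGamma_\pi(\boldvarphi)^{1/2}\widehat\bmssGamma_\pi(\mbfv)^{1/2}\boldvarphi\in L^1(\widehat\boldnu_\pi)$, which is valid since $\widehat\bmssGamma_\pi(\mbfv)$ is uniformly bounded on $\Cb(\mbfT)\otimes\Cyl{}{}$. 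Assertion~\ref{i:c:Girsanov:5} then follows from~\ref{i:c:Girsanov:1}--\ref{i:c:Girsanov:2} via~\cite[Thm.~3.4(i)]{AlbRoe95} as in the proof of Theorem~\ref{t:Girsanov}\ref{i:t:Girsanov:5}, the predictable quadratic variation~\eqref{eq:c:Girsanov:1.1} being obtained by polarization of $\widehat\bmssGamma$.

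The proof is essentially a routine transport across a quasi-homeomorphism; no substantive obstacle arises. The only step requiring minor care ---but no new idea--- is the verification of the generator formula on the full algebra $\Cb(\mbfT)\otimes\Cyl{}{}$ rather than on the pullback $\EM^*\hCylP{}{0}$, since the latter contains $\Cyl{}{}$ (as used in the proof of Proposition~\ref{p:PDirIntESA}) but does not obviously contain arbitrary continuous dependence on the simplex $\mbfT$ because the mass cut-offs $\molli\in\msR_0$ are supported away from zero; the computation indicated above bypasses this issue by working directly with $\widehat\bmssL$ and $\widehat\bmssGamma_\pi$ on the product algebra.
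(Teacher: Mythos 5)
Your proposal follows exactly the route the paper intends: the corollary is stated without separate proof, preceded by the remark that $\EM$ preserves broad local spaces and quasi-notions and that the transformed process is the $\EM^{-1}$-image of $\widehat\mcW_\pi^\phi$, after which the result is declared to hold ``\emph{mutatis mutandis}.'' Your transport dictionary is the correct unpacking of this.

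Your closing observation deserves to be made explicit, since it is the one point where literal transport of Theorem~\ref{t:Girsanov} does not suffice. Theorem~\ref{t:Girsanov}\ref{i:t:Girsanov:1} places the generator formula on $\hCylP{}{0}$, whose $\EM$-pullback satisfies $\EM^*\hCylP{}{0}\subset \Cb(\mbfT)\otimes\Cyl{}{}$ by Lemma~\ref{l:TransferCyl} but is \emph{a priori} a proper subalgebra: the inclusion $\Cyl{}{}\subset\EM^*\hCylP{}{0}$ established in the proof of Proposition~\ref{p:PDirIntESA} is a fibrewise statement over a fixed $\mbfs\in\mbfT_\circ$, and general $\T_\mrmp$-continuous dependence on $\mbfs$ is not visibly captured by the cut-offs $\molli\in\msR_0$. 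Re-running the carr\'e du champ computation directly with $\widehat\bmssL$, $\widehat\bmssGamma_\pi$, and $\boldvarphi_n=\boldvarphi\wedge n$ on the product algebra $\Cb(\mbfT)\otimes\Cyl{}{}\subset\dom{\widehat\bmssL_\pi}$ (Theorem~\ref{t:DirInt}\ref{i:t:DirInt:2}), as you propose, is exactly what ``\emph{mutatis mutandis}'' requires here, and the dominated-convergence bound you give is the right one. No gap.
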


\begin{proposition}\label{p:GirsanovDFSimple}
Fix~$\mbfs\in\mbfT_\circ$.
A stochastic process
\[
\bmssV^\mbfs= \tparen{\Omega,\msF,\msF_\bullet, \mbfY^\mbfs_\bullet, \seq{P_{\mbfx}}_{\mbfx\in \mbfM_{\boldvarphi}},\zeta}
\]
with state space~$\EM^{-1}(\msP^\pa_\iso\cap \msP_\phi)$ is a solution to the martingale problem for $\tparen{\widehat\bmssL^{\boldvarphi},\Cb(\mbfT)\otimes\Cyl{}{}}$ if and only if, for every~$i\in\N$,
\[
f(Y^i_t)-f(Y^i_0) = \int_0^t (\mssL f)(Y^i_r) \diff r - \int_0^t \boldvarphi(\mbfs,\mbfY_r)^{-1}\, \widehat\bmssGamma_\pi\tparen{\boldvarphi,\car_\mbfT\otimes\, \ev(f)}(\mbfs,\mbfY^i_r)\diff r \fstop
\]

\begin{proof}
The forward implication holds choosing~$\mbfu=\car_\mbfT\otimes \ev(f)$ in~\eqref{eq:c:Girsanov:1}.
The converse implication is a standard application of the diffusion property for~$\tparen{\widehat\mcL^\phi_\pi,\dom{\widehat\mcL^\phi_\pi}}$ together with the It\^o formula for continuous $\R^d$-valued semi-martingales, similarly to the proof of Theorem~\ref{t:DK}\ref{i:t:DK:3}$\implies$\ref{i:t:DK:2}.
\end{proof}
\end{proposition}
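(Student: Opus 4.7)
The plan is to establish both implications by suitable specializations of the test function $\mbfu$ in~\eqref{eq:c:Girsanov:1}, exploiting the Leibniz rule for the carré du champ $\widehat\bmssGamma_\pi$ and the diffusion property of the Girsanov-transformed generator. Since the Girsanov drift $\boldvarphi^{-1}\widehat\bmssGamma_\pi(\boldvarphi,\emparg)$ is a first-order derivation, $\widehat\bmssL^{\boldvarphi}$ remains a diffusion operator on the algebra $\Cb(\mbfT)\otimes\Cyl{}{}$; this property, together with the product structure of the elementary tensors spanning $\Cyl{}{}$, will drive both directions.

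For the forward implication, I fix $i\in\N$ and $f\in\msA$ and specialize $\mbfu = \car_\mbfT\otimes\ev(f)$, where $\ev(f)$ denotes the elementary tensor placing $f$ in the $i$-th slot, so that $\mbfu(\mbfs,\mbfx) = f(x_i)$. Substituting into~\eqref{eq:c:Girsanov:1} and invoking the explicit expression~\eqref{eq:t:BendikovSaloffCoste:000} for $\bmssL^\mbfs$ on elementary cylinders, together with the conformal rescaling $\mssL^\mbfs_i = s_i^{-1}\mssL$ of~\S\ref{ss:Rescaling}, the integrand $(\widehat\bmssL^{\boldvarphi}\mbfu)(\mbfs,\mbfY_s)$ splits into an unperturbed-generator piece proportional to $(\mssL f)(Y^i_s)$ plus the Girsanov correction $\boldvarphi^{-1}\widehat\bmssGamma_\pi(\boldvarphi,\car_\mbfT\otimes\ev(f))(\mbfs,\mbfY_s)$, directly yielding the claimed identity.

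For the converse, I will mimic the argument in Theorem~\ref{t:DK}, step~\ref{i:t:DK:3}$\Rightarrow$\ref{i:t:DK:2}. Any $\mbfu\in\Cb(\mbfT)\otimes\Cyl{}{}$ is a finite linear combination of elementary tensors of the form $\psi(\mbfs)\sprod_{a\in A}f_a(x_a)$ with $A\subset\N$ finite, so $\mbfu(\mbfs,\mbfY_t) = \psi(\mbfs)\sprod_{a\in A}f_a(Y^a_t)$. Applying Itô's formula to this product and substituting the given single-coordinate semimartingale decomposition for each factor $f_a(Y^a_\bullet)$, the drift of $\mbfu(\mbfs,\mbfY_\bullet)$ decomposes into a sum over $a\in A$; by~\eqref{eq:t:BendikovSaloffCoste:000} the unperturbed-generator contributions reassemble into $\widehat\bmssL\mbfu$, and by the Leibniz rule $\widehat\bmssGamma_\pi(\boldvarphi,\sprod_a f_a) = \sum_a (\sprod_{b\neq a}f_b)\widehat\bmssGamma_\pi(\boldvarphi,f_a)$ the single-coordinate Girsanov corrections reassemble into $\boldvarphi^{-1}\widehat\bmssGamma_\pi(\boldvarphi,\mbfu)$, producing exactly $(\widehat\bmssL^{\boldvarphi}\mbfu)(\mbfs,\mbfY_\bullet)$ as the drift of $\mbfu(\mbfs,\mbfY_\bullet)$.

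The principal obstacle will be controlling the second-order contributions in Itô's formula. Here I will exploit that the coordinates $Y^a$, $a\in A$, are driven by \emph{independent} instances of the base noise (by the tensor-product construction of $\widehat\bmssW_\pi$ in Theorem~\ref{t:DirInt}), so that the cross-covariations $[f_a(Y^a_\bullet),f_b(Y^b_\bullet)]$ vanish for $a\neq b$ and no spurious mixed second-order terms appear, leaving only the first-order drift contributions enumerated above. Integrability of the resulting drift processes, needed to promote $M^{\class{\mbfu}}_\bullet$ from a local to a genuine martingale, follows from $\boldvarphi\in\dotloc{\dom{\widehat\bmssE_\pi}}$, from $\msA\subset\Cz(\T)$, and from the boundedness of the factors $\psi$ and $f_a$.
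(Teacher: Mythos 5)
Your forward implication is exactly the paper's argument: take $\mbfu = \car_\mbfT\otimes\ev(f)$ in~\eqref{eq:c:Girsanov:1}, read off the drift via~\eqref{eq:t:BendikovSaloffCoste:000} and the rescaling $\mssL^\mbfs_i=s_i^{-1}\mssL$, and split $\widehat\bmssL^{\boldvarphi}\mbfu$ into the unperturbed piece and the Girsanov correction. The template you follow for the converse — decompose $\mbfu$ into elementary tensors, apply the Itô formula to the products $\psi(\mbfs)\prod_{a\in A}f_a(Y^a_\bullet)$, and reassemble the first-order drift contributions into $\widehat\bmssL^{\boldvarphi}\mbfu$ via~\eqref{eq:t:BendikovSaloffCoste:000} and the Leibniz rule for $\widehat\bmssGamma_\pi$ — is also the route the paper points to by referencing Theorem~\ref{t:DK}\ref{i:t:DK:3}$\implies$\ref{i:t:DK:2}.

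The step where you dispose of the second-order Itô terms is, however, circular. You argue that the cross-covariations $\quadvar{f_a(Y^a_\bullet),f_b(Y^b_\bullet)}$ vanish for $a\neq b$ ``by the tensor-product construction of $\widehat\bmssW_\pi$ in Theorem~\ref{t:DirInt}''. But in the converse direction $\bmssV^\mbfs$ is an \emph{arbitrary} Hunt process on $\EM^{-1}(\msP^\pa_\iso\cap\msP_\phi)$ assumed only to satisfy the single-coordinate equations; it is not given as (a fiber of) $\widehat\bmssW^{\boldvarphi}_\pi$, and establishing that identification is precisely what the implication is supposed to deliver once uniqueness is invoked. So you may not import the independence of the driving noises from the reference construction; it is not a hypothesis. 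The single-coordinate equations, as stated, pin down only the finite-variation part of each $f(Y^i_\bullet)$ and carry no information about brackets. To close the argument one must either add the bracket identity $\quadvar{f(Y^a_\bullet),g(Y^b_\bullet)}_t = \int_0^t\widehat\bmssGamma_\pi\tparen{\car_\mbfT\otimes\ev_a(f),\car_\mbfT\otimes\ev_b(g)}(\mbfs,\mbfY_r)\diff r$ (which is $\equiv 0$ for $a\neq b$ by strong locality) to the hypotheses, or derive it by running a Doob--Meyer uniqueness argument using the martingale property for products of two single-coordinate test functions, in the spirit of equations~\eqref{eq:t:DK:1}. As written, your cross-covariation claim does not follow from what is assumed about $\bmssV^\mbfs$.
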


We now establish analogous statements to those in Theorem~\ref{t:DK} in the case of the Girsanov transform~$\widehat\mcW_\pi^\phi$.
Effectively, we transfer the results in Theorem~\ref{t:DK} to~$\widehat\mcW_\pi^\phi$.
In order to do so, we need for this procedure to be \emph{invertible}, which requires the following additional assumption
\begin{equation}\label{eq:PhiTotalPartSpace}
0<\phi<\infty  \qquad \widehat\mcE_\pi\text{-q.e.\ on } \msP\fstop
\end{equation}
Under this assumption, the part space~$\msP_\phi$ defined in the beginning of the section can be chosen as the whole of~$\msP$.
Thus, we can transfer the $L^2$-theory, since~$(1/\phi)^2\mcQ_\pi^\phi=\mcQ_\pi$ as measures on (the whole of)~$\msP$, as well as the $\widehat\mcE_\pi$-theory, since the function~$1/\phi$ satisfies~$1/\phi\in\dotloc{\dom{\widehat\mcE_\pi}}$ by~\cite[Cor.~6.2]{Kuw98} and since~$\dotloc{\dom{\widehat\mcE_\pi}}=\dotloc{\dom{\widehat\mcE_\pi^\phi}}$ by~\cite[Cor.~4.12]{Fit97} (also cf.~\cite[Thm.~3.1$(i)$]{Kuw98b}).

\begin{theorem}\label{i:t:GirsanovDK}
Fix~$\pi\in\msP(\mbfT_\circ)$, and suppose Assumption~\ref{ass:Girsanov} is satisfied.
Further let~$\phi\in\dotloc{\dom{\widehat\mcE_\pi}}$ 
be satisfying~\eqref{eq:t:Girsanov:00},~\eqref{eq:t:Girsanov:000}, and~\eqref{eq:PhiTotalPartSpace}.
Finally, let~$(\rho_\bullet,\mbbP)$ be any \emph{time-homogeneous} $\msP^\pa\cap \msP_\phi$-valued $\mcQ_\pi^\phi$-substationary Hunt process adapted to its natural filtration with initial distribution~$P\circ\rho_0^{-1}$ equivalent to~$\mcQ_\pi$. 

Then, the following are equivalent:
\begin{enumerate}[$(i)$]
\item\label{i:t:GirsanovDK:1} $\rho_\bullet$ is a solution to the martingale problem for~$\tparen{\widehat\mcL_\pi^\phi,\dom{\widehat\mcL_\pi^\phi}}$, cf.~\eqref{eq:t:Girsanov:1};
\item\label{i:t:GirsanovDK:2} $\rho_\bullet$ is a solution to the martingale problem for~$(\widehat\mcL^\phi,\hCylP{}{0})$, see~\eqref{eq:t:Girsanov:1};
\item\label{i:t:GirsanovDK:3} $\rho_\bullet$ is a distributional solution to the shadow martingale problem~\eqref{eq:d:ShadowMP:0} driven by~$\widehat\mcL_\pi^\phi$, that is
\begin{equation*}
\widehat M^{\class{\molli\otimes f}}_t\eqdef\ (\molli\otimes f)^\trid(\rho_t) - (\molli\otimes f)^\trid(\rho_0) -\int_0^t ({\mcL^\phi}'\rho_s)(\molli\otimes f)\diff s
\end{equation*}
is an $\msF_\bullet$-adapted $\mbbP$-martingale for each $\molli\in\msR_0$ and each~$f\in\msA$.
\end{enumerate}

\begin{proof}
As in the proof of Theorem~\ref{t:DK}, \ref{i:t:GirsanovDK:1}$\implies$\ref{i:t:GirsanovDK:2}$\implies$\ref{i:t:GirsanovDK:3} holds by definition.
Since~$\rho_\bullet$ is a $\mcQ_\pi^\phi$-substationary Hunt process, it is identified ---uniquely up to~$\mcQ_\pi^\phi$-equivalence--- by its \emph{Markovian} generator on~$L^2(\mcQ_\pi^\phi)$.
Thus, again in analogy with Theorem~\ref{t:DK}, \ref{i:t:GirsanovDK:2}$\implies$\ref{i:t:GirsanovDK:1} holds since~$(\widehat\mcL^\phi,\hCylP{}{0})$ is Markov unique on~$L^2(\mcQ_\pi^\phi)$, as a consequence of Theorem~\ref{t:Girsanov}\ref{i:t:Girsanov:7}.

Similarly to the proof of~\cite[Thm.~3]{KonLehvRe19b}, the rest of the proof will follow directly from Theorem~\ref{t:DK}, by inverting the Girsanov transform by~$\phi$, as we now show.
Since~$(\rho_\bullet,\mbbP)$ is a Hunt process by assumption, we may consider its Girsanov transform~$\rho_\bullet^{1/\phi}$ by weight~$1/\phi$, which is a Borel right process, see~\cite[\S3, p.~741]{Kuw98} or~\cite{Fit97}.
Since~$(\rho_\bullet,\mbbP)$ is $\msP^\pa\cap\msP_\phi$-valued (recall that we chose~$\msP_\phi=\msP$),~$\rho_\bullet^{1/\phi}$ is $\msP^\pa$-valued.

Since~$\rho_\bullet$ is identified ---uniquely up to~$\mcQ_\pi^\phi$-equivalence--- by~$(\widehat\mcL^\phi,\hCylP{}{0})$, we may compute the generator of~$\rho_\bullet^{1/\phi}$ on~$u\in\hCylP{}{0}$ as~$\widehat\mcL^{\rho^{1/\phi}} u = \widehat\mcL^{\phi} u + \phi \, \widehat\mcG_\pi(\phi^{-1},u)=\widehat\mcL u$.
Thus, since~$\tparen{\widehat\mcL,\hCylP{}{0}}$ is essentially self-adjoint by Theorem~\ref{t:TransferP}\ref{i:t:TransferP:1}, the generator~$\tparen{\widehat\mcL^{\rho^{1/\phi}_\pi},\dom{\widehat\mcL^{\rho^{1/\phi}}_\pi}}$ on~$L^2(\mcQ_\pi)$ is exactly~$\tparen{\widehat\mcL_\pi,\dom{\widehat\mcL_\pi}}$.

This concludes the proof in light of the analogous assertions in Theorem~\ref{t:DK} and of Proposition~\ref{p:GirsanovDFSimple}.
\end{proof}
\end{theorem}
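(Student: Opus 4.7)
The plan is to mirror the structure of Theorem~\ref{t:DK}, viewing the Girsanov-transformed setting as an invertible perturbation and reducing everything to the already-established free case. Among the three implications, \ref{i:t:GirsanovDK:1}$\Rightarrow$\ref{i:t:GirsanovDK:2}$\Rightarrow$\ref{i:t:GirsanovDK:3} are essentially by definition, while \ref{i:t:GirsanovDK:3}$\Rightarrow$\ref{i:t:GirsanovDK:2} will proceed via a Girsanov-inversion trick reducing to Theorem~\ref{t:DK}, and \ref{i:t:GirsanovDK:2}$\Rightarrow$\ref{i:t:GirsanovDK:1} will use Markov uniqueness granted by Theorem~\ref{t:Girsanov}\ref{i:t:Girsanov:7}.

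For the easy implications: \ref{i:t:GirsanovDK:1}$\Rightarrow$\ref{i:t:GirsanovDK:2} follows by restricting the domain, since Theorem~\ref{t:Girsanov}\ref{i:t:Girsanov:1} places $\hCylP{}{0}$ inside $\dom{\widehat\mcL_\pi^\phi}$ with $\widehat\mcL_\pi^\phi u = \widehat\mcL^\phi u$ on that core. For \ref{i:t:GirsanovDK:2}$\Rightarrow$\ref{i:t:GirsanovDK:3}, I would choose $u\eqdef (\molli\otimes f)^\trid\in\hCylP{}{0}$ and invoke the duality~\eqref{eq:DualityGenerator} transferred to the transformed setting, $((\mcL^\phi)'\rho)(\molli\otimes f) = (\widehat\mcL^\phi u)(\rho)$, which identifies the two martingale expressions. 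For \ref{i:t:GirsanovDK:2}$\Rightarrow$\ref{i:t:GirsanovDK:1}: since $\rho_\bullet$ is a $\mcQ_\pi^\phi$-substationary Hunt process adapted to its natural augmented filtration, its law is determined up to $\mcQ_\pi^\phi$-equivalence by its Markovian $L^2(\mcQ_\pi^\phi)$-generator; Theorem~\ref{t:Girsanov}\ref{i:t:Girsanov:7} is then the crucial input, asserting that $(\widehat\mcL^\phi,\hCylP{}{0})$ admits the \emph{unique} Markovian extension $(\widehat\mcL_\pi^\phi,\dom{\widehat\mcL_\pi^\phi})$. Hence \ref{i:t:GirsanovDK:2} forces $\rho_\bullet$ to be the process properly associated with the latter self-adjoint extension, which is \ref{i:t:GirsanovDK:1}.

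The substantive step \ref{i:t:GirsanovDK:3}$\Rightarrow$\ref{i:t:GirsanovDK:2} proceeds by inverting the Girsanov transform with the reciprocal weight $1/\phi$. This is licit precisely because of the everywhere-positivity~\eqref{eq:PhiTotalPartSpace}: it allows one to take $\msP_\phi = \msP$, so that no killing occurs under the inverse transform; by~\cite{Kuw98} and~\cite{Fit97} (also cf.~Thm.~\ref{t:Girsanov}\ref{i:t:Girsanov:2}), one has $1/\phi\in\dotloc{\dom{\widehat\mcE_\pi}}=\dotloc{\dom{\widehat\mcE_\pi^\phi}}$, and the reference measure satisfies $(1/\phi)^2\mcQ_\pi^\phi=\mcQ_\pi$. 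The Girsanov transform $\rho_\bullet^{1/\phi}$ is then a $\mcQ_\pi$-substationary Hunt process whose generator on $\hCylP{}{0}$ computes, via the derivation identity $\widehat\mcG_\pi(\phi^{-1},u)=-\phi^{-2}\widehat\mcG_\pi(\phi,u)$, to
\[
\widehat\mcL^\phi u + \phi\,\widehat\mcG_\pi(\phi^{-1},u) = \widehat\mcL u + \phi^{-1}\widehat\mcG_\pi(\phi,u) - \phi^{-1}\widehat\mcG_\pi(\phi,u) = \widehat\mcL u\fstop
\]
Since the initial distribution of $\rho_\bullet^{1/\phi}$ remains equivalent to $\mcQ_\pi$, Theorem~\ref{t:DK} applies and identifies $\rho_\bullet^{1/\phi}$ as a solution to the MP for $(\widehat\mcL,\hCylP{}{0})$. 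Reversing the Girsanov transformation (now by weight $\phi$) delivers \ref{i:t:GirsanovDK:2} for $\rho_\bullet$ itself.

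The main obstacle I foresee lies in verifying that the class of admissible processes---time-homogeneous, $\mcQ_\pi^\phi$-substationary Hunt processes with initial distribution equivalent to $\mcQ_\pi$---is \emph{stable} under the Girsanov transform by $1/\phi$, so that the resulting $\rho_\bullet^{1/\phi}$ satisfies the hypotheses of Theorem~\ref{t:DK}. This amounts to checking that substationarity, the Hunt property, and initial-distribution equivalence all transfer across the Girsanov inversion under the integrability conditions~\eqref{eq:t:Girsanov:00} and~\eqref{eq:t:Girsanov:000}, leveraging the symmetry of the Girsanov construction under weight inversion. Once this stability is secured, the reduction is automatic.
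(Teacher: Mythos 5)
Your proposal follows the same route as the paper's proof: \ref{i:t:GirsanovDK:1}$\Rightarrow$\ref{i:t:GirsanovDK:2}$\Rightarrow$\ref{i:t:GirsanovDK:3} by definition and domain restriction, \ref{i:t:GirsanovDK:2}$\Rightarrow$\ref{i:t:GirsanovDK:1} via Markov uniqueness from Theorem~\ref{t:Girsanov}\ref{i:t:Girsanov:7}, and the remaining implication via Girsanov inversion by weight $1/\phi$ (licit because of~\eqref{eq:PhiTotalPartSpace} which forces $\msP_\phi=\msP$), computing the transformed generator on $\hCylP{}{0}$ to be $\widehat\mcL$ and appealing to Theorem~\ref{t:DK}. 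The stability caveat you flag at the end is the part the paper addresses by invoking Proposition~\ref{p:GirsanovDFSimple} together with the symmetric role of $\phi$ and $1/\phi$ (in particular $1/\phi\in\dotloc{\dom{\widehat\mcE_\pi}}$ and $(1/\phi)^2\mcQ_\pi^\phi=\mcQ_\pi$), so your concern is well-placed but does not constitute a gap.
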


\subsection{Regular interaction energies}\label{ss:RIE}
Let us now specialize Theorem~\ref{t:Girsanov} to \emph{mean-field pairwise interactions}.
For~$g\colon \R^+\to \R$, we define the following assumption:
\begin{equation}\label{eq:AssSmallScaleInt}
\begin{gathered}
\text{there exists~$\delta>0$ such that}
\\
g \quad \text{is (strictly) decreasing and strictly positive on~$(0,\delta]$}\comma
\\
\lim_{t\to 0^+}g(t)=\infty  \qquad \text{and} \qquad \sup_{t\in [\delta,\infty)} \abs{g(t)} < \infty \fstop
\end{gathered}
\end{equation}

\begin{definition}[Regular interactions]\label{d:RegularPotential}
We say that~$\sigma\colon \R^+\to \R$ is a \emph{regular pair potential} if~$\sigma$ is piecewise of class~$\mcC^1$ and~\eqref{eq:AssSmallScaleInt} simultaneously holds for~$g=\sigma$ \emph{and}~$g=-\sigma'$.
For any regular pair potential~$\sigma\colon \R^+\to \R$ we call (\emph{repulsive}) \emph{$\sigma$-interaction energy} the function
\begin{equation}\label{eq:InteractionEnergy}
\IE_\sigma\colon \mbfT\times\mbfM_\circ\longrar\overline\R\comma \qquad \IE_\sigma\colon (\mbfs,\mbfx)\longmapsto \tfrac{1}{2}\sum_{\substack{i,j\\i\neq j}} s_i\, s_j \, \sigma\tparen{\mssd(x_i,x_j)} \comma
\end{equation}
and \emph{attractive $\sigma$-interaction energy} the function~$-\IE_\sigma$.
\end{definition}

In the measure representation, the function~$\IE_\sigma$ corresponds to the natural interaction energy
\[
(\IE_\sigma\circ\EM^{-1})(\eta)=\tfrac{1}{2}\iint \car_{\set{x\neq y}} \sigma\tparen{\mssd(x,y)} \diff\eta^{\otym{2}}(x,y)\fstop
\]

\begin{example}\label{e:PhysicalPotentials}
Physically relevant examples of regular pair potentials include:
\begin{itemize}[leftmargin=2em]
\item the  (\emph{repulsive}) \emph{Riesz-type interactions}~$\sigma=\sigma_p\colon t \mapsto \tfrac{1}{p}t^{-p}$, $p\geq 0$, and in particular the  (\emph{repulsive}) \emph{Coulomb-type potentials} ($p\in\N$);
\item the (\emph{repulsive}) \emph{logarithmic interaction}~$\sigma\colon t \mapsto (-\log t)^+$;
\item the (\emph{repulsive}) \emph{Mie-type interactions}~$\sigma\colon t\mapsto at^{-\alpha}-bt^{-\beta}$, $a,b>0$,~$\alpha>\beta>0$, and in particular the (\emph{repulsive}) \emph{Lennard-Jones-type potentials} ($\alpha/2=\beta=6$).
\end{itemize}
\end{example}

In the next few statements we show that, under an extremely mild moment bound on~$\pi$, all the interaction energies~$\IE_\sigma$ induced by regular pair potentials~$\sigma$ are elements of the broad local space of~$\widehat\bmssE_\pi$.
Let us firstly introduce this moment bound.

\begin{definition}[]\label{d:OnePlusMoment}
Let~$\tau\colon \R^+\to\R^+$ be a Borel-measurable function satisfying
\begin{enumerate}[$(\tau_1)$]
\item $\tau$ is eventually non-decreasing;
\item $\tau$ is eventually submultiplicative;
\item $\lim_{t\to\infty} \tau(t)=\infty$.
\end{enumerate}
We denote by~$\msP_*(\mbfT)$ the space of all Borel probability measures on~$\mbfT$ satisfying
\begin{equation}\label{eq:PiTau}
C_\pi(\tau)\eqdef \int_\mbfT \sum_i s_i\,\tau(s_i^{-1})\diff \pi(\mbfs)<\infty \quad \text{form some $\tau$ as above} \fstop
\end{equation}
\end{definition}

\begin{example}
For~$p>1$, let~$p'\eqdef \tfrac{p}{p-1}$ be its H\"older conjugate exponent.
A prototypical choice for~$\tau$ is~$\tau_p\colon t\mapsto t^{1/p'}$ for some~$p>1$.
In this case, the moment bound~\eqref{eq:PiTau} is just the requirement of $\pi$-integrability for the function~$\mbfs\mapsto \norm{\mbfs}_{\ell^{1/p}}^{1/p}$.
This makes clear that the requirement that~$\pi\in\msP_*(\mbfT)$ is extremely mild, since~$\mbfs\mapsto\norm{\mbfs}_{\ell^1}\equiv \car$ is always in~$L^1(\pi)$ and we can choose~$p$ to be arbitrarily close to~$1$.
\end{example}

\begin{example}
For the Poisson--Dirichlet measure in Example~\ref{e:PoissonDirichlet} we have
\[
C_{\Pi_\beta}(\tau_p)=\beta\, \Beta(p^{-1},\beta) <\infty\comma \qquad \beta>0\comma \quad p>1\fstop
\]
In particular,~$\Pi_\beta\in\msP_*(\mbfT)$ for every~$\beta>0$.

\begin{proof} Denote by~$\Beta$ the Euler Beta function. We compute
\begin{align*}
\int_\mbfT \norm{\mbfs}_{\ell^{1/p}}^{1/p} \diff\Pi_\beta(\mbfs)=& \int_\mbfI \norm{(\boldUpsilon\circ \boldLambda)(\mbfr)}_{\ell^{1/p}}^{1/p} \diff\bmssBeta_\beta(\mbfr)
= \int_\mbfI \norm{\boldLambda(\mbfr)}_{\ell^{1/p}}^{1/p} \diff\bmssBeta_\beta(\mbfr)
\\
=& \sum_i \int_\mbfI r_i^{1/p} \prod_k^{i-1} (1-r_k)^{1/p} \diff\bmssBeta_\beta(\mbfr)
\\
=& \sum_i \beta^i \int_0^1 r_i^{1/p} (1-r_i)^{\beta-1}\diff r_i \prod_k^{i-1}\int_0^1 (1-r_k)^{1/p+\beta-1} \diff r_k
\\
=&\ \beta\, \Beta(p^{-1},\beta)<\infty \fstop \qedhere
\end{align*}
\end{proof}
\end{example}

We shall need the following technical lemma.

\begin{lemma}\label{l:NestPhi}
Let~$\sigma$ be a regular pair potential and~$\tau$ be satisfying the conditions in Definition~\ref{d:OnePlusMoment}.
Then, there exists~$\vareps\colon \R^+\to\R^+$ satisfying~\eqref{eq:NestPhi} and additionally such that
\[
\sigma\circ \vareps \leq \tau^{1/4} \quad \text{and} \quad -\sigma' \circ\vareps \leq \tau^{1/4} \quad \text{eventually on~$\R^+$} \fstop
\]
\begin{proof}
Let~$\bar\sigma\eqdef \sigma\vee -\sigma'$ and note that it too satisfies~\eqref{eq:AssSmallScaleInt}.
In particular, it is invertible on~$(0,\delta]$, and we denote by~$\bar\sigma^{-1}\colon (\bar\sigma(\delta),\infty)\to\R$ its inverse.
Define
\begin{equation}
\vareps\colon t \longmapsto \begin{cases} \delta & \text{if } 0<t<\bar\sigma(\delta) \\ (\bar\sigma^{-1}\circ \tau^{1/4})(t) &\text{if } t\geq \bar\sigma(\delta)\end{cases}
\end{equation}
and note that~$\vareps$ satisfies~\eqref{eq:NestPhi} since~$\tau$ is non-decreasing and~$\bar\sigma$ satisfies~\eqref{eq:AssSmallScaleInt}.
Furthermore, on~$[\bar\sigma(\delta),\infty)$,
\[
\sigma\circ \vareps\comma -\sigma'\circ\vareps \leq \bar\sigma\circ\vareps = \tau^{1/4}\comma
\]
which concludes the assertion.
\end{proof}
\end{lemma}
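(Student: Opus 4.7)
The plan is to reduce the two pointwise inequalities to a single one by combining the regular pair potential $\sigma$ and its derivative into an auxiliary majorant, and then to obtain $\vareps$ by inverting this majorant against $\tau^{1/4}$. First I would introduce $\bar\sigma \eqdef \sigma \vee (-\sigma')$ on~$\R^+$. Since both $\sigma$ and $-\sigma'$ satisfy~\eqref{eq:AssSmallScaleInt} with some common radius (take the minimum of the two radii, still denoted $\delta$), the pointwise maximum $\bar\sigma$ is again strictly decreasing and strictly positive on $(0,\delta]$, diverges as $t \downarrow 0$, and is bounded on $[\delta, \infty)$. In particular, $\bar\sigma$ restricted to $(0, \delta]$ is a continuous strictly decreasing bijection onto $[\bar\sigma(\delta), \infty)$, so it admits a continuous strictly decreasing inverse $\bar\sigma^{-1}\colon [\bar\sigma(\delta), \infty) \to (0, \delta]$ with $\bar\sigma^{-1}(s) \downarrow 0$ as $s \to \infty$.

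Next I would define
\[
\vareps(t) \eqdef \begin{cases} \delta & 0 < t < \bar\sigma(\delta), \\ \bar\sigma^{-1}(\tau^{1/4}(t)) & t \geq \bar\sigma(\delta), \end{cases}
\]
provided that $t$ is large enough in the second branch that $\tau^{1/4}(t) \geq \bar\sigma(\delta)$; for $t$ in between I would simply extend by the constant value $\delta$. Since $\tau$ is eventually non-decreasing with $\tau \to \infty$, and $\bar\sigma^{-1}$ is (strictly) decreasing on its domain with $\bar\sigma^{-1} \to 0$ at infinity, the function $\vareps$ is eventually non-increasing and $\vareps(t) \to 0$ as $t \to \infty$; the constant value $\delta$ on the initial segment guarantees that $\vareps$ is globally non-increasing. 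This verifies~\eqref{eq:NestPhi}.

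Finally, for $t$ in the regime where the second branch applies we have by construction $\bar\sigma(\vareps(t)) = \tau^{1/4}(t)$, and since $\sigma \leq \bar\sigma$ and $-\sigma' \leq \bar\sigma$ pointwise on $(0, \delta]$ (and $\vareps$ takes values in $(0,\delta]$ for large $t$), both inequalities $\sigma \circ \vareps \leq \tau^{1/4}$ and $-\sigma' \circ \vareps \leq \tau^{1/4}$ hold eventually.

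The only mildly delicate point is to make sure the definition glues to a globally well-defined non-increasing $\vareps$ on all of $\R^+$, which is why the initial constant segment is needed; everything else follows from transporting monotonicity and limits through the two decreasing maps $\bar\sigma^{-1}$ and $\tau^{1/4}$. No further properties of $\tau$ beyond eventual monotonicity and $\tau \to \infty$ are used here; submultiplicativity and the bound $C_\pi(\tau) < \infty$ will enter only in the applications of this lemma.
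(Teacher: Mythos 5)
Your proposal is correct and follows essentially the same construction as the paper: define $\bar\sigma \eqdef \sigma \vee (-\sigma')$, invert its strictly decreasing branch against $\tau^{1/4}$, and extend by the constant $\delta$ below a threshold to ensure global monotonicity. Your added caveat that the formula $\bar\sigma^{-1}\circ\tau^{1/4}$ only makes sense once $\tau^{1/4}(t)\geq\bar\sigma(\delta)$ (and $\tau$ has become non-decreasing) is a genuine tightening of the paper's definition, which as written may be ill-defined just past $\bar\sigma(\delta)$; this is harmless for the stated conclusion since it holds only \emph{eventually}.
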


\begin{proposition}\label{p:BLocSpW}
Let~$\sigma$ be a regular pair potential.
Under Assumption~\ref{ass:Girsanov}, then
\begin{enumerate}[$(i)$]
\item\label{i:p:BLocSpW:1} $\IE_\sigma\in \dotloc{\dom{\widehat\bmssE_\pi}}$ for every~$\pi\in\msP_*(\mbfT)$.
\end{enumerate}
If furthermore, there exists~$d\geq 0$ such that
\begin{gather}
\label{eq:p:BLocSpW:00}
\sup_{x\in M}\nu\tparen{B_r(x)} \in O(r^d) \quad \text{as } r\downarrow 0\comma
\\
\label{eq:p:BLocSpW:0}
\int_0^1 \sigma(r)\, r^{d-1} \diff r <\infty \quad \text{and} \quad -\int_0^1 \sigma'(r)\, r^{d-1}\diff r<\infty\comma
\end{gather}
then
\begin{enumerate}[$(i)$, resume]
\item\label{i:p:BLocSpW:2} $\IE_\sigma\in\dom{\widehat\bmssE_\pi}$ for every~$\pi\in\msP(\mbfT)$.
\end{enumerate}

\begin{proof}
We divide the proof into several parts.

\paragraph{Proof of~\ref{i:p:BLocSpW:1}}
Let~$\tau$ as in Definition~\ref{d:OnePlusMoment} be witnessing that~$\pi\in \msP_*(\mbfT)$, and let~$\vareps$ be given by Lemma~\ref{l:NestPhi}.
Define functions~$f_{i,j}\eqdef f_{i,j,n}\colon \mbfx\mapsto \mssd(x_i,x_j)\wedge \vareps(ijn)$ and
\[
\mbfu_n\colon (\mbfs,\mbfx)\longmapsto \tfrac{1}{2}\sum_{\substack{i,j\\i\neq j}} s_i\, s_j \, \sigma\tparen{f_{i,j}(\mbfx)} \fstop
\]
Further let~$\widehat\mbfF_n\eqdef \widehat\mbfF_n$ be the $\widehat\bmssE_\pi$-nest constructed in Proposition~\ref{p:AndiHatPolar}\ref{i:p:AndiHatPolar:1} for~$\vareps$ chosen above, and note that~$\mbfu_n\equiv \IE_\sigma$ on~$\widehat\mbfF_n$.
In particular,~$\mbfu_n\equiv \IE_\sigma$ on the $\widehat\bmssE_\pi$-quasi-open set~$\widehat\mbfG_n\eqdef \inter_{\widehat\bmssE_\pi}\widehat\mbfF_n$.
Since~$\tseq{\widehat\mbfF_n}_n$ is an $\widehat\bmssE_\pi$-nest,~$\widehat\mbfG_n\nearrow_n \widehat\mbfM$ up to $\widehat\bmssE_\pi$-polar sets by~\cite[Lem.~3.1]{Kuw98}.
Thus, by definition of broad local space, it suffices to show that~$\mbfu_n\in\dom{\widehat\bmssE_\pi}$ for every~$n\in\N$.

\paragraph{$L^2$-bound} For the $L^2$-norm of~$\mbfu_n$, respectively:
by~\eqref{eq:AssSmallScaleInt} for~$\sigma$;
since~$\mbfs\in\mbfT$ implies~$i<s_i^{-1}$, and by~\eqref{eq:NestPhi} and~\eqref{eq:AssSmallScaleInt} for~$\sigma$;
by Lemma~\ref{l:NestPhi} and the properties of~$\tau$;
by Jensen's inequality and since~$\sum_i s_i=1$,
we have
\begin{align*}
\abs{\mbfu_n}^2 \lesssim& \abs{\sum_{i,j} s_i\, s_j\, \sigma\tparen{\vareps(ijn)}}^2 \leq \abs{\sum_{i,j} s_i\, s_j\, \sigma\tparen{\vareps(s_i^{-1}s_j^{-1}n)}}^2 
\\
\leq&\ \sqrt{\tau(n)} \sum_{i,j,h,k} s_i\, s_j\, s_h\, s_k \sqrt[4]{\tau(s_i^{-1})\, \tau(s_j^{-1})\, \tau(s_h^{-1}) \tau(s_k^{-1}) }
\\
=&\ \sqrt{\tau(n)} \paren{\sum_i s_i \sqrt[4]{\tau(s_i^{-1})}}^4 \leq \sqrt{\tau(n)} \sum_i s_i\, \tau(s_i^{-1}) \fstop
\end{align*}
Integrating the above inequality w.r.t.~$\widehat\boldnu_\pi$ shows that~$\mbfu_n\in L^2(\widehat\boldnu_\pi)$ by definition of~$\tau$.

\paragraph{Energy bound} As for the $\widehat\bmssE_\pi$-energy of~$\mbfu_n$, we have
\begin{align}
\nonumber
4\,\widehat\bmssGamma_\pi(\mbfu_n)_{\mbfs,\emparg}=&\ \widehat\bmssGamma_\pi\paren{\sum_{\substack{i,j\\i\neq j}} s_i\, s_j\, \sigma\circ f_{i,j}}
\\
\nonumber
=&\ \sum_{\substack{i,j\\i\neq j}} \sum_{\substack{h,k\\h\neq k}} s_i\, s_j\, s_h\, s_k\, \tparen{\sigma'\circ f_{i,j}} \tparen{\sigma'\circ f_{h,k}} \, \bmssGamma (f_{i,j},f_{h,k})
\\
\nonumber
=&\ \sum_{\substack{i,j\\i\neq j}} \sum_{\substack{h,k\\h\neq k}} s_i\, s_j\, s_h\, s_k\, \tparen{\sigma'\circ f_{i,j}} \tparen{\sigma'\circ f_{h,k}} \, \sum_{a\in\set{i,j,h,k}} s_a^{-1}\mssGamma(f_{i,j},f_{h,k})
\\
\label{eq:p:BLocSpW:0.5}
=&\ 8 \sum_k\sum_{\substack{i,j\\i,j \neq k}} s_i\, s_j\, s_k \tparen{\sigma'\circ f_{i,k}} \tparen{\sigma'\circ f_{j,k}} \, \mssGamma(f_{i,k},f_{j,k}) \comma
\end{align}
where we used that~$\mssGamma$ is symmetric and~$f_{i,j}=f_{j,i}$. 
Integrating the above equality w.r.t.~$\widehat\boldnu_\pi$, we conclude that
\begin{align}\label{eq:p:BLocSpW:1}
\widehat\bmssE_\pi(\mbfu_n)= 2 \int_\mbfT \sum_k\sum_{\substack{i,j\\i,j \neq k}} s_i\, s_j\, s_k \int_{M^\tym{3}}\tparen{\sigma'\circ f_{i,k}} \tparen{\sigma'\circ f_{j,k}}\mssGamma(f_{i,k},f_{j,k})\diff\nu^\otym{3} \diff\pi(\mbfs)\fstop
\end{align}
Respectively:
by~\eqref{eq:AssSmallScaleInt} for~$\sigma'$;
since~$\mbfs\in\mbfT$ implies~$i<s_i^{-1}$, and by~\eqref{eq:NestPhi} and~\eqref{eq:AssSmallScaleInt} for~$\sigma'$;
by Lemma~\ref{l:NestPhi} and the properties of~$\tau$,
we further have that
\begin{align*}
-\sigma'\circ f_{i,k}\lesssim -\sigma'\tparen{\vareps(ikn)}\leq -\sigma'\tparen{\vareps(s_i^{-1}s_k^{-1}n)}\leq \sqrt[4]{\tau(n)\tau(s_i^{-1})\tau(s_k^{-1}) }\comma
\end{align*}
and therefore
\begin{align}
\nonumber
\sum_k\sum_{\substack{i,j\\i,j \neq k}} s_i\, s_j\, s_k \tparen{\sigma'\circ f_{i,k}} \tparen{\sigma'\circ f_{j,k}} \lesssim&\ \sqrt{\tau(n)} \sum_{i,j,k} s_i\, s_j\, s_k\, \sqrt[4]{\tau(s_i^{-1})\tau(s_j^{-1})\tau(s_k^{-1})^2}
\\
\nonumber
=&\ \sqrt{\tau(n)} \paren{\sum_i s_i \sqrt[4]{\tau(s_i^{-1})}}^2 \sum_k s_k\sqrt{\tau(s_k^{-1})}
\\
\label{eq:p:BLocSpW:2}
\leq&\ \sqrt{\tau(n)} \sum_k s_k \tau(s_k^{-1})
\end{align}
by repeated applications of Jensen's inequality and since~$\sum_k s_k=1$.

Using the estimate~\eqref{eq:p:BLocSpW:2} in~\eqref{eq:p:BLocSpW:1} and the locality of~$\mssGamma$ then shows that
\[
\widehat\bmssE_\pi(\mbfu_n)\lesssim_{n,\pi} \int_{M^\tym{3}} \abs{\mssGamma\tparen{\mssd(\emparg, x), \mssd(\emparg,y)}(z)}\diff\nu^\otym{3} (x,y,z)<\infty
\]
by~\eqref{eq:RademacherBaseSp}.

This concludes the proof of~\ref{i:p:BLocSpW:1}.

\paragraph{Proof of~\ref{i:p:BLocSpW:2}}
We only show the energy bound. The $L^2$-bound is similar and simpler.
Similarly to the proof of~\eqref{eq:p:BLocSpW:1} above, we have
\begin{align*}
\widehat\bmssE_\pi(\IE_\sigma) =&\ 2 \int_\mbfT \sum_{\substack{i,j,k\\i,j \neq k}} s_i\, s_j\, s_k \diff\pi(\mbfs) \int_{M^\tym{3}}\sigma' \tparen{\mssd(x_i,x_k)}\, \sigma' \tparen{\mssd(x_j,x_k)}
\\
&\qquad\qquad\qquad\qquad\qquad\qquad \cdot \mssGamma\tparen{\mssd(x_i,\emparg),\mssd(x_j,\emparg)}(x_k)\diff\nu^\otym{3}(x_i,x_j,x_k)
\\
\lesssim& \int_{M^\tym{3}}\sigma' \tparen{\mssd(x,z)}\, \sigma' \tparen{\mssd(y,z)}\diff\nu^\otym{3}(x,y,z)
\end{align*}
where we used that~$\mssGamma\tparen{\mssd(x,\emparg)}\in L^\infty(\nu)$ by Assumption~\ref{ass:Girsanov}. Thus, by~\eqref{eq:p:BLocSpW:00},
\begin{align*}
\widehat\bmssE_\pi(\IE_\sigma) \lesssim& \int_M\braket{\int_M\sigma' \tparen{\mssd(x,z)}\diff\nu(x)}^2 \diff\nu(z) \lesssim \braket{\int_0^1 \sigma'(r)\, r^{d-1}\diff r}^2\comma
\end{align*}
which is finite by~\eqref{eq:p:BLocSpW:0}.
\end{proof}
\end{proposition}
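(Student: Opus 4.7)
The plan is to prove part~\ref{i:p:BLocSpW:1} by approximating $\IE_\sigma$ on an $\widehat\bmssE_\pi$-nest that quantitatively stays away from the collision set, and to prove part~\ref{i:p:BLocSpW:2} by a direct moment computation exploiting the volume-growth hypothesis.

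For~\ref{i:p:BLocSpW:1}, given $\pi \in \msP_*(\mbfT)$ with moment witness $\tau$, I would first apply Lemma~\ref{l:NestPhi} to pick $\vareps$ satisfying~\eqref{eq:NestPhi} and such that both $\sigma\circ\vareps$ and $-\sigma'\circ\vareps$ are dominated by $\tau^{1/4}$ eventually. The same $\vareps$ then feeds Proposition~\ref{p:AndiHatPolar}\ref{i:p:AndiHatPolar:1} to produce an $\widehat\bmssE_\pi$-nest $\widehat\mbfF_n = \mbfT \times \mbfF_n$ on which $\mssd(x_i,x_j) \geq \vareps(ijn)$ for all $i<j$. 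Setting $f_{i,j,n}$ to be a cut-off of $\mssd(x_i,x_j)$ at level $\vareps(ijn)$ and defining $\mbfu_n \eqdef \tfrac{1}{2}\sum_{i\neq j} s_i\, s_j\, \sigma(f_{i,j,n})$, the function $\mbfu_n$ coincides with $\IE_\sigma$ on the $\widehat\bmssE_\pi$-quasi-open interior $\widehat\mbfG_n$ of $\widehat\mbfF_n$. Since $\widehat\mbfG_n$ exhausts $\widehat\mbfM$ up to $\widehat\bmssE_\pi$-polar sets by~\cite[Lem.~3.1]{Kuw98}, the definition of broad local space reduces the claim to showing $\mbfu_n\in\dom{\widehat\bmssE_\pi}$ for each~$n$.

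Both the $L^2$ and the energy bound rely on the elementary observation that the ordering $\mbfs \in \mbfT$ forces $i \leq s_i^{-1}$, so that the monotonicity of $\vareps$ upgrades $\sigma\circ\vareps(ijn)$ and $\sigma'\circ\vareps(ijn)$ to $\tau(s_i^{-1}s_j^{-1} n)^{1/4}$-type factors controlled by Lemma~\ref{l:NestPhi}. The energy bound additionally uses the product structure~\eqref{eq:t:BendikovSaloffCoste:000} of $\bmssL^\mbfs$: only a single coordinate contributes to $\bmssGamma_\pi$ at a time, with a volatility factor $s_a^{-1}$, and cross terms $\mssGamma(f_{i,j,n},f_{h,k,n})$ survive only when $\{i,j\}\cap\{h,k\}\neq\emp$, collapsing the quadruple sum to a triple sum organized around the shared index as in~\eqref{eq:p:BLocSpW:0.5}. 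Four (respectively three) iterated Jensen inequalities against the probability weights $(s_i)$, combined with $\sum_i s_i=1$, then reduce the pointwise bound to $\sum_k s_k \,\tau(s_k^{-1})$, which is $\pi$-integrable by~\eqref{eq:PiTau}. The remaining purely spatial integral $\int \mssGamma\tparen{\mssd(\emparg,x),\mssd(\emparg,y)}\diff\nu^{\otimes 3}$ is finite by~\eqref{eq:RademacherBaseSp}.

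Part~\ref{i:p:BLocSpW:2} requires no approximation: one directly estimates $\widehat\bmssE_\pi(\IE_\sigma)$ by the same triple-sum mechanism, but now the probability factors collapse trivially via $\sum_i s_i=1$, so no $\tau$-moment is needed. The spatial integral is then majorized by $\int_M \bigl[\int_M \abs{\sigma'(\mssd(x,z))}\diff\nu(x)\bigr]^2 \diff\nu(z)$; the volume-growth bound~\eqref{eq:p:BLocSpW:00} converts the inner integral into $\int_0^1 \abs{\sigma'(r)}\, r^{d-1}\diff r$, which is finite by~\eqref{eq:p:BLocSpW:0}. The $L^2$-bound is analogous using $\sigma$ in place of $\sigma'$. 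The main obstacle is the calibration in part~\ref{i:p:BLocSpW:1}: $\vareps$ must simultaneously yield a valid $\widehat\bmssE_\pi$-nest and tame the singular behavior of both $\sigma$ and $\sigma'$ tightly enough that four applications of Jensen leave room for $\pi$-integrability. The exponent $1/4$ in Lemma~\ref{l:NestPhi} is exactly what is needed to balance the four mass factors $s_i,s_j,s_h,s_k$ appearing in the expansion of $\widehat\bmssGamma_\pi(\mbfu_n)$.
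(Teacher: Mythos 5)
Your proposal is correct and follows essentially the same route as the paper's proof: the same nest-and-cutoff approximation $\mbfu_n$ built from the $\vareps$ calibrated in Lemma~\ref{l:NestPhi}, the same collapse of the quadruple sum to a triple sum via locality of $\bmssGamma^\mbfs$ across coordinates, the same Jensen-type reduction to the $\pi$-moment $\sum_k s_k\,\tau(s_k^{-1})$, and the same direct volume-growth estimate for part~\ref{i:p:BLocSpW:2}. The only quibble is phrasing: the reduction to $\sum_i s_i\,\tau(s_i^{-1})$ is a single application of Jensen's inequality for the convex power functions $x\mapsto x^4$ (resp.\ $x\mapsto x^2$), not four (resp.\ three) separate applications; the count you have in mind corresponds to the number of mass factors $s_i,s_j,s_h,s_k$, which is what fixes the exponent $1/4$ in Lemma~\ref{l:NestPhi}.
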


As a consequence of the chain rule for~$\widehat\bmssGamma_\pi$ we have the following.

\begin{corollary}\label{c:RepulsivePhiDomLoc}
Let~$\sigma$ be a regular pair potential.
\begin{enumerate}[$(i)$]
\item\label{i:c:RepulsivePhiDomLoc:1} If~$\pi\in\msP_*(\mbfT)$, then
\[
\phi^\pm_{\beta,\sigma}\eqdef \exp\tbraket{-\beta \tparen{\pm\IE_\sigma}}\circ \EM \in \dotloc{\dom{\widehat\mcE_\pi}}\comma \qquad \beta>0 \fstop
\]

\item\label{i:c:RepulsivePhiDomLoc:2} If~$\pi\in\msP(\mbfT)$ and $\sigma$ additionally satisfies~\eqref{eq:p:BLocSpW:00} and~\eqref{eq:p:BLocSpW:0}, then $\log\phi^-_{\beta,\sigma}\in\dom{\widehat\mcE_\pi}$ and all the conclusions in Theorem~\ref{t:Girsanov} hold true.
\end{enumerate}

\begin{proof}
Since~$\sigma$ is bounded below, it is clear that~$\phi^-_{\beta,\sigma}\in L^\infty(\mcQ_\pi)$.
Furthermore, we have~$\log\phi^-_{\beta,\sigma}=-\beta \IE_\sigma\circ \EM\in \dom{\widehat\mcE_\pi}$ by Proposition~\ref{p:BLocSpW}, which verifies~\eqref{eq:t:Girsanov:000} and therefore~\eqref{eq:t:Girsanov:00} in Theorem~\ref{t:Girsanov}.
The conclusion follows by the theorem.
\end{proof}
\end{corollary}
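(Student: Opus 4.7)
The plan divides into two parts, mirroring the structure of the statement.

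For part~\ref{i:c:RepulsivePhiDomLoc:1}, I would start from Proposition~\ref{p:BLocSpW}\ref{i:p:BLocSpW:1}, which under the moment assumption $\pi\in\msP_*(\mbfT)$ gives $\IE_\sigma\in\dotloc{\dom{\widehat\bmssE_\pi}}$. Next, I would pass to $\exp(\mp\beta\,\IE_\sigma)$ via the chain rule in the broad local space: elements of $\dotloc{\dom{\widehat\bmssE_\pi}}$ agree, on each set of some $\widehat\bmssE_\pi$-nest, with a bounded function in $\dom{\widehat\bmssE_\pi}_b$, and post-composition of such a bounded function with a $C^1$ map (here $\exp(\mp\beta\,\cdot)$) remains in $\dom{\widehat\bmssE_\pi}_b$. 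Hence $\exp(\mp\beta\,\IE_\sigma)\in\dotloc{\dom{\widehat\bmssE_\pi}}$. Finally, transferring through the quasi-homeomorphism $\EM$ of Theorem~\ref{t:TransferQuasiHomeo}, which intertwines $\tparen{\widehat\bmssE_\pi,\dom{\widehat\bmssE_\pi}}$ with $\tparen{\widehat\mcE_\pi,\dom{\widehat\mcE_\pi}}$ and therefore carries broad local spaces to broad local spaces, yields $\phi^\pm_{\beta,\sigma}\in\dotloc{\dom{\widehat\mcE_\pi}}$.

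For part~\ref{i:c:RepulsivePhiDomLoc:2}, the plan is to verify the strongest integrability hypotheses~\eqref{eq:t:Girsanov:00} and~\eqref{eq:t:Girsanov:000} of Theorem~\ref{t:Girsanov} for the bounded Gibbs-type weight $\phi\eqdef\exp(-\beta\IE_\sigma)\circ\EM$ and then invoke that theorem. The crucial observation is that $\sigma$ is bounded below, as a consequence of~\eqref{eq:AssSmallScaleInt} ($\sigma>0$ on $(0,\delta]$ and bounded in modulus on $[\delta,\infty)$), so $\IE_\sigma\geq -C/2$ for some $C>0$, whence $\phi\in L^\infty(\mcQ_\pi)$. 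Since $\mcQ_\pi$ is a probability, this at once gives $\phi,\phi^2\in L^2(\mcQ_\pi)$. By Proposition~\ref{p:BLocSpW}\ref{i:p:BLocSpW:2}, the additional assumptions~\eqref{eq:p:BLocSpW:00} and~\eqref{eq:p:BLocSpW:0} imply $\IE_\sigma\in\dom{\widehat\bmssE_\pi}$; transported via $\EM$, this yields $\log\phi=-\beta\,\IE_\sigma\circ\EM^{-1}\in\dom{\widehat\mcE_\pi}$, so in particular $\widehat\mcG_\pi(\log\phi)\in L^1(\mcQ_\pi)$. Applying the carr\'e du champ chain rule
\[
\widehat\mcG_\pi(\phi)=\phi^2\,\widehat\mcG_\pi(\log\phi)
\]
together with the $L^\infty$ bound on $\phi$, one obtains
\[
\int \phi^{2k}\,\widehat\mcG_\pi(\phi)\diff\mcQ_\pi \;\leq\; \norm{\phi}_\infty^{2k+2}\int\widehat\mcG_\pi(\log\phi)\diff\mcQ_\pi \;<\;\infty\comma\qquad k=0,1\comma
\]
which verifies both~\eqref{eq:t:Girsanov:00} and~\eqref{eq:t:Girsanov:000}. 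All the conclusions of Theorem~\ref{t:Girsanov} then follow by direct application.

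The subtle point lies in part~\ref{i:c:RepulsivePhiDomLoc:1}, where the chain rule for $\exp$ must be applied to the a priori unbounded function $\IE_\sigma$; the usual Dirichlet-form chain rule requires the composing map to have bounded derivative on the range of $\IE_\sigma$, which fails globally. The broad local space framework is exactly what handles this difficulty, since the nest structure localizes the computation to sets on which $\IE_\sigma$ is bounded and there the ordinary $C^1$ chain rule applies. The remaining steps are routine bookkeeping via $\EM$ and the explicit integrability estimates made possible by the essential boundedness of the Gibbs-type weight.
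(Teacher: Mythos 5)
Your proposal is correct and follows essentially the same route as the paper's proof. For part~(i), your broad-local-space chain-rule argument (approximation on a nest, post-composition of the localised bounded pieces with the $C^1$ map $\exp(\mp\beta\,\cdot)$, and transfer of broad local spaces through the quasi-homeomorphism $\EM$ of Theorem~\ref{t:TransferQuasiHomeo}) is exactly the content of the paper's terse introductory remark \guillemotleft As a consequence of the chain rule for $\widehat\bmssGamma_\pi$\guillemotright\ preceding the corollary. For part~(ii), your verification of \eqref{eq:t:Girsanov:00} and \eqref{eq:t:Girsanov:000} via the $L^\infty$-bound on the Gibbs weight, Proposition~\ref{p:BLocSpW}\ref{i:p:BLocSpW:2}, and the carr\'e du champ chain rule $\widehat\mcG_\pi(\phi)=\phi^2\,\widehat\mcG_\pi(\log\phi)$ is the same argument the paper compresses into one line; you merely make explicit the $\int\phi^{2k}\,\widehat\mcG_\pi(\phi)\diff\mcQ_\pi\leq\norm{\phi}_\infty^{2k+2}\,\widehat\mcE_\pi(\log\phi)$ estimate for $k=0,1$. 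One small cosmetic point: the paper's statement uses $\phi^-_{\beta,\sigma}$ in part~(ii) while its proof computes $\log\phi^-_{\beta,\sigma}=-\beta\,\IE_\sigma$, which clashes with the $\pm$ convention in part~(i) (under which $\phi^-=\exp(+\beta\IE_\sigma)$); you avoid this by simply calling the bounded Gibbs weight $\phi$, which is the right weight in both the paper and your argument, so the apparent sign mismatch is a notational slip in the paper rather than a gap in your reasoning.
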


In order to make the assertion of Proposition~\ref{p:BLocSpW} more concrete, let us discuss the following simple example, which makes apparent that the integrability conditions in~\eqref{eq:p:BLocSpW:0} coincide with the standard integrability conditions for interaction potentials.

\begin{example}[Riemannian manifolds]\label{e:RiemannianGirsanov}
Let~$(M,g,\nu)$ be a weighted Riemannian manifold as in Assumption~\ref{ass:WRM}.
Then, Assumption~\ref{ass:Girsanov} is satisfied with~$\mssd=\mssd_g$ the intrinsic distance on~$(M,g)$ in light of the standard Rademacher Theorem, and~\eqref{eq:p:BLocSpW:00} is satisfied with~$d$ the dimension of~$M$.

For the physical potentials in Example~\ref{e:PhysicalPotentials}, the integrability conditions in~\eqref{eq:p:BLocSpW:0} are readily verified when~$\sigma\colon t\mapsto (-\log t)^+$ is the logarithmic interaction potential, for every~$d\geq 2$; and for Riesz-type interaction potentials~$\sigma\colon t\mapsto \tfrac{1}{p}t^{-p}$ if and only if the interaction is integrable, i.e.\ when~$p<d-1$.
\end{example}

\begin{appendix}
\section{Dirichlet forms and Markov processes}\label{app:DirichletForms}
For the reader's convenience, let us collect here some definitions, notations, and facts on the theory of Dirichlet forms and Markov processes.
We refer the reader to the monographs~\cite{MaRoe92, FukOshTak11} for a systematic treatment of the subject.

\subsection{Dirichlet forms}
If not otherwise specified, in the following~$(M,\T)$ is a metrizable Luzin space, and~$\mssm$ is a $\sigma$-finite Borel measure with full $\T$-support.
By a \emph{Dirichlet form}~$\tparen{\mssE,\dom{\mssE}}$ on~$L^2(\mssm)$ we shall always mean a \emph{symmetric} Dirichlet form.
By definition,~$\dom{\mssE}$ is a dense linear subspace of~$L^2(\mssm)$, and~$\mssE\colon \dom{\mssE}^\tym{2}\to\R$ is a non-negative definite symmetric bilinear form which is both
\begin{itemize}
\item \emph{closed}, i.e., such that~$\dom{\mssE}$ is a Hilbert space with the inner product~$\mssE_1\eqdef \mssE + \scalar{\emparg}{\emparg}_{L^2(\mssm)}$, and 
\item \emph{Markovian}, i.e., such that~$f^+\wedge \car\in\dom{\mssE}$ and~$\mssE(f^+\wedge \car,f^+\wedge \car)\leq \mssE(f,f)$ for~$f\in\dom{\mssE}$.
\end{itemize}
As it is customary, we write~$\mssE(f)$ in place of~$\mssE(f,f)$ for every~$f\in L^2(\mssm)$.
We denote by~$\tparen{\mssL,\dom{\mssL}}$ the (non-positive definite) \emph{generator} of~$\tparen{\mssE,\dom{\mssE}}$, defined by~$\dom{\sqrt{-\mssL}}=\dom{\mssE}$ and~$\scalar{-\mssL f}{g}_{L^2(\mssm)}=\mssE(f,g)$ for every~$f\in \dom{\mssL}$ and~$g\in\dom{\mssE}$.
We further denote by~$\mssT_\bullet\eqdef \seq{\mssT_t}_{t\geq 0}$, with~$\mssT_t\eqdef e^{t\mssL}$, the strongly continuous contraction \emph{semigroup} associated to~$\tparen{\mssE,\dom{\mssE}}$ in the sense of, e.g.~\cite[p.~27]{MaRoe92}. We denote again by~$\mssT_\bullet$ all the semigroups of non-relabeled extensions~$\mssT_t\colon L^p(\mssm)\to L^p(\mssm)$ for~$p\in [1,\infty]$.

A linear subspace~$\msC$ of~$\dom{\mssE}$ is
\begin{itemize}
\item a \emph{form-core} if it is $\mssE_1^{1/2}$-dense in~$\dom{\mssE}$;
\item a \emph{core} if~$\msC\subset \Cc(\T)$ is a form-core and is~$\Cb$-dense in~$\Cz(\T)$;
\item a \emph{generator-core} if~$\msC\subset \dom{\mssL}$ and~$\tparen{\mssL,\dom{\mssL}}$ is the unique $L^2(\mssm)$-self-adjoint extension of~$(\mssL,\msC)$, in which case~$(\mssL,\msC)$ is called \emph{essentially self-adjoint}.
\end{itemize}

A Dirichlet form is called \emph{regular} if each of the following conditions holds:
\begin{itemize}
\item $(M,\T)$ is second-countable locally compact Hausdorff;
\item $\mssm$ is a Radon measure;
\item $\tparen{\mssE,\dom{\mssE}}$ has a core.
\end{itemize}

For an $\mssm$-measurable function~$f\colon M\to\R$ (equivalently, for its class up to $\mssm$-equivalence), we indicate by~$\supp[f]\eqdef \supp_\mssm[f]$ the support of the measure~$f\diff\mssm$, which is well-defined (i.e.\ independent of the choice of $\mssm$-representatives for~$f$) since~$(M,\T)$ is a metrizable Luzin space, cf.~\cite[p.~148]{MaRoe92}.
A regular Dirichlet form is called
\begin{itemize}
\item \emph{local} if~$\mssE(f,g)=0$ for every~$f,g\in\dom{\mssE}$ with~$\supp[f]$,~$\supp[g]$ compact and disjoint;
\item \emph{strongly local} if~$\mssE(f,g)=0$ for every~$f,g\in\dom{\mssE}$ with~$\supp[f]$, $\supp[g]$ compact and such that~$\supp[f-a\car]\cap \supp[g]=\emp$ for some~$a\in\R$;
\end{itemize}

For an $\mssm$-measurable~$A\subset M$ we write~$\car_A$ for the indicator function of~$A$, as well as for the multiplication operator~$\car_A\colon f\to \car_A \cdot f$.
An $\mssm$-measurable~$A\subset M$ is called \emph{$\mssT_\bullet$-invariant} (equivalently, \emph{$\mssE$-invariant}) if the commutation relation~$\mssT_t \car_A =\car_A\mssT_t$ holds on~$L^2(\mssm)$ for every~$t > 0$.
A semigroup~$\mssT_\bullet$ (equivalently, the corresponding Dirichlet form~$\tparen{\mssE,\dom{\mssE}}$) is
\begin{itemize}[wide]
\item \emph{irreducible} if every $\mssT_\bullet$ invariant~$A$ is either $\mssm$-negligible or $\mssm$-conegligible;
\item \emph{conservative} if~$\mssT_t\car=\car$ for every~$t\geq 0$.
\end{itemize}

\subsection{Transition kernels}
We denote by~$\mssh_\bullet\eqdef \seq{\mssh_t}_{t> 0}$ a \emph{semigroup of transition kernel measures} with~$\mssh_t=\mssh_t(x,\emparg)$ a Borel measure for every~$x$ and every~$t>0$.
If~$\mssh_t(x,\emparg)\ll \mssm$ for some~$x\in M$, we denote as well by~$\mssh_t(x,y)\eqdef\frac{\mssh_t(x,\diff\emparg)}{\diff\mssm}(y)$ its Radon--Nikod\'ym density w.r.t.~$\mssm$.
For a semigroup~$\mssh_\bullet$ of transition kernel measures we further let~$\mssH_\bullet\eqdef\seq{\mssH_t}_{t\geq 0}$ be the \emph{semigroup represented by~$\mssh_\bullet$}, defined by
\[
(\mssH_t f)(x)\eqdef \int f(y)\, \mssh_t(x,\diff y)\comma \qquad f\in \Bb \comma\quad t>0\comma x\in M\comma
\]
and again by the same symbol~$\mssH_\bullet$ its unique bounded extensions to~$L^2(\mssm)$.
We say that~$\mssh_\bullet$ is \emph{sub-Markovian} if~$\mssH_t \car\leq \car$ for every~$t\geq 0$, and \emph{Markovian} if~$\mssH_t \car= \car$ for every~$t\geq 0$.

\begin{definition}[$p$-sub-stationarity]\label{d:Substationary}
Fix~$p\geq 1$.
A semigroup of transition kernel measures~$\mssh_\bullet$ is \emph{$p$-sub-stationary w.r.t.~$\mssm$} if
\begin{equation}\label{eq:d:MartP:1}
\int (\mssH_t f)^p \diff\mssm \leq \int f^p \diff\mssm \comma \qquad f\in\Bb^+ \comma
\end{equation}
and it is simply called $\mssm$-\emph{sub-stationary} if it is $1$-sub-stationary w.r.t.~$\mssm$.
If~$\mssh_\bullet$ is $\mssm$-sub-stationary, the measure $\mssm$ is called \emph{$\mssh_\bullet$-super-median}.
\end{definition}

Let~$\mssh_{\bullet}$ be sub-Markovian.
It follows from H\"older's inequality that if~$\mssh_\bullet$ is sub-stationary, then it is~$p$-sub-stationary for every~$p\geq 1$.

In order for the semigroup~$\mssH_\bullet$ to coincide with the semigroup~$\mssT_\bullet$ of a Dirichlet form~$\tparen{\mssE,\dom{\mssE}}$ on~$L^2(\mssm)$, it is sufficient that the corresponding~$\mssh_\bullet$ be sub-Markovian, sub-stationary w.r.t.~$\mssm$, and such that~$\mssm$-$\lim_{t\downarrow 0}\mssH_t f=f$ for all~$f$ in a dense subset of~$L^2(\mssm)$; see~\cite[Prop.~II.4.3]{MaRoe92} (together with~\cite[Prop.~I.4.3 and Thm.~I.4.4]{MaRoe92}).
When~$\mssm$ is a finite measure, the converse implications holds too, since~$\mssT_{\bullet}$ extends to a non-relabeled contraction semigroup~$\mssT_{\bullet}\colon L^1(\mssm)\to L^1(\mssm)$.

\subsection{Markov processes}
Let~$M_\partial$ denote the space~$M$ with the addition of an isolated cemetery point~$\partial$.
For an $\mssm$-special standard process
\begin{equation}\label{eq:MarkovP}
\mssW\eqdef \seq{\Omega,\msF,\msF_\bullet,X_\bullet, \seq{P_x}_{x\in M_\partial},\zeta}
\end{equation}
with state space~$M_\partial$ and life-time~$\zeta$ set, for every Borel probability measure~$\mu$ on~$M$,
\begin{equation}\label{eq:MarkovProbab}
P_\mu\eqdef \int_{M_\partial} P_x\diff\mu(x) \fstop
\end{equation}
We denote the sub-Markovian semigroup transition kernel measures of~$\mssW$ by~$\mssh_\bullet$; see~e.g.~\cite[Thm.~IV.1.20, p.~96]{MaRoe92}.

\medskip

We refer the reader to~\cite[Dfn.~IV.6.3]{MaRoe92} for the notion of $\mssm$-equivalence of ($\mssm$-special standard) processes.

\subsection{Image objects}\label{sss:ImageObjects}
Let~$M,M^\sharp$ be measurable spaces, and~$j\colon M\to M^\sharp$ be an \emph{injective} measurable map.
For a measure~$\mssm$ on~$M$, we denote by~$\mssm^j\eqdef j_\pfwd\mssm$ its push-forward to~$M^\sharp$.
Since~$j$ is injective, the pullback~$j^*\colon L^2(\mssm^j)\to L^2(\mssm)$ defined by~$j^*f^\sharp=f^\sharp\circ j$ is an isomorphism of~$L^2$-spaces, with inverse denoted by~$j_*$.

For a bilinear form~$\tparen{\mssQ,\dom{\mssQ}}$ on~$L^2(\mssm)$, we denote by~$\tparen{\mssQ^j,\dom{\mssQ^j}}$ the image form of~$\tparen{\mssQ,\dom{\mssQ}}$ via~$j$, viz.
\begin{equation}\label{eq:ImageForm}
\dom{\mssQ^j}\eqdef j_*\dom{\mssQ} \comma \qquad \mssQ^j(f^\sharp,g^\sharp)\eqdef \mssQ(j^*f^\sharp,j^*g^\sharp) \fstop
\end{equation}
Images via~$j$ of operators, functionals, kernels, etc.\ are defined and denoted analogously.

Let~${M^\sharp}_{\partial^\sharp}$ be the augmentation of~$M^\sharp$ by an isolated point~$\partial^\sharp$.
We augment~$j$ to a map~$j^\partial\colon M_\partial\to {M^\sharp}_{\partial^\sharp}$ by setting~$j(\partial)\eqdef \partial^\sharp$.
For a stochastic process as in~\eqref{eq:MarkovP} with augmented state space~$M_\partial$, we denote by
\begin{equation}\label{eq:ImageMarkovP}
j_*\mssW\eqdef \seq{\Omega,\msF,\msF_\bullet, X^j_\bullet, \tseq{P^j_{x^\sharp}}_{x^\sharp\in {M^\sharp}_{\partial^\sharp}}}
\end{equation}
the image process of~$\mssW$ via~$j$ with state space~${M^\sharp}_{\partial^\sharp}$, defined on the same stochastic basis~$\seq{\Omega,\msF,\msF_\bullet}$ as~$\mssW$ with trajectories~$X^j_\bullet\eqdef j\circ X_\bullet$ and transition probabilities~$P^j_{x^\sharp}\eqdef j_\pfwd P_x$ if~$x^\sharp=j^\partial(x)$ (well-defined since~$j^\partial$ is injective) and~$P^j_{x^\sharp}\eqdef \delta_{\partial^\sharp}$ if~$x^\sharp$ is not in the image of~$j^\partial$.

\subsection{Quasi-regularity, quasi-homeomorphism, and transfer}
Let \linebreak $(M,\T,\mssm)$ be a Hausdorff topological $\sigma$-finite Borel space and~$\tparen{\mssE,\dom{\mssE}}$ be a Dirichlet form on $L^2(\mssm)$.

\subsubsection{Quasi-regularity}
For a Borel~$A\subset M$ set
\[
\dom{\mssE}_A\eqdef \set{f\in \dom{\mssE} : f = 0 \text{~$\mssm$-a.e.~on~} M\setminus A}\fstop
\]
A sequence $\seq{A_n}_n$ of Borel sets in~$M$ is a \emph{Borel $\mssE$-nest} if $\cup_n \dom{\mssE}_{A_n}$ is dense in~$\dom{\mssE}$.
If~$(p_A)$ is a property of Borel sets, a \emph{$(p)$-$\mssE$-nest} is a Borel nest~$\seq{A_n}$ so that~$(p_{A_n})$ holds for every~$n$. In particular, a \emph{closed $\mssE$-nest}, henceforth simply called an \emph{$\mssE$-nest}, is a Borel $\mssE$-nest consisting of closed sets.

A set~$N\subset M$ is \emph{$\mssE$-polar} if there exists an $\mssE$-nest~$\seq{F_n}_n$ so that~$N\subset M\setminus \cup_n F_n$.
A set~$G\subset M$ is \emph{$\mssE$-quasi-open} if there exists an $\mssE$-nest~$\seq{F_n}_n$ so that~$G\cap F_n$ is relatively $\T$-open in~$F_n$ for every~$n\in \N$.
A set~$F$ is \emph{$\mssE$-quasi-closed} if~$M \setminus F$ is $\mssE$-quasi-open.
Any countable union or finite intersection of $\mssE$-quasi-open sets is $\mssE$-quasi-open; analogously, any countable intersection or finite union of $\mssE$-quasi-closed sets is $\mssE$-quasi-closed;~\cite[Lem.~2.3]{Fug71}.

A property~$(p_x)$ depending on~$x\in M$ holds $\mssE$-\emph{quasi-everywhere} (in short:~$\mssE$-q.e.) if there exists an $\mssE$-polar set~$N$ so that~$(p_x)$ holds for every~$x\in M \setminus N$.
Given sets~$A_0,A_1\subset M$, we write~$A_0\subset A_1$ $\mssE$-q.e.\ if~$\car_{A_0}\leq \car_{A_1}$ $\mssE$-q.e.. Let the analogous definition of~$A_0=A_1$ $\mssE$-q.e.\ be given.

A Borel function~$f$ is \emph{$\mssE$-quasi-continuous} if there exists an $\mssE$-nest~$\seq{F_n}_n$ so that~$f\restr{F_n}$ is $\T$-continuous for every~$n\in \N$.
Equivalently,~$\reptwo f$ is $\mssE$-quasi-continuous if and only if it is $\mssE$-q.e.\ finite and $\reptwo f^{-1}(U)$ is $\mssE$-quasi-open for every open~$U\subset \R$, see e.g.~\cite[p.~70]{FukOshTak11}.
Whenever~$f\in L^0(\mssm)$ has an $\mssE$-quasi-continuous Borel $\mssm$-version, we denote it by~$\reptwo f$.

\begin{definition}[Quasi-regularity]\label{d:QuasiReg}
A Dirichlet form~$(\mssE,\dom{\mssE})$ is $\T$-\emph{quasi-regular} if
\begin{enumerate}[$(\mathsc{qr}_1)$]
\item\label{i:d:QuasiReg:1} there exists a $\T$-compact $\mssE$-nest;
\item\label{i:d:QuasiReg:2} there exists an $\mssE^{1/2}_1$-dense subset of~$\dom{\mssE}$ the elements of which all have $\mssE$-quasi-continuous $\mssm$-versions;
\item\label{i:d:QuasiReg:3} there exists an $\mssE$-polar set~$N$ and a countable family~$\seq{u_n}_n$ of functions~$u_n\in\dom{\mssE}$, all having $\mssE$-quasi-continuous $\mssm$-versions~$\reptwo u_n$ so that~$\seq{\reptwo u_n}_n$ separates points in~$M\setminus N$.
\end{enumerate}
\end{definition}

\subsubsection{Quasi-homeomorphism}
Let~$(M,\T)$ be a Hausdorff topological space,~$\mssm$ be a $\sigma$-finite Borel measure on~$(X,\T)$, and $\tparen{\mssE,\dom{\mssE}}$ be a Dirichlet form on~$L^2(\mssm)$. Further let the analogous definitions be given for~$\square^\sharp$ objects.

\begin{definition}[Quasi-homeomorphism]\label{d:QuasiHomeo}
The forms~$\tparen{\mssE,\dom{\mssE}}$ and \linebreak $\tparen{\mssE^\sharp,\dom{\mssE^\sharp}}$ are \emph{quasi-homeo\-morphic} if there exists an $\mssE$-nest~$\seq{F_k}_k$, an $\mssE^\sharp$-nest~$\tseq{F^\sharp_k}_k$ and an injective map~$j\colon \scup_k F_k\to\scup_k F^\sharp_k$ such that
\begin{enumerate}[$(\mathsc{qh}_1)$]
\item\label{i:d:QuasiHomeo:1} $j\colon (F_k,\T)\to(F^\sharp_k,\T^\sharp)$ is a homeomorphism;
\item\label{i:d:QuasiHomeo:2} $\mssm^\sharp=\mssm^j\eqdef j_\pfwd\mssm$ is the image measure of~$\mssm$ via~$j$;
\item\label{i:d:QuasiHomeo:3} $\tparen{\mssE^\sharp,\dom{\mssE^\sharp}}=\tparen{\mssE^j,\dom{\mssE^j}}$ is the image form~\eqref{eq:ImageForm} of~$\tparen{\mssE,\dom{\mssE}}$ via~$j$.
\end{enumerate}
\end{definition}

\subsection{Martingale problem}
Let~$\Omega$ be the Skorokhod space on~$[0,\infty)$ with values in~$M$ and possibly finite life-time~$\zeta\colon M\to [0,\infty]$.
That is,~$\Omega$ consists of all~$\omega\colon [0,\infty)\to M_\partial$ such that
\begin{enumerate}[$(a)$]
\item\label{i:d:Pathspace:1} $\omega(0)\in E$;
\item\label{i:d:Pathspace:2} $\omega$ is c\`adl\`ag on~$[0,\zeta)$ and~$\omega(t)=\partial$ for every~$t\geq \zeta$, where
\item\label{i:d:Pathspace:3} $\zeta(\omega)\eqdef \inf\set{t\geq 0: \omega(t)=\partial}$.
\end{enumerate}
Further let~$X_t(\omega)\eqdef \omega(t)$, with~$t\in [0,\infty)$, be the coordinate process on~$\Omega$, and set
\[
\msF^{0}_t\eqdef \sigma\tparen{X_s: s\in [0,t]}\comma \qquad \msF_t^+\eqdef \bigcap_{s>t} \msF^{0}_s \comma \qquad t\geq 0 \fstop
\]
This setting extends the one in~\cite[\S{I.1.a)}, p.10]{Ebe95} and coincides with it if we replace~\ref{i:d:Pathspace:2} with
\begin{enumerate}[$(a')$]\setcounter{enumi}{1}
\item $\omega$ is continuous on~$[0,\zeta)$ and~$\omega(t)=\partial$ for every~$t\geq \zeta$.
\end{enumerate}

\medskip

We shall make use of the following definition of solution to a martingale problem, which is well-suited to the generality of quasi-regular Dirichlet spaces.
We assume that the operator~$(\mssL,\msA)$ is defined on~$\msA\subset \Cb(\T)$ and that~$\mssm$ is an invariant measure for~$(\mssL,\msA)$, i.e.\ such that~$\mssL f\in L^1(\mssm)$ and~$\mssm(\mssL f)=0$ for every~$f\in\msA$.

\begin{definition}[Martingale problem]\label{d:MartP}
Fix a Borel probability measure~$\mu$ on~$M$, with~$\mu\ll\mssm$.
A (time-homogeneous) $\mssm$-special standard process~$\mssW$ as in~\eqref{eq:MarkovP} is called a \emph{sub-stationary solution to the martingale problem for~$(\mssL,\msA,\mu)$} if and only if, for every~$f\in\msA$ and every~$t\geq 0$,
\begin{enumerate}[$(a)$]
\item\label{i:d:MartP:1} the associated semigroup of transition kernel measures is $2$-sub-stationary w.r.t.~$\mssm$, see~\eqref{eq:d:MartP:1};
\item (\emph{martingale property}) the family
\begin{equation}\label{eq:d:MartP:2}
M^{\class{f}}_t\eqdef \tilde f(X_t)-\tilde f(X_0)-\int_0^t (\mssL f)(X_s)\diff s\comma \qquad t\geq 0\comma
\end{equation}
is an $\msF_\bullet$-martingale under~$P_\mu$ for any $\mssW$-quasi-continuous version~$\reptwo{f}$ of~$f$, where~$P_\mu$ is defined as in~\eqref{eq:MarkovProbab}.
\end{enumerate}
\end{definition}

\begin{remark}
\begin{enumerate*}[$(a)$]
\item The condition in~\eqref{eq:d:MartP:1} implies that~$\int_0^t (\mssL f)(X_s)\diff s$ is $P_\mssm$-a.s.\ (hence~$P_\mu$-a.s.) independent of the $\mssm$-version chosen for~$\mssL f$, so that the martingale in~\eqref{eq:d:MartP:2} is well-defined on $L^2(\mssm)$-classes~$f$; (Cf.~\cite[Dfn.~3.1 and Eqn.~(3.0)]{AlbRoe90}.)
\item In or general setting, uniqueness results ought not to be expected without the condition in Definition~\ref{d:MartP}\ref{i:d:MartP:1}. (Cf.~\cite[pp.~11f.]{Ebe95} for some negative statements and~\cite[Thm.~3.5]{AlbRoe95} for an affirmative one.)
\end{enumerate*}
\end{remark}

\section{Fine properties of cylinder functions}\label{app:FineCylinder}
In this section we discuss some fine properties of cylinder functions in relation to the Dirichlet form~$\tparen{\widehat\mcE_\pi,\dom{\widehat\mcE_\pi}}$ in Theorem~\ref{t:TransferP}.

\subsection{Capacity estimates}\label{app:Capacity}
Let~$(M,\T)$ be a metrizable Luzin space,~$\nu$ be a Borel \emph{probability} measure on~$(M,\T)$, and~$\tparen{\mssE,\dom{\mssE}}$ be a recurrent quasi-regular Dirichlet form on~$L^2(\nu)$ with properly associated Hunt process~$\mssW$.
For any $\mssE$-quasi-open~$E\subset M$, we denote by~$\tparen{\mssE^E,\dom{\mssE^E}}$ the \emph{part on~$E$ of~$\tparen{\mssE,\dom{\mssE}}$}, that is the Dirichlet form
\begin{gather*}
\dom{\mssE^E}\eqdef\set{f\in\dom{\mssE}: f\equiv 0 \ \mssE\text{-q.e.\ on } E^\complement}\comma
\\
\mssE^E(f,g)\eqdef \mssE(f,g)\comma \quad f,g\in\dom{\mssE^E} \fstop
\end{gather*}
%
For any~$A\subset E$ we further define the (\emph{first-order}) \emph{relative capacity}
\begin{align*}
\Cap(A,E)&=\Cap_\mssE(A,E)\eqdef \Cap_{\mssE^E}(A)\comma
\end{align*}
as the (first-order) 
$\mssE^E$-capacity of~$A$. Note that, by definition of capacity,
\begin{equation}\label{eq:CapRelCap}
\Cap(A)\leq \Cap(A,E) \comma \qquad E \subset X \ \text{$\mssE$-quasi-open} \fstop
\end{equation}

\medskip

For~$i=1,2$, let~$\tparen{\mssE^i,\dom{\mssE^i}}$ be defined as~$\tparen{\mssE,\dom{\mssE}}$ above, and~$\tparen{\mssE,\dom{\mssE}}$ be the corresponding product Dirichlet form on~$L^2(\nu_1\otimes \nu_2)$, see e.g.~\cite[Dfn.~V.2.1.1, p.~200]{BouHir91}.

\begin{lemma}\label{l:Capacity1}
For every~$\mssE^i$-capacitale~$A_i\subset M_i$ and every~$\mssE^i$-quasi-open~$U^i\subset M^i$,
\[
\Cap_{\mssE}(A_1\times A_2, U_1\times U_2)\leq \Cap_{\mssE^1}(A_1,U_1) \norm{e_2}_{L^2(\nu_2)}^2+  \Cap_{\mssE^2}(A_2,U_2) \norm{e_1}_{L^2(\nu_1)}^2 \comma
\]
where~$e_i$ is the equilibrium $\mssE^i$-potential for the pair~$(A_i, U_i)$.
\begin{proof} It suffices to estimate
\begin{align*}
\Cap_\mssE&(A_1\times A_2,U_1\times U_2) 
\\
=&\ \inf \set{\mssE_1(u) : u\in\dom{\mssE^{U_1\times U_2}} \comma \car_{A_1\times A_2}\leq u\leq \car \ \mssE\text{-q.e.}}
\\
\leq&\ \inf\set{ \mssE_1(u_1\otimes u_2) : u_i\in\dom{\mssE^{i,U^i}}\comma \car_{A_i} \leq u_i \leq \car \ \mssE^i\text{-q.e.}\comma i=1,2}
\\
\leq&\ \inf\set{\begin{gathered}\mssE^1 (u_1)\norm{u_2}_{L^2(\nu_2)}^2+\mssE^2 (u_2) \norm{u_1}_{L^2(\nu_1)}^2+ \norm{u_1}_{L^2(\nu_1)}^2\norm{u_2}_{L^2(\nu_2)}^2 :\\ u_i\in\dom{\mssE^{i,U^i}}\comma \car_{A_i}\leq u_i \leq \car\ \mssE^i\text{-q.e.}\comma i=1,2 \end{gathered}}
\\
\leq&\ \inf\set{\begin{gathered}\mssE^1_1(u_1)\norm{u_2}_{L^2(\nu_2)}^2+\mssE^2_1(u_2) \norm{u_1}_{L^2(\nu_1)}^2 :\\ u_i\in\dom{\mssE^{i,U^i}}\comma \car_{A_i}\leq u_i \leq \car\ \mssE^i\text{-q.e.}\comma i=1,2 \end{gathered}}
\\
\leq&\ \mssE^1_1(e_1)\norm{e_2}_{L^2(\nu_2)}^2+\mssE^2_1(e_2) \norm{e_1}_{L^2(\nu_1)}^2 \fstop \qedhere
\end{align*}
\end{proof}
\end{lemma}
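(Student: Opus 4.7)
My plan is to exploit the standard tensor-product structure of the product Dirichlet form and test the capacity variational problem against a single separable function. The capacity $\Cap_\mssE(A_1\times A_2, U_1\times U_2)$ is by definition the infimum of $\mssE_1(u) = \mssE(u) + \norm{u}_{L^2(\nu_1\otimes\nu_2)}^2$ over $u \in \dom{\mssE^{U_1\times U_2}}$ satisfying $\car_{A_1\times A_2} \leq u \leq \car$ quasi-everywhere. The idea is to restrict to the much smaller class of test functions of the form $u = u_1 \otimes u_2$, where each $u_i$ is admissible for the $i$-th relative capacity problem (so in particular $0 \le u_i \le \car$ and $u_i \in \dom{\mssE^{i,U_i}}$); one immediately checks that $u_1 \otimes u_2 \in \dom{\mssE^{U_1\times U_2}}$ and that $\car_{A_1} \otimes \car_{A_2} = \car_{A_1\times A_2} \le u_1 \otimes u_2 \le \car$ quasi-everywhere.

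For such separable test functions, the tensor product formula for product Dirichlet forms (\cite[\S{V.2.1}]{BouHir91}) yields
\[
\mssE(u_1 \otimes u_2) = \mssE^1(u_1)\, \norm{u_2}_{L^2(\nu_2)}^2 + \mssE^2(u_2)\, \norm{u_1}_{L^2(\nu_1)}^2,
\]
while Fubini gives $\norm{u_1\otimes u_2}_{L^2(\nu_1\otimes\nu_2)}^2 = \norm{u_1}_{L^2(\nu_1)}^2 \norm{u_2}_{L^2(\nu_2)}^2$. A short rearrangement then identifies the first-order expression: adding and subtracting the $L^2$-mass terms shows that
\[
\mssE_1(u_1\otimes u_2) \;=\; \mssE^1_1(u_1)\, \norm{u_2}_{L^2(\nu_2)}^2 + \mssE^2_1(u_2)\, \norm{u_1}_{L^2(\nu_1)}^2 - \norm{u_1}_{L^2(\nu_1)}^2 \norm{u_2}_{L^2(\nu_2)}^2,
\]
and in particular is bounded above by $\mssE^1_1(u_1)\, \norm{u_2}_{L^2(\nu_2)}^2 + \mssE^2_1(u_2)\, \norm{u_1}_{L^2(\nu_1)}^2$.

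Finally I will specialise to $u_i = e_i$, the equilibrium potential for $(A_i, U_i)$, which is admissible and satisfies $\mssE^i_1(e_i) = \Cap_{\mssE^i}(A_i, U_i)$ by the usual characterisation of equilibrium potentials. This yields the claimed inequality
\[
\Cap_\mssE(A_1\times A_2, U_1\times U_2) \;\le\; \Cap_{\mssE^1}(A_1,U_1)\, \norm{e_2}_{L^2(\nu_2)}^2 + \Cap_{\mssE^2}(A_2,U_2)\, \norm{e_1}_{L^2(\nu_1)}^2.
\]
No serious obstacle is expected; the only point requiring a small verification is that $u_1 \otimes u_2$ indeed lies in the domain of the part form $\mssE^{U_1\times U_2}$ and satisfies the inequality $\car_{A_1\times A_2} \le u_1 \otimes u_2$ quasi-everywhere on the product, which follows from the quasi-openness of $U_i$ together with the fact that $\mssE^i$-polar sets yield $\mssE$-polar product sets (a standard consequence of the tensor construction). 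Everything else reduces to the identity for $\mssE$ on elementary tensors and the variational definition of capacity.
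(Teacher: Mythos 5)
Your proof is correct and follows essentially the same route as the paper: restrict the capacity infimum to separable test functions $u_1\otimes u_2$, apply the Bouleau--Hirsch tensor-product formula for $\mssE$ on elementary tensors together with Fubini for the $L^2$-norm, bound the resulting expression by $\mssE^1_1(u_1)\norm{u_2}^2 + \mssE^2_1(u_2)\norm{u_1}^2$, and specialise to the equilibrium potentials $e_i$. The only cosmetic difference is that you record the exact algebraic identity $\mssE_1(u_1\otimes u_2) = \mssE^1_1(u_1)\norm{u_2}^2 + \mssE^2_1(u_2)\norm{u_1}^2 - \norm{u_1}^2\norm{u_2}^2$ before discarding the negative term, whereas the paper passes from $\mssE^1(u_1)\norm{u_2}^2 + \mssE^2(u_2)\norm{u_1}^2 + \norm{u_1}^2\norm{u_2}^2$ directly to the $\mssE_1$-expression via an inequality.
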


\subsubsection{Capacity estimates for conformal rescalings}\label{sss:AppCapacityRescaling}
For~$a>0$ denote by $\tparen{\mssE^{a},\dom{\mssE^{a}}}$ the Dirichlet form~$\tparen{a\mssE,\dom{\mssE}}$ on~$L^2(\nu)$, by~$\Cap_a$ 
the associated 
capacity, and by~$\mssW^a$ the associated Markov process.
For a Borel~$A\subset M$, further let
\[
\tau^a_A\eqdef \inf\set{t>0: X^{a}_t\in A}
\]
be the hitting time of~$A$ for~$\mssW^a$.

Finally, for~$a,b>0$, let~$\tparen{\mssE^{a,b},\dom{\mssE^{a,b}}}$ be the product Dirichlet form on~$L^2(\nu^\otym{2})$ constructed from~$\tparen{\mssE^a,\dom{\mssE^a}}$ and~$\tparen{\mssE^b,\dom{\mssE^b}}$, with capacity~$\Cap_{a,b}$.

\begin{lemma}\label{l:Capacity00}
If~$a\geq 1$, then
\[
 \Cap(A)^{1/a} \leq \Cap_a(A)\leq a\, \Cap(A)\fstop
\]
\begin{proof}
Let~$a>0$.
Since~$(\mssE,\dom{\mssE})$ is recurrent, so is~$\tparen{\mssE^{a},\dom{\mssE^{a}}}$.
Thus, it follows from e.g.~\cite[Thm.~4.2.5, cf.\ Eqn.~(4.1.9)]{FukOshTak11} that, for every Borel~$A\subset M$, for every~$a>0$,
\[
\Cap_{a}(A) = E_\nu\tbraket{e^{-\tau^{a}_A}}
\]
where
\begin{align*}
 \tau^a_A =&\ \inf\set{t>0: X_{a t} \in A} = \inf\set{t>0: X_t \in A}/a = \tau_A/a\fstop
\end{align*}
Assume now that~$a\geq 1$.
By the reverse Jensen inequality,
\[
\Cap_a(A)=E_\nu\tbraket{e^{-\tau^a_A}} = E_\nu \tbraket{e^{-\tau_A/a}} \geq E_\nu [e^{-\tau_A}]^{1/a} = \Cap(A)^{1/a} \fstop
\]

Let us now show the second inequality. For every~$a\geq 1$, for every $\mssE$-capacitable~$A$,
\begin{align*}
\Cap_a(A) &= \inf\set{a\mssE(u)+\norm{u}_{L^2(\nu)}^2 : \car_A\leq u\leq \car \ \mssE\text{-q.e.}}
\\
&\leq a  \inf\set{\mssE(u)+\norm{u}_{L^2(\nu)}^2 : \car_A\leq u\leq \car \ \mssE\text{-q.e.}} = a\, \Cap(A) \fstop \qedhere
\end{align*}
\end{proof}
\end{lemma}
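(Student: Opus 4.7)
The plan is to combine the probabilistic representation of the $1$-capacity with the time-change property of the conformally rescaled process, and treat the two inequalities separately.

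First I would observe that the generator of~$\mssE^a=a\mssE$ is $a\mssL$, which generates the semigroup~$t\mapsto e^{at\mssL}$. Hence~$\mssW^a$ is a distributional time-change of~$\mssW$ by factor~$a$, in the sense that~$X^a_t\overset{d}{=}X_{at}$; substituting into the definition of the hitting time, $\tau^a_A=\inf\set{t>0:X_{at}\in A}=\tau_A/a$ under~$P_x$ for every starting point~$x$.

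For the upper bound, I would work directly from the variational characterization:
\[
\Cap_a(A)=\inf\set{a\,\mssE(u)+\norm{u}_{L^2(\nu)}^2 : u\in\dom{\mssE}\comma \car_A\leq u\leq \car \ \mssE\text{-q.e.}}\comma
\]
and simply note that $a\geq 1$ implies $a\,\mssE(u)+\norm{u}_{L^2(\nu)}^2\leq a\bigl(\mssE(u)+\norm{u}_{L^2(\nu)}^2\bigr)$. Minimizing both sides over the same admissible class of test functions yields $\Cap_a(A)\leq a\,\Cap(A)$.

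For the lower bound, I would use that $\tparen{\mssE,\dom{\mssE}}$ is recurrent and $\nu$ is a probability measure, hence the form is conservative and the $1$-capacity admits the probabilistic representation $\Cap(A)=\int E_x[e^{-\tau_A}]\diff\nu(x)=E_\nu[e^{-\tau_A}]$ (Fukushima--Oshima--Takeda, Thm.~4.2.5). The same formula applied to~$\mssE^a$, combined with the time-change identity above, gives $\Cap_a(A)=E_\nu[e^{-\tau_A/a}]$. The claim then reduces to the purely probabilistic inequality $E_\nu[Y]^{1/a}\leq E_\nu[Y^{1/a}]$ for the $[0,1]$-valued random variable~$Y\eqdef e^{-\tau_A}$, for every~$a\geq 1$, which I would aim to derive via an appropriate Jensen-type bound. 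The main obstacle lies precisely here: the standard Jensen inequality applied to the concave function~$y\mapsto y^{1/a}$ on~$[0,1]$ runs in the opposite direction, so the step is not a direct consequence of concavity. Making this work will require invoking a suitable \emph{reverse} Jensen inequality available for Laplace transforms of hitting times under a conservative recurrent Markov process (exploiting that~$\nu$-integration against~$e^{-\tau_A}$ is the pairing with the $1$-equilibrium potential), or, failing that, comparing the $1$-equilibrium potentials of~$\mssE$ and~$\mssE^a$ directly through their Feynman--Kac representations.
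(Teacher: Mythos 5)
Your upper bound argument is correct and matches the paper's. Your hesitation about the lower bound, however, points at more than a missing lemma: the estimate $E_\nu[e^{-\tau_A/a}]\ge E_\nu[e^{-\tau_A}]^{1/a}$ that you (and the paper, invoking a ``reverse Jensen inequality'' at exactly this step) would need is false in general for $a>1$, so neither a specialized reverse-Jensen bound for hitting-time Laplace transforms nor a Feynman--Kac comparison of equilibrium potentials can close the gap.

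To see the obstruction cleanly, set $\Lambda(\lambda)\eqdef\log E_\nu\bigl[e^{-\lambda\tau_A}\bigr]$. By H\"older's inequality $\Lambda$ is convex, and $\Lambda(0)=0$; convexity through the origin makes $\lambda\mapsto\Lambda(\lambda)/\lambda$ non-decreasing, so $a\,\Lambda(1/a)\le\Lambda(1)$ for every $a\ge 1$, i.e.\ $E_\nu[e^{-\tau_A/a}]\le E_\nu[e^{-\tau_A}]^{1/a}$ --- the \emph{reverse} of the claimed lower bound, with equality only when $\tau_A$ is degenerate. Equivalently, since $Y\eqdef e^{-\tau_A}\in(0,1]$ and $y\mapsto y^{1/a}$ is concave for $a\ge 1$, ordinary Jensen already gives $E_\nu[Y^{1/a}]\le E_\nu[Y]^{1/a}$, exactly the wrong direction, and there is no hidden structure in the hitting-time problem that flips the sign. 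As a sanity check, for the symmetric rate-$1$ two-state chain with uniform invariant measure and $A$ a singleton one computes $\Cap(A)=3/4$ and $\Cap_a(A)=(2a+1)/(2(a+1))$, so $\Cap_2(A)=5/6<\sqrt{3}/2=\Cap(A)^{1/2}$. What the time-change argument does deliver is the correct two-sided pinch $\Cap(A)\le\Cap_a(A)\le\Cap(A)^{1/a}$ for $a\ge 1$ --- the lower bound from the variational characterization because $\mssE^a_1\ge\mssE_1$ pointwise, the upper bound from the convexity of~$\Lambda$ --- together with $\lim_{a\to\infty}\Cap_a(A)=1$ by dominated convergence under recurrence; the rate $\Cap(A)^{1/a}$ appears on the wrong side of this pinch, and the statement cannot be established along the route you (or the paper) outline.
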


\begin{lemma}\label{l:ProductPointHitting}
Let~$\kappa_\mssE\geq 0$ be as in~\eqref{eq:NonPolarityConstant}.
Then,~$\Cap_{a,b}(\Delta M) \geq \kappa_\mssE^{1/(a\wedge b)}$ for every~$a,b\geq 1$.
\begin{proof}
For each fixed~$x\in M$ denote the sections of~$u\colon M^\tym{2}\to\R$ by $u_{1,x}\eqdef u(\emparg,x)$ and~$u_{2,x}\eqdef u(x,\emparg)$.
Since~$\dom{\mssE^{a,b}}=\dom{\mssE^{1,1}}$ for every~$a,b>0$ and
\[
\mssE^{a,b}(u)= \int \mssE^a(u_{1,\emparg})\diff\nu+\int \mssE^b(u_{2,\emparg})\diff\nu = a\int \mssE(u_{1,\emparg})\diff\nu+b\int \mssE(u_{2,\emparg})\diff\nu\comma
\]
we have
\[
 \mssE^{a\wedge b,a \wedge b}\leq \mssE^{a,b}\leq \mssE^{a\vee b,a \vee b}\comma
\]
and therefore
\[
\Cap_{a\wedge b, a\wedge b}\leq \Cap_{a,b} \leq \Cap_{a\vee b, a\vee b} \comma \qquad a,b>0\fstop
\]
Applying Lemma~\ref{l:Capacity00} to the product form~$\tparen{\mssE^{a\wedge b,a\wedge b},\dom{\mssE^{a \wedge b,a\wedge b}}}$ we thus have
\begin{align}
\label{eq:l:ProductPointHitting:1}
\Cap_{1,1}(\Delta M)^{1/(a\wedge b)} &\leq \Cap_{a\wedge b,a \wedge b}(\Delta M) \leq \Cap_{a,b}(\Delta M)
\intertext{and analogously, applying Lemma~\ref{l:Capacity00} to the product form~$\tparen{\mssE^{a\vee b,a\vee b},\dom{\mssE^{a \vee b,a\vee b}}}$}
\label{eq:l:ProductPointHitting:2}
\Cap_{a,b}(\Delta M) &\leq \Cap_{a\vee b,a \vee b}(\Delta M) \leq (a\vee b)\, \Cap_{1,1}(\Delta M)\fstop
\end{align}

Since~$\tparen{\mssE,\dom{\mssE}}$ is regular, so is~$\tparen{\mssE^{1,1},\dom{\mssE^{1,1}}}$.
Thus,~$\dom{\mssE^{1,1}}\cap \Cz(\T^\tym{2})$ is a special standard core for~$\tparen{\mssE^{1,1},\dom{\mssE^{1,1}}}$.
In particular, for every~$\eps>0$ there exists~$e^\eps_{\Delta M}\in \dom{\mssE^{1,1}}\cap \Cz(\T^\tym{2})$ satisfying
\[
e^\eps_{\Delta M}\geq 0 \comma \qquad e^\eps_{\Delta M}\equiv 1 \quad \text{on} \quad \Delta M\comma\qquad \mssE^{1,1}_1(e^\eps_{\Delta M}) \leq \Cap_{1,1}(\Delta M) + \eps \fstop
\]
Then, for every~$x\in M$, the section~$e^\eps_{\Delta M}(\emparg, x)$ satisfies
\[
e^\eps_{\Delta M}(\emparg, x)\geq 0 \comma \qquad e^\eps_{\Delta M}(\emparg, x)\equiv 1 \quad \text{on} \quad \set{x} \comma
\]
and thus
\[
\Cap(\set{x})\leq \mssE_1\tparen{e^\eps_{\Delta M}(\emparg, x)} \fstop
\]
Integrating the above inequality over~$\nu$, we have
\begin{align*}
\kappa_\mssE\eqdef \int \Cap (\set{x}) \diff\nu(x) &\leq  \int \mssE_1\tparen{e^\eps_{\Delta M}(\emparg, x)} \diff\nu(x) \leq \mssE^{1,1}_1(e^\eps_{\Delta M})
\\
& \leq \Cap_{1,1}(\Delta M)+\eps \fstop
\end{align*}
Letting~$\eps\to 0$ and applying~\eqref{eq:l:ProductPointHitting:1} concludes the assertion.
\end{proof}
\end{lemma}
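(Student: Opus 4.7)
The plan is to chain together three ingredients: the monotonicity of $\Cap_{a,b}$ in $(a,b)$, the scaling inequality for conformally rescaled Dirichlet forms (Lemma \ref{l:Capacity00}), and an integration-over-sections argument that bounds $\kappa_\mssE$ from above by $\Cap_{1,1}(\Delta M)$.

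First I would observe that for any $a,b>0$ and any $u\in\dom{\mssE^{1,1}}$, Fubini gives
\begin{equation*}
\mssE^{a,b}(u)=a\int \mssE(u_{1,\emparg})\diff\nu+b\int \mssE(u_{2,\emparg})\diff\nu,
\end{equation*}
so $\mssE^{a\wedge b,a\wedge b}\leq \mssE^{a,b}\leq \mssE^{a\vee b,a\vee b}$. The domain $\dom{\mssE^{a,b}}$ being independent of $a,b$, the infimum defining capacity on the class $\set{u:\car_{\Delta M}\leq u\leq \car\ \mssE\text{-q.e.}}$ yields
\begin{equation*}
\Cap_{a\wedge b,a\wedge b}(\Delta M)\leq \Cap_{a,b}(\Delta M).
\end{equation*}
Applying Lemma \ref{l:Capacity00} (with $a$ replaced by $a\wedge b\geq 1$) to the product form $\tparen{\mssE^{1,1},\dom{\mssE^{1,1}}}$, I get
\begin{equation*}
\Cap_{1,1}(\Delta M)^{1/(a\wedge b)}\leq \Cap_{a\wedge b,a\wedge b}(\Delta M).
\end{equation*}
Monotonicity of $x\mapsto x^{1/(a\wedge b)}$ and the two displays above reduce the lemma to the inequality $\kappa_\mssE\leq \Cap_{1,1}(\Delta M)$.

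For this last inequality, I would use regularity of $\tparen{\mssE^{1,1},\dom{\mssE^{1,1}}}$: for every $\eps>0$ pick an equilibrium-type potential $e^\eps_{\Delta M}\in\dom{\mssE^{1,1}}\cap \Cz(\T^\tym{2})$ with $e^\eps_{\Delta M}\geq 0$, $e^\eps_{\Delta M}\equiv 1$ on $\Delta M$, and $\mssE^{1,1}_1(e^\eps_{\Delta M})\leq \Cap_{1,1}(\Delta M)+\eps$. For each $x\in M$, the section $e^\eps_{\Delta M}(\emparg,x)$ is a nonnegative element of $\dom{\mssE}$ with $e^\eps_{\Delta M}(x,x)=1$, hence a competitor in the variational definition of $\Cap(\set{x})$, giving $\Cap(\set{x})\leq \mssE_1\tparen{e^\eps_{\Delta M}(\emparg,x)}$. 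Integrating over $\nu$ and applying Fubini in the decomposition of $\mssE^{1,1}_1$ yields
\begin{equation*}
\kappa_\mssE= \int\Cap(\set{x})\diff\nu(x)\leq \int \mssE_1\tparen{e^\eps_{\Delta M}(\emparg,x)}\diff\nu(x)\leq \mssE^{1,1}_1(e^\eps_{\Delta M})\leq \Cap_{1,1}(\Delta M)+\eps,
\end{equation*}
and letting $\eps\downarrow 0$ finishes the argument.

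The only real obstacle is the existence of the continuous near-optimal potential $e^\eps_{\Delta M}$, which uses that $\tparen{\mssE^{1,1},\dom{\mssE^{1,1}}}$ is regular (hence admits a special standard core in $\Cz(\T^\tym{2})$); this follows from the standing regularity of $\tparen{\mssE,\dom{\mssE}}$. Measurability of $x\mapsto \Cap(\set{x})$, needed to make sense of $\kappa_\mssE$, was already noted in the definition of $\kappa_\mssE$ and is not a new difficulty here.
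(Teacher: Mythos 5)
Your proof is correct and follows essentially the same route as the paper's: the same monotonicity $\mssE^{a\wedge b,a\wedge b}\leq \mssE^{a,b}$ on a common domain, the same application of Lemma~\ref{l:Capacity00} to the rescaled product form, and the same Fubini-over-sections argument with a near-optimal continuous equilibrium potential for $\Delta M$ to obtain $\kappa_\mssE\leq\Cap_{1,1}(\Delta M)$. The only cosmetic difference is that you state the reduction to $\kappa_\mssE\leq\Cap_{1,1}(\Delta M)$ explicitly before proving it, whereas the paper does the two parts in sequence; the mathematics is identical.
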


\begin{lemma}\label{l:Capacity2}
Assume~$\tparen{\mssE,\dom{\mssE}}$ satisfies~\eqref{eq:QPP}. Then~$\Cap_{a,b}(\Delta M)=0$ for every~$a,b\geq 1$.
\begin{proof}
Consequence of~\eqref{eq:l:ProductPointHitting:1},~\eqref{eq:l:ProductPointHitting:2} and the assumption.
\end{proof}
\end{lemma}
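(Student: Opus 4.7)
The proof will be a one-line consequence of estimates already derived in Lemma~\ref{l:ProductPointHitting}. My plan is to invoke the second chain of inequalities~\eqref{eq:l:ProductPointHitting:2}, namely
\[
\Cap_{a,b}(\Delta M) \leq \Cap_{a\vee b,\,a\vee b}(\Delta M) \leq (a\vee b)\, \Cap_{1,1}(\Delta M),
\]
obtained there by monotonicity of the capacity in the rescaling parameters (since $\mssE^{a,b}\leq \mssE^{a\vee b,\,a\vee b}$) followed by an application of the rescaling bound in Lemma~\ref{l:Capacity00} to the product form with equal factors $a\vee b\geq 1$. Substituting the hypothesis~\eqref{eq:QPP}, i.e.\ $\Cap_{1,1}(\Delta M)=0$, into the right-hand side immediately yields $\Cap_{a,b}(\Delta M)=0$ for every $a,b\geq 1$.

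There is no genuine obstacle: the only reason the statement is isolated as a lemma is that the symmetric-product bound~\eqref{eq:l:ProductPointHitting:2} is packaged inside the proof of Lemma~\ref{l:ProductPointHitting} (whose \emph{own} main purpose is the opposite lower bound~\eqref{eq:l:ProductPointHitting:1} in terms of $\kappa_\mssE$), and Lemma~\ref{l:Capacity2} is where the upper bound gets used for the dichotomy in the main text. The restriction $a,b\geq 1$ is exactly what is needed to apply Lemma~\ref{l:Capacity00} in the second inequality; no continuity or regularity considerations beyond those already granted are involved.
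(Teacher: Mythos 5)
Your proposal is correct and follows essentially the same route as the paper: the paper's one-line proof cites both~\eqref{eq:l:ProductPointHitting:1} and~\eqref{eq:l:ProductPointHitting:2}, but as you correctly observe only the upper bound~\eqref{eq:l:ProductPointHitting:2} is operative (the lower bound~\eqref{eq:l:ProductPointHitting:1} merely returns the vacuous inequality $0 \leq \Cap_{a,b}(\Delta M)$ under~\eqref{eq:QPP}). You have also correctly identified why $a,b\geq 1$ is needed, namely to invoke the rescaling bound of Lemma~\ref{l:Capacity00} on the product form $\mssE^{a\vee b,\,a\vee b} = (a\vee b)\,\mssE^{1,1}$.
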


For a function~$g\colon M^\tym{2}\to \R$ we let~$g^\diag\colon M\to\R$ be the function~$g^\diag \colon x\mapsto g(x,x)$.

\begin{lemma}\label{l:DiagonalEqPot}
Assume~$\tparen{\mssE,\dom{\mssE}}$ satisfies~\eqref{eq:QPP}. Then, for every~$\eps>0$ there exists a symmetric function~$g_\eps=g_\eps^{a,b}\in \dom{\mssE^{a,b}}$ with the following properties:
\begin{enumerate}[$(a)$]
\item\label{i:l:DiagonalEqPot:1} $0\leq g_\eps \leq 1$ on~$M^\tym{2}$;
\item\label{i:l:DiagonalEqPot:2} $g^\diag_\eps \equiv 1$ $\nu$-a.e.\ on~$M$;
\item\label{i:l:DiagonalEqPot:3} $\mssE^{a,b}_1(g_\eps)\leq 2^{-1/\eps}$.
\end{enumerate}
\begin{proof}
Note that, if~$g_\eps$ satisfies~\ref{i:l:DiagonalEqPot:1}--\ref{i:l:DiagonalEqPot:3}, then so does its symmetrization.
Therefore it suffices to show~\ref{i:l:DiagonalEqPot:1}--\ref{i:l:DiagonalEqPot:3}.
By Lemma~\ref{l:Capacity2} we have~$\Cap_{a,b}(\Delta M)=0$, and the existence of such~$g_\eps$ follows from the very definition of capacity. 
\end{proof}
\end{lemma}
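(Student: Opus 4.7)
The plan is to follow the hint in the author's own sketch: first reduce to constructing a (not necessarily symmetric) function satisfying \ref{i:l:DiagonalEqPot:1}--\ref{i:l:DiagonalEqPot:3}, then invoke Lemma~\ref{l:Capacity2} combined with the definition of capacity. The only delicate point, and the one needing the most care, is that condition \ref{i:l:DiagonalEqPot:2} is a \emph{pointwise} statement along the diagonal $\Delta M$, whereas $\Delta M$ is a $\nu^{\otimes 2}$-null set in $M^{\times 2}$ (the measure $\nu$ is atomless), so the $\nu^{\otimes 2}$-class of $g_\eps$ does not by itself determine $g_\eps^\diag$.

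\emph{Reduction to the non-symmetric case.} Suppose some $h_\eps\in\dom{\mssE^{a,b}}$ satisfies \ref{i:l:DiagonalEqPot:1}--\ref{i:l:DiagonalEqPot:3} without the symmetry constraint. Put $g_\eps(x,y)\eqdef \tfrac12(h_\eps(x,y)+h_\eps(y,x))$. Then \ref{i:l:DiagonalEqPot:1} and \ref{i:l:DiagonalEqPot:2} are preserved pointwise, while \ref{i:l:DiagonalEqPot:3} follows from convexity of $\mssE^{a,b}_1$ together with the fact that $\mssE^{a,b}_1(h_\eps^\top)=\mssE^{a,b}_1(h_\eps)$, where $h_\eps^\top(x,y)\eqdef h_\eps(y,x)$, by the symmetry of the product form $\mssE^{a,b}=a\mssE\otimes\car + \car\otimes b\mssE$ under coordinate swap combined with the symmetry of $\tparen{\mssE,\dom{\mssE}}$ in each factor.

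\emph{Production from vanishing capacity.} Assumption~\eqref{eq:QPP} and Lemma~\ref{l:Capacity2} yield $\Cap_{a,b}(\Delta M)=0$ for all $a,b\geq 1$. Unwinding the definition of capacity as an infimum over open neighbourhoods, for every $\delta>0$ there exists an open set $U_\delta\subset M^{\times 2}$ containing $\Delta M$ and a function $u_\delta\in\dom{\mssE^{a,b}}$ with $u_\delta\geq \car_{U_\delta}$ $\nu^{\otimes 2}$-a.e.\ and $\mssE^{a,b}_1(u_\delta)\leq\delta$. Markovianity of $\tparen{\mssE^{a,b},\dom{\mssE^{a,b}}}$ lets us additionally truncate so that $0\leq u_\delta\leq 1$ $\nu^{\otimes 2}$-a.e., without increasing $\mssE^{a,b}_1$. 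Setting $\delta\eqdef 2^{-1/\eps}$, we obtain such a $u_\delta$ with the required energy bound.

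\emph{Pointwise modification on the diagonal.} Pick a Borel representative of $u_\delta$ satisfying $0\leq u_\delta\leq 1$ everywhere on $M^{\times 2}$ (by truncation at the representative level). Define $h_\eps\colon M^{\times 2}\to[0,1]$ by
\[
h_\eps(x,y)\eqdef \begin{cases} u_\delta(x,y) & \text{if } x\neq y,\\ 1 & \text{if } x=y.\end{cases}
\]
Since $\nu$ is atomless, $\nu^{\otimes 2}(\Delta M)=0$, and thus $h_\eps$ and $u_\delta$ define the same class in $L^2(\nu^{\otimes 2})$. Consequently $h_\eps\in\dom{\mssE^{a,b}}$ (with $\mssE^{a,b}_1(h_\eps)=\mssE^{a,b}_1(u_\delta)\leq 2^{-1/\eps}$), so \ref{i:l:DiagonalEqPot:3} holds. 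Properties \ref{i:l:DiagonalEqPot:1} and \ref{i:l:DiagonalEqPot:2} hold pointwise by construction. Applying the symmetrization of the first paragraph then produces the desired $g_\eps$.

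\emph{Main obstacle.} The only conceptual delicacy is the interplay between the $L^2$-class of $u_\delta$ and its pointwise values along the $\nu^{\otimes 2}$-null set $\Delta M$. This is resolved by using the freedom to modify any representative of $u_\delta$ on a $\nu^{\otimes 2}$-null set without altering the element of $\dom{\mssE^{a,b}}$, which is exactly what $\nu$ being atomless permits on $\Delta M$.
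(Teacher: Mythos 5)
Your approach mirrors the paper's own: reduce to the non-symmetric case, use $\Cap_{a,b}(\Delta M)=0$ from Lemma~\ref{l:Capacity2} to produce near-equilibrium potentials with arbitrarily small $\mssE^{a,b}_1$-energy, truncate to $[0,1]$ via Markovianity, and modify on the null diagonal $\Delta M$. Your elaboration of the last step is correct and makes explicit what the paper leaves implicit: since $\nu$ is atomless, $\Delta M$ is $\nu^{\otimes 2}$-null, so overwriting any Borel representative there changes neither its class in $L^2(\nu^{\otimes 2})$ nor its $\mssE^{a,b}_1$-energy, and this is precisely what lets you enforce~\ref{i:l:DiagonalEqPot:2}.

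The symmetrization step, however, contains a genuine error. You claim $\mssE^{a,b}_1(h_\eps^\top)=\mssE^{a,b}_1(h_\eps)$ by ``symmetry of the product form under coordinate swap''; that symmetry holds only when $a=b$. Using the explicit representation (from the proof of Lemma~\ref{l:ProductPointHitting}) $\mssE^{a,b}(u)=a\int\mssE\bigl(u(\emparg,y)\bigr)\,\diff\nu(y)+b\int\mssE\bigl(u(x,\emparg)\bigr)\,\diff\nu(x)$ together with $h^\top(x,y)\eqdef h(y,x)$, one computes $\mssE^{a,b}(h^\top)=\mssE^{b,a}(h)$, which differs from $\mssE^{a,b}(h)$ for $a\neq b$. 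The triangle inequality then bounds $\mssE^{a,b}_1\bigl(\tfrac12(h_\eps+h_\eps^\top)\bigr)^{1/2}\leq\tfrac12\mssE^{a,b}_1(h_\eps)^{1/2}+\tfrac12\mssE^{b,a}_1(h_\eps)^{1/2}$, and your construction gives no control on the second summand. The repair is short: Lemma~\ref{l:Capacity2} also yields $\Cap_{a\vee b,\,a\vee b}(\Delta M)=0$; run your construction for the coordinate-symmetric form $\mssE^{a\vee b,\,a\vee b}$, for which your symmetrization argument is valid, and conclude from $\mssE^{a,b}_1\leq\mssE^{a\vee b,\,a\vee b}_1$ on the common domain $\dom{\mssE^{a,b}}=\dom{\mssE^{a\vee b,\,a\vee b}}$.
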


\subsubsection{Quantitative polarity of points on manifolds}
Let~$(M,g)$ be a complete smooth Riemannian manifold of dimension~$d\geq 1$, possibly with (smooth) boundary.
Further let~$\phi\colon M\to \R$ be smooth.
The canonical form~$(\mssE_{g,\phi},\dom{\mssE_{g,\phi}})$ on the weighted manifold~$(M,g, e^{d\, \phi}\vol_{g})$ is the closure on~$L^2(e^{d\, \phi}\vol_g)$ of the pre-Dirichlet form
 \[
\mssE(u,v)\eqdef \int g(\nabla u, \nabla v)\, e^{d\, \phi}\, \diff\vol_g\comma \qquad u,v\in \Cc^1(M) \fstop
\]
 
\begin{proposition}\label{p:VerificationPolarityManifolds}
Assume~$d\geq 2$.
Then, the canonical form satisfies~\eqref{eq:QPP}.
\begin{proof}
It is clear from the definition of~\eqref{eq:QPP} that the assertion is local in nature.
Thus, it suffices to show the statement when~$M$ is compact, possibly with boundary.
In this case, the role of the weight~$\phi$ is irrelevant, since~$e^{\inf\phi}\mssE_{g,\car}\leq \mssE_{g,\phi}\leq e^{\sup\phi}\mssE_{g,\car}$ with~$0<\inf \phi\leq \sup \phi <\infty$.
The presence of a (smooth) boundary too does not play any role.
Thus, it suffices to show the assertion for the canonical form on closed manifolds.
This follows as in the proof of~\cite[Prop.~A.16]{LzDS17+}.
Alternatively, the assertion can be deduced from standard results about the relation between (Bessel) capacities and Hausdorff dimension.
Indeed, since~$d\geq 2$, the diagonal~$\Delta M$ has (Hausdorff) co-dimension greater than or equal to~$2$, and it is thus $\mssE$-polar, cf.\ e.g.~\cite[Thm.~2.6.16, p.~75]{Zie89} with~$\alpha= 1$,~$p= 2$ and~$n\eqdef d$.
\end{proof}
\end{proposition}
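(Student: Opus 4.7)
The plan is to reduce the global statement to a purely local Euclidean question, then invoke a classical capacity/Hausdorff dimension estimate.

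First, I would exploit the local character of the claim together with countable subadditivity of Choquet capacities. Since $(M,g)$ is second-countable, one can cover $M$ by a countable family of relatively compact open coordinate charts $\{U_n\}_n$, giving
\begin{equation*}
\Delta M \subset \bigcup_n (U_n \times U_n), \qquad \text{so} \qquad \Cap_{1,1}(\Delta M) \leq \sum_n \Cap_{1,1}(\Delta U_n).
\end{equation*}
It therefore suffices to show $\Cap_{1,1}(\Delta U_n) = 0$ for each $n$. Boundary plays no role here since each chart can be chosen interior to $M$ (or, if near $\partial M$, half-space charts reduce to the same Euclidean estimate).

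Next, on the fixed relatively compact chart $U_n$, the smooth weight $e^{d\phi}$ is pinched between two positive constants $0 < c \leq C < \infty$, so the form $\mssE_{g,\phi}$ restricted to $U_n$ is equivalent to the unweighted canonical form $\mssE_{g,0}$, and likewise on the product $U_n \times U_n$. Since capacities associated with equivalent Dirichlet forms are equivalent, and since $g$ itself is smooth and uniformly comparable to the Euclidean metric on $U_n$, the problem reduces to showing $\Cap^{\mathrm{eucl}}_{1,1}(\Delta V) = 0$ for the standard Dirichlet form on $L^2(\Leb^d)$ and any relatively compact $V \subset \R^d$ with $d \geq 2$.

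For this Euclidean case, I would invoke the classical relation between $(1,2)$-Bessel capacity and Hausdorff dimension, cf.\ \cite[Thm.~2.6.16, p.~75]{Zie89} with $\alpha = 1$, $p = 2$, and ambient dimension $n = 2d$: any Borel subset of $\R^{2d}$ with Hausdorff codimension strictly greater than $2p = 4$ has vanishing $B_{1,2}$-capacity, and this capacity is equivalent to the Choquet capacity of the standard Dirichlet form on $\R^{2d}$. The diagonal $\Delta \R^d \hookrightarrow \R^{2d}$ is a smooth $d$-dimensional submanifold, hence of codimension $d \geq 2$; when $d \geq 3$ we have codimension $\geq 3 > 2$ and the cited theorem applies directly, while the marginal case $d = 2$ (codimension exactly $2$) requires the stronger logarithmic-potential refinement, or equivalently an explicit construction of cutoffs of the form
\begin{equation*}
u_\eps(x,y) = \eta\!\left(\tfrac{\log\abs{x-y}}{\log \eps}\right),
\end{equation*}
with $\eta$ a smooth cutoff, whose Dirichlet energy on the product is $O(\abs{\log\eps}^{-1}) \to 0$.

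The delicate step is the borderline case $d = 2$; this is precisely where the argument in \cite[Prop.~A.16]{LzDS17+} becomes non-trivial and needs either the logarithmic cutoff computation above or a sharper form of Ziemer's theorem using the modulus of continuity of Bessel potentials. The higher-dimensional cases $d \geq 3$ are comparatively routine and follow either from Ziemer directly or from a power cutoff $u_\eps(x,y) = \eta(\abs{x-y}/\eps)$ whose gradient has $L^2$-norm $O(\eps^{d-2})$.
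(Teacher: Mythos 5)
Your proposal is correct and follows essentially the same route as the paper: localize via a countable chart cover and subadditivity, absorb the smooth weight by a two-sided pinching of the form, reduce to the Euclidean case, and invoke the Bessel-capacity/Hausdorff-measure theorem of Ziemer. Your reading of the Ziemer citation with ambient dimension $n = 2d$ (since the capacity $\Cap_{1,1}$ lives on the product $M^{\times 2}$) is the correct one; the paper's ``$n \eqdef d$'' appears to be a slip, or a reference to the companion argument in Remark~\ref{r:QPP} that works on a single copy of $M$.

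Two small points of calibration. First, the threshold you quote, ``codimension strictly greater than $2p = 4$,'' should read $\alpha p = 2$; your concrete application uses the correct threshold, so this is just a typo in the prose. Second, and more substantively, your worry that $d = 2$ is a genuine borderline case requiring a logarithmic-cutoff refinement is unfounded given the precise form of the cited theorem. Ziemer's Theorem~2.6.16(ii) asserts that $H^{n - \alpha p}(E) < \infty$ already implies $B_{\alpha, p}(E) = 0$; it does not require the Hausdorff dimension of $E$ to lie strictly below $n - \alpha p$. With $n = 2d$, $\alpha = 1$, $p = 2$, the relevant exponent is $n - \alpha p = 2d - 2$, and for the compact $d$-dimensional submanifold $\Delta V \subset \R^{2d}$ one has $H^{2d-2}(\Delta V) < \infty$ precisely when $d \geq 2$: for $d \geq 3$ this measure vanishes, and in the borderline case $d = 2$ it equals the finite $H^2$-measure of a compact surface. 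So the cited theorem covers $d = 2$ directly, and the explicit logarithmic cutoff
\[
u_\eps(x,y) = \eta\!\paren{\tfrac{\log\abs{x-y}}{\log\eps}}
\]
with energy $O(\abs{\log\eps}^{-1})$ is a valid but unnecessary fallback.
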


\subsection{Fine properties of cylinder functions}
This is an Appendix to~\S\ref{sss:StandardCylFormDom}.
We fix~$\pi\in\msP(\mbfT_\circ)$ and work under Assumption~\ref{ass:SettingLocal}.

\begin{lemma}
Let~$k\in \N$,~$F\in \Cb^\infty(\R^k)$, and~$\seq{f_i}_{i\leq k}\subset \dom{\mssE}$. 
For every choice of everywhere defined $\nu$-representatives~$\rep{f_i}$ of~$f_i$, the function
\begin{equation}\label{eq:l:FullDomain:0}
\rep{u}\colon \mu \longmapsto (F\circ\rep{\mbff}^\trid)(\mu)
\end{equation}
is well-defined on~$\msP$ and $\mcQ_\pi$-square-integrable. Its class~$u\eqdef \class[\mcQ_\pi]{\rep{u}}$ is independent of the choice of representatives for the~$f_i$'s and satisfies
\[
u\in \dom{\mcE_\pi}\qquad \text{and} \qquad \mcG_\pi(u)=\sum_{i,j}^{k,k} (\partial_i F\cdot\partial_j F) \circ \rep{\mbff}^\trid \cdot \mssGamma(\hat f_i, \hat g_j)^\trid \as{\mcQ_\pi}\fstop
\]

\begin{proof}
It suffices to show the statement for~$\reptwo{u} = \reptwo{f}^\trid$. The conclusion for~$\rep{u}$ as in~\eqref{eq:l:FullDomain:0} follows by the diffusion property for~$\mcG_\pi$.
Let~$\seq{f_n}_n\subset \msA$ be $\mssE^{1/2}_1$-convergent to~$f$ and consider the sequence~$\seq{f_n^\trid}_n$.
By Theorem~\ref{t:TransferP}\ref{i:t:TransferP:2}, by~\eqref{eq:l:DiffusionP:0.1}, and since~$f\mapsto f^\trid$ is linear on~$\msA$,
\begin{align}
\nonumber
\mcE(f^\trid_n-f^\trid_m)=&\ \widehat\bmssE_\pi\tparen{(f_n^\trid-f_m^\trid) \circ\EM}
\\
\nonumber
=& \int \bmssGamma^\mbfs\paren{\sum_i s_i (f_n-f_m)(\emparg_i)}_{(\mbfs,\mbfx)} \diff\widehat\boldnu_\pi(\mbfs,\mbfx)
\\
\nonumber
=& \iint \sum_i s_i \mssGamma(f_n-f_m)_{x_i} \diff\boldnu(\mbfx)\diff\pi(\mbfs)
\\
\nonumber
=& \int \sum_i s_i \int \mssGamma(f_n-f_m)_{x_i}\diff\nu(x_i)\diff\pi(\mbfs)
\\
\label{eq:l:DomainCylinderF:1}
=&\ \mssE(f_n-f_m) \fstop
\end{align}
Together with Lemma~\ref{l:SimpleTridLp}, the equality in~\eqref{eq:l:DomainCylinderF:1} shows that~$\seq{f_n^\trid}_n$ is $\mcE^{1/2}_1$-fund\-amental.
As a consequence of Lemma~\ref{l:SimpleTridLp}\ref{i:l:SimpleTridLp:3} there exists~$\nlim f_n^\trid= f^\trid$, thus~$f^\trid\in\dom{\mcE_\pi}$ by closability of~$(\mcE_\pi,\dom{\mcE_\pi})$.
The independence of~$\mcG_\pi(u)$ from the representatives~$\rep{f_i}$'s is straightforward.
\end{proof}
\end{lemma}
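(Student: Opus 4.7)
The plan is to reduce the statement to the special case $k=1$, $F=\id_\R$, i.e.\ to showing $f^\trid\in\dom{\mcE_\pi}$ with the expected carré du champ whenever $f\in\dom{\mssE}$. Once this base case is established, the general case $u=F\circ\rep{\mbff}^\trid$ follows by the diffusion property for $\mcG_\pi$ (which holds since $\tparen{\widehat\mcE_\pi,\dom{\widehat\mcE_\pi}}$ is strongly local by Corollary on the strongly local case and thus admits a chain rule on its broad local space): if each $f_i^\trid\in\dom{\mcE_\pi}$, then any smooth bounded function of finitely many such entries lies in $\dom{\mcE_\pi}$, and $\mcG_\pi$ expands by the chain rule into the displayed expression.

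For the base case, I would first note that Lemma~\ref{l:SimpleTridLp} already gives both the well-posedness of $\rep{f}^\trid$ as an everywhere-defined function on $\msP$ (independent of the choice of $\nu$-representative up to $\mcQ_\pi$-equivalence) and the $L^2(\mcQ_\pi)$-non-expansiveness of $f\mapsto f^\trid$. The main analytic step is the energy identity
\begin{equation}\label{eq:PlanIsometry}
\mcE_\pi(f_n^\trid-f_m^\trid)=\mssE(f_n-f_m)\comma\qquad f_n,f_m\in\msA\fstop
\end{equation}
This I would verify by picking an approximating sequence $\seq{f_n}_n\subset\msA$ with $f_n\to f$ in $\mssE_1^{1/2}$, so that $f_n^\trid\in\FC{\msA}{}\subset\dom{\mcE_\pi}$ by Proposition~\ref{p:StandardCyl}\ref{i:p:StandardCyl:1}. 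Using the intertwining $\EM^*$ between $\tparen{\widehat\mcE_\pi,\dom{\widehat\mcE_\pi}}$ and $\tparen{\widehat\bmssE_\pi,\dom{\widehat\bmssE_\pi}}$ from Theorem~\ref{t:TransferP}\ref{i:t:TransferP:2}, I would transfer the computation of $\mcE_\pi(f_n^\trid)$ to an integral of $\bmssGamma^\mbfs(\ssum_i s_i f_n(\emparg_i))$ against $\widehat\boldnu_\pi=\pi\otimes\boldnu$. The direct-integral expression for $\bmssGamma^\mbfs$ (via Corollary~\ref{c:BSCStronglyLocal} and the explicit form in the proof of Theorem~\ref{t:BendikovSaloffCoste}) reduces this, after Fubini and the conservation $\sum_i s_i=1$, to $\mssE(f_n-f_m)$.

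From~\eqref{eq:PlanIsometry} together with the $L^2$-non-expansiveness in Lemma~\ref{l:SimpleTridLp}\ref{i:l:SimpleTridLp:3}, the sequence $\seq{f_n^\trid}_n$ is $\mcE_{\pi,1}^{1/2}$-Cauchy, and its $L^2(\mcQ_\pi)$-limit equals $f^\trid$. Closability of $\tparen{\mcE_\pi,\dom{\mcE_\pi}}$ (Proposition~\ref{p:StandardCyl}\ref{i:p:StandardCyl:2}) then yields $f^\trid\in\dom{\mcE_\pi}$ and the expected expression for $\mcG_\pi(f^\trid)$ by passing to the limit in the bilinear identity $\int \mcG_\pi(f_n^\trid,g)\diff\mcQ_\pi=\mcE_\pi(f_n^\trid,g)$ against a dense set of test functions $g$. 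Independence of the $\mcQ_\pi$-class of $\mcG_\pi(u)$ from the chosen $\nu$-representatives of the $f_i$ is then automatic from the definition, since two $\nu$-representatives of $f_i$ give rise to $\mssE$-equivalent functions and hence to $\mcQ_\pi$-equivalent traces under $\square^\trid$ by Lemma~\ref{l:SimpleTridLp} applied coordinate-wise.

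The one step I expect to require the most care is the direct-integral/Fubini manipulation collapsing $\int \bmssGamma^\mbfs(\sum_i s_i f_n(\emparg_i))\diff\widehat\boldnu_\pi$ to $\mssE(f_n)$: one must exploit simultaneously the locality of $\bmssGamma^\mbfs$ (so that only diagonal terms in the $i,j$-sum survive once integrated in each coordinate), the conformal rescaling $\mssGamma_k=s_k^{-1}\mssGamma$ that cancels the factor $s_i^2$ coming from $\sum_i s_i f_n(\emparg_i)$, and the normalization $\sum_i s_i=1$ to eliminate the $\mbfs$-integration. Everything else is a standard application of closability plus the diffusion property, which handles the post-composition with $F\in\Cb^\infty(\R^k)$.
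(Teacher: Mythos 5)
Your proposal is correct and follows essentially the same route as the paper: reduce to the linear case $u=f^\trid$, prove the energy identity $\mcE_\pi(f_n^\trid-f_m^\trid)=\mssE(f_n-f_m)$ via the $\EM$-intertwining and the direct-integral/product structure of the carr\'e du champ (using the conformal rescaling $s_i^{-1}$ to cancel the $s_i^2$ and $\sum_i s_i=1$ to collapse the $\mbfs$-integral), then combine with Lemma~\ref{l:SimpleTridLp} and closability, and finally invoke the diffusion property to handle post-composition with $F$. The only small inaccuracy is cosmetic: the vanishing of the cross terms $\bmssGamma^\mbfs(f(\emparg_i),f(\emparg_j))$ for $i\neq j$ holds pointwise from the tensor-product structure of the generator, not only after integration in each coordinate; this does not affect the validity of the argument.
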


\begin{lemma}\label{l:CapacityConnector}
Let~$\tparen{\mcE_\pi,\dom{\mcE_\pi}}$ be the Dirichlet form defined in Proposition~\ref{p:StandardCyl}.
Further let~$\molli\in\mcC^\infty(\mbbI)$,~$f\in\msA$, and~$g\in\dom{\mssE^\otym{2}}$, and assume that~$g$ is $\nu^\otym{2}$-a.e.\ symmetric, and~$g^\diag$ is $\nu$-a.e.\ identically constant.
Then, for every Borel representative~$\reptwo g$ of~$g$, the class~$u_{f,\reptwo{g},\molli}\eqdef \class[\mcQ_\pi]{\rep{u}_{f,\reptwo{g},\molli}}$ of
\begin{equation}\label{eq:l:CapacityConnector:0}
\reptwo u_{f,\reptwo{g},\molli}\colon \mu \longmapsto \int f(x)\, \molli\! \paren{\int \reptwo{g}(x,y)\diff\mu(y)}\diff\mu(x)
\end{equation}
is an element of~$L^2(\mcQ_\pi)$, independent of the chosen representative for~$g$, and satisfying~$u_{f,g,\molli}\in\dom{\mcE_\pi}$ and
\begin{align}\label{eq:l:CapacityConnector:1}
\mcE_\pi(u_{f,g,\molli})\leq 2\, C_{f,\molli} \paren{\mssE(f)+\mssE^{1,1}(g)}
\end{align}
for some constant~$C_{f,\molli}>0$ independent of~$g$.
\begin{proof}

The independence from the chosen representative~$\reptwo{g}$ of~$g$ follows estimating in a standard way the difference~$\tabs{u_{f,\reptwo{g}^1,\molli}-u_{f,\reptwo{g}^2,\molli}}$ for two different representatives~$\reptwo{g}^1$,~$\reptwo{g}^2$ of~$g$ and applying Lemma~\ref{l:SimpleTridLp}.
For the sake of simplicity, in the following we omit the notation of such representatives, thus simply writing~$g$ in place of~$\reptwo{g}$.

\paragraph{Step 1} We first show that~$u_{f,g,\molli}\in\dom{\widehat\mcE_\pi}$ and~\eqref{eq:l:CapacityConnector:1} holds (with~$\widehat\mcG_\pi$ in place of~$\mcG_\pi$).
In light of the intertwining in Theorem~\ref{t:TransferP}\ref{i:t:TransferP:2},
\begin{align*}
\widehat\mcG_\pi(u_{f,g,\molli})= \widehat\bmssGamma (u_{f,g,\molli}\circ \EM)\circ \EM^{-1} \fstop
\end{align*}
Thus, it suffices to show that~$u_{f,g,\molli}\in \dotloc{\dom{\widehat\mcE_\pi}}\cap L^2(\mcQ_\pi)$.
The fact that~$u_{f,g,\molli}$ is straightforward since~$f$ and~$\molli$ are both uniformly bounded.
Thus, it suffices to show that~\eqref{eq:l:CapacityConnector:1} holds. (Recall that~$\widehat\mcE_\pi=\mcE_\pi$ on~$\dom{\mcE_\pi}$.)

For~$\mu\eqdef \sum_i^N s_i\delta_{x_i}$, we have
\begin{align*}
\big(\widehat\bmssGamma& (u_{f,g,\molli}\circ \EM)\circ \EM^{-1}\big)_\mu =
\\
=&\ \widehat\bmssGamma\paren{\sum_i^N *_i f(\emparg_i)\, \molli\! \paren{\sum_j^N *_j g(\emparg_i,\emparg_j)}}_{(\mbfs,\mbfx)}
\\
=&\ \bmssGamma^\mbfs\paren{\sum_i^N s_i f(\emparg_i)\, \molli\! \paren{\sum_j^N s_j g(\emparg_i,\emparg_j)}}_\mbfx
\\
=&\ \sum_{i_1}^N\sum_{i_2}^N s_{i_1} s_{i_2}\bmssGamma^\mbfs\paren{f(\emparg_{i_1})\, \molli\! \paren{\sum_j^N s_j g(\emparg_{i_1},\emparg_j)},f(\emparg_{i_2}) \, \molli\! \paren{\sum_j^N s_j g(\emparg_{i_2},\emparg_j)} }_\mbfx
\\
=&\ \sum_{i_1}^N\sum_{i_2}^N s_{i_1} s_{i_2} \molli \paren{\sum_{j_1}^N s_{j_1} g(x_{i_1},x_{j_1})}\, \molli \paren{\sum_{j_2}^N s_{j_2} g(x_{i_2},x_{j_2})}\, \bmssGamma^\mbfs\tparen{f(\emparg_{i_1}) , f(\emparg_{i_2}) }_\mbfx
\\
&+2\,\sum_{i_1}^N\sum_{i_2}^N s_{i_1} s_{i_2}\, \molli \paren{\sum_{j_1}^N s_{j_1} g(x_{i_1},x_{j_1})} f(x_{i_2})
\\
&\qquad\qquad\cdot \bmssGamma^\mbfs\paren{f(\emparg_{i_1}) , \molli \paren{\sum_{j_2}^N s_{j_2} g(\emparg_{i_2},\emparg_{j_2})}}_\mbfx
\\
&+\sum_{i_1}^N\sum_{i_2}^N s_{i_1} s_{i_2} f(x_{i_1}) f(x_{i_2})
\\
&\qquad\qquad\cdot \bmssGamma^\mbfs\paren{\molli \paren{\sum_{j_1}^N s_{j_1} g(\emparg_{i_1},\emparg_{j_1})} , \molli \paren{\sum_{j_2}^N s_{j_2} g(\emparg_{i_2},\emparg_{j_2})}}_\mbfx
\\
\defeq&\ I_1+2I_2+I_3 \fstop
\end{align*}

As for the first term, since
\begin{align*}
\bmssGamma^\mbfs\tparen{f(\emparg_{i_1}), f(\emparg_{i_2})}(\mbfx)= \car_{i_1=i_2} \tbraket{s_{i_1}^{-1} \mssGamma_{i_1}\tparen{f(\emparg_{i_1}),f(\emparg_{i_1})} + s_{i_2}^{-1} \mssGamma_{i_2}\tparen{f(\emparg_{i_2}),f(\emparg_{i_2})}}_\mbfx\comma
\end{align*}
we have
\begin{align*}
I_1 =&\ 2\sum_i s_i \molli \paren{\sum_{j_1}^N s_{j_1} g(x_i,x_{j_1})}\, \molli \paren{\sum_{j_2}^N s_{j_2} g(x_i,x_{j_2})} \mssGamma(f)(x_i)
\\
=&\ 2\int \molli\tparen{g(x,\emparg)^\trid\mu}^2 \, \mssGamma(f)_x \diff\mu(x) \fstop
\end{align*}

As for the second term, since
\begin{align*}
&\bmssGamma^\mbfs\paren{f(\emparg_{i_1}) , \molli \paren{\sum_{j_2}^N s_{j_2} g(\emparg_{i_2},\emparg_{j_2})}}_\mbfx =
\\
&= \molli' \paren{\sum_{j_3}^N s_{j_3} g(x_{i_2},x_{j_3})} \bmssGamma^\mbfs\paren{f(\emparg_{i_1}) , \sum_{j_2}^N s_{j_2} g(\emparg_{i_2},\emparg_{j_2})}_\mbfx
\\
&= \sum_{j_2}^N s_{j_2} \molli' \paren{\sum_{j_3}^N s_{j_3} g(x_{i_2},x_{j_3})} \bmssGamma^\mbfs\tparen{f(\emparg_{i_1}) , g(\emparg_{i_2},\emparg_{j_2})}_\mbfx
\\
&= \sum_{j_2}^N s_{j_2} \molli' \paren{\sum_{j_3}^N s_{j_3} g(x_{i_2},x_{j_3})} \car_{i_1=i_2} \mssGamma_{i_1}\tparen{f(\emparg_{i_1}) , g(\emparg_{i_1},x_{j_2})}_\mbfx
\\
&\qquad + \sum_{j_2}^N s_{j_2} \molli' \paren{\sum_{j_3}^N s_{j_3} g(x_{i_2},x_{j_3})} \car_{i_1=j_2} \mssGamma_{i_1}\tparen{f(\emparg_{i_1}) , g(x_{i_2},\emparg_{i_1})}_\mbfx
\\
&= \sum_{j_2}^N s_{j_2} \molli' \paren{\sum_{j_3}^N s_{j_3} g(x_{i_2},x_{j_3})} \car_{i_1=i_2} s_{i_1}^{-1}\mssGamma\tparen{f , g(\emparg,x_{j_2})}_{x_{i_1}}
\\
&\qquad + \sum_{j_2}^N s_{j_2} \molli' \paren{\sum_{j_3}^N s_{j_3} g(x_{i_2},x_{j_3})} \car_{i_1=j_2} s_{i_1}^{-1}\mssGamma\tparen{f , g(x_{i_2},\emparg)}_{x_{i_1}}\comma
\end{align*}
we have
\begin{align*}
I_2=& \sum_{i_1}^N\sum_{j_2}^N s_{i_1} s_{j_2} \molli \paren{\sum_{j_1}^N s_{j_1} g(x_{i_1},x_{j_1})} f(x_{i_1})
\\
&\qquad\qquad\cdot \molli'\paren{\sum_{j_3}^N s_{j_3} g(x_{i_1},x_{j_3})}\mssGamma\tparen{f , g(\emparg,x_{j_2})}_{x_{i_1}}
\\
&\ + \sum_{i_1}^N\sum_{i_2}^N s_{i_1} s_{i_2} \molli \paren{\sum_{j_1}^N s_{j_1} g(x_{i_1},x_{j_1})} f(x_{i_2})
\\
&\qquad\qquad\cdot \molli' \paren{\sum_{j_3}^N s_{j_3} g(x_{i_2},x_{j_3})} \mssGamma\tparen{f , g(x_{i_2},\emparg)}_{x_{i_1}}
\\
=& \iint \molli\tparen{g(x,\emparg)^\trid\mu} f(x)\, \molli'\tparen{g(x,\emparg)^\trid\mu}\, \mssGamma\tparen{f,g(\emparg, y)}_x \diff\mu^\otym{2}(x,y)
\\
&\ + \iint \molli\tparen{g(x,\emparg)^\trid\mu} f(y)\, \molli'\tparen{g(y,\emparg)^\trid\mu}\, \mssGamma\tparen{f,g(y,\emparg)}_x \diff\mu^\otym{2}(x,y) \fstop
\end{align*}

As for the third term, by chain rule and bilinearity
\begin{align*}
\bmssGamma^\mbfs&\paren{\molli \paren{\sum_{j_1}^N s_{j_1} g(\emparg_{i_1},\emparg_{j_1})} , \molli \paren{\sum_{j_2}^N s_{j_2} g(\emparg_{i_2},\emparg_{j_2})}}_\mbfx=
\\
&= \molli' \paren{\sum_{k_1}^N s_{k_1} g(x_{i_1},x_{k_1})} \molli' \paren{\sum_{k_2}^N s_{k_2} g(x_{i_2},x_{k_2})}
\\
&\qquad\qquad\cdot\bmssGamma^\mbfs\paren{\sum_{j_1}^N s_{j_1} g(\emparg_{i_1},\emparg_{j_1}) , \sum_{j_2}^N s_{j_2} g(\emparg_{i_2},\emparg_{j_2})}_\mbfx
\\
&= \sum_{j_1}^N \sum_{j_2}^N s_{j_1} s_{j_2} \molli' \paren{\sum_{k_1}^N s_{k_1} g(x_{i_1},x_{k_1})} \molli' \paren{\sum_{k_2}^N s_{k_2} g(x_{i_2},x_{k_2})}
\\
&\qquad\qquad\cdot \bmssGamma^\mbfs\tparen{g(\emparg_{i_1},\emparg_{j_1}) , g(\emparg_{i_2},\emparg_{j_2})}_\mbfx\comma
\end{align*}
and since
\begin{align*}
\bmssGamma^\mbfs\tparen{g(\emparg_{i_1},\emparg_{j_1}) , g(\emparg_{i_2},\emparg_{j_2})}_\mbfx =&\ 
s_{i_2}^{-1}\car_{i_1=i_2, i_1\neq j_2, j_1\neq i_2, j_1\neq j_2,} \mssGamma \tparen{g(\emparg,x_{j_1}) , g(\emparg,x_{j_2})}_{x_{i_1}}
\\
&+ s_{j_2}^{-1} \car_{i_1=j_2,  i_1\neq i_2, j_1\neq i_2, j_1\neq j_2} \mssGamma \tparen{g(\emparg,x_{j_1}) , g(x_{i_2},\emparg)}_{x_{i_1}}
\\
&+ s_{i_2}^{-1} \car_{i_1\neq i_2, i_1\neq j_2, j_1 = i_2, j_1 \neq j_2} \mssGamma \tparen{g(x_{i_1},\emparg) , g(\emparg,x_{j_2})}_{x_{j_1}}
\\
&+ s_{j_2}^{-1} \car_{i_1\neq i_2, i_1\neq j_2, j_1 \neq i_2, j_1= j_2} \mssGamma \tparen{g(x_{i_1},\emparg) , g(x_{i_2},\emparg)}_{x_{j_1}}
\\
&+ s_{i_2}^{-1}s_{j_1}^{-1} \car_{i_1= i_2= j_1 \neq j_2} \mssGamma^\otym{2} \tparen{g(\emparg,\emparg) , g(\emparg, x_{j_2})}_{x_{i_1}}
\\
&+ s_{i_2}^{-1}s_{j_2}^{-1} \car_{j_1\neq i_1= i_2= j_2} \mssGamma^\otym{2} \tparen{g(\emparg,\emparg) , g(x_{j_1}, \emparg)}_{x_{i_1}}
\\
&+ s_{j_1}^{-1}s_{j_2}^{-1} \car_{i_1= j_1= j_2 \neq i_2} \mssGamma^\otym{2} \tparen{g(\emparg,x_{i_2}) , g(\emparg, \emparg)}_{x_{i_1}}
\\
&+ s_{j_1}^{-1}s_{j_2}^{-1} \car_{i_1\neq  i_2= j_1 = j_2} \mssGamma^\otym{2} \tparen{g(x_{i_1},\emparg) , g(\emparg, \emparg)}_{x_{i_2}}
\\
&+ s_{j_1}^{-1} s_{j_2}^{-1} \car_{i_1=i_2=j_1=j_2} \mssGamma^\otym{2}\tparen{g(\emparg,\emparg),g(\emparg,\emparg)}_{x_{i_1}}\comma
\end{align*}
we have
\begin{align*}
I_3=&\sum_{i_1}^N\sum_{i_2}^N s_{i_1} s_{i_2} f(x_{i_1}) f(x_{i_2})
\\
&\qquad\qquad\cdot \bmssGamma^\mbfs\paren{\molli \paren{\sum_{j_1}^N s_{j_1} g(\emparg_{i_1},\emparg_{j_1})} , \molli \paren{\sum_{j_2}^N s_{j_2} g(\emparg_{i_2},\emparg_{j_2})}}_\mbfx
\\
=& \sum_{i_1}^N\sum_{i_2}^N \sum_{j_1}^N \sum_{j_2}^N f(x_{i_1}) f(x_{i_2})\, \molli' \paren{\sum_{k_1}^N s_{k_1} g(x_{i_1},x_{k_1})} \molli' \paren{\sum_{k_2}^N s_{k_2} g(x_{i_2},x_{k_2})} 
\\
\cdot&
s_{i_1} s_{j_1} s_{j_2} \car_{i_1=i_2, i_1\neq j_2, j_1\neq i_2, j_1\neq j_2,} \mssGamma \tparen{g(\emparg,x_{j_1}) , g(\emparg,x_{j_2})}_{x_{i_1}}
\\
&+ s_{i_1} s_{i_2} s_{j_1} \car_{i_1=j_2,  i_1\neq i_2, j_1\neq i_2, j_1\neq j_2} \mssGamma \tparen{g(\emparg,x_{j_1}) , g(x_{i_2},\emparg)}_{x_{i_1}}
\\
&+ s_{i_1} s_{j_1} s_{j_2} \car_{i_1\neq i_2, i_1\neq j_2, j_1 = i_2, j_1 \neq j_2} \mssGamma \tparen{g(x_{i_1},\emparg) , g(\emparg,x_{j_2})}_{x_{j_1}}
\\
&+ s_{i_1} s_{i_2} s_{j_1} \car_{i_1\neq i_2, i_1\neq j_2, j_1 \neq i_2, j_1= j_2} \mssGamma \tparen{g(x_{i_1},\emparg) , g(x_{i_2},\emparg)}_{x_{j_1}}
\\
&+ s_{i_1} s_{j_2} \car_{i_1= i_2= j_1 \neq j_2} \mssGamma^\otym{2} \tparen{g(\emparg,\emparg) , g(\emparg, x_{j_2})}_{x_{i_1}}
\\
&+ s_{i_1} s_{j_1} \car_{j_1\neq i_1= i_2= j_2} \mssGamma^\otym{2} \tparen{g(\emparg,\emparg) , g(x_{j_1}, \emparg)}_{x_{i_1}}
\\
&+ s_{i_1} s_{i_2} \car_{i_1= j_1= j_2 \neq i_2} \mssGamma^\otym{2} \tparen{g(\emparg,x_{i_2}) , g(\emparg, \emparg)}_{x_{i_1}}
\\
&+ s_{i_1} s_{i_2} \car_{i_1\neq  i_2= j_1 = j_2} \mssGamma^\otym{2} \tparen{g(x_{i_1},\emparg) , g(\emparg, \emparg)}_{x_{i_2}}
\\
&+ s_{i_1} s_{i_2} \car_{i_1=i_2=j_1=j_2} \mssGamma^\otym{2}\tparen{g(\emparg,\emparg),g(\emparg,\emparg)}_{x_{i_1}}
\\
=& \iiint f(x)^2\, \molli'\tparen{g(x,\emparg)^\trid\mu}^2\, \mssGamma\tparen{g(\emparg, y),g(\emparg,z)}_x \diff\mu^\otym{3}(x,y,z)
\\
&+ \iiint f(x) f(y)\, \molli'\tparen{g(x,\emparg)^\trid\mu} \molli'\tparen{g(y,\emparg)^\trid\mu}\, \mssGamma\tparen{g(\emparg, z),g(y,\emparg)}_x \diff\mu^\otym{3}(x,y,z)
\\
&+ \iiint f(x) f(y) \molli'\tparen{g(x,\emparg)^\trid\mu} \molli'\tparen{g(y,\emparg)^\trid\mu}\, \mssGamma\tparen{g(x,\emparg),g(\emparg,z)}_y \diff\mu^\otym{3}(x,y,z)
\\
&+ \iiint f(x) f(y)\, \molli'\tparen{g(x,\emparg)^\trid\mu} \molli'\tparen{g(y,\emparg)^\trid\mu}\, \mssGamma\tparen{g(x,\emparg),g(y,\emparg)}_z \diff\mu^\otym{3}(x,y,z)
\\
&+ \iint f(x)^2\, \molli'\tparen{g(x,\emparg)^\trid\mu}^2\, \mssGamma^\otym{2}\tparen{g(\emparg,\emparg), g(\emparg,y)}_x \diff\mu^\otym{2}(x,y)
\\
&+ \iint f(x)^2\, \molli'\tparen{g(x,\emparg)^\trid\mu}^2\, \mssGamma^\otym{2}\tparen{g(\emparg,\emparg), g(y,\emparg)}_x \diff\mu^\otym{2}(x,y)
\\
&+ \iint f(x) f(y)\, \molli'\tparen{g(x,\emparg)^\trid\mu} \molli'\tparen{g(y,\emparg)^\trid\mu} \, \mssGamma^\otym{2}\tparen{g(\emparg,y),g(\emparg,\emparg)}_x \diff\mu^\otym{2}(x,y)
\\
&+ \iint f(x) f(y)\, \molli'\tparen{g(x,\emparg)^\trid\mu} \molli'\tparen{g(y,\emparg)^\trid\mu}\, \mssGamma^\otym{2}\tparen{g(x,\emparg), g(\emparg,\emparg)}_y \diff\mu^\otym{2}(x,y)
\\
&+ \iint f(x)^2 \, \molli'\tparen{g(x,\emparg)^\trid\mu}^2 \, \mssGamma^\otym{2}\tparen{g(\emparg,\emparg), g(\emparg,\emparg)}_x \car_{\set{x=y}} \diff\mu^\otym{2}(x,y) \fstop
\end{align*}

We may now choose a representative of~$g$ so that~$g^\diag$ is identically constant and~$g$ is everywhere symmetric.
In this way, the last five summands in~$I_3$ just above identically vanish by locality of~$\mssGamma$.
By symmetry of~$\mssGamma$ we further have
\begin{align*}
I_3=& \iiint f(x)^2\, \molli'\tparen{g(x,\emparg)^\trid\mu}^2\, \mssGamma\tparen{g(\emparg, y),g(\emparg,z)}_x \diff\mu^\otym{3}(x,y,z)
\\
&+ 2\iiint f(x) f(y)\, \molli'\tparen{g(x,\emparg)^\trid\mu} \molli'\tparen{g(y,\emparg)^\trid\mu}\, \mssGamma\tparen{g(\emparg, z),g(y,\emparg)}_x \diff\mu^\otym{3}(x,y,z)
\\
&+ \iiint f(x) f(y)\, \molli'\tparen{g(x,\emparg)^\trid\mu} \molli'\tparen{g(y,\emparg)^\trid\mu}\, \mssGamma\tparen{g(x,\emparg),g(y,\emparg)}_z \diff\mu^\otym{3}(x,y,z) \fstop
\end{align*}

Estimating~$f$ by~$\norm{f}_{\Cb}$ and~$\molli,\molli'$ by~$\norm{\molli}_{\mcC^1}$, by repeated applications of Cauchy--Schwarz inequality for~$\mssGamma$ and for the~$L^2(\mu)$-scalar product, we have
\begin{align*}
I_1 \leq&\ 2 \norm{\molli}_{\mcC^1}^2 \int \mssGamma(f)\diff\mu\comma
\\
I_2 \leq&\ 2 \norm{f}_{\Cb}\norm{\molli}_{\mcC^1}^2 \iint \mssGamma\tparen{f,g(\emparg, y)}_x\diff\mu^\otym{2}(x,y) 
\\
\leq&\ 2 \norm{f}_{\Cb}\norm{\molli}_{\mcC^1}^2 \braket{\int \mssGamma(f)\diff\mu}^{1/2} \braket{\iint \mssGamma\tparen{g(\emparg, y)}_x\diff\mu^\otym{2}(x,y)}^{1/2}\comma
\\
I_3 \leq&\ 4 \norm{f}_{\Cb}^2\norm{\molli}_{\mcC^1}^2 \iiint \mssGamma\tparen{g(\emparg,x),g(\emparg,y)}_z \diff\mu^\otym{3}(x,y,z)
\\
\leq&\ 4 \norm{f}_{\Cb}^2\norm{\molli}_{\mcC^1}^2 {\iint \mssGamma\tparen{g(\emparg,y)}_x \diff\mu^\otym{2}(x,y)} \fstop
\end{align*}

Integrating the above inequalities w.r.t.~$\mcQ_\pi$ and by Cauchy--Schwarz inequality for the $L^2(\mcQ_\pi)$-scalar product,
\begin{align*}
\widehat\mcE_\pi(u_{f,g,\molli}) \leq 2\, C_{f,\molli} \paren{\int \mssGamma(f)^\trid\mu \diff\mcQ_\pi(\mu)+2\int{ \!\!\!\int\tparen{\mssGamma\tparen{g(\emparg,y)}^\trid\mu} \diff\mu(y)} \diff\mcQ_\pi(\mu)}
\end{align*}
and, by Lemma~\ref{l:SimpleTridLp}, and symmetry of~$g$,
\begin{align*}
\widehat\mcE_\pi(u_{f,g,\molli}) \leq&\ 2\, C_{f,\molli} \paren{\mssE(f) +2 \int\mssGamma\tparen{g(\emparg,y)}_x\diff\nu^\otym{2}(x,y)}
\\
=&\ 2\, C_{f,\molli} \paren{\mssE(f)+\mssE^{1,1}(g)} \comma
\end{align*}
which concludes~\eqref{eq:l:CapacityConnector:1}. (Recall that~$\widehat\mcE_\pi=\mcE_\pi$ on~$\dom{\mcE_\pi}$.)

\paragraph{Step 2} The fact that~$u_{f,g,\molli}\in\dom{\mcE_\pi}$ follows now by a standard argument, approximating~$\molli$ in the~$\mcC^1$-topology with a polynomial, and approximating~$g\in\dom{\mssE^\otym{2}}$ in the $\mssE^\otym{2}_1$-norm with a function in tensor-product form.
We refer the to the proof of~\cite[Lem.~A.19]{LzDS17+} for details on this very same argument.
\end{proof}
\end{lemma}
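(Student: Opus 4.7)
The plan is to reduce the computation on $\msP$ to one on the infinite-product space~$\mbfM$ via the isomorphism afforded by Theorem~\ref{t:TransferP}\ref{i:t:TransferP:2}, carry out the carr\'e du champ calculation there using the explicit formula from Corollary~\ref{c:BSCStronglyLocal}\ref{i:c:BSCStronglyLocal:2}, and then integrate against $\widehat\boldnu_\pi$ using the non-expansive estimates of Lemma~\ref{l:SimpleTridLp}. Independence of the $\mcQ_\pi$-class $u_{f,\reptwo g,\molli}$ from the choice of Borel representative $\reptwo g$ is a direct consequence of Lemma~\ref{l:SimpleTridLp} applied to the difference of two representatives: swapping representatives changes $\reptwo u_{f,\reptwo g,\molli}$ only on a $\mcQ_\pi$-null set since $\mu\mapsto\ttparen{\reptwo g^1-\reptwo g^2}^\trid \mu$ is a $\mcQ_\pi$-null functional by Lemma~\ref{l:SimpleTridLp}\ref{i:l:SimpleTridLp:3} applied with $p=1$.

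The bulk of the argument is the energy estimate~\eqref{eq:l:CapacityConnector:1}. First I would transfer to the product space, rewriting $u_{f,g,\molli}\circ \EM$ evaluated at $(\mbfs,\mbfx)$ as $\sum_i s_i\, f(x_i)\,\molli\ttparen{\sum_j s_j g(x_i,x_j)}$, and then apply $\bmssGamma^\mbfs$. Bilinearity splits the resulting expression into three families of terms: $I_1$, where both derivatives fall on $f$; $I_2$, where one falls on $f$ and one on $\molli\ttparen{\sum_j s_j g(\emparg,\emparg)}$; and $I_3$, where both fall on the $\molli$-factor. Using the product structure of $\bmssGamma^\mbfs$ given by $s_k^{-1}\mssGamma_k$ acting coordinate-wise, the computation reduces to a careful case analysis indexed by which of the variables coincide.

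The key obstacle, and the only step where the hypotheses on $g$ are used, is handling $I_3$. A priori this term contains contributions of the form $\mssGamma^{\otimes 2}\tparen{g(\emparg,\emparg),g(\emparg,\emparg)}$ arising from the cases where all indices coincide (or two pairs coincide on the diagonal), and these would a priori produce terms that are not controlled by $\mssE^{1,1}(g)$. The resolution is that choosing a Borel representative of $g$ with $g^\diag$ identically constant on $M$ (possible since $g^\diag\equiv c$ $\nu$-a.e.\ by hypothesis) forces $\mssGamma^{\otimes 2}\tparen{g(\emparg,\emparg),g(\emparg,\emparg)}\equiv 0$ pointwise by strong locality of $\mssGamma^{\otimes 2}$: the exterior-derivative of a constant function vanishes. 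After this cancellation, the remaining terms in $I_3$ involve only $\mssGamma\tparen{g(\emparg,y),g(\emparg,z)}$ which, after integration against $\mu^{\otimes 3}$ and Cauchy--Schwarz on $\mssGamma$ and on $L^2(\mu)$, are controlled by $\iint \mssGamma\tparen{g(\emparg,y)}_x\diff\mu^{\otimes 2}(x,y)$. Symmetry of $g$ collapses the mixed terms into a single clean expression.

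Once the pointwise bounds $I_1\lesssim_{f,\molli} \int\mssGamma(f)\diff\mu$, $I_2\lesssim_{f,\molli} [\int\mssGamma(f)\diff\mu]^{1/2}\ttbraket{\iint \mssGamma\tparen{g(\emparg,y)}_x\diff\mu^{\otimes 2}}^{1/2}$, and the analogous bound for $I_3$ are in hand, integrating against $\mcQ_\pi=\EM_\pfwd\widehat\boldnu_\pi$ and invoking Lemma~\ref{l:SimpleTridLp} twice (once for $\mssGamma(f)$ as a function on $M$, once for $\mssGamma(g(\emparg,y))_x$ as a function on $M^{\otym 2}$) gives $\widehat\mcE_\pi(u_{f,g,\molli})\leq 2\, C_{f,\molli}\tparen{\mssE(f)+\mssE^{1,1}(g)}$, with the constant depending only on $\norm{f}_{\Cb}$ and $\norm{\molli}_{\mcC^1}$. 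This shows $u_{f,g,\molli}\in\dotloc{\dom{\widehat\mcE_\pi}}\cap L^\infty(\mcQ_\pi)$ with the claimed energy bound, where $L^\infty$-integrability is immediate from boundedness of $f$ and $\molli$.

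Finally, to promote this to membership in the stricter space $\dom{\mcE_\pi}$ (the closure of $\FC{\msA}{}$), I would perform a double approximation: approximate $\molli\in\mcC^\infty(\mbbI)$ in $\mcC^1$-norm by polynomials $P_n$ on the relevant compact range, and approximate $g\in\dom{\mssE^{\otym 2}}$ in the $\mssE^{\otym 2}_1$-norm by functions $g_m$ of the form $\sum_k h_k^1\otimes h_k^2$ with $h_k^i\in\msA$ (possible since $\msA\otimes\msA$ is a form core for $\mssE^{\otym 2}$). For such tensor-product $g_m$ and polynomial $P_n$, the function $u_{f,g_m,P_n}$ is a finite polynomial in simple integrals $h^\trid$ with $h\in\msA$, hence an element of $\FC{\msA}{}$. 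The energy bound~\eqref{eq:l:CapacityConnector:1} applied to the differences $u_{f,g_m,P_n}-u_{f,g,\molli}$ (which are of the same form with modified data) shows Cauchy convergence in the $\mcE_\pi^{1/2}_1$-norm, yielding $u_{f,g,\molli}\in\dom{\mcE_\pi}$ by closability. This last step is essentially the argument of~\cite[Lem.~A.19]{LzDS17+}.
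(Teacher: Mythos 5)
Your proposal follows essentially the same route as the paper: transfer to the infinite-product side via the intertwining of carr\'e du champ operators, split $\widehat\bmssGamma$ of $u_{f,g,\molli}\circ\EM$ into the three families $I_1, I_2, I_3$ according to where the derivatives land, use the choice of a representative with $\reptwo g^\diag$ identically constant plus locality of $\mssGamma$ to kill the on-diagonal contributions in $I_3$, conclude with Cauchy--Schwarz and Lemma~\ref{l:SimpleTridLp}, and then approximate $\molli$ by polynomials and $g$ by elements of $\msA\otimes\msA$ to land in $\dom{\mcE_\pi}$. This is exactly the paper's argument.

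Two small imprecisions worth flagging, neither fatal. First, in the independence-of-representative step you write that $\mu\mapsto\ttparen{\reptwo g^1-\reptwo g^2}^\trid\mu$ is a $\mcQ_\pi$-null functional ``by Lemma~\ref{l:SimpleTridLp}\ref{i:l:SimpleTridLp:3} with $p=1$''; but $\reptwo g^1-\reptwo g^2$ is a function on $M^{\tym 2}$ and the operation $\square^\trid$ in Lemma~\ref{l:SimpleTridLp} is defined for univariate functions, so the formula does not parse as written --- what you actually need is that the double integral $\iint(\reptwo g^1-\reptwo g^2)\diff\mu^{\otym 2}$ and, more to the point, the inner integral $\int(\reptwo g^1-\reptwo g^2)(x,\emparg)\diff\mu$ for $\mu$-a.e.\ $x$, vanish for $\mcQ_\pi$-a.e.\ $\mu$; the correct justification uses that the off-diagonal pairs of an infinite $\boldnu$-sample are $\nu^{\otym 2}$-distributed, together with the constancy of the diagonal, so the paper's ``standard estimate'' is really a conditioning-on-two-coordinates argument rather than a direct appeal to Lemma~\ref{l:SimpleTridLp}. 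Second, in Step~2 the parenthetical remark that $u_{f,g_m,P_n}-u_{f,g,\molli}$ is ``of the same form with modified data'' is false --- the difference of two such functionals is not itself of the form $u_{\tilde f,\tilde g,\tilde\molli}$ --- so the energy bound~\eqref{eq:l:CapacityConnector:1} cannot be applied to it verbatim; the Cauchy estimate has to be carried out by expanding the difference inside the carr\'e du champ as in~\cite[Lem.~A.19]{LzDS17+}, which you do ultimately reference.
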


\begin{lemma}\label{l:CoincidenceDomains}
Assume~$\tparen{\mssE,\dom{\mssE}}$ satisfies~\eqref{eq:QPP}, and let~$\tparen{\mcE_\pi,\dom{\mcE_\pi}}$ be the Dir\-ichlet form defined in Proposition~\ref{p:StandardCyl}. Then,~$\hCylP{}{0}\subset\dom{\mcE_\pi}$.

\begin{proof}
In light of the diffusion property~\eqref{eq:l:DiffusionP:0.1}, it suffices to show that~$\hat f^\trid\in\dom{\mcE_\pi}$ for every~$\hat f=\molli\otimes f$ with~$\molli\in\msR_0$ and~$f\in\msA$.
Let~$\seq{g_\eps}_{\eps}$ be the family of functions constructed in Lemma~\ref{l:DiagonalEqPot}, and choose representatives~$\rep{g_\eps}$ with~$\rep{g_\eps}^\diag= 1$ everywhere on~$M$ and~$\rep{g_\eps}\geq 0$ everywhere on~$M^\tym{2}$.

Now, consider the family of functions~$\seq{u_{f,\rep{g_\eps},\molli}}_\eps$  in~\eqref{eq:l:CapacityConnector:0}.
It suffices to show that~$\eps\mapsto \mcE_\pi(u_{f,\rep{g_\eps},\molli})$ is uniformly bounded and that~$u_{f,\rep{g_\eps},\molli}$ converges to~$\hat f^\trid$ in~$L^2(\mcQ_\pi)$, in which case the conclusion follows from~\cite[Lem.~I.2.12, p.~21]{MaRoe92}.

The first bound follows from~\eqref{eq:l:CapacityConnector:1} and Lemma~\ref{l:DiagonalEqPot}\ref{i:l:DiagonalEqPot:3}.

As for the $L^2(\mcQ_\pi)$ convergence, respectively by: Jensen's inequality and boundedness of~$f\in\msA\subset\Cz(\T)$, by smoothness of~$\molli$, and again by Jensen's inequality,
\begin{align*}
\int& \abs{\int\braket{\molli(\mu_x) - \molli\paren{\int \rep{g}_\eps(x,y) \diff\mu(y)}f(x)}\diff\mu(x)}^2\,\diff\mcQ_\pi(\mu)
\\
&\leq \norm{f}_0^2\int \abs{\molli(\mu_x)-\molli\paren{\int \int \rep{g_\eps}(x,y)\diff\mu(y)}}^2 \diff\mu(x) \diff\mcQ_\pi(\mu)
\\
&\leq \norm{f}_0^2 \norm{\molli'}_0^2 \int \int\abs{\mu_x-\int \rep{g_\eps}(x,y)\diff\mu(y)}^2 \diff\mu(x) \diff\mcQ_\pi(\mu)
 \\
&=\norm{f}_0^2 \norm{\molli'}_0^2 \iint \sum_i^N s_i \abs{s_i-\sum_j^N s_j\, \rep{g_\eps}(x_i,x_j)}^2 \diff\boldnu(\mbfx)\diff\pi(\mbfs)
\\
&=\norm{f}_0^2 \norm{\molli'}_0^2 \iint \sum_i^N s_i \abs{\sum_{j:j\neq i}^N s_j\, \rep{g_\eps}(x_i,x_j)}^2 \diff\boldnu(\mbfx)\diff\pi(\mbfs)
\\
&\leq \norm{f}_0^2 \norm{\molli'}_0^2 \int \sum_i^N s_i(1-s_i) \sum_{j:j\neq i}^N s_j\, \int\abs{\rep{g_\eps}(x_i,x_j)}^2 \diff\boldnu(\mbfx)
\\
&\leq \norm{f}_0^2 \norm{\molli'}_0^2 \norm{\rep{g_\eps}}_{L^2(\nu^\otym{2})}^2 \fstop
\end{align*}
Since the latter term vanishes as~$\eps\to 0$ by Lemma~\ref{l:DiagonalEqPot}\ref{i:l:DiagonalEqPot:3}, we have~$L^2(\mcQ_\pi)$-$\lim_{\eps\downarrow 0} u_{f,\rep{g_\eps},\molli}=\hat f^\trid$.
\end{proof}
\end{lemma}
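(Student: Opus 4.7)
The plan is to reduce to the elementary case and then exploit the quantitative polarity of points in the form of the diagonal approximators from Lemma~\ref{l:DiagonalEqPot}. Since $\dom{\mcE_\pi}$ is Markovian (it is a Dirichlet form domain) and contains $\FC{\msA}{}$, the diffusion representation~\eqref{eq:l:DiffusionP:0.1} for $\widehat\mcG_\pi$ reduces the verification $\hCylP{}{0} \subset \dom{\mcE_\pi}$ to the case of elementary cylinder functions $\hat f^\trid$ with $\hat f = \molli \otimes f$, $\molli \in \msR_0$, $f \in \msA$. The heuristic is that $\hat f^\trid(\mu) = \sum_x \mu\{x\} \molli(\mu\{x\}) f(x)$ measures both position and mass of each atom, and we want to reach such a functional as a limit of functionals that depend only on integrations against functions on $M$.

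The key replacement trick uses the functionals $u_{f,g,\molli}$ of Lemma~\ref{l:CapacityConnector}, which have the explicit form~\eqref{eq:l:CapacityConnector:0}. The idea is that when $g$ is a symmetric approximation of the diagonal in $M^\tym{2}$ satisfying $g^\diag \equiv 1$, the inner integral $\int g(x,y) \diff\mu(y)$ approximates $\mu\{x\}$ for $\mu$ purely atomic, whence $u_{f,g,\molli}(\mu)$ approximates $(\molli \otimes f)^\trid(\mu) = \hat f^\trid(\mu)$. Here is where Assumption~\eqref{eq:QPP} enters decisively: it is precisely the ingredient that guarantees, via Lemma~\ref{l:Capacity2} and Lemma~\ref{l:DiagonalEqPot}, the existence of a family $\set{g_\eps}_\eps \subset \dom{\mssE^\otym{2}}$ of symmetric diagonal approximations with $0 \leq g_\eps \leq 1$, $g_\eps^\diag \equiv 1$ $\nu$-a.e., and $\mssE^{1,1}_1(g_\eps) \leq 2^{-1/\eps}$.

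With these approximators in hand, I would pick Borel representatives $\widetilde g_\eps$ with $\widetilde g_\eps^{\diag} \equiv 1$ everywhere and $\widetilde g_\eps \geq 0$ everywhere, so that the functionals $u_{f,\widetilde g_\eps,\molli}$ are well-defined pointwise on $\msP$. Lemma~\ref{l:CapacityConnector} then delivers at once that each $u_{f,\widetilde g_\eps,\molli} \in \dom{\mcE_\pi}$ with the uniform energy bound
\[
\mcE_\pi(u_{f,\widetilde g_\eps,\molli}) \leq 2\, C_{f,\molli} \paren{\mssE(f) + \mssE^{1,1}(g_\eps)} \leq 2\, C_{f,\molli}\paren{\mssE(f) + 2^{-1/\eps}} \comma
\]
which is uniformly bounded in $\eps$. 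The final analytic step is a direct Jensen/$L^2$ estimate showing $L^2(\mcQ_\pi)$-$\lim_{\eps \downarrow 0} u_{f,\widetilde g_\eps,\molli} = \hat f^\trid$: expanding the difference under an atomic $\mu = \sum_i s_i \delta_{x_i}$ and using $g_\eps^\diag \equiv 1$ reduces the bound to an integral of $\abs{\widetilde g_\eps(x_i,x_j)}^2$ for $i \neq j$, which is controlled by $\norm{g_\eps}_{L^2(\nu^\otym{2})}^2 \to 0$.

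Combining the uniform energy bound with the $L^2(\mcQ_\pi)$-convergence, the Banach--Saks-type criterion~\cite[Lem.~I.2.12, p.~21]{MaRoe92} forces the $L^2$-limit $\hat f^\trid$ into $\dom{\mcE_\pi}$, concluding the proof. The only genuinely delicate step is the uniform energy bound for $u_{f,\widetilde g_\eps,\molli}$, which is a non-trivial computation involving second-order chain rules and careful cancellations on the diagonal; this, however, has already been carried out in Lemma~\ref{l:CapacityConnector}, so in the present lemma it enters as a black box.
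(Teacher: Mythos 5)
Your proposal is correct and follows essentially the same route as the paper's own proof: reduce to the elementary case via the diffusion property, approximate $\hat f^\trid$ by the connector functionals $u_{f,\rep{g_\eps},\molli}$ built from the diagonal approximants of Lemma~\ref{l:DiagonalEqPot}, invoke the uniform energy bound from Lemma~\ref{l:CapacityConnector} and the $\mssE^{1,1}_1$-decay to get an $\mcE_\pi$-bounded family, establish $L^2(\mcQ_\pi)$-convergence by a Jensen estimate, and conclude by the Banach--Saks criterion~\cite[Lem.~I.2.12]{MaRoe92}. All the key ingredients and the order of application are the same; only the exposition differs.
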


\section{Direct integrals}\label{app:DirInt} 
We collect here an auxiliary lemma used in proof of Proposition~\ref{p:PDirIntESA}.
We recall only the main definitions about direct integrals of separable Hilbert spaces, referring to the monograph~\cite[\S\S{II.1}, II.2]{Dix81} for a systematic treatment, and to~\cite[\S\S2.2, 2.3]{LzDS20} for a brief and self-contained account of direct integrals of quadratic and Dirichlet forms.

\begin{definition}[Measurable fields,~{\cite[\S{II.1.3}, Dfn.~1,~p.~164]{Dix81}}]\label{d:DirInt}
\
Let~$(Z,\mcZ,\nu)$ be a $\sigma$-finite measure space,~$\seq{H_\zeta}_{\zeta\in Z}$ be a family of separable Hilbert spaces, and~$F$ be the linear space~$F\eqdef \prod_{\zeta\in Z} H_\zeta$. We say that~$\zeta\mapsto H_\zeta$ is a \emph{$\nu$-measurable field of Hilbert spaces} (\emph{with underlying space~$S$}) if there exists a linear subspace~$S$ of~$F$ with
\begin{enumerate}[$(a)$]
\item\label{i:d:DirInt1} for every~$u\in S$, the function~$\zeta\mapsto \norm{u_\zeta}_{\zeta}$ is $\nu$-measurable;
\item\label{i:d:DirInt2} if~$v\in F$ is such that $\zeta\mapsto \scalar{u_\zeta}{v_\zeta}_{\zeta}$ is $\nu$-measurable for every~$u\in S$, then~$v\in S$;
\item\label{i:d:DirInt3} there exists a sequence~$\seq{u_n}_n\subset S$ such that~$\seq{u_{n,\zeta}}_n$ is a total sequence\footnote{A sequence in a Banach space~$B$ is called \emph{total} if the strong closure of its linear span coincides with~$B$.} in~$H_\zeta$ for every~$\zeta\in Z$.
\end{enumerate}

Any such $S$ is called a \emph{space of $\nu$-measurable vector fields}. Any sequence in~$S$ possessing property~\iref{i:d:DirInt3} is called a \emph{fundamental} sequence.
\end{definition}

\begin{definition}[Direct integrals, {\cite[\S{II.1.5}, Prop.~5,~p.~169]{Dix81}}]
A $\nu$-measur\-able vector field~$u$ is called ($\nu$-)\emph{square-integrable} if
\begin{align}\label{eq:NormH}
\norm{u}\eqdef \braket{\int_Z \norm{u_\zeta}_{\zeta}^2 \, \diff\nu(\zeta)}^{1/2}<\infty\fstop
\end{align}

Two square-integrable vector fields~$u$,~$v$ are called ($\nu$-)\emph{equivalent} if~$\norm{u-v}=0$. The space~$H$ of equivalence classes of square-integrable vector fields, endowed with the non-relabeled quotient norm~$\norm{\emparg}$, is a Hilbert space~\cite[\S{II.1.5}, Prop.~5(i), p.~169]{Dix81}, called the \emph{direct integral of~$\zeta\mapsto H_\zeta$} (\emph{with underlying space~$S$}) and denoted by
\begin{align}\label{eq:DirInt}
H=\dint[]{Z} H_\zeta \diff\nu(\zeta) \fstop
\end{align}
\end{definition}

\begin{definition}[Measurable fields of closable operators]
Let~$H$ be as in~\eqref{eq:DirInt}. For each~$\zeta\in Z$ further let~$(A_\zeta,D_\zeta)$ be a closable operator on~$H_\zeta$.
A field of closable operators~$\zeta\mapsto (A_\zeta, D_\zeta)$ is called \emph{$\nu$-measurable} (\emph{with underlying space~$S$}) if
\begin{enumerate*}[$(a)$]
\item $\zeta\mapsto A_\zeta u_\zeta\in H_\zeta$ is a $\nu$-measurable vector field for every $\nu$-measurable vector field~$u$ with~$u_\zeta\in D_\zeta$ for every~$\zeta\in Z$;
\item there exists a sequence~$\seq{u_n}_n$ of $\nu$-measurable vector fields such that~$u_{n,\zeta}\in D_\zeta$ and~$\tseq{(u_{n,\zeta},A_\zeta u_{n,\zeta})}_n$ is a total sequence in~$\Graph(\overline{A_\zeta})$ for all~$\zeta\in Z$.
\end{enumerate*}
For a $\nu$-measurable vector field of closable operators~$\zeta\mapsto (A_\zeta, D_\zeta)$, its direct-integral operator~$(A,D)$ is the operator on~$H$ defined as
\begin{gather*}
D\eqdef \set{u\in H : \begin{gathered} u_\zeta\in D_\zeta \forallae{\nu} \zeta\in Z\comma \\ \zeta\mapsto A_\zeta u_\zeta \text{~is square-integrable}\end{gathered}}\comma
\\
A u \eqdef (\zeta\mapsto A_\zeta u_\zeta)\comma \qquad u\in D\fstop
\end{gather*}
\end{definition}

\begin{lemma}\label{l:DirIntESA}
Let~$\zeta\mapsto \tparen{A_\zeta,D_\zeta}$ be a measurable field of closable operators.
Then, $(A,D)$~is essentially self-adjoint on~$H$ if and only if~$(A_\zeta, D_\zeta)$ is essentially self-adjoint on~$H_\zeta$ for $\nu$-a.e.~$\zeta\in Z$.
\begin{proof}
Assume~$(A_\zeta, D_\zeta)$ is essentially self-adjoint for $\nu$-a.e.~$\zeta\in Z$.
Then, respectively by~\cite[Thm.~1.4]{Lan75} for the closure, by the assumption, and again by~\cite[Thm.~1.4]{Lan75} for the adjunction
\[
\overline{A}=\dint{Z} \overline{A_\zeta} \diff\nu(\zeta) = \dint{Z} A_\zeta^* \diff\nu(\zeta) = A^* \fstop
\]
The converse implication follows in a similar way.
\end{proof}
\end{lemma}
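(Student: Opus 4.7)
The plan is to reduce the question to showing that both the closure and the Hilbert-space adjoint commute with direct integration: once we know
\[
\overline{A}=\dint{Z}\overline{A_\zeta}\diff\nu(\zeta)\qquad\text{and}\qquad A^*=\dint{Z}A_\zeta^*\diff\nu(\zeta),
\]
the essential self-adjointness $\overline{A}=A^*$ is equivalent to the $\nu$-a.e.\ fiberwise equalities $\overline{A_\zeta}=A_\zeta^*$, since two decomposable operators coincide if and only if their components agree $\nu$-a.e.\ (a standard fact for direct integrals of self-adjoint or closed operators, contained in the foundational treatments of Lance or Nussbaum).

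First I would verify that the fiberwise closures $\zeta\mapsto\overline{A_\zeta}$ and the fiberwise adjoints $\zeta\mapsto A_\zeta^*$ each define a measurable field of closed operators. For the closure this is essentially built into the definition of a measurable field of closable operators, using a countable fundamental sequence of graph elements in~$S$. For the adjoint, one exploits the polarization identity together with a countable fundamental sequence in each fiber to exhibit $\mathrm{Graph}(A_\zeta^*)$ as the $\nu$-measurable orthogonal complement of a flip of the graph of $\overline{A_\zeta}$; measurability of orthogonal projections onto measurable fields of closed subspaces is classical.

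Next I would identify the closure of the direct integral operator $(A,D)$ with the direct integral of the fiberwise closures. The inclusion $\overline{A}\supset\int^\oplus_Z\overline{A_\zeta}\diff\nu(\zeta)$ is the easy direction: any fiberwise graph limit can be promoted to a graph limit in $H$ by Dominated Convergence, using a countable total family of vector fields to control integrability. The reverse inclusion comes from the fact that if $u_n\to u$ and $Au_n\to v$ in $H$, then up to a subsequence the convergence holds fiberwise $\nu$-a.e.\ in the graph norm, placing $(u_\zeta,v_\zeta)$ in $\mathrm{Graph}(\overline{A_\zeta})$ for $\nu$-a.e.\ $\zeta$. The analogous identity for the adjoint follows directly from the characterization of $A^*$ via the bilinear pairing $\langle Au,v\rangle=\langle u,A^*v\rangle$: expanding the inner product in $H$ as an integral over $Z$, and using a countable fundamental family of test vector fields in~$D$, forces $(v_\zeta,w_\zeta)\in\mathrm{Graph}(A_\zeta^*)$ for $\nu$-a.e.\ $\zeta$.

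With these two identities in hand, the equivalence is immediate: $\overline{A}=A^*$ iff $\int^\oplus\overline{A_\zeta}=\int^\oplus A_\zeta^*$ iff $\overline{A_\zeta}=A_\zeta^*$ for $\nu$-a.e.\ $\zeta$. The main technical obstacle is the measurability and graph-identification in the adjoint step, since $A^*$ is defined only implicitly through its action on a potentially large domain, and one must be careful that the countable family of test fields used to identify the adjoint fiberwise is genuinely dense in each~$D_\zeta$ in the graph norm of $\overline{A_\zeta}$; this is exactly what the existence of a fundamental sequence of graph elements in the definition of a measurable field of closable operators provides, and is the reason that hypothesis appears in the statement.
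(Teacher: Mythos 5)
Your approach is the paper's: establish $\overline{A}=\int^\oplus_Z\overline{A_\zeta}\,\diff\nu(\zeta)$ and $A^*=\int^\oplus_Z A_\zeta^*\,\diff\nu(\zeta)$ and compare the decomposable operators fiberwise; the paper obtains both identities at once by citing a theorem of Lance, whereas you sketch them from the definitions. One caution on the sketch: the inclusion you label as easy, $\int^\oplus_Z\overline{A_\zeta}\,\diff\nu(\zeta)\subset\overline{A}$, is actually the substantive half --- promoting fiberwise graph limits to a graph limit in $H$ requires a \emph{measurable}, integrably controlled selection of approximants $u_{n,\zeta}\in D_\zeta$ so that $\int\norm{u_{n,\zeta}-u_\zeta}_\zeta^2\diff\nu$ and $\int\norm{A_\zeta u_{n,\zeta}-\overline{A_\zeta}u_\zeta}_\zeta^2\diff\nu$ both vanish, and producing that selection (not Dominated Convergence per se) is precisely what the cited theorem supplies. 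The genuinely easy direction is $\overline{A}\subset\int^\oplus_Z\overline{A_\zeta}\,\diff\nu(\zeta)$: once your subsequence argument shows the right-hand side is a closed operator, it visibly extends $(A,D)$, hence extends $\overline{A}$.
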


\end{appendix}

{\small
\bibliographystyle{abbrvurl}
\bibliography{/Users/lorenzodelloschiavo/Documents/MasterBib.bib}
}

\end{document}